\DeclareFontFamily{LS1}{stixscr}{\skewchar\font127 }
\DeclareFontShape{LS1}{stixscr}{m}{n} {<->s*[.7] stix-mathscr}{}
\newlength{\temp@wc@width}
\newlength{\temp@wc@height}
\newcommand{\widecheck}[1]{%
  \setlength{\temp@wc@width}{\widthof{$#1$}}%
  \setlength{\temp@wc@height}{\heightof{$#1$}}%
  #1\hspace{-\temp@wc@width}%
  \raisebox{\temp@wc@height+2pt}[\heightof{$\widehat{#1}$}]%
     {\rotatebox[origin=c]{180}{\vbox to 0pt{\hbox{$\widehat{\hphantom{#1}}$}}}}%
}
\newtheorem{teo}{Theorem}[section]
\newtheorem{prop}[teo]{Proposition}
\newtheorem{lem}[teo]{Lemma}
\newtheorem{cor}[teo]{Corollary}
\newtheorem{conj}[teo]{Conjecture}
\newtheorem{defi}[teo]{Definition}
\newcommand{\Hom}{\mbox{Hom}}
\newcommand{\GSp}{{\rm GSp}}
\newcommand{\GL}{{\rm GL}}
\newcommand{\SL}{{\rm SL}}
\newcommand{\Sh}{{\rm Sh}}
\newcommand{\Gal}{{\rm Gal}}
\newcommand{\der}{{\rm der}}
\newcommand{\CC}{{\mathbb C}}
\newcommand{\RR}{{\mathbb R}}
\newcommand{\ZZ}{{\mathbb Z}}
\newcommand{\QQ}{{\mathbb Q}}
\newcommand{\HH}{{\mathbb H}}
\newcommand{\Hcal}{{\mathcal H}}
\newcommand{\SSS}{{\mathbb S}}
\newcommand{\AAA}{{\mathbb A}}
\newcommand{\lto}{\longrightarrow}
\def\Fh{\mathfrak{h}}
\def\Fu{\mathfrak{u}}
\def\Fm{\mathfrak{m}}
\def\Fz{\mathfrak{z}}
\newcommand{\cF}{\mathcal{F}}
\newcommand{\cA}{{\mathcal A}}
\newcommand{\cZ}{{\mathcal Z}}
\newcommand{\ol}{\overline}
\newcommand{\lie}{{\mbox{Lie}}}
\newcommand{\wt}{\widetilde}
\newcommand{\tens}{\otimes}
\DeclareMathOperator{\Radu}{{\mathrm{Rad}^{\mathrm{u}}}}
\DeclarePairedDelimiterX{\Nm}[1]{\lVert}{\rVert}{#1}
\newtheorem{theorem}{Theorem}
\newtheorem{proposition}[theorem]{Proposition}
\newtheorem{lemma}[theorem]{Lemma}
\newtheorem{corollary}[theorem]{Corollary}
\theoremstyle{definition}
\newtheorem{definition}[theorem]{Definition}
\theoremstyle{remark}
\DeclareMathOperator{\convolution}{\star}
\newcommand{\Prob}{\mathrm{Prob}}
\newcommand{\U}{\mathbf{U}}
\newcommand{\G}{\mathbf{{G}}}
\newcommand{\C}{\mathbf{C}}
\global\let\restriction|
\newcommand{\A}{\mathbf{A}}
\newcommand{\R}{\mathbf{R}} % nombres réels
\newcommand{\Q}{\mathbf{Q}} % nombres rationnels
\newcommand{\Z}{\mathbf{Z}} % nombres entiers
\renewcommand{\L}{\mathbf{{L}}} % groupe de type Ratner
\newcommand{\K}{\mathbf{K}} % Sous-groupes de K
\renewcommand{\H}{\mathbf{{H}}}
\newcommand{\GammaN}{{\Gamma_{N(L)}}}
\newcommand{\g}{\mathfrak{g}}
\newcommand{\h}{\mathfrak{h}}
\renewcommand{\l}{\mathfrak{l}}
\newcommand{\w}{\mathfrak{w}}
\newcommand{\F}{\mathbf{F}} % groupe fixateur
\newcommand{\RatQ}{\mathscr{R}_\Q}
\newcommand{\RatG}{\mathscr{R}_\Gamma}
\newcommand{\Lscr}{\mathscr{R}_\Gamma}
\newcommand{\Supp}{\textbf{Supp}}
\newcommand{\ad}{\mathbf{ad}}
\newcommand{\Ad}{\mathbf{Ad}}
\newcommand{\Cj}{\text{Conj}}
\newcommand{\Stab}{\mathrm{Stab}}
\renewcommand{\det}{\mathrm{det}}
\newcommand{\alg}{\mathrm{alg}}
\newcommand{\ciso}{\simeq}
\newcommand{\leftexp}[2]{{\vphantom{#2}}^{#1}{#2}}
\newcommand{\abs}[1]{{\left|{#1}\right|}}
\newcommand{\eps}{\varepsilon}
\newcommand{\Lpp}{{L^\ddagger}}
\newcommand{\kk}{\mathbf{k}}
\newcommand{\Ik}{\mathscr{I}_\kk}
\newcommand{\an}{\textrm{an}}
\newcommand{\gl}{\lie{gl}}
\newcommand{\Id}{\mathrm{Id}}
\newtheorem{theoreme}{Théorème}[section]
\newcommand{\Tr}{\mathbf{Tr}}
\newcommand{\z}{\lie{z}}
\def\renewtheorem#1{%
  \expandafter\let\csname#1\endcsname\relax
  \expandafter\let\csname c@#1\endcsname\relax
  \gdef\renewtheorem@envname{#1}
  \renewtheorem@secpar
}
\def\renewtheorem@secpar{\@ifnextchar[{\renewtheorem@numberedlike}{\renewtheorem@nonumberedlike}}
\def\renewtheorem@numberedlike[#1]#2{\newtheorem{\renewtheorem@envname}[#1]{#2}}
\def\renewtheorem@nonumberedlike#1{  
\def\renewtheorem@caption{#1}
\edef\renewtheorem@nowithin{\noexpand\newtheorem{\renewtheorem@envname}{\renewtheorem@caption}}
\renewtheorem@thirdpar
}
\def\renewtheorem@thirdpar{\@ifnextchar[{\renewtheorem@within}{\renewtheorem@nowithin}}
\def\renewtheorem@within[#1]{\renewtheorem@nowithin[#1]}
\newcommand{\NM}[1]{{\left\vert\kern-0.25ex\left\vert\kern-0.25ex\left\vert #1 
    \right\vert\kern-0.25ex\right\vert\kern-0.25ex\right\vert}}
\newtheorem{corollaire}{Corollaire}[section]
\newtheorem{lemme}{Lemme}[section]
\newcommand{\Aff}{\mathbf{Aff}}
\newcommand{\latin}[1]{\emph{#1}}
\newcommand{\sous}{\backslash}
\renewcommand{\tens}{\otimes}
\newcommand{\cf}{cf.}
\newcommand{\cov}{\mathrm{cov}}
\newcommand{\sys}{\mathrm{sys}}
\begin{document}
\doparttoc

\title[Dynamics and unlikely Intersections]{Homogeneous dynamics\\ and Unlikely intersections.}
%{Some  results in homogeneous dynamics and applications to unlikely intersections problems}

\author{R. Richard}
%\address{...} address where the research was carried out (required)
%\address{IRMAR, Bâtiment 22-23, université de Rennes 1, Campus de Beaulieu, %35000~Rennes, France}
%\address{École polytechnique fédérale de Lausanne, SB -- MATHGEOM-- TAN
%EPFL, Station 8, CH–1015 Lausanne, Suisse}
%\address{Mathematisch Instituut, Universiteit Leiden, Niels Bohrweg 1, 2333 CA %Leiden, Nederland}
%\address{University College London, Gower Street, London WC1E 6BT, United %Kingdom}
\curraddr{DPMMS, University of Cambridge}% current address, if different from the research address (optional)
\email{\href{mailto:rodolphe.richard@normalesup.org}{rodolphe.richard@normalesup.org}}
%\urladdr{...} URL address (optional)
\author{A. Yafaev}
\address{University College London, Gower Street, London WC1E 6BT, United Kingdom}
\email{\href{mailto:yafaev@math.ucl.ac.uk}{yafaev@math.ucl.ac.uk}}
\author{T. Zamojski}
\address{École polytechnique fédérale de Lausanne, SB -- MATHGEOM-- TAN
EPFL, Station 8, CH–1015 Lausanne, Suisse}
\email{\href{mailto:thomas.zamojski@gmail.com}{thomas.zamojski@gmail.com}}
%\curraddr{}
%These classifications are entered as
%\subjclass[2010]{Primary primary class;
%Secondary secondary classes}
%separated by commas within.

%\keywords{...}

\maketitle
\tableofcontents

%If \\ has been used to break a chapter title, an adjustment must usually be
%made for the table of contents. This can be done by specifying what is to be included
%or omitted from the TOC as follows.
%• \except{toc}{...} excludes material from the TOC, for example
%\chapter{some text\except{toc}{\\} the rest of the title}
%• \for{toc}{...} inserts an additional directive in the TOC, often an alternate
%linebreak command.

%%%%%%%%%%%%%%%%%%%%%%%%%%%%%%%%%%%%%%%%
%%%%%%%%%%%%%%%%%% INTRO  %%%%%%%%%%%%%%%%%%
%%%%%%%%%%%%%%%%%%%%%%%%%%%%%%%%%%%%%%%%

\selectlanguage{english}
\chapter*{Introduction}
% !TEX root = ../Maitre.tex

Since the 1990s applications of results in homogeneous dynamics to problems in Number Theory and 
Diophantine Geometry have
become one of the important trends in Number Theory.
A key role in the subject is played by Ratner's theorems on rigidity of unipotent flows on homogeneous
spaces. These theorems prove a conjecture by Raghunathan, generalised by Dani and Margulis. Ratner's proofs use earlier work of Dani and Margulis.
A summary of Ratner's results can be found in \cite{0-Ra}.

Various generalisations of Ratner's theorems have been obtained by 
Eskin, Margulis, Mozes, Shah, Tomanov and many others. We refer to
this general area as Ratner's theory.

In 1987, Margulis proved the Oppenheim conjecture on Diophantine 
approximation by quadratic forms.  Ratner proved her theorems shortly after. 
Eskin, Mozes and Shah proved Manin's
conjecture on counting integer points for a wide range of reductive homogeneous spaces.

In 2000 Cornut and Vatsal applied Ratner's theory to some problems in the Arithmetic of Elliptic Curves
(Mazur's conjecture on the non-triviality of Heegner points, see \cite{0-Co}
and \cite{0-Va}). Roughly at the same time Clozel and Ullmo have applied, in~\cite{Ullmospeciale1},
Ratner's theory to the Andr\'e-Oort conjecture on the distribution of special points on subvarieties
of Shimura varieties. It is this last avenue of research that we develop further in this volume.

This volume consists of four papers:
\begin{enumerate}
\item
\emph{Inner Galois equidistribution in $S$-Hecke orbits} by Richard and Yafaev (\cite{0-RY});
\item
\emph{Limit distributions of translated pieces of possibly irrational leaves in $S$-arithmetic 
homogeneous spaces} by Richard and Zamojski (\cite{0-RZ});
\item
\emph{R\'esultat g\'eometrique sur les repr\'esentations de groupes r\'eductifs sur un corps
ultram\'etrique} by Richard (\cite{0-R});
\item
\emph{On narrowness for translated algebraic probabilities in $S$-arithmetic homogeneous spaces} by Richard (\cite{LemmaA}).
\end{enumerate}

Central to the volume is the article \cite{0-RZ} which contains new developments in homogeneous dynamics. Main theorems of \cite{0-RZ} go beyond the results of 
Eskin, Mozes, Shah and Margulis.
% pushes further the applications of Ratner's theory, in the area of homogeneous %dynamics called ``linearization'' and ``focusing''. 
 This article uses technical results of papers~\cite{0-R} and
\cite{LemmaA} extracted from the author's thesis. 

The paper~\cite{0-RY} contains applications of main results of \cite{0-RZ}
to Diophantine problems, namely
to some problems
of unlikely intersections in Shimura varieties. 

The four articles can be read independently from each other. 
For instance, the reader more interested in Diophantine applications,  may read~\cite{0-RY}.
Those interested in homogeneous dynamics only may read~\cite{0-RZ,LemmaA}.
In addtion to proving the main equidistribution theorem used in
\cite{0-RY}, paper \cite{0-RZ} develops the general theory of $S$-arithmetic homogeneous spaces and proves a large number of results concerning these spaces which may be of independent interest. Note also that we sometimes give several alternative proofs of the same lemma or proposition.

Paper~\cite{0-R} contains results relevant to
those interested in the geometry of reductive groups.
We also believe that technical results of \cite{0-R} and \cite{LemmaA} to be of independent interest. The reader may also be interested in a consise 
account of the Bruhat-Tits theory contained in the paper \cite{0-R}.

%We will discuss briefly the content of each of the papers, which also contain %results
%of independant interest. 

%\section{}

%The paper \cite{0-R} proves technical group theoretic results used by the other %two papers.
%\section{}

We now outline the contents of the two major papers \cite{0-RZ} and \cite{0-RY}
of the volume.

The paper \cite{0-RZ} is Ergodic Theoretic.
It develops the theory of equidistribution of certain measures on $S$-arithmetic homogeneous spaces.
Let us describe the general situation. Let $G$ be a semisimple algebraic group over $\QQ$ and $S$ a finite set of places of $\QQ$.
Let $\QQ_S:= \prod_{p \in S} \QQ_p$.
Consider an $S$-adic Lie subgroup $H$ of $G(\QQ_S)$, let $\Omega$ 
an open subset of $H$ and $\nu$ the Haar probability of $H$
normalised so that $\nu(\Omega)=1$.
Let $\Gamma$ be an $S$-arithmetic lattice in $G(\QQ_S)$ and
consider the space $\Gamma \backslash G(\QQ_S)$. 
%We call this space
%an $S$-adic homogeneous space.
Denote $\mu_{\Omega}$ the pushforward of $\nu$ to 
$\Gamma \backslash G(\QQ_S)$.
Consider the sequence of translated measures $\mu_n =  \mu_{\Omega} g_n$ with $g_n \in G(\QQ_S)$.
The aim of \cite{0-RZ} is to describe the limit measures of $\mu_n$.
One defines an `$S$-Ratner's class' $R_S$ of subgroups of $G$. The definition is technical and is given and fully explained in \cite{0-RZ}.
%One important property is that for a group $L$ of class 
%$R_S$, $L\cap \Gamma$ is a lattice in $L$.
The first main result of \cite{0-RZ} is that the limit measure
$\mu_{\infty}$ is \emph{roughly} as follows (for precise statements and explanations we refer to the paper \cite{0-RZ} itself).
There exists a subgroup $L$ of $G$ of $S$-Ratner class,
there exists an open (precisely described) subgroup $L^{++}$ 
of $L(\QQ_S)$, an element $n_{\infty}$ of the normaliser of $L$ and an element $g_{\infty}$ of $G(\QQ_S)$ such that
$$
\mu_{\infty} = \mu_{L^{++}} \cdot n_{\infty} \star \mu_{\Omega} \cdot g_{\infty} 
$$
where $\star$ denotes the convolution.
Furthermore, it is proven in \cite{0-RZ} that the convergence of
measures (or equidistribution) is ``inner'': up to extracting a subsequence
the supports of $\mu_n$ are contained in the support $\mu_{\infty}$
for all $n$ large enough.
This property is essential for the arithmetic applications in \cite{0-RY}.

We now turn to the contents of the paper \cite{0-RY}.
The paper \cite{0-RY} applies the results of \cite{0-RZ} outlined above to problems on unlikely 
intersections in Shimura varieties, namely the Zilber-Pink conjecture
which is a vast generalisation of the Andr\'e-Oort conjecture.
Recall that the Andr\'e-Oort conjecture is the statement that 
the Zariski closure of a set of special points is a finite union of special subvarieties.
Precise statements and survey of recent methods and results can be found in the volume \cite{0-PS}.
There are currently three approaches to the Zilber-Pink type of problems.
One is purely algebro-geometric and relies on Galois properties of special points and 
geometric properties of subvarieties (in particular their property of being `stable' under Hecke correspondences).
Another approach relies on equidistribution (Ratner's theory). The two approaches 
combine together to produce a proof of the Andr\'e-Oort conjecture under the Generalised Riemann Hypothesis.
We refer to Yafaev's article in \cite{0-PS}  and references therein for more details.
Another very fruitful approach that emerged about 10 years ago is that involving o-minimality (a branch of 
Model Theory). T. Scanlon's article in \cite{0-PS} provides a good introduction to this approach.
This approach sometimes allows to obtain complete results. A major achievement is a proof of the Andr\'e-Oort
conjecture unconditionally for all Shimura varieties of abelian type. This result should be attributed to a large number of contributors.
It seems that in the recent years o-minimality has overshadowed the equidistribution approach to the 
Zilber-Pink type of problems.
However, the equidistribution approach, when applicable yields much finer results than just a characterisation of the Zariski closure of a certain set of points. 
In the paper \cite{0-RY}, the equidistribution approach to
a special case of the Zilber-Pink conjecture, namely the Andr\'e-Pink-Zannier conjecture, is developed.
Simply put, the conjecture predicts that the Zariski closure of a subset of the Hecke orbit of a point in a Shimura variety  is a finite union of weakly special subvarieties.
There are several characterisations of the weakly special subvarieties: a Hodge theoretic one (in terms of Shimura data), a differential geometric one (they are exactly the totally geodesic subvarieties) and a bi-algebraic one, particularly relevant to the o-minimal approach.
We refer to \cite{0-KUY} for a thorough exposition of these notions.
Very loosely speaking, weakly special subvarieties are homogeneous varieties embedded into 
the ambient Shimura variety via a morphism arising from an inclusion of Shimura data.

It should be noted that in \emph{any} approach to Zilber-Pink type of problems, Galois action on `special' (or `atypical')
points plays a central role. Obtaining precise information about this action and the growth of Galois orbits is the most difficult part of any strategy. A large section of \cite{0-RY} is devoted to analysis of the Galois action on points of $S$-Hecke orbits (see below) which we believe to be of independent interest.

The paper \cite{0-RY} concerns itself with the $S$-Andr\'e-Pink-Zannier conjecture  (see precise statement below) - a special case where one considers Hecke orbits restricted to a finite set $S$ of primes. This assumption is essential in order to apply equidistribution results of \cite{0-RZ}. 
To state our results precisely, we need to introduce some technical notations.
 Let $(G,X)$ be a Shimura datum (see \cite{0-Deligne}) and $K$ be a compact open subgroup 
 of $G(\AAA_f)$. 
 The following double coset space  is the set of complex points of the corresponding Shimura variety:
 $$
 Sh_K(G,X) = G(\QQ)\left\backslash X\times G(\AAA_f)\right/ K.
 $$
 We recall that $Sh_K(G,X)$ admits a canonical model over a certain
 explicitly described number field $E(G,X)$ called the reflex field of the Shimura datum $(G,X)$.

Let $x\in X$ and $g \in G(\AAA_f)$.
We denote by~$[x,g]$ the image  of~$(x,g)\in X\times G(\AAA_f)$ in
$Sh_K(G,X)$. Note that in this introduction we use slightly simpler 
(but less precise) notations than in the paper \cite{0-RY}.
The Hecke orbit of $[x,g]$ is the set~$H(x)=\{[x,g]|g\in G(\AAA_f)\}$.
The Andr\'e-Pink-Zannier conjecture is the statement that irreducible components 
of the Zariski closure of any subset of $H(x)$ are weakly special subvarieties.
For Shimura varieties of abelian type and components of dimension one, this
statement was proved by Orr in \cite{0-Orr} using the theory of o-minimality.
Orr's approach is currently not generalisable 
to higher dimensional subvarieties or Shimura varieties of exceptional type.
The approach in \cite{0-RY} is very different from Orr's - it relies on 
recent results in homogeneous dynamics described above.

Let us now be more specific.
The $S$-Hecke orbit of $[x,t]$ is 
~$H_S(x)=\{[x,tg]|g\in G(\QQ_S)\}$.

The precise statement of the problem we are interested in is
as follows:

\begin{conj}[$S$-APZ conjecture]\label{Conjecture 1}
The Zariski closure~$\overline{\Sigma}$ of a subset~$\Sigma\subseteq H(x)$ is a finite union of weakly special subvarieties.
\end{conj}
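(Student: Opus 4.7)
The plan is to combine the Galois-theoretic description of $S$-Hecke orbits with the main equidistribution theorem of \cite{0-RZ}. By Noetherianity of the Zariski topology, it suffices to show that for any infinite subset $\Sigma\subseteq H_S(x)$, some positive-dimensional weakly special subvariety $Y\subseteq\overline{\Sigma}$ contains all but finitely many points of $\Sigma$; the claim then follows by applying this successively to the irreducible components and to the finitely many remaining points.

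\textbf{From Galois orbits to homogeneous measures.} Write $x_n=[x,tg_n]$ with $g_n\in G(\QQ_S)$, passing to an infinite subsequence of pairwise distinct points. Since $\overline{\Sigma}$ is defined over a number field and stable under $\Gal(\oQ/E(G,X))$, enlarging $\Sigma$ by adjoining Galois conjugates keeps us inside $\overline{\Sigma}$. A substantial part of \cite{0-RY} (and, we expect, the technically hardest input) is devoted to describing these Galois orbits in terms of $S$-arithmetic homogeneous dynamics: via the reciprocity law and strong approximation one identifies the Galois orbit of $x_n$ with a $\Gamma$-orbit of an $\Omega_n \cdot g_n$ piece inside the $S$-arithmetic quotient $\Gamma\backslash G(\QQ_S)$, where $\Omega_n$ is an open set in an $S$-adic Lie subgroup $H\subseteq G(\QQ_S)$ cut out by the image of a Galois representation attached to the Hodge-theoretic data of $x$. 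Normalising these orbits gives a sequence of probability measures $\mu_n=\mu_{\Omega_n}\cdot g_n$ on $\Gamma\backslash G(\QQ_S)$, precisely of the form to which the equidistribution theorem of \cite{0-RZ} applies.

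\textbf{Applying the equidistribution theorem.} After a subsequence extraction the main theorem of \cite{0-RZ} recalled in the introduction yields a limit
$$\mu_\infty=\mu_{L^{++}}\cdot n_\infty\star\mu_\Omega\cdot g_\infty$$
for a subgroup $L\subseteq G$ of $S$-Ratner class, together with $n_\infty$ in the normaliser of $L$ and some $g_\infty\in G(\QQ_S)$. The decisive step is to identify the image $Y:=\pi(\supp(\mu_\infty))$ in $Sh_K(G,X)$ with a weakly special subvariety: this requires $L$ to arise as the $S$-adic points of a $\QQ$-subgroup coming from a sub-Shimura datum, which should be forced by the Mumford--Tate / Hodge-theoretic origin of the groups $H$ and $\Omega_n$ combined with the classification of groups in the $S$-Ratner class. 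One then invokes the \emph{inner} equidistribution conclusion of \cite{0-RZ}: along a subsequence, $\supp(\mu_n)\subseteq\supp(\mu_\infty)$ for all sufficiently large $n$, so that $x_n\in Y\subseteq\overline{\Sigma}$ for all but finitely many $n$, yielding the required absorbing weakly special subvariety.

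\textbf{Main obstacle.} The hard part is the Galois-theoretic analysis underlying the construction of the $\Omega_n$: one must control the $S$-adic image of the Galois representation sharply enough to produce subgroups satisfying the openness, unipotent-generation and $S$-Ratner class hypotheses of \cite{0-RZ}, and one must prove that the limiting group $L$ inherits a $\QQ$-rational Hodge-theoretic structure so that the support of $\mu_\infty$ descends to a weakly special (and not merely homogeneous) subvariety of $Sh_K(G,X)$. A secondary technical point is the passage between the analytic support of $\mu_\infty$ in the $S$-arithmetic quotient and its Zariski closure in $Sh_K(G,X)$, which is where the inner equidistribution property is indispensable: without it one would recover only a topological statement, but with it the argument delivers the algebraic containment $x_n\in Y$ that is needed to conclude.
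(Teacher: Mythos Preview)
The statement you are attempting to prove is labelled a \emph{conjecture} in the paper, and the paper does \emph{not} prove it unconditionally. What the paper establishes (Theorem~\ref{Theoreme2} in Part~1) is the $S$-APZ conjecture \emph{under the $S$-Shafarevich hypothesis} on the base point~$s$: for every finite extension $F$ of $E$, the $S$-Hecke orbit $\Hcal_S(s)$ contains only finitely many $F$-rational points. Moreover the paper proves a converse (the Proposition at the end of \S1 in Part~1): if the inner equidistribution property --- precisely the tool you invoke in your final step --- holds for every sequence in $\Hcal_S(s)$, then $s$ is automatically of $S$-Shafarevich type. So your approach, which relies on inner equidistribution, \emph{cannot} succeed without this hypothesis; the $S$-Shafarevich condition is both sufficient and necessary for the method, and your proposal has no substitute for it. Concretely, the hypothesis is what allows one to verify the ``analytic stability'' condition required by~\cite{0-RZ} (via the group-theoretic characterisation in Proposition~\ref{charSha} and the Mostow-type decomposition in~\S\ref{sous section stabilite}), and to control the factor~$n_\infty$ in the limit formula.

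At the strategic level your outline matches the paper's: construct Galois-orbit probability measures $\mu_n$, lift to an $S$-arithmetic quotient of $G^{\der}$, apply~\cite{0-RZ}, and use inner equidistribution. Two further details are inaccurate. First, you assert $\overline{\Sigma}$ is defined over a number field, but $E$ is an arbitrary field of definition of $s$ and need not be of finite type over~$\QQ$; the paper's \S\ref{SectionGalois} is largely about how the relevant Galois properties depend on this. Second, the support of $\mu_\infty$ is \emph{not} a weakly special subvariety in general but only a finite union of weakly $S$-special real submanifolds (Definition~\ref{ws subm}); the passage to weakly special subvarieties for the Zariski closure uses the hyperbolic Ax--Lindemann--Weierstra\ss\ theorem (Proposition~\ref{Ax}), and it is only under the stronger $S$-Mumford--Tate hypothesis that the support itself is weakly special.
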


An essential Galois-theoretic assumption we must make is the $S$-Shafarevich assumption
(see next section)
which is the natural generalisation of the Shafarevich conjecture 
proved by Faltings for abelian varieties. In particular this assumtion is
true for all Shimura varieties of abelian type.

Let $s = [x,t]$ be a point of $Sh_K(G,X)$ and 
 $E \subset \CC$ be a field of  definition of $s$.
Let $M \subset G$ be the Mumford-Tate group of~$x$.
We have the following (see section 3 of \cite{0-RY} for details):
\begin{enumerate}

\item \label{defimonorep} 
There exists a continuous $S$-adic monodromy representation
$$
{\rho_{x,S}} \colon Gal(\overline E / E) \rightarrow M(\AAA_f) \cap K \cap G(\QQ_S).
$$
which has the  property that for any $g \in G_S$ and $\sigma \in Gal(\overline{E} / E)$,
\begin{equation}\label{Galois carac}
\sigma( [x,tg] ) = [x, \rho(\sigma) \cdot tg].
\end{equation}
\item \label{defimonogroup}
The~\emph{$S$-adic monodromy group} is
$U_S=\rho_{x,S}\left(Gal(\overline{E} / E)\right)$.
It is a compact $S$-adic Lie subgroup of~$M(\AAA_f)\cap G_S\simeq M(\QQ_S)$.
\item The algebraic~\emph{$S$-adic monodromy group}, denoted~$H_S \subset M_S$, is $H_S=\overline{U_S}^{\text{Zar}}$.
%This is an algebraic group over~$\Q_S$. %and we have~$H_S(\QQ_\ell)=H_\ell(\QQ_\ell)$.
%We write~$H^0_S$ for its neutral component.
\end{enumerate}
We now introduce the following definitions:
we say that the point~$s$ over~$E$ is
\begin{enumerate}
\item \label{S-MT}
	 of $S$-\emph{Mumford-Tate type} if~$U_S$ is open in 
	 $M(\AAA_f) \cap G_S$;
\item \label{S-Sha}
	 of \emph{$S$-Shafarevich type} if for every finite extension~$F$ of $E$, there are only finitely many $F$-rational points in~$H_S(s)$;
\item \label{S-Tate}
	 of \emph{$S$-Tate type} if~$M$ and~$H^0_S$ (neutral component of $H_S$) have the same centraliser in~$G_S$.
%$$
%Z_{G_S}(H_S^0) = Z_{G_S}(M).
%$$
\end{enumerate}
We say that the point~$s$ over~$E$
\begin{enumerate}\setcounter{enumi}{3}
\item \label{S-semisimple} satisfies $S$-\emph{semisimplicity} if $H_S$ is a reductive group;
\item \label{Algebraicity} satisfies~$S$-\emph{algebraicity} if the subgroup~$U_S$ of~$H_S(\QQ_S)$ is open. 
\end{enumerate}
%This latter is equivalent to the Lie algebra of~$U_S$ being algebraic (in the sense of Chevalley, cf. 
%\cite[\S7]{BorelLAG}), as a~$\Q_S$-Lie subalgebra of the Lie algebra of~$M_{\Q_S}$.

These properties are heavily dependent on the choice of the field $E$, particularly on whether $E$ is of 
finite type over $\QQ$ or not. 
In section 3.4 of \cite{0-RY} we 
give examples of various interesting phenomena
that occur when one varies $E$.
Note that a point of~$S$-Mumford-Tate type is obviously of~$S$-Tate type. These properties hold 
notably for a special point, by definition of a canonical model.
We prove that $S$-Tate type property~\eqref{S-Tate} implies the algebraicity of~$U_S$ if~$E$ is of finite type. For such an~$E$, the~$S$-Mumford-Tate type property~\eqref{S-MT} is equivalent to its \emph{a priori} weaker variant:~$H_S^0=M_{\QQ_S}$.

The main results of \cite{0-RY} are proved under the $S$-Shafarevich hypothesis~\eqref{S-Sha} which  is implied by
(and in fact not far from being equivalent to) the $S$-Tate hypothesis~\eqref{S-Tate} together with the $S$-semisimplicity assumption~\eqref{S-semisimple}. 
More precisely (\cite{0-RY}, Proposition 3.7), the $S$-Shafarevich
property is equivalent to $S$-semisimplicity together with the property that
the centraliser of $U_S$ in $G_S$ is compact modulo the centraliser of $M$ in $G_S$.
In our opinion these results are of independent interest and may prove useful elsewhere.

Let $L$ be a group in $R_S$ ($S$-Ratner class as above). An $S$-real weakly special submanifold is a subset of~$Sh_K(G,X)$ of the  following form (for some~$L$ in~$R_S$ and~$(x,g)\in X\times G(\AA_f)$),
\[
Z_{L,(x,g)}=\left\{[lx,g]\middle|l\in L(\RR)^+\right\}.
\] 
We have a natural identification
\[
(\Gamma\cap L(\RR)^+)\backslash L(\RR)^+/(L(\RR)^+\cap C)\simeq Z_{L,(x,g)}
\]
where~$\Gamma$ is an arithmetic subgroup of~$G$ depending on~$g$ and~$C$ a maximal compact subgroup of~$G^{der}(\RR)$ depending on~$x$. We obtain a probability measure~$\mu_Z$ on~$Z=Z_{L,(x,g)}$ by pushing forward a Haar measure on $L(\RR)^+$ and normalising. 

We now describe the main results of \cite{0-RY}. Let $s$ be a point of 
$Sh_K(G,X)$ and $E$ be its field of definition.
As explained before we must assume that $s$ and $E$ are of $S$-Shafarevich type.
Consider a sequence ~$(s_n)_{n\geq0}$  of points in the~$S$-Hecke orbit~$H_S(s)$ of~$s$, and denote 
$\mu_n = \frac{1}{|Gal(\overline{E}/E)|} \sum_{\zeta\in Gal(\overline{E}/E)\cdot s_n } \delta_{\zeta}$ 
the sequence 
of discrete probability measures attached to the Galois orbits of the~$s_n$.
Applying results of \cite{0-RZ} to our particular situation, we show that 
after possible extracting a subsequence,
the sequence of measures $(\mu_n)$ weakly converges to a measure 
attached to a real weakly special submanifold. Furthermore, we show that that the 
equidistribution is inner in the sense explained above.
From this we deduce that the $S$-Andr\'e-Pink-Zannier conjecture and a much finer statement about the topological closure of a subset of an $S$-Hecke orbit. 
Namely we show that the topolocial closure of a subset of an $S$-Hecke orbit is a
finite union of $S$-real weakly special submanifolds.
Under a stronger assumption of the $S$-Mumford-Tate conjecture (which holds for example when these points are special),
we prove that these submanifolds are actually weakly special subvarieties.

Finally one  easily proves a converse statement below
which shows that the $S$-Shafarevich property assumption is essential and optimal.
Keeping the above notations, 
assume that for any sequence~$(s_n)_{n\geq0}$ in~$H_S(s)$, for any 
finite extension~$F$ of~$E$, there is an extracted 
subsequence for which the associated measure~$\mu_n$ converges weakly
to a limit~$\mu_\infty$ and furthermore the convergence is inner.
Then~$s$ is of~$S$-Shafarevich type.

\selectlanguage{english}
\part{{Inner Galois equidistribution \except{toc}{\\} in $S$-Hecke orbits \author{R. Richard \and A. Yafaev}}}
%{{Inner Galois equidistribution\\ in $S$-Hecke orbits\\ --- \\ R. Richard and A. Yafaev}}
%\title{Inner Galois equidistribution\\ in $S$-Hecke orbits}
%\author{ Rodolphe Richard, Andrei Yafaev}
%\date{\today}
% \address{Richard, Yafaev: UCL, Department of Mathematics, Gower street, WC1E %6BT, London, UK}
%\email{Rodolphe.Richard@normalesup.org, yafaev@math.ucl.ac.uk}
%\setcounter{tocdepth}{1}
%\setcounter{secnumdepth}{4}
%\maketitle

%\parttoc%\tableofcontents

% !TEX root = ../Maitre.tex

\section{Introduction.}

This paper concerns itself with certain special cases of the Zilber-Pink conjecture on unlikely intersections
in Shimura varieties and some of its natural generalisations.

Let $(G,X)$ be a Shimura datum, let $K$ be a compact open subgroup of $G(\AAA_f)$, and let
\[
\Sh_K(G,X) = G(\QQ)\backslash X\times G(\AAA_f)/K
\]
be the associated Shimura variety. 
We refer to \cite{1-Milne} and references therein for definitions and facts related to Shimura varieties. Throughout the paper we always assume that $G$ is the 
generic Mumford-Tate group on $X$. This is a standard convention in the theory of Shimura varieties.
We will write a point of $\Sh_K(G,X)$ as $s=\overline{(h,t)}$ - the double class of the element $(h,t) \in X \times G(\AAA_f)$.

The \emph{Hecke orbit} of a point of $\Sh_K(G,X)$ is defined as follows.

\begin{defi}[Hecke orbit] Let~$s=\ol{(h, t)}$ be a point of~$\Sh_K(G,X)$. We define the \emph{Hecke orbit} of the point~$s$ in~$\Sh_K(G,X)$ to be the set
$$
\Hcal(s) = \left\{ \ol{(h, t\cdot g)}~\middle|~g \in G(\AAA_f) \right\}\subseteq \Sh_K(G,X).
$$
\end{defi}

We refer to \cite{1-Moonen} for a definition of \emph{weakly special subvarieties}\footnote{A concise characterisation of \emph{weakly special subvarieties}, the one studied in \cite{1-Moonen}, is the following: they are the complex algebraic subvarieties of~$\Sh_K(G,X)$ which are the  images of totally geodesic submanifolds of the symmetric space~$X\times\{t\}$, with some~$t\in G(\AAA_f)$. Each such subvariety is isomorphic to a component of some Shimura variety, that is, an arithmetic quotient of a Hermitian symmetric domain. Weakly special subvarieties containing smooth special points are called special subvarieties. A special subvariety is a 
component of the image of a morphism of Shimura varieties induced by a morphism of Shimura data.
In a certain sense a weakly special subvariety is a translate of a special subvariety.} of $\Sh_K(G,X)$ and related notions. 
Several equivalent definitions (different in flavour) are given in \cite{1-U1}, \cite{1-UY3} and \cite{1-UY2}. 
Note that a single point is trivially a weakly special subvariety.

The Andr\'e-Pink-Zannier conjecture is the following statement.

\begin{conj}[Andr\'e-Pink-Zannier] \label{APZ}
Irreducible components of the Zariski closure of any subset~$\Sigma$ of $\Hcal(s)$ are weakly special subvarieties.
\end{conj}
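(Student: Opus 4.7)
My plan is to establish this conjecture (in the $S$-Hecke form, i.e.\ when $\Sigma\subseteq H_S(s)$, which is all the equidistribution machinery actually delivers) by reducing it to a statement about limits of translated Galois-orbit measures, and then invoking the main theorem of \cite{0-RZ}. The overall strategy proceeds in three stages: first, translate the problem into the homogeneous-dynamical setting on $\Gamma\backslash G(\QQ_S)$; second, apply the equidistribution theorem and exploit inner convergence to control topological closures; third, upgrade real weakly special submanifolds to weakly special subvarieties using a Mumford-Tate type hypothesis.

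For the first stage, I would use the Galois-equivariance property \eqref{Galois carac}: for $s_n=[x,tg_n]$ with $g_n\in G(\QQ_S)$, the Galois orbit $\Gal(\overline E/E)\cdot s_n$ is the image in $\Sh_K(G,X)$ of $U_S\cdot tg_n\subseteq G(\QQ_S)$. Therefore the discrete probability measure $\mu_n$ is (up to pushforward) the translate $\mu_{U_S\cdot t}\cdot g_n$ in the notation of \cite{0-RZ}. Under the $S$-Shafarevich assumption, which by \cite{0-RY}, Proposition 3.7, is equivalent to $S$-semisimplicity together with compactness of the relevant centraliser quotient, the Zariski closure $H_S$ of $U_S$ is reductive and $U_S$ is open in $H_S(\QQ_S)$; this places the sequence $(\mu_n)$ precisely within the hypotheses of the equidistribution theorem, and in particular supplies the Galois-theoretic input needed to prevent escape of mass.

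For the second stage, I would apply the main theorem of \cite{0-RZ} to extract a subsequence for which $\mu_n$ converges weakly to a limit
\[
\mu_\infty = \mu_{L^{++}}\cdot n_\infty \star \mu_\Omega\cdot g_\infty
\]
supported on a finite union of $S$-real weakly special submanifolds $Z_{L,(x,g_\infty)}$, for some $L\in R_S$. The inner character of the convergence then gives $\supp(\mu_n)\subseteq\supp(\mu_\infty)$ for $n$ large, so $s_n$ lies in a finite union of $S$-real weakly special submanifolds for all sufficiently large $n$ in the subsequence. A standard diagonal extraction argument applied to an arbitrary subset $\Sigma\subseteq H_S(s)$ shows that its topological closure is contained in a finite union of $S$-real weakly special submanifolds, which already yields the refined real-analytic version of the conjecture, and by passing to Zariski closures gives the algebraic statement once we know these submanifolds are algebraic.

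Finally, to identify the real weakly special submanifolds $Z_{L,(x,g)}$ with algebraic weakly special subvarieties, I would invoke the $S$-Mumford-Tate hypothesis \eqref{S-MT}, which constrains $L$ to be compatible with the full Mumford-Tate group of $x$, so that $Z_{L,(x,g)}$ is the image of a totally geodesic submanifold of $X$ coming from a sub-Shimura datum; by \cite{1-Moonen} this image is exactly a weakly special subvariety. The main obstacle, in my view, is the Galois-theoretic stage: one must carry out a careful and quite delicate analysis of the $S$-adic monodromy representation $\rho_{x,S}$ and of the compact $S$-adic Lie group $U_S$, both to verify that the measures $\mu_n$ satisfy the non-escape and narrowness conditions required by \cite{0-RZ} (this is where \cite{LemmaA} enters) and to pin down the precise relationship between $U_S$, $H_S$ and $M$ under $S$-Shafarevich, $S$-Tate, and $S$-semisimplicity. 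This is the technical core of \cite{0-RY} and is what makes the equidistribution input actually applicable to the APZ problem.
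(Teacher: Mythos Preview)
The statement you are addressing is labeled a \emph{conjecture} in the paper, and the paper does not prove it: the full Andr\'e--Pink--Zannier conjecture, for the entire Hecke orbit $\Hcal(s)$ and without Galois-theoretic hypotheses on $s$, remains open. You correctly restrict at once to the $S$-Hecke orbit $\Hcal_S(s)$, and implicitly impose the $S$-Shafarevich hypothesis; what you are then outlining is a proof of the paper's Theorem~\ref{Theoreme2}, not of Conjecture~\ref{APZ}. For that restricted target your three-stage strategy---lift to an $S$-arithmetic homogeneous space, apply the equidistribution theorem of \cite{0-RZ} together with inner convergence, then pass from real weakly special submanifolds to weakly special subvarieties---is exactly the paper's approach, and your identification of the Galois-theoretic verification (non-escape, analytic stability, reduction to the derived group) as the technical crux is accurate.

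Two points deserve correction. First, you overuse the $S$-Mumford-Tate hypothesis: for the \emph{Zariski} closure statement the paper does not need it. Under $S$-Shafarevich alone one obtains that the topological closure of $\Sigma_E$ is a finite union of real weakly $S$-special submanifolds, and the paper then invokes the hyperbolic Ax--Lindemann--Weierstra\ss{} theorem (Proposition~\ref{Ax}, from \cite{1-KUY}) to conclude that the Zariski closure of each such submanifold is already a weakly special subvariety. The $S$-Mumford-Tate hypothesis is only required for the stronger conclusion that the \emph{topological} closure itself consists of weakly special subvarieties. Second, your assertion that $S$-Shafarevich implies $U_S$ is open in $H_S(\QQ_S)$ is not correct: that openness is the $S$-algebraicity property, which is logically separate (the paper exhibits counterexamples in \S3.4). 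What $S$-Shafarevich gives, via Proposition~\ref{charSha}, is that $H_S$ is reductive and that $Z_{G_S}(M)\backslash Z_{G_S}(U_S)$ is compact; the former is what is needed for the reductivity hypothesis in \cite{0-RZ}, and the latter is what is used to achieve the analytic stability condition after modifying the translating elements.
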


This conjecture was formulated by Andr\'e for curves in Shimura varieties in \cite{1-A}, then by Pink 
for arbitrary subvarieties of mixed Shimura varieties in \cite{1-Pinky} and independently
by Zannier (unpublished). We therefore refer to this conjecture as the
Andr\'e-Pink-Zannier conjecture.

For \emph{curves} in Shimura varieties of abelian type, conjecture \ref{APZ} was proved by Orr in \cite{1-Orr}.
Orr also proves in \cite{1-Orr} that conjecture \ref{APZ} is a special case of the Zilber-Pink conjecture on unlikely intersections in Shimura varieties. 

In this paper we deal with the `$S$-Hecke orbit' version of Conjecture~\ref{APZ}. In our situation, we actually derive much stronger conclusions then those of conjecture \ref{APZ}.

\subsection{The $S$-Andr\'{e}-Pink-Zannier conjecture}
We consider the following weaker notion of a Hecke orbit.

\begin{defi}[$S$-Hecke orbits]\label{DefiS-Hecke}
 Let $S$ be a finite set of prime numbers. 
\begin{enumerate}
\item
Write~$(g_\ell)_\ell$ an element of~$G(\AAA_f)$ viewed as a restricted product indexed by primes~$\ell$. We denote~$G_S$ the subgroup of~$G(\AAA_f)$  consisiting of elements~$(g_\ell)_\ell$ such that $g_\ell = 1$ for~$\ell\notin S$.\\ As~$S$ is a finite set, we may identify~$G_S$ with~$\prod_{\ell\in S} G(\QQ_\ell)$, or equivalently with~$G(\QQ_S)$ where~$\QQ_S=\prod_{\ell\in S} \QQ_\ell$.% viewed as a subgroup of $G(\AAA_f)$.
\item 
For a point~$s=\ol{(h, t)}$ of $\Sh_K(G,X)$, we define the \emph{$S$-Hecke orbit} of~$s$ to be the subset
$$
\Hcal_S(s) = \left\{ \ol{(h, t\cdot g)}~\middle|~ g \in G_S \right\}\subseteq \Hcal(s).
$$
\end{enumerate}
\end{defi}

Let $E$ be a field of definition of $s$. The fact that Hecke correspondences are defined over the reflex field $E(G,X)$ shows that
points of $\Hcal(s)$ and $\Hcal_S(s)$ are defined over $\overline{E}$.

We introduce the following definition which will play a central role in what follows.

\begin{defi}[$S$-Shafarevich property] \label{SSha}
The point $s$ is said to satisfy the $S$-Shafarevich property or to be
of $S$-Shafarevich type if for every finite extension $F \subset \overline{E}$
of $E$, $\Hcal_S(s)$ contains only finitely many points defined over $F$.
\end{defi}

We immediately observe that the $S$-Shafarevich property is invariant by 
finite morphisms induced by the inclusions of compact open subgroups
$K' \subset K$.
The conclusions of Conjectures \ref{APZ} and \ref{APS} as well as those 
of our main theorems \ref{Theoreme1} and \ref{Theoreme2} are also invariant by 
replacing $K$ by a subgroup of finite index.
Therefore, throughout the paper we always assume \emph{the group $K$ to be 
\underline{neat}}.
More precisely, we choose the group $K$ as follows.
\begin{enumerate}
\item $K$ is a product $K = \prod K_p$ of compact open subgroups 
$K_p \subset G(\QQ_l)$.
\item Fix $l \geq 3$ and $l \notin S$. We assume that 
$K_l$ is contained in the group of elements congruent to the identity modulo $l$ 
with respect to some faithful representation $G \subset \GL_n$.
\end{enumerate}

As noted just above these assumptions cause no loss of any generality but
avoid many annoying technicalities. These assumptions will be kept throught the paper.

We will examine this property in great detail in section \ref{SectionGalois}. In particular,
we will obtain a group theoretic characterisation of this property.

We now formulate the corresponding weaker form of Conjecture~\ref{APZ} which 
will be the main object of study in this paper.

\begin{conj}[Andr\'e-Pink-Zannier conjecture for $S$-Hecke orbits] \label{APS}
Irreducible components of the Zariski closure of any subset of $\Hcal_S(s)$ are weakly special subvarieties.
\end{conj}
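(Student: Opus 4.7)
The plan is to derive Conjecture~\ref{APS} from the inner equidistribution theorem of~\cite{0-RZ} applied to Galois orbits along an $S$-Hecke orbit, by means of the $S$-adic monodromy formalism recalled in the general introduction.

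First I would reduce the statement to an equidistribution problem. If $\Sigma\subseteq\Hcal_S(s)$ is finite the conclusion is immediate, so assume $\Sigma$ is infinite and pick a sequence $(s_n)_{n\geq 0}$ of pairwise distinct points of $\Sigma$ that is Zariski dense in $\overline{\Sigma}$. Writing $s_n=\ol{(h,tg_n)}$ with $g_n\in G_S$, the identity~\eqref{Galois carac} identifies the $\Gal(\overline{E}/E)$-orbit of $s_n$ with the image of $U_S\cdot tg_n$ under the uniformisation map; the associated uniform probability $\mu_n$ on $\Gal(\overline{E}/E)\cdot s_n$ is therefore, up to pullback to a fundamental domain, the pushforward to $\Sh_K(G,X)$ of a translate by $g_n$ of a Haar probability on $U_S$. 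The $S$-Shafarevich hypothesis on $s$ is precisely what forces $|\Gal(\overline{E}/E)\cdot s_n|\to\infty$, equivalently that the elements $g_n$ escape every compact in the relevant double coset space, so that the $\mu_n$ have non-trivial limiting behaviour.

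Second, I would apply the main equidistribution theorem of~\cite{0-RZ} with the $S$-adic Lie subgroup $U_S\subseteq H_S(\QQ_S)$ and an open subset $\Omega\subseteq U_S$ of finite Haar measure. After extracting a subsequence, $\mu_n$ converges weakly to a probability
\[
\mu_\infty \;=\; \mu_{L^{++}}\cdot n_\infty\convolution\mu_\Omega\cdot g_\infty
\]
for some $L$ in the $S$-Ratner class $R_S$, and crucially the convergence is \emph{inner}: $\supp(\mu_n)\subseteq\supp(\mu_\infty)$ for all $n$ sufficiently large. Pushed forward to $\Sh_K(G,X)$, $\supp(\mu_\infty)$ is an $S$-real weakly special submanifold $Z_{L,(x,g_\infty)}$, so all but finitely many of the extracted $s_n$ lie in this single submanifold. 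Iterating the procedure on the points of $\Sigma$ not captured, and using Noetherianity of the Zariski topology together with the finiteness of the irreducible components of $\overline{\Sigma}$, one concludes that the topological closure of $\Sigma$ in $\Sh_K(G,X)$ is a finite union $\bigcup_i Z_i$ of $S$-real weakly special submanifolds. To pass to the algebraic statement of Conjecture~\ref{APS}, one then argues that (under the $S$-Mumford-Tate hypothesis) each $Z_i$ is Zariski dense in a weakly special subvariety $W_i$ in the sense of~\cite{1-Moonen}, so that $\Sigma\subseteq\bigcup_i W_i$ forces $\overline{\Sigma}\subseteq\bigcup_i W_i$ and the maximal $W_i$'s under inclusion coincide with the irreducible components of $\overline{\Sigma}$.

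The main obstacle is this final algebraicity step — upgrading the ergodic output $Z_{L,(x,g_\infty)}$ from a real weakly special submanifold to a genuine weakly special subvariety whose Zariski closure is cut out by an actual sub-Shimura datum. The group $L\in R_S$ produced by the equidistribution is \emph{a priori} only specified by its $S$-adic incarnation, and one must ensure it arises as the $\QQ_S$-points of a reductive $\QQ$-subgroup of $G$ fitting into such a sub-datum with $(G,X)$. Under the $S$-Mumford-Tate condition (or the weaker combination of $S$-Tate type and $S$-semisimplicity), $H_S$ is large enough inside $M$ to force every Ratner group $L$ appearing as a limit to be of algebraic origin, so that $Z_{L,(x,g_\infty)}$ is indeed weakly special. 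Controlling which $L\in R_S$ can actually arise as a limit in the Galois-orbit setup, and bridging this with the descent from $\QQ_S$-points to $\QQ$-subgroups, is the core technical input required to complete the proof.
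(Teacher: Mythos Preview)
The statement you are ``proving'' is labelled a \emph{Conjecture}, and the paper does not prove it unconditionally. What the paper establishes is Theorems~\ref{Theoreme1} and~\ref{Theoreme2}: the conclusion of Conjecture~\ref{APS} holds \emph{assuming the point~$s$ is of $S$-Shafarevich type}. You invoke this hypothesis in passing (``The $S$-Shafarevich hypothesis on $s$ is precisely what forces\ldots'') but never flag that it is an extra assumption not present in the conjecture; without it the argument does not go through, and indeed the paper's converse proposition shows the inner-equidistribution property is essentially equivalent to $S$-Shafarevich.

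Your identification of the ``main obstacle'' is off in two ways. First, the group~$L$ produced by~\cite{0-RZ} is \emph{by definition} a connected $\QQ$-subgroup of~$G$ (this is what the $S$-Ratner class in Definition~\ref{defi S Ratner class} means), so there is no descent problem from~$\QQ_S$ to~$\QQ$ at the level of~$L$. Second, for the Zariski closure statement that Conjecture~\ref{APS} actually asks about, no $S$-Mumford-Tate hypothesis is needed: Proposition~\ref{Ax} (hyperbolic Ax--Lindemann--Weierstra\ss, from~\cite{1-KUY}) says unconditionally that the Zariski closure of any weakly $S$-special real submanifold is a weakly special subvariety. So once inner equidistribution gives you $\Sigma_E\subseteq\bigcup_{Z\in\mathcal{F}}Z$ up to finitely many points, Ax--Lindemann finishes the Zariski statement directly. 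The $S$-Mumford-Tate hypothesis is only used for the \emph{stronger} conclusion that the $Z$'s themselves (hence the topological closure) are already algebraic --- that is part~\eqref{TheoremeMT} of Theorem~\ref{Theoreme1}, handled in Section~\ref{SectionMT} via the criterion that~$h$ normalises~$L$.

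There are also genuine technical steps you skip: one cannot feed~$U_S\subseteq G(\QQ_S)$ directly into~\cite{0-RZ}, since that reference works with semisimple ambient groups. The paper spends Section~\ref{derived} replacing~$U_S$ by~$\Omega=\wt{U_S}\cap G^{\der}(\QQ_S)$ and the translators~$g_n$ by elements of~$G^{\der}(\QQ_S)$, and then Section~\ref{sous section stabilite} verifying the analytic-stability hypothesis of~\cite{0-RZ} --- which is where the $S$-Shafarevich assumption (in its group-theoretic form, Proposition~\ref{charSha}) is actually used. Finally, $\supp(\mu_\infty)$ is a \emph{finite union} of weakly $S$-special real submanifolds, not a single one.
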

In this paper we prove results in the direction of  Conjecture~\ref{APS}. There are some known cases of this conjecture, all are implied by our results. In particular,
\begin{itemize}
 \item that Conjecture~\ref{APS} holds whenever~$(G,X)$ is of abelian type (proved in \cite{1-OrrThesis} by a different method)
 \item that Conjecture~\ref{APS} holds if~$s\in \Sh_K(G,X)$ is a special point (this is a special case of the 'weak Andr\'e-Oort conjecture' proven in \cite{1-KY});
% \item that Conjecture~\ref{APS} follows from the generalised Shafarevich conjecture 
%(cf.~\cite{1-UY1}).
\end{itemize}
 
The conclusion of our main theorem \ref{Theoreme1} holds under the assumption that the $S$-Shafarevich property (\ref{SSha}) holds.
We actually prove equidistribution results that are much stronger than conclusions of Conjecture \ref{APS}.

\subsection{Galois monodromy properties}\label{introgalois}

Let $E \subset \CC$ be a field 
over which the point $s = \overline{(h,t)}$ is defined. 
Let $M \subset G$ denote the Mum\-ford-Tate group of~$h$.
We assume that $E$ contains the reflex field of the Shimura datum $(M,M(\RR) \cdot h)$.

\begin{defi}\label{defiintro}
\begin{enumerate}
\item \label{defimonorep} 
There exists a continuous ``$S$-adic monodromy representation'' 
$$
{\rho_{h,S}} \colon \Gal(\ol E / E) \lto M(\AAA_f) \cap K \cap G_S.
$$
which has the  property that for any $g \in G_S$ and $\sigma \in \Gal(\ol E / E)$,
\begin{equation}\label{Galois carac}
\sigma\left(\ol{(h,tg)}\right) = \ol{(h, \rho(\sigma) \cdot tg)}.
\end{equation}
\item \label{defimonogroup}
The~\emph{$S$-adic monodromy group}, which we will denote by~$U_S$, is defined as the image of~$\rho_{h,S}$:
\[
U_S=\rho_{h,S}\left(\Gal(\ol E / E)\right).
\]
This is a compact $S$-adic Lie subgroup of~$M(\AAA_f)\cap G_S\simeq M(\QQ_S)$.
\item The algebraic~\emph{$S$-adic monodromy group}, denoted~$H_S$, is defined as the algebraic envelope in~$M_{\QQ_S}$ of the subgroup~$U_S$ of~$M(\QQ_S)$:
\[
H_S=\ol{U_S}^{\text{Zar.}}
\]
This is an algebraic group over~$\QQ_S$. %and we have~$H_S(\QQ_\ell)=H_\ell(\QQ_\ell)$.
We write~$H^0_S$ for its neutral component.
\end{enumerate}
\end{defi}
\begin{defi}\label{ProprietesGaloisiennes} Let~$s$ be a point of~$\Sh_K(G,X)$ and~$E$ a field of definition of~$s$ such that the associated $S$-adic monodromy representation~\[\rho:\Gal\left(\ol{E}/E\right)\to M(\AAA_f)\cap G_S\] is defined. 
We say that the point~$s$ over~$E$ is
\begin{enumerate}
\item \label{S-MT}
	 of $S$-\emph{Mumford-Tate type} if~$U_S$ is open in 
	 $M(\AAA_f) \cap G_S$;
\item \label{S-Sha}
	 of \emph{$S$-Shafarevich type} if for every finite extension~$F$ of $E$, there are only finitely many $F$-rational points in~$\Hcal_S(s)$;
\item \label{S-Tate}
	 of \emph{$S$-Tate type} if~$M$ and~$H^0_S$ have the same centraliser in~$G_S$:
$$
Z_{G_S}(H_S^0) = Z_{G_S}(M).
$$
\end{enumerate}
We say that the point~$s$ over~$E$
\begin{enumerate}\setcounter{enumi}{3}
\item \label{S-semisimple} satisfies $S$-\emph{semisimplicity} if $H_S$ is a reductive group;
\item \label{Algebraicity} satisfies~$S$-\emph{algebraicity} if the subgroup~$U_S$ of~$H_S(\QQ_S)$ is open. 
\end{enumerate}
This latter is equivalent to the Lie algebra of~$U_S$ being algebraic (in the sense of Chevalley, cf. \cite[\S7]{BorelLAG}), as a~$\QQ_S$-Lie subalgebra of the Lie algebra of~$M_{\QQ_S}$.
\end{defi}

We will examine the interrelations between these properties in detail 
 in section \ref{SectionGalois}. We will also provide examples where these
various assumptions hold and where they do not.
In \ref{SectionGalois}, we will make apparent that these properties are very heavily dependent on the choice of the field $E$, particularly on whether $E$ is of 
finite type over $\QQ$ or not.

Note that a point of~$S$-Mumford-Tate type is obviously of~$S$-Tate type. These properties hold 
notably for a special point, by definition of a canonical model.
We will prove that $S$-Tate type property~\eqref{S-Tate} implies the algebraicity of~$U_S$ if~$E$ is of finite type. As a consequence, for such~$E$, the~$S$-Mumford-Tate type property~\eqref{S-MT} is equivalent to its \emph{a priori} weaker variant:~$H_S^0=M_{\QQ_S}$.

Our main results are proved under the $S$-Shafarevich hypothesis~\eqref{S-Sha} which as we will see in section \ref{SectionGalois} is implied by
(and in fact not far from being equivalent to) the $S$-Tate hypothesis~\eqref{S-Tate} together with the $S$-semisimplicity assumption~\eqref{S-semisimple}. 
More precisely, the main result of section \ref{SectionGalois} is that the $S$-Shafarevich
property is equivalent to $S$-semisimplicity together with the property that
the centraliser of $U_S$ in $G_S$ is compact modulo the centraliser of $M$ in $G_S$.

\subsection{Notions related to equidistribution.}

Let $s \in \Sh_K(G,X)(E)$. Recall that points of $\Hcal(s)$ are defined over $\ol E$.
% (see \cite{1-UY1}, Section 3). 

To any $z \in \Hcal(s)$ we attach a probability measure with finite support that we define as follows. In what follows, for a point $z$ in $\Sh_K(G,X)$, we refer to the following set as its  Galois orbit:
$$
\Gal\left(\ol E/E\right) \cdot z = \left\{ \sigma(z)\,\middle|\,\sigma \in \Gal\left(\ol E/E\right) \right\}.
$$

\begin{defi} \label{measure}
Let $z = \ol{(x, t)}$ be a point of $\Hcal_S(s)$. We define
\begin{equation}\label{defi mesure}
\mu_z =  \frac{1}{\#\Gal\left(\ol E/E\right) \cdot z}\cdot\sum_{\zeta \in   \Gal\left(\ol E/E\right)\cdot z} \delta_{\zeta}
\end{equation}
where $\delta_{\zeta}$ is the Dirac mass at $\zeta$.
\end{defi}

We introduce a class of groups, arising from the use of Ratner's theorem.

\begin{defi}\label{defi S Ratner class}
A connected $\QQ$-subgroup~$L$ of $G$ is said to be of \emph{$S$-Ratner class} if its Levi subgroups are semisimple and for every~$\QQ$-quasi factor~$F$ of this Levi,~$F(\RR\times \QQ_S)$ is not compact\footnote{Equivalently such factor~$F$ can not be anisotropic simultaneously over~$\RR$ and every of the~$\QQ_v$ for~$v$ in~$S$.}. 

We denote by~$L^\dagger$ the subgroup of~$L(\RR\times\QQ_S)$ generated by the unipotent  elements. Then~$L$ is of $S$-Ratner class if and only if no proper subgroup of~$L$ defined over~$\QQ$ contains~$L^\dagger$.

We denote by~$L(\RR)^{+}$ the neutral component of~$L(\RR)$ with respect to the Archimedean topology. 
\end{defi}

This class is slightly more general\footnote{Actually every Zariski connected subgroup~$L$ with semisimple Levi subgroups will be of~$S$-Ratner class for some~$S$ big enough.} than~\cite[D\'{e}f.~2.1]{1-U} and~\cite[Def.~2.4]{1-UY2}. The latter actually corresponds to the case where~$S$ is empty, in which case we have~$L^\dagger=L(\RR)^+$.

Let~$L$ be subgroup of~$G$ of $S$-Ratner class and~$(h,t)\in X\times G(\AAA_f)$. 
To such data we associate the following subset (actually a real analytic variety):
\[
	Z_{L,(h,t)}=\left\{\ol{(l\cdot h,t)}~\middle|~l\in L(\RR)^+\right\}\subseteq \Sh_K(G,X).
\]
We consider the corresponding generalisation of a notion of weakly special subvariety from~\cite{1-UY2}.

\begin{defi}\label{ws subm}
A \emph{weakly $S$-special real submanifold (or subvariety)}~$Z$ of~$\Sh_K(G,X)$ is a subset of the form~$Z_{L,(h,t)}$ for some subgroup~$L$ of~$G$ of~$S$-Ratner class and~$(h,t)$ in~$X\times G(\AAA_f)$.
\end{defi}
Note that the parametrising map~$L(\RR)^+\xrightarrow{l\mapsto \ol{(l\cdot h,t)}} Z_{L,(h,t)}$ induces a homeomorphism
\[
	\left.
	\left(\Gamma_{tK}\cap L(\RR)^+\right)
	\middle\backslash 
	L(\RR)^+
	\middle/
	(L(\RR)^+\cap K_h)
	\right. 
	\to Z_{L,(h,t)}
\] 
where~$\Gamma_{tK}$ is an arithmetic subgroup of~$G(\QQ)$ depending only on~$tK$
whose  intersection with~$L(\RR)^+$ is an arithmetic subgroup, necessarily a lattice. 
Indeed, there is a canonical right $L(\RR)^+$-invariant probability measure on~$\left(\Gamma_{tK}\cap L(\RR)^+\right)\backslash L(\RR)^+$. We denote~$\mu_{L,(h,t)}$
its direct image in~$Z_{L,(h,t)}$, viewed as a Borel probability measure on~$\Sh_K(G,X)$.
\begin{defi}\label{canonical} Let~$Z$ be a weakly $S$-special real submanifold~$Z$ inside of~$\Sh_K(G,X)$. The \emph{canonical probability~$\mu_Z$ with support~$Z$}, is a measure of the form~$\mu_{L,(h,t)}$ with support~$Z=Z_{L,(h,t)}$.
\end{defi}

The next lemma shows that $\mu_{L,(h,t)}$ is independent of choices.

\begin{lem}
The canonical probability measure~$\mu_Z$ is well defined: it depends only on $Z$ and not on the choice of~$(L,(h,t))$. 
\end{lem}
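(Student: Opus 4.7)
The plan is to prove $\mu_{L,(h,t)} = \mu_{L',(h',t')}$ whenever $Z := Z_{L,(h,t)} = Z_{L',(h',t')}$, by reducing both measures to the canonical Riemannian volume on a common lifted orbit in $X$.

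First I would align the two parametrizations. Since $\ol{(h',t')} \in Z$, one can write $(h',t') = (\gamma l_0 h,\gamma t k)$ for some $\gamma \in G(\QQ)$, $l_0 \in L(\RR)^+$, $k \in K$. Replacing $(L',h',t')$ by $(\gamma^{-1}L'\gamma,\, l_0 h,\, t)$ preserves both $Z$ and the associated measure: $\gamma^{-1}L'\gamma$ remains a $\QQ$-subgroup of $S$-Ratner class, and Haar measure on the unimodular group $L'(\RR)^+$ is conjugation-invariant. After this reduction I may assume $t = t'$ and $h' = l_0 h$ with $l_0 \in L(\RR)^+$.

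Next I identify the lifted orbit in $X$. By neatness of $K$, the map $X\times\{t\}\to\Sh_K(G,X)$ is a local homeomorphism onto its image, so the connected component through $h$ of the preimage of $Z$ in $X$ is a smooth real-analytic submanifold. This component coincides \emph{locally} both with $L(\RR)^+\cdot h$ (by definition of $Z_{L,(h,t)}$) and with $L'(\RR)^+\cdot l_0 h$ (by definition of $Z_{L',(l_0 h,t)}$); both orbits being connected, they are globally equal to a common submanifold
\[
Y := L(\RR)^+\cdot h = L'(\RR)^+\cdot l_0 h\subseteq X.
\]
Since $h\in Y$ one has $L'(\RR)^+\cdot h = Y$ as well, and both $L(\RR)^+$ and $L'(\RR)^+$ act transitively on $Y$ with compact isotropy $L(\RR)^+\cap K_h$ and $L'(\RR)^+\cap K_h$ respectively.

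The core of the argument is then uniqueness of invariant measures. The Hermitian symmetric space $X$ carries a canonical $G(\RR)$-invariant Riemannian metric whose restriction to $Y$ is preserved by every subgroup of $G(\RR)$ leaving $Y$ invariant, in particular by $L(\RR)^+$ and $L'(\RR)^+$; hence the associated Riemannian volume $\omega_Y$ is invariant under both. Because $L(\RR)^+$ is unimodular (a property of $S$-Ratner class groups) and acts transitively on $Y$ with compact stabilizer, the cone of $L(\RR)^+$-invariant Radon measures on $Y$ is one-dimensional. Thus the pushforward $\nu^L$ of Haar from $L(\RR)^+$ to $Y$ is a scalar multiple of $\omega_Y$, and likewise $\nu^{L'}$ is a scalar multiple of $\omega_Y$; the two are proportional on $Y$. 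Descending along the quotient map $Y\to Z$ (which depends only on $\Gamma_{tK}$ and $K_h$, not on $L$ or $L'$) produces finite proportional measures on $Z$, which after normalization as probability measures coincide. This yields $\mu_{L,(h,t)} = \mu_{L',(h',t')}$.

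The delicate point is the second step: promoting a local identification of the $L$- and $L'$-orbits through $h$ to a global equality in $X$. This rests on the smoothness of $Z$ together with connectedness of the orbits, and uses the neatness of $K$ to guarantee that the $G(\QQ)$-action on $X\times G(\AAA_f)/K$ is free and $\Sh_K(G,X)$ is a manifold in a neighborhood of $Z$.
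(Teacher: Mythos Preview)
Your core idea---compare both measures to the Riemannian volume on the lift to~$X$---is exactly the paper's. The execution differs, and your version has a gap in the second step.

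You claim that the connected component through~$h$ of the preimage of~$Z$ in~$X$ equals~$L(\RR)^+\cdot h$ and also equals~$L'(\RR)^+\cdot l_0 h$. Your argument (``local coincidence plus connectedness'') only shows each orbit is \emph{open} in the component; it does not show closedness, so equality does not follow. In fact the full preimage of~$Z$ is~$\Gamma_{tK}\cdot L(\RR)^+\cdot h$, and distinct~$\Gamma_{tK}$-translates of the orbit can meet and thereby lie in the same component. More to the point, the two orbits~$L(\RR)^+\cdot h$ and~$L'(\RR)^+\cdot l_0 h$ need not be literally equal as subsets of~$X$: the argument of the subsequent lemma in the paper only yields~$L(\RR)^+\cdot h=\gamma\,L'(\RR)^+\cdot l_0 h$ for \emph{some}~$\gamma\in\Gamma_{tK}$, not necessarily~$\gamma=e$. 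So the global identification~$Y=L(\RR)^+\cdot h=L'(\RR)^+\cdot l_0 h$ on which your third step relies is unjustified and possibly false.

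The paper sidesteps this entirely. It never tries to match the orbits in~$X$. Instead it observes that the lifted measure~$\mu_{\wt Z}$ on the full preimage~$\wt Z\subset X\times G(\AAA_f)/K$ is, \emph{locally}, proportional to the Riemannian volume form induced from a~$G(\RR)$-invariant metric on~$X$: on any single orbit~$L(\RR)^+\cdot x$ the pushed Haar measure and the Riemannian volume are both~$L(\RR)^+$-invariant, hence agree up to a scalar. This local description is manifestly independent of the choice of~$(L,(h,t))$. Since~$Z$ is connected, the locally constant scaling factor is globally constant and fixed by the normalisation~$\mu_Z(Z)=1$. Your argument is easily repaired by adopting this local viewpoint and dropping the attempt to globalise.
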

\begin{proof}
Firstly note that~$Z$ is almost everywhere locally isomorphic to its 
inverse image~$\wt{Z}$ in~$X\times G(\AAA_f)/K$, and~$\mu_Z$ is determined by the corresponding locally finite measure~$\mu_{\wt{Z}}$ on~$\wt{Z}$.
It will suffice to show that~$\mu_{\wt{Z}}$ is intrinsic up to a locally constant scaling factor, the latter being characterised by~$\mu_Z$ being a probability and~$Z$ being connected. Endow~$X$ with a~$G(\RR)$-invariant Riemannian structure, which we extend to~$X\times G(\AAA_f)/K$. Then~$\mu_{\wt{Z}}$ is locally proportional to the volume form of the induced Riemannian structure on~$\wt{Z}$. It suffices to check it for an orbit~$L(\RR)^+\cdot x$ in~$X$.
But the~$L(\RR)^+$-invariant measure on~$L(\RR)^+\cdot x$, the Haar measure,
is unique up a factor, and, as~$L(\RR)^+\leq G(\RR)$ acts by isometries on~$X$, the
Riemannian volume form on~$L(\RR)^+\cdot x$ is a Haar measure.
\end{proof}

\subsubsection{Weakly special subvarieties} The first  statement below is a slight generalisation of~\cite[Prop.~2.6]{1-UY2}. This in fact is a direct consequence of the hyperbolic Ax-Lindemann-Weierstrass theorem proven in \cite{1-KUY}.

\begin{prop}[\cite{1-KUY}, \cite{1-UY}]\label{Ax}
 The Zariski closure of a weakly $S$-special real submanifold 
is a weakly special subvariety.
\end{prop}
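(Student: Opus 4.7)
The plan is to reduce the statement directly to the hyperbolic Ax-Lindemann-Weierstrass theorem of \cite{1-KUY}, as the parenthetical hint in the excerpt suggests. First, I would fix the relevant connected component $X^+$ of $X$ meeting the orbit $L(\RR)^+\cdot h$, and consider the uniformisation $\pi\colon X^+\to \Gamma_{tK}\backslash X^+\hookrightarrow \Sh_K(G,X)$, where $\Gamma_{tK}$ is the arithmetic subgroup from the excerpt. Up to this finite (locally closed) embedding, the submanifold $Z=Z_{L,(h,t)}$ is precisely the image $\pi(L(\RR)^+\cdot h)$, and its Zariski closure in $\Sh_K(G,X)$ equals the Zariski closure of $\pi(L(\RR)^+\cdot h)$ inside the component determined by $tK$.

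Next I would analyse the orbit $Y:=L(\RR)^+\cdot h\subseteq X^+$ which is the input for ALW. Because $L(\RR)^+\leq G(\RR)^+$ is a real-algebraic subgroup acting on the bounded symmetric domain $X^+$ through a real-algebraic action, $Y$ is a connected real semi-algebraic subset of $X^+$ in the sense used by \cite{1-KUY}; moreover, as already invoked in the proof of the preceding lemma, $L(\RR)^+$ acts by isometries of any $G(\RR)^+$-invariant Riemannian structure on $X^+$, so $Y$ is a totally geodesic real-analytic submanifold. These are exactly the hypotheses under which the hyperbolic Ax-Lindemann-Weierstrass theorem applies.

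Applying ALW, I get that the Zariski closure of $\pi(Y)$ in $\Sh_K(G,X)$ is a finite union of weakly special subvarieties (in the sense of \cite{1-Moonen}, \cite{1-UY2}). Since $Y$ is connected and $\pi$ is continuous, $\pi(Y)$ is connected, so the Zariski closure is irreducible and hence a single weakly special subvariety, which is the conclusion.

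The step requiring actual care, rather than routine invocation, is reconciling the $S$-Ratner class of $L$ (which permits $\QQ$-quasi-factors compact over $\RR$ but non-compact at some $v\in S$) with the narrower class of subgroups underlying classical weakly special subvarieties (no $\RR$-compact Levi factors, as in \cite[Def.~2.4]{1-UY2}). The content of ALW is precisely that the output weakly special subvariety is associated to a possibly strictly larger $\QQ$-subgroup $L'\supseteq L$ in the classical class, which absorbs the real-compact quasi-factors of $L$ without altering the Zariski closure. In other words, the Zariski closure of $L(\RR)^+\cdot h$ in $X^+$ is of the form $L'(\RR)^+\cdot h$ for such an $L'$, and this identification is where the ALW theorem is doing all the nontrivial work.
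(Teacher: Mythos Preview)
Your proposal is correct and follows exactly the approach the paper indicates: the proposition is stated as ``a direct consequence of the hyperbolic Ax-Lindemann-Weierstrass theorem proven in \cite{1-KUY}'' with no further argument given, and you have filled in precisely that deduction. Two minor remarks: the totally geodesic property is not part of the hypotheses of ALW (semi-algebraicity of the orbit $L(\RR)^+\cdot h$ in the Harish-Chandra realisation is what matters), and ``connected $\Rightarrow$ irreducible Zariski closure'' is not valid in general---the correct reason here is that $L(\RR)^+$ is Zariski dense in the connected $\QQ$-group $L$, so $L(\RR)^+\cdot h$ is Zariski dense in the irreducible orbit $L\cdot h$.
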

The following is a generalisation of an observation\footnote{We quote:~``In the case where~$h$ viewed as a morphism from~$\SSS$ to~$G_\RR$ factors through~$H_{\RR}$, the corresponding real weakly special subvariety has Hermitian structure and in fact is a weakly special subvariety in the usual sense''.} of~\cite[p.~2]{1-UY2}.

\begin{prop}Let~$Z=Z_{L,(h,t)}$ be a weakly $S$-special real submanifold of~$\Sh_K(G,X)$.
If~$L$ is normalised by~$h$, then~$Z$ a weakly special subvariety: it is Zariski closed.
\end{prop}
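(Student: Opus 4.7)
The plan is to identify $Z=Z_{L,(h,t)}$ as a weakly special subvariety by enlarging $L$ to a $\QQ$-subgroup through which $h$ factors.

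The normaliser $N_G(L)$ is a $\QQ$-subgroup of $G$, and the hypothesis says $h(\SSS)\subseteq N_G(L)(\RR)$. By minimality of the Mumford--Tate group $M:=\MT(h)$ (the smallest $\QQ$-subgroup of $G$ such that $h$ factors through its base change to $\RR$), we get $M\subseteq N_G(L)$, so $M$ normalises $L$; hence $L^\bullet:=L\cdot M$ is a connected $\QQ$-subgroup of $G$ in which $L$ is normal, and through which $h$ factors. Since $h$ factors through $L^\bullet$, the $L^\bullet(\RR)$-conjugacy class of $h$ defines a Shimura sub-datum $(L^\bullet,X^\bullet)\hookrightarrow (G,X)$, and the associated subset $Z^\bullet:=Z_{L^\bullet,(h,t)}$ is a weakly special subvariety of $\Sh_K(G,X)$ in the standard sense; this is exactly the situation covered by the footnote of the proposition, applied to $L^\bullet$ in place of $L$.

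Next, I would exhibit $Z$ as (a component of) a fiber of the Shimura-variety morphism induced by the quotient-of-Shimura-data map $(L^\bullet,X^\bullet)\to (L^\bullet/L,\,X_{L^\bullet/L})$, which is well defined because $L\triangleleft L^\bullet$. Unfolding: $L^\bullet(\RR)^+\cdot h = L(\RR)^+\cdot M(\RR)^+\cdot h$ projects onto $M(\RR)^+\cdot h$ modulo $L\cap M$, and the fiber of this projection over the class of $h$ is precisely $L(\RR)^+\cdot h$. Fibers of Shimura-variety morphisms coming from quotient-of-data maps are weakly special subvarieties of the source; pushing forward through the weakly special inclusion $Z^\bullet\hookrightarrow\Sh_K(G,X)$, we obtain that $Z$ is a weakly special subvariety of $\Sh_K(G,X)$, hence Zariski closed.

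The main obstacle is verifying that $(L^\bullet,L^\bullet(\RR)^+\cdot h)$ and $(L^\bullet/L,\,X_{L^\bullet/L})$ are genuine Shimura (sub-)data: SV1 and SV3 are inherited from $(G,X)$, but SV2 (the Cartan-involution condition) can require care, especially when $L$ has non-trivial unipotent radical. To bypass this technicality, one can instead argue analytically: (i) $h$ normalising $L$ makes $h(\SSS)$ preserve the Hodge-type decomposition of $\l_\CC$ inherited from that of $\g_\CC$, so $T_h(L(\RR)^+\cdot h)\subseteq T_h X$ is a complex subspace, and $L(\RR)^+\cdot h$ is a complex submanifold of $X$; (ii) by Proposition Ax, $\overline{Z}^{\mathrm{Zar}}$ is a weakly special subvariety $Z'$; (iii) $Z\subseteq Z'$ with equal complex dimension (comparing tangent spaces at $\overline{(h,t)}$), and $Z$ is topologically closed in $\Sh_K$ (thanks to the closedness of $L(\RR)^+\cdot h$ in $X$, the compactness of the stabiliser $L(\RR)^+\cap K_h$, and the properness of the relevant arithmetic quotient map), forcing $Z=Z'$ and thus $Z$ Zariski closed.
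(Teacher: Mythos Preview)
Your strategy is on the right track, but you are missing the single observation that makes everything work and that the paper's proof (given later, as Proposition~\ref{boutonrouge}) establishes at the very first step: since $h(\sqrt{-1})$ induces a Cartan involution of $G^{\mathrm{ad}}_\RR$ and normalises $L_\RR$, the group $L_\RR$ is automatically \emph{reductive}. This dissolves your stated ``main obstacle'' about a possible unipotent radical of $L$---there is none---and is precisely what makes the Shimura-datum verifications go through.

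With reductivity of $L$ in hand, the paper's route also differs from your $L^\bullet=L\cdot M$. It works instead with $N:=N_G(L)^0$, which by reductivity of $L$ decomposes as $Z_G(L)\cdot L$; then $h$ factors through $N_\RR$, and after discarding $\RR$-compact $\QQ$-factors (via lemmas of Ullmo) one obtains a Shimura sub-datum $(H,X_H)$ whose adjoint group is a genuine \emph{product} $Z^{nc,\mathrm{ad}}\times L^{nc,\mathrm{ad}}$, so that the image of $L(\RR)^+\cdot h$ is visibly a fibre $\{h_1\}\times X_2$. Your $L\cdot M$ variant could presumably be completed once reductivity of $L$ is known, but the normaliser gives this product structure for free. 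Regarding your analytic bypass: the paper's alternative proof is close in spirit but does not invoke Ax--Lindemann followed by a dimension comparison; it shows directly that $L(\RR)^+\cdot h$ is a Hermitian symmetric subspace (complex tangent space via $h(U(1))$, geodesic symmetry via $h(i)$), and then appeals to Baily--Borel together with Borel's extension theorem to get algebraicity of the arithmetic quotient and of the inclusion map. Your step (iii) is not complete as written: the inclusion $Z\subseteq Z'$ with $Z$ closed complex-analytic does not by itself yield $\dim_\CC Z=\dim_\CC Z'$ without exactly such an algebraicity input.
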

%Let us note that the sufficient condition from the proposition is not necessary.
\subsection{Main theorems} 
We may finally state our main theorems, which give a stronger form of Conjecture~\ref{APS}, at the cost of~$S$-Tate type assumption.

We now state our first main result.

\begin{teo}[Inner Equidistributional $S$-Andr\'{e}-Pink-Zannier] \label{Theoreme1}
Let~$s$ be a point of ~$\Sh_K(G,X)$ defined over a field $E$ such that 
\item the point $s$ is of  $S$-Shafarevich type.

Let~$(s_n)_{n\geq0}$ be a sequence of points in the~$S$-Hecke orbit~$\Hcal_S(s)$ of~$s$, and denote $(\mu_n)_{n\geq0}$ the sequence 
of measures attached to the~$s_n$ as in definition \ref{measure}.

After possibly extracting a subsequence and replacing $E$ by a finite extension, 
there exists a finite set~$\cF$ of weakly $S$-special real submanifolds $Z$ with canonical probability measure~$\mu_Z$  (as defined in Definitions~\ref{ws subm} and~\ref{canonical})  such that 
\begin{enumerate}
 \item  \label{TheoremeLimite}	 the sequence~$(\mu_n)_{n\geq 0}$ tightly converges to a barycentre~$\mu_\infty$ of the~$\mu_Z$,
 \item and for all $n\geq 0$, we have~${\rm Supp}(\mu_n)\subseteq{\rm Supp}(\mu_\infty)=\bigcup_{Z\in\cF}  Z$. \label{TheoremeInner}	
\end{enumerate}

\begin{enumerate}\setcounter{enumi}{2}
 \item If $s$ is of $S$-Mumford-Tate type,
 then every $Z$ in~$\cF$ is a weakly special subvariety.\label{TheoremeMT}
\end{enumerate}
\end{teo}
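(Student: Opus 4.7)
The plan is to reduce Theorem~\ref{Theoreme1} to the main equidistribution theorem of the companion paper of Richard--Zamojski and then translate its dynamical output back into the geometry of~$\Sh_K(G,X)$. Writing~$s_n=\ol{(h,tg_n)}$ with~$g_n\in G_S$, the Galois description~\eqref{Galois carac} identifies the Galois orbit of~$s_n$ with $\{\ol{(h,u\,tg_n)}:u\in U_S\}$, so~$\mu_n$ is the direct image, under $u\mapsto\ol{(h,u\,tg_n)}$, of the Haar probability on~$U_S$ (divided by a finite stabiliser). On a suitable $S$-arithmetic homogeneous space attached to $(G,X,K)$ -- incorporating both the Archimedean factor $G(\RR)$ (acting on $X$) and the~$S$-adic factor $G_S$ -- this measure takes the form $\mu_\Omega\cdot g_n$ with~$\Omega=U_S$ an open subset of the compact~$S$-adic Lie group~$H_S(\QQ_S)$; openness is the~$S$-algebraicity property, which by Section~\ref{SectionGalois} follows from the~$S$-Shafarevich hypothesis. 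This places us in the setup of Richard--Zamojski.

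The~$S$-Shafarevich hypothesis, characterised in Section~\ref{SectionGalois} by compactness of~$Z_{G_S}(U_S)$ modulo~$Z_{G_S}(M)$, supplies the non-escape-of-mass condition needed to apply their main theorem. After extraction one obtains a limit of the form $\mu_\infty=\mu_{L^{++}}\cdot n_\infty\star\mu_\Omega\cdot g_\infty$, for some~$\QQ$-subgroup~$L\subseteq G$ of $S$-Ratner class and elements $n_\infty,g_\infty$, together with the inner containment $\supp(\mu_n)\subseteq\supp(\mu_\infty)$ eventually. The Archimedean component of $L^{++}$ is an open subgroup of $L(\RR)^+$; pushed to $\Sh_K(G,X)$ the support is therefore a union of $L(\RR)^+$-orbits of the form $Z_{L,(h,t')}$, indexed by the finitely many classes of $u\in U_S$ modulo its open intersection with $N_{G}(L)(\QQ_S)$ (compactness of $U_S$ forcing finiteness). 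This yields the finite family $\cF$ of weakly~$S$-special real submanifolds of Definition~\ref{ws subm}, and $\mu_\infty$ decomposes accordingly as the barycentre of the canonical probabilities $\mu_Z$, establishing assertions~\ref{TheoremeLimite} and~\ref{TheoremeInner}.

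For assertion~\ref{TheoremeMT}, under $S$-Mumford--Tate type we have $H_S^0=M_{\QQ_S}$; a~$\QQ$-descent argument, using that $L$ is a $\QQ$-group and must contain (after the normalisations absorbed into $n_\infty$ and $g_\infty$) the algebraic envelope of the monodromy, forces $L$ to contain the Mumford--Tate group $M$. Since $h$ factors through $M(\RR)\subseteq L(\RR)$ it factors through $L$ itself, whence $L$ is normalised by $h$. The second proposition stated just above the theorem then shows that each $Z\in\cF$ is Zariski closed, i.e.\ a weakly special subvariety.

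The main obstacle is the geometric identification carried out in the second paragraph: matching the abstract datum $(L,L^{++},n_\infty,g_\infty,\Omega)$ produced by Richard--Zamojski with an explicit finite family of weakly~$S$-special real submanifolds in~$\Sh_K(G,X)$, and verifying that the convolution $\mu_{L^{++}}\star\mu_\Omega$ gives rise to a genuine finite barycentre of the canonical probabilities $\mu_Z$. A secondary technical point, arising in the Mumford--Tate step, is the~$\QQ$-descent from the~$\QQ_S$-equality $H_S^0=M_{\QQ_S}$ to the~$\QQ$-containment $L\supseteq M$: one must control how the~$\QQ_S$-algebraic structure of the monodromy is transferred to the~$\QQ$-structure of~$L$ produced by the equidistribution theorem.
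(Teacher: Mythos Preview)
Your overall strategy---lift the~$\mu_n$ to an $S$-arithmetic quotient, apply Richard--Zamojski, push back down---is the paper's. But two of your steps are wrong, and a third is seriously underspecified.

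\textbf{The $S$-algebraicity claim.} You say that openness of~$U_S$ in~$H_S(\QQ_S)$ follows from the $S$-Shafarevich hypothesis. It does not: Section~\ref{SectionGalois} gives an explicit counterexample (the Liouville-number construction) where $S$-semisimplicity holds but $S$-algebraicity fails. Fortunately algebraicity is not what Richard--Zamojski require. Their~$\Omega$ is a bounded open subset of an analytic $\QQ_S$-Lie subgroup~$H$ whose \emph{Zariski closure} is reductive; one takes~$H=\Omega$ to be (the image in~$G^{\der}$ of) the compact group~$U_S$ itself, and reductivity of~$\overline{U_S}^{\rm Zar}=H_S$ is $S$-semisimplicity, which \emph{does} follow from $S$-Shafarevich via Proposition~\ref{charSha}. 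You also gloss over the passage from~$G$ to~$G^{\der}$ and the verification of the analytic-stability hypothesis (Section~\ref{Reduction}), which is where the compactness of~$Z_{G_S}(U_S)/Z_{G_S}(M)$ is actually used.

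\textbf{The Mumford--Tate step.} Your argument is wrong. You claim that under $S$-MT the Ratner-class group~$L$ must contain~$M$, so that~$h$ factors through~$L_\RR$. There is no such containment: for the constant sequence~$s_n=s$ one has~$L=\{e\}$ while~$M$ can be large. What the paper proves (Section~\ref{SectionMT}) is that~$L$ is \emph{normalised} by~$M$. Under $S$-MT the algebraic monodromy equals~$M_{\QQ_S}$, hence is definable over~$\QQ$; the intersection~$L^H=\bigcap_{h\in M} hLh^{-1}$ is then also a $\QQ$-group, and the focusing criterion---together with the $\QQ$-anisotropy of~$Z_{G^{\der}}(M)$---shows that the supports of the~$\mu_n$ already lie in~$\Gamma\backslash\Gamma L^H U_S$, forcing~$\dim L^H\ge\dim L$ and hence~$L^H=L$. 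Thus~$h(\SSS)\subseteq M(\RR)$ normalises~$L_\RR$, and Proposition~\ref{boutonrouge} yields that each~$Z$ is weakly special. Your containment~$L\supseteq M$ is both unproved and unnecessary.

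\textbf{The inner containment.} You assert that Richard--Zamojski's theorem delivers~${\rm Supp}(\mu_n)\subseteq{\rm Supp}(\mu_\infty)$ directly. It does not; what it gives is the limit formula and the focusing factorisation~$g_n'=l_n f_n b_n$ with~$l_n\in\bigcap_\omega\omega L\omega^{-1}$, $f_n\in N\cap Z_{G_S}(\Omega)$, and~$b_n$ bounded. Converting this into a support inclusion at the level of~$\Sh_K(G,X)$ is the content of Section~\ref{SectionRZ}: one absorbs~$b_n$ into~$K$ at finite level, and---this is a second, essential use of $S$-Shafarevich---one replaces~$f_n$ by an element of~$Z_{G^{\der}}(M)\cap N$, which being $\QQ$-anisotropic has cocompact arithmetic quotient and can be absorbed into~$\Gamma$ up to a bounded correction. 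Only then does one reduce to the ``fundamental case''~$g_n'\in M^\ddagger$, where the inclusion is immediate.
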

\noindent We may refer to property~\eqref{TheoremeLimite} as ``equidistribution'' property -- a shorthand for a convergence of measures --
and to property~\eqref{TheoremeInner} by saying that this equidistribution is ``inner''. 

Note that conclusion~\eqref{TheoremeMT} applies to special points, in which case every~$Z$ in~$\cF$ is actually a \emph{special} subvariety.

We deduce from theorem  \ref{Theoreme1} the following theorem, which is more directly 
related to the Andr\'{e}-Pink-Zannier conjecture. Let us stress that, in the deduction process, 
we need not only~\eqref{TheoremeLimite}, but also~\eqref{TheoremeInner}, from 
Theorem~\ref{Theoreme1}.

\begin{teo}[Topological and Zariski $S$-Andr\'{e}-Pink-Zannier] \label{Theoreme2}
Let~$s$ be a point of a Shimura variety~$\Sh_K(G,X)$ defined over a field~$E\subseteq \CC$.
Consider a subset $\Sigma\subseteq \Hcal_S(s)$ of its~$S$-Hecke orbit and denote
$$
\Sigma_E = \Gal\left(\ol E/E\right)\cdot \Sigma 
=
\left\{ \sigma(x)~\middle|~\sigma \in \Gal(\ol E/E), x \in \Sigma \right\}.
$$
Then
\begin{enumerate}
\item \label{Theoreme2-1}
 If $s$ is of $S$-Shafarevich type then the topological closure of~$\Sigma_E$
 is a finite union of weakly $S$-special real submanifolds;

 Furthermore, the Zariski closure of $\Sigma$ is a finite union of weakly special subvarieties.
 
\item \label{Theoreme2-2}
If $s$ is of $S$-Mumford-Tate type, then the  topological closure of~$\Sigma_E$ is a finite union of weakly special subvarieties;

\end{enumerate}
\end{teo}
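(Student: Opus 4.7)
The plan is to derive both assertions of Theorem~\ref{Theoreme2} from Theorem~\ref{Theoreme1} by combining the innerness property~\eqref{TheoremeInner} with the Ax-Lindemann Proposition~\ref{Ax}, through a Noetherian iteration. The innerness is the crucial ingredient: without it we would only control a tail of each sequence, which would not suffice to describe the Zariski or topological closure of $\Sigma$ itself.

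For the Zariski part of~\eqref{Theoreme2-1}, I would let $Y=\overline{\Sigma}^{\mathrm{Zar}}$, decompose it into its finitely many irreducible components, and treat one component $Y_0$ at a time. Choosing a sequence $(s_n)_{n\ge 0}$ in $\Sigma\cap Y_0$ that is Zariski dense in $Y_0$ (possible since $\Sh_K(G,X)$ is quasi-projective and second-countable in the Zariski topology), applying Theorem~\ref{Theoreme1}, and invoking~\eqref{TheoremeInner}, places the entire extracted subsequence and all its Galois conjugates inside a finite union $\bigcup_{Z\in\mathcal F}Z$ of weakly $S$-special real submanifolds. By Proposition~\ref{Ax} each $\overline{Z}^{\mathrm{Zar}}$ is a weakly special subvariety, so $\Sigma\cap Y_0$ is essentially contained in a finite union of weakly special subvarieties. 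If the first extracted subsequence happens not to be Zariski dense in $Y_0$, I would iterate, re-applying Theorem~\ref{Theoreme1} to the points of $\Sigma\cap Y_0$ lying outside the Zariski closure of the previous extraction; by Noetherianity of $Y_0$ this ascending chain of proper closed subsets stabilizes, yielding a finite cover of $\Sigma\cap Y_0$ by weakly special subvarieties, which by irreducibility and maximality identifies $Y_0$ with one of them.

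For the topological part of~\eqref{Theoreme2-1}, set $C=\overline{\Sigma_E}^{\mathrm{top}}$. I would run an analogous iteration at the level of weakly $S$-special real submanifolds: enumerate $\Sigma$ (countable since $\Sigma\subseteq\Hcal_S(s)$), apply Theorem~\ref{Theoreme1} to produce a finite union $W_1\subseteq C$ containing an extracted subsequence together with its Galois conjugates, then iterate on $\Sigma\setminus W_1$. The \textbf{main obstacle} is showing that this iteration terminates after finitely many steps, giving $\Sigma\subseteq W_1\cup\cdots\cup W_N$ and hence $C=W_1\cup\cdots\cup W_N$. My plan for termination combines the Zariski statement just proved with the structural results of~\cite{0-RZ}: the ascending chain $\overline{W_1\cup\cdots\cup W_n}^{\mathrm{Zar}}$ stabilizes by Noetherianity inside $Y$, so eventually every new $W_n$ is confined to a fixed finite union $Y^*$ of weakly special subvarieties; within $Y^*$, the subgroups $L$ of $S$-Ratner class that can arise in Theorem~\ref{Theoreme1} form a finite family up to $G(\Q)$-conjugation, and the $S$-Shafarevich assumption controls the base points $(h,t)$ sufficiently to ensure that only finitely many distinct $Z$ can appear, forcing the iteration to halt.

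Finally, assertion~\eqref{Theoreme2-2} is immediate from Theorem~\ref{Theoreme1}\eqref{TheoremeMT}: under the $S$-Mumford-Tate hypothesis, each $Z$ produced during the iteration above is already a weakly special subvariety, so the limit cover $W_1\cup\cdots\cup W_N$ is automatically a finite union of weakly special subvarieties.
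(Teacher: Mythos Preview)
Your overall framing is right — the deduction hinges on the innerness~\eqref{TheoremeInner} — but the termination argument you sketch for the topological closure is a genuine gap, and the paper handles it quite differently.

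You propose to show termination by arguing that ``the subgroups~$L$ of $S$-Ratner class that can arise in Theorem~\ref{Theoreme1} form a finite family up to~$G(\Q)$-conjugation'' and that ``the $S$-Shafarevich assumption controls the base points $(h,t)$ sufficiently to ensure that only finitely many distinct~$Z$ can appear''. Neither of these claims is established anywhere in the paper, and in fact neither is true in the generality you need: infinitely many non-conjugate subgroups of $S$-Ratner class can sit inside a fixed weakly special subvariety, and the Shafarevich hypothesis constrains Galois orbits, not the set of weakly $S$-special submanifolds through points of the Hecke orbit. Even granting Noetherian stabilisation of the Zariski closures, you would still have to show that only finitely many weakly $S$-special real submanifolds can occur inside the stabilised~$Y^*$, and your sketch gives no mechanism for that.

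The paper bypasses this entirely. It first develops a small dimension theory for weakly $S$-special real submanifolds (\S\ref{subsec dimension}): strict inclusions strictly increase dimension, so chains have length at most~$1+\dim G$, and the canonical measures satisfy an orthogonality property $\mu_{Z_1}(Z_2)\neq 0 \Rightarrow Z_1\subseteq Z_2$. It then considers the \emph{entire} collection~$\mathcal Z$ of submanifolds~$Z$ whose~$\mu_Z$ appears in \emph{some} subsequential limit, and its set~$\mathcal M$ of maximal elements. Finiteness of~$\mathcal M$ is proved by a diagonal/contradiction argument: if~$\mathcal M=(M_n)_{n\geq 0}$ were infinite, the orthogonality gives~$\mu_{M_n}\big(\Sh_K(G,X)\smallsetminus\bigcup_{i<n}M_i\big)=1$, so one can extract a new sequence~$(\nu_n)$ of Galois-orbit measures with $\mathrm{Supp}(\nu_n)\not\subseteq M_0\cup\cdots\cup M_{n-1}$; re-applying Theorem~\ref{Theoreme1} to this sequence forces the supports into a \emph{single} finite union from~$\mathcal S$, a contradiction. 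Once~$\mathcal M$ is finite, innerness forces all but finitely many points of~$\Sigma_E$ into~$\bigcup_{Z\in\mathcal M}Z$, giving the topological statement directly; the Zariski statement then follows in one line from Proposition~\ref{Ax}. Note the order is reversed from yours: topological first, Zariski as a corollary — and the termination input is purely measure-theoretic and internal to Theorem~\ref{Theoreme1}, with no appeal to finiteness of Ratner-class groups or to structure theory of the Hecke orbit.
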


We will prove that~\eqref{Theoreme2-1} holds whenever
~\eqref{TheoremeLimite} and~\eqref{TheoremeInner} from Theorem~\ref{Theoreme1} hold for sequences in~$\Sigma$. The second statement will then follow of Proposition \ref{Ax}.

When the $S$-Mumford-Tate property holds, the conclusion ~\eqref{TheoremeMT} from Theorem~\ref{Theoreme1} will imply
~\eqref{Theoreme2-2}. 

\subsubsection{A converse statement.}

Let us end with a statement emphasizing the importance of property~\eqref{TheoremeInner} of Theorem~\ref{Theoreme1}. This statement 
makes precise the idea that property~\eqref{TheoremeInner} of Theorem~\ref{Theoreme1} implies the~$S$-Sha\-fa\-rev\-ich property.

This shows that the $S$-Shafarevich property assumption is essential and optimal.

\begin{prop} Let~$s$ be a point in a Shimura variety~$\Sh_K(G,X)$ defined over a field~$E$ and let~$\Hcal_S(s)$ be its~$S$-Hecke orbit.

Assume that for any sequence~$(s_n)_{n\geq0}$ in~$\Hcal_S(s)$, for any 
finite extension~$F$ of~$E$, there is an extracted 
subsequence for which the associated measure~$\mu_n$ converges weakly
to a limit~$\mu_\infty$ in such a way that 
\begin{equation}\label{innerprop}
\forall n\geq 0, {\rm Supp}(\mu_n)\subseteq {\rm Supp}(\mu_\infty).
\end{equation}

Then~$s$ is of~$S$-Shafarevich type.
\end{prop}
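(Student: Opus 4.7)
The plan is to argue by contraposition. Suppose $s$ fails to be of $S$-Shafarevich type: by Definition~\ref{SSha}, there exists a finite extension $F$ of $E$ such that $\Hcal_S(s)$ contains infinitely many $F$-rational points. After replacing $F$ by its Galois closure over $E$, one may choose a sequence $(s_n)_{n\geq 0}$ of pairwise distinct $F$-rational points in $\Hcal_S(s)$, which we feed into the hypothesis of the proposition together with this extension $F$. This produces, after extraction (still indexed by $n$), a weak limit $\mu_\infty$ of the associated measures $\mu_n$, satisfying ${\rm Supp}(\mu_n)\subseteq {\rm Supp}(\mu_\infty)$ for every $n$.

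The crucial observation is that since $s_n$ is $F$-rational, its $\Gal(\ol E/F)$-orbit reduces to $\{s_n\}$, and hence its $\Gal(\ol E/E)$-orbit has cardinality at most $N:=[F:E]$. In particular each $\mu_n$ is a probability measure supported on at most $N$ points (and, if ``the associated measure'' is understood relative to the new base $F$ as in the extraction statement of Theorem~\ref{Theoreme1}, then simply $\mu_n=\delta_{s_n}$). A short compactness argument then shows that any vague or weak limit of probability measures supported on at most $N$ points is again supported on at most $N$ points: writing $\mu_n=\sum_{i=1}^N a_i^{(n)}\delta_{x_i^{(n)}}$, one extracts along each coordinate, allows some of the $x_i^{(n)}$ to escape to infinity, and finds in the limit at most $N$ surviving atoms. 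Consequently $|{\rm Supp}(\mu_\infty)|\leq N<\infty$.

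But by hypothesis $\{s_n\mid n\geq 0\}\subseteq\bigcup_n{\rm Supp}(\mu_n)\subseteq{\rm Supp}(\mu_\infty)$, and the left-hand side contains infinitely many distinct points, contradicting $|{\rm Supp}(\mu_\infty)|\leq N$. The only mild obstacle to watch is the possible non-compactness of $\Sh_K(G,X)$: mass may escape to infinity in the weak limit, but this only shrinks ${\rm Supp}(\mu_\infty)$, so the contradiction persists. No deep input from the rest of the paper is required; the proposition is really a soft consequence of the ``inner'' requirement forcing every $\mu_n$ into a common finite-support limit measure, which is incompatible with the existence of infinitely many $F$-rational Hecke translates.
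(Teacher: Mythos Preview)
Your argument is correct and follows the same contrapositive strategy as the paper. The paper takes the associated measures relative to~$F$, so that each~$\mu_n$ is the Dirac mass~$\delta_{s_n}$, and then splits according to whether the extracted sequence~$(s_n)$ converges in~$\Sh_K(G,X)$ or diverges: in the divergent case~$\mu_\infty=0$ and the inner condition~\eqref{innerprop} is violated outright, while in the convergent case~$\mu_\infty=\delta_{s_\infty}$ and~\eqref{innerprop} forces the sequence to be eventually constant, contradicting pairwise distinctness. Your version, bounding~$|{\rm Supp}(\mu_n)|\leq N$ and showing this bound passes to the weak limit, is a mild repackaging that avoids the case split and also covers the alternative reading where~$\mu_n$ is taken relative to~$E$; the underlying idea is the same.
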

\begin{proof} Assume for contradiction that~$s$ is not of~$S$-Shafarevich type.
Then there is a finite extension~$F$ of~$E$ such that there is an infinite
sequence~$(s_n)_{n\geq 0}$ of pairwise distinct~$F$-rational points in~$\Hcal_S(s)$. After possibly extracting a subsequence, we may assume that this 
sequence is convergent or is divergent in~$\Sh_K(G,X)$.

As these~$s_n$ are rational points, the associated measures~$\mu_n$ are Dirac masses. We recall that weak convergence of Dirac masses is induced by
convergence in the Alexandroff compactification, with the point
at infinity corresponding to the zero measure.

If~$(s_n)_{n\geq 0}$ is divergent, so is any subsequence,
and the measure~$\mu_\infty$ will be the~$0$ measure, in which case~\eqref{innerprop} may not hold.

If~$(s_n)_{n\geq 0}$ converges to~$s_\infty$, then~$\mu_\infty$ will be the Dirac measure~$\delta_{s_\infty}$, and~\eqref{innerprop} means that~$(s_n)_{n\geq 0}$ is a stationary sequence, which it cannot be since the~$s_n$ are pairwise distinct.
This yields a contradiction.
\end{proof}
%\begin{cor} If furthermore the~$S$-adic monodromy group~$U_S$ is algebraic as an $S%$-adic Lie group, then the conclusions of Theorem~\ref{Theoreme1} hold.
%\end{cor}

\subsection{Plan of the Article} In Section~\ref{consequence} we explain how to deduce Theorem~\ref{Theoreme2}
from Theorem~\ref{Theoreme1}. 
Section~\ref{SectionGalois} reviews Galois representations,
various properties listed before ($S$-Mumford-Tate, $S$-Shafarevich, $S$-Tate, etc)
and relations between them. We in particular prove useful and practical group-theoretic
characterisation of the $S$-Shafarevich property.
We also provide examples and counterexamples of when the properties do and do not hold depending on the field $E$. 
We believe the contents and results of this section to be of independent interest.

The sections that follow are devoted to the proof of  Theorem~\ref{Theoreme1}.
\begin{itemize}
\item 
Section~\ref{Reduction} explain how to reduce to a situation
falling under the scope of application of~\cite{1-RZ}. It ends 
by invoking~\cite{1-RZ}, which immediatly gives us~\eqref{TheoremeLimite}
of Theorem~\ref{Theoreme1}
\item Section~\ref{SectionRZ}  then discusses how
to get~\eqref{TheoremeInner} of Theorem~\ref{Theoreme1}.
\item Finally Section~\ref{SectionMT} treats the
stronger conclusion we can reach under the $S$-Mumford-Tate hypothesis. 
\end{itemize}

\section*{Acknowledgments}
While working on this article,
both authors were supported by the ERC grant (Project 511343, SPGSV). They gratefully acknowledge ERC's support.
The first named author is grateful to UCL for hospitality.

%%%%%%%%%%%%%%%%%%%%%%%%%%%%%%%%%%%%%%%%%%%%%%%%%%%%%%%
% !TEX root = RichardYafaevDecok.tex
\newpage
\section{From Inner equidistribution to Topological closures}\label{consequence}

In this section we show how to derive Theorem~\ref{Theoreme2} from Theorem~\ref{Theoreme1}.
The main result is proposition \ref{Sorite} which implies the main theorem of this section:

\begin{teo} \label{implication}
The conclusions of Theorem~\ref{Theoreme1} imply the conclusions of Theorem~\ref{Theoreme2}.
\end{teo}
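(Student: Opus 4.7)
The plan is to derive Theorem~\ref{Theoreme2} from Theorem~\ref{Theoreme1} by reducing the topological and Zariski closure statements to sequential accumulation, and then extracting a uniform finite cover via the inner property~\eqref{TheoremeInner}.

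The first step is a pointwise containment: any accumulation point $p$ of $\Sigma_E$ lies in some weakly $S$-special real submanifold. Given $p\in\overline{\Sigma_E}^{\mathrm{top}}$, pick a sequence $(p_n)\subseteq\Sigma_E$ with $p_n\to p$ and write $p_n=\sigma_n(s_n)$ with $\sigma_n\in\Gal(\overline{E}/E)$ and $s_n\in\Sigma$. Applying Theorem~\ref{Theoreme1} to $(s_n)$, after extraction and a finite enlargement of $E$, produces a finite family $\mathcal{F}$ of weakly $S$-special real submanifolds such that, by the inner property, ${\rm Supp}(\mu_n)=\Gal(\overline{E}/E)\cdot s_n\subseteq\bigcup_{Z\in\mathcal{F}} Z$ for all $n$ in the extracted subsequence. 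Since $p_n$ lies in this Galois orbit and the finite union is closed in $\Sh_K(G,X)$, $p$ belongs to $\bigcup_{Z\in\mathcal{F}} Z$.

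The main obstacle is to promote this pointwise conclusion to a \emph{uniform} finite cover of $\overline{\Sigma_E}^{\mathrm{top}}$. I plan to argue by contradiction through a diagonal extraction: if no finite family of weakly $S$-special real submanifolds covers $\overline{\Sigma_E}^{\mathrm{top}}$, inductively build a sequence $(s_n)\subseteq\Sigma$ together with finite families $(\mathcal{F}_n)$ obtained by applying Theorem~\ref{Theoreme1} to appropriate subsequences, chosen so that $s_{n+1}\notin\bigcup_{k\leq n}\bigcup\mathcal{F}_k$. A final application of Theorem~\ref{Theoreme1} to the whole sequence $(s_n)$ yields a single finite family $\mathcal{F}$ into which, by pigeonhole, an infinite subsequence of the $s_n$ must fall; this contradicts the construction. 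The inner property is essential here, since without it one would only control limit points, not the terms $s_n$ themselves.

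For the Zariski closure part of Theorem~\ref{Theoreme2}\eqref{Theoreme2-1}, I would combine the Noetherian property of the Zariski topology with Proposition~\ref{Ax}: $\overline{\Sigma}^{\mathrm{Zar}}$ has finitely many irreducible components $V_i$, and for each $V_i$ one selects a Zariski-dense sequence in $\Sigma\cap V_i$ whose terms, by Theorem~\ref{Theoreme1} and a pigeonhole on the resulting finite family, all lie in a common weakly $S$-special real submanifold $Z_i$; Proposition~\ref{Ax} then identifies $\overline{Z_i}^{\mathrm{Zar}}$ as a weakly special subvariety containing $V_i$, and the union over $i$ gives the desired finite union. Finally, under the $S$-Mumford-Tate hypothesis, conclusion~\eqref{TheoremeMT} of Theorem~\ref{Theoreme1} identifies each $Z\in\mathcal{F}$ as a weakly special subvariety, so the finite cover constructed in the first two steps already consists of weakly special subvarieties, immediately yielding Theorem~\ref{Theoreme2}\eqref{Theoreme2-2}.
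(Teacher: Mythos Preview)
Your diagonal argument in the second paragraph has a genuine gap. You build $(s_n)\subseteq\Sigma$ with $s_{n+1}\notin\bigcup_{k\leq n}\bigcup\mathcal{F}_k$, then apply Theorem~\ref{Theoreme1} once more to obtain a \emph{new} finite family $\mathcal{F}$ containing (after extraction) all the $s_n$. Pigeonhole places infinitely many $s_n$ into a single $Z\in\mathcal{F}$, but nothing forces $Z$ to lie in any $\mathcal{F}_k$; the sets $\mathcal{F}_k$ were built along the induction and need bear no relation to the final $\mathcal{F}$. So there is no contradiction with the construction $s_{n+1}\notin\bigcup_{k\leq n}\bigcup\mathcal{F}_k$. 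What you need, and what is missing, is a mechanism guaranteeing that \emph{any} family produced by Theorem~\ref{Theoreme1} is already covered by a fixed finite collection. The paper supplies exactly this: it introduces the global countable collection~$\mathcal{Z}$ of \emph{all} weakly $S$-special $Z$ occurring in \emph{any} limit decomposition, shows (via the dimension theory of \S\ref{subsec dimension}) that $\mathcal{Z}$ has maximal elements, and proves the set $\mathcal{M}$ of maximal elements is finite. The point is that the family $\mathcal{F}$ from a final application is automatically a subset of $\mathcal{Z}$, hence $\bigcup\mathcal{F}\subseteq\bigcup_{M\in\mathcal{M}}M$, and now the contradiction goes through. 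The finiteness of $\mathcal{M}$ is itself proved by a diagonal argument, but one that exploits a measure-theoretic separation (distinct maximal $M_i$ satisfy $\mu_{M_i}(M_j)=0$ by Lemma~\ref{lemme canonique}), not merely set-theoretic avoidance.

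Your Zariski-closure argument has a related issue: placing infinitely many terms of a Zariski-dense sequence into a single $Z_i$ does not place a Zariski-dense subset there, so you cannot conclude $V_i\subseteq\overline{Z_i}^{\mathrm{Zar}}$. The paper instead first establishes the topological statement for $\Sigma_E$ and then simply takes Zariski closures, invoking Proposition~\ref{Ax}; this avoids any further pigeonhole.
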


In order to prove this theorem,
 we need to develop a dimension theory of weakly $S$-special real submanifolds.

\subsection{Dimension and Measure in chains of weakly $S$-special real submanifolds}\label{subsec dimension}

We prove here some standard properties about inclusions of weakly~$S$-special real submanifolds, involving dimension, that we define, and their canonical measure.

\begin{defi} Let~$Z=Z_{L,(h,t)}$ be a weakly $S$-special real sub\-ma\-ni\-fold. Then we define the dimension of~$Z$
as  the codimension of the stabiliser~$K_h\cap L(\RR)$ of~$h$ in~$L(\RR)$. This is also the dimension~$L(\RR)^+\cdot h$, or equivalently~$L(\RR)\cdot h$  in~$X$, as a semialebraic set and as a real analytic variety.
\end{defi}

\begin{lem}\label{lemme canonique}
\begin{enumerate}
\item The dimension of a weakly $S$-special real sub\-ma\-ni\-fold is well defined. If~$Z_{L_1,(h_1,t_1)}=Z_{L_2,(h_2,t_2)}$,
then~
\begin{equation}\label{defi dim}
\dim\left( L_1(\RR)^+\cdot h_1\right)=\dim\left( L_2(\RR)^+\cdot h_2\right).
\end{equation}
\item Let~$Z_1\subsetneq Z_2$ be two $S$-special real submanifolds. 
Then
\begin{equation}
\dim Z_1< \dim Z_2.
\end{equation}
\item  \label{mesure ortho}
Let~$Z_1$ and~$Z_2$ be two weakly $S$-special real submanifolds, such that~$\mu_{Z_1}(Z_2)\neq 0$.
Then~$Z_1\subseteq Z_2$, and~$\mu_{Z_1}(Z_2)= 1$.
\end{enumerate}
\end{lem}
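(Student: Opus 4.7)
The plan is to exploit the following local picture: by neatness of~$K$, every weakly $S$-special real submanifold $Z=Z_{L,(h,t)}$ has an open dense subset on which the projection from $X\times G(\AAA_f)$ is a local real-analytic diffeomorphism identifying $Z$ with an open piece of the orbit $L(\RR)^+\cdot h$ in $X$, and under this identification $\mu_Z$ is proportional to the Riemannian volume on $L(\RR)^+\cdot h$ induced from any $G(\RR)$-invariant metric on $X$ (as in the proof of the previous lemma).

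For assertion (1), I would observe that $\dim(L(\RR)^+\cdot h)$ coincides with the local topological (equivalently real-analytic) dimension of $Z$ at any of its smooth points. This is an invariant of the set $Z\subseteq\Sh_K(G,X)$ and so does not depend on the chosen presentation $(L,(h,t))$.

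I would prove (3) next, then deduce (2) from it. For (3), suppose $\mu_{Z_1}(Z_2)\neq0$. Since $\mu_{Z_1}$ is (up to scaling) the $\dim Z_1$-dimensional Riemannian volume of $Z_1$, the set $Z_1\cap Z_2$ has positive $\dim Z_1$-Hausdorff measure. Restricted to a local chart at a common smooth point $p$ at which the covering projection embeds, the intersection is a real-analytic subset of the connected real-analytic manifold $L_1(\RR)^+\cdot h_1\subseteq X$; having positive volume, such a subset must contain a non-empty open subset of $L_1(\RR)^+\cdot h_1$. In particular, in a neighbourhood of~$p$ in $X$ one has $L_1(\RR)^+\cdot h_1 \subseteq L_2(\RR)^+\cdot h_2'$ for some $h_2'$ in the $L_2(\RR)^+$-orbit of $h_2$. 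Propagating this local inclusion along the connected orbit $L_1(\RR)^+\cdot h_1$ by real-analytic continuation yields the global inclusion $L_1(\RR)^+\cdot h_1\subseteq L_2(\RR)^+\cdot h_2'$ in $X$, whence $Z_1\subseteq Z_2$ in $\Sh_K(G,X)$. Finally $\mu_{Z_1}(Z_2)\geq \mu_{Z_1}(Z_1)=1$ gives equality.

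For (2), assume for contradiction that $Z_1\subsetneq Z_2$ with $\dim Z_1=\dim Z_2$. Then at any smooth point $Z_1$ is a $\dim Z_2$-dimensional submanifold contained in $Z_2$, hence contains a non-empty open subset of $Z_2$; this forces $\mu_{Z_2}(Z_1)>0$. Applying (3) with the roles of $Z_1$ and $Z_2$ swapped gives $Z_2\subseteq Z_1$, contradicting the strict inclusion. The main obstacle is the propagation step in (3): passing from a local orbit inclusion to a global one. This reflects the analytic rigidity of orbits of semisimple real Lie groups on the symmetric domain $X$, and will be implemented either by a Lie-algebra tangent-space argument combined with the identity principle for real-analytic maps on the connected group $L_1(\RR)^+$, or by showing directly that the set $\{l\in L_1(\RR)^+\mid l\cdot h_1\in L_2(\RR)^+\cdot h_2'\}$ is a non-empty open-and-closed subset of $L_1(\RR)^+$.
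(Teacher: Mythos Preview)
Your overall strategy is sound and leads to the same conclusions, but the propagation mechanism you use in (3) differs from the paper's, and your open-and-closed alternative has a gap.

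The paper reduces as you do to working in a single component of~$X$ modulo an arithmetic group~$\Gamma_{tK}$, but then argues algebraically rather than analytically. Having found a non-empty open $U\subseteq L_1(\RR)^+$ with $U\cdot h_1\subseteq \gamma L_2(\RR)^+\cdot h_2$ for some~$\gamma\in\Gamma_{tK}$, it uses that $U$ is Zariski dense in the Zariski-connected algebraic group~$L_1$ to conclude $L_1(\RR)^+\cdot h_1\subseteq (L_1\cdot h_1)(\RR)\subseteq (\gamma L_2\cdot h_2)(\RR)$, and then that connected components must agree. This bypasses any need to discuss closedness of real orbits or analytic continuation. Your analytic-continuation route is a legitimate alternative, but note that your proposed open-and-closed argument requires $L_2(\RR)^+\cdot h_2'$ to be closed in~$X$; this is true but not automatic for groups of $S$-Ratner class (which need not be reductive), so it would need justification.

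One further imprecision: when you lift $Z_2$ to $X$ you obtain the countable union $\bigcup_{\gamma\in\Gamma_{tK}}\gamma L_2(\RR)^+\cdot h_2$, so the intersection with $L_1(\RR)^+\cdot h_1$ is a countable union of semi-algebraic sets. Positive measure forces one of these, indexed by some~$\gamma$, to have non-empty interior in~$L_1(\RR)^+\cdot h_1$; this is how the paper selects the translate. Your phrase ``for some $h_2'$ in the $L_2(\RR)^+$-orbit of $h_2$'' should rather be ``in the $\Gamma_{tK}\cdot L_2(\RR)^+$-orbit''.
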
\label{lemme chains}

One immediately deduces the following.

\begin{cor}  \label{length}
Let~$Z_1\subsetneq \ldots \subsetneq Z_l$ be a chain of strictly included weakly $S$-special real submanifolds.
Then its length~$l$ satisfies~$l\leq 1+\dim(G)$. 
\end{cor}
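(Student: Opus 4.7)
The plan is to derive this directly from part~(2) of Lemma~\ref{lemme canonique}, which already does the nontrivial work by showing that a strict inclusion of weakly $S$-special real submanifolds forces a strict drop in dimension. The corollary then follows from a counting argument, so the main task is to verify that the dimension of any weakly $S$-special real submanifold lies in a finite range bounded by $\dim(G)$.

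First I would observe that given the chain $Z_1 \subsetneq Z_2 \subsetneq \cdots \subsetneq Z_l$, repeated application of Lemma~\ref{lemme canonique}(2) yields a strictly increasing sequence of integers
\[
\dim Z_1 < \dim Z_2 < \cdots < \dim Z_l.
\]
Next I would bound these dimensions. By Definition, if $Z_i = Z_{L_i,(h_i,t_i)}$, then $\dim Z_i$ equals the codimension of $K_{h_i} \cap L_i(\RR)$ in $L_i(\RR)$; in particular
\[
0 \leq \dim Z_i \leq \dim L_i(\RR) \leq \dim G(\RR) = \dim G.
\]
Hence the $l$ distinct integers $\dim Z_1, \ldots, \dim Z_l$ all lie in the set $\{0, 1, \ldots, \dim G\}$, which has cardinality $\dim(G) + 1$. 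This forces $l \leq 1 + \dim(G)$, as required.

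There is no genuine obstacle here: the entire content of the statement is packaged inside Lemma~\ref{lemme canonique}(2), whose proof presumably uses the description of $Z_{L,(h,t)}$ via orbits of $L(\RR)^+$ on $X$ together with the fact that a strictly smaller real-analytic (or semialgebraic) orbit inside a larger one must have strictly smaller dimension. The only point that deserves a brief sanity check is the well-definedness of $\dim Z$ (guaranteed by Lemma~\ref{lemme canonique}(1)) so that the strictly increasing chain of dimensions is unambiguous.
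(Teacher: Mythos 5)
Your argument is correct and is exactly what the paper intends by ``one immediately deduces'': Lemma~\ref{lemme canonique}(2) gives strictly increasing dimensions, each $\dim Z_i$ lies in $\{0,\dots,\dim G\}$, so $l\leq 1+\dim G$. No divergence from the paper's (implicit) proof.
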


From this we deduce the following.

\begin{cor}\label{coro max}
Any non empty collection~$\mathcal{F}$ of weakly $S$-special real submanifolds, partially ordered by inclusion, has maximal elements, and any element of~$\mathcal{F}$ is contained in a maximal element of~$\mathcal{F}$.
\end{cor}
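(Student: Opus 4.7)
The plan is very short because this is essentially a one-line consequence of Corollary~\ref{length}. I would argue as follows. Fix an element $Z_0\in\mathcal{F}$, and consider the subposet $\mathcal{F}_{\geq Z_0}=\{Z\in\mathcal{F}\mid Z\supseteq Z_0\}$. Corollary~\ref{length} provides an a priori bound $1+\dim(G)$ on the length of any strictly increasing chain of weakly $S$-special real submanifolds, so in particular any strictly increasing chain inside $\mathcal{F}_{\geq Z_0}$ is finite of length at most $1+\dim(G)$.

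From this finiteness of chains, maximality above $Z_0$ is immediate: pick any element $Z_0\subsetneq Z_1$ in $\mathcal{F}$, if one exists; then either $Z_1$ is maximal in $\mathcal{F}$, or one finds $Z_1\subsetneq Z_2$ in $\mathcal{F}$; iterating, the process must terminate after at most $\dim(G)$ steps, and yields a maximal element of $\mathcal{F}$ containing $Z_0$. (Equivalently one invokes Zorn's lemma: every totally ordered subset of $\mathcal{F}_{\geq Z_0}$ is finite by Corollary~\ref{length}, hence admits a maximum, which is an upper bound.) This proves simultaneously the existence of maximal elements and the fact that every element of $\mathcal{F}$ lies below one, since $Z_0$ was arbitrary.

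There is no real obstacle here: the entire content of the corollary is packed into the uniform bound on chain length, which is Corollary~\ref{length}, itself a direct consequence of the strict dimension drop in proper inclusions established in Lemma~\ref{lemme canonique}. The only thing to be careful about is that $\mathcal{F}$ is not required to be closed under any operation (no suprema, no intersections), so one must not argue via limits of chains but rather use the fact that ascending chains are forced to be finite of bounded length; and one should note that the bound $1+\dim(G)$ does not depend on $\mathcal{F}$, which makes the argument uniform.
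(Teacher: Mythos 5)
Your proposal is correct and follows the same route as the paper: both arguments reduce the claim to the uniform bound $1+\dim(G)$ on chain lengths from Corollary~\ref{length}, then iterate a strict enlargement starting from a given element until the bound forces termination. The Zorn's lemma remark is a harmless rephrasing, not a different proof.
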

\begin{proof}[Proof of the Corollary~\eqref{coro max}] By induction we may extend every chain in~$\mathcal{F}$ to a maximal one. By \ref{length}, this induction terminates after at most~$1+\dim(G)$ steps. 

The last element of a non empty maximal chain is a maximal element.
Hence any element~$f$, seen as a chain of length one, is part of  a non empty maximal chain. The last element of the latter contains~$f$ and is maximal in~$\mathcal{F}$.
If~$\mathcal{F}$ has an element, this implies that there is a maximal element.
\end{proof}

\begin{proof} Write~$Z_i$ for ~$Z_{L_i,(h_i,t_i)}$ (for~$i=1$ or~$i=2$).

\emph{We assume that the intersection~$Z_1\cap Z_2$ is not empty.}

These two subsets~$Z_1$ and~$Z_2$ of~$\Sh_K(G,X)$ are connected, and hence belong to the same connected component of~$\Sh_K(G,X)$.
This implies, as subsets in~$G(\AAA)$,
\[G(\QQ)\cdot\left( G(\RR)\times t_1K\right)=G(\QQ)\cdot \left(G(\RR)\times t_2K\right).\]
Left translating~$t_1$ with~$\gamma\in G(\QQ)$ and right translating with~$k\in K$ we may assume~$t_1=t_2$.
We have to substitute accordingly~$h_1$ with~$\gamma h_1$ and~$L_1$ with~$\gamma L_1 \gamma^{-1}$.
As we have
\[
\gamma L_1 \gamma^{-1}\cdot \gamma h_1=\gamma\left(L_1\cdot h_1\right)
\]
this does not change the notion of dimension of~$Z_1$.

\emph{We now assume that~$t_1=t_2$, which we will be denote simply~$t$.}

Let~$\Gamma_{tK}$ be the inverse image in~$G(\QQ)$ of~$tKt^{-1}$ with respect to the map~$G(\QQ)\to G(\AAA_f)$. This is the
arithmetic subgroup such that the previous component of~$\Sh_K(G,X)$ belongs to those of~$\Gamma_{tK}\backslash X\times\{t\}$.
We will identify~$X\times\{t\}$ with~$X$ for simplicity. The inverse images of~$Z_1$ and~$Z_2$ in~$X$ are
\(\wt{Z_1}=\Gamma_t\cdot L_1(\RR)^+\cdot h_1\) and \(\wt{Z_2}=\Gamma_t\cdot L_2(\RR)^+\cdot h_2\) respectively.

We may write 
\[
\wt{Z_1}\cap\wt{Z_2}=\Gamma_{tK}\cdot\left(\left( L_1(\RR)^+\cdot h_1\right)\cap \wt{Z_2}\right)
\]
and 
\begin{equation}\label{inter union}
\left(L_1(\RR)^+\cdot h_1\right)\cap \wt{Z_2}=\bigcup_{\gamma\in\Gamma_t} \left(L_1(\RR)^+\cdot h_1\right)\cap\left(\gamma_t\cdot L_2(\RR)^+\cdot h_2\right).
\end{equation}

\emph{Assume first that~$\mu_{Z_1}(Z_2)\neq 0$.}

By our definition of~$\mu_{Z_1}$, the~$\left(\Gamma_{tK}\cap L_1(\RR)^+\right)$-saturated set
\begin{equation}\label{sature}
\left(\Gamma_{tK}\cap L_1(\RR)^+\right)\cdot\left(L_1(\RR)^+\cdot h_1\right)\cap \wt{Z_2}
\end{equation}
is non negligible (Cf. Lemma~\ref{lemme neglect} proven below) in~$L_1(\RR)^+\cdot h_1$ with respect to a Haar measure on 
the homogeneous~$L_1(\RR)^+$-set~$L_1(\RR)^+\cdot h_1$. But this~\eqref{sature} is again 
the countable union~\eqref{inter union}. So there is a~$\gamma$ in~$\Gamma_{tK}$ such
that
\[
\left(L_1(\RR)^+\cdot h_1\right)\cap\left(\gamma_t\cdot L_2(\RR)^+\cdot h_2\right)
\]
is not negligible. This is a real semi-algebraic subset of the real semi-algebraic set~$L_1(\RR)^+\cdot h_1$.
We use a cylindrical cellular decomposition of tis subset. Subset of codimension~$1$ are negligible\footnote{}.
So at least one cell has codimension~$0$. It must have non empty interior. 

The orbit map~$L_1(\RR)^+\to L_1(\RR)^+\cdot h_1$ are open maps. So there is an open subset~$U$
in~$L_1(\RR)^+$ such that~$U\cdot h_1\subseteq \gamma L_2(\RR)^+\cdot h_2$. But~$L_1$ is Zariski
connected, and~$U$ is Zariski dense in~$L_1$. Hence~
\[L_1(\RR)^+\cdot h_1\subseteq (L_1\cdot h_1)(\RR)\subseteq (\gamma L_2\cdot h_2)(\RR).\]
We note that~$L_1(\RR)^+\cdot h_1$ is a connected component of~$(L_1\cdot h_1)(\RR)$. 
Likewise~$\gamma L_2(\RR)^+\cdot h_2$ is a connected component of~$(\gamma L_2\cdot h_2)(\RR)$.
But the connected~$L_1(\RR)^+\cdot h_1$ intersects the component~$\gamma L_2(\RR)^+\cdot h_2$, hence
is contained in it. It follows~$\wt{Z_1}\subseteq \wt{Z_2}$ and finally~$Z_1\subseteq Z_2$.
We have proved the last point of the lemma. 

We now turn to the first point.

\emph{We now assume~$Z_1\subseteq Z_2$ instead of~$\mu_{Z_1}(Z_2)\neq 0$.}

Then certainly~$\mu_{Z_1}(Z_2)\neq0$. We  may, and will, keep the notations above. We have already proved
\[
L_1(\RR)^+\cdot h_1\subseteq \gamma L_2(\RR)^+\cdot h_2
\]
for some~$\gamma$ in~$\Gamma_t$. It follows
\[
	\dim(L_1(\RR)^+\cdot h_1)\leq\dim( \gamma L_2(\RR)^+\cdot h_2)=\dim(L_2(\RR)^+\cdot h_2).
\]
If~$Z_1=Z_2$ we may echange te roles to get a converse comparison, yielding~\eqref{defi dim}: the dimension of~$Z_i$ is well defined.
This was the first point of the lemma. 

It remains to prove the second point.
\emph{We now assume~$Z_1\subsetneq Z_2$.}
We keep our notations. We have proved
\[
L_1(\RR)^+\cdot h_1\subseteq \gamma L_2(\RR)^+\gamma^{-1}\cdot\gamma h_2.
\]
The reverse inclusion does not hold, as it would, easily, imply~$Z_2\subseteq Z_1$. 
We may substitute our base point~$\gamma h_2$ with~$h_1$, as it does belong to the same~$\gamma L_2(\RR)^+\gamma^{-1}$ orbit. We deduce
\[
L_1(\RR)^+\cdot h_1\subsetneq \gamma L_2(\RR)^+\gamma^{-1}\cdot h_1.
\]
Assume by contradiction that both sides have same dimension. The orbit~$L_1(\RR)^+\cdot h_1$ is closed
in~$X$, and \emph{a fortiori} closed in~$L_2(\RR)^+\gamma^{-1}\cdot h_1$. Furthermore~$L_1(\RR)^+\cdot h_1\to L_2(\RR)^+\cdot h_1$, as a map of differential manifolds, is a submersion at at least a point, by equality of dimensions.
It is a submersion everywhere y homogeneity, hence an open map. As a consequence,~$L_1(\RR)^+\cdot h_1$ is not only closed, but open as well in~$L_2(\RR)^+\cdot h_1$. As~$L_2(\RR)^+\gamma^{-1}\cdot h_1$ is connected, we deduce
\[
L_1(\RR)^+\cdot h_1= \gamma L_2(\RR)^+\gamma^{-1}\cdot h_1.
\]
That is a condradiction. This ends our proof.
\end{proof}

We finish this section by proving a lemma used in the proof above.

\begin{lem}\label{lemme neglect} Endow on~$(\Gamma\cap L )\backslash L$ with a Haar measure. Then the inverse of a negligible 
subset in~$(\Gamma\cap L )\backslash L$ is negligible in~$L$, with respect to a Haar measure.
\end{lem}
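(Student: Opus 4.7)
The plan is to exploit that, as $\Gamma':=\Gamma\cap L$ is discrete in $L$ and acts freely and properly discontinuously by left translation, the projection $\pi:L\to \Gamma'\backslash L$ is a covering map, in particular a local homeomorphism. First I would fix a left Haar measure $dl$ on $L$ and the compatible invariant measure $d\bar l$ on $\Gamma'\backslash L$, characterised by the Weil integration formula
\[
\int_L f(l)\, dl \;=\; \int_{\Gamma'\backslash L}\Bigl(\sum_{\gamma\in\Gamma'} f(\gamma l)\Bigr)\, d\bar l(\bar l)
\]
for every non-negative Borel $f$ on $L$. Here $\Gamma'$, being discrete, is unimodular, so such a compatible $d\bar l$ exists up to a positive scalar; the statement we want concerns null sets only, so the scalar is irrelevant.

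Second, using second countability of the Lie group $L$, I would pick a countable family of open sets $V_n\subseteq L$ covering $L$ such that for each $n$ the restriction $\pi|_{V_n}:V_n\to U_n:=\pi(V_n)$ is a homeomorphism, and hence $\pi^{-1}(U_n)=\bigsqcup_{\gamma\in\Gamma'}\gamma V_n$. Given a negligible Borel set $N\subseteq \Gamma'\backslash L$, it suffices, by countable additivity, to prove that $V_n\cap\pi^{-1}(N)$ is $dl$-null for each $n$: indeed the translates $\gamma\cdot(V_n\cap\pi^{-1}(N))$ are then also $dl$-null by left-invariance of $dl$, and $\pi^{-1}(N)$ is the countable union of these as $n$ and $\gamma$ vary.

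Third, since $\pi|_{V_n}$ is bijective onto $U_n$, for every $\bar l\in U_n$ there is exactly one $\gamma\in\Gamma'$ with $\gamma l\in V_n$, and none if $\bar l\notin U_n$. Applying the Weil formula to $f=\mathbf{1}_{V_n\cap\pi^{-1}(N)}$ then collapses the inner sum to $\mathbf{1}_{N\cap U_n}(\bar l)$, giving
\[
\mathrm{vol}_L\bigl(V_n\cap\pi^{-1}(N)\bigr)
\;=\; \int_{\Gamma'\backslash L}\mathbf{1}_{N\cap U_n}(\bar l)\, d\bar l
\;=\; 0,
\]
which is the only computation needed.

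No step is expected to pose a real obstacle; the only mild technical care is to invoke the integration formula for a locally-finite invariant measure on $\Gamma'\backslash L$ without assuming $L$ itself is unimodular, but this is standard since discreteness of $\Gamma'$ already guarantees the formula holds for some (hence any compatible) choice of $d\bar l$, and the class of $dl$-null subsets is insensitive to the choice of left or right Haar measure.
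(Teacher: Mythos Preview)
Your proof is correct and takes essentially the same approach as the paper: both invoke the Weil/quotient integration formula for~$\Gamma'\backslash L$ to reduce the measure of~$\pi^{-1}(N)$ to an integral over~$N$, which vanishes since~$N$ is negligible. The only cosmetic difference is that the paper localises via an exhaustion by compacts and monotone convergence, whereas you localise via a countable cover by evenly covered open sets; both devices serve the same purpose and neither adds anything substantive over the other.
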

\begin{proof} Let~$N$ be a negligible subset in~$(\Gamma\cap L )\backslash L$, and~$\wt{N}$ its inverse
image in~$L$. The lemma amounts to proving that
\begin{equation}\label{neglect}
\int_{l\in L} 1_{\wt{N}}~dl=0
\end{equation}
where~$1_{\wt{N}}$ is the characteristic function  of~$\wt{N}$ and~$dl$ is a Haar measure on~$L$.
Let~$K$ be a compact subset in~$L$. And consider, as a real function~$(\Gamma\cap L )\backslash L\to\RR$,
\[
f:(\Gamma\cap L)\cdot l\mapsto \int_{\gamma \in \Gamma\cap L} 1_{K\cap N}(\gamma l)~d\gamma,
\]
where~$d\gamma$ is a Haar measure on~$(\Gamma\cap L )$.
Then we have, see~\cite[VII~\S2.1]{BBK-I-VII},
\[
\int_{l\in L} 1_{\wt{N}\cap K}~dl=\int_{(\Gamma\cap L )\backslash L} f(x) dx
\]
where~$dx$ is the quotient Haar measure (cf. loc. cit.) on~$(\Gamma\cap L )\backslash L$. But the
support of~$f$ is contained in~$N$, hence is negligible. The last integral eveluates as zero.
By choosing increasing compact subsets whose union is~$L$, we, by the monotone convergence~$\lim_K 1_{\wt{N}\cap K}=1_{\wt{N}}$,
deduce~\eqref{neglect}.
\end{proof}

\subsection{Topological and Zariski closures.} \label{topologicalsub}
We place ourselves in the situation of Theorem~\ref{Theoreme2}. In particular we assume that~$s$ is of~$S$-Shafarevich type.

Let $\Sigma$ be as in theorem \ref{Theoreme2}. It is a countable set, and we write it as
~$\Sigma = \{ s_n , n \geq 0 \}$. Let~$(\mu_n)_{n\geq0}$ be the sequence of probability measures attached to~$(s_n)_{n\geq0}$ as in Definition~\ref{measure}. As the~$S$-Shafarevich hypothesis is assumed, we are permitted to
invoke Theorem~\ref{Theoreme1} for any infinite subsequence.

 Our proof of Theorem~\ref{Theoreme2} relies on the following.
 
\begin{prop}\label{Sorite} We consider the following situation.
\begin{itemize}
\item[---] Let~$\mathcal{S}$ be the set of supports of limits of converging subsequences of~$(\mu_n)_{n\geq0}$.
\item[---] Let~$\mathcal{Z}$ be the collection of $S$-special real submanifolds~$Z$ such that the canonical measure~$\mu_Z$ occurs in the decomposition of the limit of a converging subsequence of~$(\mu_n)_{n\geq0}$. 
\item[---] We endow~$\mathcal{Z}$ with the partial order induced by inclusion. Let~$\mathcal{M}$ be the subset of maximal elements in~$\mathcal{Z}$.
\end{itemize}
We have the following.
\begin{enumerate}[label=\roman*)]
\item \label{Scholie1} Every support~$S$ belonging to~$\mathcal{S}$ is a finite union of finitely many weakly $S$-special real submanifolds belonging to~$\mathcal{Z}$. If~$s$ is of $S$-Mumford-Tate type, then the~$Z$ belonging to~$\mathcal{Z}$ are actually weakly special subvarieties.
\item \label{Scholie2} Every element~$Z$ of~$\mathcal{Z}$ is included in a maximal element of~$\mathcal{Z}$, an element belonging to~$\mathcal{M}$.
\item \label{Scholie3} The subset~$\mathcal{M}$ of maximal elements of~$\mathcal{Z}$ is a finite subset.
\item \label{Scholie4} Every~$Z$ in~$\mathcal{Z}$, or~$S$ in~$\mathcal{S}$, is contained in the topological closure of~$\Sigma_E$.
\item \label{Scholie5} All but finitely many elements of~$\Sigma_E$ are in~$\bigcup_{Z\in \mathcal{M}} Z$.
\item \label{Scholie6} The topological closure of $\Sigma_E$ is a finite union 
of weakly $S$-special real manifolds. 
\item \label{Scholie7} The Zariski closure of $\Sigma_E$ is a finite union of weakly special subvarieties.
\end{enumerate}
\end{prop}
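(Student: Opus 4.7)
The plan is as follows. Parts (i), (ii) and (iv) follow from the earlier material with little extra effort: for (i), each weak limit $\mu_\infty$ is, by Theorem~\ref{Theoreme1}, a barycentre $\sum_Z c_Z \mu_Z$, so its support is $\bigcup_{c_Z>0} Z$, a finite union of elements of $\mathcal{Z}$, and the Mumford--Tate refinement is exactly part~(\ref{TheoremeMT}) of Theorem~\ref{Theoreme1}. Part (ii) is Corollary~\ref{coro max} applied to $\mathcal{Z}$. For (iv), when $\mu_Z$ occurs with positive weight $c_Z$ in $\mu_{n_k}\to\mu_\infty$, Lemma~\ref{lemme chains}(\ref{mesure ortho}) gives $\mu_{Z'}(Z)=0$ for every other support component $Z'$, so one may pick an open $U\subseteq \Sh_K(G,X)$ meeting $Z$ in positive $\mu_Z$-measure and disjoint from every other component. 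Weak convergence forces $\mu_{n_k}(U)>0$ for large $k$, so some Galois conjugate of $s_{n_k}$ lies in $U$; letting $U$ range over a basis of the topology of $Z$ yields $Z\subseteq\overline{\Sigma_E}$.

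For (v), suppose for contradiction that infinitely many $s_n$ lie outside $\bigcup_{Z\in\mathcal{M}} Z$. By weak-$\ast$ compactness together with the $S$-Shafarevich hypothesis, which enables Theorem~\ref{Theoreme1}, extract a convergent subsequence $\mu_{n_k}\to\mu_\infty$. By (i) and (ii), ${\rm Supp}(\mu_\infty)\subseteq\bigcup_{Z\in\mathcal{M}} Z$; the inner property (\ref{TheoremeInner}) of Theorem~\ref{Theoreme1} then places $s_{n_k}$ in this union, contradicting the choice of subsequence. Once (iii) is in hand, parts (vi) and (vii) will follow quickly: by (iv) and (v), $\overline{\Sigma_E}=\bigcup_{Z\in\mathcal{M}} Z\cup F$ for some finite set $F$, and (iii) upgrades this to a finite union of weakly $S$-special real submanifolds (singletons being trivially such), giving (vi); Proposition~\ref{Ax} then converts the Zariski closure of each $Z\in\mathcal{M}$ into a weakly special subvariety, yielding (vii).

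The real obstacle is (iii), the finiteness of $\mathcal{M}$. My plan is a diagonal extraction combined with the inner property. Suppose for contradiction that $\mathcal{M}$ contains pairwise distinct $Z_1,Z_2,\ldots$; using the argument of (iv), pick $t_i\in Z_i\cap\Sigma_E$ belonging to distinct Galois orbits $\Gal(\overline E/E)\cdot s_{n_i}$ with $(n_i)$ strictly increasing. Applying Theorem~\ref{Theoreme1} to $(s_{n_i})$ and extracting yields $\mu_{n_i}\to\mu_\infty$ with ${\rm Supp}(\mu_\infty)=\bigcup_j Y_j$, each $Y_j$ contained in some $Y_j'\in\mathcal{M}$ by (ii). The inner property places the whole Galois orbit of $s_{n_i}$, hence $t_i$, in $\bigcup_j Y_j'$ for large $i$, and a pigeonhole on the finite index $j$ extracts a further subsequence with $t_i\in Z_i\cap Y_{j_0}'$ for a single $Y_{j_0}'\in\mathcal{M}$. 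Since at most one $i$ has $Z_i=Y_{j_0}'$, infinitely many pairwise incomparable maximal elements $Z_i$ must then share a point with $Y_{j_0}'$. Converting this configuration into a contradiction — by invoking the strict dimension drop of Lemma~\ref{lemme chains} at the intersection of two distinct maximal weakly $S$-special submanifolds and iterating, against the chain-length bound of Corollary~\ref{length} — is where the deepest work lies. This is the step I expect to demand the most care; the alternative route would be to first establish (vii) directly via Noetherianity of the Zariski topology combined with Proposition~\ref{Ax}, then deduce the finiteness of $\mathcal{M}$ from the rigidity of the $(L,(h,t))$ parametrization of $Z\in\mathcal{M}$ sitting inside each irreducible weakly special component.
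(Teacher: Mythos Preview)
Your treatment of (i), (ii), (v), (vi), (vii) is correct and matches the paper. For (iv) you work harder than necessary: since every $\mu_n$ is supported on the finite Galois orbit of $s_n\in\Sigma$, we have ${\rm Supp}(\mu_n)\subseteq\Sigma_E$; hence any weak limit $\mu_\infty$ is supported on the closed set $\overline{\Sigma_E}$, and $Z\subseteq{\rm Supp}(\mu_\infty)\subseteq\overline{\Sigma_E}$ follows at once.

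The genuine gap is in (iii), and you correctly locate it yourself. Having infinitely many pairwise distinct maximal $Z_i$ each meeting a fixed maximal $Y'_{j_0}$ in a point is \emph{not} a contradiction: two incomparable weakly $S$-special submanifolds can perfectly well intersect, and their intersection need not be weakly $S$-special, so neither the dimension-drop of Lemma~\ref{lemme canonique} nor the chain bound of Corollary~\ref{length} applies. Your alternative Noetherian route is also not viable as stated: infinitely many $Z_i$ could all have the same Zariski closure.

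The paper's argument for (iii) replaces your pointwise geometry by measure theory. Enumerate $\mathcal{M}=(M_n)_{n\geq0}$ (after noting $\mathcal{Z}$ is countable). By maximality and Lemma~\ref{lemme canonique}\,(\ref{mesure ortho}), $\mu_{M_i}(M_j)=0$ for $i\neq j$, so the open set $S_n=\Sh_K(G,X)\smallsetminus\bigcup_{i<n}M_i$ has full $\mu_{M_n}$-measure. Since $M_n\in\mathcal{Z}$, some subsequence $(\mu^{(n)}_m)_m$ converges to a limit containing $\lambda_n\mu_{M_n}$ with $\lambda_n>0$; by weak convergence choose $\nu_n:=\mu^{(n)}_{N_n}$ with $\nu_n(S_n)\geq\lambda_n/2>0$, so ${\rm Supp}(\nu_n)\not\subseteq M_0\cup\ldots\cup M_{n-1}$. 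Thus no finite union of elements of $\mathcal{M}$ supports infinitely many $\nu_n$. Now apply Theorem~\ref{Theoreme1} to $(\nu_n)$: after extraction, the inner property forces all ${\rm Supp}(\nu_n)$ into a single element of $\mathcal{S}$, hence into a fixed finite union of elements of $\mathcal{M}$ --- a contradiction. The key move you are missing is to produce \emph{measures} $\nu_n$ escaping each initial segment $\bigcup_{i<n}M_i$, rather than points.
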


To justify the definition of $\mathcal{Z}$, we need to show
 that the~$\mu_Z$ that occur in the sum  with a nonzero coefficient of a limit measure~$\mu$ are defined unambiguously.
 
 This is a consequence of the following:
 
 \begin{lem}
Any finite set of canonical measures~$\mu_Z$ is linearly independent.
\end{lem}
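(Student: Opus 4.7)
The plan is to exploit the ``orthogonality'' dichotomy already established in Lemma~\ref{lemme canonique}\eqref{mesure ortho}: for any pair of weakly $S$-special real submanifolds $Z_1, Z_2$, we have $\mu_{Z_1}(Z_2) = 1$ if $Z_1 \subseteq Z_2$ and $\mu_{Z_1}(Z_2) = 0$ otherwise. This is almost exactly an upper-triangular structure on the ``Gram matrix'' $(\mu_{Z_i}(Z_j))_{i,j}$ once we reorder the $Z_i$ by inclusion, from which linear independence follows.

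More concretely, suppose we have a dependence $\sum_{i=1}^{n} c_i \mu_{Z_i} = 0$ for pairwise distinct weakly $S$-special real submanifolds $Z_1, \ldots, Z_n$. I would argue by induction on $n$. The case $n=1$ is trivial since each $\mu_{Z_i}$ is a nonzero probability measure. For the inductive step, invoke Corollary~\ref{length} (the inclusion-chains of such submanifolds have bounded length) to pick an element $Z_j$ which is minimal for inclusion among $\{Z_1,\ldots,Z_n\}$. Then for every $i \neq j$, distinctness together with minimality of $Z_j$ forbids $Z_i \subseteq Z_j$; Lemma~\ref{lemme canonique}\eqref{mesure ortho} therefore gives $\mu_{Z_i}(Z_j) = 0$, while $\mu_{Z_j}(Z_j) = 1$. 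Evaluating the dependence relation on the (Borel) set $Z_j$ yields $c_j = 0$, and the induction hypothesis applied to the remaining indices finishes the argument.

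I do not anticipate a substantive obstacle: the entire statement reduces to the dichotomy already proved, plus an essentially linear-algebra induction. The only point worth checking carefully is the measurability of $Z_j$ as a Borel subset of $\Sh_K(G,X)$, which is immediate since $Z_j$ is a real semi-analytic (in fact semi-algebraic on the uniformising cover) subset, hence Borel; and the fact that the dichotomy in Lemma~\ref{lemme canonique}\eqref{mesure ortho} is stated symmetrically enough for the above application, which it is, since we only need it in the direction ``$Z_i \not\subseteq Z_j \Rightarrow \mu_{Z_i}(Z_j) = 0$'', i.e.\ the contrapositive.
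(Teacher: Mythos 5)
Your proof is correct and takes essentially the same approach as the paper: both arguments rest entirely on the dichotomy of Lemma~\ref{lemme canonique}\,\eqref{mesure ortho} and the resulting triangular structure of the values $\mu_{Z_i}(Z_j)$, and both conclude by an induction made legitimate by the finite-chain-length Corollary. Your formulation (peel off a minimal element, evaluate the dependence relation at it) is a slightly cleaner rendering of the same idea than the paper's induction on dimension, but there is no substantive difference.
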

\begin{proof} Consider a linear combination~$\mu=\lambda_1\mu_{Z_1}+\ldots+\lambda_n\mu_{Z_n}$. We may compute~$\mu(Z)$ by using~\ref{lemme canonique}\,\eqref{mesure ortho}. It follows that we recover the coefficient of~$\mu_Z$ as the measure~$\mu(Z)$ minus the coefficients associated with subvarieties in~$Z$. To see it is well defined, we argue by induction on the dimension of~$Z$ to check that we thus obtain only finitely many non zero coefficients, because these agree with the~$\lambda_i$. We refer to~Corollary~\ref{lemme chains} for justification why this induction is legit.

So the coefficient of~$\mu_Z$ in~$\mu$ is uniquely defined.
\end{proof}

We proved that \ref{Theoreme1} implies $(1)$ and $(2)$ of Theorem \ref{Theoreme2}.
We split the proof of ~\ref{Scholie3} into two lemmas below. Lemma~\ref{lem_sh3} is
\ref{Scholie3}.

\begin{proof}[of  Proposition~\ref{Sorite}]

The statement~\ref{Scholie1} is a direct consequence of  Theoreme \ref{Theoreme1}.

The statement \ref{Scholie2} is Corollary~\ref{coro max} from the previous section.

To prove \ref{Scholie3} we prove two lemmas. The statement \ref{Scholie3} is lemma
~\ref{lem_sh3}.

\begin{lem}
The set~$\mathcal{Z}$ is countable.
\end{lem}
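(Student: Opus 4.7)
My plan is to exhibit an injection from $\cZ$ into the countable set $\mathcal{L} \times \Sigma_E$, where $\mathcal{L}$ denotes the set of $\QQ$-subgroups of $G$ of $S$-Ratner class, and $\Sigma_E = \Gal(\ol{E}/E) \cdot \Sigma$. Both targets are countable: $\mathcal{L}$, because algebraic $\QQ$-subgroups of $G$ correspond to finitely generated ideals of the countable Hopf algebra $\QQ[G]$; and $\Sigma_E$, as a countable union of finite Galois orbits. It suffices therefore to show that each $Z \in \cZ$ admits a representation $Z = L(\RR)^+ \cdot s$ for some $L \in \mathcal{L}$ and $s \in Z \cap \Sigma_E$.

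To establish this when $Z$ is maximal in $\cZ$, let $\mu_\infty = \sum_{Z' \in \cF} a_{Z'} \mu_{Z'}$ be the decomposition of a subsequential limit of $(\mu_n)$ containing $\mu_Z$ with coefficient $a_Z > 0$, and consider the open subset $U_Z = Z \setminus \bigcup_{Z' \in \cF \setminus \{Z\}} Z'$ of $Z$. By maximality of $Z$ in $\cF$ and by Lemma~\ref{lemme canonique}\eqref{mesure ortho}, only $\mu_Z$ contributes to the mass of $\mu_\infty$ on $U_Z$, so $\mu_\infty(U_Z) = a_Z > 0$. Choosing a generic point $p \in U_Z$ and a sufficiently small open neighborhood $V$ of $p$ in $\Sh_K(G,X)$ disjoint from every $Z' \in \cF \setminus \{Z\}$ (which is possible since the $Z'$ are closed and exclude $p$), the Portmanteau theorem and the inner property~\eqref{TheoremeInner} of Theorem~\ref{Theoreme1} give $\liminf_k \mu_{n_k}(V) \geq \mu_\infty(V) > 0$, hence $\Supp(\mu_{n_k}) \cap V \neq \emptyset$ for some $k$; the resulting point lies in $Z \cap \Sigma_E$, since $\Supp(\mu_{n_k}) \subseteq \bigcup_{Z' \in \cF} Z'$ and $V$ meets only $Z$ among these.

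For non-maximal $Z \in \cZ$, one has $Z \subsetneq Z''$ for some $Z'' \in \mathcal{M}$ by Corollary~\ref{coro max}. I would then iterate the argument inside the ambient Shimura subvariety $Z''$, restricting and renormalizing the measures $\mu_{n_k}$ to $Z''$ and invoking downward induction on $\dim Z$, together with the countability of Ratner-class $\QQ$-subgroups of the parametrizer of $Z''$. The principal obstacle is precisely this non-maximal case: points of $\Sigma_E$ accumulating on $Z$ could lie in a strictly larger $Z' \supsetneq Z$ generic within $Z'$ rather than in $Z$ itself, so the naive Portmanteau argument no longer produces a point of $\Sigma_E$ inside $Z$. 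Overcoming this requires the dimensional theory of Section~\ref{subsec dimension} together with a careful relative analysis inside each maximal containing submanifold, so that mass genuinely concentrated on $Z$ is distinguished from mass that merely accumulates on $Z$ via the absolutely continuous part of $\mu_{Z'}$ for $Z' \supsetneq Z$.
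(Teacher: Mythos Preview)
Your argument for maximal $Z\in\cZ$ is correct, but the non-maximal case is a genuine gap, and the induction you sketch (``restricting and renormalizing the measures $\mu_{n_k}$ to $Z''$ and invoking Theorem~\ref{Theoreme1} inside $Z''$'') is not well-founded: there is no reason the restricted measures should again be of the form covered by Theorem~\ref{Theoreme1}, nor that its hypotheses are inherited by the parametrizer of~$Z''$. You yourself identify the obstruction --- mass near a non-maximal $Z$ may live in a strictly larger $Z'\supsetneq Z$ --- and nothing in your outline overcomes it.

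More importantly, the whole Portmanteau detour is unnecessary. The paper's proof is two lines: every $Z\in\cZ$ is, by the \emph{construction} in the proof of Theorem~\ref{Theoreme1}, of the form $Z_{L,(h',t')}$ where $L$ ranges over the countable set of $\QQ$-subgroups of $S$-Ratner class, and the base point $\overline{(h',t')}$ can be taken in the $S$-Hecke orbit $\Hcal_S(s)$, which is itself countable (it is a quotient of $G_S/K_S$). The point is that in Sections~\ref{Reduction}--\ref{SectionRZ} the translating elements $g'_n$, as well as $\Omega$, $g_\infty$, $n_\infty$, all end up in $G^{\der}(\QQ_S)$ (trivial at the real place), so when one pushes forward via $\pi_{(h,tK)}$ the resulting real submanifolds are automatically based at points with the \emph{fixed} archimedean coordinate $h$ and with finite-ad\`elic coordinate in $G_S\cdot t$. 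This parametrisation by countable data is what gives countability directly; there is no need to locate a point of $\Sigma_E$ inside each $Z$ by a limiting argument.

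In short: your approach proves the right thing for maximal~$Z$ but by a route that does not extend, while the paper's argument sidesteps the difficulty entirely by reading the countable parametrisation off the structure of the limit formula rather than off the supports of the~$\mu_n$.
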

\begin{proof} Every element of~$\mathcal{Z}$ can be associated with the data
\begin{itemize}
\item of some group of Ratner class, which is a algebraic subvariety over~$\QQ$ of~$G$, hence belong to a countable class;
\item of some point in the $S$-Hecke orbit~$\Sigma$, which is countable.
\end{itemize}
As there only finitely many possibilities for these data, we can construct at most countably many elements in~$\mathcal{Z}$.
\end{proof}

\begin{lem}\label{lem_sh3}
The set~$\mathcal{M}$ is finite.
\end{lem}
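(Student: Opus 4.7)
Proceed by contradiction. Suppose $\mathcal{M}$ is infinite and enumerate $\mathcal{M}=\{Z_i\}_{i\geq 1}$ with the $Z_i$ pairwise distinct. The strategy is to construct, by a diagonal extraction, a converging subsequence of $(\mu_n)$ whose inner limit support would have to involve infinitely many of the $Z_i$, contradicting the finiteness of the family $\mathcal{F}$ in Theorem~\ref{Theoreme1}.

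A preliminary step is the populating observation: for each $Z\in\mathcal{M}$, infinitely many $n$ satisfy $\mathrm{Supp}(\mu_n)\cap Z\neq\emptyset$. By definition of $\mathcal{Z}$, some subsequence $(\mu_n)_{n\in N_Z}$ converges to $\mu^{(Z)}_\infty=\sum_{Z'\in\mathcal{F}^{(Z)}}\lambda^{(Z)}_{Z'}\mu_{Z'}$ with $Z\in\mathcal{F}^{(Z)}$ and $\lambda^{(Z)}_Z>0$. Since $Z$ is maximal in $\mathcal{Z}$, no $Z'\in\mathcal{F}^{(Z)}$ strictly contains $Z$, so by Lemma~\ref{lemme chains} the set $Z\cap\bigcup_{Z'\in\mathcal{F}^{(Z)},\,Z'\neq Z}Z'$ is a proper closed subset of $Z$ of $\mu_Z$-measure zero. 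Choosing an open $V\subset\Sh_K(G,X)$ with $V\cap\bigcup\mathcal{F}^{(Z)}=V\cap Z\neq\emptyset$ and $\mu^{(Z)}_\infty(V)>0$, weak convergence together with the inner property~\eqref{TheoremeInner} forces $\emptyset\neq\mathrm{Supp}(\mu_n)\cap V\subseteq Z$ for infinitely many $n\in N_Z$.

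Next, inductively pick $n_1<n_2<\cdots$ with $\mathrm{Supp}(\mu_{n_i})\cap Z_i\neq\emptyset$, and apply Theorem~\ref{Theoreme1} to the subsequence $(\mu_{n_i})$ to extract a converging sub-subsequence $(\mu_{n_{i_k}})\to\mu^*=\sum_{Z\in\mathcal{F}^*}\lambda^*_Z\mu_Z$ with $\mathcal{F}^*$ finite and $\mathrm{Supp}(\mu_{n_{i_k}})\subseteq\bigcup\mathcal{F}^*$ for every $k$. Picking $q_k\in\mathrm{Supp}(\mu_{n_{i_k}})\cap Z_{i_k}$, one has $q_k\in\bigcup\mathcal{F}^*$; by pigeonhole and a further extraction, a fixed $Z^*\in\mathcal{F}^*$ satisfies $q_k\in Z^*\cap Z_{i_k}$ for all $k$. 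The maximality of $Z_{i_k}$ in $\mathcal{Z}$ combined with $Z^*\in\mathcal{Z}$ rules out $Z^*=Z_{i_k}$ (which can hold for at most one $k$) and $Z_{i_k}\subsetneq Z^*$; after one more extraction each $Z_{i_k}$ either properly contains $Z^*$ or is incomparable with $Z^*$.

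The main obstacle is the final rigidity step: one must rule out the existence of infinitely many pairwise distinct maximal $Z_{i_k}\in\mathcal{M}$, each meeting a fixed $Z^*\in\mathcal{Z}$. The plan is to exploit the algebraic structure: writing $Z_{i_k}=Z_{L_k,(h_k,t_k)}$ and $Z^*=Z_{L^*,(h^*,t^*)}$, the relation $q_k\in Z^*\cap Z_{i_k}$ forces the $S$-Ratner-class subgroups $L_k$ into one of countably many $\QQ$-conjugacy classes and their base points into finitely many classes modulo the relevant arithmetic subgroup. Combined with the chain-length bound of Corollary~\ref{length} and a second diagonal extraction centered on $Z^*$ (reducing to a strictly lower-dimensional situation), one obtains an induction on $\dim\Sh_K(G,X)-\dim Z^*$ that eventually forces the $Z_{i_k}$ to lie in only finitely many possibilities, contradicting the infinitude of $\mathcal{M}$.
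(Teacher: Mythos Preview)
Your argument has a genuine gap at the final ``rigidity step''. After your extraction you have infinitely many pairwise distinct maximal $Z_{i_k}\in\mathcal{M}$, each meeting a fixed $Z^*\in\mathcal{Z}$ at some point $q_k$. But there is no general finiteness principle for weakly $S$-special real submanifolds passing through a given point, or meeting a given one; indeed, many such submanifolds can share a point without any algebraic constraint forcing them to coincide. Your proposed induction on $\dim\Sh_K(G,X)-\dim Z^*$ has no clear inductive step: reducing to a ``lower-dimensional situation'' centered on $Z^*$ does not force the $Z_{i_k}$ into finitely many classes, since they may be incomparable with $Z^*$ and of arbitrary shape away from $q_k$. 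The appeal to $\QQ$-conjugacy classes of the groups $L_k$ also gives nothing: there are only countably many such classes, but that was already known and does not yield finiteness.

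The paper's proof bypasses intersection analysis entirely. The key idea you are missing is \emph{measure orthogonality of distinct maximal elements}: for $M_i\neq M_j$ in $\mathcal{M}$, neither contains the other, so Lemma~\ref{lemme canonique}\,\eqref{mesure ortho} gives $\mu_{M_i}(M_j)=0$. Setting $S_n=\Sh_K(G,X)\smallsetminus\bigcup_{i<n}M_i$, each $S_n$ is open with $\mu_{M_n}(S_n)=1$. For each $n$, pick from the subsequence witnessing $M_n\in\mathcal{Z}$ a term $\nu_n$ with $\nu_n(S_n)>0$; then $\mathrm{Supp}(\nu_n)\not\subseteq M_1\cup\cdots\cup M_{n-1}$. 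Consequently no finite union of elements of $\mathcal{M}$ (hence no $S\in\mathcal{S}$, by~\ref{Scholie2}) supports infinitely many of the $\nu_n$. Applying Theorem~\ref{Theoreme1} to the diagonal sequence $(\nu_n)$ and invoking the inner property~\eqref{TheoremeInner} immediately yields the contradiction. So the correct move is not to track where the supports \emph{meet} the $Z_i$, but where they \emph{escape} the already-listed maximal elements.
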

\begin{proof}[Proof of the last claim.] Assume for contradiction that~$\mathcal{M}$ is infinite. It is countable. Hence we can we can arrange its elements as a sequence $\mathcal{M} = (M_n)_{n\geq 0}$ such that the $M_n$ are the distinct maximal (for inclusion) elements of $\cZ$. We arrange~$\cZ$ likewise in a sequence~$(Z_n)_{n\geq0}$.

Define~$S_n=\Sh_K(G,X)\smallsetminus \bigcup_{i<n} M_i$. This is an open subset.
By maximality of the~$M_i$, we have~$\mu_{M_i}(M_j)=0$ whenever~$i\neq j$ by~Lemme~\ref{lemme canonique}\,\eqref{mesure ortho}. Hence~$S_n$ is of full measure for~$\mu_{M_n}$.

We will use a diagonal argument.

By definition, there is a convergent subsequence, say~$(\mu^{(n)}_m)_{m\geq 0}$, of the sequence~$(\mu_m)_{m\geq 0}$ such that its limit, say~$\mu^{(n)}_\infty$, admits~$\mu_{M_n}$ as a component, with some non zero coefficient~$\lambda_n$.
By convergence, there is some~$N_n$ such that for~$m\geq N_n$ we have~$\mu^{(n)}_m(S_n)\geq\lambda_n/2$.
Write~$\mu^{(n)}_{N_n}$. Consequently~${\rm Supp}\nu_n$ is not included in~$M_1\cup\ldots \cup M_{n-1}$. We deduce that no finite union of subsets~$M$ from~$\mathcal{M}$ can support infinitely many of the~$\nu_n$. As any~$Z$ from~$\mathcal{Z}$ is contained in some~$M$ from~$\mathcal{M}$, by~\eqref{Scholie2}, no finite union of such~$Z$  can support infinitely many of the~$\nu_n$. A fortiori no~$S$ from~$\mathcal{S}$  can support infinitely many of the~$\nu_n$.

But, by Theorem~\ref{Theoreme1} we may extract a subsequence from~$(\nu_n)_{n\geq0}$ which is converging, say with limit~$\nu’_\infty$, satisfying the conclusions of Theorem~\ref{Theoreme1}, and
\[
{\rm Supp}(\nu’_n)\subseteq {\rm Supp}(\nu’_\infty)\in\mathcal{S}.
\]
This yields a contradiction.
\end{proof}

The statement~\ref{Scholie4} is obvious: the topological closure of~$\Sigma_E$ is a closed subset containing 
the support of the~$\mu_n$, and hence contains the support of any limit of a subsequence of~$(\mu_n)_{n\geq0}$.

Finally, to prove~\ref{Scholie5},
assume for contradiction that there exists an infinite subsequence $(s_n)$ of points of the set $\Sigma_E$ which are
not in $\bigcup_{Z\in \mathcal{M}}Z$. Let $(\mu_n)$ be the associated sequence of measures as defined in \ref{measure}.
 By theorem \ref{Theoreme1},
after possibly extracting a subsequence, we may assume that $(\mu_n)$ converges to a measure $\mu$ whose support contains ${\rm Supp}(\mu_n)$ for all $n$.
By definition, we have that  
$$s_n\in{\rm Supp}(\mu_n)\subseteq{\rm Supp}(\mu) \subset \bigcup_{S\in \mathcal{S}} S=\bigcup_{Z\in \mathcal{Z}} Z=\bigcup_{Z\in \mathcal{M}} Z.
$$
This contradicts the choice of $s_n$.

The statement~\ref{Scholie6} follows directly from \ref{Scholie5} and
statement \ref{Scholie7} follows from \ref{Scholie6} and the fact that 
the Zariski closure of a weakly $S$-special manifold is a weakly special subvariety (Proposition \ref{Ax}).

We have finished proving proposition~\ref{Sorite} and hence Theorem \ref{implication}.

\end{proof}

%%%%%%%%%%%%%%%%%%%%%%%%%%%%%%%%%%%%%%%%%%%%%%%%%%%%%
% !TEX root = RichardYafaevDecok.tex
% !TeX spellcheck = enGB

\section{Review of the Galois monodromy representations.}\label{SectionGalois}
\subsection{Construction of representations.}

In this section we recall the construction of Galois representations attached to points in Hecke orbits, and then specialise to $S$-Hecke orbits. 

The contents of this section are mostly taken from Section 2 of \cite{1-UY1}. There it
was assumed that $E$ was a number field, however all arguments carry over 
verbatim to an arbitrary field of characteristic zero.

Let $\Sh(G,X)$ be the Shimura variety of infinite level, it is the profinite cover
\[
	\Sh(G,X)(\CC)=\varprojlim_K \Sh_K(G,X)
\]
with respect to the finite maps induced by the inclusions of compact open subgroups.
By Appendix to \cite{1-UY1}, the centre $Z$ of $G$ has the property that 
$Z(\QQ)$ is discrete in $G(\AAA_f)$.
It follows (Theorem 5.28 of \cite{1-Milne}), we have
$$
\Sh(G,X)(\CC)=G(\QQ)\backslash\left(X\times G(\AAA_f)\right).
$$
The scheme $\Sh(G,X)$ is endowed with the right action of $G(\AAA_f)$
which is defined over the reflex field $E(G,X)$.

We let 
$$
\pi \colon \Sh(G,X) \lto \Sh_K(G,X)
$$
be the natural projection.
The Shimura varieties $\Sh(G,X)$ and $\Sh_K(G,X)$ are defined over the reflex field $E(G,X)$ and so is the map $\pi$.

Let $s = \overline{(h,t)}$ be a point of $\Sh_K(G,X)$ defined over a field $E$. 
Lemma 2.1 of \cite{1-UY1} shows that the fibre $\pi^{-1}(s)$ has a 
transitive fixed point free right action of $K$. Explicitly
$$
\pi^{-1}(s) = \overline{(h,tK)}
$$
and the action of $K$ is the obvious one.
Furthermore, $\Gal(\overline{E}/E)$ acts on $\pi^{-1}(s)$ and this action commutes
with that of $K$. This is a consequence of the theory of canonical models of Shimura varieties (see \cite{1-Milne} and \cite{1-Deligne}).
By elementary group theoretic Lemma 2.4 of \cite{1-UY1}, we obtain a
morphism 
$$
\rho_{s} \colon \Gal(\overline{E}/E) \lto K
$$ 
such that the Galois action on $\pi^{-1}(s)$ is described as follows:
$$
\sigma((h, tk)) = (h, k \rho_s(\sigma)).
$$
This representation is continuous since an open subgroup $K'$ of $K$ is of finite index
and hence $\rho_{s}^{-1}(K')$ contains $\Gal(\overline{E}/F)$ for a finite extension
$F/E$.

The representation $\rho_{s}$ has the following fundamental property.
Let $M$ be the Mumford-Tate group of $h$. To $M$ one associates the Shimura datum
$(M,X_M)$ with $X_M = M(\RR) \cdot h$. Let $E_M$ be the reflex field
of $(M,X_M)$ and $E'$ the subfield of $\ol{E}$ generated by $E$ and $E_M$.

\begin{prop}
We have
$$
\rho_{s}(\Gal(\overline{E}/E')) \subset M(\AAA_f) \cap K.
$$
\end{prop}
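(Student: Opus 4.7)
The plan is to combine the functoriality of canonical models for the inclusion of Shimura data $(M, X_M) \hookrightarrow (G, X)$ with the $E(G, X)$-rationality of the right $G(\AAA_f)$-action on $\Sh(G, X)$.

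First, I would invoke Deligne's theorem on canonical models (see \cite{1-Milne}, \cite{1-Deligne}): the inclusion of Shimura data induces a morphism
\[
i \colon \Sh(M, X_M) \lto \Sh(G, X),
\]
which is $M(\AAA_f)$-equivariant and defined over the compositum $E_M \cdot E(G, X) \subseteq E'$. At the infinite level, the point $\tilde{s}_0 := \ol{(h, 1)} \in \Sh(G, X)$ is the image under $i$ of the analogous point $\ol{(h,1)}_M$ of $\Sh(M, X_M)$; moreover $\tilde{s} = \tilde{s}_0 \cdot t$, the right action of $t \in G(\AAA_f)$ being defined over $E(G, X) \subseteq E'$.

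Second, I would track the Galois action. For $\sigma \in \Gal(\ol{E}/E')$, both $i$ and the right translation by $t$ are $E'$-rational, hence commute with $\sigma$. This forces $\sigma(\tilde{s}_0) \in i(\Sh(M, X_M))$ and $\sigma(\tilde{s}) = \sigma(\tilde{s}_0) \cdot t$. Combined with the defining property $\sigma(\tilde{s}) = \tilde{s} \cdot \rho_s(\sigma)$ (via the free transitive $K$-action on $\pi^{-1}(s)$), I obtain the identity $\sigma(\tilde{s}_0) = \tilde{s}_0 \cdot (t \rho_s(\sigma) t^{-1})$, which says that the element $t\rho_s(\sigma)t^{-1}$ translates $\tilde{s}_0$ back into $i(\Sh(M, X_M))$.

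The hard part will be the final identification: extracting from the above that $\rho_s(\sigma)$ lands in $M(\AAA_f) \cap K$ and not merely in the a priori larger subgroup obtained by intersecting the stabiliser of $i(\Sh(M, X_M))$ with $K$. A naive computation only yields $t\rho_s(\sigma)t^{-1} \in N_G(M)(\QQ) \cdot M(\AAA_f) \subseteq N_G(M)(\AAA_f)$, coming from the possibility of representatives $(\gamma h', \gamma m)$ with $\gamma \in G(\QQ)$ normalising $M$. To eliminate this ``normaliser noise'' I would rely on three ingredients: (a) the minimality of $M$ as Mumford-Tate group of $h$, which ensures that any Galois translate of $\tilde{s}_0$ within $i(\Sh(M, X_M))$ must stay in the $M(\QQ)$-orbit of $h$, since otherwise $h$ would factor through a strictly smaller $\QQ$-algebraic subgroup of $G$; (b) the neat assumption on $K$, which kills the spurious discrete stabilisers and gives a free action on $\pi^{-1}(s)$; and (c) careful bookkeeping of the left/right action conventions to convert the conclusion ``$\sigma(\tilde{s}_0) = \tilde{s}_0 \cdot m$ for some $m \in M(\AAA_f)$'' into the stated containment $\rho_s(\sigma) \in M(\AAA_f) \cap K$. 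Steps (a) and (b) are the crux; step (c) is essentially bookkeeping.
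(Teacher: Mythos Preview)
The paper does not prove this proposition; it simply cites Proposition~2.9 of \cite{1-UY1} and remarks that the number-field hypothesis there is not actually used. Your plan --- exploiting the $E'$-rationality of $i\colon\Sh(M,X_M)\to\Sh(G,X)$ together with that of the right $G(\AAA_f)$-action --- is exactly the strategy of the cited reference, so you are reconstructing that argument rather than offering an alternative.

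Your identification of the ``hard part'' is accurate, but ingredient (a) as you phrase it does not close the gap: minimality of $M$ shows only that any $\gamma\in G(\QQ)$ carrying $h$ into $X_M$ normalises $M$ (since $h$ then factors through $M\cap\gamma^{-1}M\gamma$, forcing equality by dimension), which is precisely the normaliser ambiguity you are trying to eliminate. What actually works is a slightly different observation: the $M(\AAA_f)$-orbit of $\tilde s_0$ inside $\Sh(M,X_M)$ is itself $\Gal(\overline E/E')$-stable, because the right $M(\AAA_f)$-action is $E_M$-rational; hence $\sigma(\tilde s_0)$ admits a representative $(h,m)$ with the \emph{same} $h$ and some $m\in M(\AAA_f)$. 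Comparing with $\sigma(\tilde s_0)=\overline{(h,\rho_s(\sigma))}$ now produces $\gamma\in G(\QQ)$ with $\gamma h=h$, hence $\gamma\in Z_G(M)(\QQ)$ rather than merely $N_G(M)(\QQ)$. Since $Z_G(M)(\RR)$ sits in the compact-mod-centre stabiliser $K_h$ and $\gamma$ lies in a compact set at the finite places, $\gamma$ has finite order; neatness of $K$ --- your ingredient (b) --- together with the discreteness of $Z(G)(\QQ)$ in $G(\AAA_f)$ then forces $\gamma=1$, giving $\rho_s(\sigma)=m\in M(\AAA_f)\cap K$. So ingredients (b) and (c) are the right levers; it is the precise role you assign to (a) that needs adjustment.
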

\begin{proof}
This is Proposition 2.9 of \cite{1-UY1}. This proposition is
stated with $E$ a number field, however the proof goes through
without any changes for an arbitrary field of characteristic zero.
\end{proof}

The representation $\rho_{s}$ describes the action of $\Gal(\overline{E}/E)$ on the 
Hecke orbit $\Hcal(s)$.
Let $s' = \overline{(h, tg)}$ be a point of $\Hcal(s)$.
Consider the point  $\widetilde{s'} = (h, tg)$ of $\pi^{-1}(s)$.
Let $\sigma \in \Gal(\overline{E}/E)$.
Since the action of $G(\AAA_f)$ is defined over $E(G,X)$, we have
$$
\sigma( (h, tg))= (\sigma((h,t))\cdot g = (h, \rho_s(\sigma) t)g = (h, \rho_s(\sigma) tg)
$$
By applying $\pi$ to this relation and using the fact that $\pi$ is defined over $E(G,X)$,
we obtain:
$$
\sigma(\overline{(h, tg)}) = \overline{(h, \rho_s(\sigma) tg)}.
$$

We now describe the representation $\rho_{s,S}$ and Galois action the $S$-Hecke orbit
$\Hcal_S(s)$. 
Recall that $K$ is product of compact open subgroups $K_p$ of $G(\QQ_p)$.
Let $K_S := \prod_{p \in S} K_p$ and $K^S = \prod_{p \notin S} K_p$.
We denote by $p_S$ the projection map 
$$
p_S \colon G(\AAA_f) \lto G_S.
$$
Clearly $p_S(K) = K_S$.

\begin{defi} \label{defSGal}
We define 
$$
\rho_{s,S} := p_S \circ \rho_{s} \colon \Gal(\ol{E}/E) \lto K_S. 
$$ 
\end{defi}

Let 
$$
\Sh_{K^S}(G,X) = G(\QQ) \backslash X \times G(\AAA_f) / K^S.
$$
This is a scheme defined over $E(G,X)$ endowed with a continuous right $G_S$ action 
and a morphism $\pi^S \colon  \Sh(G,X) \lto \Sh_{K^S}(G,X)$ defined over $E(G,X)$.

The maps $\pi^S \colon  \Sh(G,X) \lto \Sh_{K^S}(G,X)$ and
$\pi_S \colon  \Sh_{K^S}(G,X) \lto \Sh_{K}(G,X)$ are defined over $E(G,X)$.
Furthermore,
$$
\pi = \pi_S \circ \pi^S.
$$

Contemplation of these properties and the properties of $\rho_{s,S}$ show the following:

\begin{teo} \label{Sadic}
The morphism 
$$
\rho_{s,S} \colon \Gal(\ol{E}/E) \lto K_S
$$ 
we have constructed above has the following properties:
\begin{enumerate}
\item
The representation $\rho_{s,S}$ is continuous.
\item
Let $s' = \overline{(h, tg)}$ (with $g \in G_S$) be a point in $\Hcal_S(s)$.
Then for any $\Gal(\ol{E}/E)$, we have 
$$
\sigma(s') = \ol{(h, \rho_{s,S}(\sigma)tg)}.
$$
\item
After replacing $E$ by $EE_M$, we have
$$
\rho_{s,S}(\Gal(\ol{E}/E)) \subset M(\AAA_f) \cap K_S = M(\QQ_S) \cap K
$$
(intersection taken inside $G(\AAA_f)$). 
\end{enumerate}
\end{teo}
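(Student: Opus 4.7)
The plan is to verify each of the three conclusions in turn. All three follow from the definition $\rho_{s,S} = p_S \circ \rho_s$ combined with the corresponding properties of $\rho_s$ established earlier in this section.

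For (1), $\rho_{s,S}$ is continuous as the composition of the continuous morphism $\rho_s$ with the projection $p_S \colon G(\AAA_f) \to G_S$, which is continuous for the restricted-product topology.

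For (2), I would proceed via the intermediate Shimura variety $\Sh_{K^S}(G,X)$, exploiting the factorisation $\pi = \pi_S \circ \pi^S$ where all maps are defined over $E(G,X)$. The fibre $\pi_S^{-1}(s) \subset \Sh_{K^S}(G,X)$ is a $K_S$-torsor for which the Galois action commutes with the right $K_S$-action (the latter being $E(G,X)$-defined). By the same torsor argument that produces $\rho_s$ from $\pi^{-1}(s)$, this yields an intrinsic monodromy $\Gal(\ol E/E) \to K_S$; using naturality under $\pi^S$ together with the fact that $\pi^S$ kills the $K^S$-component of any representative, this intrinsic monodromy is identified with $p_S \circ \rho_s = \rho_{s,S}$. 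Here the key elementary observation is that $G_S$ and $G^S$ commute elementwise in $G(\AAA_f)$, so that writing $\rho_s(\sigma) = u \cdot \rho_{s,S}(\sigma)$ with $u \in K^S$, the factor $u$ is absorbed in the passage $\pi^S$. The formula in (2) then follows: $s' = \ol{(h, tg)}$ lifts to $\ol{(h, tg)}_{K^S}$ in $\Sh_{K^S}(G,X)$, obtained from the base point $\ol{(h, t)}_{K^S}$ by the right $G_S$-action (defined over $E(G,X)$, hence commuting with $\sigma$), and pushing the computation of $\sigma(\ol{(h, tg)}_{K^S})$ down through $\pi_S$ yields the stated formula.

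For (3), after replacing $E$ by the compositum $EE_M$, the Proposition proved just before the theorem yields $\rho_s(\Gal(\ol E/E)) \subseteq M(\AAA_f) \cap K$. Applying $p_S$, and noting that $p_S$ sends $M(\AAA_f)$ into $M(\QQ_S)$ and $K$ into $K_S$, gives the required containment $\rho_{s,S}(\Gal(\ol E/E)) \subseteq M(\QQ_S) \cap K_S = M(\AAA_f) \cap K_S$.

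The main obstacle, in my view, is the identification in step (2) of $p_S \circ \rho_s$ with the intrinsic monodromy of the $K_S$-torsor $\pi_S^{-1}(s)$ and the translation of this into the stated left-multiplication formula; one must reconcile the natural right-$K_S$-action on the torsor with the left-multiplication convention used in the statement of~\eqref{Galois carac}. Working through the intermediate Shimura variety $\Sh_{K^S}(G,X)$, rather than attempting to directly manipulate representatives in $\Sh_K(G,X)$, isolates the $K^S$-absorption cleanly and avoids ad hoc computations on elements of $G(\AAA_f)$.
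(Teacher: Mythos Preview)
Your proposal is correct and matches the paper's intended argument; the paper itself gives no detailed proof, writing only ``Contemplation of these properties and the properties of~$\rho_{s,S}$ show the following'', so you have filled in exactly the details the authors left implicit. Your handling of~(2) via the intermediate variety~$\Sh_{K^S}(G,X)$ and the commutation of~$G_S$ with~$G^S$ is precisely the machinery the paper sets up just before the theorem, and your treatment of~(1) and~(3) is the straightforward deduction from the corresponding properties of~$\rho_s$.
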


\subsection{Properties of $S$-Galois representations.} 

In this section we examine in detail the properties \ref{ProprietesGaloisiennes}.

\subsubsection{General assumptions.} \label{assumptions}

For the sake of the ease of reading, we recall the general situation.
Let $S$ be a finite set of places of $\QQ$. Let $(G,X)$ be a Shimura datum,
normalised so that $G$ is the generic Mumford-Tate group on $X$
and $K$ a compact open subgroup of $G(\AAA_f)$ satisfying the following conditions:
\begin{enumerate}
\item $K$ is a product $K = \prod K_p$ of compact open subgroups 
$K_p \subset G(\QQ_l)$.
\item There eists an $l \geq 3$ and $l \notin S$ such that 
$K_l$ is contained in the group of elements congruent to the identity modulo $l$ 
with respect to some faithful representation $G \subset \GL_n$.
In particular, this implies that $K$ is neat.
\end{enumerate}

We let $s = \overline{(h,t)}$ a point of $\Sh_K(G,X)$ defined over a field 
$E$. 
We let $M$ be the Mumford-Tate group of $h$. 
In what follows, for ease of notation, we will write $M$ for $M_{\QQ_S}$
or $M(\QQ_S)$ when it is clear from the contex what is meant.

As we have seen earlier in this section, there exists a continuous ``$S$-adic monodromy representation''
(we have if necessary replaced $E$ with $EE_M$): 
$$
{\rho_{h,S}} \colon \Gal(\ol E / E) \lto M(\AAA_f) \cap K_S =M(\QQ_S)\cap K_S
$$
which has the  property that for any $t \in G_S$ and $\sigma \in \Gal(\ol E / E)$,
\begin{equation}\label{Galois carac}
\sigma\left(\ol{(h,tg)}\right) = \ol{(h, \rho(\sigma) \cdot tg)}.
\end{equation}

We let $U_S \subset M(\QQ_S)$ be the image of $\rho_{(h,S)}$
and $H_S$ the algebraic monodromy group i.e. the Zariski closure 
of $U_S$.
It is immediate that properties \ref{ProprietesGaloisiennes} are invariant by replacing $K$ by an open subgroup
and $E$ by a finite extension (or equivalently $U_S$ by an open subgroup).
After replacing $E$ by a finite extension, we assume that $U_S$ (and hence $H_S$)
are connected.

\subsubsection{The algebraicity property.}

In this section we consider the $S$-algebraicity property. All results of this section
are under the assumption that $E$ is of finite type over $\QQ$.

\subsubsection*{The centre of the $S$-adic monodromy.}

The first result is that when $E$ is of finite type, the $S$-Mumford-Tate property
(and hence the $S$-algebraicity) hold for abelian $S$-adic representations.

\begin{prop}\label{Prop E type fini abelien}
Let~$M^{der}$ be the derived group of~$M$ and~$M^{ab}=M/M^{der}$ its maximal abelian quotient.
We consider the quotient map
\[
\pi:M(\QQ_S)\to M^{ab}(\QQ_S).
\]
\begin{enumerate}
\item \label{Prop E 1} If~$s$ is of $S$-Tate type, then the centre~$Z(U_S)$ of~$U_S$ is contained in the centre of~$M(\QQ_S)$.
\item \label{Prop E 2} If~$s$ is of $S$-Tate type and $S$-simplicity holds, then we have~$\pi(Z(U_S))$ is open in $\pi(U_S)$.
\item \label{Prop E 3} If~$E$ is of finite type over~$\QQ$, then~$\pi(U_S)$ is open in~$M^{ab}(\QQ_S)$.
\end{enumerate}
% If~$E$ is of finite type over~$\QQ$, the image~$U_S$ of~$\rho_S$ contains an open subgroup of the centre~$Z(M)(\QQ_S)$
%of~$M(\QQ_S)$.
In particular, when $M$ is abelian (i.e. the point $s$ is special), the $S$-Mumford-Tate
property holds.
\end{prop}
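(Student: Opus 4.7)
The proof splits along the three assertions. The overall strategy is to combine Zariski density with the $S$-Tate identity for~\eqref{Prop E 1}; to pass from the group-theoretic centre to the algebraic centre and exploit the reductive structure of $H_S$ for~\eqref{Prop E 2}; and to invoke a Serre-style openness theorem for abelian $S$-adic representations for~\eqref{Prop E 3}.

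For~\eqref{Prop E 1}, any element of $Z(U_S)$ commutes with $U_S$ and, by Zariski density, with $H_S=H_S^0$ (after enlarging $E$ to make $U_S$ connected), so $Z(U_S)\subseteq Z_{M(\QQ_S)}(H_S^0)\subseteq Z_{G_S}(H_S^0)$. The $S$-Tate hypothesis identifies the latter with $Z_{G_S}(M)$; intersecting with $M(\QQ_S)$ yields $Z(U_S)\subseteq Z(M(\QQ_S))$, the centre of $M(\QQ_S)$.

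For~\eqref{Prop E 2}, the first step is to upgrade the above to the algebraic centre: the same centralizer computation, applied to $Z(H_S)$ in place of $Z(U_S)$, gives $Z(H_S)\subseteq Z(M)$. Since $Z(M)\cap M^{der}$ is finite and $H_S=Z(H_S)^0\cdot H_S^{der}$ by the reductivity of $H_S$, the map $\pi|_{Z(H_S)^0}\colon Z(H_S)^0\to\pi(H_S)$ is a surjective isogeny (using $\pi(H_S^{der})=1$). The desired openness of $\pi(Z(U_S))$ in $\pi(U_S)$ reduces to the Lie-algebra identity $d\pi(\Lie(Z(U_S)))=d\pi(\Lie(U_S))$, equivalently to the decomposition $\Lie(U_S)=Z(\Lie(U_S))\oplus(\Lie(U_S)\cap\Lie(H_S^{der}))$. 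I expect this to be the main technical obstacle. The plan is to show that $\Lie(U_S)$ is a reductive Lie algebra: since $U_S$ is compact, the adjoint image $\Ad(U_S)$ is a compact subgroup of $\GL(\Lie(U_S))$ whose Zariski closure is the reductive $\Ad(H_S)$ restricted to $\Lie(U_S)$, making the adjoint representation of $U_S$ on $\Lie(U_S)$ completely reducible and forcing $\Lie(U_S)=Z(\Lie(U_S))\oplus[\Lie(U_S),\Lie(U_S)]$ with $[\Lie(U_S),\Lie(U_S)]\subseteq\Lie(H_S^{der})$.

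For~\eqref{Prop E 3}, the composition $\pi\circ\rho_{h,S}\colon\Gal(\overline E/E)\to M^{ab}(\QQ_S)$ is an abelian $S$-adic representation into a torus. When $E$ is of finite type over $\QQ$, one invokes the classical openness theorem for abelian $S$-adic representations coming from Shimura data: the composition of $h$ with $M\to M^{ab}$ defines a cocharacter of $M^{ab}$ giving a special subdatum, whose Shimura reciprocity morphism has open image in $M^{ab}(\QQ_S)\cap K_S$; by naturality of the monodromy construction this image coincides with $\pi(U_S)$ up to finite index, establishing~\eqref{Prop E 3}. The ``in particular'' statement then follows: when $M$ is abelian one has $M=M^{ab}$ and $\pi$ is the identity, so~\eqref{Prop E 3} asserts that $U_S$ is open in $M(\QQ_S)$, i.e.\ the $S$-Mumford--Tate property.
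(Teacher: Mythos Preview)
Your arguments for parts~\eqref{Prop E 1} and~\eqref{Prop E 3} are essentially the paper's: the centraliser computation for~\eqref{Prop E 1}, and the reciprocity law for the zero-dimensional Shimura variety attached to~$M^{ab}$ for~\eqref{Prop E 3}.

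For~\eqref{Prop E 2} your Lie-algebra route is a valid alternative to the paper's group-theoretic argument (which shows that every element of~$\pi(U_S)$ has its $n$-th power in~$\pi(Z(U_S))$ for a uniform~$n$), but your justification of the key step contains a genuine error. You write that ``$\Ad(U_S)$ is a compact subgroup of~$\GL(\Lie(U_S))$ whose Zariski closure is the reductive~$\Ad(H_S)$ restricted to~$\Lie(U_S)$''. Two things go wrong here. First, compactness alone does \emph{not} give complete reducibility over~$\QQ_p$: the group~$\ZZ_p$ acting on~$\QQ_p^2$ by~$t\mapsto\begin{pmatrix}1&t\\0&1\end{pmatrix}$ is compact and not semisimple. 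Second, $H_S$ does not preserve~$\Lie(U_S)$ (that it does is exactly the $S$-algebraicity you are not assuming), so ``$\Ad(H_S)$ restricted to~$\Lie(U_S)$'' is undefined.

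The fix is to argue on~$\Lie(H_S)$ first. Since~$U_S$ is Zariski dense in the reductive group~$H_S$ (char.~$0$), a~$\QQ_S$-subspace of~$\Lie(H_S)$ is~$\Ad(U_S)$-stable iff it is~$\Ad(H_S)$-stable; hence~$\Lie(H_S)$ is semisimple as a~$U_S$-module. Now~$\Lie(U_S)$ is a~$U_S$-submodule (conjugation preserves~$U_S$), hence semisimple. Writing~$\Lie(U_S)=\Lie(U_S)^{U_S}\oplus W'$ with~$W'$ having no~$U_S$-invariants, one checks~$\Lie(U_S)^{U_S}=\Lie(Z(U_S))$, and since~$d\pi$ is~$U_S$-equivariant with trivial target action, the semisimple module~$W'$ (no trivial sub, hence no trivial quotient) maps to zero. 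Thus~$d\pi(\Lie(U_S))=d\pi(\Lie(Z(U_S)))$, which is what you need. Note that this bypasses the identification~$Z(\Lie(U_S))=\Lie(Z(U_S))$, which is another point where your sketch is imprecise in the $S$-adic setting.
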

The last remark is a straightforward consequence of the reciprocity law for canonical models of Shimura varieties.
We will rely on this law for the proof of~\eqref{Prop E 3}.
\begin{proof}
Let us prove~\eqref{Prop E 1}. The center of~$U_S$ is the intersection of~$U_S$ with its centraliser~$Z_{G_S}(U_S)$.
By the $S$-Tate property we have~$Z_{G_S}(U_S)=Z_{G_S}(M)$. We also have~$U_S\subseteq M(\QQ_S)$. It follows
\[U_S\subseteq M(\QQ_S)\subseteq M(\QQ_S)\cap Z_{G_S}(M)\]
but the latter is just the centre~$Z(M)$ of~$M$.

We now prove~\eqref{Prop E 2}. 
Let $x \in \pi(U_S)$. Write $x=\pi(y)$ for $y \in U_S$. 
Since semisimplicity holds, there exists an integer $n$ (depending only on $U_S$ but not $y$)
such that 
$$
y^n = z\cdot t
$$
with $z \in  Z_{G_S}(U_S)$ (by ~\eqref{Prop E 1}) and $t \in  U_S^{der} \subset M^{der}(U_S)$.
Thus $x^n = \pi(y^n) = \pi(z) \in \pi(Z(U_S))$. This proves ~\eqref{Prop E 2}.
 
We prove~\eqref{Prop E 3}.
As~$M$ is reductive, the restriction of~$\pi$ to~$Z(M)$ is an isogeny. Since $S$ is a finite set of primes,  it follows that the induced map~$Z(M)(\QQ_S)\to M^{ab}(\QQ_S)$
has finite kernel and an open image.
It will hence suffice to prove that the image of~$U_S\cap Z(M)$ is open in~$M^{ab}(\QQ_S)$. 

In the Shimura variety\footnote{Of dimension~$0$, related to the space of connected components of~$\Sh(G,X)$.}, associated with~$M^{ab}$ every point is a special point.
So is the image of~$s$. The associated Galois representation is
\[
 \pi\circ\rho_{s,S}: \Gal(\overline{E}/E)\to M(\QQ_S)\to M^{ab}(\QQ_S).
\]
By the reciprocity law for the canonical model of the Shimura variety associated with~$M^{ab}$, this representation
factors through 
\[
 \Gal(\overline{\QQ}/E(M,X_M))\to M^{ab}(\QQ_S).
\]
and the image of the latter is open (\cite[composantes connexes]{Deligne}).
As~$E$ is of finite type, the algebraic closure~$F$ of the number field~$E_ME(M,X_M)$ 
in~$E$ is finite. Then the image of~$\Gal(\overline{E}/E)$ in~$\Gal(\overline{\QQ}/E_M)$
is the open subgroup~$\Gal(\overline{\QQ}/F)$. It follows that the image in~$ M^{ab}(\QQ_S)$ of~$\Gal(\overline{E}/E)$
is open  (of index at most~$[F:E_M]$) in that of~$\Gal(\overline{\QQ}/E_M)$. This concludes the proof of the proposition.
\end{proof}

\subsubsection*{$S$-Tate property and $S$-algebraicity.}

We prove here that when $E$ is of finite type over $\QQ$, the $S$-Tate property implies the $S$-algebraicity.
Let~$\Fm$ be the Lie algebra
of~$M_{\QQ_S}$, let~$\Fu$ be the Lie algebra of~$U_S$, a $\QQ_S$-Lie subalgebra 
of~$\Fm$. Let~$H$ be the $\QQ_S$-algebraic envelope of~$U_S$, and~$\Fh$ its Lie algebra.

We refer to \cite{1-Serre} for the following.

\begin{prop} The following are equivalent:
\begin{itemize}
 \item the subgroup~$U_S$ of~$H(\QQ_S)$ is open (for the $S$-adic topology);
 \item the Lie algebra~$\Fu$ of~$U_S$ is an \emph{algebraic} Lie subalgebra of the Lie algebra~$\Fm$ of~$M$, in the sense of Chevalley as in~\cite[II.\S7]{BorelLAG};
 \item the Lie algebras~$\Fu$ of~$U_S$ and~$\Fh$ of~$H$ are the same:~$\Fu=\Fh$.
\end{itemize}
If these properties hold we say that~$U_S$ is an \emph{algebraic Lie subgroup}.
\end{prop}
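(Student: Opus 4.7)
The plan is to establish the three equivalences by passing between the $S$-adic Lie group~$U_S$, its Lie algebra~$\Fu\subseteq\Fm$, and the algebraic group~$H$ which by definition is the Zariski closure of~$U_S$ in~$M_{\QQ_S}$. First I would reduce to the case of a single prime~$p\in S$: since~$\QQ_S=\prod_{p\in S}\QQ_p$, all of~$M(\QQ_S)$, $H(\QQ_S)$, $U_S$, $\Fm$, $\Fu$ and~$\Fh$ decompose into products indexed by~$p$, and each of the three conditions is the conjunction of its prime-by-prime analogues. This reduces the statement to a standard fact in $p$-adic Lie theory.

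For a fixed~$p$, I would prove (1)$\Leftrightarrow$(3) using the correspondence between $p$-adic Lie subgroups and $p$-adic Lie subalgebras. The compact group~$U_S$ is a closed subgroup of the $p$-adic Lie group~$H(\QQ_p)$, hence itself a $p$-adic Lie subgroup with Lie algebra~$\Fu$, and one has~$\Fu\subseteq\Fh$. If~$U_S$ is open in~$H(\QQ_p)$, then both groups share the same Lie algebra at the identity, so~$\Fu=\Fh$. Conversely, if~$\Fu=\Fh$, then via the exponential map a neighbourhood of the identity in~$H(\QQ_p)$ lies inside~$U_S$; since~$U_S$ is a subgroup, this forces~$U_S$ to be open in~$H(\QQ_p)$.

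Then I would prove (2)$\Leftrightarrow$(3). By the correspondence of Chevalley (\cite[II.\S7]{BorelLAG}) between connected algebraic subgroups of~$M_{\QQ_p}$ and algebraic Lie subalgebras of~$\Fm$, the Lie algebra of the Zariski closure of~$U_S$ is the smallest algebraic Lie subalgebra of~$\Fm$ that contains~$\Fu$; in other words,~$\Fh$ is the algebraic hull of~$\Fu$ in the sense of Chevalley. Consequently, $\Fu$ is algebraic if and only if it coincides with its algebraic hull, that is, if and only if~$\Fu=\Fh$.

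The main subtlety, which is handled precisely by the theory of Serre and Chevalley alluded to in the citation, is the justification that taking the Zariski closure at the group level corresponds to taking the algebraic hull at the Lie algebra level. This rests on the fact that~$U_S$ contains an open $p$-adic neighbourhood of the identity whose Lie algebra is~$\Fu$, and that such a neighbourhood is Zariski-dense in~$H$; once this identification is in place, the three conditions become straightforward rephrasings of one another.
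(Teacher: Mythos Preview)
Your argument is correct and is the standard route through $p$-adic Lie theory and Chevalley's notion of algebraic Lie subalgebras. Note, however, that the paper does not actually give a proof of this proposition: it simply writes ``We refer to \cite{1-Serre} for the following'' and states the result. So there is no proof in the paper to compare against; your write-up supplies exactly the kind of argument one would extract from that reference, with the reduction to a single prime, the equivalence (1)$\Leftrightarrow$(3) via the local exponential, and (2)$\Leftrightarrow$(3) via the identification of~$\Fh$ with the algebraic hull of~$\Fu$.
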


\begin{prop} Assume that Lie algebras~$\Fu$ is of $S$-Tate type and satisfies~$S$-semisimplicity.
If~$E$ is an extension of finite type of~$\QQ$, then the Lie subgroup~$U_S$ is algebraic in the sense of the previous proposition, that is~$s$ satisfies the~$S$-algebraicity in the sense of definition~\ref{ProprietesGaloisiennes}\,\eqref{Algebraicity}.
\end{prop}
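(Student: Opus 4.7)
The plan is to prove $\Fu=\Fh$, which by the preceding proposition is exactly the $S$-algebraicity of $U_S$. Since $H_S$ is reductive by $S$-semisimplicity, decompose $\Fh=\Fz(\Fh)\oplus[\Fh,\Fh]$ and treat the two summands separately.

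For the derived summand I would invoke Chevalley's theorem (\cite[II.\S7]{BorelLAG}): the derived algebra of any linear Lie algebra is algebraic, and the formation of the algebraic hull commutes with the derived operation. Since $\Fh$ is the Lie algebra of the Zariski closure of $U_S$, it coincides with the algebraic hull $\Fu^{\mathrm{alg}}$, and therefore $[\Fh,\Fh]=[\Fu,\Fu]\subseteq\Fu$.

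For the central summand I would prove the chain $\Fz(\Fu)=\Fz(\Fh)=\Fz(\Fm)$ via three inclusions. (a) The $S$-Tate hypothesis read at the Lie algebra level gives $Z_{\Fg_S}(\Fh)=Z_{\Fg_S}(\Fm)$; intersecting with $\Fh\subseteq\Fm$ yields $\Fz(\Fh)\subseteq\Fz(\Fm)$. (b) Since the centraliser of a subset of an algebraic group equals the centraliser of its Zariski closure, $Z_{\Fg_S}(\Fu)=Z_{\Fg_S}(\Fh)$, whence $\Fz(\Fu)=\Fu\cap Z_{\Fg_S}(\Fu)\subseteq \Fh\cap Z_{\Fg_S}(\Fh)=\Fz(\Fh)$. (c) Parts~\eqref{Prop E 2} and~\eqref{Prop E 3} of Proposition~\ref{Prop E type fini abelien} combined imply that $\pi(Z(U_S))$ is open in $M^{\mathrm{ab}}(\QQ_S)$, whence $d\pi(\Fz(\Fu))=\Fm^{\mathrm{ab}}$. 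Since $M$ is reductive, $d\pi$ restricts to an isomorphism $\Fz(\Fm)\xrightarrow{\sim}\Fm^{\mathrm{ab}}$, and part~\eqref{Prop E 1} provides $\Fz(\Fu)\subseteq\Fz(\Fm)$; these two facts force $\Fz(\Fu)=\Fz(\Fm)$. Chaining (a), (b), (c) gives $\Fz(\Fh)=\Fz(\Fu)\subseteq\Fu$. Together with $[\Fh,\Fh]\subseteq\Fu$ and the reductive decomposition, this shows $\Fh\subseteq\Fu$, hence $\Fu=\Fh$.

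The main obstacle is inclusion (c): this is where the hypothesis ``$E$ of finite type'' is essential, since Proposition~\ref{Prop E type fini abelien}\,\eqref{Prop E 3} rests on the reciprocity law for the zero-dimensional Shimura variety attached to $M^{\mathrm{ab}}$. Without this input, the toral part of $U_S$ can be strictly smaller than that of $H_S$ and algebraicity fails, so the three hypotheses ($S$-Tate, $S$-semisimplicity, and finite type of $E$) are genuinely used simultaneously. A subsidiary technical point worth making precise is the identification $\Fu^{\mathrm{alg}}=\Fh$ and the compatibility of algebraic hull with the derived bracket; both are standard in \cite{BorelLAG} but deserve an explicit citation.
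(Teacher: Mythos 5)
Your proposal is correct and follows essentially the same route as the paper: both decompose into a derived part and a central part using $S$-semisimplicity, dispatch the derived part via Chevalley's algebraicity of $[\Fu,\Fu]$ (with $[\Fh,\Fh]=[\Fu,\Fu]$), and identify the central part with $\Fz_M$ using $S$-Tate for one inclusion and the reciprocity law via Proposition~\ref{Prop E type fini abelien} (where the finite-type hypothesis on $E$ enters) for the other. The only cosmetic difference is that the paper decomposes $\Fu$ and proves each summand algebraic, while you decompose $\Fh$ and prove each summand lies in $\Fu$; your step~(c) also spells out the surjectivity of $d\pi$ somewhat more carefully than the paper's terse ``$U_S$ contains an open subgroup of the centre of $M(\QQ_S)$''.
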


\begin{proof}We first use the $S$-semisimplicity to note that the adjoint action of~$\Fu$ on itself is semisimple,
that is~$\Fu$ is a reductive Lie algebra by the definition used in~\cite[I \S6.4 D\'{e}f.~4]{Lie1}. 
% Bourbaki Lie 1 1re éd. 1972 
It follows that  we may decompose~$\Fu$ as a direct sum~$\Fu=[\Fu,\Fu]+\Fz$ of its derived Lie algebra~$[\Fu,\Fu]$ and its centre~$\Fz$, by~\cite[Cor. (b) to Prop. 5]{Lie1}. It is enough to show that both~$[\Fu,\Fu]$ and~$\Fz$ are algebraic by~\cite[Cor. 7.7 (1) and (3)]{BorelLAG}. The derived Lie algebra~$[\Fu,\Fu]$ is algebraic by~\cite[Cor. 7.9]{BorelLAG}. 

It remains to prove that~$\Fz$ is an algebraic Lie subalgebra. 
We use the $S$-Tate type property, to note that the centraliser of~$U$ is included
in the  centraliser of~$M$. We infer~$\mathfrak{z}\subseteq\mathfrak{z}_M(\QQ_S)$. As~$E$ is of finite type, 
we may apply Prop.~\ref{Prop E type fini abelien}.
we have seen  that~$U_S$ contain an open subgroup of the centre of~$M(\QQ_S)$. We have
conversely~$\mathfrak{z}\supseteq \mathfrak{z}_M(\QQ_S)$. Finally,~$\mathfrak{z}$ is the algebraic lie subalgebra~$\mathfrak{z}_M$.
\end{proof}
\subsubsection*{N.B.:} This algebraicity statement is similar to Bogomolov's algebraicity result~\cite[Th.~1]{1-Bogo} for abelian varieties. The reduction to the case of abelian Lie algebras is very similar. We rely on $S$-Tate property and the theory of canonical models to treat the abelian case.

For similar algebraicity results see~\cite[133. Th. p.~4]{SerreOEuvres4}, and notably its subsequent Corollary.

%Recall that by $S$-simplicity we mean the property of $H_S$ 
%of being reductive and by $S$-algebraicity the property of $U_S$
%being open in $H_S$, or equivalently that of Lie algebra of $U_S$ 
%being algebraic (in the sense of Chevalley, cf. \cite[\S7]{BorelLAG}), as a~$\QQ_S$-Lie subalgebra of the Lie algebra of~$M_{\QQ_S}$).

\subsubsection{Characterisation of the $S$-Shafarevich property.}
We prove that~$S$-Shafarevich property~\ref{ProprietesGaloisiennes}\,\eqref{S-Sha} is equivalent to the conjonction of the $S$-semi-simplicity property~\ref{ProprietesGaloisiennes}\,\eqref{S-semisimple} and a weakening or the~$S$-Tate property (as defined in~\ref{ProprietesGaloisiennes}\,\eqref{S-Tate}). This is amounts to a group theoretic characterisation of the~$S$-Shafarevich property, which is essential for proving the main theorems of this paper and which, we believe, is also of independent interest.
%In this section we obtain a group theoretic characterisation of the $S$-Shafarevich property which in particular shows that the $S$-Shafarevich property is implied by the $S$-Tate property together with the $S$-semisimplicity. 

%Recall that $s$ has the
% \emph{$S$-Shafarevich property}~\ref{ProprietesGaloisiennes}\,\eqref{S-Sha} if for every finite extension~$F$ of $E$, there are only finitely many $F$-rational points in~$\Hcal_S(s)$.
%
%The point $s$ satisfies the
%\emph{$S$-Tate property}~\ref{ProprietesGaloisiennes}\,\eqref{S-Tate} if 
%~$M$ and~$H_S$ have the same centraliser in~$G_S$:
%$$
%Z_{G_S}(H_S) = Z_{G_S}(M).
%$$

%We place ourselves in the situation as in \ref{assumptions}.

%The main result of this section is a group theoretic characterisation of the $S$-Shafarevich property which is essential for proving the main theorems of this paper which, we believe, is also of independent interest.

\begin{prop} \label{charSha} Let~$s$ be a point of~$\Sh_K(G,X)$ and~$E$ a field of definition of~$s$ such that the associated $S$-adic monodromy representation~\[\rho:\Gal\left(\ol{E}/E\right)\to M(\AAA_f)\cap G_S\] is defined, and the image~$U_S$ of~$\rho$ is Zariski-connected. 
Let~$Z_{G_S}(M)$ and~$Z_{G_S}(U_S)$ denote the centraliser in~$G_S$ of the Mumford-Tate group~$M$ of~$s$ and of the image~$U_S$ of~$\rho$ respectively.

The point~$s$ is of $S$-Shafarevich type if and only if it satisfies $S$-semisimplicity and, furthermore,~$Z_{G_S}(M) \backslash Z_{G_S}(U_S)$ is compact.

In particular the $S$-Shafarevich property is implied by the conjunction of $S$-semisimplicity and $S$-Tate properties.
\end{prop}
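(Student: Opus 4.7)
My plan is to translate the $S$-Shafarevich condition into a group-theoretic finiteness statement about double cosets in $G_S$ and then prove both directions of the equivalence. Starting from the Galois formula $\sigma([h,tg])=[h,\rho(\sigma)tg]$ of Theorem~\ref{Sadic} and the identity $\Stab_{G(\QQ)}(h)=Z_G(M)(\QQ)$ (which holds by definition of the Mumford--Tate group), $[h,tg]$ is fixed by $\sigma$ iff there exist $q\in Z_G(M)(\QQ)$ and $k\in K$ with $q\rho(\sigma)tg=tgk$. Decomposing this equality over places, the places outside $S$ constrain $q$ to lie in the $S$-arithmetic subgroup $\Gamma:=Z_G(M)(\QQ)\cap\prod_{p\notin S}t_pK_pt_p^{-1}$, and the places in $S$ yield $\rho(\sigma)\in\Gamma_S\cdot\kappa(g)$, where $\Gamma_S$ denotes the image of $\Gamma$ in $Z_G(M)(\QQ_S)\subseteq G_S$ and $\kappa(g):=t_SgK_Sg^{-1}t_S^{-1}$. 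Since two elements $g,g'\in G_S$ give the same Hecke point iff they lie in the same $(t_S^{-1}\Gamma_St_S)$-$K_S$ double coset, $S$-Shafarevich becomes the statement that for every open finite-index subgroup $V$ of $U_S$, only finitely many double cosets $\Gamma_SgK_S$ satisfy $V\subseteq\Gamma_S\cdot\kappa(g)$.

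For the forward direction I would argue by contrapositive. Suppose first that $Z_{G_S}(M)\backslash Z_{G_S}(U_S)$ is non-compact: pick a sequence $z_n\in Z_{G_S}(U_S)$ escaping to infinity modulo $Z_{G_S}(M)$, set $g_n:=t_S^{-1}z_nt_S\in G_S$, and choose a finite $F/E$ large enough that $V:=\rho(\Gal(\overline{E}/F))\subseteq K_S\cap t_SK_St_S^{-1}$. Since $z_n$ commutes with every $\rho(\sigma)_S\in U_S$, a direct computation shows that $y_n^{-1}\rho(\sigma)y_n\in K$ for $y_n=tg_n$, so $[h,tg_n]$ is $F$-rational; the sequence $(z_n)$ escapes modulo $\Gamma_S\subseteq Z_{G_S}(M)$ and the compact $K_S$, so the points lie in infinitely many Hecke classes, contradicting $S$-Shafarevich. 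If instead $H_S^0$ fails to be reductive, its non-trivial unipotent radical $N=\Radu(H_S^0)$ receives a non-trivial image from $U_S$ under any splitting of the Levi decomposition, and conjugating $\kappa(g)$ by an unbounded family inside $N(\QQ_S)$ produces infinitely many compact open subgroups each containing a common open $V\subseteq U_S$ modulo $\Gamma_S$, again contradicting $S$-Shafarevich.

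For the backward direction, assume $H_S$ reductive and $Z_{G_S}(M)\backslash Z_{G_S}(U_S)$ compact. Fix an open subgroup $V\subseteq U_S$ of finite index, Zariski dense in $H_S^0$. If $V\subseteq\Gamma_S\cdot\kappa(g)$, then $V$ being compact is covered by finitely many $\gamma\kappa(g)$, so by Zariski density $H_S^0$ lies in the Zariski closure of this finite union; using reductivity of $H_S^0$, this pins $\kappa(g)$ modulo the centraliser $Z_{G_S}(H_S^0)=Z_{G_S}(U_S)$ down to a bounded set. Compactness of $Z_{G_S}(M)\backslash Z_{G_S}(U_S)$ then constrains $g$ to finitely many double cosets $\Gamma_SgK_S$, giving $S$-Shafarevich. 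The ``in particular'' claim follows at once: under $S$-Tate, $Z_{G_S}(U_S)=Z_{G_S}(H_S^0)=Z_{G_S}(M)$, so the centraliser quotient is trivial and \emph{a fortiori} compact.

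The genuinely hard step is the finiteness assertion in the backward direction: it requires combining Zariski density of $V$ in the reductive $H_S^0$ with the discreteness of $\Gamma_S$ and the compactness of the centraliser quotient, amounting to an $S$-arithmetic reduction-theoretic finiteness in the spirit of Borel's finiteness theorem for orbits of arithmetic groups on quasi-projective varieties.
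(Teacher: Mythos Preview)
Your translation of rationality into the double-coset language is sound, and the contrapositive for non-compactness of $Z_{G_S}(M)\backslash Z_{G_S}(U_S)$ matches the paper's argument. However, the backward direction---which you correctly flag as the hard step---has a genuine gap.

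You write that $V\subseteq\Gamma_S\cdot\kappa(g)$ implies $V$ is covered by finitely many cosets $\gamma_i\kappa(g)$, and then that ``by Zariski density $H_S^0$ lies in the Zariski closure of this finite union; using reductivity this pins $\kappa(g)$ modulo the centraliser down to a bounded set.'' The problem is that the $\gamma_i$ lie in $\Gamma_S\subseteq Z_G(M)(\QQ_S)$, not in $H_S$, so the translates $\gamma_i^{-1}V$ are \emph{not} subsets of $H_S^0$, and Zariski closure inside $H_S^0$ gives you nothing. Moreover the $\gamma_i$ depend on $g$ and are not \emph{a priori} bounded, so you cannot simply absorb them. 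The paper resolves this with a torsion trick you are missing: from $qu=gkg^{-1}$ with $q\in Z_G(M)(\QQ)$ one observes that all powers $q^n=u^{-n}\cdot gk^ng^{-1}$ lie in the compact set $U_S\cdot gKg^{-1}$ while $Z_G(M)(\QQ)$ is discrete in $G(\AAA_f)$, forcing $q$ to be torsion of uniformly bounded order $N$. This kills $q$ and yields $g^{-1}u^Ng\in K$ for all $u\in U_S$; now $\{u^N:u\in U_S\}$ is genuinely a Zariski-dense subset of the connected reductive $H_S$, and a Richardson-type transporteur lemma (cited as \cite[Lemma~D.2]{1-RU}) gives $g\in Z_{G_S}(U_S)\cdot C$. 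Your appeal to ``Borel's finiteness theorem'' does not supply this mechanism.

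Your treatment of the failure of $S$-semisimplicity is also too sketchy to be a proof. Saying the unipotent radical ``receives a non-trivial image from $U_S$ under any splitting'' and that conjugating by an unbounded family in $N(\QQ_S)$ does the job leaves the crucial point unverified: why do the resulting Hecke points have \emph{uniformly bounded} field of definition? The paper's argument is quite different and more delicate: it invokes Borel--Tits to place $H_S$ inside a parabolic $P$ whose unipotent radical contains $\Radu(H_S)$, takes a cocharacter $y$ defining $P$, and uses that $\Ad_{y(t)}$ \emph{contracts} $N_P$ (hence contracts $U_S\subseteq P$) as $t\to\infty$, so that $y(t)U_Sy(t)^{-1}$ stays bounded---this is what controls the degree---while $y(t)$ escapes modulo $Z_{G_S}(M)$, giving infinitely many distinct rational points.
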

We will split the proof into several steps.
Let us consider a sequence $s_n = \overline{(h,g_n)}$ with $g_n \in G_S$
of points  in the~$S$-Hecke orbit~$\mathcal{H}_S(s)$ of~$s$. 

We start with an easy Lemma about the homogeneous structure of the $S$-Hecke orbit~$\Hcal_S(s)$.
\begin{lem}\label{Lemme Hecke orbite Z}We embed~$Z_G(M)(\QQ)$ into~$G(\AAA_f)$ (at the finite places \emph{only}) and~$G(\QQ)$ into~$G(\AAA)$. Then the application
\begin{multline*}
Z_G(M)(\QQ)\backslash Z_G(M)(\QQ)\cdot G_S\cdot K/K
\\\to \Hcal_S(s)=G(\QQ)\backslash G(\QQ)\cdot (\{h\}\times G_S)\cdot K/K,
\end{multline*}
which maps the double class of~$g$ to~$\ol{(h,g)}$, is a bijection.
\end{lem}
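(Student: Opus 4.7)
The plan is to prove that the map $[g]\mapsto\ol{(h,g)}$ is simultaneously well defined, surjective, and injective. The crux of the argument is the identification
\[
\Stab_{G(\QQ)}(h) = Z_G(M)(\QQ),
\]
where $\Stab_{G(\QQ)}(h)$ denotes the stabiliser of $h\in X$ under the conjugation action of $G(\QQ)\subseteq G(\RR)$ on $X$. Granted this identity, the three verifications amount to unwinding the definition of the double quotient
$\Sh_K(G,X) = G(\QQ)\backslash X\times G(\AAA_f)/K$.

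First I would establish the stabiliser identification. By definition, $M$ is the smallest $\QQ$-subgroup of $G$ such that $h\colon\SSS\to G_\RR$ factors through $M_\RR$, i.e.\ such that $h(\SSS)\subseteq M(\RR)$. If $\gamma\in Z_G(M)(\QQ)$, then $\gamma$ centralises $M_\RR\supseteq h(\SSS)$, so $\gamma h\gamma^{-1}=h$ as a morphism $\SSS\to G_\RR$, and $\gamma$ fixes the point $h\in X$. Conversely, if $\gamma\in G(\QQ)$ fixes $h$, then $\gamma$ centralises $h(\SSS)$, so the $\QQ$-subgroup $M\cap Z_G(\gamma)$ has real points containing $h(\SSS)$; by the minimality property in the definition of $M$, this forces $M\cap Z_G(\gamma)=M$, whence $\gamma\in Z_G(M)(\QQ)$.

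Well-definedness then unfolds as follows: if $g'=zgk$ with $z\in Z_G(M)(\QQ)$ and $k\in K$, then inside $\Sh_K(G,X)$,
\[
\ol{(h,g')} = \ol{(h,zgk)} = \ol{(z^{-1}h,gk)} = \ol{(h,gk)} = \ol{(h,g)},
\]
where we used that $z$ fixes $h$ together with the right $K$-invariance. Surjectivity is immediate because $G_S$ is contained in $Z_G(M)(\QQ)\cdot G_S\cdot K$, so every $\ol{(h,g_S)}$ with $g_S\in G_S$ is the image of the class of $g_S$. For injectivity, suppose $\ol{(h,g_1)}=\ol{(h,g_2)}$ with $g_1,g_2$ lying in $Z_G(M)(\QQ)\cdot G_S\cdot K$; by definition of the double quotient there exist $\gamma\in G(\QQ)$ and $k\in K$ with $\gamma\cdot h=h$ and $g_2=\gamma g_1 k$. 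The stabiliser identification gives $\gamma\in Z_G(M)(\QQ)$, so $g_1$ and $g_2$ represent the same class on the left-hand side.

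The only step with any content is the stabiliser identification, which is a standard consequence of the minimality property defining the Mumford-Tate group; the remaining verifications are pure bookkeeping with the double coset definitions and the fact that the right $K$-action factors out on both sides.
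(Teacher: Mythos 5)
Your proof is correct and follows essentially the same route as the paper: both reduce the statement to the identification of the $G(\QQ)$-stabiliser of $h$ with $Z_G(M)(\QQ)$ and then unwind the double-coset definitions. The only difference is that you spell out the proof of that stabiliser identity (via the minimality characterisation of the Mumford--Tate group), where the paper simply invokes it as the known fact that $Z_G(M)$ is the largest $\QQ$-subgroup contained in the stabiliser of $h$.
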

\begin{proof} The surjectivity is immediate, by the very definition of~$\Hcal_S(s)$. We prove the injectivity. We start with the identity~$\ol{(h,g)}=\ol{(h,g')}$. Equivalently there is~$q$ in~$G(\QQ)$ and~$k$ in~$K$ such that
\[
q\cdot(h,g)\cdot k=(h,g').
\]
From~$qh=h$, we infer that~$q$ is in the stabiliser of~$h$. As~$q$ is in~$G(\QQ)$, it belongs to the biggest~$\QQ$-subgroup in the stabiliser of~$h$, which is~$Z_G(M)(\QQ)$. We conclude by observing
\[
q\cdot g\cdot k= g'.\qedhere
\]
\end{proof}

In the following Lemma, we show how the $S$-simplicity hypothesis can be used to work out a group theoretic condition on $g_n$ for~$s_n$ to be defined over $E$.
\begin{lem}\label{stab}
Assume that the $S$-simplicity holds.

There exists a compact subset $C \subset G_S$ such that if a point~$\ol{(h,g)}$ of~$\Hcal_S(s)$ is defined over~$E$, then
$$
g \in Z_{G_S}(U_S)\cdot C.
$$
\end{lem}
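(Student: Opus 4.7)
The plan is to translate the condition ``$\ol{(h,g)}$ defined over $E$'' into a group-theoretic one, then pass to the Bruhat-Tits building to control $g$ modulo the centraliser. First, without loss of generality (by conjugating $K$ if needed), assume the base point is $\ol{(h,1)}$, so that Lemma~\ref{Lemme Hecke orbite Z} parametrises $\Hcal_S(s)$ as $\Gamma\backslash G_S/K_S$, where $\Gamma$ is the image in $G_S$ of the $S$-arithmetic subgroup $Z_G(M)(\QQ)\cap(G_S\times K^S)$ of $Z_G(M)(\QQ_S)$. Since Galois acts on this quotient via left multiplication by $U_S\subseteq M(\QQ_S)$ --- which commutes with $\Gamma\subseteq Z_G(M)(\QQ_S)$ --- the class $\ol{(h,g)}$ is Galois-fixed over $E$ if and only if $U_S\subseteq \Gamma\cdot gK_Sg^{-1}$ inside $G_S$.

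Next, I would use connectedness to sharpen this containment to $U_S\subseteq gK_Sg^{-1}$. The map
\[
\phi:U_S\to\Gamma/(\Gamma\cap gK_Sg^{-1}),\qquad \gamma\cdot gkg^{-1}\mapsto [\gamma],
\]
is well-defined and locally constant because $gK_Sg^{-1}$ is open in $G_S$, and its target is discrete as a subset of $G_S/gK_Sg^{-1}$. After replacing $E$ by a finite extension (which shrinks $U_S$ only to an open subgroup and enlarges its centraliser), we may assume $U_S$ is connected, so $\phi\equiv\phi(1)=[1]$; equivalently $g^{-1}U_Sg\subseteq K_S$.

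This last condition means exactly that the point $gx_0$, where $x_0=[K_S]$, lies in the fixed-point subcomplex $B^{U_S}$ of the Bruhat-Tits building $B$ of $G_S=\prod_{p\in S}G(\QQ_p)$. The heart of the proof is then to show that $Z_{G_S}(U_S)\backslash B^{U_S}$ is bounded. By $S$-semisimplicity, $H_S$ is reductive and $U_S$ is Zariski dense in it, so $Z_G(U_S)=Z_G(H_S)$. If the quotient were unbounded, $B^{U_S}$ would contain a geodesic ray not $Z_{G_S}(U_S)$-equivalent to a bounded piece; examining the filtration of stabilisers along such a ray shows that $U_S$ fixes a point at infinity whose stabiliser is a parabolic $P(\lambda)$, and forcing $U_S$ to fix every point of the ray pushes $U_S$ into the Levi $Z_G(\lambda)$. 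By Zariski density then $H_S\subseteq Z_G(\lambda)$, i.e., $\lambda$ takes values in $Z_G(H_S)=Z_G(U_S)$, whence the ray already lies in a single $Z_{G_S}(U_S)$-orbit up to bounded error --- a contradiction.

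Finally, since the Bruhat-Tits building of a $p$-adic group is locally finite, a bounded subset of $B$ has relatively compact preimage in $G_S/K_S$; combined with the compactness of $K_S$, the above yields $g\in Z_{G_S}(U_S)\cdot C$ for a compact $C\subset G_S$ depending only on $U_S$, $K_S$ and the diameter bound from the previous step. I expect the main obstacle to be making the contraction argument along geodesic rays precise in the $S$-arithmetic setting; this is exactly where the companion paper~\cite{0-R} on reductive groups over ultrametric fields, and its analysis of how compact Zariski-dense subgroups interact with the Bruhat-Tits apparatus, ought to intervene.
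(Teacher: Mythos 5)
Your translation of the Galois-fixedness condition via Lemma~\ref{Lemme Hecke orbite Z} is correct and agrees with the paper. The gap is in the very next step. You claim that the locally constant map $\phi\colon U_S\to \Gamma/(\Gamma\cap gK_Sg^{-1})$ is constant because, after passing to a finite extension, ``$U_S$ is connected.'' But $U_S$ is a compact subgroup of $\prod_{\ell\in S} M(\QQ_\ell)$, hence a profinite group, hence \emph{totally disconnected} whenever it is nontrivial. The paper's standing assumption is that $U_S$ is \emph{Zariski} connected, which is a completely different property and gives you nothing against a locally constant map into a discrete set. So the conclusion $g^{-1}U_Sg\subseteq K_S$ is not established, and the argument stalls before the Bruhat--Tits step.

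The paper handles exactly this difficulty with a torsion trick rather than a connectedness argument. Writing the fixedness condition as $qu=gkg^{-1}$ with $q\in Z_G(M)(\QQ)$ and $k\in K$, one observes that $q$ centralises $U_S$ (so $(qu)^n=q^nu^n$), hence every power of $q$ lies in the compact set $U_S\cdot gKg^{-1}$; combined with the discreteness of $Z_G(M)(\QQ)$ in $G(\AAA_f)$, this forces $q$ to be torsion. A uniform bound $N$ on torsion orders (Lemma~\ref{Lemme phi}) then gives $g^{-1}u^N g\in K$ for all $u\in U_S$. The set $V=\{u^N\mid u\in U_S\}$ is a neighbourhood of the identity and hence Zariski dense in $U_S$ (using Zariski connectedness), which is exactly what the transporter lemma \cite[Lemma D.2]{1-RU} needs: it yields $g\in Z_{G_S}(U_S)\cdot C$ for compact $C$, working one ultrametric place at a time. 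Note, incidentally, that under the paper's standing neatness assumption (so $K_l$ lies in the principal congruence subgroup mod $l$ for some $l\geq 3$ outside $S$) a torsion $q$ would in fact have to be trivial by Minkowski's lemma, which would recover your reduction $g^{-1}U_Sg\subseteq K_S$ --- but that is a Minkowski/neatness argument, not a connectedness one, and the paper's route via the Zariski-dense subset $V$ is more robust and avoids invoking neatness.

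Your Bruhat--Tits sketch for the final step --- that $Z_{G_S}(U_S)\backslash B^{U_S}$ is bounded when the Zariski closure of $U_S$ is reductive --- is indeed the geometric content underlying \cite[Lemma D.2]{1-RU} and is worked out in the companion paper \cite{0-R} (Théorème~\ref{TheoF1} there). But as written your proof never legitimately reaches that stage, and the building argument itself is only outlined; so the proposal would need to be repaired at the reduction step (by the torsion/Minkowski argument or the paper's Zariski-density trick) before the geometry can be applied.
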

\begin{proof} The point~$\ol{(h,g)}$ is defined over~$E$ if and only if, for every element~$u$ of~$U_S$, we have
\[
\overline{(h, u g)} = \overline{(h, g)}.
\]
This means that there exists a $q$ in $G(\QQ)$ and $k$ in $K$, depending on~$u$, such that
\begin{eqnarray*}
q h = h,&\text{ i.e. }&q \in Z_G(M)(\QQ),\\
\text{ and }q u g = g k,&\text{ i.e. }&qu = g k g^{-1}.
\end{eqnarray*}
Thus~$qu$ belongs to the group~$gKg^{-1}$. Any power~$(qu)^n$ belongs to the same group.
As~$q$ centralises~$U_S$, we have~$(qu)^n=u^nq^n$. It follows
\[
q^n=u^{-n}\cdot gk^ng^{-1}.
\]
In particular all powers of~$q$ belongs to the compact set~$U_S\cdot gKg^{-1}$. They also
belong to the discrete set~$Z_G(M)(\QQ)$ in~$G(\AAA_f)$. It must be that~$q$ is a torsion element of~$Z_G(M)(\QQ)$.

Actually all torsion elements of~$Z_G(M)(\QQ)$ satisfy~$q^N=1$ for a uniform order~$N>0$: we may embed~$Z_G(M)$ in a linear group~$GL(D)$ and apply Lemma~\ref{Lemme phi}. We hence have
\begin{equation}\label{eq transporteur}
u^N=q^Nu^N=gk^Ng^{-1},\text{ whence }g^{-1}u^Ng\in K.
\end{equation}
Let~$V=\{u^N|u\in U\}$. This is a neighbourhood of the neutral element in~$U$, as the~$N$-th power map has a non-zero differential at the origin: it is the multiplication by~$N$ map on the Lie algebra. It follows that~$V$ is Zariski dense in~$U$ (recall that~$U$ is Zariski connected).

We deduce from~\eqref{eq transporteur} above that~$g$ belongs to the transporteur, for the conjugation right-action, of~$V$ to~$K$
\[
T=\{t\in G_S|t^{-1}Vt\subseteq K\}.
\]

The Zariski closed subgroup generated by~$V$ is the same as the one generated by~$U$, and is a reductive group by hypothesis. This is the essential hypothesis we need to invoke~\cite[Lemma D.2]{1-RU},  according to which there exists a compact subset $C$ of $G_S$ such that
\[
g\in T \subseteq Z_{G_S}(U_S)\cdot C. 
\]
We are done, but from the fact that actually \emph{loc. cit.} works only for one ultrametric place at a time. But arguing with the projections~$G_p$, $U_p$, $V_p$, $K_p$ and~$T_p$ of~$G_S$, $U_S$, $V$,~$K$ and~$T$ in~$G_p$ at a some place~$p$ in~$S$, we can prove as above that
\[
T_p\subseteq \{t\in G_p|t^{-1}V_p t\subseteq K_p\}\subseteq Z_{G_p}(U_p)\cdot C_p
\] 
for a compact subset~$C_p$ of~$G_p$, and conclude, with~$C$ the product compact set
\[
T\subseteq \prod_{p\in S} T_p\subseteq\prod_{p\in S} Z_{G_p}(U_p)\cdot C_p=Z_{G_S}(U_S)\cdot \prod_{p\in S} C_p=Z_{G_S}(U_S)\cdot C.\qedhere
\]
\end{proof}
The following standard fact was used in the preceding proof.
\begin{lem}\label{Lemme phi} 
Consider a general linear group~$GL(D,\QQ)$ over the field~$\QQ$ of rational numbers. Then there is an integer~$N(D)$ such that for any~$g$ in~$GL(D,\QQ)$ of finite order its power~$g^{N(D)}$ is the neutral element.
\end{lem}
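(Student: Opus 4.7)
The plan is to reduce the problem to bounding the orders of roots of unity whose minimal polynomial over $\QQ$ has degree at most $D$, and then to use the classical finiteness fact that $\{d \geq 1 : \varphi(d) \leq D\}$ is finite.

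First I would observe that any finite-order element $g \in GL(D,\QQ)$ is diagonalisable over $\overline{\QQ}$, with eigenvalues that are roots of unity. Its minimal polynomial $\mu_g(X) \in \QQ[X]$ is therefore squarefree and divides $X^m - 1$, where $m$ is the order of $g$. Consequently $\mu_g$ factors as a product of distinct cyclotomic polynomials
\[
\mu_g(X) = \Phi_{d_1}(X)\cdots\Phi_{d_k}(X),
\]
where the $d_i$ are precisely the orders of the eigenvalues of $g$, and the order $m$ of $g$ equals $\mathrm{lcm}(d_1,\ldots,d_k)$.

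Next I would use the degree bound. Since $\mu_g$ divides the characteristic polynomial, which has degree $D$, we have
\[
\sum_{i=1}^k \varphi(d_i) = \deg(\mu_g) \leq D,
\]
so in particular each $d_i$ satisfies $\varphi(d_i) \leq D$. It is a classical fact (since $\varphi(d) \to \infty$ as $d \to \infty$) that the set $\mathcal{D}(D) := \{d \geq 1 : \varphi(d) \leq D\}$ is finite.

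The main (and essentially only) content is this boundedness observation; once it is in place, the lemma follows by setting
\[
N(D) := \mathrm{lcm}\bigl(d : d \in \mathcal{D}(D)\bigr).
\]
Then every $d_i$ divides $N(D)$, so $m = \mathrm{lcm}(d_1,\ldots,d_k)$ divides $N(D)$, and hence $g^{N(D)} = 1$ for every finite-order $g \in GL(D,\QQ)$, as required. I do not foresee any obstacle: the argument is elementary and uses only the rationality of the characteristic polynomial together with the standard growth of $\varphi$.
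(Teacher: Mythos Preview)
Your argument is correct and follows essentially the same route as the paper: bound each eigenvalue order~$d$ by $\varphi(d)\leq D$ via irreducibility of cyclotomic polynomials, invoke finiteness of $\{d:\varphi(d)\leq D\}$, and take a common multiple. The only cosmetic differences are that the paper uses $N(D)=N!$ with $N=\max\{d:\varphi(d)\leq D\}$ rather than your sharper $\mathrm{lcm}$, and concludes via ``$g^{N(D)}$ is unipotent of finite order, hence trivial'' instead of your direct appeal to diagonalisability.
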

\begin{proof}Let~$g$ be a torsion element. Every complex eigenvalue of~$g$ is some root of unity~$\zeta$. Let~$d$ be the order of~$\zeta$. As the cyclotomic polynomials are irreducible over~$\QQ$, it must be that~$g$ has at least~$\phi(d)$ eigenvalues, the algebraic conjugates of~$\zeta$. We hence have~$\phi(d)\leq D$.

It is known that~$\phi(n)$ diverges to infinity as~$n$ diverges to infinity (one has~$n^{1-\varepsilon}=o(\phi(n))$ for instance).
There is a largest integer~$N$ such that~$\phi(N)\leq D$.

We have necessarily~$d\leq N$. It follows~$d|N!$, and hence~$\zeta^{N!}=1$. The only eigenvalue of~$g^{N!}$ is~$1$.
The power~$g^{N!}$ is unipotent. It is also of finite order, and we must have that~$g^{N!}$ is the neutral element. TWe can take~$N(D)=N!$.
\end{proof}
We now prove one implication in the second part of Proposition~\ref{charSha}.
\begin{lem}
Assume now that $S$-simplicity holds and that 
$Z_{G_S}(M)$ is cocompact in~$Z_{G_S}(U_S)$.
Then~$s$ is of $S$-Shafarevich type.
\end{lem}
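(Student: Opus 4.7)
The plan is to fix a finite extension $F$ of $E$ and show that $\Hcal_S(s)$ has only finitely many $F$-rational points. First I would observe that replacing $E$ by $F$ shrinks $U_S$ to a finite-index open subgroup, which -- since $U_S$ is assumed Zariski-connected -- is Zariski-dense in the same algebraic envelope $H_S$; in particular $Z_{G_S}(U_S)$ is unchanged, and so both the $S$-semisimplicity and the cocompactness hypotheses persist. Applying Lemma \ref{stab} to $F$ then produces a compact $C \subseteq G_S$ such that every $F$-rational point of $\Hcal_S(s)$ has a representative $\overline{(h,g)}$ with $g \in Z_{G_S}(U_S) \cdot C$. The cocompactness hypothesis lets me write $Z_{G_S}(U_S) = Z_{G_S}(M) \cdot K_1$ for a compact set $K_1$, refining this to $g \in Z(\QQ_S) \cdot C''$ for the compact set $C'' := K_1 \cdot C$, where I set $Z := Z_G(M)$.

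The next step is to use Lemma \ref{Lemme Hecke orbite Z} together with the decomposition $K = K_S \cdot K^S$ to rewrite the Hecke orbit as $\Gamma_Z \backslash G_S / K_S$, where $\Gamma_Z := Z(\QQ) \cap K^S$ is an $S$-arithmetic subgroup of $Z(\QQ)$ acting through the $S$-component embedding into $Z(\QQ_S)$. Under this identification the $F$-rational points lie inside the image of $Z(\QQ_S) \cdot C''$. Since $K_S$ is open in $G_S$, the quotient $\Gamma_Z \backslash G_S / K_S$ is discrete; moreover $C'' \cdot K_S$ is compact, so $C''$ meets only finitely many right $K_S$-cosets $c_1 K_S, \ldots, c_N K_S$. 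It thus suffices to bound, for each $i$, the image of $Z(\QQ_S) c_i K_S$ in $\Gamma_Z \backslash G_S / K_S$; a direct check identifies this image with $\Gamma_Z \backslash Z(\QQ_S) / K_{i,2}$, where $K_{i,2} := Z(\QQ_S) \cap c_i K_S c_i^{-1}$ is a compact-open subgroup of $Z(\QQ_S)$.

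The hard part will be establishing finiteness of $\Gamma_Z \backslash Z(\QQ_S) / K_{i,2}$, which is ultimately a class-number statement for the reductive $\QQ$-group $Z = Z_G(M)$. My approach is to embed this double coset space into the adelic class set $Z(\QQ) \backslash Z(\AAA_f) / (K_{i,2} \cdot K^S)$, whose finiteness follows from the classical Borel--Harish-Chandra finiteness of class numbers for reductive groups; injectivity of the embedding is immediate once one checks that any identity $Z(\QQ) z K_{i,2} K^S = Z(\QQ) z' K_{i,2} K^S$ with $z, z' \in Z(\QQ_S)$ forces the witness element of $Z(\QQ)$ to lie in $Z(\QQ) \cap K^S = \Gamma_Z$. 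Summing the finitely many contributions over $i$ then yields the claim. Apart from this class-number input, everything reduces to formal manipulations enabled by Lemma \ref{stab} and the cocompactness hypothesis.
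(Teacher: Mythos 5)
Your proof is correct and reaches the conclusion by a genuinely different route than the paper's. Both arguments begin identically: reduce to $F=E$, apply Lemma~\ref{stab} to place the translating elements in $Z_{G_S}(U_S)\cdot C$, and then use the cocompactness hypothesis to pull them into $Z(\QQ_S)\cdot C''$. The divergence is in how finiteness is extracted. The paper works adelically throughout: it invokes Godement's compactness criterion (using that $Z_G(M)$ is $\RR$-anisotropic modulo the centre of $G$) to show $Z(\QQ)\backslash Z(\AAA_f)$ is compact, combines this with the cocompactness hypothesis via a surjection argument to show $Z(\QQ)\backslash Z(\AAA_f)\cdot T$ is compact, and then uses the openness of $K$ to conclude that the double cosets $Z(\QQ)\backslash Z(\AAA_f)\cdot T\cdot K/K$ form a finite set. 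You instead pass immediately to the $S$-local picture $\Gamma_Z\backslash G_S/K_S$ via Lemma~\ref{Lemme Hecke orbite Z}, cover $C''$ by finitely many $K_S$-cosets, reduce each contribution to $\Gamma_Z\backslash Z(\QQ_S)/K_{i,2}$, and invoke Borel's finiteness of class numbers for the reductive $\QQ$-group $Z_G(M)$. This trades the Godement compactness step for the class-number theorem: your version avoids the need to know that $Z_G(M)$ is anisotropic (a fact the paper does not actually justify within this lemma but relies on), since class-number finiteness holds for arbitrary reductive $\QQ$-groups. The one bookkeeping point you should tighten is the target of your embedding: $K^S$ is a compact subgroup of $G(\AAA_f^S)$, so the denominator should read $K_{i,2}\cdot(K^S\cap Z(\AAA_f^S))$ for $Z(\QQ)\backslash Z(\AAA_f)/(\cdots)$ to be a class set for $Z$; with this fix, the injectivity check you sketch (the witness element of $Z(\QQ)$ has its away-from-$S$ component forced into $K^S$) is exactly right.
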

\begin{proof}
Let $\overline{(h,g_n)}$ be a sequence of points, in the $S$-Hecke orbit~$\Hcal_S(s)$, defined over a finite extension~$F$ 
of $E$. Our aim is to show this sequence can take at most finitely many distinct values.
After replacing $U_S$ by an open subgroup of finite index, we may assume that~$F=E$, which translates into the property
$$
\forall u\in U_S,~\overline{(h, u\cdot g_n)} = \overline{(h,g_n)}
$$
for all $n$.

Let us write
\begin{subequations}\label{Abbr Z}
\begin{align}
Z&=Z_G(M),&
Z(\QQ_S)&=Z_{G_S}(M),
\end{align}
\begin{equation}
Z(\QQ)=Z_G(M)(\QQ)\subseteq Z(\AAA_f)=Z_G(M)(\AAA_f)
\end{equation}
\end{subequations}

Lemma \ref{stab} shows that elements $g_n$ are contained in 
$T=Z_{G_S}(U_S)\cdot C$ for some compact subset~$C$ of 
$G_S$.
By hypothesis,~$Z_{G_S}(M)\backslash Z_{G_S}(U_S)$ is compact.
By the adelic version of Godements's compactness criterion %\footnote{We shall name a few people. The criterion was conjectured by Godement for the archimedean place; it was established for general groups by Borel and Harish-Chandra; another method was due to Mostow and Tamagawa; the latter was extended to the adelic setup by godement and Weil...\cite{1-Sem.BBKn°257}.}
 it is also true that $Z(\QQ) \backslash Z(\AAA_f)$ is compact as well, as~$Z$ is~$\RR$-anisotropic up to the centre of~$G$.

We remark that~$Z(\AAA_f)$ normalises~$Z_{G_S}(U_S)$. As the latter is a place by place product it can be checked place by place: at places in~$S$ the projection of~$Z(\AAA_f)$ is contained in~$Z_{G_S}(M)$ which is itself contained in~$Z_{G_S}(U_S)$; at other places~$Z_{G_S}(U_S)$ has only trivial factors. We hence have a homomorphism
\begin{equation}\label{conversion S vs adelic}
Z_{G_S}(M)\backslash Z_{G_S}(U_S)\longrightarrow{}
Z(\AAA_f)\backslash Z_{G_S}(U_S)\cdot Z(\AAA_f),
\end{equation}
which is surjective, with compact source, hence has compact image.

Incorporating with the compactness of~$Z(\QQ)\backslash Z(\AAA_f)$ we infer the compactness of
\[
Z(\QQ)\backslash Z_{G_S}(U_S)\cdot Z(\AAA_f)
\]
and follows the compactness of the subset
\[
Z(\QQ)\backslash  Z(\AAA_f)\cdot Z_{G_S}(U_S)\cdot C= Z(\QQ)\backslash  Z(\AAA_f)\cdot T
\]
of~$Z(\QQ)\backslash G(\AAA_f)$ (we note that this quotient is separated, as~$Z(\QQ)$ is discrete in~$G(\AAA_f)$).

As~$K$ is open, we deduce that 
\[
 Z(\QQ)\backslash  Z(\AAA_f)\cdot T\cdot K/K
\]
is finite. To sum it up we have
\[
Z(\QQ)\cdot g_n \cdot K\in Z(\QQ)\backslash  Z(\AAA_f)\cdot T\cdot K/K\subseteq  Z(\QQ)\backslash Z(\QQ) T\cdot K/K.
\]
Now, the double coset on the right characterises~$\ol{(h,g_n)}$, by Lemma~\ref{Lemme Hecke orbite Z}.
Finally~$\ol{(h,g_n)}$ can take at most~$\#Z(\QQ)\backslash Z(\QQ) T\cdot K/K$ distinct values.\qedhere
%
%To sum it up, it follows that~$Z_G(M)(\QQ)\backslash T=Z_G(M)(\QQ)\backslash Z_{G_S}(U_S)\cdot C$ is compact. As~$K$ is open, the double quotient
%\[\left. Z_G(M)(\QQ)\middle\backslash T\middle/K\right.\]
%is finite.
%
%We recall that, by Lemma~\ref{Lemme Hecke orbite Z}, for~$g$ in~$G_S$ the double coset~$\ol{(h,g)}$ is determined by the double class
%\[
% Z_G(M)(\QQ)\cdot g\cdot K.
%\]
%The points~$\ol{(h,g_n)}$ can take at most~$\#\left. Z_G(M)(\QQ)\middle\backslash T\middle/K\right.<\infty$ many distinct values, thus proving the $S$-Shafarevich property.\qedhere

% Since $C$ is covered by finitely many classes of the 
%form $g K$ with $K$ in $G(\AAA_f)$.
%
%To show that the sequence $\overline{(h,g_n)}$  is finite is 
%therefore enough to assume that $g_n \in Z_{G_S}(U_S)$.
%
%The assumption that $Z_{G_S}(M)\backslash Z_{G_S}(U_S)$ is compact
%implies that  $Z_{G_S}(U_S)$ belongs to finitely many double classes
%of the form 
%$$
%Z_{G_S}(M) g {K_S \cap Z_{G_S}(U_S)}.
%$$
%For the purposes of proving finiteness, we can therefore without 
%loosing generality assume that 
%$$
%g_n \in Z_{G_S}(M).
%$$
%Since the set $Z_G(M)(\QQ) \backslash Z_G(M)(\AAA_f) / K$
% is finite, we see that $Z_{G_S}(M)$ is contained in finitely many 
% double classes $Z_G(M)(\QQ) t_i / K$, $i=1,\dots, r$.

%This shows that the sequence $\overline{(h,g_n)}$ is finite, thus proving 
%the $S$-Shafarevich property.
\end{proof}

To prove the other inclusion, let us first prove that 
the $S$-Shafarevich property implies $S$-semisimplicity.
This is done in the following lemma.

\begin{lem}
The $S$-Shafarevich property implies the $S$-semisimplicity property, namely that the group $H_S$ is reductive.
\end{lem}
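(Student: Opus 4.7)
The plan is to argue by contraposition: assuming $H_S$ is not reductive, I will construct, over some finite extension $F$ of $E$, infinitely many distinct $F$-rational points in $\Hcal_S(s)$, thereby violating the $S$-Shafarevich hypothesis. The non-reductivity of $H_S$ provides a nontrivial $\QQ_S$-algebraic normal subgroup $N := \Radu(H_S)$, and the strategy is to transfer non-compactness of the ``normalizer picture'' of $N$ into infinite families of rational points.

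The first step is Lie-algebraic and produces unipotent elements inside $U_S$ itself. Set $\mathfrak{h} := \Lie(H_S)$, $\mathfrak{u} := \Lie(U_S)$, and $\mathfrak{n} := \Lie(N)$. By Chevalley's theory of algebraic hulls in characteristic zero (cf. \cite[II.\S7]{BorelLAG}, which is the same input behind Bogomolov's algebraicity theorem), the algebraic hull $\mathfrak{h}$ of $\mathfrak{u}$ satisfies $[\mathfrak{h}, \mathfrak{h}] = [\mathfrak{u}, \mathfrak{u}] \subseteq \mathfrak{u}$, the ``extra'' part $\mathfrak{h}/\mathfrak{u}$ being abelian and of toral type. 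Since $\mathfrak{h}/[\mathfrak{h},\mathfrak{h}]$ is abelian (hence reductive), one has $\mathfrak{n} \subseteq [\mathfrak{h},\mathfrak{h}] \subseteq \mathfrak{u}$. Via the exponential map, $U_S$ then contains an open neighbourhood $N_0$ of the identity in $N(\QQ_S)$.

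The second step is the geometric construction of the rational points. Because $N$ is a nontrivial unipotent normal subgroup of $H_S \subseteq M \subseteq G$, structure theory (Levi decomposition of the parabolic $N_G(N)$) provides a cocharacter $\lambda \colon \GG_m \to G_{\QQ_S}$ whose adjoint action on $\mathfrak{n}$ has only strictly positive weights. After replacing $E$ by a suitable finite extension $F$, the image $U_{S,F}$ can be taken arbitrarily small inside $H_S(\QQ_S)$, while still containing (a shrunk) $N_0$. Setting $t_m := \lambda(p^m)$ for some $p \in S$ and $m \gg 0$, conjugation by $t_m$ contracts $U_{S,F}$ into the compact open $K_S$, so each $\overline{(h, t_m)}$ is an $F$-rational point of $\Hcal_S(s)$. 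Finally, by Lemma \ref{Lemme Hecke orbite Z} these points correspond to double cosets $Z_G(M)(\QQ)\cdot t_m \cdot K$; since $Z_G(M)(\QQ)$ is discrete in $G(\AAA_f)$ (a point noted in the Godement-type argument earlier in this section) and $(t_m)$ diverges in $G_S$, infinitely many $m$ produce pairwise distinct double cosets, hence pairwise distinct $F$-rational points in $\Hcal_S(s)$, contradicting the $S$-Shafarevich property.

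The main obstacle is Step 2: ensuring the existence of the contracting cocharacter $\lambda$ with positive weights on $\mathfrak{n}$ and verifying that the resulting points are genuinely distinct modulo $Z_G(M)(\QQ)$ and $K$. The first is handled by the classical parabolic/Levi machinery applied to $N_G(N)$ over $\QQ_S$; the second reduces to the non-compactness of $\lambda$ together with the discreteness of $Z_G(M)(\QQ)$, and is the essential reason why the failure of reductivity forces an infinite Galois-rational fibre.
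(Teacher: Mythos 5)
Your overall strategy matches the paper's — argue by contraposition, use the unipotent radical of $H_S$ and a contracting cocharacter, and use Lemma~\ref{Lemme Hecke orbite Z} for distinctness — but Step~1 is incorrect as stated. Chevalley gives $[\mathfrak{h},\mathfrak{h}]=[\mathfrak{u},\mathfrak{u}]$, but the inference ``$\mathfrak{h}/[\mathfrak{h},\mathfrak{h}]$ is abelian hence reductive, so $\mathfrak{n}\subseteq[\mathfrak{h},\mathfrak{h}]$'' is wrong: an abelian Lie algebra need not be reductive, and the nilradical of the algebraic hull need not lie in the original subalgebra. A concrete failure: take a commuting Jordan decomposition $X=S+N$ with $S=\Id$, $N$ nilpotent and nonzero, and set $\mathfrak{u}=\QQ_p X$; its algebraic hull $\mathfrak{h}=\QQ_p S \oplus \QQ_p N$ has $[\mathfrak{h},\mathfrak{h}]=0$ yet nilradical $\mathfrak{n}=\QQ_p N \not\subseteq \mathfrak{u}$. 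So $U_S$ need not contain an open piece of $N(\QQ_S)$, and in fact the paper's proof never needs this — it only uses that $N_H\neq 1$.

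Step~2 therefore inherits two gaps. First, to have one $F$ work for all $m$ you need $\lambda(p^m) U_{S,F}\lambda(p^m)^{-1}$ to stay in a fixed bounded set; shrinking $U_{S,F}$ cannot guarantee this, because $\lambda$ could expand components of $U_{S,F}$ transverse to $N$. The point (which you mention but never actually use) is that $U_S\subseteq H_S(\QQ_S)\subseteq P$ for the Borel--Tits parabolic $P$, and $\mathrm{Ad}_{\lambda(t)}$ restricted to $P$ converges, uniformly on compact subsets, to the Levi projection — hence $\mathrm{Ad}_{\lambda(t)}(U_S)$ stays bounded. (Also note $N_G(N)$ itself is not a parabolic; the parabolic merely contains it.) Second, discreteness of $Z_G(M)(\QQ)$ does not by itself give infinitely many distinct double cosets $Z_G(M)(\QQ)\,t_m\, K$: that group can be infinite, so divergence of $(t_m)$ in $G_S$ need not imply divergence modulo it. The paper instead shows $\lambda(t)$ is unbounded modulo the \emph{whole} centraliser $Z_{G_S}(M)$, using that the conjugation orbit of a nontrivial $u\in N_H$ inside $G_S/Z_{G_S}(M)$ avoids $e$, while $\lambda(t)\,u\,\lambda(t)^{-1}\to e$; your invocation of $\QQ$-rationality of the centraliser is the wrong tool.
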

\begin{proof} We can argue place by place: indeed the place by place Shafarevich hypothesis is weaker than~$S$-Shafarevich property, and the reductivity of~$H_S$ can be checked place by place. We may assume for simplicity that~$S$ consists of only one finite place.

We will prove the contrapositive statement, namely that for non reductive~$H_S$ the $S$-Shafarevich property cannot hold.
Let us denote the unipotent radical of~$H$ by~$N_H$. That~$H_S$ is reductive means that~$N_H$ is trivial. We assume it is not the case.

We apply \cite[Proposition 3.1]{BorelTits} to the unipotent subgroup~$N_H \subset G_S$.
There exists a parabolic subgroup $P$ of $G_S$ such: that $N_H$ is
contained in the unipotent radical~$N_P)$ of $P$ and the normaliser of
$N_H$ in $G_S$ is contained in $P$. In particular, $H_S$ is contained in $P$.

By \cite[Prop 8.4.5]{1-Springer} there exists a cocharacter 
$$
y : {\bf G}_m \lto G_S
$$
of $G_S$ over $\QQ_S$  such that
$$
P = \{ g \in G_S : Ad_{y(t)}(g) \text{ converges as } t \lto \infty \} 
$$
and (cf. \cite[Th. 13.4.2(i)]{1-Springer}, \cite[\S2.2 Def. 2.3/Prop. 2.5]{GIT})
%Geometric Invariant Theory, 4ed. 1994
$$
N_P = \{ g \in G_S : Ad_{y(t)}(g) \text{ converges to~$e$ as } t \lto \infty \}.
$$
Moreover, the centraliser of~$y$ in~$P$ is a Levi factor~$L$ of~$P$. It follows that for all~$p=\lambda\cdot n$ in~$P$,
with~$l$ in~$L$ and~$n$ in~$\Radu(P)$, the limit~$\lim_{t\to\infty} Ad_{y(t)}(p)$ is the factor~$l$.

We will contradict the $S$-Shafarevich property by showing that the family of the~$\ol{(h,y(t))}$, as~$t$ diverges to infinity,
\begin{itemize}
\item describes infinitely many points in the $S$-Hecke orbit of~$s$,
\item and that these points are all defined on a common finite extension of~$E$.
\end{itemize}

We address the first statement.

By the non triviality of~$N_H$, we may pick an element~$u$ in~$N_H$ distinct from~$e$. It follows that 
the conjugacy class~$C(u)$ of~$u$ in~$G_S$ does not contain~$e$.
As~$Z_G(M)$ centralises~$H_S$ and its subgroup~$N_H$, the orbit map at~$u$ for the 
conjugation action factors through~$G_S/Z_{G_S}(M)$. We have a map
\[
c:G_S/Z_{G_S}(M)\xrightarrow{gZ_{G_S}(M)\mapsto Ad_g(u)} C(u).
\]

We can deduce by contradiction that~$y(t)$ is not bounded modulo~$Z_{G_S}(M)$ as~$t$ diverges to~$\infty$.
Assume not. Then~$y(t)Z_{G_S}(M)$ would have some accumulation point~$gZ_{G_S}(M)$. Hence~$c(y(t)Z_{G_S}(M))$ would have the accumulation point~$c(gZ_{G_S})$, which belongs to~$C(u)$ and hence is distinct from~$e$. This contradicts
the fact that~$c(y(t)Z_{G_S}(M))$ converges to~$e$.

The Hecke orbit of~$s$ can be identified with~$Z_{G_S}(M)(\QQ)\backslash G_S /K_S$ through the quotient of the map~$g\mapsto \ol{(s,g)}$ on~$G_S$. Let us claim that~$Z_{G_S}(M)(\QQ)y(t)K_S$ describes infinitely many cosets. It is sufficient that~$Z_{G_S}(M)y(t)K_S$ does so. If not, then~$Z_{G_S}(M)y(t)$ would be contained in finitely many right~$K_S$ orbits, that is in a bounded set of $Z_{G_S}(M)$-cosets, which cannot be, as we already proved. This proves the statement.

We address the second statement, investigating the field of definition of these $S$-Hecke conjugates of~$s$. We use that the extension of definition of a point~$\ol{(s,g)}$ is associated with the finite quotient~$gU_Sg^{-1} K_S/K_S$ of~$\Gal(\ol{E}/E)$.

As~$U_S$ is topologically of finite type, it will be sufficient to show that~$\# y(t)U_Sy(t)^{-1} K_S/K_S$ is bounded as~$t$ diverges to~$\infty$. It will even be sufficient that~$y(t)U_Sy(t)^{-1}$ remains in a bounded subset~$C$ of~$G_S$, as then we have the bound~$\# y(t)U_Sy(t)^{-1} K_S/K_S\leq \# CK_S/K_S$. 

On~$P$ the family of functions~$Ad_{y(t)}$ converges simply to the projection onto the Levi factor~$L$ of~$P$.
Let us prove the claim that this convergence is uniform on compacts subsets of~$P$, including~$U_S$. As~$Ad_{y(t)}$
acts on factor of the Levi decomposition~$P=L M$ of~$P$ separately, it will suffice to argue for~$L$ and~$\Radu(P)$
separately. 
\begin{itemize}
\item It is immediate for~$L$, on which~$Ad_{y(t)}$ is the identity, independently from~$t$.
\item  We turn to the unipotent group~$N_P$. We may argue at the level of Lie algebras, on which the corresponding action~$ad_{y(t)}$ is linear. 
Yet again we may argue separately, this time with respect to the decomposition into eigenspaces. On a given eigenspace,~$ad_{y(t)}$ acts by a negative power of~$t$, which converges uniformly to~$0$ as~$t$ diverges to~$\infty$ on any bounded subset.
\end{itemize} 
This proves the claim. 

We conclude that~$Ad_{y(t)}$ is a uniformly bounded family on~$U_S$ as~$t$ diverges to~$\infty$: there is a bounded set~$C$ that contains~$Ad_{y(t)}(U_S)$ for big enough~$t$. We obtained bounded subset that we sought. This proves the second statement. 

We have proven that the~$S$-Shafarevich property cannot hold.
%We claim that the points $\overline{(s, y(t))}$ with $t \in {\QQ_S}^*$
%yield infinitely many points in one Hecke orbit defined over 
%a finite extension of $E$.
%
%The fact that these points are defined over a finite extension of $E$
%follows from the fact that
%$y(t)^{-1} U_S y(t)$ is contained in a compact as $t \in {\QQ_S}^*$.
%
%To show that the set of points is infinite, it is enough to check that for infinitely many $t$, 
%$y(t) \notin Z_{G_S}(M) K_S$. 
%
%Suppose exist $t$s  such that $y(t)$ is unbounded and  
%$$
%y(t) \in Z_{G_S}(M)K_S.
%$$
%
%Write for each such $t$, 
%$$
%y(t) = y_1(t) k(t)
%$$
%with $y_1(t) \in Z_G(M)(\QQ_S)$ and $k(t) \in K_S$.
%As $y(t)$ is unbounded, $y_1(t)$ are equally unbounded.
\end{proof}

We can now conclude the proof of the implication.

\begin{lem}
The $S$-Shafarevich property implies that~$Z_{G_S}(M)$ is cocompact in~$Z_{G_S}(H_S)$.
\end{lem}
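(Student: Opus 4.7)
The plan is to prove the contrapositive: assuming $Z_{G_S}(M)$ is \emph{not} cocompact in $Z_{G_S}(H_S)$, I will exhibit infinitely many $E$-rational points in $\Hcal_S(s)$, contradicting the $S$-Shafarevich property. First observe that $Z_{G_S}(H_S)=Z_{G_S}(U_S)$, since $H_S$ is the Zariski closure of $U_S$ in $G_S$ and centralisers of subsets coincide with centralisers of their Zariski closures. So the hypothesis becomes: $Z_{G_S}(M)\backslash Z_{G_S}(U_S)$ is non-compact. Pick a sequence $(z_n)_{n\geq 0}$ in $Z_{G_S}(U_S)$ whose images in this quotient diverge to infinity, and set $s_n=\ol{(h,tz_n)}\in \Hcal_S(s)$.

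The first (easy) step is to show that every $s_n$ is defined over $E$. For any $\sigma\in \Gal(\ol{E}/E)$, we have by \eqref{Galois carac} that $\sigma(s_n)=\ol{(h,\rho(\sigma)tz_n)}$. Since $z_n\in Z_{G_S}(U_S)$, the elements $z_n$ and $\rho(\sigma)\in U_S$ commute, so $\rho(\sigma)tz_n = t\cdot(t^{-1}\rho(\sigma)t z_n)$. More directly, taking $q=1\in Z_G(M)(\QQ)$ and $k=t^{-1}\rho(\sigma)t\in K$ (after absorbing $t$ appropriately as in the setup of Lemma~\ref{stab}, one may assume $t=1$, in which case $k=\rho(\sigma)\in U_S\subseteq K$ directly), the equivalence relation defining $\Sh_K(G,X)$ gives $\sigma(s_n)=s_n$. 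Hence all $s_n$ lie in $\Hcal_S(s)(E)$.

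The main step is to show that the $s_n$ are pairwise distinct for infinitely many $n$. By Lemma~\ref{Lemme Hecke orbite Z}, $s_n=s_m$ is equivalent to the existence of $q\in Z_G(M)(\QQ)$ and $k\in K$ with $z_n=qz_mk$. Since $z_n,z_m\in G_S$ are trivial at places outside $S$, this forces $q$ to lie in $\Lambda:=Z_G(M)(\QQ)\cap K^S$, where $K=K_S\times K^S$. Now $\Lambda\subseteq Z_{G_S}(M)\subseteq Z_{G_S}(U_S)$ (the first inclusion by the definition of $\Lambda$, the second because $U_S\subseteq M(\QQ_S)$). Therefore $k=z_m^{-1}q^{-1}z_n\in Z_{G_S}(U_S)\cap K_S$, and the identification in Lemma~\ref{Lemme Hecke orbite Z} shows that the class of $s_n$ is determined by the image of $z_n$ in
\[
\Lambda\backslash Z_{G_S}(U_S)/\bigl(K_S\cap Z_{G_S}(U_S)\bigr).
\]
The hard part is arguing that this double coset space is \emph{infinite} under our hypothesis. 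This is where Godement's compactness theorem enters: since $Z_G(M)$ is $\RR$-anisotropic modulo the centre of $G$, $\Lambda\backslash Z_{G_S}(M)$ is compact; combined with the compactness of $K_S\cap Z_{G_S}(U_S)$, the natural surjection
\[
\Lambda\backslash Z_{G_S}(U_S)/\bigl(K_S\cap Z_{G_S}(U_S)\bigr)\twoheadrightarrow Z_{G_S}(M)\backslash Z_{G_S}(U_S)/\bigl(K_S\cap Z_{G_S}(U_S)\bigr)
\]
has compact fibres, so the former is compact (hence finite, being discrete) if and only if the latter is. But the latter fails to be compact as soon as $Z_{G_S}(M)\backslash Z_{G_S}(U_S)$ does, by a standard argument using compactness of $K_S\cap Z_{G_S}(U_S)$. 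Hence infinitely many $s_n$ are pairwise distinct, producing the desired contradiction.
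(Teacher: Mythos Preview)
Your proof is correct and follows the same contrapositive strategy as the paper: assuming non-cocompactness, produce infinitely many $E$-rational points in $\Hcal_S(s)$ using the double-coset description from Lemma~\ref{Lemme Hecke orbite Z}. The rationality step is essentially identical to the paper's (both implicitly reduce to $t=1$ and assume $U_S\subseteq K$ after a finite extension of $E$; you should make the latter reduction explicit).

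The genuine difference is in how you prove infinitude of the double coset space. The paper surjects onto the adelic quotient $Z(\AAA_f)\backslash Z(\AAA_f)\cdot Z_{G_S}(U_S)\cdot K/K$ and then shows $Z(\AAA_f)\backslash Z(\AAA_f)\cdot Z_{G_S}(U_S)$ is unbounded by proving that $Z(\QQ_S)\backslash G(\QQ_S)\hookrightarrow Z(\AAA_f)\backslash G(\AAA_f)$ is a closed immersion. You instead identify the relevant set directly as $\Lambda\backslash Z_{G_S}(U_S)/(K_S\cap Z_{G_S}(U_S))$ for the $S$-arithmetic lattice $\Lambda\subset Z_G(M)(\QQ)$, and use Godement's criterion (via $\RR$-anisotropy of $Z_G(M)$ modulo the centre) to get compactness of $\Lambda\backslash Z_{G_S}(M)$, hence compact fibres over $Z_{G_S}(M)\backslash Z_{G_S}(U_S)/(K_S\cap Z_{G_S}(U_S))$. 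Your route is more direct and avoids the adelic closed-immersion argument, at the price of invoking Godement here (the paper only uses Godement for the converse implication). Both approaches are valid; yours stays entirely within the $S$-adic world.
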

\begin{proof}
Assume  that $Z_G(M)\backslash Z_{G_S}(U_S)$ is not compact, and let us disprove the $S$-Shafarevich property.
Possibly substituting~$E$ with a finite extension thereof, we may assume~$U_S\subseteq K$. 
We will prove that
\[\Hcal=\{\ol{(h,z)}|z\in Z_{G_S}(U_S)\}\]
is an infinite set of points defined over~$E$.

Let~$\ol{(h,z)}$ be such a point. For~$\sigma$ in~$\Gal(\ol{E}/E)$ with image~$u$ in~$U_S$ we have
\[
\sigma\left(\ol{(h,z)}\right)=\ol{(h,uz)}=\ol{(h,zu)}=\ol{(h,z)},
\]
by definition of~$U_S$, by the fact that~$z$ commutes with~$U_S$, and that~$U_S$ is contained in~$K$ respectively.
It follows that these points are defined over~$E$.

By Lemma~\ref{Lemme Hecke orbite Z} we have a bijection
\[
\Hcal\simeq Z_G(M)(\QQ)\backslash Z_G(M)(\QQ)\cdot Z_{G_S}(U_S)\cdot K/K.
\]
We need to to prove this is an infinite set. 
%We use the abbreviations~\eqref{Abbr Z}.
As the following map of double quotients
\[
Z(\QQ)\backslash Z(\QQ)\cdot Z_{G_S}(U_S)\cdot K/K
\longrightarrow
Z(\AAA_f)\backslash Z(\AAA_f)\cdot Z_{G_S}(U_S)\cdot K/K
\]
is a surjection, it is sufficient to prove its image is infinite. As~$K$ is compact,
it is enough that
\begin{equation}\label{quotient Z A}
Z(\AAA_f)\backslash Z(\AAA_f)\cdot Z_{G_S}(U_S)
\end{equation}
be unbounded in~$Z(\AAA_f)\backslash G(\AAA_f)$. 

Let us accept for now  that the the map
\begin{equation}\label{closed immersion}
Z(\QQ_S)\backslash G(\QQ_S)\longrightarrow Z(\AAA_f)\backslash G(\AAA_f),
\end{equation}
induced by the closed immersion~$\QQ_S\to\AAA_f$, is itself a closed immersion. Moreover~$Z_{G_S}(U_S)$ is closed in~$G(\QQ_S)$ and contains~$Z(\QQ_S)$. Hence the group~$Z(\QQ_S)\backslash Z_{G_S}(U_S)$ embeds as a closed subset of~$Z(\QQ_S)\backslash G(\QQ_S)$. It is also non compact by hypothesis. Its image in~$Z(\AAA_f)\backslash G(\AAA_f)$ is closed and non compact. It is hence unbounded. But this image is~\eqref{quotient Z A}. This concludes

We prove now that~\eqref{closed immersion} is a closed immersion. It is certainly the case of
\[
(Z\backslash G)(\QQ_S)\longrightarrow
(Z\backslash G)(\AAA_f)
\]
as it induced by the closed immersion~$\QQ_S\to\AAA_f$ (we may embed~$Z/G$ as a closed subvariety in an affine space). Moreover the map
\[
Z(\QQ_S)\backslash G(\QQ_S) \longrightarrow (Z\backslash G)(\QQ_S)
\]
is the kernel of the continuous map~$ (Z\backslash G)(\QQ_S)\to H^1(\QQ_S;Z)$, 
hence a closed immersion. The image~$F$ of a closed subset of~$Z(\QQ_S)\backslash G(\QQ_S)$
in~$(Z\backslash G)(\AAA_f)$ is hence closed. The inverse image of~$F$ by the continuous map~$Z(\AAA_f)\backslash G(\AAA_f)\to (Z\backslash G)(\AAA_f)$ is a fortiori closed. The map~\eqref{closed immersion} is then a closed map.
Clearly, it is furthermore injective. It is finally a closed immersion. This concludes.\qedhere

\end{proof}

This finishes the proof of Proposition \ref{charSha}.

\subsection{Shimura varieties of abelian type.}

In this section we show that when $E$ \emph{is of finite type} over $\QQ$
and $(G,X)$ is a Shimura datum of abelian type, all the properties
except possibly $S$-Mumford-Tate hold.

\begin{prop}
Assume that $\Sh_K(G,X)$ is a Shimura variety of abelian type
and that $E$ is a field of finite type.

Then the $S$-semisimplicity, $S$-Tate (and hence $S$-Shafarevich)
and $S$-algebraicity  hold for all points of 
 $\Sh_K(G,X)(E)$.
\end{prop}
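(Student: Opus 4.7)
The plan is to reduce the statement to the Faltings--Deligne theory of Galois representations on Tate modules of abelian varieties.

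First, I would reduce to the case where $(G,X)$ is of Hodge type. By definition of abelian type there exists a Hodge type datum $(G_1, X_1)$ together with a central isogeny $G_1^{\mathrm{der}}\to G^{\mathrm{der}}$ inducing an isomorphism of adjoint data $(G_1^{\mathrm{ad}}, X_1^{\mathrm{ad}})\simeq (G^{\mathrm{ad}}, X^{\mathrm{ad}})$. A point $s=\ol{(h,t)}\in \Sh_K(G,X)(E)$ then lifts, up to replacing $E$ by a finite extension and $K$ by a compatible neat compact open subgroup $K_1$, to a point $s_1$ in $\Sh_{K_1}(G_1, X_1)$. Each of the three properties is, by construction, invariant under finite extension of $E$, replacement of $K$ by an open subgroup, and central isogenies of the ambient group (the Mumford-Tate group of $h_1$ and of $h$ differ only by a central isogeny and the properties are formulated purely in terms of images/centralisers inside Mumford-Tate groups). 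This reduces the problem to the Hodge type case.

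Next, for $(G_1, X_1)$ of Hodge type, fix a symplectic embedding $(G_1, X_1)\hookrightarrow (\GSp(V), \Hcal^\pm)$. The point $s_1$ then corresponds, over a finitely generated extension $E_1$ of $E$, to a polarised abelian variety $A/E_1$ with level structure. By Deligne's theorem that Hodge cycles on abelian varieties are absolute Hodge, the natural Galois action on $\prod_{p\in S} V_p(A)\subset V\otimes \QQ_S$ factors through $M_1(\QQ_S)$, where $M_1$ is the Mumford--Tate group of the Hodge structure associated to $h_1$; this representation is precisely $\rho_{s_1, S}$.

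I would then invoke the two classical theorems of Faltings, valid over any finitely generated field of characteristic zero: (i) $V_p(A)$ is a semisimple $\Gal(\ol{E_1}/E_1)$-module, and (ii) $\End_{\Gal(\ol{E_1}/E_1)}(V_p(A)) = \End(A)\otimes_\ZZ \QQ_p$. From (i), the Zariski closure $H_S$ of the Galois image is reductive, which is $S$-semisimplicity. From (ii), combined with Deligne's theorem (so that $\End(A)\otimes\QQ$ coincides with the Hodge endomorphism algebra, whose commutant in $\End(V)$ defines the Mumford--Tate group), the centraliser of $H_S^0$ in $\GL(V\otimes\QQ_S)$ equals the centraliser of $M_1$; intersecting with $G_S$ gives $S$-Tate. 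Combining $S$-semisimplicity and $S$-Tate, Proposition~\ref{charSha} yields $S$-Shafarevich. Finally, the implication ``$S$-Tate and $S$-semisimplicity imply $S$-algebraicity'' established earlier for $E$ of finite type (Subsection on algebraicity) gives the last property.

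The principal technical obstacle is not the abelian-variety input, which is classical, but the book-keeping in the reduction from abelian type to Hodge type: one has to track the $S$-adic representation through the central isogeny $G_1^{\mathrm{der}}\to G^{\mathrm{der}}$, check that the image and its Zariski closure in the two Mumford--Tate groups correspond under this isogeny, and verify that each property (which involves centralisers and openness conditions in specific ambient $\QQ_S$-groups) really is preserved. The other delicate point worth recording is the appeal to Faltings and Deligne over a \emph{finitely generated} base field rather than a number field; this is standard but uses a specialisation argument that should be cited rather than re-proved.
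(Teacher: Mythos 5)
Your proposal is correct and follows essentially the same route as the paper: reduce via the abelian-type datum and a central isogeny to an abelian variety over a finitely generated base, invoke Faltings's semisimplicity and Tate isogeny theorems (over such fields), then push the $S$-semisimplicity and $S$-Tate conclusions through the isogeny and appeal to the earlier group-theoretic propositions for $S$-Shafarevich and $S$-algebraicity. If anything, your write-up is slightly more explicit than the paper's proof about the final $S$-algebraicity step (the paper implicitly relies on the earlier proposition that $S$-Tate plus $S$-semisimplicity plus $E$ of finite type give $S$-algebraicity, without re-stating it in the proof), and about the Deligne absolute-Hodge input needed to identify the Mumford--Tate group as the target of the Galois representation.
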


\begin{proof}
By definition of a Shimura variety of abelian type, there exists 
a Shimura subdatum $(G',X) \subset (GSp_{2g}, \HH_g)$
with a central isogeny $\theta \colon G' \lto G$.

There exists a compact open subgroup $K' \subset G'(\AAA_f)$ such that
$\Sh_{K'}(G',X)$ is a subvariety of $\cA_{g,3}$ (the fine moduli scheme 
of abelian vareities with level $3$ structure) and there is a finite morphism
$\Sh_{K'}(G',X) \lto \Sh_K(G,X)$.

For the purposes of proving the $S$-Tate property we may assume that
all our Shimura varieties are defined over $E$.

Let $x = \overline{(s,t)} \in \Sh_{K}(G,X)(E)$ and $x'$ a point 
of $\Sh_{K'}(G',X)$ over a finite extension of $E$.
Without loss of generality in the proof of the $S$-Tate property 
we replace $E$ by this finite extension and hence assume that $s'$ is defined over $E$.

The point $s'$ corresponds to an abelian varietey $A$ defined over $E$.
Let $V_S(A) = \prod_{l \in S} V_l(A)$ be the product of $l$-adic Tate modules attached to $A$ for $l \in S$. The module $V_l(A)$ is endowed 
with an action of $\Gal(\ol{E}/E)$ and with a symplectic action of 
$\GSp_{2g}(\QQ_S)$. Following the arguments of Remark 2.8
of \cite{1-UY1}, we see that the action of $\Gal(\ol{E}/E)$ is given by 
the representation 
$$
\rho'_S \colon \Gal(\ol{E}/E) \lto G'_S \subset \GSp_{2g}(\QQ_S).
$$

Note that in \cite{1-UY1} there the authors suppose the field $E$ to be a number field and $S$ to consist of one prime. However, all arguments adapt verbatim in our situation.

We denote by $M'$ the Mumford-Tate of $x'$ and by $H'_S$
the image of $\rho'_S$.

By Falting's theorem (Tate conjecture for abelian varieties),
 the group $H'_S$ is reductive. Note that Falting's theorem holds for abelian
varieties over finitely generated fields over $\QQ$ (see Chapter  VI of \cite{1-FW}).
 
 Again, by the Tate conjecture, we have 
 $$
 Z_{GSp_{2g, S}}(M') = Z_{GSp_{2g, S}}(H'_S)
 $$
 and therefore 
 $$
 Z_{G'_S}(M') = Z_{G'_S}(H'_S).
 $$
 Let $M$ be the Mumford-Tate group of $x$ and let $H_S$ be Zariski closure of the image 
 of $\rho_S$, the representation of $\Gal(\ol{E}/E)$ attached to $x$.
 
 Recall that we have a central isogeny $\theta \colon G' \lto G$.
 We naturally have
 $$
 M = \theta(M'), \quad H_S = \theta(H'_S).
 $$ 
In particular, $H_S$ is semisimple. 
 
 Since $\theta$ is a central isogeny and hence 
 commutes with conjugation, the equality $Z_{G'_S}(M') = Z_{G'_S}(H'_S)$.
\end{proof}

\subsection{Dependence on the field $E$.}

We have seen that for a Shimura variety of abelian type and
when $E$ is of finite type all properties~\ref{ProprietesGaloisiennes}
(except $S$-Mumford-Tate) hold.

In this section we will show that these properties fail even when $(G,X)$ is
of abelian type when the field $E$ is not of finite type.

Let $A$ be an elliptic curve over $\QQ$ without complex multiplication.
By a celebrated theorem of Serre, the image of the adelic representation of $\Gal(\overline{\QQ}/\QQ)$ attached to $A$ is open in $\GL_2(\widehat{\ZZ})$.
For any $S$, all our properties hold for $A$ over $\QQ$ 
(or any finite type extension of $\QQ$).

Choose a prime $l$ such that $\rho_l$ surjects onto $\GL_2(\ZZ_l)$.
Take $S= \{ l \}$.
Let $B$ be the standard Borel in $\GL_2(\ZZ_l)$ (upper triangular matrices)
and let $E$ be an extension corresponding to the subgroup $\rho_l^{-1}(B)$ of $\Gal(\overline{\QQ}/\QQ)$.
The extension $E$ is not of finite type over $\QQ$.

We immediately see that $S$-Mumford-Tate property does not hold
for $A/E$ (the image of Galois is $B$). The $S$-semisimplicity also fails
since the Zariski closure of $B$ is not reductive.
The $S$-Tate property holds however.
Indeed the centraliser of $B$ is the centre of $\GL_2$ which is of course
also the centraliser of $\GL_2$ in itself.

Finally, let us see directly that the $S$-Shafarevich property fails.
Write $\overline{(s, 1)}$ the point of $\Sh_{\GL_n(\widehat{\ZZ})}(\GL_2,\HH^{\pm})$ corresponding to $A$.

Consider elements
$g_n = \left(\begin{smallmatrix}l^{-n}& 0\\0&1\end{smallmatrix}\right)$ for $n > 0$.

Note that
$$
g_n^{-1} B g_n \subset B
$$

and therefore the sequence of points $\overline{(s,g_n)}$ is defined over 
$E$. This sequence of points is obviously infinite - it corresponds to all elliptic curves isogeneous (over $\CC$) to $A$ by a cyclic isogeny of
degree $l^n$.

We now give an example where $S$-algebraicity fails.
In the previous situation, consider the subgroup of $\GL_2(\ZZ_l)$
defined by 
$\left(\begin{smallmatrix}e^x& x\\0&e^x\end{smallmatrix}\right)$
where $x \in l^2 \ZZ_l$.

Note that this group is Zariski dense in the group
$\left(\begin{smallmatrix}a& b\\0&a\end{smallmatrix}\right)$  where $a,b \in \ZZ_l$.
However it is not open in this group and therefore $S$-algebraicity fails.

Note that in the previous example semi-simplicity did not hold. However,
we can construct an example where $S$-semi-simplicity does hold, but
the $S$-algebraicity fails.

Let $\alpha$ be the Liouville number, 
$\alpha = \sum_{n=0}^{\infty} l^{n!}$. This is an element of $\ZZ_l$ which is transcendental over $\QQ$. We refer to \cite{1-AM} and references therein for more details.

Consider the subgroup 
$\left(\begin{smallmatrix}e^{x}& 0\\0&e^{\alpha x}\end{smallmatrix}\right)$  where $x \in \ZZ_l$ and as before $E$ the extension corresponding to this subgroup. The Zariski closure is the diagonal torus therefore 
the $S$-semisimplicity holds.
However the $S$-algebraicity fails.
This example is analogous to the one given after Theorem 2.3 in \cite{1-Bost}.

%%%%%%%%%%%%%%%%%%%%%%%%%%%%%%%%%%%%%%%%%%%%%%%%%%%%%%

% !TEX root = RichardYafaevDecok.tex
% !TeX program = pdfLaTeX
% !TeX encoding = UTF-8
% !TeX spellcheck = en_GB

\section{The $S$-Arithmetic lift and Equidistribution}\label{Reduction}

In this section we start working towards proving Theorem~\ref{Theoreme1}.
Our aim here is to translate our problem (i.e. Theorem~\ref{Theoreme1}) about the probabilities~$\mu_n$ on~$\Sh_K(G,X)$ into a problem of equidistribution 
on an~$S$-arithmetic homogeneous space of a semisimple algebraic group over~$\QQ$, of the kind studied~\cite{1-RZ}, in order to apply the main theorem thereof. More precisely, we will use the maps~$\pi_G$ of~\eqref{piG} and~$\pi$ of~\eqref{pider}.
%We warn here that the map $\pi$ we construct in this section is not the same as the one considered in the previous %section.

We work with a point~$s = \overline{(h,t)}$ of $\Sh_K(G,X)$ and 
its $S$-Hecke orbit~$\Hcal_S(s)$ as in the statement of Theorem~\ref{Theoreme1}.

We also have a sequence of points~$s_n = \ol{(h,t\cdot g_n)}$ of~$\Hcal_S(s)$
with some $g_n \in G_S$.
The elements~$g_n$ are not uniquely defined by the points $s_n$ and will actually be subject to modification, without changing~$s_n$, in the course of the proof.

For any~$g$ in~$G_S$, we may change the representative~$(h,t)$ of~$s$ into the representative~$(h,t\cdot g)$, provided that we change accordingly each~$g_n$ into~$g^{-1}g_n$. Neither the~$S$-Hecke orbit~
$\Hcal_S(s)$, nor the set of points~$\{s_n\}$ are changed by such substitutions.  The group~$U_S$ is left unchanged too. Hence the $S$-Shafarevich property of~$s$ remains valid under this substitution of~$s$ with~$\overline{(h,tg)}$.

Lastly, since the conclusion of Theorem~\ref{Theoreme1} is up to extracting
 a subsequence, we may, whenever necessary, replace~$(g_n)_{n\geq0}$ by a subsequence.

%%% Produit est inutile: couvert par Richard Zamojski (ouvert de groupe de Lie S-adique).
%Recall that the group $K$ is a product of subgroups of the $G(\QQ_p)$. It follows that
%$$
%U_S = \prod_{p\in S} U_{U,p}	
%$$
%where $U_{S,p}$ is a compact subgeoup of $G(\QQ_p)$.
%As already observed in the previous section, we may, when needed, 
%we can replace $E$ by a finite extension or equivalently replace $U_S$ by a finite subgroup.
For technical reasons we will need to assume that $H_S \cap G^{der}$
is $\QQ_S$-Zariski connected. 
%We assume, possibly after replacing $E$ by a finite extension thereof that~$U_S\cap G^{\der}(\QQ_S)$ is $\QQ_S$-Zariski connected. 
It is sufficient that~$U_S\cap G^{\der}(\QQ_S)$ be  contained in~$\left( H_S\cap G^{\der}_{\QQ_S}\right)^0(\QQ_S)$.
This can be achieved by passing to a subgroup of finite index in~$U_S$, that is to a finite extension of~$E$.
We note that this assumption will still be satisfied after passing again to a finite extension of~$E$. We
recall that the statement of Theorem~\ref{Theoreme1} is invariant by passing to finite extensions of~$E$.%This can be achieved by replacing $U_S$ by a %subgroup of finite index. 
%Furthermore, the choice of $E$ may be done in such a way that $U_S$ remains a product, which we assume to be the case.

\subsection{The reductive $S$-arithmetic lift.}

Since the equidistribution theorems of \cite{1-RZ} apply to $S$-arithmetic homogeneous
spaces, we need to ``lift'' our situation, from the base Shimura variety of level~$K$ to such a space.

In this section we construct a map~$\pi_G$  below (see ~\eqref{piG}), from an~$S$-arithmetic homogeneous space of the reductive group~$G$ to~$\Sh_K(G,X)$. We then introduce probabilities~$\wt{\mu_n}'$ which
are ``lifts'' of the probabilities~${\mu_n}$, that is such that we have the compatibility~\eqref{lem compat} by direct image.

\subsubsection{The~$S$-arithmetic map.}
For convenience we identify the sub\-group~$G_S\subseteq G(\AAA_f)$ with its image~$G(\QQ_S)$. Let us consider the ``orbit maps''
\begin{align*}
 \omega_{tK}&:	G_S\xrightarrow{g\mapsto g\cdot t K} G(\AAA_f)/K&&\text{ at the coset }tK,\\
 \text{ and }~
 \omega_h&:G(\RR) \xrightarrow{g \mapsto g \cdot h} X&&\text{ at the point~$h$.}
\end{align*}
Together these induce the following map
\[
 	\ol{\omega_{(h,tK)}}:
 		G(\RR\times \QQ_S)
 			\xrightarrow{ \omega_{h} \times \omega_{tK}}
		X\times G(\AAA_f)
			\xrightarrow{(h,t)\mapsto \ol{(h,t)}}
		\Sh_K(G,X).
\]
Equivalently, for any element~$(g_\RR,g_S)\in G(\RR)\times G(\QQ_S)\simeq G(\RR\times \QQ_S)$,
\[\ol{\omega_{(h,tK)}}(g_\RR,g_S)=\ol{( g_\RR\cdot h, g_S\cdot t)}.\]
\begin{quote}
\item
\paragraph*{Remark}
The Shimura variety~$\Sh_K(G,X)$ has finitely many geometrically connected components.
Each component is a quotient of a the Hermitian symmetric domain~$X^+$ by an arithmetic subgroup of~$G(\QQ)$. The image of~$\ol{\omega_{(h,tK)}}$ 
consists of the union the components this image intersects.\footnote{This is an instance of a ``$S$-adic packet'' of components extracted out of the ``adelic packet'' of all components. For instance, if the Shimura variety is reduced to a class group (and hence finite), viewed as a Galois group, we obtain cosets of the subgroup generated by the Frobenius  from the places in~$S$.}
\end{quote}
%Let~$K_S=K\cap G_S$. This is an open compact sugroup of~$G_S$.

The product~$G_S\cdot tKt^{-1}$  is open in~$G(\AAA_f)$. As~$G_S$ is a normal subgroup in~$G(\AAA_f)$, this product is a subgroup. 
We define then following $S$-arithmetic subgroup of $G(\QQ)$
\footnote{We understand ``$S$-arithmetic'' in the sense that there is a faithful $\QQ$-linear representation~$\rho:G\hookrightarrow \GL(n)$, such that the groups~$\rho(G)(\QQ)\cap \GL\left(n,\ZZ[(1/\ell)_{\ell\in S}]\right)$ and~$\rho(\Gamma)$ are commensurable. For~$S=\emptyset$ we recover the usual notion of arithmetic subgroup.
\newline \indent
We will define~$\Gamma$ in~\eqref{defi Gamma}, as an~$S$-arithmetic subgroup of~$G^\der$.}:
\begin{equation}\label{defi Gamma G}
\Gamma_G := G(\QQ)\cap (G_S\cdot tKt^{-1})
\end{equation}
Equivalently we may define, as the intersection, inside of~$G(\AAA)$,
\[
\Gamma_G=\left( G(\RR)\cdot G_S \cdot tKt^{-1} \right)\cap G(\QQ).
\]
This~$\Gamma_G$ depends on~$tK$ and~$S$ though we don't specify it to simplify notation.

The map~$\ol{\omega_{(h,tK)}}$ is left invariant under the action of~$\Gamma_G$: this map factors through a map
\begin{equation}\label{piG}
\pi_G=\pi_{(h,tK)} : \Gamma_G\backslash G(\RR \times \QQ_S) \lto \Sh_K(G,X).
\end{equation}

Let~$K_h$ be the stabiliser of~$h$ in~$G(\RR)$ and~$K_S=K\cap G_S$, so that~$tK_St^{-1}$ is the stabiliser of the coset~$tK$ in~$G_S$.
We may further factor~$\pi_{(h,tK)}$ as the composition
\begin{itemize}
\item of the quotient map
\[
\Gamma_{G} \backslash G(\RR \times \QQ_S)
\to
\Gamma_{G} \backslash \left( G(\RR)\times G(\QQ_S)\right)/\left( K_h\times tK_St^{-1}\right)
\] by the right action of the compact group $K_h \times t K_S t^{-1}$;
\item followed by a closed open immersion into~$\Sh_K(G,X)$, that is  the inclusion of a union of components.
\end{itemize}

\subsubsection{Lift of probabilities}
Recall from~\eqref{defimonogroup} of~Definition~\ref{defiintro} that we denote~$U_S$ denote the~$S$-adic monodromy group associated with~$\overline{(h,t)}$. 
Since $U_S$ is a compact group, it supports a Haar probability.
Let~$\mu_{U_S}$ be \emph{the direct image} of this Haar probability 
in the $S$-arithmetic quotient
\[
	U_S\hookrightarrow G(\RR\times \QQ_S)\to\Gamma_G\backslash G(\RR\times \QQ_S).
\]

The right translate of~$\mu_{U_S}$ by~$g_n$ is denoted by
\begin{equation}\label{lifted1}
	\wt{\mu_n}^\prime=\mu_{U_S}\cdot g_n.
\end{equation}
We now show that the~$\wt{\mu_n}'$ are lifts of the~$\mu_n$.

\begin{lem} Pushing forward the measures~$\wt{\mu_n}'$ from~\eqref{lifted1} along the map~$\pi_G$ from~\eqref{piG} we get
\begin{equation}\label{lem compat}
{\pi_G}_\star(\wt{\mu_n}')=\mu_n.
\end{equation}
\end{lem}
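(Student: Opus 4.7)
The plan is to unwind both measures and compare them after a harmless change of representative. First, I will invoke the reparametrization recalled at the start of Section~\ref{Reduction}: for any $g\in G_S$ one may replace the representative $(h,t)$ of $s$ by $(h,tg)$ provided each $g_n$ is replaced by $g^{-1}g_n$, leaving $s$, $s_n$, $\Hcal_S(s)$ and $U_S$ untouched. Choosing $g$ to be the $S$-component of $t$, I will henceforth assume that $t$ has trivial $S$-component, i.e.\ lies in $\prod_{\ell\notin S}G(\QQ_\ell)$. Both sides of the claimed identity are invariant under this substitution (the lattice $\Gamma_G$ and the map $\pi_G$ from~\eqref{piG} adjust accordingly), so no generality is lost.

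Under this assumption, $u\in U_S\subseteq G_S$ and $g_n\in G_S$ lie in one factor of the product decomposition $G(\AAA_f)=G_S\times\prod_{\ell\notin S}G(\QQ_\ell)$, while $t$ lies in the other, so they commute, yielding $ug_n\cdot t=u\cdot tg_n$ in $G(\AAA_f)$. Next I will unwind the definitions: by construction $\widetilde{\mu_n}'=\mu_{U_S}\cdot g_n$ is the direct image of the Haar probability on $U_S$ under $u\mapsto\Gamma_G\cdot(1,ug_n)$, and applying $\pi_G$ this becomes
\[
u\longmapsto \overline{(h,ug_n\cdot t)}=\overline{(h,u\cdot tg_n)}.
\]
By the Galois characterization~\eqref{Galois carac}, the latter point equals $\sigma(s_n)$ for any $\sigma\in\Gal(\ol E/E)$ with $\rho_{h,S}(\sigma)=u$. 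Hence $\pi_{G*}(\widetilde{\mu_n}')$ is the push-forward of the Haar probability on $U_S$ under the composite $U_S\to\Gal(\ol E/E)\cdot s_n$, $u\mapsto\sigma(s_n)$.

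To conclude, I will observe that this composite factors through the finite quotient $U_S/\mathrm{Stab}_{U_S}(s_n)$, which by~\eqref{Galois carac} is in bijection with $\Gal(\ol E/E)\cdot s_n$. By bi-invariance of Haar on the compact group $U_S$, every coset of $\mathrm{Stab}_{U_S}(s_n)$ carries the same mass $1/\#\Gal(\ol E/E)\cdot s_n$, so
\[
\pi_{G*}(\widetilde{\mu_n}')\;=\;\frac{1}{\#\Gal(\ol E/E)\cdot s_n}\sum_{\zeta\in\Gal(\ol E/E)\cdot s_n}\delta_\zeta\;=\;\mu_n,
\]
matching Definition~\ref{measure}. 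The only delicate point---and the sole motivation for the preliminary reparametrization---is engineering the equality $ug_n\cdot t=u\cdot tg_n$. Without it, the $S$-arithmetic lift $\pi_G(\Gamma_G\cdot(1,ug_n))=\overline{(h,ug_nt)}$ would not coincide in general with the Galois conjugate $\overline{(h,utg_n)}=\sigma(s_n)$, because $g_n\in G_S$ and $t\in G(\AAA_f)$ need not commute at the $S$-places; once this is arranged, the rest is routine bookkeeping of push-forwards and Haar measure.
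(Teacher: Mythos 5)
Your proof is correct and follows essentially the same path as the paper's: identify $\pi_{G\star}(\widetilde{\mu_n}')$ as the push-forward of the Haar probability on $U_S$ (equivalently, on $\Gal(\overline E/E)$) under the Galois orbit map at $s_n$, then conclude by invariance and uniqueness of the Haar probability on the finite orbit. The one substantive difference is that you make explicit the preliminary reparametrization that the paper only announces as available: tracing the definition of $\pi_G$ gives $\pi_G(\Gamma_G\cdot(1,ug_n))=\overline{(h,\,ug_n\cdot t)}$, while $\sigma(s_n)=\overline{(h,\,u t g_n)}$, and these agree precisely because you have arranged the $S$-component of $t$ to be trivial so that $g_n$ and $t$ commute in $G(\AAA_f)$; the paper's computation of $\alpha(\sigma)$ passes from $u g_n t$ to $u t g_n$ without comment. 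Your added step is a genuine, if small, tightening. One minor slip in the write-up: to make the new $t$ have trivial $S$-component you should take $g$ to be the \emph{inverse} of the $S$-component of $t$ (i.e. $g=t_S^{-1}$), not $t_S$ itself; this does not affect the argument.
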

\begin{proof}
The map~$\alpha$ defined by the commutativity of the diagram

 \begin{equation}\label{composee}
 \xymatrix{
	\Gal\left(\ol{E}/E\right)
		\ar[r]^(.65){\rho}
		\ar[drrr]_\alpha
	&
	U_S\ar[drr]^\beta
		\ar[r]^(.30){\sigma\mapsto \Gamma_G\cdot\sigma}
	&
	\Gamma_G\backslash G(\RR\times \QQ_S)
		\ar[r]^{x\mapsto x \cdot g_n}\ar[dr]^{\gamma}
	&
	\Gamma_G\backslash G(\RR\times \QQ_S)
		\ar[d]^{\pi_G}
	\\&&&
	\Sh_K(G,X)
}
\end{equation}
sends~$\sigma$ to~$\ol{(h,\rho(\sigma) t g_n)}$. By the defining properties~\eqref{Galois carac} of~$\rho$ and of~$g_n$, we may rewrite
\(
\alpha(\sigma)=\sigma\cdot \ol{(h,t g_n)}=\sigma\cdot s_n.
\) In other words~$\alpha$ is the orbit map at~$s_n$ for the action of~$\Gal\left(\ol{E}/E\right)$ on~$\Sh_K(G,X)$. In particular its image is the Galois orbit of~$s_n$, which is~${\rm Supp}(\mu_{s_n})$ in the notation of~\eqref{defi mesure}. The map~$\alpha$ is clearly left~$\Gal\left(\ol{E}/E\right)$-equivariant.
Consequently the direct image of the Haar probability, say~$\mu_E$, on~$\Gal\left(\ol{E}/E\right)$ is an invariant probability on~${\rm Supp}(\mu_{s_n})$, necessarily the Haar probability of the transitive~$\Gal\left(\ol{E}/E\right)$-space~${\rm Supp}(\mu_{s_n})$.
The counting probability~$\mu_n=\mu_{s_n}$ is invariant by permutation, hence is~$\Gal\left(\ol{E}/E\right)$-invariant.
It must equal~$\mu_n$ by the uniqueness of the Haar probability:
\[
	\alpha_\star\left(\mu_E\right)=\mu_n.
\]

As~\eqref{composee} is commutative and pushforwards are functorial, we may factor
\[
	\beta_\star\circ \rho_\star=(\beta\circ \rho)_\star=\alpha_\star.
\]
The representation~$\rho$ is a continuous map of compact groups, 
so the direct image~$\rho_\star(\mu_E)$ is the Haar measure on
the image~$\rho(\Gal\left(\ol{E}/E\right))$. This image is~$U_S$
by definition.
It follows that~$\mu_n$ is the direct image of the Haar probability measure~$\rho_\star(\mu_E)$ on~$U_S$ through this~$\beta$. 

We defined~$\mu_{U_S}$ as
the image of the Haar measure of~$U_S$ in the first occurrence in~\eqref{composee} of~$\Gamma_G\backslash G(\RR\times \QQ_S)$, and~$\wt{\mu_n}'$ as the direct image in the second occurrence. By the same functoriality argument as above, the compatibilities
\[
\mu_n=\gamma_\star(\mu_{U_S})={\pi_G}_\star(\wt{\mu_n}')=\beta_\star(\rho_\star(\mu_E))=\alpha_\star(\mu_E),
\]
including the identity~\eqref{lem compat}, follow.
\end{proof}
%\subsubsection{Afterword}
%Nothing was surprising so far.
%An issue is that~\cite{1-RZ} covers only \emph{se\-mi\-simple} groups~$G$, whereas %our~$G$ may more generally be \emph{reductive}; it requires us to reduce to the case %of a semisimple group.
%This is done in the following~\S\ref{derived}, using a variant of the previous %construction, with the derived group~$G^{\text der.}$ of~$G$ instead. This will require %extra work, and possibly passing to a finite extension 
%of~$E$ (depending only on~$U_S$).

%Let us foreshadow that this semisimple group~$G^{\text der.}$ will be the group %referred as~$G$ in~\cite{1-RZ}, when we will invoke the results of~\cite{1-RZ}, 

\subsection{Passing to the derived subgroup}\label{derived}

Theorems from ~\cite{1-RZ} require that the group $G$ be semisimple. 
We will reduce to this case by passing from~$G$ to~$G^{der}$.
In this section,
 we modify our lifting probabilities~$\wt{\mu_n}'=\mu_{U_S}\cdot g_n$ in two ways
 in order to be able to make this assumption and thus recover the setting of ~\cite{1-RZ}.
 
\begin{itemize}
\item Firstly we substitute the translating element~$g_n$ in~\eqref{lifted1} with another one which comes from the derived group~$G^\der(\QQ_S)$, thus constructing 
a probability~$\wt{\mu_n}''$. This first step might require passing to a subsequence and altering~$t$.

\item Secondly, we replace the compact subgroup~$U_S$ of~$G(\QQ_S)$ by a compact subgroup~$\Omega$ of~$G^\der(\QQ_S)$, thus producing~$\wt{\mu_n}'''$. This step may require passing to a finite extension of~$E$.
\end{itemize}

%\paragraph*{}

We recall that we beforehand ensured that~$U_S\cap G^{\der}(\QQ_S)$ is $\QQ_S$-Zariski connected.% It is sufficient that~$U_S\cap G^{\der}(\QQ_S)$ be  contained in~$\left( H_S\cap G^{\der}_{\QQ_S}\right)^0(\QQ_S)$.  In particular this assumption will still be fulfilled after passing again to a finite extensions of~$E$.
%Recall also that $U_S$ is a product of compact subgroups $U_{S,p}$ with $p \in S$.

As, for simplicity reasons, the reference~\cite{1-RZ} deals semisimple groups~$G$, instead of general reductive, we need to carry out the reduction to the semisimple case. A reader uninterested in subtle technical details may skip directly to the next section~\ref{sous section stabilite}.

\subsubsection{Some finite index open subgroups.}
We first note that
\begin{equation}\label{indice fini}
G^{\der}(\QQ_S)\cdot Z(\QQ_S)\text{ is a finite index open subgroup of~$G_S$.}
\end{equation}
\begin{proof}
 As~$S$ is finite, we may work place by place, in which case it is sufficient to refer to~\cite[\S3.2, Cor.~3 p.~122, \S6.4 Cor.~3 p.~320]{PR}.
\end{proof}

In this section we will  use the notation:
\begin{equation}\label{def Gamma Z}
\Gamma_Z=\left(Z(\RR)\cdot \Gamma_G\right)\cap Z(\QQ_S).
\end{equation}
The finiteness of the class group of the torus~$Z$ tells us  that
\[
	(Z(\QQ_S)\cap K) \cdot \Gamma_Z
\]
is an open subgroup of finite index in~$Z(\QQ_S)$. We deduce, combining with~\eqref{indice fini}, that
\begin{equation}\label{indice fini 2}
   G^{\der}(\QQ_S)\cdot	(Z(\QQ_S)\cap K) \cdot \Gamma_Z
\end{equation}
is a finite index open subgroup of~$G_S$.
\subsubsection{Reducing to~$g_n\in G^{\text der.}(\QQ_S)$}
The sequence  of right cosets relative to~\eqref{indice fini 2}, induced by~$(g_n)_{n\geq0}$, that is
\begin{equation}\label{coset gamma Z}
X	\left( g_n\cdot\left(G^{\der}(\QQ_S)\cdot	(Z(\QQ_S)\cap K) \cdot \Gamma_Z\right)\right)_{n\geq 0},
\end{equation}
can, by finiteness of the index below, be decomposed into at most
\begin{equation}
N(G,\Gamma_G)=\left[G^{\text der.}(\QQ_S)\cdot (Z(\QQ_S)\cap K) \cdot \Gamma_Z~:~G_S\right]
\end{equation}
 constant subsequences (this index only depends on~$G$ and~$\Gamma_G$).

After possibly passing to a subsequence, we may assume the sequence~\eqref{coset gamma Z} has a constant value.
This value is the right coset of~$g_0$. Replacing~$t$ by~$tg_0$, we may assume that~$g_0=1$, and that the right coset of~$g_n$
is the neutral coset. Equivalently,  for every~$n\geq0$,
\begin{equation}\label{decompo dzg}
	\exists (d_n,z_n,\gamma_n)\in G^{\text der.}(\QQ_S)\times	(Z(\QQ_S)\cap K) \times \Gamma_Z,~g_n=d_n\cdot z_n\cdot\gamma_n.
\end{equation}
We will see that we may assume~$z_n=\gamma_n=1$, and replace~$g_n$ by~$d_n$,
without interfering with the compatibility~\eqref{lem compat}. 
To achieve this, similarly to~\eqref{lifted1}, we set:
\begin{equation}\label{lifted2}
	\wt{\mu_n}''=\mu_{U_S}\cdot d_n.
\end{equation}
\begin{lem}\label{lemme compat2} The direct image along~$\pi_G$ of the above~$\wt{\mu_n}''$ is given by
\begin{equation}\label{lem compat2}
{\pi_G}_\star(\wt{\mu_n}'')=\mu_n.
\end{equation}
\end{lem}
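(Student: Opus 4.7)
The plan is to reduce to the preceding lemma (whose conclusion is~\eqref{lem compat}) by exploiting the fact that the factor $z_n\gamma_n$, by which $d_n$ differs from $g_n$ via~\eqref{decompo dzg}, lies in the centre $Z$ and is absorbed by the stabilisers through which $\pi_G$ factors. Using centrality of $z_n,\gamma_n\in Z(\QQ_S)$,
\[
\wt{\mu_n}'=\mu_{U_S}\cdot g_n=\mu_{U_S}\cdot d_n\cdot (z_n\gamma_n)=\wt{\mu_n}''\cdot (z_n\gamma_n).
\]
Since~\eqref{lem compat} gives ${\pi_G}_\star(\wt{\mu_n}')=\mu_n$, it is enough to check that the right action of $z_n\gamma_n$ on $\Gamma_G\backslash G(\RR\times\QQ_S)$ descends to the identity on $\Sh_K(G,X)$ via $\pi_G$, equivalently that it is absorbed by the combined left $\Gamma_G$-action and right $(K_h\times tK_St^{-1})$-action through which $\pi_G$ factors.

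For $z_n$: by definition $z_n\in Z(\QQ_S)\cap K$, and since $Z(\QQ_S)\subseteq G_S$ we get $z_n\in K\cap G_S=K_S$. By centrality $tz_nt^{-1}=z_n\in tK_St^{-1}$, so right multiplication by $(1,z_n)$ lies directly in the right action of $K_h\times tK_St^{-1}$.

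For $\gamma_n$: unwinding the definition $\Gamma_Z=(Z(\RR)\cdot\Gamma_G)\cap Z(\QQ_S)$ inside $G(\RR)\times G(\QQ_S)$ shows that every $\gamma_n\in\Gamma_Z$ is the $\QQ_S$-image of a rational element $\gamma\in Z(\QQ)\cap\Gamma_G$ (the image in $G(\RR)$ must lie in $Z(\RR)$, which for a rational element forces $\gamma\in Z(\QQ)$). Applying the diagonal left action of $\gamma^{-1}\in\Gamma_G$ sends $(g_\RR,g_S\cdot\gamma_n)$ to $(\gamma^{-1}g_\RR,g_S)$ by centrality; and since $Z(\RR)$ acts trivially on the symmetric domain $X$ one has $\gamma\in K_h$, so right multiplication by $(\gamma,1)\in K_h\times tK_St^{-1}$ restores $(g_\RR,g_S)$.

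Combining these two steps, right multiplication by $z_n\gamma_n$ lies in the equivalence relation defining $\pi_G$, so $\pi_G(\bar g\cdot z_n\gamma_n)=\pi_G(\bar g)$ pointwise on the quotient and therefore ${\pi_G}_\star(\wt{\mu_n}'')={\pi_G}_\star(\wt{\mu_n}')=\mu_n$. The only non-mechanical step is the identification of $\Gamma_Z$ with the $S$-adic image of $Z(\QQ)\cap\Gamma_G$, which rests on the fact that a rational element of $G$ is central over $\QQ_S$ iff it is central over $\QQ$; everything else reduces to the centrality of $Z$ and the standard property that $Z(G)(\RR)$ fixes every point of $X$.
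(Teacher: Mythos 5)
Your proof is correct and takes essentially the same approach as the paper's: both write $g_n = d_n z_n \gamma_n$ and absorb the factors $z_n$ and $\gamma_n$ using centrality together with the left $\Gamma_G$-invariance of the quotient and the right $K_h\times tK_St^{-1}$-invariance of $\pi_G$. The paper packages exactly these absorptions into the three items of Lemma~\ref{LemmeInvariances} before applying them, and your lift of $\gamma_n$ to a rational $\gamma\in Z(\QQ)\cap\Gamma_G$ is equivalent to the paper's decomposition $\gamma_n=\zeta_n c_n$ with $\zeta_n\in Z(\RR)$ and $c_n\in\Gamma_G\cap Z(\RR\times\QQ_S)$.
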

\noindent Lemma~\ref{lemme compat2} is deduced from~\eqref{lem compat} using following invariance properties.

\begin{lem}\label{LemmeInvariances}
Let~$\mu$ be any probability measure on~$\Gamma_G\backslash G(\RR)\times G_S$.
\begin{subequations}
\begin{enumerate}
\item \label{item1} For any~$\gamma$ in $\Gamma_G\cap Z(\RR\times\QQ_S)$ we have
\begin{equation}\label{invariance 1}
\mu\cdot\gamma=\mu.
\end{equation}
\item \label{item2} For any~$k\in K_h\times K$, we have
\begin{equation}\label{inva 2}
	{\pi_G}_\star(\mu\cdot k)={\pi_G}_\star(\mu).
\end{equation}
\item \label{item3} For every~$g\in G(\QQ_S)$, and every~$ z\in K\cap Z(\QQ_S)$ or~$z\in Z(\RR)$,
\begin{equation}\label{seconde invariance}
{\pi_G}_\star(\mu\cdot z\cdot g)={\pi_G}_\star(\mu\cdot g).
\end{equation}
\end{enumerate}
\end{subequations}
\end{lem}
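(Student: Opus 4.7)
All three assertions follow directly from the explicit construction of~$\pi_G$ and from the fact that the elements~$\gamma$ and~$z$ involved are central. The overall strategy is to prove~\eqref{item1} and~\eqref{item2} by inspection, and then to deduce~\eqref{item3} from~\eqref{item2} by exploiting centrality to commute~$z$ past~$g$. Interpret the factor~``$K$'' in statement~\eqref{item2} as the compact group~$tK_St^{-1}\subseteq G(\QQ_S)$ acting on the right; this is the one appearing in the factorisation of~$\pi_{(h,tK)}$ explained after~\eqref{piG}. No serious obstacle is expected; the main subtlety is only the correct identification of compact subgroups through which the map~$\pi_G$ factors.

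For~\eqref{item1}, I would argue that since~$\gamma\in Z(\RR\times\QQ_S)$ is central in~$G(\RR\times\QQ_S)$, right multiplication by~$\gamma$ coincides with left multiplication by~$\gamma$. As~$\gamma$ also lies in~$\Gamma_G$, the latter descends to the identity on the quotient~$\Gamma_G\backslash G(\RR\times\QQ_S)$, so right multiplication by~$\gamma$ acts trivially on the quotient and fixes every probability measure. For~\eqref{item2}, it suffices to check that the map~$\ol{\omega_{(h,tK)}}$ is invariant under the right action of~$K_h\times tK_St^{-1}$. Concretely, for~$x=(x_\RR,x_S)$ and~$k=(k_\RR,k_S)$ in this compact group,
\[
	\ol{\omega_{(h,tK)}}(xk)=\ol{(x_\RR k_\RR\cdot h,\; x_Sk_S\cdot t)}=\ol{(x_\RR\cdot h,\; x_S\cdot t)}
\]
using~$k_\RR\cdot h=h$ (definition of~$K_h$) and~$k_S\cdot t\in t\cdot K_S$ (definition of~$tK_St^{-1}$), so~$x_Sk_St$ and~$x_St$ have the same image in~$G(\AAA_f)/K$. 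Consequently~$\pi_G(xk)=\pi_G(x)$ and the pushforward is invariant.

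For~\eqref{item3}, since~$z$ is central we have~$zg=gz$, hence~$\mu\cdot z\cdot g=(\mu\cdot g)\cdot z$, and it suffices by~\eqref{item2} to verify that in each of the two cases~$z$ lies in the compact group~$K_h\times tK_St^{-1}$. If~$z\in Z(\RR)$, then~$z$ acts trivially under inner conjugation on the~$G(\RR)$-conjugacy class~$X$, so~$z\cdot h=h$, i.e.~$z\in K_h$. If~$z\in K\cap Z(\QQ_S)$, then~$z\in K\cap G_S=K_S$, and centrality gives~$z=t\cdot z\cdot t^{-1}\in tK_St^{-1}$. In either case,~\eqref{item2} applies to~$\mu\cdot g$ and~$k=z$, giving
\[
	{\pi_G}_\star(\mu\cdot z\cdot g)={\pi_G}_\star((\mu\cdot g)\cdot z)={\pi_G}_\star(\mu\cdot g),
\]
which concludes the proof.
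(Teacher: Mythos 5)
Your proof is correct and takes essentially the same route as the paper: triviality of right multiplication by a central element of $\Gamma_G$ on the quotient for (1), invariance of $\pi_G$ under the compact group through which it factors for (2), and commuting $z$ past $g$ by centrality and then invoking (2) for (3). The one place you are more careful than the paper is in reading the ``$K$'' of the statement as the stabiliser $tK_St^{-1}$ of the coset $tK$ rather than $K$ itself (the paper's factorisation of $\pi_{(h,tK)}$ makes this the correct group, and your verification that $z\in K\cap Z(\QQ_S)$ lies in $tK_St^{-1}$ by centrality is exactly the needed bridging remark), which tightens a minor imprecision in the paper's wording and proof of item~\eqref{item2}.
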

\begin{proof}[Proof of \eqref{item1}] Note that~$\Gamma_G\cap Z(\RR\times\QQ_S)$ acts trivially on~$\Gamma_G\backslash G(\RR)\times G_S$,
as can be checked pointwise, for some~$\gamma\in Z(\RR\times\QQ_S)$ and~$\gamma\in\Gamma_S$, with
\[
\Gamma_G \cdot(g_\RR,g_S)\cdot \gamma
=
\Gamma_G \cdot \gamma\cdot (g_\RR,g_S)
=
\Gamma_G \cdot(g_\RR,g_S).
\]
By ``transport of structure'' it acts trivially its measure space.
\end{proof}
\begin{proof}[Proof of \eqref{item2}] 
As~$\pi_G$ factors through the right action of~$K_h\times K$, we have
\[
\forall k\in K\times K_h,~
\pi_G(x\cdot k)=\pi_G(x).
\]
Equivalently~${\pi_G}_\star(\delta_x\cdot k)={\pi_G}_\star(\delta_x)$. Concerning~$\mu$ such as in the statement, we may
compute
\begin{align*}
{\pi_G}_\star(\mu\cdot k)&={\pi_G}_\star\left(\int\delta_x\cdot k~\mu(x)\right)=\int{\pi_G}_\star(\delta_x\cdot k )~\mu(x)\\&
=\int{\pi_G}_\star(\delta_x )~\mu(x)={\pi_G}_\star\left(\int\delta_x~\mu(x)\right)={\pi_G}_\star(\mu).
\end{align*}
We used linearity and continuity of~${\pi_G}_\star$ on bounded measures, seen for instance as continuous linear form on continuous functions.\footnote{We recall that our spaces are `polish' (separable and metrizable) and hence they are Radon spaces: Borel probability measures are inner regular (cf \cite[INT Ch. IX, \S3 \no3 Prop.\,3]{BBKINT}).}
\end{proof}
\begin{proof}[Proof of \eqref{item3}]  As~$z\in Z(\RR\times\QQ_S)$,we may substitute~$z\cdot g=g\cdot z$. Replacing~$\mu$ by~$\mu\cdot g$ we may omit omit~$g$.
We have~$z\in Z(\RR)\leq K_h$ or~$z\in Z(\QQ_S)\cap K\leq K$. In either case we may apply~\eqref{inva 2}.
\end{proof}

\begin{proof}[Proof of Lemma~\ref{lemme compat2}] We note from definition~\eqref{def Gamma Z}
that
\[
\Gamma_Z\subseteq Z(\RR)\cdot \left(\Gamma_G\cap Z(\RR\times\QQ_S)\right).
\]
We may decompose accordingly~\[\gamma_n= \zeta_n\cdot c_n\] with~$\zeta_n\in Z(\RR)$ and~$c_n\in\Gamma_G\cap Z(\RR\times\QQ_S)$. We have
\[
\mu_{U_S}\cdot g_n = \mu_{U_S}\cdot d_n\cdot z_n\cdot \zeta_n\cdot c_n=\mu_{U_S}\cdot d_n\cdot z_n\cdot \zeta_n
\]
where we may omit~$c_n$ in the right-hand side thanks to~\eqref{invariance 1}. Applying~\eqref{seconde invariance}
to~$z=\zeta_n$ and to~$z=z_n$, we deduce
\[
{\pi_G}_\star\left(\mu_{U_S}\cdot d_n\cdot z_n\cdot \zeta_n\right)
=
{\pi_G}_\star\left(\mu_{U_S}\cdot d_n\cdot z_n\right)
=
{\pi_G}_\star\left(\mu_{U_S}\cdot d_n\right).
\]
The Lemma then follows from~\eqref{lem compat}.
\end{proof}

We conclude by noting that 
\[
	\ol{(h,tg_n)}=s_n=\ol{(h,td_n)}.
\]
We may follow the same proof as that of Lemma~\ref{lemme compat2}, but with~$\delta_{(h,tg_n)}$ instead of~$\mu_{U_S}$.
We are now reduced to the case~$g_n\in G^{\text der.}(\QQ_S)$. 

\subsubsection{Passing from~$U_S\leq G_S$ to~$\Omega\leq G^{\der}(\QQ_S)$.}
We now turn to the matter of replacing~$U_S$ by a sugbroup~$\Omega$ of~$G^{\der}(\QQ_S)$.

%In general, that~$U_S$ does not need to contain an open subgroup of~$Z(\QQ_S)$. It
%is however the case if~$E$ is of finite type over~$\QQ$.
Let 
\begin{equation}\label{U S tilde}
\wt{U_S}:=U_S\cdot (K\cap Z(\QQ_S)).
\end{equation}
Note that this is a compact group.
We define
\begin{equation}\label{defi Omega}
	\Omega= \wt{U_S}\cap G^{\der}(\QQ_S).
\end{equation}

%Since $U_S$, and thus $\wt{U_S}$ is a product of compact subgroups of $G(\QQ_l)$, this~$\Omega$ is a product of its intersections with the $G^{\der}(\QQ_l)$ for $l \in S$.

This~$\Omega$ is a compact group, and therefore carries a Haar probability.
Let~$\mu_\Omega$ be the \emph{direct image} of this Haar probability 
in the $S$-arithmetic quotient
\begin{equation}\label{Sar quotient1}
	\Gamma_G\backslash G(\RR\times \QQ_S).
\end{equation}
Let~
\begin{equation}\label{defi Gamma}
\Gamma=\Gamma_G\cap G^{\der}(\RR\times \QQ_S).
\end{equation}
This is an~$S$-arithmetic group in~$G^{\text der.}(\RR\times \QQ_S)$, and a \emph{lattice} by the $S$-arithmetic form of Borel and Harish-Chandra theorem (e.g.~\cite{1-GW} by Go\-de\-ment-Weil,
cf~\cite[\S 5.4]{PR}).
We dropped the indiex~in~$\Gamma$, as this will be the~$S$-arithmetic lattice involved when applying~\cite{1-RZ}, which is denoted by~$\Gamma$ in \emph{loc.\,cit.}

We identify the semisimple arithmetic quotient space
\begin{equation}\label{Sar quotient2}
\Gamma\backslash G^{\text der.}(\RR\times \QQ_S)
\end{equation}
with its image via the natural embedding~$\Gamma\cdot g\mapsto \Gamma_G\cdot g$ into the reductive arithmetic quotient space~\eqref{Sar quotient1}. The support of the probability measure~$\mu_\Omega$ is contained in $\Gamma\backslash G^{\text der.}(\RR\times \QQ_S)$.

We now define
\begin{equation}\label{attempt3}
		\widetilde{\mu_n}'''=\mu_\Omega\cdot g_n,
\end{equation}
as a probability measure on~$\Gamma\backslash G^{\text der.}(\RR\times \QQ_S)$.

Let now
\begin{equation}\label{pider}
	\pi:\Gamma\backslash G^{\text der.}(\RR\times \QQ_S)\to \Sh_K(G,X)
\end{equation}
be the restriction of~$\pi_G$.

We have thus reduced ourselves to the case $G = G^{\der}$.

\subsubsection{Passing to a splitting extension and the lifting property.}
Ensuring that these$~\widetilde{\mu_n}'''$ are still lifts of the~$\mu_n$, as in~\eqref{lem compat2}, will require a bit of extra work.

Recall from~\eqref{indice fini} that~$G^{\text der.}(\QQ_S)\cdot Z(\QQ_S)$ is 
an open subgroup of~$G_S$. 
By its construction in~\eqref{U S tilde}, the group~$\wt{U_S}$ contains an open subgroup of~$Z(\QQ_S)$.
The Lie algebra of~$\wt{U_S}$ hence contains that of~$Z(\QQ_S)$, and is then the sum of the latter with the Lie algebra of~$\Omega$.
Equivalently, the product
\begin{equation}\label{open galois}
	\left(\wt{U_S}\cap Z(\QQ_S)\cap K'\right)\cdot\Omega
\end{equation}
is an open subgroup of~$\wt{U_S}$; which has finite index, as~$U_S$ is compact.
After replacing~$E$ by the finite extension corresponding to this subgroup, we may assume that
\begin{equation}\label{eq 37}
	\wt{U_S}=\left(\wt{U_S}\cap Z(\QQ_S)\cap K'\right)\cdot\Omega,
\end{equation}
which implies
\begin{equation}
	\wt{U_S}\cap Z(\QQ_S)\subseteq Z(\QQ_S)\cap K'\subseteq Z(\QQ_S)\cap K.
\end{equation}
Thus replacing~$E$, we did not change~$\Omega$, nor the associated~$\widetilde{\mu_n}'''$.
We can now prove:

%Let us stress that our way of changing~$E$, and~$U_S$ accordingly, can equivalently be done place by place, and thus the property that~$U_S$ is decomposed according to the places as in~\eqref{decompose} is still conserved.

\begin{lem}\label{lemmelift}
The direct images of the measures~$\widetilde{\mu_n}'''$ from~\eqref{attempt3} above, along the map~$\pi$ from~\eqref{pider}, are given by
\begin{equation}\label{lifted3}
\forall n\geq 0,~\pi_\star(\widetilde{\mu_n}''')=\mu_n.
\end{equation}
\end{lem}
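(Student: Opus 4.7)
First I would observe that $\pi$ is by construction the restriction of $\pi_G$ along the natural injection $\Gamma\backslash G^{\der}(\RR\times\QQ_S)\hookrightarrow\Gamma_G\backslash G(\RR\times\QQ_S)$ (well-defined and injective since $\Gamma=\Gamma_G\cap G^{\der}(\RR\times\QQ_S)$), and that $\widetilde{\mu_n}'''=\mu_\Omega\cdot g_n$ is supported in the image of this injection, as $\Omega\subseteq G^{\der}(\QQ_S)$ and $g_n\in G^{\der}(\QQ_S)$. It therefore suffices to prove ${\pi_G}_\star(\widetilde{\mu_n}''')=\mu_n$ inside $\Gamma_G\backslash G(\RR\times\QQ_S)$. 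Combined with Lemma~\ref{lemme compat2}, which already asserts ${\pi_G}_\star(\mu_{U_S}\cdot g_n)=\mu_n$, this reduces the problem to establishing
\[
{\pi_G}_\star(\mu_\Omega\cdot g_n)={\pi_G}_\star(\mu_{U_S}\cdot g_n).
\]

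To establish this, I would introduce the auxiliary measure $\mu_{\wt{U_S}}$, the direct image of the Haar probability on the compact group $\wt{U_S}=U_S\cdot(K\cap Z(\QQ_S))$, and show that both sides above equal ${\pi_G}_\star(\mu_{\wt{U_S}}\cdot g_n)$. The mechanism is uniform: suppose $\wt{U_S}$ decomposes as $C\cdot H_0$ with $H_0$ a compact subgroup and $C$ a compact \emph{central} subgroup contained in $K\cap Z(\QQ_S)$. By centrality, the multiplication map $C\times H_0\to\wt{U_S}$ is a surjective continuous homomorphism, and the pushforward of $\mu_C\otimes\mu_{H_0}$ is bi-invariant, hence coincides with $\mu_{\wt{U_S}}$ by uniqueness of the Haar probability. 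Moreover, for every $c\in C$, $\omega\in H_0$ and $g\in G_S$, centrality of $c$ together with Lemma~\ref{LemmeInvariances}\,\eqref{item3} (applicable since $c\in K$) give
\[
\pi_G(\Gamma_G\cdot c\omega\cdot g)=\pi_G(\Gamma_G\cdot\omega\cdot g\cdot c)=\pi_G(\Gamma_G\cdot\omega\cdot g).
\]
Integrating this identity against $\mu_C\otimes\mu_{H_0}$ (Fubini) yields
${\pi_G}_\star(\mu_{\wt{U_S}}\cdot g)={\pi_G}_\star(\mu_{H_0}\cdot g)$ for any such $g$.

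I would then apply this principle twice. First, with $H_0=U_S$ and $C=K\cap Z(\QQ_S)$, the required decomposition being precisely the definition \eqref{U S tilde} of $\wt{U_S}$; this gives ${\pi_G}_\star(\mu_{\wt{U_S}}\cdot g_n)={\pi_G}_\star(\mu_{U_S}\cdot g_n)$. Second, with $H_0=\Omega$ and $C=\wt{U_S}\cap Z(\QQ_S)\cap K'$, the decomposition being exactly \eqref{eq 37} (made available by our earlier replacement of $E$ by a finite extension), and the inclusion $C\subseteq K\cap Z(\QQ_S)$ following from the chain of inclusions stated just after \eqref{eq 37}; this gives ${\pi_G}_\star(\mu_{\wt{U_S}}\cdot g_n)={\pi_G}_\star(\mu_\Omega\cdot g_n)$. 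Combining these two equalities completes the proof.

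The only technical point — not a genuine obstacle — is the measure-theoretic identification $\mu_{\wt{U_S}}=\mathrm{mult}_\star(\mu_C\otimes\mu_{H_0})$, which is immediate from the uniqueness of bi-invariant Haar probabilities on compact groups. What makes the whole argument go through is the centrality of the subgroups $C$ in both applications together with their containment in $K$, so that Lemma~\ref{LemmeInvariances}\,\eqref{item3} eliminates them from the $\pi_G$-pushforward.
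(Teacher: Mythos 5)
Your proposal is correct and follows essentially the same route as the paper: both reduce the claim to ${\pi_G}_\star(\mu_\Omega\cdot g_n)={\pi_G}_\star(\mu_{U_S}\cdot g_n)$ via the intermediate measure $\mu_{\wt{U_S}}$, use the same two compact-group decompositions $\wt{U_S}=U_S\cdot(K\cap Z(\QQ_S))$ and $\wt{U_S}=(\wt{U_S}\cap Z(\QQ_S))\cdot\Omega$ from \eqref{U S tilde} and \eqref{eq 37}, and eliminate the central factor with Lemma~\ref{LemmeInvariances}\,\eqref{item3}. The only difference is cosmetic: you package the mechanism as a general principle about central compact subgroups in $K$ and apply it twice, where the paper writes out the two corresponding convolution-integral calculations directly.
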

This follows from~\eqref{lifted2} and the following equality, proven below,

\begin{equation}\label{lift compat}
\pi_\star(\widetilde{\mu_n}''')={\pi_G}_{\star}(\wt{\mu_n}'').
\end{equation}

\begin{proof}[Proof of~\eqref{lift compat}] Let~$U=\wt{U_S}\cap Z(\QQ_S)$ . Note that the map
\[
\Omega\times U\to \Omega\cdot U=\wt{U_S}.
\]
is a continuous map of compact groups.
It is surjective iun view of~\eqref{eq 37}. The image of the Haar probability measure is a probability which is invariant
under the image of the map. This is hence the Haar probability measure on~$\wt{U_S}$. The direct image measure is actually a convolution of measure. Pushing 
into~$\Gamma_G\backslash G(\RR\times \QQ_S)$, this convolution, in integral form, is defined as
\[
\mu_{\wt{U_S}}=\int_{u\in U} \mu_\Omega\cdot u~du\quad\text{on } \Gamma_G\backslash G(\RR\times \QQ_S)
\]
where the differential notation~$du$ denotes Haar probability measure on~$U$, and~$\mu_{\wt{U_S}}$ is the direct image of the Haar probability on~$\wt{U_S}$ in the~$S$-arithmetic space of~$G^\der$.
Similarly we can prove
\[
\mu_{\wt{U_S}}=\int_{k\in K\cap Z(\QQ_S)} \mu_{U_S}\cdot k~dk
\]
From~\eqref{item3} of Lemma~\ref{LemmeInvariances}, we get, with $dz$ the Haar probability on~$K\cap Z(\QQ_S)$,
\begin{eqnarray*}
{\pi_G}_\star(\mu_{\wt{U_S}}\cdot g_n)
&=&{\pi_G}_\star\left(\left(\int_{z\in K'\cap Z(\QQ_S)} \mu_{U_S}\cdot z~dz\right)\cdot g_n\right)\\
&=&{\pi_G}_\star\left(\int_{z\in K'\cap Z(\QQ_S)} \mu_{U_S}\cdot z\cdot g_n~dz\right)\\
&=&\int_{z\in K'\cap Z(\QQ_S)} {\pi_G}_\star(\mu_{U_S}\cdot z\cdot g_n)~dz\\
&=&\int_{z\in K'\cap Z(\QQ_S)} {\pi_G}_\star(\mu_{U_S}\cdot g_n)~dz\\
&=&{\pi_G}_\star(\mu_{U_S}\cdot g_n)\\
\end{eqnarray*}
In the same manner, we prove
\begin{equation}
{\pi_G}_\star(\mu_{\wt{U_S}}\cdot g_n)
=
{\pi_G}_\star(\mu_{\Omega}\cdot g_n).
\end{equation}
Finally, using Definitions~\eqref{attempt3}, \eqref{pider} we prove~\eqref{lift compat}
\begin{equation}
\pi_\star(\wt{\mu_n}''')={\pi_G}_\star(\mu_{\Omega}\cdot g_n)={\pi_G}_\star(\mu_{\wt{U_S}}\cdot g_n)={\pi_G}_\star(\mu_{U_S}\cdot g_n).\qedhere
\end{equation}
\end{proof}

%Let us now deprecate 
%the notation~\eqref{lifted1} and instead use  it henceforth to abbreviate
%\[
%\wt{\mu_n}=\widetilde{\mu_n}.
%\]

\subsubsection{Conclusion.}
We now have a sequence~$(\wt{\mu_n}''')_{n\geq0}$ which lifts the sequence~$(\mu_n)_{n\geq0}$, and is the translate~$\mu_\Omega\cdot g_n$ of a probability~$\mu_\Omega$ coming from~$\Omega\leq G^\der(\QQ_S)$ by elements~$g_n$ from~$G^\der(\QQ_S)$, with respect to the semisimple group~$G^\der$. This lifted setting is the setting of~\cite{1-RZ}. We will now need to verify the hypothesis of~\emph{loc.~cit.}

\subsection{The analytic stability hypothesis.}\label{sous section stabilite}

In this section we will further alter the
sequence~$(\wt{\mu_n}''')_{n\geq0}$ in order to to be able to apply the results of \cite{1-RZ} to it. 
 
%We turn below to yet another reason why we may still want to change the
%sequence~$(\wt{\mu_n}''')_{n\geq0}$.

To apply results of \cite{1-RZ}, the sequence~$(g_n)_{n\geq 0}$ must satisfy a technical ``analytic stability'' hypothesis of~\cite{1-RZ}, which very loosely speaking means that the ``direction'' in which the~$g_n$ diverge is ``not too close to that of the centraliser of~$\Omega$''.

Recall that~$M$ denotes the Mumford-Tate group of $s$, and that~$H_S$ is the $\QQ_S$-algebraic envelope of the
$S$-adic monodromy group~$U_S$.

Since $s$ satisfies the $S$-Shafarevich assumption, by Proposition \ref{charSha},
we know that
\[
Z_{G}(M)(\QQ_S)\backslash Z_{G_S}(H_S).
\]
is compact. Then the closed subspace 
\[
Z_{G^{\der}}(M)(\QQ_S)\backslash Z_{G^{\der}_S}(H_S)
\]
is also compact. We may find a compact subset~$C$ of~$Z_{G^{\der}_S}(H_S)$ such that
\begin{equation}\label{eq 43}
Z_{G^{\der}_S}(H_S)=Z_{G^{\der}}(M)(\QQ_S)\cdot C.
\end{equation}

We will denote~$Z_S = Z_{G_S^{der}}(H_S)$.
Hence
\[
	Z_{G^{\der}(\RR\times\QQ_S)}(\Omega)=G^{\der}(\RR)\times Z_S.
\]

Recall that the centraliser $Z_{G^{der}}(M)$ is $\RR$-anisotropic, 
as it is contained in the centraliser of the point~$h\in X$, which is compact subgroup. It is a fortiori a~$\QQ$-anisotropic group.

We are in a trivial instance of Godement compactness criterion. There is a compact subset $\cF \subset  Z_{G^{der}}(M)(\RR\times\QQ_S)$ (a ``fundamental set'') such that
\begin{equation}\label{Godement}
Z_{G^{der}}(M)(\RR\times\QQ_S)=\left(Z_{G^{der}}(M)\cap \Gamma_S\right)\cdot \cF.
\end{equation}
Combining with~\eqref{eq 43} we get
\begin{equation}\label{Godement 2}
Z_S=\left(Z_{G^{der}}(M)\cap \Gamma_S\right)\cdot F\text{ with }F=\cF\cdot C
\end{equation}
a compact subset of~$Z_S$.

By the results of~\cite{1-RS} and~\cite[Partie~2]{1-R}, there exists a subset~
\[Y=\{1_\RR\}\times Y_S\subseteq G_S,\] where~$1_\RR$ denotes the neutral element of~$G(\RR)$, such that 
\begin{itemize}
\item we have a ``$S$-adic Mostow decomposition''
\begin{equation}\label{Mostow}
G^{\text{der.}}(\RR\times\QQ_S)=Z_{G^{der}}(\Omega)(\RR\times\QQ_S)\cdot (\{1_\RR\}\times Y_S),
\end{equation}
which at finite places becomes
\begin{equation}\label{Mostow2}
G^{\text{der.}}(\QQ_S)=Z_S\cdot Y_S,
\end{equation}
\item any sequence~$(g_n)_{n\geq0}$ in~$\{1_\RR\}\times Y_S$ satisfies the ``analytic stability'' property\footnote{\label{footnote stab 1}Namely the~$y^{-1}$ in~${Y_S}^{-1}$ are such that for any representation~$\rho:G\to GL(N,\QQ_S)$, there is a constant~$c=c({\rho,\Omega})$ such that for any vector~$v\in{\QQ_S}^N$, the action of~$y^{-1}$ one cannot shrink~$\Omega\cdot V$ uniformly by a factor bigger than~$c$: one has~
\begin{equation}\label{eq footnote}
\sup_{\omega\in\Omega} \Nm{y^{-1}\cdot\omega\cdot v}\geq c\cdot\Nm{v}
\end{equation}
with respect, say, to some standard product norm on~$\QQ_S^N$. 
See footnote~\ref{footnote stab 2}.}
of~\cite{1-RZ} with respect to~$\Omega$ (for~$y^{-1}$ in~$Y_S$ as we deal with left arithmetic quotient~$\Gamma\backslash G$). Here we used that the $\QQ_S$-algebraic envelope~$H_S\cap G_S^\der$ of~$\Omega$ is reductive.
\end{itemize}
Moreover we may, in~$G^{\der}(\RR\times\QQ_S)$, multiply~$\{1_\RR\}\times Y_S$ on the left by a compact subset of~$G^{\der}(\QQ_S)\times Z_S$ (and on the right by any compact subset of~$G$) and the product will still satisfy previous properties\footnote{\label{footnote stab 2}We refer to footnote~\ref{footnote stab 1}. Let~$F$ resp.~$C$ be a compact subset of~$Z_S$ resp.~$G$. For~$(\gamma,y,f,v)\in C\times Y_S\times F\times {\QQ_S}^N$ we have~$\gamma^{-1}\cdot y^{-1}\cdot f^{-1}\cdot\omega\cdot v=\gamma^{-1}\cdot y^{-1}\cdot\omega\cdot f^{-1}\cdot v$. Let~$c_1$ and~$c_2$ be the maximum of the operator norm of~$\rho(\gamma)$ resp.~$\rho(f)$ for~$\gamma$ in~$C$ resp.~$f$ in~$F$. For instance one has~$\Nm{v}=\Nm{f\cdot (f^{-1} \cdot v)}\leq c_2\cdot \Nm{f^{-1}\cdot v}$.
We have
\[
\Nm{\gamma^{-1} y^{-1} f^{-1} \omega\cdot v}
=
\Nm{\gamma^{-1} y^{-1} \omega \cdot f^{-1} v}
\geq c_1\cdot\Nm{y^{-1} \omega \cdot f^{-1} v}
\geq c_1 c\cdot \Nm{f^{-1} v}\\
\geq c_1 c c_2\cdot\Nm{v}.
\]
This proves~\eqref{eq footnote} of footnote~\ref{footnote stab 1} for $F\cdot Y_S\cdot C$ with the constant~$c_1\cdot c\cdot c_2$.
}. This is the case of the subset
\begin{equation}\label{def Y'}
Y'=F\cdot \left(\{1_\RR\}\times Y_S\right)\cdot Z_{G^{\der}}(M)(\RR)\subseteq G^{\der}(\RR\times\QQ_S).
\end{equation}
Let us now decompose~$g_n$ according to~\eqref{Mostow2}
\[
\forall n\geq0, \exists z_n\in  Z_{G^{der}}(M)(\QQ_S),\exists y_n\in Y_S,~
	g_n= z_n\cdot y_n,
\]
and then decompose~$z_n$ inside~$Z_{G^{der}}(M)(\RR\times\QQ_S)$ with respect to the decomposition~\eqref{Godement 2},
\[
	z_n= \gamma_n\cdot f_n,\text{ with }\gamma_n\in \Gamma_S\cap Z_{G^{der}}(M)(\RR\times\QQ_S)\text{ and }f_n\in F.
\]
We recall from~\eqref{invariance 1} that
\begin{equation}\label{gamma invariance}
\mu_\Omega\cdot \gamma_n=\mu_\Omega.
\end{equation}

Substituting~$g_n=\gamma_n\cdot f_n\cdot y_n$ with~$f_n \cdot y_n$ we may, and will, assume that~$\gamma_n=1$.
But doing so we lose the property that~$g_n\in G_S$, as~$\gamma_n$ may be non  trivial at the real place.
Instead we define~
\begin{equation}\label{def gn'}
g^\prime_n={\gamma_n}^{-1}\cdot g_n\cdot {\gamma_{n,\RR}},
\end{equation}
where~${\gamma_{n,\RR}}$ is the real place factor of~$\gamma_n$.
We then have~$g^\prime_n\in G_S$ as we ensured it be trivial at the real place. Define, in our final attempt to lift the~$\mu_n$,
\begin{equation}
	\wt{\mu_n}=\mu_\Omega\cdot g^{\prime}_n.
\end{equation}
We observe that definition~\eqref{def gn'} agrees with~\eqref{def Y'}: we have
\begin{equation}\label{g' dans Y'}
\forall n\geq 0, g'_n\in Y'.
\end{equation}

\begin{prop}\label{prop compat}
The sequence~$(g^\prime_n)_{n\geq0}$ satisfies the analytic stability hypothesis
required by~\cite[Theorem~3]{1-RZ}. Furthermore, we have
\begin{equation}\label{final compat}
\pi_\star\left( \mu_\Omega\cdot g^\prime_n\right)=\mu_n.
\end{equation}
\end{prop}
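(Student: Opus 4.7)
The plan is to treat the two assertions separately, each reducing to structural facts already established earlier in this subsection.

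For the analytic stability, I would first unpack~$g'_n$ using the decompositions just built. From~$g'_n = \gamma_n^{-1}\cdot g_n\cdot \gamma_{n,\RR}$ and~$g_n = \gamma_n\cdot f_n\cdot y_n$ the factor~$\gamma_n$ cancels, giving
\[
g'_n = f_n\cdot y_n\cdot \gamma_{n,\RR},
\]
with~$f_n\in F$,~$y_n\in\{1_\RR\}\times Y_S$ and~$\gamma_{n,\RR}\in Z_{G^{\der}}(M)(\RR)$. This immediately yields~$g'_n\in Y'$ in the sense of~\eqref{def Y'}, recording~\eqref{g' dans Y'}. The stability of~$(g'_n)_{n\geq 0}$ then follows from the structural property of~$Y'$ advertised just after~\eqref{def Y'}, with the quantitative preservation being the one spelled out in footnote~\ref{footnote stab 2}.

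For the identity~\eqref{final compat}, the strategy is to massage~$\mu_\Omega\cdot g'_n$ into~$\mu_\Omega\cdot g_n$ and invoke Lemma~\ref{lemmelift}. To absorb~$\gamma_n^{-1}$ on the left of~$g_n$, I would use the same mechanism as in~\eqref{gamma invariance}: since~$\gamma_n\in \Gamma\cap Z_{G^{\der}}(M)(\RR\times\QQ_S)$ and~$\Omega\subseteq M(\QQ_S)$, the element~$\gamma_n$ centralises~$\Omega$, so right translation by~$\gamma_n^{-1}$ on the~$\Omega$-orbit coincides with left translation by~$\gamma_n^{-1}\in\Gamma$ and is trivial in the~$\Gamma$-quotient; hence~$\mu_\Omega\cdot\gamma_n^{-1}=\mu_\Omega$. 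To absorb~$\gamma_{n,\RR}$ on the right, I would appeal to Lemma~\ref{LemmeInvariances}\,\eqref{item2}: the key point is that~$\gamma_{n,\RR}\in K_h$, because the Hodge morphism~$h\colon\SSS\to G_\RR$ factors through~$M_\RR$ so that~$Z_{G^{\der}}(M)(\RR)$ centralises, hence fixes,~$h$; thus~$(\gamma_{n,\RR},1)\in K_h\times K$ and~$\pi_\star$ is insensitive to right translation by it. Combining the two invariances with Lemma~\ref{lemmelift} yields
\[
\pi_\star(\mu_\Omega\cdot g'_n)=\pi_\star(\mu_\Omega\cdot g_n\cdot\gamma_{n,\RR})=\pi_\star(\mu_\Omega\cdot g_n)=\mu_n.
\]

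The main obstacle I anticipate is the stability claim, and more precisely the fact that the right factor~$Z_{G^{\der}}(M)(\RR)$ entering the definition~\eqref{def Y'} of~$Y'$ is \emph{not} bounded, so that the quantitative preservation of stability from footnote~\ref{footnote stab 2} does not apply as stated. The resolution, which must be made explicit, is that the analytic stability of~\cite{1-RZ} is a purely~$S$-adic condition: the norms live on~$\QQ_S^N$ via~$\QQ_S$-rational representations of~$G^{\der}$, on which~$\gamma_{n,\RR}\in G^{\der}(\RR)$ acts trivially via the splitting~$G^{\der}(\RR\times\QQ_S)=G^{\der}(\RR)\times G^{\der}(\QQ_S)$. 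Thus the constant valid for~$f_n\cdot y_n$ via footnote~\ref{footnote stab 2} is automatically valid for~$g'_n=f_n\cdot y_n\cdot \gamma_{n,\RR}$, and all remaining ingredients are routine.
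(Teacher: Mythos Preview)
Your approach is essentially the paper's: for stability you record~$g'_n\in Y'$ and invoke the property of~$Y'$, and for the compatibility you absorb~$\gamma_n^{-1}$ via~\eqref{gamma invariance}, absorb~$\gamma_{n,\RR}\in K_h$ via Lemma~\ref{LemmeInvariances}\,\eqref{item2}, and conclude by Lemma~\ref{lemmelift}. That is exactly what the paper does.

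However, the ``obstacle'' you raise is a phantom. A few lines above the proposition the paper records that~$Z_{G^{\der}}(M)$ is contained in the stabiliser of~$h$, hence is~$\RR$-anisotropic; in particular~$Z_{G^{\der}}(M)(\RR)$ is \emph{compact}. So the right factor in~\eqref{def Y'} is a compact subset of~$G$ and footnote~\ref{footnote stab 2} applies verbatim --- no workaround is needed. Your proposed resolution is also shaky on its own terms: in~\cite{1-RZ} the set of places includes the archimedean one, so the stability inequality is tested against representations over~$\RR\times\QQ_S$, not merely~$\QQ_S$, and a nontrivial real factor does act. The compactness of~$Z_{G^{\der}}(M)(\RR)$ is what actually closes the argument (and is also the reason~$\gamma_{n,\RR}\in K_h$ in the second part).
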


\begin{proof}[Proof of~\ref{final compat}] The analytic stability hypothesis is given by~\eqref{g' dans Y'}. It remains to prove the lifting compatibility~\eqref{final compat}.

We have, using~\eqref{gamma invariance}, and recalling definition~\eqref{attempt3},
\[
\wt{\mu_n}=\mu_\Omega\cdot{\gamma_n}^{-1}\cdot g_n\cdot  {\gamma_{n,\RR}}=\mu_\Omega\cdot g_n\cdot  {\gamma_{n,\RR}}=\wt{\mu_n}'''\cdot{\gamma_{n,\RR}}.
\]
But recall that~${\gamma_{n,\RR}}\in Z_{G^{der}}(M)(\RR)$ a compact group which is \emph{included in the stabiliser~$K_h$ of~$h$ in~$X$}. Similarly to~\eqref{seconde invariance}, we deduce that the measures~$\wt{\mu}_n$ and~$\wt{\mu}_n'''$, though they maybe different
as measures on~$\Gamma_S\backslash G^{\text der.}(\RR\times \QQ_S)$,
will satisfy
\[
{\pi_G}_\star(\wt{\mu}_n)={\pi_G}_\star(\wt{\mu}_n''').
\]
We are done: by recalling Lemma~\ref{lemmelift}, and substituting~$\pi$ for~$\pi_G$, as we are dealing with measures supported on~$\Gamma\backslash G^{\text{der.}}(\RR\times\QQ_S)$.
\end{proof}

\subsection{Invoking the Theorem of Richard-Zamojski.}\label{invocation}
We now are in a position to apply~\cite[Theorem 3]{1-RZ}.
In this section we review the assumptions of the statement of this theorem
and apply the theorem to our situation.
%We summarise here how to invoke~\cite[Theorem 3]{1-RZ}.

\subsubsection{Reviewing the hypotheses..}

\paragraph{The ambient group~$G$ and its $S$-arithmetic quotient.}

The semisimple algebraic  group~$G$ of~\cite{1-RZ} is the derived subgroup~$G^{\text der.}$ of our
initial~$G$.
The finite set of places considered in~\cite{1-RZ} is our set~$S$
together with the archimedean place. The $S$-arithmetic lattice
is our~$\Gamma$ defined by~\eqref{defi Gamma} and~\eqref{defi Gamma G}. 
This defines the ambient~$S$-arithmetic space~$\Gamma\backslash G^{\text der.}(\RR\times\QQ_S)$
in our notations.

\paragraph{The piece of orbit~$\Omega$.}
The~$\Omega$ of~\cite{1-RZ} is~$\{1_\RR\}\times\Omega$ here. The image~$U_S$ of the representation of the Galois group is a compact~$S$-adic Lie subgroup of~$M(\QQ_S)$. It follows that~$\Omega$, as defined in~\eqref{U S tilde} and~\eqref{defi Omega}, is a bounded~$S$-adic Lie subgroup. 
%It is Zariski connected 
%is indeed a bounded $S$-adic Lie subgroup.
% subset (a compact open subgroup actually) of~$H(\RR\times\QQ_S)$ where~$H$ is the~$\RR\times \QQ_S$ subgroup of~$G^{\text der.}(\RR\times \QQ_S)$ which
 We postpone the proof that its $\QQ_S$-algebraic envelope~$H$ is a Zariski connected subgroup of~$G_S$ to~\ref{H Omega} below.
%\begin{itemize}
%\item at the real place is the trivial group, which was denoted~$\{1_{\RR}\}$;
%\item at an ultrametric place associated with a prime~$\ell$ in~$S$, and with algebraic monodromy group
%is the intersection of the \emph{algebraic}~$\ell$-adic monodromy group with~$G^{\text{der.}}_{\QQ_\ell}$. REFAIRE PAR CONSTRUCTION OMEGA
%\end{itemize} 
%Our~$\Omega$ is also decomposed according to the places: it maps bijectively
%to the product of its images into the factors~$H(\QQ_\ell)$.

\paragraph{The sequence of translates of $\mu_\Omega$.}
The measure~$\mu_\Omega$ is of the type studied in~\cite{1-RZ}.

The translating elements~$g_n$ of~\cite{1-RZ} are our~$g^\prime_n$.
The ``analytic stability'' hypothesis has been taken care of in the preceding section.

\subsubsection{Applying Theorem of~{\cite{1-R}}}
As a consequence of this analytic stability hypothesis, we may apply
\cite[Th\'{e}or\`{e}me~1.3, Exp.~VI, p.~121]{1-R} with $H = H_S$ and $Y_S = \{ y_n \}$ and $f$ the characteristic function of $\Omega$ to the sequence of translated probabilities
\[
	\wt{\mu_n}=\mu_\Omega\cdot g_n^\prime
\]
on the $S$-arithmetic homogeneous space
\begin{equation}\label{Homog space}
	\Gamma\backslash G^\text{der.}(\RR\times\QQ_S).
\end{equation}
Thus the sequence~$(\wt{\mu_n}^\prime)_{n\geq0}$ is tight: 
any subsequence contains a subsequence converging to a probability measure.

In particular, after possibly extracting a subsequence, we may assume the sequence~$(\wt{\mu_n}^\prime)_{n\geq0}$ is tightly convergent.

We may now invoke~\cite[Theorem~3]{1-RZ}. %Before stating the conclusions of~\cite[Theorem~3]{1-RZ} in our situation, we refer the reader to~\S\ref{section pp} for the definition and discussion of algebraic $S$-arithmetic measures~$\mu_{\Lpp}$.

\subsubsection{The envelope of~$\Omega$}\label{H Omega} We recall that the $\QQ_S$-algebraic envelope of~$U_S$, the algebraic monodromy group, is denoted~$H_S$, and that~$H_S$ is a reductive group. Here we will determine the $\QQ_S$-algebraic envelope of~$\Omega$, in terms of that of~$H_S$. We discuss why it is a reductive group, and how to ensure it is Zariski connected.

Firstly the  $\QQ_S$-algebraic envelope~$\widetilde{H}_S$ of~$\widetilde{U}_S$ is the algebraic group generated by~$U_S$ and~$K\cap Z(\QQ_S)$. The $\QQ_S$-algebraic envelope of~$Z(\QQ_S)\cap K$ is a Zariski open subgroup~$\widetilde{Z}$  of~$Z(\QQ_S)$: one has~$Z^0\leq \widetilde{Z}\leq Z$. Then
\[
\widetilde{H}_S=H_S\cdot \widetilde{Z}.
\]

Let~$H$ be the  $\QQ_S$-algebraic envelope~$\Omega$. One has~$H\leq G_S^{\der}$ and~$H\leq \widetilde{H}_S$ as~$\Omega$ is contained in both groups. 

Let~$\mathfrak{u},\wt{\mathfrak{u}},\mathfrak{\omega},\mathfrak{z},\mathfrak{h}_S,\wt{\mathfrak{h}}_S,\mathfrak{h}$ be the Lie algebras of~$U$, $\wt{U}_S$, $\Omega$, $Z$, $H_S$ and~$H$. All sums and direct sums will be sum of linear spaces, and the resulting sums will be Lie algebras. 
We have~$\wt{\mathfrak{u}}={\mathfrak{u}}+{\mathfrak{z}}$. The decomposition~${\mathfrak{g}}={\mathfrak{g}}^\der\oplus {\mathfrak{z}}$ induces~$\wt{\mathfrak{u}}={\mathfrak{\omega}}\oplus{\mathfrak{z}}$. Taking algebraic envelopes of Lie subalgebras is compatible with sums. We get
\begin{equation}\label{Lie sums}
\wt{\mathfrak{h}}_S=\mathfrak{h}_S+\mathfrak{z}=\mathfrak{h}\oplus \mathfrak{z},
\end{equation}
and deduce~$\mathfrak{h}=\wt{\mathfrak{h}}_S\cap \mathfrak{g}^\der$. This establishes that~$H$ is a Zariski open subgroup of~$\widetilde{H}_S\cap G_S^\der$. 

The product map~$(h,z)\mapsto h\cdot z$ is an homomorphism, as~$H$ and~$Z$ commute.
At the level of the groups, we deduce an isogeny
\[
H^0\times Z^0\to {H_S}^0\cdot Z^0.
\]	
Recall that~$H_S$ and~$Z$ are reductive. Hence~${H_S}^0\times Z^0$ is reductive too, and so is its quotient~${H_S}^0\cdot Z^0$. The finite cover~$H^0\times Z^0$ must then be reductive, and so is its direct factor~$H^0$. This establishes that~$H$ is a reductive group.

The Zariski neutral component~$H^0$ of~$H$ is that~$(\widetilde{H}_S\cap G_S^\der)^0$, and we have~$H=H^0$ if and only if				
\[
\Omega\subseteq H^0(\QQ_S)=(\widetilde{H}_S\cap G_S^\der)^0(\QQ_S).
\]
As~$H^0(\QQ_S)$ is open in~$H(\QQ_S)$, there is a neighbourhood~$V$ of~$H^0(\QQ_S)$ in~$G_S$ that doesnot meet~$H(\QQ_S)\smallsetminus H^0(\QQ_S)$. Note that~$H^0$ does only depend on~$H_S$, not on~$\Omega$ or~$K$. Provided~$K$ and~$U_S$ are sufficiently small, we may ensure that~$\Omega\subseteq U_S\cdot K\subseteq V$. In such a case, the group~$H$ is Zariski connected.

% In order to state our first conclusion from~\cite[Theorem~3]{1-RZ}, we introduce a analogue in the $S$-arithmetic context of the canonical probabilities from Definition~\ref{canonical}.

\subsection{The equidistribution property.}
We apply~\cite[Theorem~3]{1-RZ} in the setting recalled in~\S\ref{invocation}. 
Then, possibly passing to a subsequence of~$\left(\wt{\mu}_{n}\right)_{n\geq0}$,
there exist
\begin{itemize}
\item a~$\QQ$-subgroup~$L\leq G^{\text der.}$ of $S$-Ratner type (as in Definition~\ref{defi S Ratner class}),
\item elements~$n_\infty,g_\infty\in G^\text{der.}(\RR\times\QQ_S)$
\end{itemize}
Recall that $L^{\dagger}$ denotes the subgroup of of $L(\RR\times\QQ_S)$ generated by its unipotent one parameter subgroups over $\RR\times\QQ_S$ and  let $\Lpp$ be the closure of $(\Gamma \cap L(\QQ_S))L^{\dagger}$.

Let $\mu_{\Lpp}$ be the canonical $\Lpp$-invariant probability measure. It's support is 
$$
\Gamma\backslash\Gamma\Lpp = \overline{\Gamma\backslash\Gamma L^{\dagger}}. 
$$
We refer to \cite{1-RZ}, Appendix B for details and proofs.
By ~\cite[Theorem~3]{1-RZ} we have:
\[
\lim_{n\to\infty} \wt{\mu}_{n}= \mu_{\Lpp}\!\!\cdot n_\infty \star \nu\cdot g_\infty
\]
This limit measure is the translate by~$g_\infty$ of the convolution of the translated probability measure~$\mu_{\Lpp}\cdot n_\infty$ by the Haar probability~$\nu$ on~$\Omega$.
In terms of Radon measures on~$\Gamma\backslash G^{\text der.}(\RR\times \QQ_S)$,
applied to an arbitrary bounded continuous test function~$f$ in~$C^b(\Gamma\backslash G^{\text der.}(\RR\times \QQ_S))$, this means
\[
	\lim_{n\to\infty}
		\int f\wt{\mu}_{n}	
=
	\int\! f\mu_{\Lpp}  n_\infty\star \nu\cdot g_\infty
=
	\int_{\omega\in\Omega}\!
		\int_{\Gamma l\in\Gamma\backslash\Gamma \Lpp} 
			f (\Gamma l\cdot n_\infty\cdot \omega\cdot g_\infty)
		~\mu_{\Lpp} (\Gamma l)
	\nu(\omega).
\]

%We will prove that we can take $n_{\infty} = 1$ in \ref{n infini} below.

%TEST MAJ

We now explain how to derive  the conclusion~\eqref{TheoremeLimite} of Theorem~\ref{Theoreme1}.
We will need the following lemma.
\begin{lem}\label{Lemma 4.6}
Let~$g=(g_\RR,g_S)\in G^\der(\RR\times\QQ_S)$. Then the pushforward of the measure~$\mu_\Lpp\cdot g$ to 
\[
\Gamma\backslash G^\der(\RR\times \QQ_S)/K_S
\] 
is a finite linear combination of Haar measures on right orbits of~${g_\RR}^{-1} L(\RR)^+ g_\RR$.
\end{lem}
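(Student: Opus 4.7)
The plan is to exploit that $\Lpp$ splits along the product $G^\der(\RR\times\QQ_S)=G^\der(\RR)\times G^\der(\QQ_S)$. By the construction $\Lpp=\overline{(\Gamma\cap L(\QQ_S))L^\dagger}$, and because $L^\dagger$ is generated by unipotent one-parameter subgroups distributed over both the real and the $S$-adic places, one obtains an almost-product decomposition
\[
\Lpp = L(\RR)^+\cdot \Lpp_S,
\]
where $L(\RR)^+$ is the real factor (the $S$-Ratner class hypothesis ensures real unipotents generate all of $L(\RR)^+$ here) and $\Lpp_S\subseteq L(\QQ_S)$ is the $\QQ_S$-factor, containing the $\QQ_S$-unipotent subgroup $L^\dagger_S$ and the closure of the $S$-adic part of $\Gamma\cap L$. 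These two factors commute, since they sit in different places of $\RR\times\QQ_S$.

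First I would write $g=g_\RR\cdot g_S$ and use that $g_\RR$ commutes with $\Lpp_S$ and $g_S$ with $L(\RR)^+$ to rewrite
\[
\Lpp\, g\, K_S \;=\; L(\RR)^+ g_\RR \cdot \Lpp_S g_S K_S \;=\; g_\RR\bigl(g_\RR^{-1}L(\RR)^+ g_\RR\bigr)\cdot \Lpp_S g_S K_S.
\]
The measure $\mu_{\Lpp}\cdot g$ is right-invariant under $g^{-1}\Lpp g$, and in particular under $g_\RR^{-1}L(\RR)^+ g_\RR$; this invariance survives the pushforward to $\Gamma\backslash G^\der(\RR\times\QQ_S)/K_S$. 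It therefore suffices to show that the $\QQ_S$-direction contributes only finitely many $K_S$-cosets modulo $\Gamma$, i.e.\ that
\[
\Gamma\backslash\Gamma \Lpp_S g_S K_S/K_S
\]
is a finite set. Granted this, each of the finitely many resulting fibers in $\Gamma\backslash G^\der(\RR\times\QQ_S)/K_S$ is a right orbit of $g_\RR^{-1}L(\RR)^+ g_\RR$, and the restriction of the pushforward there is a right-$g_\RR^{-1}L(\RR)^+ g_\RR$-invariant measure on the orbit, hence a Haar measure up to a positive scalar.

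The main obstacle is the finiteness claim for $\Gamma\backslash\Gamma \Lpp_S g_S K_S/K_S$. I would prove it by a standard lattice-versus-open-subgroup argument: the group $\Gamma\cap \Lpp$ is a lattice in $\Lpp$ (which is precisely why $\mu_{\Lpp}$ is a probability measure), so its $\QQ_S$-component $\Gamma^S_L:=\Gamma\cap \Lpp_S$ has finite covolume in $\Lpp_S$; meanwhile $K^L_S:=\Lpp_S\cap g_S K_S g_S^{-1}$ is open in $\Lpp_S$, hence of positive Haar measure. A finite-volume quotient covered by translates of a positive-measure open subgroup admits only finitely many such translates, so the double-coset set $\Gamma^S_L\backslash \Lpp_S/K^L_S$ is finite. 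The delicate point to check carefully is the compatibility of the real and $S$-adic factors of $\Gamma\cap \Lpp$ in the above decomposition: one needs that the archimedean factor $\Gamma\cap L(\RR)^+$ has finite covolume in $L(\RR)^+$ independently of the $\QQ_S$-direction, so that it contributes only the Haar measure along each $g_\RR^{-1}L(\RR)^+ g_\RR$-orbit, without spoiling the finiteness statement on the $\QQ_S$-side.
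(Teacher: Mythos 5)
Your invariance step is sound: $\Lpp$ is an open finite-index subgroup of $L(\RR\times\QQ_S)$, so it contains $L(\RR)^+\times\{e\}$, and therefore $\mu_{\Lpp}\cdot g$ is right-invariant under $g^{-1}L(\RR)^+g = g_\RR^{-1}L(\RR)^+g_\RR$; this invariance passes to $\Gamma\backslash G^\der(\RR\times\QQ_S)/K_S$ because the archimedean and $S$-adic factors commute. That is exactly what the paper does.

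The finiteness argument, however, has a genuine gap, which you yourself flag at the end as ``delicate'' but then do not actually resolve. You define $\Gamma^S_L := \Gamma\cap\Lpp_S$ and claim it ``has finite covolume in $\Lpp_S$'' because $\Gamma\cap\Lpp$ is a lattice in $\Lpp$. But $\Gamma$ is an $S$-arithmetic lattice embedded \emph{diagonally} in $G^\der(\RR)\times G^\der(\QQ_S)$: it is irreducible, not a product lattice. If $\Lpp_S$ sits inside $\{e_\RR\}\times L(\QQ_S)$, then any $\gamma\in\Gamma\cap\Lpp_S$ has trivial archimedean component, which forces $\gamma=e$. So $\Gamma^S_L$ is trivial, and $\Gamma^S_L\backslash\Lpp_S/K^L_S = \Lpp_S/K^L_S$ is infinite whenever $L$ has $\QQ_S$-isotropic factors (a non-compact group modulo a compact open subgroup has infinitely many cosets). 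The ``finite-volume quotient covered by a positive-measure open subgroup'' principle you invoke would need $\Gamma^S_L$ to be a lattice in $\Lpp_S$, and that is exactly what fails. In general a lattice in a product need not project to lattices in the factors, and for diagonal $S$-arithmetic lattices it does not.

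The paper sidesteps this by quoting \emph{finiteness of the class number of $L$} relative to $\Gamma\cap L(\RR\times\QQ_S)$ and $K_S\cap L(\QQ_S)$: the relevant double-coset set $\left(\Gamma\cap\Lpp\right)\backslash\Lpp/\bigl(L(\RR)^+\cdot(\Lpp\cap gK_Sg^{-1})\bigr)$ is finite because $L$, being of $S$-Ratner class, has no non-trivial $\QQ$-characters, and one applies the classical finiteness of $S$-arithmetic class numbers. That argument works with the diagonal embedding of $\Gamma$ directly, rather than trying to split it along the places, which is the point your proposal misses.
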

\begin{proof}
The image of the support~$\Gamma \backslash \Gamma\Lpp \cdot g$ in the double coset space is a finite union of right~${g_\RR}^{-1} L(\RR)^+ g_\RR$ orbits, by finiteness of the class number of~$L$ (relative to~$\Gamma\cap L(\RR\times\QQ_S)$ and~$K_S\cap L(\QQ_S)$).

Moreover the right action of~${g_\RR}^{-1} L(\RR)^+ g_\RR$ passes to the quotient (note that $K_{S}$ is contained in $L(\QQ_S)$ hence the infinite place is not affected by this right quotient). 
Note that~$\mu_\Lpp\cdot g$ is right~${g_\RR}^{-1} L(\RR)^+ g_\RR$ invariant. Hence its pushforward measure is right invariant too. The pushforward measure is the sum of its restrictions to these finitely many orbits, and each restriction is right invariant, in other words a Haar measure on the orbit.
\end{proof}
\begin{cor}The pushforward of~$\mu_{\Lpp}\!\!\cdot n_\infty \star \nu\cdot g_\infty$ to~$\Sh_K(G,X)$ is a finite linear combination of canonical measures in the sense of the definition~\ref{canonical}.
\end{cor}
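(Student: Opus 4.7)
The plan is to combine the convolution formula with Lemma~\ref{Lemma 4.6}, exploiting two features of our construction: that $\Omega \subseteq G^\der(\QQ_S)$ has trivial archimedean component, and that $\Omega \subseteq K$ (which follows from $U_S \subseteq K$ and $K \cap Z(\QQ_S) \subseteq K$ by the construction of $\Omega$).

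Setting $g_\omega := n_\infty \omega g_\infty$ and using Fubini, the pushforward can be written as
\[
\pi_\star\bigl(\mu_\Lpp \cdot n_\infty \star \nu \cdot g_\infty\bigr) = \int_{\omega \in \Omega} \pi_\star\bigl(\mu_\Lpp \cdot g_\omega\bigr) \, d\nu(\omega).
\]
For each fixed $\omega$, Lemma~\ref{Lemma 4.6} decomposes the further pushforward of $\mu_\Lpp \cdot g_\omega$ to $\Gamma\backslash G^\der(\RR\times \QQ_S)/K_S$ as a finite nonnegative combination of Haar measures on right orbits of $(g_\omega)_\RR^{-1} L(\RR)^+ (g_\omega)_\RR$. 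The crucial observation is that $(g_\omega)_\RR = n_{\infty,\RR} g_{\infty,\RR}$ is \emph{independent of $\omega$}, so the acting group is the same for every $\omega$.

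Unwinding the remaining quotient by $K_h$: a right orbit based at $\Gamma l g_\omega K_S$, with $l = (l_\RR,l_S) \in L^\ddagger$, projects in $\Sh_K(G,X)$ to
\[
\bigl\{\,\overline{(l_\RR \ell\, g_{\infty,\RR} h,\ l_S n_{\infty,S}\omega g_{\infty,S} t)} : \ell \in L(\RR)^+\bigr\}.
\]
Since $l_\RR \in L(\RR)$ normalises $L(\RR)^+$, the substitution $\ell' = l_\RR \ell l_\RR^{-1}$ rewrites this set as $Z_{L,(l_\RR g_{\infty,\RR} h,\, l_S n_{\infty,S}\omega g_{\infty,S} t)}$, which is a weakly $S$-special real submanifold in the sense of Definition~\ref{ws subm}. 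By uniqueness of Haar measure on $L(\RR)^+$, the pushed-forward orbit measure is a positive scalar multiple of the associated canonical probability $\mu_Z$.

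Finally, I must check that only finitely many distinct $Z$'s arise as $\omega$ ranges over $\Omega$. On the archimedean side, $l_\RR$ runs through the finite collection of orbit representatives supplied by Lemma~\ref{Lemma 4.6}, so $l_\RR g_{\infty,\RR} h$ takes only finitely many values. On the finite-adelic side, the inclusion $\Omega \subseteq K$ forces $\Omega \cap (g_{\infty,S} t) K (g_{\infty,S} t)^{-1}$ to be an open subgroup of the compact group $\Omega$, hence of finite index, so the coset $\omega g_{\infty,S} t K$ takes only finitely many values in $G(\AAA_f)/K$. Integrating the resulting scalar coefficients over $\omega$ then produces a finite nonnegative linear combination of canonical measures, the uniqueness of this decomposition being ensured by the linear independence of canonical measures noted just before this Corollary. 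The most delicate step is the explicit identification of each pushed-forward Haar orbit measure with a positive multiple of the canonical probability on its weakly $S$-special submanifold; everything else is bookkeeping built upon Lemma~\ref{Lemma 4.6}.
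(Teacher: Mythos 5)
Your proof is correct and reaches the same conclusion, but it reorganizes the argument relative to the paper's. The paper first reduces the convolution integral to a finite sum: it chooses a finite set $R\subseteq\Omega$ of representatives for $\Omega g_\infty K_S/K_S$ (finite purely because $\Omega$ is compact and $K_S$ is open — no group structure on $\Omega$ needed), observes that the pushforward to $\Gamma\backslash G^\der(\RR\times\QQ_S)/K_S$ of $\mu_\Lpp\cdot n_\infty\cdot\omega\cdot g_\infty$ depends only on the coset $\omega g_\infty K_S$, and only then applies Lemma~\ref{Lemma 4.6} to each of the finitely many terms. You instead apply Lemma~\ref{Lemma 4.6} inside the integral, explicitly identify each Haar orbit with a $Z_{L,(h',t')}$, and argue finiteness at the end. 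Both routes work; the paper's order avoids any worry about the $\omega$-dependence of the Lemma~\ref{Lemma 4.6} decomposition (you tacitly need that decomposition to be locally constant in $\omega$, which is true but you don't say so), while yours has the benefit of making explicit the conjugation step $\ell'=l_\RR\ell l_\RR^{-1}$ that identifies each orbit as a $Z_{L,\cdot}$ in the sense of Definition~\ref{ws subm}, a step the paper compresses to ``by construction this pushforward is of the form $\mu_{L,(h,t)}$''.

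Two small blemishes, neither fatal. First, a transcription slip: you set $(g_\omega)_\RR = n_{\infty,\RR}g_{\infty,\RR}$ but then write $l_\RR\ell\,g_{\infty,\RR}h$ in the display; the factor $n_{\infty,\RR}$ has been dropped and should read $l_\RR\ell\,n_{\infty,\RR}g_{\infty,\RR}h$ (the Corollary is stated before the paper's ``getting rid of $n_\infty$'' step, so $n_{\infty,\RR}$ is not yet normalised to the identity). Second, the hypothesis $\Omega\subseteq K$ is not actually needed for the finite-adelic finiteness: $\Omega\cap(g_{\infty,S}t)K(g_{\infty,S}t)^{-1}$ is an open subgroup of the compact group $\Omega$ simply because the conjugate of $K$ is open, and in fact even the group structure on $\Omega$ is superfluous — compactness of $\Omega g_{\infty,S}t$ and openness of $K$ already force the image in $G(\AAA_f)/K$ to be finite, which is the argument the paper uses.
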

\begin{proof}
Let~$R\subseteq\Omega$ be a set of representatives for the quotient~$\Omega g_\infty K_S/K_S$. It is finite because $\Omega$ is a compact subset of~$G^\der(\QQ_S)$ and~$K_S$ is open in~$G^\der(\QQ_S)$. ... . It follows that the pushforward~$\mu$ of~$\mu_{\Lpp}\!\!\cdot n_\infty \star \nu\cdot g_\infty$ in 
\[
\Gamma\backslash G^\der(\RR\times \QQ_S)/K_S
\]
is the same as the pushforward of the sum of~$(\mu_{\Lpp}\!\!\cdot n_\infty \cdot \omega\cdot g_\infty)_{\omega\in R}$.

Applying the lemma~\ref{Lemma 4.6} to each of these~$\mu_{\Lpp}\!\!\cdot n_\infty \cdot \omega\cdot g_\infty$, we learn that~$\mu$ is a sum of Haar measure on right orbits of~${g_\RR}^{-1} L(\RR)^+ g_\RR$. It is enough to prove that the direct image of the latter in~$\Sh_K(G,X)$ is proportional to a canonical measure attached to a $S$-real weakly special submanifold. 
Write~$g=n_\infty \cdot \omega\cdot g_\infty=(g_\RR,g_S)$. By construction this pushforward is of the form~$\mu_{L,(h,t)}$ for~$h$ obtained from~$g_{\RR}$ and~$t$ associated with~$g_S$.
\end{proof}

This proves the equidistribution conclusion~\eqref{TheoremeLimite} of Theorem~\ref{Theoreme1}.

\subsubsection{Getting rid of~$n_\infty$.}\label{n infini}

After replacing by a subsequence, all the conclusions
of~\cite[Theorem~3]{1-RZ} hold. In particular we also know that~$n_\infty$
belongs to the closure of~$(\Gamma\cap N)\cdot (Z_{G_S^\der}(\Omega)\cap N)$.
We claim that 
\begin{equation}\label{eq 56}
(\Gamma\cap N)\cdot (Z_{G_S^\der}(\Omega)\cap N)
\end{equation}
is already a closed subset.
\begin{proof} By the $S$-Shafarevich hypothesis, the quotient
\[
Z_{G_S}(\Omega)/Z_{G_S}(M)
\]
is compact. It follows that for any subgroup~$Z$ of~$Z_{G_S}(\Omega)$
the quotient
\[
Z/(Z\cap Z_{G_S}(M))
\subseteq 
Z_{G_S}(\Omega)/Z_{G_S}(M)
\]
is compact. In particular, for~$Z=Z_N(\Omega)=N\cap Z_{G_S^\der}(\Omega)$, we may write
\begin{equation}\label{eq 56b}
Z_{N}(\Omega)=Z_{N}(M)\cdot C
\end{equation}
for a compact subset~$C$. The closedness of~\eqref{eq 56}, that is of 
\[
(\Gamma\cap N)\cdot (Z_{G_S^\der}(\Omega)\cap N)=(\Gamma\cap N)\cdot (Z_{G_S^\der}(M)\cap N)\cdot C
\]
will follow from that of
\[
(\Gamma\cap N)\cdot (Z_{G_S^\der}(M)\cap N).
\]

Let~$F$ be a finite generating subset of~$M$, as a topological group for the Zariski topology, defined over~$\QQ$. Then the orbit map for the adjoint action
\[
G_S^{\der}\xrightarrow{g \mapsto (gfg^{-1})_{f\in F}}  (G_S^{\der})^F
\]
embeds~$G_S/Z_{G_S^\der}(\Omega)$ as a subvariety of~ the affine variety~$G_S^\der$. (In particular~$G_S/Z_{G_S^\der}(\Omega)$ is quasi-affine). We can linearise this action by choosing a faithful representation~$G_S^\der\to GL(N)$ and embedding~$GL(N)^F$ into~$V={M_N}^F$ a Cartesian power of the corresponding matrix space.

Let~$p$ be a generator of~$\det\mathfrak{l}\subseteq\bigwedge^{\dim L}\mathfrak{g}$, the maximal exterior power of the Lie algebra of~$L$. Then~$N$ is defined as the stabiliser of~$p$ in~$G_S^\der$. The orbit map
\[
G_S^{\der}\xrightarrow{g \mapsto g\cdot p=\bigwedge^{\dim L}({\rm ad}_g)(p)}  \bigwedge^{\dim L}\mathfrak{g}
\]
at~$p$ embeds~$G/N$ into the affine variety~$W\bigwedge^{\dim L}\mathfrak{g}$. 

We deduce, with the product map, an embedding
\begin{equation}\label{eq 57}
G/(Z_{G_S^\der}(\Omega)\cap N)\to V\times W,
\end{equation}
the orbit map at~$((f)_{f\in F},p)$.

By definition~$\Gamma$ stabilises an lattice~$\Lambda$ in the free~$\QQ_S$ module~$V\times W$ defined over~$\QQ$, which is arithmetic, that made of~$\QQ$ rational elements. Up to scaling we may assume~$((f)_{f\in F},p)\in\Lambda$. It follows that
\[
(\Gamma\cap N)\cdot ((f)_{f\in F},p)
\]
which is a subset of~$\Lambda$, is discrete, and in particular closed. It follows that its inverse image in~$G_S^\der$,
which is~\eqref{eq 57} is closed as well.
\end{proof}
We may write~$n_\infty=\gamma\cdot z$ with~$\gamma\in \Gamma\cap N$
and~$z\in Z_{G_S^\der}(\Omega)$. We claim that
\[
\mu_{\Lpp}\cdot \gamma=\mu_{\Lpp},
\]
which is proven below. We view~$\nu$ as a measure on~$G_S^\der$ supported on~$\overline{\Omega}$. (Actually~$\overline{\Omega}=\Omega$). We also have
\[
z\cdot \nu=\nu\cdot z.
\]
So we may rewrite the limit measure
\[
\mu_{\Lpp}\!\!\cdot n_\infty \star \nu\cdot g_\infty
=
\mu_{\Lpp}\!\! \star \nu\cdot z\cdot g_\infty.
\]
Subsituting~$g_\infty=z\cdot g_\infty$ we may assume~$n_\infty=1$.
\begin{proof}[Proof of the claim]
Recall that~$L$ is a normal subgroup of~$N$.
Consequently~$\Gamma\cap L(\RR\times\QQ_S)$ is normal in~$\Gamma\cap N(\RR\times\QQ_S)$.
The subgroup~$L(\RR\times\QQ_S)^+$ is normalised by~$N(\RR\times\QQ_S)$, hence by~$\Gamma\cap N(\RR\times\QQ_S)$. We have seen both factors of
\[
L(\RR\times\QQ_S)^+\cdot\left(\Gamma\cap L(\RR\times\QQ_S)\right)
\]
are normalised by~$\Gamma\cap N(\RR\times\QQ_S)$. As~$\Gamma\cap N(\RR\times\QQ_S)$
acts continuously, it normalises the closure of this product, namely~$\Lpp$.

By definition~$\mu_{\Lpp}$ is the right~$\Lpp$-invariant probability on~$\Gamma\backslash\Gamma\Lpp$. It follows that~$\mu_{\Lpp}\gamma$
is the right~$\gamma^{-1}\Lpp\gamma$-invariant probability on~$\Gamma\backslash\Gamma\Lpp\gamma$. We just have seen~$\gamma^{-1}\Lpp\gamma=\Lpp$. But we also have
\[
\Gamma\backslash\Gamma\Lpp\gamma
=
\Gamma\backslash\Gamma\gamma^{-1}\Lpp\gamma
=
\Gamma\backslash\Gamma\Lpp.
\]
It follows that~$\mu_{\Lpp}\gamma=\mu_{\gamma^{-1}\Lpp\gamma}=\mu_{\Lpp}$.
\end{proof}

\section{Focusing criterion and Internality of the equidistribution.}\label{SectionRZ}

In this section we prove conclusion~\eqref{TheoremeInner} of Theorem~\ref{Theoreme1}, namely the inclusion of  supports of  the measures~$\mu_n$ in the support of the limiting measure. This property is essential in order to derive the conclusions about the topological closure.

\begin{lem}
For all $n$ large enough, we have the inclusion
$$
{\rm Supp}(\mu_n) \subset \pi({\rm Supp}(\mu_{\infty}))
$$
of closed subsets in~$\Sh_K(G,X)$.
\end{lem}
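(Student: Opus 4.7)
The plan is to transfer the inner-equidistribution property from the lifted picture on the $S$-arithmetic quotient $\Gamma\backslash G^{\text{der.}}(\RR\times\QQ_S)$ back down to $\Sh_K(G,X)$ via the projection $\pi$. All the preparatory work was done in the previous section: the translates $\wt{\mu_n}=\mu_\Omega\cdot g'_n$ lift $\mu_n$ (Proposition~\ref{prop compat}), the sequence $(g'_n)$ satisfies the analytic stability hypothesis, the ambient group $G^{\text{der.}}$ is semisimple, and the $\QQ_S$-algebraic envelope of $\Omega$ is reductive and Zariski connected. So Theorem~3 of~\cite{1-RZ} applies in full strength, and in particular its \emph{inner} conclusion is available: after extracting a subsequence,
\[
\forall n \gg 0,\qquad {\rm Supp}(\wt{\mu_n}) \subseteq {\rm Supp}(\mu_\infty),
\]
where $\mu_\infty = \mu_{\Lpp}\cdot n_\infty \star \nu\cdot g_\infty$ is the lifted limit.

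From here the remaining step is almost formal. First I would note that ${\rm Supp}(\wt{\mu_n})$ is the compact set $\Gamma\backslash\Gamma\cdot\Omega\cdot g'_n$, since $\Omega$ is compact. Because $\pi$ is continuous and this support is compact, $\pi\!\left({\rm Supp}(\wt{\mu_n})\right)$ is already a closed subset of $\Sh_K(G,X)$, and by the standard fact that the support of a pushforward measure is contained in the image of the support, one has
\[
{\rm Supp}(\mu_n) \;=\; {\rm Supp}(\pi_\star \wt{\mu_n}) \;\subseteq\; \pi\!\left({\rm Supp}(\wt{\mu_n})\right).
\]
Chaining these two inclusions yields, for all $n$ large enough,
\[
{\rm Supp}(\mu_n) \;\subseteq\; \pi\!\left({\rm Supp}(\wt{\mu_n})\right) \;\subseteq\; \pi\!\left({\rm Supp}(\mu_\infty)\right),
\]
which is the desired statement.

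The only point that deserves checking carefully is that the inner inclusion on the lift is not destroyed by the various cosmetic modifications made in Section~\ref{Reduction} (passage from $\wt{\mu_n}^\prime$ to $\wt{\mu_n}^{\prime\prime}$ to $\wt{\mu_n}^{\prime\prime\prime}$ to $\wt{\mu_n}$, together with the tweak $n_\infty \mapsto 1$ from~\S\ref{n infini}). Each of those substitutions was either a right translation by a bounded element lying in $Z_{G^{\der}}(\Omega)$ or an absorption of elements of $\Gamma\cap N$ that leave $\mu_{\Lpp}$ and the pushforward along $\pi$ unchanged, so the supports are preserved up to the same right translation on both sides of the inclusion. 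Thus the inner property passes through unaltered, and I expect this bookkeeping to be the only mild obstacle: once it is verified, the lemma follows immediately from the inner clause of~\cite[Theorem~3]{1-RZ} together with continuity of $\pi$.
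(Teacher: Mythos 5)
Your central step — that after extracting a subsequence, $\mathrm{Supp}(\wt{\mu_n})\subseteq\mathrm{Supp}(\mu_\infty)$ holds in $\Gamma\backslash G^{\mathrm{der.}}(\RR\times\QQ_S)$ and is an "inner conclusion" of~\cite[Theorem~3]{1-RZ} — is where the argument breaks. Theorem~3 of~\cite{1-RZ} does \emph{not} contain a support-inclusion clause at the $S$-arithmetic level, and in general no such inclusion holds there. What it gives is the \emph{focusing criterion}: a factorisation $g'_n = l_n\, f_n\, b_n$, with $l_n\in\bigcap_{\omega\in\Omega}\omega L(\RR\times\QQ_S)\omega^{-1}$, $f_n\in N\cap Z_{G^{\mathrm{der}}_S}(\Omega)$, and $(b_n)$ bounded. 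The bounded factor $b_n$ genuinely translates the supports, so $\mathrm{Supp}(\wt{\mu_n})$ sits inside a \emph{moving translate} $\mathrm{Supp}(\mu_\infty)\cdot b_n^{-1}b_\infty$ of the limit support, not inside $\mathrm{Supp}(\mu_\infty)$ itself. Indeed the companion paper is explicit about this: the inner picture in $G/\GammaN$ involves "the support of a closer and closer translate of the limit probability". So the chain you propose fails at its first link.

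The real content of the paper's proof is precisely the bookkeeping you dismiss as cosmetic. The inclusion does hold after pushing to $\Sh_K(G,X)$, but only because the extra right quotient by $K$ makes it possible to kill $b_n$: after extracting, $b_n\to b_\infty$ in $G^{\der}(\QQ_S)$, the sequence $b_n t K$ is eventually constant in the discrete space $G^{\der}(\QQ_S)K/K$, and one absorbs $b_\infty$ into $t$. Similarly, eliminating the centraliser factor $f_n$ requires the $S$-Shafarevich hypothesis to get compactness of $(\Gamma\cap\underline N)\backslash\underline N$ and hence to decompose $f_n=\gamma_n\beta_n$ with $\gamma_n\in\Gamma$ and $\beta_n$ bounded. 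Only after both reductions does one land in the "fundamental case" where $g'_n\in\bigcap_{\omega\in\Omega}\omega\Lpp\omega^{-1}$, and there the support inclusion is a direct computation. None of this is preserved "up to the same right translation on both sides" as you suggest — eliminating $b_n$ is a finite-level phenomenon that has no analogue upstairs, which is exactly why the lemma is stated in $\Sh_K(G,X)$ rather than in the $S$-arithmetic quotient.
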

 We will rely on the ``focusing property'', which states that the translating elements~$g_n$ are take a specific form, as a sequence. This inlusion of supports is a phenomenon which occurs at finite level, in the double quotient~$\Sh_K(G,X)$. An important ingredient is that our translating elements are trivial at the archimidean place.

%\noindent The proof will rely of most of the finer consequences from~\cite[Theorem~3]{1-RZ}. 

\subsection{A Fundamental case.}
\newcommand{\M}{{\underline{M}}}
\newcommand{\N}{{\underline{N}}}
\newcommand{\Mpp}{{\underline{M}^\ddagger}}
Let us first treat the case where~$g'_n$ belongs to
\[
\Mpp=
\bigcap_{\omega\in \Omega}\omega \Lpp\omega^{-1}
\]
This is the typical situation featuring the dynamics explicited by~\cite[Theorem~3]{1-RZ}. We will see 
in the following sections how to reduce the situation to this case.

We first note that~$\Omega$  is a compact subgroup.  Therefore~$\Omega=\overline{\Omega}$ and
\[{\rm Supp}(\nu)=\overline{\Gamma\backslash\Gamma\cdot\Omega}=\Gamma\backslash\Gamma\cdot\overline{\Omega}\text{ and }\bigcap_{\omega\in \Omega}\omega \Lpp\omega^{-1}=\bigcap_{\omega\in \overline{\Omega}}\omega \Lpp\omega^{-1}.\]

For any~$\omega$ in~$\Omega$ we rewrite
\[
\Gamma\cdot \omega \cdot g'_n=\Gamma\cdot(\omega \cdot g'_n\cdot \omega^{-1})\cdot\omega\in \Gamma\Lpp\omega.
\]
It follows that
\[
{\rm Supp}(\wt{\mu}_n)=\Gamma\backslash\Gamma\Omega\cdot g'_n\subseteq\Gamma\backslash\Gamma\Lpp\Omega={\rm Supp}(\mu_{\Lpp}\star\nu).
\]
It is now enough to observe that
\[
\mu_\infty=\pi_\star(\mu_{\Lpp}\star\nu).\]
Both are combination of canonical measures supported on a union of real weakly $S$-special subvarieties associated with~$L$.

\subsubsection{}

In order to achieve this, we will use a finer form of the main result of~\cite{1-RZ}. Namely that the convergenceof measures happens modulo
$\Gamma\cap N$ instead of merely modulo~$\Gamma$: if~$\widetilde{\mu}_\Omega$, resp.~$\widetilde{\mu}_{\Lpp}$, denotes the direct image of~$\nu$, resp. the quotient measure of the Haar measure on~$\Lpp$, in
\[(\Gamma\cap N)\backslash G^\der(\RR\times\QQ_S),\]
we have
\[
\lim_{n\to\infty} \widetilde{\mu}_\Omega\cdot g'_n=\widetilde{\mu}_{\Lpp}\cdot n_\infty\star\nu\cdot g_\infty.
\]
Let us denote~$\widetilde{\mu}_\infty$ this limit probability measure.
We note that~$\widetilde{\mu}_{\Lpp}$, resp.~$\widetilde{\mu}_{\infty}$ is a probability measure whose direct image in
\[
\Gamma\backslash G^\der(\RR\times\QQ_S)
\]
is~$\mu_\Lpp$. Moreover, by the definition of~$N$, we note that~$\Lpp$ is normalised by~$\Gamma\cap N$ and that~$\widetilde{\mu}_\infty$ is determined by its direct image\footnote{This image exactly describes an ergodic decomposition of~$\widetilde{\mu}_\infty$, and of~${\mu}_\infty$. Here, at level~$K$ it can serve to describe the canonical measures components of~$\mu_\infty$.} in
\[
(\Lpp\cdot (\Gamma\cap N))\backslash G^\der(\RR\times\QQ_S).
\]
(Care that this is not a quotient map by a group action.)

Now we note that the~$\widetilde{\mu}_\Omega\cdot g'_n$ have the same direct image, because of the identity
\[
\int_{\omega\in\Omega}\Lpp(\Gamma\cap N)\cdot \omega g'_n~\nu(\omega)=\int_{\omega\in\Omega}\Lpp(\Gamma\cap N)\cdot \omega g'_n~\nu(\omega).
\]
Passing to the limit, the direct image of~$\widetilde{\mu}_\infty$ is this same measure. By construction, this is the direct image of~$\nu(\omega)$. It follows the identity~$\widetilde{\mu}_\infty=\widetilde{\mu}_{\Lpp}\star\nu$ and, by direct image,~${\mu}_\infty={\mu}_{\Lpp}\star\nu$.
%We have on the one hand
%\[
%{\rm Supp}(\mu_\infty)\subseteq {\rm Supp}(\pi_\star(\mu_{\Lpp}\star\nu))
%\]
%as the right hand side is closed and contains each of the~${\rm Supp}(\mu_n)$, hence their limit.

%It suffices to prove that [...]

\subsection{The focusing criterion's factorisation}
We recall that we have a limiting distribution
\[
\lim_{n\to\infty} \wt{\mu}_{n}= \mu_{\Lpp} \star \nu\cdot g_\infty
\]
where the probability measures~$\wt{\mu}_{n}=\mu_\Omega\cdot g'_n$ satisfy
\[
	\mu_n=\pi_\star\left(\wt{\mu_n}\right).
\]
We ensured that~$g'_n$ belongs to~$G^\der(\QQ_S)$, which we view as a subgroup of~$G(\RR\times\QQ_S)$.

Our goal is to prove that, after possibly extracting a subsequence, 
\[
{\rm Supp}({\mu}_n)={\rm Supp}(\pi_\star(\wt{\mu}_n))\text{ is contained in }{\rm Supp}({\mu}_\infty)={\rm Supp}(\pi_\star(\wt{\mu}_\infty)).
\]

As we are allowed to extract subsequences we may use the more stringent conclusions of~\cite[Theorem~3]{1-RZ},
the \emph{focusing criterion}, according to which we  may factor
\begin{equation}\label{decompo}
	g'_n=l_n\cdot f_n\cdot b_n
\end{equation}
where~$l_n\in \bigcap_{\omega\in \Omega} \omega L(\RR\times\QQ_S)\omega^{-1}$,
where~$(b_n)_{n\geq 0}$ is a bounded sequence and where~$f_n\in N\cap Z_{G_S^\der}(\Omega)$.

We note that such a decomposition holds if and only if it holds place by place:
the groups involved are place by place product groups (they are even~$\QQ_S$-algebraic); a bounded sequence is a sequence which is bounded  a sequence whose component at each place is bounded.

In particular, as the real component of~$g'_n$ is the neutral element, we may, and we will, substitute the real components of~$l_n$,
of~$f_n$ and of~$b_n$ by the neutral element and still have a decomposition as above, at the real place as well as in~$G^\der(\RR\times\QQ_S)$. To summarize: without loss of generality, we may assume 
\begin{equation}\label{factor decomposition}
l_n, f_n, b_n\in G^\der_S\subseteq G^\der(\RR\times\QQ_S).
\end{equation}

\subsubsection{} Let us comment on the three factors in~\eqref{factor decomposition} above. The bounded factor~$b_n$ does not involve any dynamic, and cannot be discarded in general. In our situation, for Hecke orbits, \emph{at finite level}, we will be able to suppress it. The factor~$l_n$ is the one responsible for equidistribution, happening from the inside of~$(\Gamma\cap L) \backslash L \cdot \Omega$ to the whole of~$(\Gamma\cap L) \backslash L \cdot \Omega$. 

The factor~$f_n$ is trickier, and a toy exemple is when a sequence of distinct points equidistributes to its limit, whereas the limit sequence do not converge in the covering group. For instance in cases where the $S$-Shafarvich hypothesis does not hold. We may build up more intricate cases by considering a cartesian product, of this toy exemple, with a more typical situation of inner equidistribution as above. We can even consider cases of a semi-direct product, or more generally cases where~$L$ is merely normal in the ambiant group (the subgroup~$N$ of~$G$). This gives an outline of why the fator~$f_n$ may happen.

\subsubsection{} We explain how we can pick~$f_n$ in~$Z_{G_S^\der}(M)$, which is defined over~$\QQ$ instead of picking it merely in~$Z_{G_S^\der}(\Omega)$.
As we saw, the $S$-Shafarevich hypothesis implies that the subgroup~$Z_{G_S^\der}(M)$ is cocompact in~$Z_{G_S^\der}(\Omega)$. 
We have morevover the following.
\begin{lem} The subgroup~$Z_{G_S^\der}(M)\cap N$ of~$Z_{G_S^\der}(\Omega)\cap N$ is cocompact.
\end{lem}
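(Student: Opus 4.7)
The strategy is to realise $(Z_{G_S^\der}(M)\cap N)\backslash(Z_{G_S^\der}(\Omega)\cap N)$ as a closed subset of the compact space $Z_{G_S^\der}(M)\backslash Z_{G_S^\der}(\Omega)$. The latter is compact by Proposition~\ref{charSha} combined with the $S$-Shafarevich hypothesis, and any closed subset of a compact Hausdorff space is compact: this yields the sought cocompactness. This parallels the (informal) argument alluded to just before~\eqref{eq 56b}, where the analogous compactness statement was asserted for an arbitrary subgroup of $Z_{G_S^\der}(\Omega)$.

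First I would verify that the natural map
\[
(Z_{G_S^\der}(M)\cap N)\backslash(Z_{G_S^\der}(\Omega)\cap N)\lto Z_{G_S^\der}(M)\backslash Z_{G_S^\der}(\Omega)
\]
is well-defined, continuous, and injective. Injectivity is the observation that if $x_1,x_2\in Z_{G_S^\der}(\Omega)\cap N$ have the same image, then $x_1x_2^{-1}\in Z_{G_S^\der}(M)$, and since $x_1,x_2\in N$ this ratio actually lies in $Z_{G_S^\der}(M)\cap N$.

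The main content is to show the image is closed, equivalently that the saturation $Z_{G_S^\der}(M)\cdot(Z_{G_S^\der}(\Omega)\cap N)$ is closed in $Z_{G_S^\der}(\Omega)$. The elementary identity
\[
Z_{G_S^\der}(M)\cdot(Z_{G_S^\der}(\Omega)\cap N)=\left(Z_{G_S^\der}(M)\cdot N\right)\cap Z_{G_S^\der}(\Omega)
\]
(for $\supseteq$: if $y=m\cdot n\in Z_{G_S^\der}(\Omega)$ with $m\in Z_{G_S^\der}(M)\subseteq Z_{G_S^\der}(\Omega)$, then $n=m^{-1}y\in Z_{G_S^\der}(\Omega)\cap N$) reduces the question to showing that $Z_{G_S^\der}(M)\cdot N$ is closed in $G_S^\der$. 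Since $Z_{G_S^\der}(M)$ and $N$ are both $\QQ_S$-algebraic subgroups of $G^\der$, the set $Z_{G_S^\der}(M)\cdot N/N$ is the orbit of the identity coset under the left algebraic action of $Z_{G_S^\der}(M)$ on $G^\der/N$; by standard results on algebraic group actions such orbits are locally closed in the $\QQ_S$-topology, and the orbit map descends to a homeomorphism onto the orbit.

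The main obstacle is to upgrade \emph{locally closed} to \emph{closed}. I would leverage the cocompactness of $Z_{G_S^\der}(M)\backslash Z_{G_S^\der}(\Omega)$ via an argument analogous to the one carried out in Section~\ref{n infini} for the closedness of $(\Gamma\cap N)\cdot(Z_{G_S^\der}(\Omega)\cap N)$: realising $N$ as the stabiliser in $G^\der$ of a generator $p\in\det\mathfrak{l}$ under the adjoint representation, embedding $G^\der/N$ as a quasi-affine $\QQ_S$-variety through the orbit map at $p$, and using the compact decomposition $Z_{G_S^\der}(\Omega)=Z_{G_S^\der}(M)\cdot C$ from~\eqref{eq 43} to pull the closedness of the orbit back from the representation space. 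Once $Z_{G_S^\der}(M)\cdot N$ is known to be closed, the image of our injection is closed in the compact space $Z_{G_S^\der}(M)\backslash Z_{G_S^\der}(\Omega)$, and the proof concludes.
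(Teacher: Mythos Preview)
Your reduction is correct up to and including the identity
\[
Z_{G_S^\der}(M)\cdot\bigl(Z_{G_S^\der}(\Omega)\cap N\bigr)=\bigl(Z_{G_S^\der}(M)\cdot N\bigr)\cap Z_{G_S^\der}(\Omega),
\]
and you rightly isolate the real difficulty: one must show this product is \emph{closed}. However, your proposed route to closedness does not go through. The argument in~\S\ref{n infini} for the closedness of $(\Gamma\cap N)\cdot(Z_{G_S^\der}(\Omega)\cap N)$ hinges on the fact that $\Gamma$ stabilises an arithmetic lattice in the linearisation space, so that the $(\Gamma\cap N)$-orbit of the base point is \emph{discrete}. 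There is no analogue of this for $Z_{G_S^\der}(M)$, which is a positive-dimensional reductive group; its orbit $Z_{G_S^\der}(M)\cdot p_L$ in the representation space has no reason to be closed (reductive orbits in affine varieties are generally only locally closed --- think of $\GL(1)$ acting by scaling on $\mathbb{A}^1$). The compact decomposition $Z_{G_S^\der}(\Omega)=Z_{G_S^\der}(M)\cdot C$ from~\eqref{eq 43} relates $Z_{G_S^\der}(\Omega)$ to $Z_{G_S^\der}(M)$, but it says nothing about how either interacts with $N$, so it does not close the gap.

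The paper's proof avoids this obstacle by a structural manoeuvre rather than a direct closedness argument. It introduces $Z^+$, the product of the $\QQ_S$-isotropic factors of $Z_{G_S^\der}(\Omega)$, which is \emph{normal} in $Z_{G_S^\der}(\Omega)$ with compact quotient. Because a bounded regular map from an isotropic group to an affine variety is constant, and $Z_{G_S^\der}(\Omega)/Z_{G_S^\der}(M)$ is affine (Matsushima--Richardson, since $Z_{G_S^\der}(M)$ is reductive) and compact, one gets $Z^+\subseteq Z_{G_S^\der}(M)$. Normality of $Z^+$ is the key: it makes $(Z_{G_S^\der}(\Omega)\cap N)\cdot Z^+$ a genuine \emph{subgroup}, hence open of finite index in the $\QQ_S$-points of an algebraic subgroup, hence closed. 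Its image in the compact group $Z_{G_S^\der}(\Omega)/Z^+$ is therefore a closed subgroup, automatically compact; and since $Z^+\cap N\subseteq Z_{G_S^\der}(M)\cap N$, the desired quotient is a further quotient of this compact group. Your approach lacks any substitute for this normality, and without it the closedness step cannot be completed as written.
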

\noindent This will allow to restrict to the case where the~$f_n$ are in~$Z_{G_S^\der}(M)\cap N$ rather than merely in~$Z_{G_S^\der}(\Omega)\cap N$, seen in~\eqref{conclusion f'n} below.
\begin{proof}Note that~$Z_{G_S^\der}(\Omega)$, as it is a centraliser, is an algebraic subgroup of~$G_S$. Let~$Z^+$ be the maximal isotropic connected $\QQ_S$-subgroup of~$Z_{G_S^\der}(\Omega)$ (the product of the non compact factors of the factor groups~$Z_{G_S^\der}(\Omega)\cap G(\QQ_p)$). This is a normal subgroup of~$Z_{G_S^\der}(\Omega)$. The isotropic factors are generated by unipotent subgroup and split tori. Hence every regular function on~$Z^+$ which is bounded is actually constant, and more generally every regular map into an affine space, hence in a quasi-affine variety. 

We consider the map
\[
Z_{G_S^\der}(\Omega)\to Z_{G_S^\der}(\Omega)/Z_{G_S^\der}(M).
\]
	We recall that~$Z_{G_S^\der}(M)$ is a reductive group. It follows that the homogeneous space~$Z_{G_S^\der}(\Omega)/Z_{G_S^\der}(M)$ is affine\footnote{See~\cite{1-RichardsonMatsushima} for arbitrary characteristic, \cite[\S B, p. 2 (206) before Th.~3]{1-Matsushima} for the complex field. These provide a reverse statement for quotients of a group which is reductive.}. As we saw, the quotient map must be constant on~$Z^+$ that is we have
\[
Z^+\subseteq Z_{G_S^\der}(M)\subseteq Z_{G_S^\der}(\Omega).
\]

We now consider the map
\[
Z_{G_S^\der}(\Omega)\cap N
\to
\left.(Z_{G_S^\der}(\Omega)\cap N)\middle/(Z^+\cap N) \right.
\]
We may identify the image with the subspace~$NZ^+/Z^+$ of~$Z_{G_S^\der}(\Omega)/Z^+$. As~$Z^+$ is normal in~$Z_{G_S^\der}(\Omega)$, the subspace~$NZ^+/Z^+$ is actually a subgroup of~$Z_{G_S^\der}(\Omega)/Z^+$. It is in particular closed. As it is closed in a compact space, it is itself compact. Its quotient
\[
NZ^+/Z(M)\simeq (Z_{G_S^\der}(\Omega)\cap N)/(Z_{G_S^\der}(M)\cap N)
\]
is then compact.
\end{proof}

We may hence factor~$f_n=f'_n\cdot b'_n$ where~$f'_n$ belongs to~$Z_{G_S^\der}(M)$ and the~$b'_n$ are bounded in~$Z_{G_S^\der}(\Omega)$. Substitutig~$f_n$ with~$f'_n$ and~$b_n$ with~$b_n\cdot b'_n$ 
\begin{equation}\label{conclusion f'n}
\text{we may actually assume that $f_n$ belongs to~$Z_{G_S^\der}(M)$}.
\end{equation}
\subsection{Getting rid of the bounded factor~$b_n$} The bounded factors~$b_n$ are just translating, at the ``infinite level''~$\Gamma\backslash G^\der(\RR\times\QQ_S)$, the situation. They involve no asymptotic dynamical feature, and as they belong to~$G_S$, they will essentially be killed at finite level, modulo~$K$. Here are the details.

Possibly passing to an extracted subsequence, we may assume that the bounded
sequence~$(b_n)_{n\geq0}$ is convergent in~$G^\der(\QQ_S)$, with limit, say,~$b_\infty$. It follows that~$(b_n tK)_{n\geq0}$ is a convergent sequence in the space~$G^\der(\QQ_S)K/K$ (it converges to~$b_\infty tK$). As this quotient is a discrete space the convergent sequence~$(b_n tK)_{n\geq0}$ is actually eventually constant.
Possibly extracting further, we may assume that this is a constant sequence.

Substituting~$t$ with~$b_\infty\cdot t$ we may assume~$b_\infty=1$ in~$G^\der(\QQ_S)$,
and thus~$b_ntK=tK$. For any~$x$ in~$\Gamma\backslash G^\der(\RR\times\QQ_S)$ we have~$\pi(x\cdot b_n)=\pi(x)$. Hence
\[
	\pi_\star(\mu\cdot b_n)
	=
	\pi_\star(\mu)
\]
for every bounded measure~$\mu$ on~$\Gamma\backslash G^\der(\RR\times\QQ_S)$. In particular
\[
	\mu_n
	=
	\pi_\star(\wt{\mu}_n)
	=
	\pi_\star(\mu_\Omega\cdot l_n\cdot f_n\cdot b_n)
	=
	\pi_\star(\mu_\Omega\cdot l_n\cdot f_n).
\]
In order to prove, in~$\Gamma\backslash G^\der(\RR\times\QQ_S)K/K\subseteq \Sh_K(G,X)$
\[
{\rm Supp}(\pi_\star(\wt{\mu}_n))={\rm Supp}(\mu_n)\subseteq{\rm Supp}(\mu_\infty)
\]
we may substitute~$\wt{\mu}_n=\mu_\Omega\cdot l_n\cdot f_n\cdot b_n$ with~$\mu_\Omega\cdot l_n\cdot f_n$. In other terms we may assume~$b_n=1$.

\subsection{Getting rid of the centralising factor~$f_n$} 

We consider the algebraic subgroups of~$G^\der(\RR\times\QQ_S)$ given by 
\begin{subequations}
\begin{eqnarray}
\underline{N}&=N\cap Z_{G}(M)\\
\M&=\bigcap_{\omega\in\Omega}\omega L\omega^{-1}.
\end{eqnarray}
\end{subequations}
The former is defined over~$\QQ$ and the latter over~$\QQ_S$. We write
\[
\Mpp=\bigcap_{\omega\in\Omega}\omega \Lpp\omega^{-1}.
\]
The latter group~$\M$ is denoted~$M$ in~\cite{1-RZ}. Our notation distinguishes it from the Mumford-Tate group that here we denote~$M$.

As~$\N$ is defined over~$\QQ$, the subspace~
\[
\Gamma\backslash \Gamma\cdot \N\simeq(\Gamma\cap \N)\backslash \N
\]
of~$\Gamma\backslash G^\der(\RR\times\QQ_S)$ is closed. We even note the following.
\begin{lem} The space~$(\Gamma\cap \N)\backslash \N$ is compact.
\end{lem}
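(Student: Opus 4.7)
The plan is to verify that $\N=N\cap Z_{G^{\der}}(M)$ is a reductive $\QQ$-anisotropic algebraic group and then invoke the $S$-arithmetic form of the Godement/Borel--Harish-Chandra compactness criterion, which is already the workhorse of this section (compare the use around~\eqref{Godement}).

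First I would observe that $\N$ is a $\QQ$-algebraic subgroup of $G^{\der}$: both $N$ (the normaliser of the $\QQ$-subgroup $L$) and $Z_{G^{\der}}(M)$ are defined over $\QQ$, so their intersection is as well. Moreover $\N(\RR)$ is compact, since it is closed in $Z_{G^{\der}}(M)(\RR)$, which is itself compact (being contained in the stabiliser of $h\in X$, a compact subgroup of $G^{\der}(\RR)$, as already recalled in the paper).

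Next I would show that $\N$ is reductive and $\QQ$-anisotropic. Any $\QQ$-split torus of $\N$ would yield a non-trivial $\RR$-split torus whose real points are non-compact, contradicting compactness of $\N(\RR)$; hence $\N$ contains no non-trivial $\QQ$-split torus. The unipotent radical $R_u(\N)$ is a unipotent $\QQ$-group; its group of real points embeds as a closed subgroup of the compact group $\N(\RR)$ and is therefore compact, but the real points of a non-trivial unipotent $\QQ$-group are homeomorphic to some $\RR^n$ and so are non-compact. It follows that $R_u(\N)=\{e\}$, i.e., $\N$ is reductive.

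Finally I would apply the $S$-arithmetic form of Borel--Harish-Chandra: for a reductive $\QQ$-anisotropic group $H$, the $S$-arithmetic quotient $(\Gamma\cap H)\backslash H(\RR\times\QQ_S)$ is compact. Applied to $\N$, and using that $\Gamma\cap\N$ is $S$-arithmetic in $\N$ by commensurability, this yields the compactness of $(\Gamma\cap\N)\backslash\N$. I do not expect any serious obstacle: the proof is essentially the transfer of compactness of $Z_{G^{\der}}(M)(\RR)$ to $\N$, followed by a classical criterion; the only minor point to be careful with is ruling out a non-trivial unipotent radical of $\N$, which is immediate from its compact real points.
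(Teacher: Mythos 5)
Your proof is correct and follows the same route as the paper: deduce compactness of $\N(\RR)$ from compactness of $Z_{G^{\der}}(M)(\RR)$, conclude $\QQ$-anisotropy, and apply the Godement/Borel--Harish-Chandra compactness criterion. You merely spell out more explicitly than the paper that compactness of $\N(\RR)$ rules out both $\QQ$-split tori and a non-trivial unipotent radical, which is exactly what is packed into the paper's ``a fortiori $\QQ$-anisotropic'' step.
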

\begin{proof} We recall that~$Z_{G^\der}(M)$ is~$\RR$-anisotropic: its group of real points is compact. Hence so is the subgroup~$N'$.
It is a fortiori~$\QQ$-anisotropic as a~$\QQ$-algebraic group. By Godement's criterion we may deduce the statement.
\end{proof}
Recall that~$f_n$ belongs to~$\N$. We may hence, in~$\N$, write~$f_n=\gamma_n\cdot \beta_n$ with~$\gamma_n$ in~$\Gamma$ for a bounded sequence~$(\beta_n)_{n\geq0}$ in~$\N$. As~$\N$ commutes with~$\Omega$, we have
\[
\Gamma\cdot\omega\cdot f_n\cdot l_n\cdot b_n
=
\Gamma\cdot\omega\cdot \gamma_n\cdot \beta_n\cdot l_n\cdot b_n
=
\Gamma\cdot \gamma_n\cdot\omega\cdot \beta_n\cdot l_n\cdot b_n
=
\Gamma\cdot \omega\cdot \beta_n\cdot l_n\cdot b_n.
\]
Introducing~$l'_n=\beta_n\cdot l_n\cdot {\beta_n}^{-1}$, we then reshuffle the inner factors
\[
\Gamma\cdot \omega\cdot \beta_n\cdot l_n\cdot b_n
=
\Gamma\cdot \omega\cdot l'_n\cdot \beta_n\cdot b_n.
\]
 As~$\N$ normalises~$\M$ the elements~$l'_n$ belong to~$\M$.
Substituting~$l_n$ with $l'_n$ and~$b_n$ with~$\beta_n\cdot b_n$ we may assume that~$f_n$ is the neutral element.

Doing so, we may lose the property that~$b_n$ is trivial at the real place. Nevertheless, as~$\beta_n$ belongs to~$\N\subseteq Z_{G^\der}(M)$, its real factor~$\beta_{n,\RR}$ belongs to~$K_h$ and is such that
\[
\pi(\mu\cdot \beta_{n,\RR})=\pi(\mu)
\]
for any probability measure on~$\Gamma\backslash G^\der(\RR\times \QQ_S)$. We may hence substitute~$\beta_n$ with its $S$-adic factor.

\section{Zariski closedness: the $S$-Mumford-Tate hypothesis.}\label{SectionMT}

We put ourselves in the situation of Seciton~\ref{SectionRZ}. We will
use the $S$-Mumford-Tate property to reach the stronger conclusion~\eqref{TheoremeMT} of Theorem~\ref{Theoreme1}, namely that we obtain actual weakly special subvarieties.

\subsection{Normalisation by rational monodromy}
\begin{prop} Assume that the algebraic monodromy subgroup~$H_{S}$ is 
definable over~$\QQ$, that is of form~$H_S=H_{\QQ_S}$ for 
some~$\QQ$-subgroup~$H$ of~$G$.

Then the subgroup of Ratner class~$L$ in~$G$ is normalised by~$H$.
\end{prop}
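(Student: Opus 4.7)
The plan is to show $U_S\subseteq N_G(L)(\mathbf{Q}_S)$; taking Zariski closures and using that both $H$ and $N_G(L)$ are $\mathbf{Q}$-defined, this will yield $H\subseteq N_G(L)$ by $\mathbf{Q}$-descent.

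First, I would exploit Galois invariance of the limit measure. Each $\mu_n$ is a Galois-orbit average on $\Sh_K(G,X)$, hence Galois-invariant; by continuity of the Galois action on the space of probability measures, so is $\mu_\infty$. At the $S$-arithmetic level, this manifests as a left invariance of $\widetilde{\mu_n}=\mu_\Omega\cdot g'_n$ under $\Omega$: indeed, $\mu_\Omega$ is the direct image of the Haar probability on the compact group $\Omega$, so at the level of the underlying measure on $\Omega\subseteq G^{\mathrm{der}}(\mathbf{Q}_S)$, left multiplication by any $\omega_0\in\Omega$ — in particular by any element of $U_S\cap G^{\mathrm{der}}(\mathbf{Q}_S)$ — preserves this Haar measure. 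Passing to the limit, $\widetilde{\mu_\infty}=\mu_{\Lpp}\star\nu\cdot g_\infty$ inherits the same left invariance.

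Second, I would extract algebraic normalization from this invariance. Writing $\widetilde{\mu_\infty}=\int_{\Omega}\mu_{\Lpp}\cdot\omega g_\infty\,d\nu(\omega)$, the support of $\widetilde{\mu_\infty}$ is $\Gamma\backslash\Gamma\Lpp\Omega g_\infty$, and the tangent distribution at generic points is determined by $\mathrm{Lie}(\Lpp)=\mathrm{Lie}(L)_{\mathbf{Q}_S}$ — recall that $L^\dagger\subseteq\Lpp$ Zariski-generates $L$ because $L$ is of $S$-Ratner class. For $h\in U_S\cap G^{\mathrm{der}}(\mathbf{Q}_S)$, left translation maps the ergodic component $\mu_{\Lpp}\cdot\omega g_\infty$ to $\mu_{h\Lpp h^{-1}}\cdot h\omega g_\infty$, whose tangent distribution involves $\mathrm{Lie}(hLh^{-1})_{\mathbf{Q}_S}$. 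The invariance of $\widetilde{\mu_\infty}$, tracked modulo the $\Gamma$-action on ergodic components, forces these two tangent distributions to agree on a set of positive measure, whence $\mathrm{Lie}(hLh^{-1})_{\mathbf{Q}_S}=\mathrm{Lie}(L)_{\mathbf{Q}_S}$. Since $L$ is Zariski connected, this yields $hL_{\mathbf{Q}_S}h^{-1}=L_{\mathbf{Q}_S}$, i.e.\ $h\in N_G(L)(\mathbf{Q}_S)$.

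Third, I would conclude by Zariski closure and descent. The previous step shows $U_S\cap G^{\mathrm{der}}(\mathbf{Q}_S)\subseteq N_G(L)(\mathbf{Q}_S)$; the remaining central part of $U_S$, lying in $Z(G)(\mathbf{Q}_S)$, normalizes $L$ trivially. Taking Zariski closures inside $G_{\mathbf{Q}_S}$ gives $H_S=H_{\mathbf{Q}_S}\subseteq N_G(L)_{\mathbf{Q}_S}$. Since both $H$ and $N_G(L)$ are $\mathbf{Q}$-subgroups of $G$, Galois descent from $\mathbf{Q}_S$ to $\mathbf{Q}$ yields the inclusion $H\subseteq N_G(L)$ over $\mathbf{Q}$, which is exactly the statement that $H$ normalizes $L$.

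The main obstacle is the tangent-distribution argument in the second step. Left translation by $h\in\Omega$ does not globally descend to the arithmetic quotient $\Gamma\backslash G^{\mathrm{der}}(\mathbf{R}\times\mathbf{Q}_S)$, so one must reason measure-theoretically, reconciling the left invariance of $\widetilde{\mu_\infty}$ with the $\Gamma$-action permuting $\Lpp$-ergodic components. Extracting ``$h$ normalizes $L$ at the Lie-algebra level'' from this invariance is the heart of the proof and should benefit from the focusing factorisation $g'_n=l_nf_nb_n$ from Section~\ref{SectionRZ}, which already encodes how much $h$ may ``drift'' away from $N_G(L)$.
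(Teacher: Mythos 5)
Your step 2 has a genuine gap. The claimed ``left invariance of $\widetilde{\mu_n}$ under $\Omega$'' does not make sense: $\Gamma\backslash G^{\der}(\RR\times\QQ_S)$ carries a right $G$-action but no left $G$-action, so left multiplication by $\omega_0\in\Omega$ does not induce a transformation of this space under which $\widetilde{\mu_n}$ could be invariant. What is true — that precomposing the orbit map $\omega\mapsto\Gamma\omega g'_n$ with left translation by $\omega_0$ yields the same pushforward — is a tautology about the definition of $\mu_\Omega$ as a pushforward of Haar measure, and extracts no constraint. Consequently the tangent-distribution argument that follows, which is the heart of your proposal, does not get off the ground. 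You flag this yourself as the ``main obstacle,'' and rightly so: this is not a detail to be reconciled but the step that needs an entirely different mechanism. Your steps 1 and 3 are fine in principle (the passage from $U_S\subseteq N_G(L)(\QQ_S)$ to $H\subseteq N_G(L)$ over $\QQ$ is a standard Zariski-density/rationality argument, though ``Galois descent from $\QQ_S$'' is not the right terminology since $\QQ_S$ is a product of fields, not a field extension), but without step 2 they have nothing to work with.

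The paper's actual proof is shorter and avoids these difficulties by working directly with the \emph{focusing criterion} of Richard--Zamojski rather than with the limit measure's invariance. The criterion gives a factorisation $g_n\in O(1)\cdot(N\cap Z_G(H))\cdot L^H$, where $L^H=\bigcap_{h\in H}hLh^{-1}$. When $H$ is defined over $\QQ$, both $L^H$ and $N\cap Z_G(H)$ are $\QQ$-subgroups; moreover the $S$-Shafarevich hypothesis (in force throughout the theorem) forces $Z_G(H)$ to be $\QQ$-anisotropic, so the $(N\cap Z_G(H))$-factor decomposes as $\Gamma\cdot(\text{compact})$ and can be absorbed after extraction. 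One then finds that $\mathrm{Supp}(\mu_n)$, and hence $\mathrm{Supp}(\mu_\infty)$, lies inside the closed set $\Gamma\backslash\Gamma L^H\Omega$, whence $\dim L^H\geq\dim L$; since $L^H\subseteq L$ always and $L$ is Zariski connected, this yields $L^H=L$, i.e.\ $H$ normalises $L$. Note this is precisely the same conclusion your step 3 would draw, just reached via the focusing criterion and a dimension count instead of a measure-invariance argument that has no clear formulation on the arithmetic quotient. If you want to pursue your route, replace step 2 entirely with an application of the focusing factorisation as the paper does — there is no known way to extract normalisation by $H$ directly from Galois invariance of the $\mu_n$ at the $S$-arithmetic level.
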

\begin{proof}
We assume that the sequence $(\mu_n)$ converges to a measure $\mu_{\infty}$ associated to a $\QQ$-group $L$. 
We know that 
$$
g_n = L^H \cdot (N \cap Z_G(H)) \cdot O(1)
$$
with 
$$
L^H = \cap_{h} h L h^{-1}
$$
and
$$
N \subset N_G(L).
$$
We want to show that $L=L^H$.

Without loss of generality, we may assume that $O(1)=1$.

We have that $N \cap Z_G(H)$ is defined over $\QQ$ and
$$
N\cap Z_G(H) = N \cap Z_G(H) \cap \Gamma \times \cF.
$$ 
As $Z_G(H)$ is $\QQ$-anisotropic, $N\cap Z_G(H)$ is $\QQ$-anisotropic and therefore $\cF$ is compact.

We write 
$$
g_n = l_n f_n
$$
with $f_n \in (N \cap Z_G(H))(\RR \times \QQ_S)$ and we write
$$
f_n = \gamma_n \cdot \phi_n
$$
with $\gamma_n \in Z_G(H) \cap \Gamma$ and $\phi_n \in \cF$.

As $\cF$ is compact, after extraction, we may assume that $\phi_n$ is convergent and we may assume that the limit is one.

We can therefore assume that 
$$
g_n = l_n \cdot \gamma_n
$$
and $g_n$ stabilises the closed set $\Gamma \backslash \Gamma L^H \cdot U$ (it is closed because $L^H$ is defined over $\QQ$)
and$\Gamma \backslash \Gamma L^H U$ contains the support of $\mu_n$.
Therefore $\Gamma \backslash \Gamma L^H U$ contains $Supp (\mu_{\infty})$, hence 
$$\dim L^H \geq \dim(Supp(\mu_{\infty})) = \dim(L)$$.

With the assumption of the $S$-Mumford-Tate property, we show that $L$ is normalised by $M =H$.
\end{proof}
Under the $S$-Mumford-Tate type hypothesis we immediately deduce the following.
\begin{cor} Assume moreover that~$s$ is of $S$-Mumford-Tate type, that is~$M=H$.

Then~$L(\RR)$ is normalised by~$h$.
\end{cor}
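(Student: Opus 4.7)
\begin{prf}{Sketch}
The plan is to derive this directly from the preceding proposition. Since the argument is short, the main content is to verify that the two hypotheses of the proposition are satisfied under the $S$-Mumford-Tate assumption, and then to interpret the conclusion at the archimedean place.

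First, I would observe that the $S$-Mumford-Tate type hypothesis gives precisely that the image $U_S$ of the monodromy representation is open in $M(\AAA_f)\cap G_S \simeq M(\QQ_S)$. In particular the Zariski envelope $H_S$ of $U_S$ in $M_{\QQ_S}$ coincides with the whole of $M_{\QQ_S}$. Hence $H_S$ is of the form $H_{\QQ_S}$ with $H = M$, a $\QQ$-subgroup of $G$. This matches the hypothesis of the preceding proposition, which applies and yields that the subgroup $L$ of $S$-Ratner class is normalised by $M$.

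Next, I would recall that by the very definition of the Mumford-Tate group, the morphism $h\colon \SSS\to G_\RR$ factors through $M_\RR$; equivalently $h(\SSS)\subseteq M(\RR)$. Since $L$ is normalised by $M$ as $\QQ$-groups, taking real points gives that $M(\RR)$ normalises $L(\RR)$. A fortiori, every element in the image of $h$ normalises $L(\RR)$, which is the conclusion sought. The only minor point to check is that the normalisation of $L$ by $M$, which is an identity of $\QQ$-subgroups, descends to an inclusion $M(\RR)\subseteq N_{G(\RR)}(L(\RR))$; this is immediate since the normaliser of $L$ is itself a $\QQ$-subgroup of $G$.

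I do not expect any real obstacle: once the proposition is granted and $M=H$ is observed, the corollary is just the statement that $h$ lands inside a group that normalises $L$. No extraction, no further reduction, and no further dynamical input are needed here; the content has already been packaged in the proposition.
\end{prf}
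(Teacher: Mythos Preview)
Your argument is correct and matches the paper's approach: the corollary is stated as an immediate consequence of the preceding proposition, and you have correctly unpacked this by noting that the $S$-Mumford-Tate hypothesis gives $H_S = M_{\QQ_S}$ with $M$ a $\QQ$-group, so the proposition yields $M \subseteq N(L)$, whence $h(\SSS) \subseteq M(\RR)$ normalises $L(\RR)$.
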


\subsection{A criterion for a weakly $S$-special real submanifold to be a weakly special subvariety.}

In this section we show that stonger conclusions \ref{TheoremeMT} and
\ref{Theoreme2-2} of the main theorems \ref{Theoreme1} abd \ref{Theoreme2} respectively hold under the assumption that the~$S$-Mumford-Tate hypothesis holds.
This follows from the following proposition.

%In this section we denote by $\pi \colon X^+ \lto \Gamma \backslash X^+$ the projection map.
%We refer to~\cite[Section~2]{1-UY3} for the definition of weakly special %subvarieties.

\begin{prop} \label{boutonrouge}
Let~$(G,X)$ be a Shimura datum and~$h$ a point in a connected component~$X^+$ of~$X$.
Let~$L \subset G$ be a subgroup such that~$L_{\RR}$ is normalised by~$h(\SSS)$.

Then the image of~$L(\RR)^+ \cdot x \subseteq X^+$ in~$\Gamma\backslash X$ is a weakly special subvariety,
where~$\Gamma$ is any congruence arithmneitc subgroup of~$G$.
\end{prop}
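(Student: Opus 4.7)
The strategy is to enlarge $L$ to a reductive $\QQ$-subgroup $N$ containing the Mumford--Tate group $M$ of $h$, recognise $(N,N(\RR)\cdot h)$ as a Shimura subdatum of $(G,X)$, and then exhibit $L(\RR)^+\cdot h$ as a fiber over a special point for a Shimura morphism induced by an almost-direct product decomposition. First I would upgrade the normalisation hypothesis: since $h(\SSS)$ is Zariski dense in $M_\RR$ and the normaliser of $L$ is Zariski closed, $M_\RR$ normalises $L_\RR$; as $M$ and $L$ are both $\QQ$-subgroups, this descends to $M$ normalising $L$ over $\QQ$. I then set $N:=M\cdot L$, a $\QQ$-subgroup of $G$ in which $L$ is normal and through which $h$ factors via $M_\RR\subseteq N_\RR$.

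Next I would reduce to the case where $L$ is reductive. The adjoint action of $h(\SSS)$ on $\Lie(R_u L)$ is a sub-Hodge structure of $\Lie(G)$, of type contained in $\{(-1,1),(0,0),(1,-1)\}$. Its $(0,0)$-summand centralises $h(\SSS)$ and thus lies in $\Lie(K_h)$, giving a compact subgroup of the unipotent group $R_u L$; such a subgroup is trivial modulo the centre of $G$. The remaining $(\pm 1,\mp 1)$-summands are complex conjugate and generate an abelian unipotent Lie subalgebra on which $\Ad(h(i))$ acts as $-1$; the Cartan condition (SV2) on $h$ then forces this part to be central as well. We may therefore replace $L$ by its reductive Levi without altering the orbit $L(\RR)^+\cdot h$.

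With $L$ and hence $N$ now reductive, I would verify that $(N,N(\RR)\cdot h)$ is a Shimura subdatum of $(G,X)$: axiom (SV1) is inherited by restriction of the Hodge decomposition of $\Lie(G)$; (SV2) follows because $h(i)\in N_\RR$ and a reductive subgroup of $G^{ad}_\RR$ preserved by a Cartan involution inherits it as Cartan; (SV3) is secured after possibly discarding those $\QQ$-simple factors of $N^{ad}$ on which $h$ projects trivially. Since $L\triangleleft N$ with both reductive, there is an almost-direct product decomposition $N=L\cdot Z_N(L)^\circ$ and a compatible product of Shimura subdata $(N,X_N)\cong (L,X_L)\times(Z_N(L)^\circ,X_c)$, under which $h=h_L\cdot h_c$ and the orbit $L(\RR)^+\cdot h$ is identified with $X_L^+\times\{h_c\}$.

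Consequently, the image of $L(\RR)^+\cdot h$ in $\Gamma\backslash X^+$ is the fiber over the special point $[h_c]$ of the Shimura morphism $\Sh(N,X_N)\to\Sh(Z_N(L)^\circ,X_c)$, composed with the closed immersion $\Sh(N,X_N)\hookrightarrow\Sh_K(G,X)$. Preimages of special points under Shimura morphisms are precisely weakly special subvarieties, which yields the conclusion. The main obstacle is the reductivisation step: forcing the unipotent radical of $L$ to be central requires a careful Hodge-theoretic rigidity argument using both the type condition (SV1) and the Cartan condition (SV2). Once this is handled, the remainder is a formal verification of the Shimura datum axioms together with the standard characterisation of weakly special subvarieties as preimages of special points.
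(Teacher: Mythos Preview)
Your approach is essentially the same as the paper's main proof: pass to a reductive $\QQ$-group containing $h(\SSS)$ and $L$, recognise a Shimura subdatum, and realise $L(\RR)^+\cdot h$ as a fibre in a product decomposition. A few points of comparison and minor corrections are worth noting.

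\textbf{Reductivity of $L$.} Your Hodge-theoretic argument on $R_u L$ does work (the key point being that nilpotent elements in the $(-1)$-eigenspace of a Cartan involution satisfy $B(X,X)=0$ while $B$ is definite there, forcing $X=0$), but it is a circuitous rediscovery of a one-line fact: since $h(i)$ induces a Cartan involution of $G^{\mathrm{ad}}_\RR$ and normalises $L_\RR$, the group $L_\RR$ is stable under that involution and hence reductive (Satake, \emph{Algebraic structures of symmetric domains}, Th.~4.2). The paper simply cites this.

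\textbf{Choice of $N$.} The paper takes $N=N_G(L)^0$ rather than your $N=M\cdot L$; either works, but the normaliser is slightly more convenient because one knows immediately that $N=Z_G(L)\cdot L$ once $L$ is reductive, without invoking the Mumford--Tate group.

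\textbf{Product decomposition and compact factors.} Your claim of a product of Shimura \emph{data} $(N,X_N)\cong(L,X_L)\times(Z_N(L)^\circ,X_c)$ is too quick: the factors may have $\RR$-anisotropic $\QQ$-simple pieces on which $h$ projects trivially, violating (SV3). The paper handles this by passing to $N^{\mathrm{ad}}$ and splitting off the compact factors $Z^c$, $L^c$ before forming the Shimura data on the noncompact parts $Z^{nc}$, $L^{nc}$ (citing Ullmo, \emph{Lemmes 3.3, 3.7}).

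\textbf{``Special point''.} The point $[h_c]$ need not be special; weakly special subvarieties are fibres over \emph{arbitrary} points of Shimura morphisms arising from product decompositions, so drop that adjective.
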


\begin{proof}
First observe that $L_{\RR}$ is normalised by $h(\sqrt{-1})$ which induces a Cartan involution on $G^{ad}_{\RR}$. 
Therefore $L_{\RR}$ is reductive (see \cite[Th.~4.2]{1-Satake}).

Let $N = N(L)^0$ be the neutral component of the normaliser of $L$ in $G$. 
Because~$L$ is reductive, so are~$N$ and its centraliser~$Z_G(L)$, and we have
$$
N = Z_G(L) \cdot L.
$$
Note that~$h$ factors through $N_{\RR}$.

We have a almost-product decomposition of semisimple groups
$$
N^{der} = Z_G(L)^{der} \cdot L^{der}.
$$
Let~$Z^{c}$ (resp.~$Z^{nc}$) denote the almost product of the almost $\QQ$-factors
of~$Z_G(L)$ which are~$\RR$-compact (resp.\ which are not~$\RR$-compact). Using
the analogous notation for~$L$, we have 
$$
Z_G(L)^{der} = Z^{nc}\cdot Z^c \text{ and } L^{der} = L^{nc}\cdot L^{c}.
$$

By~\cite[Lemme 3.7]{1-U}, $x$ factors through $H = Z(N) Z^{nc} L^{nc}$.

Let $X_H = H(\RR)\cdot h$.
By~\cite[Lemme 3.3]{1-U}, $(H,X_H)$ is a Shimura subdatum of $(G,X)$.

Note that
$$
H^{ad} = Z^{nc,ad} \times L^{nc,ad}
$$
and 
write $h^{ad} = (h_1, h_2)$  in this decomposition.

We have
$$
X_H^{ad} = X_1 \times X_2
$$
where $X_1 =  Z^{nc,ad}(\RR)\cdot h_1$ and $X_2 =  L^{nc,ad}(\RR) \cdot h_2$.

By Lemme 3.3 of \cite{1-U}, both $(Z^{nc,ad},X_1)$ and $(L^{nc,ad},X_2)$ are Shimura data.

The image of $L(\RR)^+ \cdot h$ in $X_{H^{ad}}$ is $\{ h_1 \} \times X_2$,
as in~\cite[Section~2]{1-UY3}. This finishes the proof.
\end{proof}

For the sake of completeness, we give an alternative proof of the statement
\ref{boutonrouge}.

\begin{proof}[Alternative proof] The symmetry~$s_h$ of~$X$ at~$x$ is induced 
by the Cartan involution given by the conjugation action of~$h(i)$. But~$h(i)$
normalises~$L(\RR)$, hence normalises its neutral component~$L(\RR)^+$,
from which we get that~$L(\RR)^+\cdot h$ under~$s_h$: it is symmetric at~$h$.

Moreover~$L(\RR)^+$ is normalised by~$h(U(1))$, whose conjugation induces the
complex structure on the tangent~$T_hX$ space of~$X$ at~$h$. We deduce that the tangent space of~$L(\RR)^+\cdot h$ at~$h$ is complex subspace of~$T_hX$.

Notice that~$L$ is normalised by~$lhl^{-1}$ for every~$l\in L(\RR)^+$. By the
argument above,~$L(\RR)^+\cdot h$ is symmetric at every point and has a complex
tangent space at every point: it is a symmetric quasi-complex subspace.

By \cite{1-Helgason}, this quasi-complex structure is a complex structure:~$L(\RR)^+\cdot h$ is a symmetric holomorphic subvariety.

By a theorem of Baily-Borel~\cite{1-BailyBorel}, the arithmetic quotients
\[
\left(\Gamma\cap L(\RR)^+\right)\backslash L(\RR)^+\cdot h\text{ and }\Gamma\backslash X
\]
are quasi-projective varieties. And by a theorem of Borel~\cite{1-BorelExtension},
the embedding into~$\Gamma\backslash X$ is algebraic.

It implies that it is a totally geodesic subvariety in the sense of~\cite{1-Moonen},
that is a weakly special subvariety in our terminology.
\end{proof}

\selectlanguage{english}
\part{Limit distributions \except{toc}{\\}of\except{toc}{\\} translated pieces of leaves\except{toc}{\\} in\except{toc}{\\} $S$-arithmetic homogeneous spaces. 
\author{R. Richard and T. Zamojski}}
%\title{On Focusing, local, transcendental and~$S$-arithmetic.\\ With a preview about Shimura varieties}
%\author{Rodolphe Richard}
%\author{Thomas Zamojski}
% Written by Rodolphe Richard, joint work with Thomas Zamojski. Prefaced by Nimish Shah. On a question of the latest.

\numberwithin{equation}{section}
\setcounter{tocdepth}{1}

\setcounter{tocdepth}{2}
\setcounter{secnumdepth}{4}
%\parttoc%\tableofcontents

\begin{abstract}
We describe and characterize the limit distributions of translates of a  bounded open ``piece of orbit'', a non necessarily closed orbit, of a reductive Lie subgroup, non necessarily algebraic, in a space of $S$-arithmetic lattices. This is accomplished under a mild assumption of ``analytic stability'' on the sequence of translates. A notable improvement is in not assuming that the reductive subgroup or its centralizer are algebraic over the rational numbers. Moreover, it is also not necessary to assume that the initial orbit is of finite measure or even closed (that is we handle leaves). This article thus provides important generalizations of previously known results and opens ways to new applications in number theory, two of which are mentioned. 
\end{abstract}

%\maketitle
%\begin{abstract}
%This work fits in the context of “homogeneous dynamics” of “algebraic measures”. Namely the study of closed orbits, inside lattices spaces, and of homogeneous measures on it. We bring a general answer to questions about the processes at work when a sequence of translates of an homogeneous measure converge to some limit, continuing a tradition going back to Dani-Margulis work on Raghunathan's conjecture (now a theorem of Ratner). In the work of Eskin, Mozes and Shah, the term “focusing" has been coined to describe the convergence of algebraic orbits to another one. 
%
%We are able to study the focusing of “pieces” of  closed orbits, instead of full orbits, and even pieces of not necessarily closed leaves of non necessarily
% algebraic Lie subgroups. We furthermore generalise previous theory to the S-adic setting. We can evade the main hypothesis of the cited authors under a technical assumption from an earlier work of Richard and Shah we rely upon. Our method is also expected to cover non arithmetic lattices. 
% 
% Thus relaxing several hypothesis needed in existing work on these questions, this opens the way to new results in a range of applications, such as: counting problem, diophantine approximation, equidistribution in arithmetic geometry.
%
%\end{abstract}

\renewcommand{\N}{\mathbf{N}} % normalisateur unitaire
\renewcommand{\M}{\mathbf{{M}}} % groupe algebrique M associé à L et H
\renewcommand{\Mpp}{{M^\ddagger}}
\renewcommand{\Hom}{\mathbf{Hom}}

\section{Introduction}
\subsection{Motivations}In several problems in number theory, one is led to the understanding of the limiting distribution of translates of certain orbits, in a space~$G/\Gamma$ of~{$S$-arithmetic} lattices, of some subgroups~$H$ of~$G$.
\begin{enumerate}
\item   For instance, in~\cite{DukRudSar93} is suggested such an approach to the study of the density of integral points on affine homogeneous varieties  under a semisimple Lie group. This approach was further pursued in \cite{EskMcm93,EMSAnn,GorMauOh08,GorOh09}.
\item Another example are some arithmetico-geometric problems involving the stu\-dy of weakly special subvarieties 
of Shimura varieties. A stu\-dy of their ergodic property was conducted in~\cite{Ullmospeciale1,Ullmospeciale2}.
\item \label{AppliGalois}	Yet another example pertains to Galois action on Hecke orbits in Shimura varieties and a conjecture of Pink~\cite{These3}.
\end{enumerate}
A. Yafaev and the first named author recently proved some 
cases of the refined version of the Andr\'{e}-Pink-Zannier conjecture (\cite{0-RY}).
The advances made here are the cornerstone to their proof.

Set aside applications to number theory, the problem of classifying and characterising these limit measures is an interesting problem in homogeneous dynamics in its own right, leading to the development of new results, notably in~\cite{Lemmanew}.

\subsection{$S$-adic setting.}
Our general setting for the problem is as follows. We let $S$ be a finite set of places, and denote~$\Q_S=\prod_{v\in S}\Q_v$ the product of the corresponding completions~$\Q_v$ of~$\Q$. Denote~$G=\G(\Q_S)$ the group of $\Q_S\text{-}$points of a semisimple $\Q$-algebraic group~$\G$, let~$\Gamma\subseteq G$  be an $S$-arithmetic lattice in $G$, and $H$ a connected reductive $\Q_S$-subgroup in $G$ (see~\S\ref{secintro}). 

This~\emph{$S$-adic} setting, rather than merely \emph{archimedean} setting, of real algebraic groups, is fundamental in applications to the study of the the Galois orbits in~\cite{0-RY}. This setting also leads to the counting of~$S$-integer points on homogeneous varieties.

\subsection{Previous work}
In~\cite{EMSAnn}, it is assumed that~$\Q_S=\R$, that the orbit~$H\Gamma/\Gamma$ in~$G/\Gamma$ supports a globally $H\text{-}$invariant probability measure $\mu_H$ and that $H=\H(\R)$ for an algebraic group~$\H$ defined over $\Q$.  One question is then
\begin{equation*}
\text{ to describe weak limits of sequences of translates $(g_n\cdot\mu_H)_{n\geq0}$, with~$g_n\in G$.}
\end{equation*} 

Although~\cite{EMSAnn} obtains an answer for any non-divergent sequences, every application to counting of integral points on affine homogeneous varieties of the form~$G/H$ works under the more stringent assumption that $Z_G(H)$, the centralizer of $H$ in $G$, is defined over~$\Q$ and is $\Q$-anisotropic. This condition is necessary to avoid the trivial issue of pushing all the mass to infinity using the elements of $Z_G(H)$. If we wish to relax the assumption on~$Z_G(H)$, in the above applications, we need to encompass this phenomena in the ergodic method. 
 
\subsection{Filiation}
The purpose of this article is to pursue the study of the limit measures of translates of orbits of reductive subgroups in $G/\Gamma$, removing some of the assumptions of \cite{EMSAnn}. We continue a tradition dating back to the Raghunathan's conjecture, who was motivated by Oppenheim conjecture (now theorems of Ratner~\cite{Rat,Ratnerp} and Margulis~\cite{MargulisOppenheim} respectively); to the linearisation method initiated by Dani and Margulis for unipotent trajectories, pursued in the work of Eskin, Mozes, Shah; together with the more recent notion of $(C,\alpha)$-good functions, and its adaptation to the~$S$-adic setting by Kleinbock and Tomanov. 
\subsection{Pieces of orbits}	
Considering the applications we have mentioned, the following cases arise 
\begin{itemize}

\item where $Z_G(H)$ is not $\Q$-anisotropic and when $H\Gamma/\Gamma$ is not finite volume (e.g. for counting points on more general affine homogeneous spaces~$G/H$)
\item where $H$ is not necessarily defined over~$\Q$ and when $H\Gamma/\Gamma$  is not even closed.\footnote{For instance, let~$A$ be an abelian variety over a field of zero characteristic,~$S$ be made of a $\ell$-adic place and the archimedean place, and~$G$ be the Mumford-Tate group of~$A$. After possibly a finite extension of the base field, the image of the Galois representation on the~$\ell$-adic Tate module sits in~$G(\Q_\ell)$. This is a compact~$\ell$-adic Lie subgroup which is an open in an $\Q_\ell$-algebraic subgroup~$H$ of~$G$. Pursuing~\cite{These3} we wish not to rely on Mumford-Tate conjecture (for a finitely generated base field), which asserts one should have~$H=G$, we are to consider general subgroups~$H$ defined over~$\Q_\ell$ and possibly not over~$\Q$.
%For applications to questions like the Andr\'{e}-Pink conjecture, one wish to consider ambiant Shimura varieties which are 
%not necessarily attached to the Mumford-Tate group~$G$. In the context of~\cite{These3}, it is the Hodge genericity hypothesis on~$A$. 
}
\end{itemize}
 It is then advisable to focus on a subset of the orbit~$H\Gamma/\Gamma$: we let $\Omega$ be a Zariski dense open bounded subset of~$H$ with zero measure boundary, and let $\mu_\Omega$ be the push forward under $G\to G/\Gamma$ of the restriction to $\Omega$ of an Haar measure on $H$. Under a mild assumption on the sequence $(g_n)_{n\geq0}$, we describe the weak limits of trans\-la\-tes~$g_n\cdot\mu_\Omega$.

We furthermore give a sufficient and necessary condition on the sequence to converge to this limit. Following~\cite{EMSAnn},  we call these conditions ``the~\emph{focusing criterion}''. These conditions allow to describe the equidistibution process, and  are essential to the applications to Andr\'{e}-Pink-Zannier conjecture (in \cite{0-RY}). 

%This is an important feature of our results that they give precise details on how the %equidistribution occur. This shall prove useful in the applications.

%The purpose of this article is to describe the weak limit measures of translates of a ``piece of orbit" in the ``$S$-arithmetic setting".  Furthermore, given such limit measure, we give a sufficient and necessary condition on the sequence to converge to this limit.  After~\cite{EMSAnn},  we name it the~\emph{focusing criterion}. 

\subsection{Two simplified cases.}
Before making our problem precise in section~\ref{secintro}, we provide two simplified cases of our main result~Theorem~\ref{Theorem}. We hope this will help the reader to understand the general case.

\subsubsection{The first simplified case.}
 This first simplified version is as follows:
 
\addtocounter{theorem}{-1}
\begin{theorem} \label{Theointro} Let~$G$ be a real linear semisimple algebraic group defined over~$\Q$, and let~$\Gamma$ be an arithmetic lattice relative to the~$\Q$-structure on~$G$. Let~$H$ be a real algebraic connected subgroup of~$G$ defined over~$\Q$ which is reductive as an algebraic subgroup\footnote{The centre of~$H$ is semisimple in~$G$.}. Assume the centraliser~$H^\prime$ of~$H$ in~$G$ is $\Q$-anisotropic. 

Let~$\Omega$ be a non-empty open bounded subset of~$H$, let~$\mu|_\Omega$ be a probability on~$\Omega$ which is the restriction of a Haar measure of~$H$, and demote~$\mu_\Omega$ be the image probability measure on~$G/\Gamma$.

Fix any sequence~$\left(g_i\cdot\mu_\Omega\right)_{i\geq0}$ of translated probabilities on~$G/\Gamma$ and let $\mu_\infty$ be any weak limit measure. We have the following.
\begin{enumerate}
\item{\textbf{Tightness.}} The limit $\mu_\infty$ is a probability measure.
\item{\textbf{Limit probabilities.}} There are
\begin{itemize}
\item  a~$g_\infty$ in~$G$ and
\item an algebraic subgroup~$L$ of~$G$ defined over $\Q$, normalised by~$H$ and generated as a~$\Q$-algebraic group by its unipotent elements defined over~$\R$, with analytic connected component of the identity~$\Lpp$,
\end{itemize}  such that
\begin{equation}\label{limitformulasimplified}
\mu_\infty= g_\infty\int_\Omega (\omega\cdot\mu_{\Lpp})~~d\mu(\omega),
\end{equation}
where $\mu_{\Lpp}$ is the~$\Lpp$-invariant probability measure on~$\Lpp\Gamma/\Gamma$.
%writes~$\mu_\infty$ as integral sum of translates of the~$L^+$-invariant probability~$\mu_{L^+}$ on~$L^+\Gamma/\Gamma$;
\item{\textbf{Focusing criterion.}} %If the sequence~$\left(g_i\cdot\mu_\Omega\right)_{i\geq0}$ converges to the probability~\eqref{limitformulasimplified}, for some~$L$ as above, then
The sequence~$\left(g_i\right)_{i\geq0}$ is of the class
\begin{equation}\label{focusingcriterionsimplified} g_\infty\cdot o(1)\cdot L\left(\Gamma\cap N(L)H^\prime\right),\end{equation}
where~$o(1)$ denotes the class of sequences in~$G$ converging to the identity, $N(L)$ is the normaliser of~$L$ in~$G$, and $H^\prime$ is the centraliser of~$H$.
\end{enumerate}
\end{theorem}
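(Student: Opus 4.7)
The plan is to adapt the linearization technique of Dani-Margulis and Eskin-Mozes-Shah, and ultimately to derive this simplified statement from the forthcoming main Theorem~\ref{Theorem}; the $\Q$-rational archimedean setting offers substantial simplifications. First, I would address tightness. Since $H'$ is $\Q$-anisotropic, the intersection $H' \cap \Gamma$ is a cocompact lattice in $H'$; since $H$ is reductive, the derived subgroup $H^{\mathrm{der}}$ is generated by one-parameter unipotent subgroups over $\R$, and the connected centre $Z(H)^0$ lies in $H'$. Consequently, the Dani-Margulis quantitative non-divergence theorem (applied to the unipotent directions of $H^{\mathrm{der}}$, with boundedness of $\Omega$ controlling transverse directions and the $\Q$-anisotropy of $H'$ controlling the central torus) ensures that no mass escapes: every weak accumulation point $\mu_\infty$ of $(g_i \mu_\Omega)$ is a probability measure.

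For part~(2), having extracted a weakly convergent subsequence $g_i \mu_\Omega \to \mu_\infty$, I would view $\mu_\Omega$ as a fibration $\omega \mapsto \delta_{\omega\Gamma}$ over $\Omega$ and analyze the translates fibre by fibre. The Zariski-density of $\Omega$ in $H$, combined with differentiating along $\omega$-directions, produces in the limit a non-trivial subgroup under whose action $\mu_\infty$ is invariant; this group contains enough unipotents to trigger Ratner's measure classification theorem (in the translate form of Mozes-Shah). One obtains a connected algebraic subgroup $L \subseteq G$ generated by its $\R$-unipotents, with $\Lpp$ the analytic identity component of $L(\R)$, such that $\mu_\infty$ decomposes fibrewise as in formula~\eqref{limitformulasimplified}. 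The $\Q$-rationality of $L$ is recovered by pairing this dynamical invariance with $\Gamma$-saturation of the support together with a rationality argument on the associated normalizer. That $L$ is normalized by $H$ follows from continuity: for $h \in H$ near the identity with $\mu(h\Omega \cap \Omega) > 0$, the identity $\mu_\infty = h \cdot \mu_\infty$ (modulo negligible boundary effects, since $\partial\Omega$ has $\mu$-measure zero) forces $h \Lpp h^{-1} = \Lpp$, and hence $hLh^{-1} = L$ throughout the connected group $H$.

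The focusing criterion~(3) is obtained via the classical algebraic linearization: embed $G \hookrightarrow \mathrm{GL}(V)$ with $V = \bigwedge^{\dim \mathfrak{l}} \mathfrak{g}$, and let $v_L \in V$ span the line $\det \mathfrak{l}$. Its stabilizer in $G$ is $N(L)$, and $\Gamma \cdot v_L$ is discrete in directions transverse to $N(L)$ (the observation at the heart of Dani-Margulis linearization). Convergence in~\eqref{limitformulasimplified} then forces each $g_i \omega \cdot v_L$ to approach a single limiting $\Gamma$-translate, so that $g_i \omega \in N(L)\Gamma$ up to corrections vanishing with $i$; combined with the $\Q$-anisotropy of $H'$, which forces any ``centralizer drift'' to be absorbed into $\Gamma \cap N(L)H'$, this yields the stated form $g_i \in g_\infty \cdot o(1) \cdot L \cdot (\Gamma \cap N(L)H')$.

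The main obstacle I foresee lies in the second step: verifying that the invariance group $L$ produced by Ratner's theorem is defined over $\Q$ and normalized by all of $H$. In the full generality of the paper's setting this is delicate and requires interplay between the $\R$-algebraic output of Ratner, the arithmetic rationality of $\Gamma$, and the Zariski-density of $\Omega$; the main Theorem~\ref{Theorem} bridges this via the group $L^\dagger$ built for exactly this purpose. In the present simplified setting with $H$ defined over $\Q$ and $\Q_S = \R$, the passage is substantially cleaner, but the underlying structural argument is the same.
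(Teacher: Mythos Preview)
Your overall plan—derive Theorem~\ref{Theointro} from the general Theorem~\ref{Theorem} once the analytic stability hypothesis~\eqref{Hypo} is secured via the $\Q$-anisotropy of~$H'$—is exactly the route the paper takes. The reduction is recorded in~\S\ref{RemarkcasGAFA}: since~$H'\cap\Gamma$ is cocompact in~$H'$, one right-multiplies the~$g_i$ by suitable elements of~$H'\cap\Gamma$ (leaving each~$g_i\mu_\Omega$ unchanged) to land in a set~$Y$ satisfying~\eqref{AnS}, and then Theorem~\ref{Theorem} applies directly. Your tightness sketch is in the right spirit but slightly misdirected: the actual mechanism is not a direct appeal to non-divergence along unipotent directions of~$H^{\mathrm{der}}$, but rather the passage to hypothesis~\eqref{Hypo} followed by~\cite{LemmaA}.

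There is, however, a genuine gap in your argument that~$H$ normalises~$L$. You claim that for~$h$ near the identity one has~$\mu_\infty = h\cdot\mu_\infty$ ``modulo boundary effects''. This is false: $h\cdot\mu_\infty$ is the limit of~$hg_i\mu_\Omega$, and since~$h$ does not commute with~$g_i$ there is no reason for~$hg_i\mu_\Omega$ to be close to~$g_i\mu_\Omega$. What \emph{is} true is that~$g_i\mu_\Omega$ and~$g_i\mu_{h\Omega}$ are close in total variation when~$\mu(\Omega\triangle h\Omega)$ is small; but~$g_i\mu_{h\Omega}=(g_ih)\mu_\Omega$, and you have no control on the convergence of the sequence~$(g_ih)\mu_\Omega$, so this does not yield invariance of~$\mu_\infty$.

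The paper's actual route to~$H$-normalisation (see~\S\ref{seccontext} and the ``Rational case'' Proposition in~\S3.3) is different and does not pass through any invariance of~$\mu_\infty$ under~$H$. Theorem~\ref{Theorem} first produces a group~$L$ of Ratner class together with the auxiliary group~$M=\bigcap_{h\in H}hLh^{-1}$; one then replaces~$L$ by the $\Q$-Zariski closure~$L'$ of~$M^+$. Because~$H$ is defined over~$\Q$, so is~$M$, and~$L'$ is automatically normalised by~$H$ (indeed by~$H^{\alg}(\Q)$, hence by its Zariski closure). The work is then to show that the focusing criterion and limit formula survive this replacement, i.e.\ that one may take~$L'$ in place of~$L$; this uses the structure of~$M$ (Levi decomposition, isotropic/anisotropic factors) and the fact that the~$z_i$-factor in the focusing decomposition absorbs the central torus of~$M$. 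This is precisely the content of~\cite[Remark~4.1]{EMSAnn} and~\cite{EMSCorr} in the archimedean case, and is what the paper alludes to when it says ``the fact that~$L$ is normalised by~$\Omega$ is a key technical fact\ldots our method do[es] not rely on this''.
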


%The first conclusion follows from earlier work (\cite{EMSGAFA}, \cite{Lemma,LemmaA}). The second and third conclusions are proved together, and are an instance of Theorem~\ref{Theorem} in the simplified case above.
Theorem~\ref{Theointro} is a slight generalisation of~\cite[Statement 1.13]{EMSAnn}. Our main theorem will generalise this statement  further in several directions (see~\S\ref{seccontext}). Most importantly, $H$ will no longer be required to be defined over $\Q$, nor to have a closed orbit $H\Gamma/\Gamma$. We will however need to restrict to sequences having a geometric stability property (see \S\ref{analytic stability}).
Here this hypothesis was avoided thanks to~\cite{Lemma}, which is related to~\cite{EMSGAFA} (see~{\S}\ref{RemarkcasGAFA}).
Nonetheless, this restriction is flexible enough to encompass all applications of~\cite{EMSAnn} to counting integral points on varieties, and even more.

\subsubsection{Another particular case}
As mentioned previously,
our main theorem applies to the study of Galois actions on Hecke orbits in Shimura varieties. Measures on bounded pieces of orbits of irrational groups $H$ arise in this context. To illustrate this, we give the following version of our theorem, requiring much of the power of our main Theorem~\ref{Theorem} (and also Theorem~\ref{thm21bis}). For simplicity we consider here only one ultrametric place. The reader might adapt at wish to finitely many ultrametric places; the arithmetically inclined might consider left cosets of~$\Gamma$, and reverse accordingly the order from left to right actions of~$G$.

\begin{theorem}\label{Theointro2} Let~$G$ be a semi-simple linear algebraic group over~$\Q$. Fix a prime~$p$ and a compact subgroup~$K$ of~$G(\Q_p)$ Let~$\Gamma$ be an~$S$-arithmetic lattice\footnote{With respect to the  couple~$S:=\left\{\Q\to\R;\Q\to\Q_p\right\}$ of places. See~\cite[{\S}(3.1.2)]{Margulis} or~{\S}\ref{secmuomega} for the definition of a $S$-arithmetic lattice we use.} of~$G(\R)\times G(\Q_p)$. Consider the Haar probability measure~$\mu$ on~$K$ and define~$\mu_K$ to be its direct image in the quotient space~$\left.G(\R)\times G(\Q_p)\middle/\Gamma\right.$.

We make the following assumptions on~$K$:
\begin{enumerate}
\item[(H1.)] \label{H1intro} that the centraliser of~$K$ in~$G(\Q_p)$ is the centre of~$G(\Q_p$);
\item[(H2.)] \label{H2intro} that~$K$ is reductive in~$G(\Q_p)$ (equivalently: its Lie algebra is semisimple);
\item[(H3.)] \label{H3intro} that the Zariski closure of~$K$ over~$\Q_p$ is Zariski connected.
\end{enumerate}

Then the following  holds.
\begin{enumerate}
\item The family of translated probabilities
\begin{equation}\left(g\cdot \mu_K\right)_{g\in G(\Q_p)}\end{equation}
 is tight.
\item Each limit point of the family~$\left(g\cdot \mu_K\right)_{g\in G(\Q_p)}$ is a translate of some measure of the form
\begin{equation}\label{theointropadiclimitformula}
\mu_{K{\star} L} = \int_{\kappa\in K} \kappa\cdot \mu_{\Lpp}~d\mu(\kappa).
\end{equation}
\begin{itemize}
\item where~$L$ is a subgroup of~$G$ defined over~$\Q$ without~$\R\times\Q_p$-anisotropic $\Q$-rational factor, generated over~$\Q$ by its unipotent elements defined over~$\R\times\Q_p$,  (of \emph{Ratner class} in~{\S}\ref{secnotations})
\item where~$\Lpp$ is an explicit (defined in~{\S}\ref{secnotations}) open subgroup of finite index in~$L(\R\times\Q_p)$,
\item and where~$\mu_{\Lpp}$ is the~$\Lpp$-invariant probability on~$\Lpp\Gamma/\Gamma$.
\end{itemize}
\item The measure~$\eqref{theointropadiclimitformula}$ can occur as a limit point for a subgroup~$L$ distinct from~$G$ and~$\{e\}$ if and only if there exists an unbounded algebraic subgroup of~$G(\Q_p)$ which is normalised by~$K$ and not Zariski dense over~$\Q$ in~$G$.
\item Assume that~$(g_i\cdot \mu_\rho)$ converges to a translate of~$\mu_{K\convolution G}$ for every sequence~$(g_i)_{i\geq 0}$ in~$G(\Q_p)$ without bounded infinite subsequence. Then the Zariski closure over~$\Q$ of~$K$ in~$G$ contains the~$\Q_p$-isotropic factors of~$G$.
\end{enumerate}
\end{theorem}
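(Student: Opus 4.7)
The theorem is an application of the main Theorem~\ref{Theorem} to the setting where the ambient semisimple $\Q$-group is $\G$, the set of places is $S=\{\infty,p\}$, the lattice is $\Gamma$, the compact reductive piece $\Omega$ is $\{1_\R\}\times K\subset G(\R)\times G(\Q_p)$ equipped with Haar probability $\mu$, and the translating elements lie in $G(\Q_p)\subset G(\R)\times G(\Q_p)$ (trivial at the real place). The measure $\mu_K$ is the pushforward of $\mu$ to $\Gamma\backslash (G(\R)\times G(\Q_p))$. Hypotheses (H2) and (H3) ensure that the $\Q_p$-algebraic envelope of $K$ is Zariski-connected and reductive, matching the input of Theorem~\ref{Theorem}.

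\textbf{Parts 1 and 2.} First I would verify the analytic stability hypothesis for every sequence $(g_i)\subset G(\Q_p)$. By (H1), the centraliser of $\Omega$ in $G(\Q_p)$ reduces to the finite group $Z(\G)(\Q_p)$; hence via the Mostow-type decomposition of~\cite{1-RS,1-R} one writes $g_i=z_i\cdot y_i$ with $z_i\in Z(\G)(\Q_p)$ bounded and $(y_i)$ analytically stable. Theorem~\ref{thm21bis} then delivers tightness of $(g_i\cdot\mu_K)$, which is Part 1. Applying Theorem~\ref{Theorem} to an extracted subsequence yields a $\Q$-subgroup $L\leq\G$ of Ratner class and an element $g_\infty\in G(\R)\times G(\Q_p)$ such that the limit equals $\mu_{\Lpp}\star\nu\cdot g_\infty$; after absorbing the translation $g_\infty$ this is precisely formula~\eqref{theointropadiclimitformula}, proving Part 2.

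\textbf{Part 3.} For the forward direction I would invoke the focusing criterion of Theorem~\ref{Theorem}, factoring $g_i=l_i\cdot f_i\cdot b_i$ with $l_i\in\bigcap_{\omega\in\Omega}\omega\Lpp\omega^{-1}$, $f_i\in N_\G(L)\cap Z_\G(\Omega)$, and $(b_i)$ bounded. Since (H1) forces $Z_\G(\Omega)(\Q_p)$ to be contained in the finite centre $Z(\G)(\Q_p)$, the factor $f_i$ is bounded; hence $(g_i)$ lies, modulo a bounded perturbation, in $L(\Q_p)$. If $L\neq\{e\},\G$, then $L(\Q_p)$ must be unbounded (otherwise $(g_i)$ itself would be bounded), it is normalised by $K$ because the focusing holds uniformly over $\omega\in\Omega$, and it is a proper $\Q$-subgroup, hence not $\Q$-Zariski dense in $\G$. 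Conversely, given an unbounded $\Q_p$-algebraic subgroup $L_p$ normalised by $K$ that is not $\Q$-Zariski dense, I would pick a diverging sequence $g_i\in L_p$; the extracted limit has support in the proper $\Q$-subgroup obtained as the $\Q$-Zariski closure of $L_p$, so the associated Ratner subgroup differs from both $\{e\}$ and $\G$.

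\textbf{Part 4.} Arguing contrapositively, suppose the $\Q$-Zariski closure $H_\Q$ of $K$ omits some $\Q_p$-isotropic $\Q$-almost-simple factor of $\G$. Decomposing $\G$ as an almost-direct product $\prod\G_i$ of its $\Q$-almost-simple factors and projecting $H_\Q$ accordingly, I would use (H1), which forces triviality modulo centre of the $\Q_p$-centraliser of the projected $K$ in each $\G_i$, to construct an unbounded $\Q_p$-algebraic subgroup of $G(\Q_p)$ normalised by $K$ whose $\Q$-Zariski closure in $\G$ is proper. Part 3 then produces a limit of the form $\mu_{K\star L}$ with $L\neq\G$, arising from a sequence with no bounded infinite subsequence, contradicting the hypothesis. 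The hard part will be this explicit construction of an unbounded normalising $\Q_p$-subgroup when $H_\Q$ fails to absorb the isotropic factors; the delicate step exploits the rigidity provided jointly by (H1), (H2), and the behaviour of Zariski closures under the almost-direct-product decomposition of $\G$.
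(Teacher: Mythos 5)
The paper's own justification of this theorem is telegraphic (it simply points to [LemmaA] for Part~1, to the limit formula of Theorem~\ref{Theorem} for Part~2, and to the focusing criterion for Parts~3--4), so your proposal is correct to route everything through Theorem~\ref{Theorem}. However there are some genuine gaps beyond what even the paper's brevity would excuse.

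The most serious is in the forward direction of Part~3. You assert that $L(\Q_p)$ ``is normalised by $K$ because the focusing holds uniformly over $\omega\in\Omega$,'' but $L$ is a $\Q$-subgroup picked out by the linearisation procedure and there is no reason whatsoever for $K$ to normalise it. What the focusing criterion actually gives you is a factor $m_i$ lying in $M=\bigcap_{\kappa\in K}\kappa L\kappa^{-1}$ (or its open finite-index subgroup $\Mpp$); it is the group $M_p=\bigcap_{\kappa\in K}\kappa L(\Q_p)\kappa^{-1}$, not $L(\Q_p)$, that is normalised by $K$ by construction. This $M_p$ is the right witness: it is unbounded because (by (H1)) the centralising factor $z_i$ lives in the finite centre and $b_i$ is bounded, so the divergence of $g_i$ is borne entirely by $m_i\in M_p$; and $M_p\subseteq L(\Q_p)$ with $L$ a proper $\Q$-subgroup, so its $\Q$-Zariski closure is contained in $L\neq\G$. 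Using $L(\Q_p)$ simply does not give you normalisation by $K$, and that is exactly the property the statement demands.

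Two further points deserve attention. In Part~2 you quietly drop the $n_\infty$ factor appearing in the limit formula $g_\infty\int_\Omega\omega n_\infty\mu_{\Lpp}\,\mu(\omega)$ of Theorem~\ref{Theorem}. Pulling $n_\infty$ out of the integral requires writing it as $z\gamma$ with $z\in Z_G(H)\cap F$ (which commutes with every $\omega$) and $\gamma\in\Gamma\cap N$ (which fixes $\mu_{\Lpp}$), and this in turn needs $(Z_G(H)\cap F)\cdot(\Gamma\cap N)$ to be \emph{closed} --- a fact that does hold under (H1) and (H2) but must be invoked, as is done in the companion argument of Part~1 of the volume (\S~4.5.1 of the Richard--Yafaev article). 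Finally, the converse of Part~3 is only a heuristic: you still need to rule out $L=\{e\}$ for the extracted limit, and to explain why the $K$-normalisation of $L_p$ constrains the Ratner type of the limit to stay proper (the support argument via $g_iK\subseteq KL_p$ is a good start but the passage from ``$L_p$ not $\Q$-Zariski dense'' to ``the Ratner class $L$ is proper'' is not immediate because $KL_p$ itself may be $\Q$-Zariski dense). You already flag Part~4 as unfinished, so I won't belabour that.
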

The first point is a special case of~\cite{LemmaA}. The second point is a particular case of the limit formula of our result Theorem~\ref{Theorem}.  The third point is derived from the focusing criterion of Theorem~\ref{Theorem}, and the last point is a consequence of the third.
These last two points serve as a criterion for
``indiscriminate'' equidistribution, for all reasonable sequences: there is no focusing phenomenon.

\subsection{Remarks}
We mention an interesting question, which goes reciprocally from the subject of~\cite{These3}.
To what extent, assuming equidistribution properties, namely knowing the limit measures, 
can lead the determination of~$K$, which in the applications from~\cite{These3} would be 
the Mumford-Tate conjecture?

Applications discussed above shall be the subject of future work.
 
The proof follows the strategy of~\cite{EMSAnn}, thus relies on Ratner's measure classification theorem and on Dani and Margulis' linearisation method. However, a significant difference in our proof is that the geometric stability properties from~\cite{Lemmanew} provides a major simplification to the linearisation method (see~{\S}\ref{subsection-proof-structure}). It is responsible for the stronger conclusions obtained in our main Theorem~\ref{Theorem}.

%Our proof follows the strategy of~\cite{EMSAnn} but is a new in certain key technical aspects of its realisation. In particular, our method, which can be thought as more "orthodox", gives interesting and detailed information on the way the convergence occurs, from the~$g_i\cdot\mu_\Omega$ to~$\mu_\infty$.

%\subsection*{Structure of the article} Section~\ref{secintro} introduces the setting for the problem, subgroups relevant to the proof and the stability assumption mentioned above. Section~\ref{secthm} consists of the commented statement of the result, whereas its proof lies in Section~\ref{secproof}. Appendices... 

\subsection{Acknowledgements}
This work could not exist without earlier work of (notably) Dani, Eskin, Kleinbock, Margulis, Moz{e}s, Ratner, Shah, Tomanov.

Present work stems from a collaboration of the first named author visiting Pr~Shah at ICTP (Mumbai) in January 2005 (producing \cite{Lemma}), helped by the  \emph{\'{E}cole normale sup\'{e}rieure de Paris}. Tools were developed in the thesis~\cite{Lemma,Lemmap,LemmaA} at IRMAR (Rennes). These were deepened with a visit of Pr Shah at Irchel Universit\"{a}t (Z\"{u}rich) in June 2011 and to him at OSU (Columbus OH) in September 2011. At \'{E}PFL (Lausanne) in the academic year 2011-2012, this gave the preprint~\cite{Lemmanew}, which updates~\cite{Lemma} with the cornerstone result
allowing the present developments, a collaboration with T. Zamojski. Writing occurred at \'{E}PFL, Plouisy (France), ETHZ, and Mahina (Tahiti), 
and were lately supported by Universiteit Leiden and University College London.% The preview Section~\ref{SecShimura} about Shimura varieties is the development which was the implicit perspective of the thesis~\cite{These,These3}.

The second named author was supported by the European Research Council Grant 228304.

Both authors first met at a summer school in Pisa organised by the Clay institute about homogeneous dynamics.

%\section{Local Focusing for transcendent algebraic measures}

\section{Setting.} \label{secintro}
Our main result, Theorem~\ref{Theorem} in~\S\ref{sec ennonce thm}, is formulated in the ``$S$-arithmetic setting'', which we introduce in~\ref{Ssetting} below.
This Theorem involves translates of quite general probability measures~\(\mu_\Omega\), the alluded ``pieces of orbits'', which we describe in~\ref{secmuomega}. For full generality, we rely on a technical hypothesis detailed in~\ref{analytic stability}. We then emphasize a particular case
where this hypothesis is void, a particular case which encompass the works of ~\cite{EMSAnn, EMSGAFA}. Lastly we review some of Ratner theory and introduce some notations, especially the measures~$\mu_{\Lpp}$ our limit measures will be made of.
\subsection{The~$S$-arithmetic setting.} \label{Ssetting}
 This setting is analogous to \cite{Borel-Prasad}, \cite{MargulisTomanov}, \cite{TomanovOrbits}, \cite{KT}, \cite{LemmaA}.

\subsubsection{}
Recall the folowing notation. Let~$S$ be a finite set of places~$v\colon\Q\to\Q_v$. We denote:
\begin{equation}\label{notationQS}
\Q_S:=\prod_{v\in S}\Q_v.
\end{equation}
%(This is not a canonical terminology.)
Let~$\G$ be a semisimple algebraic group over~$\Q$. We write~$G$ for the topological group~$\G(\Q_S)$, which we will identify with~$\prod_{v\in S}\G(\Q_v)$. 

\subsubsection{}
The \emph{Zariski topology on~$G$}, resp. on its subsets, will mean the product topology of the Zariski topologies induced on the~$\G(\Q_v)$, resp. the induced topology. The product of Zariski closed subsets at each place is a generating family of closed subsets. These products are also the vanishing locus in~$G$ of sets of regular functions on~$\G$ with~$\Q_S$ coefficients. If~$S$ and~$G$ are not singletons, not every closed subsets must be given by a such set of equations (N.B.:~$\Q_S$ is not an integral domain). 

We will apply the notion of Zariski connected subsets to possibly non algebraic subsets.

\subsubsection{}
We fix, once for all, an \emph{$S$-arithmetic subgroup}~$\Gamma$ in~$\G(\Q_S)$, in the sense, for simplicity, of~\cite[{\S}(3.1.2)]{Margulis} (compare~\cite{TomanovOrbits}, cf.~\cite[\S\,1,\S\,6]{Borelihes}). Namely,~$\Gamma$ is commensurable with, equivalently: the group of~$S$-integer points with respect to a integer model of~$\G$; the stabiliser of a $S$-integral lattice in a faithful representation of~$\G\to\mathbf{GL}(N)$.

As~$G$ is semisimple, the discrete subgroup~$\Gamma$ is automatically a lattice, by the $S$-arithmetic Borel Harish-Chandra criterion, found in~\cite[{\S}(3.2.1)]{Margulis}.

\paragraph{On General lattice subgroups.} There is a more general notion of~$S$-arithmetic lattices (see~\cite{TitsMargulis}), and moreover semi-simple groups can feature interesting non arithmetic lattice.
We limit ourselves to a restricted notion of a lattice for commodity: we rely technically on quite a number of references, of which we do not know counterparts for general lattices. This might hamper applications
to Shimura varieties, which may be associated with general arithmetic lattices (cf.~\cite{These3}.) We expect nevertheless our 
method to adapt for general lattices without serious trouble, and analogous statements for general arithmetic lattices to be deduced form our case\footnote{In general let~$\Gamma$ be commensurable with the image in~$G(\Q_S)$, by  map~$\phi:\G'\to\G$ over~$\Q_S$ with~$\Q_S$-anisotropic kernel, of an arithmetic subgroup~$\Gamma'$ of~$\G'(\Q_S)$. From our results, applied in~$\G'(\Q_S)/\Gamma''$ with~$\phi^{-1}(\Gamma)\cap\Gamma'$, one shall derive, the desired analogous results in~$\G(\Q_S)/\Gamma$: convergence of measures in the former implies convergence in the latter by direct image.}. Our method deals mostly with Dani-Marulis' linearisation, which has already been applied
to more general lattices, and rely on Ratner's theorems and its variants in comparable generalities. Nonetheless, the exigencies of rigour led us
at technical points to require the use of explicit structure properties of the groups~$L$ of Ratner class (see~Appendix~\ref{AppRatner}).

\subsection{The Reductive subgroup~$H$}
We consider a subgroup~$H$ of~$G$ such that
\begin{equation}
\label{H reductif dans}
\text{``$H$ is a Zariski connected~\emph{$\Q_S$-Lie subgroup} which is \emph{reductive in~$G$}'',}
\end{equation}
in the sense of~\cite{Lemma,Lemmap}.
By this we mean~$H$ can be written as~\(\prod_{v\in S}H_v\)
where 
\begin{itemize}
\item if~$\Q_v\ciso\R$ is archimedean,~$H_v$ is a connected real Lie\footnote{This is likely that the whole argumentation in this article still works if more generally~$H_\R$ an ``integral subgroup'' in the sense of~\cite[III \S6.2 Def.\,1]{BBKLie3}, say a pathwise connected subgroup, by a Theorem of Yamabe~\cite{Yamabe}.} subgroup of~$\G(\R)$, the Lie algebra of which is acting, by adjoint representation, completely reducibly on the Lie algebra of~$\G(\R)$;
\item if~$\Q_v$ is ultrametric,~$H_v$ a Zariski connected Lie subgroup, in the $v$-adic sense, of~$\G(\Q_v)$ such that its action on~$\lie{g}(\Q_S)$ is completely reducible.
\end{itemize}
In this definition, the complete reducibility assumption has to do with the notion of \emph{reduced} (sic) subgroup in~\cite[p.\,101]{PR} and of a subgroup \emph{strongly reductive in} $G$ of~\cite[\S16]{Richardsontuple} (defined for Reductive~$G$ only, but in general characteristic). One way to rephrase it here is that the image of~$H_v$ in~$\G^\ad_{\Q_v}$ has a reductive $\Q_v$-Zariski closure. In our situation,~$G$ is semisimple, and this complete reducibility assumption amounts to
\begin{equation}\label{H reductif dans bis}
\text{the $\Q_v$-Zariski closure of~$H_v$ in~$\G_{\Q_v}$, is reductive.}
\end{equation}
\subsection{Sequences of probabilities and Problem}\label{secmuomega} Given this setting, we consider 
\[\text{a non-empty open bounded\footnote{``Bounded'' will throughout be used for~``relatively compact''.} subset~$\Omega$ of~$H$,}\]
which is necessarily~$\Q_S$-Zariski dense in~$H$.
%; if not, we may replace~$H$ by the~$\Q_S$-Zariski closure of~$\Omega$ in~$H$. %The density of~$\Omega$ always holds if~$H$ is made of a connected real Lie group and Zariski connected algebraic groups~$\Hgp_v$.

From assumption~\eqref{H reductif dans bis}, we note that~$H$ is a unimodular group (cf.\footnote{For non algebraic~$H$, here are more details: a bilateral Haar measure is obtained from considering, place by place, any non-zero top differential form on~$H_v$, at the neutral element, which, we shall see, are invariant under conjugation. The top differential forms on~$H_v$ at the origin describe a~$\Q_v$ vector line~$\det(\lie{h}_v)$ in~$\bigwedge^{\dim(H_v)}\lie{g}$. This line is stable under the exterior adjoint action of~$H_v$, and thus under its $\Q_v$-Zariski closure, say~$\H^\alg_{\Q_v}$ of~$H_v$ in~$\G_{\Q_v}$ this action is trivial: the semisimple part of~$H_v$ admits no non constant character, and the centre of~$\H^alg_v$ commutes with~$H_v$. The action of~$H_v\leq\H^\alg_v(\Q_v)$ is hence trivial.}~\cite[p.15, Exemple de modules a)]{vigneras1996representations}, \cite[VIII~\S2, Corollary 8.31 (d)]{Knapp}): its Haar measures are both left and right Haar measures. Let~$\mu$ be the Haar measure on~$H$ normalised so that~$\mu(\Omega)=1$. % Fix a lattice~$\Gamma$ of~$G$.
Here is our main object of study: we denote
\begin{equation}\label{def mu Omega}
\text{$\mu\restriction_\Omega$ the restriction of $\mu$ to $\Omega$ and$\mu_\Omega$ its image probability measure on~$G/\Gamma$.}
\end{equation}

%%%%%%% Variant for mu de f au lieu de mu omega %%%%%%%%%%

%\paragraph{A variant} Following the setting of~\cite[Theorem~1.3]{LemmaA}, we could have replaced~$\mu_\Omega$ with the following generalisation. Take~$f$ a bounded positive~$\mu$-integrable function on~$H$. For example the characterisitc function of~$\Omega$. Define~$\mu_f$ to be the direct image of~$f\cdot\mu$ in~$G/\Gamma$. We replace~$\mu_\Omega$ with~$\mu_f$ provided the latter is non zero.

We are interested in the asymptotic behaviour of a sequence~$\left(g_i\cdot\mu_\Omega\right)_{i\geq0}$ of probabilities on~$G/\Gamma$ made of translates of~$\mu_\Omega$ by elements~$g_i$ in~$G$. We ask:
\begin{enumerate}
\item   what are the limit measures (in particular are they also probabilities);
\item   how to characterise conveniently the sequences converging to a given limit measure.
\end{enumerate}
We will completely answer these questions provided the translating elements~\(g_i\) are constrained to any subset~\(Y\) of~$G$ satisfying the stability property in \S~\ref{analytic stability} below.

\subsubsection{Measures with densities and Full orbits.} Let us mention a possible variation of our setting. As in~\cite{LemmaA}, we could consider more general measures of the form~$\mu_f=f\cdot\mu_\Omega$ where~$f\geq0$ is of class~$L^1(\mu)$ and such that~$\int f\cdot\mu_\Omega=1$. We will not
delve here into such a generalisation. It could be useful nonetheless, for instance in order to rigorously relate, on the one hand our results about the measure~$\mu_\Omega$ on a piece of orbit, and on the other analogous statements about probability measures on a full orbit, rather than a piece (these orbits may not be bounded, and not be of finite volume). We refer to~\cite{LemmaA} for such considerations. (See also~\cite{Lemma} for a variant.) Another possible use of this smoother generality may be to more easily prove some uniformity results in our main statements, with respect to~$\Omega$. We did not include such results here, as it would add quite some lengths, with arguments mostly unrelated to the one involved here.
%A nicer treatment would also study closed orbits of possibly unbounded homogeneous measures.% such study is for a later work.

%We will consider a weakly converging sequence~$\left(g_i\cdot\mu_\Omega\right)_{i\geq0}$ with limit~$\mu_\infty$ and assume that~$\mu_\infty$ is non zero\footnote{The non vanishing of~$\mu_\infty$ will actually follow from~\cite{LemmaA} under the hypothesis~\eqref{Hypo} that will be introduced shortly.}.
%Our main result Theorem~\ref{Theorem} p.\pageref{Theorem} identifies the possible forms of the limit~$\mu_\infty$, and its Corollary~\ref{Corollary} describes the sequences converging to a given non zero limit. %Assuming the above-mentioned hypothesis.
%

%We will use the linearisation techniques~\cite[{\S}2]{DM} of Dani-Margulis, following the method of~\cite[{\S}3]{EMSGAFA} of Eskin-Mozes-Shah, or rather its $S$-arithmetic extensions appendix~\ref{AppA} is devoted to. We will also use the classification theorem~\cite{Rat} of Ratner~\cite[?]{Ratnerp}, and more precisely the~$S$-arithmetic version as in~\cite[Theorem~2]{MargulisTomanov}, together with refinements from~\cite{TomanovOrbits}. We refer to appendix~\ref{AppRatner} for precisions. 

\subsection{\emph{Analytic stability} hypothesis}\label{analytic stability}
% We will also need the geometric stability lemmas~\cite[Theorem~1, Theorem~2]{Lemma,Lemmanew} of Richard-Shah and~\cite{Lemmap} of Richard.
%This leads to the following technical hypothesis.
Here is our main technical hypothesis, which, though cumbersome is a cornerstone of this whole work.
\subsubsection{}
The \emph{analytic stability} property on a subset~$Y$ of~$G$, studied in~\cite{Lemma,Lemmanew,Lemmap}, is defined as follows:
\begin{equation}\tag{An.S.}\label{AnS}
	\parbox{.88\textwidth}
	{\indent For any $\Q_S$-linear representation~$\rho:G\to GL(V)$, for any norm~$\Nm{-}$ on~$V$,\\
	$\exists c>0,~\forall y\in Y,~\forall v\in V,~\sup_{\omega\in\Omega}\Nm{y \cdot \omega\cdot v }\geq \Nm{v}/c.$}
\end{equation}

 Note that the property can be made independent of~$\Omega$, as long as~$\Omega$ is bounded and Zariski dense in~$H$.
The hypothesis under which our proof works is that
\begin{equation}\tag{H}\label{Hypo}
 \text{ the set~$Y:=\{g_i~|~i\geq0\}$ satisfies~\eqref{AnS}.}
\end{equation}
\subsubsection{}
We explain why such hypothesis is reasonable. On the one hand, it implies (\cite[Theorem~1.3]{LemmaA}) that the sequence~$\left(g_i\cdot \mu_\Omega\right)_{i\geq0}$ is tight (in the sense of~\cite[IX \S5.3 and \S5.5]{BouINT} for instance). One the other hand, this hypothesis is not far from being equivalent to tightness. More precisely, thanks to Mahler's criterion and \cite{KT}, one can closely relate tightness of the sequence~$\left(g_i\cdot \mu_\Omega\right)_{i\geq0}$ to the following~\emph{(uniform) Arithmetic (semi)stability}\footnote{
Came lately to the notice of the authors the work of Grayson~\cite{Grayson1,Grayson2} where he produces related subsets of a symmetric space~$G/K$
defined by a related (more precise) property of \emph{arithmetic stability} (as coined by Stuhler~\cite{Stuhler}), in relation to reduction theory for arithmetic groups.
} statement.
\begin{equation}\tag{Ar.S.}\label{ArS}
\parbox{.85\textwidth}
	{
	For any $\Q$-linear representation~$\rho:G\to GL(V)$, for any norm~$\Nm{-}$
on~$V\tens\Q_S$, for any $\Q$-rational $S$-arithmetic lattice~$\Lambda$ in~$V\tens\Q_S$,\\
$\qquad\exists C>0,~\forall y\in Y,~\forall v\in \Lambda\smallsetminus\{0\}, \max_{\omega\in\Omega}\Nm{y \cdot \omega\cdot v }\geq C$.
	}
\end{equation}
One deduce~\eqref{ArS} from~\eqref{AnS} as follows. We note that~$\Lambda$ is discrete in~$V\tens\Q_S$. Thus its systole~$\inf \{\Nm{\lambda}~|~\lambda\in\Lambda\smallsetminus{0}\}$ is bounded below. Let~$\sigma>0$ be a bound. Then~\eqref{AnS} for~$V\tens\Q_S$ implies~\eqref{ArS} with~$C=\sigma/c$.

Last but not least, references~\cite{Lemma,Lemmap,LemmaA,Lemmanew} show that hypothesis~\eqref{AnS} can be achieved for explicit and quite large subsets~$Y$ (see~Theorem~\ref{thm21bis}) which contain ``suppplements'' in~$G$, in some sense, to the centraliser~$Z_G(H)$ of~$H$: in particular their~$Z_G(H)$-saturated is~$G$.

\subsection{An Important special case}\label{RemarkcasGAFA}
One can circumvent the hypothesis~\eqref{Hypo} in the following special case which is important considering the applications\footnote{Notably~\cite{EMSAnn}, and its wanted generalisations; but also Galois actions on Hecke orbits in Shimura varieties, see~\cite{These3}.}.
\begin{equation}\tag{R.nD.}\label{RnD}
\text{The centraliser~$Z_G(H)$ of~$H$ in~$G$ is defined over~$\Q$ and is~$\Q$-anisotropic.}
\end{equation}
Here, by~``$\Q$-anisotropic" we mean for instance satisfying the Godement compacity criterion~\cite[Th\'{e}or\`{e}me~8.7\footnote{Note that here,~$Z_G(H)$ is necessarily reductive, and has trivial unipotent radical.}]{BorelIntro}. On top of the rationality property for~$Z_G(H)$, the anisotropy property translates into
a non divergence property for any sequence of translates~$(g_i\cdot \mu_\Omega)_{i\geq0}$ (see~\cite{LemmaA} following~\cite{EMSGAFA}), namely that~$(g\cdot \mu_\Omega)_{g\in G}$ is a tight family.

In the case~\eqref{RnD}, the hypothesis~\eqref{Hypo} can always be achieved by modifying the sequence of translators~$\left(g_i\right)_{i\geq0}$ by right multiplying by a sequence of  elements of~$Z_G(H)\cap\Gamma$, such modification leaving the sequence of probabilities~$\left(g_i\cdot\mu_\Omega\right)_{i\geq0}$ unchanged. 

This special case is enough to encompass the setting needed to apply~\cite{EMSGAFA} (and its~$S$-arithmetic generalisation~\cite{LemmaA}.)

The hypothesis~\eqref{RnD} is notably satisfied in the case where
\begin{equation}\label{centraliserless}
\text{$Z_G(H)$ is the centre of~$G$.}
\end{equation}

\subsection{Notations from Ratner theory}\label{secnotations}
%In order to state our result, we introduce some of the notations we will use later using Ratner's theory. 

A ``one parameter unipotent subgroup'' of~$G$ will mean a subgroup of the form~$U=\U(\Q_S)$ where~$\U$ is an algebraic subgroup of~$\G$ defined over~$\Q_S$ of dimension\footnote{The dimension for~$\Q_S$-algebraic groups is naturally a function on~$\mathbf{Spec}(\Q_S)$. The latter can be identified with~$S$. This is a constant function for an algebraic group which comes from~$\Q$. The group~$H$ is not assumed of constant dimension, as well as groups depending on it. It may furthermore contain an non-algebraic factor at the real place, at which place one considers the real analytic dimension.} at most one at every place in~$S$. 

For a subgroup~$L$ of~$G$ %(resp. an algebraic subgroup~$\L$ of~$\G$ defined over~$\Q$),
\begin{equation}\label{Notation Lp}
\parbox{.9\textwidth}
	{we denote~$L^+$ the subgroup generated by the one parameter unipotent algebraic subgroups of~$G$ contained in~$L$.}
\end{equation}
 (cf.~\cite[{\S}1.5 and Theorem~2.3.1]{Margulis}, \cite[{\S}6]{BorelTits} for interesting properties of such subgroups. See also~\cite{GilleKneserTits} for more actual questions. We recall some properties in Appendix~\ref{AppRatner}.) For a unipotent algebraic group~$\U$, the group~$U=\U(\Q_S)$ is covered by its one parameter subgroups~$\exp(\Q_S\cdot X)$ for~$X$ in the~$\Q_S$ Lie algebra of~$\U$ (cf. Lemma~\ref{Unipotent vs 1param}). We may drop~``one parameter'' in the definition~\eqref{Notation Lp} above.
 
We call\footnote{This terminology of Ratner class and type is non canonical, but idiosyncratic to here.
} the \emph{Ratner class}, the set~$\RatQ$ of 
\begin{equation}\label{defRatclass}
\text{subgroups~$\L$ of~$\G$ over~$\Q$ such that~$L^+$ is $\Q$-Zariski dense in~$L$.}
\end{equation}
This is the class~$\mathscr{F}$ of~\cite{TomanovOrbits}.
% The property~\eqref{defRatclass} can be rephrased as the property that any non trivial quotient~$L/P$ defined over~$\Q$ contains a non trivial unipotent element.
 We review this class of groups in the section~\ref{sectionB1} of the appendix. We will allow ourselves to say that~$L$ is in~$\RatQ$ if~$L=\L(\Q_S)$ with~$\L$ in~$\RatQ$.

We define a \emph{Ratner type} as a $\Gamma$-conjugacy class of groups of Ratner class. We denote~$[L]$
the Ratner type to which belongs a group~$L$ of Ratner class. This notion is pertinent in Ratner decomposition theorem, as in~\ref{ss section Ratner decomposition}.

  For any~$L$ in~$\RatQ$, we introduce in this work the notation~$\Lpp$ for the topological closure
\begin{subequations}
\begin{equation}\label{defi Lpp}
\Lpp=\overline{L^+\cdot(\Gamma\cap L(\Q_S))},\end{equation}
which is an open subgroup of finite index in~$L(\Q_S)$ (see~Proposition~\ref{++finiteindex}),
and let~
\begin{equation}\label{mu Lpp}
\text{$\mu_{\Lpp}$ be the~$\Lpp$-invariant probability measure on~$G/\Gamma$ supported on~$\Lpp\Gamma/\Gamma$.}
\end{equation}
\end{subequations}
Note that, though~$L^{+}$ does not depend on~$\Gamma$, the group~$\Lpp$ does. The probability~$\mu_{\Lpp}$ exists and is unique provided~$L$ belongs to~$\RatQ$. The support~$\Lpp\Gamma/\Gamma$ of~$\mu_{\Lpp}$ is also the orbit closure~$\overline{L^{+}\Gamma/\Gamma}$. 

It follows from Ratner's theorem that the unique~$L^+$-invariant probability measure on~$\Lpp\Gamma/\Gamma$ (resp. on~$L\Gamma/\Gamma$) is~$\mu_\Lpp$ (resp. and its finitely many translates~$l\mu_\Lpp$ by some element~$l$ on~$L$). It seems to be the only fact we use from Ratner theory, which rarely appears explicitly in this work.

\subsection{Notations from focusing}One of the important feature of our result, admittedly obscure and easily underestimated, is the focusing criterion, which is formulated in terms of the following groups.
Once a group~$L$ of Ratner class is fixed, we consider its normaliser~$N(L)$ in~$\G$, and the action of~$N(L)$ on the Lie algebra of~$L$.  We write
\begin{equation}\label{def GammaN}
\GammaN=\Gamma\cap N(L)
\end{equation}
We will call \emph{unitary normaliser} of~$L$ in~$G$ the subgroup~$\N^1(L)$ of~$\N(L)$ which is the kernel of the determinant of this action. We will write~$N$ for~$N^1(L)=\N^1(L)(\Q_S)$. We will also be working with the intermediate group~$N\leq N\GammaN\leq N(L)$, which stabilises each Haar measure on~$L$ (see Lemma~\ref{Lemma unimodular}).

We will use the abbreviations
\begin{subequations}\label{defLF}
\begin{align}
\M&:=\bigcap_{h\in H} h\cdot \L_{\Q_S}\cdot h^{-1}
	&M&:=\bigcap_{h\in H} h\cdot L\cdot h^{-1}=\M(\Q_S),
\\
M^\ddagger&:=\bigcap_{x\in HN} x\cdot \Lpp\cdot x^{-1}
&~\text{ and }~F&:=\bigcap_{h\in H} h\cdot N\cdot h^{-1}.
\end{align}
\end{subequations}
The subgroup\footnote{We used the~${}^\dagger$ to evoke that~$\Mpp$  depends on~$\Lpp$; but this is not the notation from~\eqref{defi Lpp} reserved to subgroup of Ratner class, to which may or may not belong~$M$, which may even not be definable over~$\Q$. Still,~$\Mpp$ being open of finite index in~$M$ it does contain~$M^+$. Though the notation~$\Mpp$ not consistent with~\eqref{defi Lpp}, no confusion shall arise as we will only apply~\eqref{defi Lpp} the later to a same group, designated thoughout by~$L$, and the notation~$\Mpp$ will be reserved for the associated group in~\eqref{defLF}.}~$\Mpp$ of~$M$ will turn out to be open of finite index (Lemma~\ref{Lemme M finite index}). 
The following easy observations might be useful while reading our main statement. Their proof is left to the reader.
\begin{lemma}\label{lemdebut}
\begin{enumerate}
\item The groups~$H$, and~$N(L)$, and~$F\leq N\leq N(L)$ normalise~$M$. \label{lem1}
\item We have the identity~$N\cap Z_G(H)=F\cap Z_G(H)$, where~\(Z_G(H)\) denotes the centraliser of~\(H\) in~\(G\). \label{lem2}
\end{enumerate}
\end{lemma}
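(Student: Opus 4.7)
The plan is to verify each claim directly from the defining intersection $M=\bigcap_{h\in H} hLh^{-1}$ and the analogous definition of $F$.

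For the first claim, I would handle $H$, $F$, $N$, and $N(L)$ in turn. The invariance under $H$ is the cleanest: for any $h_{0}\in H$, the bijection $H\to H$, $h\mapsto h_{0}h$, gives
\[
h_{0}Mh_{0}^{-1}=\bigcap_{h\in H}(h_{0}h)L(h_{0}h)^{-1}=\bigcap_{h'\in H}h'L(h')^{-1}=M,
\]
so $H$ normalises $M$. For $F$, the key observation is that $f\in F=\bigcap_{h\in H} hNh^{-1}$ means $h^{-1}fh\in N\leq N(L)$ for every $h\in H$; equivalently $f$ normalises each conjugate $hLh^{-1}$, and hence it normalises their intersection $M$. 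The asserted chain $F\leq N\leq N(L)$ is purely structural: one recovers $F\leq N$ by specialising to $h=e$ in the intersection defining $F$, and $N\leq N(L)$ holds by the very definition $N=\mathbf{N}^{1}(L)(\Q_{S})$ as the kernel inside $N(L)$ of the determinant character on $\lie{l}$. (The assertion that general elements of $N$ or $N(L)$ normalise $M$ needs an additional hypothesis on the interaction with $H$; in the uses below, one only needs the stability of $M$ under $H$ and under elements of $\bigcap_{h}hN(L)h^{-1}$, which contains $F$ and is what the text is shorthand for.)

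For the second claim, the inclusion $F\cap Z_{G}(H)\subseteq N\cap Z_{G}(H)$ is immediate from $F\leq N$. For the converse, let $g\in N\cap Z_{G}(H)$. Since $g$ commutes with every $h\in H$, we have $h^{-1}gh=g$ for all $h\in H$, so $g=h^{-1}gh\in N$, which rewrites as $g\in hNh^{-1}$. As this holds for every $h\in H$, we conclude $g\in\bigcap_{h\in H}hNh^{-1}=F$, and therefore $g\in F\cap Z_{G}(H)$.

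I do not anticipate serious obstacles: the whole lemma is a bookkeeping statement whose proof amounts to careful manipulation of the defining intersections. The only point that requires a moment of care is precisely the one above, namely that a general element of $N(L)$ need not normalise $M$; only those elements lying in $\bigcap_{h\in H}hN(L)h^{-1}$ do, and it is this subgroup (which contains $F$) that is really relevant in the arguments that follow in the linearisation procedure.
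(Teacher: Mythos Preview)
Your proof is correct, and since the paper explicitly leaves this lemma to the reader (``Their proof is left to the reader''), there is no authorial argument to compare against. Your direct manipulations of the defining intersections for both parts are exactly what is intended.

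Your caveat about $N(L)$ is well taken: the assertion that a general element of $N(L)$ normalises $M=\bigcap_{h\in H}hLh^{-1}$ is not obvious (and indeed dubious) without some compatibility between $N(L)$ and $H$. What is clearly true, and what you prove, is that $H$ and $F$ normalise $M$; this is precisely how the lemma is invoked later in the paper (``By Lemma~\ref{lemdebut}, the subgroups $H$ and $F$ of $G$ both normalise $M$''). Your reading of the clause ``$F\leq N\leq N(L)$'' as contextual bookkeeping rather than a claim that all of $N(L)$ normalises $M$ is the sensible one, and your explicit identification of the subgroup $\bigcap_{h\in H}hN(L)h^{-1}$ as the relevant normalising object is a useful clarification.
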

\noindent We also note that~$F$ is a normal subgroup of~$HF$, and that~$M$ and~$\Mpp$ are a normal
subgroup of~$F$ and of~$HF$. In its latest stages, our proof will introduce the subquotients~$\widehat{G}=HF/\Mpp$ and~$HF/M$ of~$G$.

\subsection{Some notations about asymptotics of sequences in groups}\label{notations O}
For convenience, let us denote~$e$ the neutral element of~$G$, and
 \begin{subequations}
\begin{equation}
\text{let $O(1)$ denote the class of bounded sequences in~$G$,}
\end{equation}
\begin{equation}
\text{and $o(1)$ be the class of sequences converging to~$e$.}
\end{equation}
For a subset~$A$ of~$G$, we will say
\begin{equation}
\text{a sequence~$(g_i)_{i\geq0}$ is \emph{of class~$O(1)A$}, resp.~is \emph{of class~$o(1)A$},}
\end{equation}
\end{subequations}
 as a shorthand to mean
that: there is a sequence~$(b_i)_{i\geq0}$ in~$G$ which is bounded, resp. converging to~$e$, and a sequence~$(a_i)_{i\geq0}$ in~$A$ such that for all~$i\geq0$ one has~$g_i=b_i\cdot a_i$.
If~$A$ is a closed subgroup, this is equivalent, for the sequence of cosets~$(g_iA)_{i\geq0}$ to be bounded, resp. converging to the neutral coset~$A$, in~$G/A$.
\section{Statements}\label{secthm}
\subsection{Main Theorem with comments}\label{sec ennonce thm}
Below is our main theorem. We use the notation and setting of section~{\S}\ref{secintro}, and make some direct reference to these notations inside the statement. Recall that~$o(1)$ stands for the class of sequences in~$G$ converging to the identity~$e$.

We summarise our setting. Let~$\G$ be a semisimple $\Q$-algebraic group, $S$ a finite set of places, $G=\G(\Q_S)$ (see~\eqref{notationQS}), $\Gamma$ an $S$-arithmetic lattice, $H$ a $\Q_S$-Lie subgroup whose Zariski closure in~$G$ is a Zariski connected and reductive in~$G$ (cf.~\eqref{H reductif dans}) and $\Omega$ a Zariski dense open bounded subset of~$H$. Consider the probability on~$\Omega$ which is the restriction of a Haar measure~$\mu$ on~$H$, and let~$\mu_\Omega$ be its direct image on~$G/\Gamma$ (as in~{\S}\ref{secmuomega}). Write~$Z_G(H)$ for the centraliser of~$H$ in~$G$.

\begin{theorem}[Local focusing of translated measures]\label{THEOREM}\label{Theorem} In the setting above, consider a sequence~$\left(g_i\cdot\mu_\Omega\right)_{i\geq0}$ of translates. Assume the sequence~$\left(g_i\right)_{i\geq0}$ in~$G$ satisfies the hypothesis~\eqref{Hypo} p.\pageref{Hypo}.
\begin{enumerate}
\item[]{\textbf{Tightness:}} Then the sequence~$\left(g_i\cdot\mu_\Omega\right)_{i\geq0}$ is \emph{tight}: it is relatively compact in the space of probabilities on~$G$; equivalently any weakly converging subsequence is tightly converging; equivalently any weak limit of a subsequence is a probability.
\end{enumerate}
Assume moreover that~$\left(g_i\cdot\mu_\Omega\right)_{i\geq0}$ converges weakly to non zero limit~$\mu_\infty$.%~\cite[Theorem~2.1, part~2. of conclusion]{LemmaA} (confer~\eqref{})
\begin{subequations}
\begin{enumerate}
\setcounter{enumi}{-1}
\item{\textbf{Ratner decomposition:}}\label{0 Ratner decomposition}
(We use the notations of \S~\ref{ss section Ratner decomposition}). Let~$W=\Stab(\mu_\infty)^+$. Then the $W$-ergodic components of~$\mu_\infty$ arise form a unique Ratner type~$[L_0]$, that is we have
\begin{equation}\label{equation thm Ratner decompo}
\mu_\infty={\mu_\infty}^{[L_0]},
\end{equation}
or equivalently: every $W$-ergodic component of~$\mu_\infty$ is a translate~$c\mu_{{L_0}^\ddagger}$ of~$\mu_{{L_0}^\ddagger}$.
\end{enumerate}
In addition, we can decompose~$\left(g_i\right)_{i\geq0}$ into finitely many subsequences such that, if we substi\-tu\-te~$\left(g_i\right)_{i\geq0}$ with anyone of these subsequences, then there exists a group~$L$ in the Ratner type~$[L_0]$, and, writing~$N,F,{\Lpp},M$ as in~{\S}\ref{secnotations}  (see~\eqref{defLF}) the following holds.
\begin{enumerate} 

\item{\textbf{Limit measure formula:}} \label{limit formula statement}
There exist~$g_\infty$ in~$G$ and~$n_\infty$ in~$N$ such that the limit probability measure~$\mu_\infty$ can be written as the convolution
\begin{equation}\label{limitformula}
\mu_\infty = g_\infty\cdot\int_{\omega\in\Omega} \left(\omega\cdot n_\infty\cdot\mu_{\Lpp}\right)~~\mu(\omega),
\end{equation}
and we may choose~$n_\infty$ is in the topological closure of~${\left(Z_G(H)\cap F\right)\cdot(\Gamma\cap N)}$ in~$N$.
\item{\textbf{Focusing criterion:}} 
\begin{enumerate}
\item \label{thm Fa}
The sequence~$(g_i)_{i\geq 0}$ is of class
\begin{equation}\label{focusingcriterion}
O(1)\cdot (Z_G(H)\cap F)\cdot M.
\end{equation}
%\addtocounter{enumii}{-1}
\item \label{thm Fa'}
Furthermore we may find a corresponding factorisation~$g_i=b_i\cdot z_i\cdot m_i$, with a bounded sequence~$(b_i)_{i\geq0}$ in~$G$,
and sequences~$(z_i)_{i\geq0}$ and~$(m_i)_{i\geq0}$ in~$Z_G(H)\cap F$ and~$\Mpp$,
such that 
\begin{equation*}
\text{the factor~$(z_i)_{i\geq0}$ is of class~$O(1)\GammaN\Lpp$.}
\end{equation*}
\item \label{thm Fb}
 For such factorisation and for any subsequence such that~$(g_i,z_i\GammaN\Lpp)_{i\geq0}$ converges in~$G\times (N\GammaN/\GammaN\Lpp)$, then, for some~$(g_\infty,n_\infty)$ in~$G\times N$, the limit is of the form~$(g_\infty,n_\infty\GammaN\Lpp)$ and formula~\eqref{limitformula} holds for such~$g_\infty$ and~$n_\infty$. %Such subsequence is of class
%and~$(b_i)_{i\geq0}$ are convergent in~$G/\GammaN$ and~$G$ respectively, and for any wriing the limit may be written~$n_\infty\Gamma_N$ and~$g_\infty$ with~$n_\infty$ in~$N$ and~$g_\infty$ in~$G$ such that 
%and furthermore, for a subsequence,~$f_i\Gamma_N$ converges to~$n_\infty\Gamma_N$ in~$G/\Gamma_N$ and~$b_i$ converges to~$g_\infty$.
%\begin{equation}\label{finefocusing}
%\text{$(g_i)_{i\geq0}$ is of class~$g_\infty\cdot o(1)\cdot n_\infty\cdot \Mpp$.}\end{equation}
\end{enumerate}
\end{enumerate}
\end{subequations}
\end{theorem}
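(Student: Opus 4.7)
The proof will follow the broad strategy of \cite{EMSAnn} adapted to the $S$-adic setting, and enhanced by the analytic stability machinery of \cite{Lemmanew,LemmaA}. First, tightness of the sequence $(g_i\cdot\mu_\Omega)_{i\geq 0}$ is essentially \cite[Theorem~1.3]{LemmaA}: hypothesis \eqref{Hypo} combined with the $S$-adic Kleinbock--Tomanov non-divergence estimates (which generalise Dani--Margulis) together with Mahler's criterion implies that no mass escapes to infinity. I would begin the proof proper by extracting a weakly convergent subsequence whose limit $\mu_\infty$ is, by tightness, a probability.

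Next, to get the Ratner decomposition \eqref{equation thm Ratner decompo}, I would let $W=\Stab(\mu_\infty)^+$ be generated by the one-parameter unipotent subgroups fixing $\mu_\infty$. Since $\mu_\infty$ arises as a weak limit of $H$-invariant pieces translated by $g_i$, and since $H$ is reductive in $G$, the measure $\mu_\infty$ has enough unipotent invariance so that $W$ is non-trivial. Ratner's measure classification theorem (in its $S$-arithmetic form, e.g.\ \cite{Rat,Ratnerp,MargulisTomanov,TomanovOrbits}) then gives that $W$-ergodic components are canonical probabilities $c\mu_{{L_0}^\ddagger}$ associated with a Ratner-class subgroup $L_0\in\RatQ$. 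After extracting a subsequence and using countability of Ratner types (and countability of $\Gamma$-orbits), one can arrange that a single Ratner type $[L_0]$ carries all of $\mu_\infty$.

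The main work is the \emph{focusing criterion} \eqref{thm Fa}--\eqref{thm Fb}, which I would derive via the Dani--Margulis linearisation method applied to the algebraic representations afforded by the choices of $L$ in the Ratner type. Concretely, pick $L\in[L_0]$; by linearising the action of $G$ on the top exterior power $\bigwedge^{\dim L}\lie{g}$ via a fixed rational vector $p_L$ pointing to the Lie algebra of $L$, one has that ``$g_i\cdot\omega$ sends $p_L$ close to an orbit of the normaliser'' precisely when $g_i\cdot\omega\cdot\Gamma$ concentrates near $\Lpp\Gamma/\Gamma$. By \eqref{AnS}, the map $\omega\mapsto g_i\cdot\omega\cdot p_L$ does not shrink $\Omega$-uniformly; combined with Dani--Margulis-type (and Kleinbock--Tomanov) $(C,\alpha)$-good estimates, this forces the existence, after extracting once more, of a factorisation $g_i=b_i\cdot z_i\cdot m_i$ with $b_i$ bounded, $m_i\in\Mpp$, and $z_i\in Z_G(H)\cap F$, because the part of $g_i$ transverse to $M$ is forced to centralise $H$ (in order to preserve $\Omega\cdot p_L$ in a common bounded set up to the $\Mpp$-action that preserves $p_L$). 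This is essentially the content of Lemma~\ref{lemdebut}\eqref{lem2}: obstructions to focusing lie precisely in $Z_G(H)\cap F$. The further refinement that $z_i$ may be taken of class $O(1)\GammaN\Lpp$ comes from applying the $S$-arithmetic Godement/Borel--Harish-Chandra argument to the arithmetic quotient $N\GammaN/\GammaN\Lpp$.

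Finally, the limit formula \eqref{limitformula} is obtained by passing to the limit in the factorisation $g_i=b_i\cdot z_i\cdot m_i$: the $m_i\in\Mpp$ factor is absorbed by the right-invariance of $\mu_{\Lpp}$ (since $\Mpp\subseteq\Lpp$), the $b_i$ factor converges up to subsequence to $g_\infty$, the $z_i$ converges modulo $\GammaN\Lpp$ to an element $n_\infty\in N$ (lying in the closure of $(Z_G(H)\cap F)\cdot(\Gamma\cap N)$ by the focusing structure), and one writes out the integral over $\Omega$ using right-translation by $\omega$ of $\mu_{\Lpp}$ and the fact that $H$ normalises $M$, hence acts on $\Lpp\Gamma/\Gamma$ as a family of translations recombining the different ergodic components. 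The uniqueness up to subsequence of $[L_0]$ combined with the uniqueness of the Haar measure on $\Lpp$ closes the argument.

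The hardest step will be the focusing criterion \eqref{thm Fa}--\eqref{thm Fb}: standard linearisation presumes $H$ is $\Q$-rational and $Z_G(H)$ is $\Q$-anisotropic, whereas here $H$ need not be defined over $\Q$ and $Z_G(H)$ may be isotropic. The rescue is precisely the analytic stability hypothesis \eqref{Hypo}, which substitutes for the arithmetic non-escape arguments one would otherwise need; marrying this with Ratner-type linearisation in the $S$-adic setting (following \cite{KT,TomanovOrbits,MargulisTomanov}) is the technical heart of the proof and will occupy the bulk of subsequent sections.
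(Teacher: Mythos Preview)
Your broad outline matches the paper's strategy, but two of your steps contain genuine gaps that would not close as written.

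\textbf{Uniqueness of Ratner type.} You write that ``after extracting a subsequence and using countability of Ratner types \ldots\ one can arrange that a single Ratner type $[L_0]$ carries all of $\mu_\infty$.'' This cannot work: once $\mu_\infty$ is fixed, its Ratner decomposition $\mu_\infty=\sum_{[L]}\mu_\infty^{[L]}$ is fixed; extracting subsequences of $(g_i)$ does not alter it. The paper's argument is substantive: one picks $L_0$ of \emph{minimal} dimension with $\mu_\infty^{[L_0]}\neq 0$, applies the linearisation Proposition~\ref{AProp313} to get $g_i\Omega\gamma_i p_{L_0}\subset\Phi$, and then uses analytic stability \eqref{Hypo} (via \cite[Theorem~2]{Lemmanew}) to deduce that $(g_i)$ is of class $O(1)\cdot F$. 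This forces $\overline{\Omega}\cdot p_L\subseteq D\subseteq A_L$, hence $\overline{\Omega}N\subseteq X(L,W)$, hence $\mathrm{Supp}(\mu_\infty)\subseteq X([L],W)$, which together with minimality gives $\mu_\infty=\mu_\infty^{[L]}$. Your sketch also asserts $W$ is non-trivial because $\mu_\infty$ ``has enough unipotent invariance''; this is neither claimed nor needed (and not obviously true).

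\textbf{Focusing criterion.} Your explanation that the transverse part of $g_i$ ``is forced to centralise $H$ in order to preserve $\Omega\cdot p_L$'' only yields $(g_i)\in O(1)\cdot F$, not the sharper $O(1)\cdot(Z_G(H)\cap F)\cdot M$. The paper obtains the refinement by a different mechanism: after lifting to $G/\GammaN$ and passing to the subquotient $\widehat{G}=HF/\Mpp$ acting on $X\subseteq G/(\GammaN\Lpp)$, one argues \emph{by contradiction}. If $(\widehat{g}_i)$ were not of class $O(1)\cdot Z_{\widehat{G}}(\widehat{\lie{h}})$, then Proposition~\ref{Propobddcrit} produces $X_i\to 0$ in $\widehat{\lie{h}}$ with $\Ad_{\widehat{g}_i}(X_i)\to X_\infty\neq 0$ nilpotent, and Proposition~\ref{unipotescence} shows $\exp(\lambda X_\infty)$ fixes $\widehat{\mu}_\infty$; this contradicts Lemma~\ref{Lemme unipotent contradiction}, which says the image of $\Stab(\widehat{\mu}_\infty)$ in $\widecheck{G}$ has trivial $(\cdot)^+$, a consequence of the \emph{maximality} of $W$. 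This ``creating unipotent invariance'' step is the heart of the proof and is absent from your sketch.

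Finally, the boundedness of $z_i$ modulo $\GammaN\Lpp$ does not come from a Godement/Borel--Harish-Chandra argument as you suggest; it is forced by the fact that $\widehat{\mu}_\infty$ is a probability (Lemma~\ref{lemme trivial dyn}: if $z_i\GammaN\Lpp$ diverged, the limit would be zero).
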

\subsubsection{}
The formula~\eqref{limitformula} is closely related to the desintegration formula of~$\mu_\infty$ into ergodic components (cf.~\cite[\S6.1 Th.~6.2]{EinsiedlerGTM}). The ergodic components of~$\mu_\infty$ will be proved to be the integrand~$g_\infty\omega n_\infty\mu_\Lpp$ (for~$\omega$ in~$\Omega$ outside a negligible subset). To be fair, it is a loose ergodic decomposition as, in the given form, the same ergodic component may possibly occur for several elements~$\omega$ in~$\Omega$. Knowing that formula~\eqref{limitformula} is an ergodic desintegration formula implies retrospectively the statement~\ref{0 Ratner decomposition} about Ratner decomposition, and furthermore that~$[L_0]=[L]$.

\subsubsection{}
We can reformulate~\eqref{limitformula} in terms of test functions as follows. For any continuous function~$f:G/\Gamma\to\R$ with compact support (a posteriori, boundedness of~$f$ will suffice), its integral against~$\mu_\infty$ can be computed via the double integral
$$\int_{x\in G/\Gamma} f(x)~~\mu_\infty(x)=\int_{\omega\in\Omega} \left(\int_{y\in G/\Gamma} f\left({g_\infty}\cdot\omega\cdot n_\infty\cdot y\right)~~\mu_{\Lpp}(y)\right)~~\mu(\omega).
$$
The formula~\eqref{limitformula} can also be abbreviated as
\[
\mu_\infty=g_\infty\cdot\mu\restriction_\Omega\convolution n_\infty\cdot \mu_{\Lpp}.
\] in terms of the convolution product~$\convolution$ which is as follows.
The action of~$G$ on~$G/\Gamma$ induces an convolution action~$\convolution$ of the space~$\Prob(G)$ of probabilities on~$G$ on the one~$\Prob(G/\Gamma)$ of~$G/\Gamma$, which is such that~$\alpha\convolution\beta$ depends continuously and affinely upon~$\alpha$ (and upon~$\beta$) and such that for an element~$g$ of~$G$, one has~$\delta_g\convolution\beta=g\cdot \beta$.

Note that on the right hand side, the integral is against $\mu|_\Omega$ (in~\(G\)) and \emph{not} against~$\mu_\Omega$ (in~\(G/\Gamma\)). The latter would not make sense because~$L$ is not necessarily normalised by~$\Omega$.
 Let us note that in the context of~\cite{EMSAnn}, the fact that~$L$ is normalised by~$\Omega$ is a key technical fact in the proof; this is the subject of \cite{EMSCorr}. Our more general context and our method do not rely on this.

\subsubsection{}
We also note as a consequence of formula~\eqref{limitformula} that~$\mu_\infty$ is a probability: it has mass one; there is no escape of mass at infinity. Thus formula~\eqref{limitformula} implies the ``Tightness'' statement. Actually this follows from hypothesis~\eqref{Hypo} as proved in the generalisation~\cite{LemmaA} of~\cite{EMSGAFA}. The hypothesis that~$\mu_\infty$ is non zero is even superfluous. Thus  our statement  includes the main results of~\cite{LemmaA}, and of~\cite{EMSGAFA} (see~{\S}\ref{RemarkcasGAFA}).

%\subsubsection{}
%Formula~\eqref{finefocusing} is to be understood as a convergence in the quotient~$G/\Mpp$: the image sequence~$\left(g_i\cdot M\right)_{i\geq0}$ of cosets in~$G/M$ is such that 
%$$\lim_{i\to\infty}g_i\cdot \Mpp \to g_\infty\cdot n_\infty\cdot \Mpp\text{~in~}G/\Mpp.$$

\subsubsection{}
We insist that in~\eqref{limitformula}, neither~$g_\infty$, nor~$n_\infty$, nor even~$L$ are in general determined by~$\mu_\infty$. For example, for any $n\in H'\cap N$, one can replace the pair $(g_\infty,n_\infty)$ by $(g_\infty n,n^{-1}n_\infty)$. Nevertheless, the Ratner type~$[L_0]$ can be defined canonically in terms of~$\mu_\infty$, although implicitly (cf. the begining of the proof, notably footnote~\ref{anticipating} at p.\pageref{anticipating}). %Furthermore, given~$\left(g_i\right)_{i\geq0}$, there is a finite subset of this type in which one can find~$L$.

%\subsubsection{}
%The class~\eqref{finefocusing} describes the ``typical" behaviour of~$(g_i)_{i\geq0}$ in the following %sense: nested inside any infinite subsequence~$(a_i)_{i\geq 0}$ of~$(g_i)_{i\geq0}$ one can find an %infinite subsequence which is of class~\eqref{finefocusing}, although maybe for another triple~$(L,g_
%\infty,n_\infty)$ describing the same measure~$\mu_\infty$ via formula~\eqref{limitformula}. Indeed one %can apply Theorem~\ref{Theorem} to the subsequence~$(a_i)_{i\geq 0}$ instead of~$(g_i)_{i\geq0}$. 

\subsubsection{} Let us discuss some logical relations between the various assertions contained in the statement. We already discussed that knowing that the formula~\eqref{limitformula} is an $W$-ergodic decomposition encompasses statement~\eqref{0 Ratner decomposition}.
Assertion~\eqref{thm Fb} explicitly states that itself contains the limit formula statement~\eqref{limit formula statement}. The statement~\eqref{thm Fa'} is actually a consequence of~\eqref{thm Fa}, once we know that the limit~$\mu_\infty$ is non zero. If one knows, which is true, that\footnote{Actually, it suffices to know that the limit measure~$\mu_\infty$ in~$G/\GammaN$ is composed of translates of~$\mu_\Lpp$, and that knowledge is already contained in the analogue, which is true too, of statement~\eqref{0 Ratner decomposition} in~$G/\GammaN$.}  the statement~\eqref{limit formula statement} can actually be written in~$G/\GammaN$ rather than merely~$G/\Gamma$, then we can deduce~\eqref{thm Fb} from~\eqref{thm Fa}. That is indeed how our proof proceeds.

\subsubsection{} The technical assumption~\eqref{Hypo} of ``analytic stability'' is crucial to us to single out some (finitely many) subgroups~$L$ in the Ratner type~$[L_0]$, and thus to give sense to the Focusing criterion. 
Without the hypothesis~\eqref{Hypo}, any hypothetical statement would be very messy.

%A tentative general statement bypassing hypothesis~\eqref{Hypo} may provide a less clean (or messier) %Focusing criterion.

%A word about convergence. We will use \emph{weak convergence} of measures. We understand by weak convergence the pointwise convergence as functionals on the space of continuous functions with compact support (which is actually a kind of~\emph{weak-$\convolution$} convergence from the viewpoint of functional analysis). One should note that, as far as we are interested with probability measures converging to probability measures, tight, vague, and weak convergence\footnote{Test functions are respectively: continuous and bounded (tight convergence); continuous and vanishing at infinity (vague convergence); continuous with compact support (weak convergence).} are equivalent. In above Theorem~\ref{Theorem}, the limit measure~\eqref{limitformula} is indeed a probability measure.

\subsection{Context}\label{seccontext}
We compare our statement to \cite{EMSAnn,EMSGAFA}. 
Theorem~1.7 and Corollary~1.13 in \cite{EMSAnn} concern themselves with limits of translates of a globally $H$-invariant probability measure supported on $H\Gamma/\Gamma$, in the case $\Q_S=\R$, such that~$H$ is defined over~$\Q$ and under the non-divergence hypothesis, i.e. the limit is a probability measure. The latter is intimately related to the arithmetic stability~\eqref{ArS}.

Every application to the counting of integral points on varieties met in~\cite{EMSAnn} and the results in~\cite{EMSGAFA} assumes furthermore that $H'$ is defined over $\Q$ and is $\Q$-anisotropic, named~\eqref{RnD} in \S2.4. As noted there, we can assume the hypothesis~\eqref{Hypo} in that case, and our statement generalises previous results: to the $S$-arithmetic setting; to a piece of measure $\mu_\Omega$ on $H\Gamma/\Gamma$; to a non-necessarily $\Q$-defined $H$ and finally to a non-necessarily closed orbit $H\Gamma/\Gamma$ (in order to recover statements about full closed orbits form statement from a piece of orbit we refer to the technique in~\cite{LemmaA}).

However, without~\eqref{RnD}, the relation is not that simple. The hypothesis~\eqref{Hypo} is not vacuous. It implies non-divergence by \cite{LemmaA}, but not the other way around (consider a divergent sequence~$(g_i)_{i\geq0}$ in~$H^\prime\cap \Gamma$). A side effect of assuming the stronger analytic stability is that our focusing criterion is also stronger than the corresponding~\cite[Corollary~1.13]{EMSAnn}. Nonetheless, there are examples where~\eqref{Hypo} can not be satisfied, yet convergence to a probability measure occurs. We recall that our method does not require the additional assumptions of rationality of~$H^\prime$ and even closedness of the $H$-orbit where~$\mu_\Omega$ is supported.

We note also that~\cite[Theorem 1.7]{EMSAnn} concludes that $\mu_\infty$ is homogeneous, whereas our classification seems more complicated: $\mu_{\infty}$ is described as an integral of homogeneous measures. However, when $H$ is defined over $\Q$, so is the group~$\bigcap_{\omega\in\Omega} \omega\cdot L\cdot \omega^{-1}$, and one should actually replace~$L$ with the $\Q$-Zariski closure of~$\bigcap_{\omega\in\Omega} \omega\cdot L^+\cdot\omega^{-1}$. The latter is normalised by~$\Omega$, hence by $H$, and we retrieve~\cite[Remark~4.1]{EMSAnn}.

As for the proof of Theorem~\ref{Theorem}, it is similar to~\cite{EMSAnn}, with one fundamental difference: the lack of rationality led us to introduce analytic stability to take further advantage of the linearisation techniques.

\subsection{Complements to the Main Theorem.}
\subsubsection{Rational case.} Let us discuss some simplifications which can be made when~$H$ is rational over~$\Q$. This case should be relevant to many applications.

\begin{proposition} In the context of Theorem~\ref{Theorem}, assume that the Zariski closure of~$H$ is definable over~$\Q$.
Then we may assume that~$L$ is normalised by~$H$, and hence so is~$N$: we have~$L=M$ and~$F=N$. The algebraic groups~$L$, and hence~$F$,~$N$ and $F$ are rational over~$\Q$. %one has~$\Mpp=\Lpp$.
%The focusing criterion becomes~$O(1) Z_G\capL$.
\end{proposition}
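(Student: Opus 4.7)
The plan is to prove that in the rational case, the group $M = \bigcap_{h \in H} h L h^{-1}$ from~\eqref{defLF} coincides with $L$. This immediately forces $F = N$, the $H$-normalization of $N$, and the $\Q$-rationality of all four groups.

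First, I would establish that $M$ is a $\Q$-algebraic subgroup. For fixed $g \in G$, the condition $h^{-1} g h \in L$ is Zariski-closed in the variable $h$ since $L$ is Zariski-closed. Hence $g \in h L h^{-1}$ for every $h \in H$ if and only if the same holds for every $h$ in the $\Q$-Zariski closure $\H$ of $H$. Under our rationality hypothesis, $\H$ is defined over $\Q$, so
\[
M \;=\; \bigcap_{h \in \H(\Q_S)} h L h^{-1}
\]
is an intersection (finite, by Noetherianity) of $\Q$-algebraic subgroups, hence is itself defined over $\Q$. By construction, $M$ is normalized by $\H$, and in particular by $H$.

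Next, starting from the limit measure formula of Theorem~\ref{Theorem}\eqref{limit formula statement},
\[
\mu_\infty \;=\; g_\infty \int_\Omega \omega \, n_\infty \, \mu_{\Lpp} \, d\mu(\omega),
\]
I would rewrite $\mu_\infty$ in terms of $M$ by disintegrating $\mu_{\Lpp}$ along its $\Mpp$-ergodic components. Because $H$ normalises $M$, conjugation by $\omega \in \Omega$ acts transversally on the quotient $\Lpp/\Mpp$ in a controlled manner, and this transversal variation can be absorbed into the outer integration over $\Omega$. The resulting expression writes $\mu_\infty$ as an integral of translates of $\mu_{\Mpp}$; in particular $M$ belongs to the Ratner class $\RatQ$ (the validity of the formula forces $M^+$ to be $\Q$-Zariski dense in $M$) and has the same Ratner type as $L$.

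Uniqueness of the Ratner type in Theorem~\ref{Theorem}\eqref{0 Ratner decomposition} now yields $[M] = [L_0] = [L]$, so $M$ is $\Gamma$-conjugate to $L$; in particular $\dim M = \dim L$. Combined with the inclusion $M \subseteq L$ and the Zariski connectedness of $L$ (required in the definition of $\RatQ$), this forces $M = L$. Once $L = M$ is normalised by $H$, its unitary normaliser $\N^1(L)(\Q_S) = N$ is also normalised by $H$, so $F = \bigcap_{h \in H} h N h^{-1} = N$. The $\Q$-rationality of $L$ propagates to $N(L)$, $N$, and $F$ by the standard algebraic group operations (normaliser of a $\Q$-group, kernel of a $\Q$-character, intersection of $\Q$-groups).

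The main obstacle is the disintegration argument of the second paragraph: verifying that the rewritten formula with $\Mpp$ in place of $\Lpp$ is indeed a valid expression for $\mu_\infty$, and concluding thereby that $M \in \RatQ$ and shares the Ratner type of $L$. This requires checking that the extra integration parameter along $\Lpp/\Mpp$ can be reparameterised through $\omega \in \Omega$, which is where the $\Q$-rationality of $\H$ and the $H$-equivariance of the quotient $L \to L/M$ enter crucially.
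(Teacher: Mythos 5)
Your proof plan contains a genuine gap, and I think it stems from a misreading of the statement. The proposition says ``we \emph{may assume} that $L$ is normalised by $H$,'' which means: the conclusions of Theorem~\ref{Theorem} still hold after replacing $L$ by a (possibly different) subgroup enjoying these extra properties. It does \emph{not} assert that the group $L$ furnished by the main theorem already equals $M = \bigcap_{h\in H} hLh^{-1}$, and in general it does not.

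The critical error is in the step ``$M$ belongs to the Ratner class $\RatQ$ \dots\ hence $[M]=[L]$ and $\dim M = \dim L$, so $M=L$.'' You are right that $M$ is $\Q$-rational when $\H$ is (it is a finite intersection of $\Q$-conjugates of $L$ by Noetherianity), but being $\Q$-rational is not enough to belong to $\RatQ$: one also needs the Levi factors of $M$ to be semisimple and none of the $\Q$-quasi-factors to be $\R\times\Q_S$-anisotropic. As $M$ is typically a proper subgroup of $L$, it can acquire a non-unipotent radical or an anisotropic quasi-factor even when $L$ itself is in $\RatQ$. The ``disintegration of $\mu_{\Lpp}$ along $\Mpp$-ergodic components'' that you invoke does not produce the needed Zariski density of $M^+$ in $M$, and you flag it yourself as the main obstacle — but there is no way around it, because the assertion being aimed for ($M\in\RatQ$ and $M=L$) is simply false in general. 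The paper's proof sidesteps exactly this issue: it defines $L' := \overline{M^+}^{\mathrm{Zar}/\Q}$, which is in $\RatQ$ \emph{by construction} (since $(L')^+ = M^+$ is tautologically Zariski dense in $L'$), shows that $H$ and $F^0$ normalise $L'$ via weak approximation, and then re-derives the focusing criterion and the limit-formula \emph{with respect to $L'$}. In particular $M'=L'$ and $F'=N'$ hold for the new group, which is exactly what the proposition promises, without ever claiming that the original $L$ equals $M$.

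To repair your plan, replace $M$ by $L'=\overline{M^+}^{\mathrm{Zar}/\Q}$ throughout, drop the appeal to the Ratner-type uniqueness for $M$, and instead carry the focusing criterion $O(1)(Z_G(H)\cap F)M$ over to $O(1)(Z_G(H)\cap N(L'))M'$. This requires the Levi-decomposition bookkeeping the paper performs: decomposing the reductive part of $M$ into its centre $C$, the $\Q_S$-anisotropic part $K$, and the $\Q_S$-isotropic part $I$; noting $C\leq Z_G(H)\cap F^0$ and $K$ is compact; and absorbing $C$ and $K$ into the centralising and bounded factors of the focusing decomposition. This is the step your second paragraph was trying to shortcut via ergodic disintegration, but the right tool is the group-theoretic decomposition, not measure theory.
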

To be precise, we derive a similar focusing criterion and a limit formula with respect to a group~$L'$ such as in the proposition.
\begin{proof} Recall that~$L$ is definable over~$\Q$. Hence so are~$N$ and~$N(L)$.
Let~$H^\alg$ be denote the Zariski closure of~$H$ in~$G$ assumed to be definable over~$\Q$. Then~$Z_G(H)=Z_{H^\alg}(G)$ is definable over~$\Q$. Because both~$H$ and~$L$ are definable over~$\Q$, the groups~$M$ and~$F$, by construction, will be definable over~$\Q$.

Let us consider~$L'$ to be the Zariski closure over~$\Q$ of~$M^+$. By construction this subgroup invariant under any $\Q$-rational algebraic automorphism of~$M^+$. In particular under the conjugation action of~$H(\Q)$ and~$F(\Q)$. By weak approximation, the Zariski neutral component of~$H$, which is~$H$ by assumption, and~$F^0$ of~$F$ normalise~$L'$. 

Let~$M'$, $N'$ and~$F'$  denote the groups associated with~$L'$ (and~$H$) in the same manner that~$M$, $N$, $F$ are derived from~$L$.
As~$H$ normalises~$L'$, we have~$M'=L'$ and~$F'=N'$. 
We will prove~$L'=L$, which will achieve our goal.

\paragraph{The focusing criterion} We are in the context of~Theorem~\ref{Theorem}, and use the focusing criterion with respect to~$L$, that is that~$(g_i)_{i\geq0}$ is of class
\[
O(1)(Z_G(H)\cap F)M.
\]
We write~$g_i=b_i z_i m_i$, for~$i\geq0$, a respective factorisation of the~$g_i$, with~$(b_i)_{i\geq 0}$ a bounded sequence in~$G$, with~$z_i$ in~$(Z_G(H)\cap F)$ and~$m_i$ in~$M$.

Our first step will be to deduce the analogue criterion with respect to~$L'$, in the form
\begin{equation}\label{focusing L'}
\text{$(g_i)_{i\geq0}$ is of class~$O(1)(Z_G(H)\cap N(L'))M'$.}
\end{equation}

We noted that~$F^0$ normalises~$L'$, hence~$F^0\leq N(L')$. The group~$((Z_G(H)\cap F)M)^0\leq (Z_G(H)^0\cap F^0)M^0$ is of finite index in~$(Z_G(H)\cap F)M$, hence~$(g_i)_{i\geq0}$ is of class
\[
O(1)(Z_G(H)\cap F^0)M^0.
\]

We work with the sequence of cosets
\[
g_i L=b_i z_i m_iL\in G/L.
\]
The group~$H$ is reductive, definable over~$\Q$, and acts by automorphisms on~$M$, which is definable over~$\Q$. It follows that there is a~$H$-invariant (semi-direct) Levi decomposition~$M=RU$ definable over~$\Q$, with~$R$ an~$H$-invariant maximal reductive subgroup of~$M$ and~$U$ the unipotent radical of~$M$ (\cite[\S7 Cor. to Th.~7.1]{Mostow56}). Again we can find a~$H$ invariant Levi decomposition~$L'=R'U$ of~$L'$. Possibly replacing~$R$ by~$R'K$, we may assume~$R'\leq R$. The group~$R$ is reductive and has a decomposition, definable over~$\Q$, as~$CKI$ where~$C$ is its centre, where~$KI$ is the semisimple part, with~$K$ (resp.~$I$) generated by the~$\Q_S$-anisotropic (resp. $\Q_S$-isotropic)~$\Q$-quasi factors of~$KI$. This decomposition is invariant under~$\Q_S$-algebraic automorphisms, and is in particular~$H$-invariant. As~$IU$ is generated, over~$\Q$, by~$\Q_S$-rational unipotents, we have~$IU\leq L'$. (This is actually an equality.)

Let us decompose
\[
m_iL=c_i k_i L. 
\]
We have
\[
g_i L=b_i z_i m_iL= b_i z_i c_i k_i L
\]
Let us prove the claim that we have an inclusion
\begin{equation}
C\leq Z_G(H)\cap F^0
\end{equation}
\begin{proof} We have~$C\leq M\leq L$, and as~$L$ is unimodular,~$L\leq N$. Finally~$C\leq N$.

As the Zariski connected group~$H$ normalises~$C$ and the~$\Q_S$-algebraic automorphism group of the torus~$C$ is disconnected, we get that~$H$ centralises~$C$, that is~$C\leq Z_G(H)$.

We proved
\(
C\leq N\cap Z_G(H)=F\cap Z_G(H).
\)
\end{proof}
Substituting~$z_i$ with~$z_ic_i$ we may assume the~$c_i$ are all the neutral element. 

The subgroup~$KL\leq M$ is invariant under algebraic automorphisms: it is the intersection of the kernels of the characters definable over~$\overline{\Q}$. We recall that~$Z_G(H)\cap F\leq F$ normalises~$M$. It follows, in which we write~$[g]=gL'$ the image in the group~$(Z_G(H)\cap F)L'/L'$ of an element~$g$ in~$(Z_G(H)\cap F)L'$,
\[
z_i k_iL'=[z_i][k_i]=[z_i] [k_i] [z_i^{-1}][z_i]= [k'_i][z_i]=(k'_i L')(z_iL')=k'_i z_iL'
\]
with~$k'_i$ in~$K$. The sequence of the~$k'_i$ is hence bounded.
In
\[
g_i L'= b_i k'_i z_i L'
\]
substituting~$b_i$ with~$b_i k'_i$ we may assume that the~$k'_i$ are the neutral element. Finally
\[
g_i L'=b_i z_i L'.
\]
which implies the focusing criterion~\eqref{focusing L'} with respect to~$L'$.
\paragraph{The limit measure} We now jump to the latter stages on the main proof and indicate some adaptations to our situation. We can assume the factor~$b_i$ of~$g_i$ is trivial. Then~$\Omega$ and the~$g_i$, by the focusing criterion for~$L'$,  belongs to~$HFL'$, which is a group as~$H$ normalises~$F$ and both normalise~$L'=M'$; this is even a subgroup of the~$\Q$-group~$N(L')$.

The proof of the main theorem establishes a limit
\begin{equation}\label{cvg twist}
\lim\left(g_i\widetilde{\mu}_\Omega\right)_{i\geq0}=\widetilde{\mu}_\infty
\end{equation}
in~$G/\GammaN$. The convergence~\eqref{cvg twist} actually occurs in the closed subset~$N(L')/(N(L')\cap\GammaN)$. 
\newcommand{\GammaNp}{\Gamma_{N(L')}}
Let~$\Lambda=\GammaN\cap N(L')$, which normalises~${L'}^{\ddagger}$. We thus have convergence of the image measures, say
\[
\lim\left(g_i\widetilde{\mu}'_\Omega\right)_{i\geq0}=\widetilde{\mu}'_\infty
\]
in~$N(L')/\Lambda$.

We know that~$\mu_\infty$ is the direct image of~$\widehat{\mu}_\infty$, which is a quotient measure of a right~$\Lpp$-invariant measure~${\widetilde{\mu}_\infty}^\sharp$ on~$G$. This~${\widetilde{\mu}_\infty}^\sharp$ is a fortiori a right~${L'}^{\ddagger}$ invariant measure. As~$N(L')$ contains~$L'$, the restriction of~${\widetilde{\mu}_\infty}^\sharp$ to~$N(L')$ is also right~${L'}^{\ddagger}$ invariant, whose quotient by a suitably normalised Haar measure on~$\Lambda$ gives~$\widetilde{\mu}_\infty$. It follows that~$\widetilde{\mu}_\infty$ is determined by its image in~$G/{\Lambda\L'}^{\ddagger}$, which is contained in~$N(L')/{\Lambda\L'}^{\ddagger}$. Let
\begin{equation}\label{cvg twist}
\lim\left(g_i\widehat{\mu}'_\Omega\right)_{i\geq0}=\widehat{\mu}'_\infty
\end{equation}
the convergence of images measures in the quotient group~$N(L')/{\Lambda\L'}^{\ddagger}$. The element~$g_i$ acts through the coset~$g_i{\L'}^{\ddagger}=z_i\cdot{\L'}^{\ddagger}$, which reduces to the action of elements~$z_i$ centralising~$\Omega$.
The situation is dynamically trivial and we can finish the proof as for the main Theorem to deduce~$\widehat{\mu}'_\infty$ and then~$\widetilde{\mu}_\infty$ and~$\mu_\infty$. We obtain a formula as in~\eqref{limitformula} but with respect to~$L'$.
\end{proof}

\subsubsection{Addenda from the proof: the lifted convergence.} 
The way our proof proceeds (see {\S}\ref{subsection-proof-structure}) allows us to state finer properties on the way~$(g_i\mu_\Omega)_{i\geq0}$
does converge to~$\mu_\infty$. Actually, writing
\[
\GammaN=\Gamma\cap N(L),
\]
all convergence (and the ergodic decomposition) occur at the higher~``level''~$G/\GammaN$.

Moreover in this statement we give a precise meaning to the property that equidistribution of
homogeneous sets is ``inner'': from the inside to the outside. In the typical case, one expects the 
support of measures in an equidistributing sequence to be eventually contained in the support
of the limit probability.  In the general case, one has to consider we may also be translating
by a convergent sequence: the support of the measures in an equidistributing sequence are
contained in the support of a closer and closer translate of the limit probability. Here, roughly speaking, 
the equidistribution phenomenon actually occurs inside right~$\Lpp$-orbits. Passing to 
the right quotient by~$\Lpp$, we are left with little dynamic: that of a bounded sequence of
translators.

\begin{proposition}[Addenda to~Theorem~\ref{Theorem}] Let us consider the situation of~\eqref{limit formula statement} in the Theorem~\ref{Theorem}, once~$(g_i)_{i\geq0}$ is  substituted with one of the finitely many considered subsequences. We can conclude moreover, 
%\begin{enumerate}
%\begin{subequations}
%\item \label{stat1} 
denoting~$\widetilde{\mu}_\Omega$ the direct image of~$\mu|_\Omega$ into~$G/\GammaN$, the sequence of translated measures~$g_i\cdot\widetilde{\mu}_\Omega$ has limit
\begin{equation}\label{limitformulatilde1}
\widetilde{\mu}_\infty := g_\infty\cdot\int_{\omega\in\Omega} \left(\omega\cdot n_\infty\cdot\widetilde{\mu}_{\Lpp}\right)~~\mu(\omega),
\end{equation}
in the space of probabilities on~$G/\GammaN$, where~$\widetilde{\mu}_{\Lpp}$ is the left (and right)~$\Lpp$-invariant probability on~$G/\Gamma$ supported on~$\Lpp\GammaN/\GammaN$.

%\item \label{stat2} the sequence~$(\mu_i)_{i\geq 0}$ of ``corresponding right-$\Lpp$ invariant'' measures
%\begin{equation}
%\mu_i:= g_i\cdot\int_{\omega\in\Omega} \left(\omega \cdot\widetilde{\mu}_{\Lpp}\right)~~\mu(\omega)
%\end{equation}
%has as a limit the measure~$\widetilde{\mu}_\infty$ defined by~\eqref{limitformulatilde1}.

%\item \label{stat3} Let~$\widehat{\mu}_\Omega$ be the 	for every~$i$, let~$\widehat{\mu}_i$ be the direct image of~$into~$G/(\Lpp\GammaN)$, and the sequence~$(\widehat{\mu}_i)_{i\geq 0}$ has limit
%\begin{equation}\label{limitformulatilde2}
%\widehat{\mu}_\infty = g_\infty\cdot\int_{\omega\in\Omega} \left(\omega\cdot n_\infty \cdot \delta_{\Lpp\GammaN}\right)~~\mu(\omega).
%\end{equation}
%\end{subequations}
%\end{enumerate}
\end{proposition}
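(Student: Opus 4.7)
The plan is to establish~\eqref{limitformulatilde1} by recognising that the arguments proving Theorem~\ref{Theorem} actually take place naturally at the level $G/\GammaN$, with the passage to $G/\Gamma$ being merely the final pushforward. Let $p : G/\GammaN \to G/\Gamma$ denote the natural projection. Since $\Lpp \subseteq L \subseteq N(L)$, one has $\GammaN \cap \Lpp = \Gamma \cap \Lpp$, so the $\Lpp$-invariant probability $\widetilde{\mu}_{\Lpp}$ on $\Lpp \GammaN / \GammaN \subseteq G/\GammaN$ is well-defined and satisfies $p_* \widetilde{\mu}_{\Lpp} = \mu_{\Lpp}$. Consequently the candidate limit $\widetilde{\mu}_\infty$ of~\eqref{limitformulatilde1} projects under $p_*$ to $\mu_\infty$, and $p_*(g_i \cdot \widetilde{\mu}_\Omega) = g_i \cdot \mu_\Omega$, so~\eqref{limitformulatilde1} is a coherent lift of~\eqref{limitformula}.

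The first step is to establish tightness of the family $(g_i \cdot \widetilde{\mu}_\Omega)_{i \geq 0}$ on $G/\GammaN$. Since $\GammaN$ is an $S$-arithmetic subgroup of the $\Q$-group $N(L)$, the non-divergence framework of~\cite{LemmaA} applies verbatim with $(\G,\Gamma)$ replaced by $(\N(L),\GammaN)$, noting that the analytic stability hypothesis~\eqref{AnS} depends only on $(g_i)$ and $\Omega$, not on the lattice. This produces the required exhausting family of compacts.

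The main step is to identify any tight limit $\widetilde{\nu}$ of a subsequence of $(g_i \cdot \widetilde{\mu}_\Omega)$ as $\widetilde{\mu}_\infty$. Continuity of $p_*$ forces $p_* \widetilde{\nu} = \mu_\infty$. The focusing factorisation $g_i = b_i z_i m_i$ from Theorem~\ref{Theorem}\,\eqref{thm Fa'}, with $(b_i)_{i\geq0}$ bounded in $G$, $(z_i)_{i\geq 0}$ of class $O(1) \GammaN \Lpp$ inside $Z_G(H) \cap F$, and $m_i \in \Mpp$, lets us extract a further subsequence such that $b_i \to b_\infty$ in $G$ and $z_i \GammaN \Lpp \to n_\infty \GammaN \Lpp$ in $N \GammaN / \GammaN \Lpp$ for some $n_\infty \in N$. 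Since $\Mpp \subseteq \Lpp$ acts trivially from the right on the support $\Lpp \GammaN / \GammaN$ of $\widetilde{\mu}_{\Lpp}$, and since $\GammaN$ normalises $\Lpp$ so that $\gamma \widetilde{\mu}_{\Lpp} = \widetilde{\mu}_{\Lpp}$ for $\gamma \in \GammaN$, the limit reduces to
\[
\widetilde{\nu} = b_\infty \cdot \int_{\omega \in \Omega} \omega \cdot n_\infty \cdot \widetilde{\mu}_{\Lpp}~d\mu(\omega),
\]
which is $\widetilde{\mu}_\infty$ with $g_\infty = b_\infty$. As this identification is independent of the extracted subsequence, the full sequence $(g_i \widetilde{\mu}_\Omega)$ converges to $\widetilde{\mu}_\infty$.

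The main technical obstacle will be verifying that $\GammaN \Lpp$ is closed in $N$, so that the quotient $N\GammaN / \GammaN \Lpp$ is Hausdorff and the bounded class $(z_i \GammaN \Lpp)$ admits a converging subsequence. This rests on the standard fact that $\GammaN \cap \Lpp = \Gamma \cap \Lpp$ is a lattice in $\Lpp$ (true because $L$ is of Ratner class) combined with the discreteness of $\GammaN$ in $N(L)$; together these imply that $\GammaN \Lpp / \Lpp$ is discrete in $N / \Lpp$, hence $\GammaN \Lpp$ is closed in $N$. Granted this, the three steps above fit together to give~\eqref{limitformulatilde1}.
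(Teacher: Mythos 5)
Your proposal captures the right intuition — the whole proof of Theorem~\ref{Theorem} does take place at the level of $G/\GammaN$ — but the two load-bearing steps are not justified, and the way you try to fill them does not work.

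The first gap is tightness. You invoke the non-divergence framework of~\cite{LemmaA} ``verbatim with $(\G,\Gamma)$ replaced by $(\N(L),\GammaN)$'', but this substitution is not meaningful: the measures $g_i\cdot\widetilde{\mu}_\Omega$ live on $G/\GammaN$, and $\GammaN=\Gamma\cap N(L)$ is not a lattice in $G(\Q_S)$ when $L\neq G$, so $G/\GammaN$ has infinite volume and the $(C,\alpha)$-good/linearisation apparatus of~\cite{LemmaA} is not directly available there. The paper instead establishes tightness of the lifted sequence as a \emph{topological} consequence of the downstairs convergence: one shows (Corollary~\ref{coro loc compact closed}) that the lifted measures are supported in the closed set $\overline{\Omega}N\GammaN/\GammaN$, that the restriction of $\varphi:G/\GammaN\to G/\Gamma$ to this set is a proper map, and then applies Proposition~\ref{Propoliftcvg}, which upgrades the already-known tight convergence in $G/\Gamma$ to tight convergence upstairs. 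This route does not re-run any non-divergence estimate in the quotient by $\GammaN$.

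The second, more serious gap is uniqueness of the lift. You write that continuity of $p_\star$ forces $p_\star\widetilde{\nu}=\mu_\infty$ and then identify $\widetilde{\nu}$ from the focusing factorisation, concluding that ``this identification is independent of the extracted subsequence.'' But $p_\star\widetilde{\nu}=\mu_\infty$ does \emph{not} determine $\widetilde{\nu}$: a priori $\mu_\infty$ could have many $\GammaN$-lifts supported on distinct $\GammaN$-cosets projecting to the same $\Gamma$-coset. The entire point of the Dani--Margulis ``no $(L,\GammaN)$-self-intersection'' property (Proposition~\ref{No intersection}) and the chain of reductions~\eqref{cbelongs}--\eqref{gammaoops} is precisely to show that $\mu_\infty$ is concentrated on the Borel set $Z=\left(\overline{\Omega}N\Gamma\right)^*\Gamma/\Gamma\subseteq X^*([L],W)$ on which $\varphi$ restricts to a bijection onto its image $Y$, so that the lift is unique (cf.~\eqref{self2}, \eqref{finalbij}, \eqref{BBK Rem}, \eqref{lift}). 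Without invoking the singular-set analysis, the step where you ``identify'' $\widetilde{\nu}$ is not forced — it is consistent with the hypotheses but not implied by them. In short, the Addenda is, as the paper says, a genuine by-product of the full proof of Theorem~\ref{Theorem} (specifically of the lifting sections using $X^*(L,W)$, Proposition~\ref{No intersection}, and Proposition~\ref{Propoliftcvg}), and cannot be obtained by a shortcut that bypasses the self-intersection analysis.
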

%The more stringent statement is actually statement~\eqref{stat1}, which is proved at the end of the proof of Theorem~\ref{Theorem}. The statement~\eqref{stat3} is deduced by direct image into~$G/(\Lpp\GammaN)$. One can also deduce the identity~\eqref{limitformula} from statement~\eqref{stat3} by direct image, but into~$G/\Gamma$ instead. The statement~\eqref{stat2} is deduced from statement~\eqref{stat3} by the correspondence  between invariant measures
% and measures on the quotient\footnote{See \cite{Integration}[VIII-{\S}2 Prop.~4] about quotient measures. This correspondence is compatible with the vague topologies, by \emph{loc. cit}. We are concerned with probabilities converging to probabilities, so the
% convergence in the statement can be interpreted as vague convergence.}.
We won't prove it here. It will be a by-product of our proof of Theorem~\ref{Theorem}. Let us just note that, by direct image along~$\varphi:G/\GammaN\to G/\Gamma$, this implies the limit formula~\ref{limitformula} in statement~\eqref{limit formula statement} of Theorem~\ref{Theorem}.

\section{Proof of Theorem~\ref{Theorem} -- Introduction}\label{secproof}
We will now turn to the proof of~Theorem~\ref{Theorem}, after introducing some notations, mostly from the Dani-Margulis linearisation method of~\cite[\S\,{\S}2-3]{DM} and Ratner theory.

\subsection{Singular sets}\label{RatnerNotations}
Recall from~\ref{secnotations} that a \emph{Ratner type} was defined to be the~$\Gamma$-conju\-ga\-cy class~$[L]$ of a subgroup~$L$ in the Ratner class~$\RatQ$. Let~$W$ be a subgroup of~$G$ such that~$W^+=W$. Dani and Margulis define the subset
\begin{subequations}
\begin{equation}\label{X1}
X(L,W) = \Cj(W,L)^{-1} = \left\{g\in G\middle| g^{-1}Wg \subseteq L \right\}\subseteq G
\end{equation}
of inverse of elements in~$G$ that conjugate~$W$ into~$L$.
These~$X(L,W)$ are closed subsets, actually real algebraic subvarieties, of~$G$. We have the right invariance property
\begin{equation}\label{right invariance N(L)}
X(L,W)=X(L,W)N(L)
\end{equation}
under the normaliser~$N(L)$ of~$L$. For a fixed~$W$, this kind of subset is stable by intersection; more precisely we have
\begin{equation}\label{intersection X}
X(L_1,W)\cap X(L_2,W)= X(L_3,W)
\end{equation}
where~$L_3$ is the Zariski closure over~$\Q$ of~$(L_1\cap L_2)^+$.
\end{subequations}
\begin{subequations}
If~$[L]$ is the Ratner type of~$L$, we denote
\begin{equation}\label{X2}
X([L],W) = X(L,W)\Gamma/\Gamma \subseteq G/\Gamma
\end{equation}
the \emph{singular locus of type~$[L]$ for the action of~$W$ on~$G/\Gamma$}. This locus depends indeed on~$L$ only via~$[L]$, because, for~$\gamma$ in~$\Gamma$
\begin{equation}\label{gammaX} X(L^\gamma ,W)=X(L ,W)\cdot\gamma.\end{equation}
%These~$X([L],W)$ are closed subsets of~$G/\Gamma$.
 Denote
\end{subequations}
\begin{subequations}
\begin{equation}\label{X3}
X^*(L,W) = X(L,W) \smallsetminus \bigcup_{K\in \RatQ,\, \dim(K)<\dim(L)} X\left(K,W\right).
\end{equation}
Note that the latter union is right-invariant under~$\Gamma$\footnote{\label{rightgammainv}	The set~$\RatQ$  is invariant under the action of~$\Gamma$ by conjugation. Conjugation preserve the dimension. We conclude by~\eqref{gammaX}. This statement and in~\eqref{gammaX} still hold with~$\G(\Q)$ instead of~$\Gamma$.}. Together with~\ref{right invariance N(L)} we get, with~$\GammaN=\Gamma\cap N(L)$, the right-invariance
\begin{equation}\label{right invariance GammaN}
X^*(L,W) =X^*(L,W) \cdot \GammaN
\end{equation}
\end{subequations}
We then denote
\begin{equation}\label{X4}
X^*([L],W)=X^*(L,W)\Gamma/\Gamma=X([L],W) \smallsetminus \bigcup_{K\in \RatQ,\,\dim(K)<\dim(L)} X\left(\left[K\right],W\right)
\end{equation} 
the \emph{exclusive} singular locus of type~$[L]$. Using~\eqref{intersection X}, we could as well have indexed the union over the~$K$ in~$\RatQ$ that do not contain~$L$, or again over the~$K$ which are strictly contained in~$L$.

\subsubsection{}
Here are two facts we will use elsewhere.
\begin{subequations}
\begin{equation}\label{Borel}
\text{the~$X^*(L,W)$ and the~$X^*([L],W)$ are Borel subsets of~$G$ and~$G/\Gamma$ resp.}
\end{equation}
\begin{proof} The subsets~$X(L,W)$ are closed subsets by construction. As~$\RatQ$ is countable,~$X^*(L,W)$ is the complement of countably many closed subsets in~$X(L,W)$, hence is a Borel subset of~$G$. As~$\Gamma$ is countable,~$X^*(L,W)\Gamma$ is a Borel subset of~$G$. At this point, as~$X^*(L,W)\Gamma$ is the inverse image of~$X^*([L],W)$
we may invoke~\cite[IX \S6.6 Cor.\,2]{TOP}. Alternatively we may proceed as follows. The map~$G/\Gamma$ is a local homeomorphism, and~$X^*([L],W)$ is locally, in~$G/\Gamma$ homeomorphic to~$X^*(L,W)$. As~$G/\Gamma$ is locally compact and countable at infinity, we may get a countable cover of~$X^*([L],W)$ by Borel subsets.
\end{proof}
\begin{proof} Here is an alternate proof. We first note that~$G$ can be embedded as a closed subset of some~$\R^N$. Hence is complete for the induced metric. Similarly~$G/\Gamma$ is also separable (consider for instance~$G(\Q)$ and weak approximation). Hence~$G$ and~$G/\Gamma$ are a Polish spaces (see also~\cite[IX\,\S6.1 Cor. to Prop.~2]{TOP}). The closed subset~$X(L,W)$ and is Polish too. As~$X^*(L,W)$ is a countable union of open subsets in~$X(L,W)$, it is Polish (\cite[IX\,\S6.1 Th.~1]{TOP}).
Its continuous image~$X^*([L],W)$ is then a Suslin space. 
\end{proof}
It follows, by~\cite[IX\S3.3 Cor. to Prop.\,2]{BouINT} (we refer to~\cite[IX\S3.3, in part. D\'{e}f.\,2]{BouINT} about Radon spaces) that
%The topological space~$G$ is Polish, see~\cite[IX-\S6.1 D\'{e}f.\,1]{BouTGIX}, by~\cite[IX-\S6.1 Cor. to Prop.~2]{BouTGIX}. Its closed subset~$X(L,W)$ is Polish too, by~\cite[IX-\S6.1 Prop.~1\,a)]{BouTGIX}. Polishness is stable under taking  countable intersection of open subsets, by Alexandrov’s Theorem~\cite[IX-\S6.1 Th.\,1]{BouTGIX}, whence
\begin{equation}\label{Radon}
\text{$X(L,W)$ and~$X^*([L],W)$  are Radon subspaces.}
\end{equation}
Note that~$X^*(L,W)$ need not be locally compact.
\end{subequations}
One can show that, for any~$g$ in~$G$, one has~$g\in X^*(L,W)$ if and only if~$L$ is the~$\Q$-Zariski closure of~$g^{-1}Wg$ in~$G$.
For a fixed~$W$, the~$X^*(L,W)$ induce a countable partition of~$G$, globally stable under the action of~$\Gamma$, and the~$X^*([L],W)$ induce a countable partition of~$G/\Gamma$ (confer~\cite[Proposition~2.1]{DM}). 

\subsubsection{Ratner decomposition}\label{ss section Ratner decomposition}
 For any measure~$\mu$ on~$G/\Gamma$,
\begin{subequations}
\begin{equation}\label{notationrestriction}
\text{we denote }{\mu}^{[L]}\text{ the restriction~}\mu|_{X^*([L],W)},
\end{equation}
so that we have the countable decomposition, indexed by Ratner types,
\begin{equation}\label{Ratnerdecomposition}
\mu=\sum_{[L]}{\mu}^{[L]}.
\end{equation}
This is a particularly meaningful decomposition if~$\mu$ is~$W$-invariant. In such a case,
by Ratner classification theorem (and some precision from~\cite{TomanovOrbits} in the~$S$-a\-rith\-me\-tic case), the measure~${\mu_\infty}^{[L]}$ is made of the $W$-ergodic components of~$\mu_\infty$ which are translates of~$\mu_{\Lpp}$.
\end{subequations}

\subsubsection{}\label{ss section 412}
Here we give a dynamical interpretation of these sets. We refer to Appendix~\ref{sectionB1} for the definition of~$\Lpp$ and the associated measure~$\mu_\Lpp$.

\begin{itemize}
\item By construction, the set~$X(L,W)$ is the set of elements~$g$ in~$G$ for which the translated measure~$g\cdot\mu_{\Lpp}$ is~$W$-invariant.

\item Then~$X([L],W)$ is the set of points in~$G/\Gamma$ belonging to the support of some~$W$-invariant translate of~$\mu_{\Lpp}$.  
\item
The no self-intersection statement~\cite[Proposition~3.3]{DM}, in the form of Proposition~\ref{No intersection}, says the elements of~$X^*([L],W)$ are the points in~$G/\Gamma$ belonging to the support of \emph{exactly one}~$W$-invariant translate of~$\mu_{\Lpp}$. 
\item
Finally, one knows that~$X^*([L],W)$ is also the set of points~$x$ in~$G/\Gamma$ such that the topological closure~$\overline{W\cdot x}$ of the~$W$-orbit at~$x$ is of the form~$g\Lpp\Gamma/\Gamma$ for some~$g$ in~$G$.
\end{itemize}
This last point relies on Ratner's theorem on orbit's closure (\cite{Ratnerp}, used in a context to which applies~\cite[Theorem~2]{MargulisTomanov}),
and also uses~\cite[Theorems~1 and~2, {cf.} comments between Th.~2 and~3]{TomanovOrbits}). We refer to Appendix~\ref{sectionB1}.
%The set~$X(L,W)\cdot\mu_{L^+}$ is the set of translates of~$\mu_{L^+}$ which are~$W$-invariant, and 

\subsection{Structure of the proof}\label{subsection-proof-structure} The main strategy is classical in the context of linearisation methods.
The key new inputs are the two stability results in linearised dynamics from~\cite{Lemmanew}, at the expense of the hypothesis~(\ref{Hypo}).
\footnote{The first named author advocates a rapprochement between Homogenous dynamics, especially linearisation, and Arithmetic stability, in particular in Arakelov geometry context (\cite{Seshadri,Burnol,ACLT,Zhang,Bost}).}
This new input allows us to complete the linearisation approach, concluding with the detailed conclusion from our main Theorem.

The proof will begin, until~\eqref{eq0}, with introducing a suitable~$W$ and an application of Dani-Margulis linearisation method (Appendix~\ref{AppA}), as used in~\cite{EMSGAFA} Prop.~3.13. Then a simplification will occur as, from~\eqref{eq1} to~\eqref{eq5}, we will apply Theorem~1 of~\cite{Lemma,Lemmanew}. It will allow us to apply Theorem~2 of~\cite{Lemmanew}, and when interpreted in the linearisation setting, in~\eqref{keyvariant},~\eqref{keypoint}, and~\eqref{purity}, this shows that the limiting measure~$\mu_\infty$ is made of a unique Ratner type of~$W$-ergodic components.

This work also allows the next step, which is to lift the measures in~\eqref{lift} to~$G/\Gamma_N$,
based on ``no self intersection'' properties of singular sets from the work of Dani and Margulis.

We finally initiate an induction process in~\eqref{induction}. We prove some \emph{right} $\Lpp$-invariance, which allows us to work on a quotient. By maximality of~$W$, we will be able to conclude.

As a guide through our induction, we assemble in a diagram the arrows through which our proof will proceed.
\[ 
\begin{tikzcd}[row sep=large, column sep=small]
G\arrow{dr}\arrow{rr}	&   				&	G/\Gamma_N\arrow{dl}\arrow{dr}	&\\
								& G/\Gamma	&											  		& 
								G/(\Lpp\Gamma_N) & X\arrow[hookrightarrow,swap]{l}\arrow[loop right]{}
															&\widehat{G}
\end{tikzcd}
\]
We will end up working inside~$X=\overline{HN/(\Lpp\Gamma_N)}$ on which~$\widehat{G}=HF/M$ acts 
(see~\eqref{Ghat} and~\eqref{defX}). At this step, we will have killed all the dynamical features of the sequence
of measures we have been working with, and we identify its limit.

%Working in our general setting leads us to regularly stack multiple technical %tools. It appeared that the proof runs on quite some length and that the %order of the arguments allow very little change: the steps unavoidably occur %in a definite succession. We tried to evade most technicalities from the %main proof
%and separate them in a postponed section.
A treatment of standard linearisation statements, adapted to our setting has been gathered in Appendix~\ref{AppA}.
We encourage the reader to focus on the main part of the proof, in~{\S}\ref{mainproof} below, and first consider a simpler setting, as for instance the one in~\ref{Theointro}, where most of the postponed technicalities can be ignored (and for example where Appendix~\ref{AppA} reduces to results well known to experts for quite some time now).

\section{Proof of Theorem~\ref{Theorem} -- Uniqueness of Ratner type}\label{mainproof}
The setting is that of~Theorem~\ref{Theorem}.
Note that the tightness statement is settled\footnote{This relies on~\cite{KT} as well as~\cite{Lemma,Lemmap}.} by~\cite{LemmaA}. In this section we will start proving~Theorem~\ref{Theorem} by establishing the statement about the uniqueness of Ratner type in the Ratner decomposition.
\begin{proof}[Proof of Theorem~\ref{Theorem}]
%The beginning of our proof will require to work with $\Omega$ small enough in order to apply Proposition~\ref{AProp313}. To distinguish this initial choice, we denote it $\Omega_0$. 
 %TODO ref proposition
 
We write~$\Stab_G(\mu_\infty)$ for the stabiliser of~$\mu_\infty$ in~$G$, and, using notation~\eqref{Notation Lp} define
\begin{equation}\label{def W}
W=\left(\Stab_G(\mu_\infty)\right)^+.
\end{equation}
The group~$W$ is normal in~$\Stab_G(\mu_\infty)$.%, and, by Corollary~\ref{corliftuni}, no quotient of~$\Stab_G(\mu_\infty)/W$ does contains a~$\Q_S$-algebraic unipotent subgroup.

\subsection{Ratner decomposition.}
By assumption, the weak limit~$\mu_\infty$ is non zero. From Ratner decomposition~\eqref{Ratnerdecomposition}, there must be at least one group~$L_0$ in the Ratner class~$\RatQ$ such that
\begin{equation*}{\mu_\infty}^{[L_0]}\neq 0.\label{Rdec}\end{equation*}
Let us choose
\footnote{ \label{anticipating}Anticipating on the proof, we will see that the Ratner decomposition will actually involve a unique non zero term (a unique Ratner type). 
Anticipating furthermore, we will actually find the group~$L$ fulfilling the statement among the conjugates of~$L_0$ by an element of~$\Gamma$.}
a group~$L_0$ in~$\RatQ$ of minimal dimension such that~${\mu_\infty}^{[L_0]}\neq 0$.
The minimality of~$\dim(L_0)$ gives the identity (see notation~\eqref{X4})
\begin{equation}\label{eqmini}
{\mu_\infty}|_{X([L_0],W)}={\mu_\infty}|_{X^*([L_0],W)}.
\end{equation}

\subsection{Linearisation method and focusing}\label{subsection linearisation}
We now apply the linearisation method, and more precisely \cite[Prop.~3.13]{EMSAnn} in the form of Proposition~\ref{AProp313}. 

We first introduce the setting of~\cite[Section~3, paragraph~3]{DM}, adapted to our $S$-arithmetic setting.
%\emph{
%The ``linearisation'' method involves the study of dynamics of~$G$ on~$G/\Gamma$, and notably the singular sets from section~\ref{RatnerNotations} by working inside the following kind of \emph{linear} representation.
%}
We write~$\lie{g}$ and~$\lie{l}_0$ the Lie algebras over~$\Q_S$ of~$G$ and~$L_0$ respectively, and~$\lie{l}_{0,\Q}$ the Lie algebra over~$\Q$ of~$L_0$.
\begin{itemize}
\item  Let~$V=\bigwedge^{\dim(L_0)}\g$, as a $\Q_S$-linear representation of~$G$, an exterior power of the adjoint representation.

\item We pick a generator~$p_{L_0}$ of the~$\Q$-line~$\det(\l_{0,\Q}):=\bigwedge^{\dim(L_0)}\l_{0,\Q}$ viewed as an element of the free $\Q_S$-submodule~$\det(\l_0):=\bigwedge^{\dim(L_0)}\l_0$ of rank one in~$V$.
\item The $\Q_S$-sub\-mo\-du\-le generated by~$X(L_0,W)\cdot p_{L_0}$ is denoted
\begin{equation}\label{def A}
A_{L_0}:=\left< X(L_0,W)\cdot p_{L_0}\right>.
\end{equation}

\end{itemize} 
Let~$\w$ denote the Lie algebra of~$W$ and define the sub\-module
\[ V_W := \left\{v\in V~\middle|~\forall\, w\in\w,\ v\wedge w={0}\text{ inside of }{\bigwedge}^\bullet\g \right\}.\]
We observe (cf.~\cite[Prop.~3.2]{DM}) that~$X(L_0,W)\cdot p_{L_0}$ is the intersection of~$V_W$ with the orbit~$G\cdot p_{L_0}$. Note that~$G\cdot p_{L_0}$ depends on~$L_0$ only up to conjugation. 
Consequently, both~$G\cdot p_{L_0}\cap V_W$ and the linear space~$A_{L_0}$ it generates depend on~$L_0$ only up to conjugation.

%Let $\Omega_0$ be a neighbourhood of the identity in $H$ contained in $\Omega$ and small enough so that~Proposition~\ref{AProp313} (or~\cite[Prop. 3.13]{EMSAnn} in the archimedean case) applies:
By~Proposition~\ref{AProp313} (or~\cite[Prop. 3.13]{EMSAnn} in the archimedean case), there exists a compact subset~$D$ of~$A_{L_0}$, and a sequence~$(\gamma_i)_{i\geq 0}$ in~$\Gamma$, such that, for any neighbourhood~$\Phi$ of~$D$ in~$V$, one has\footnote{The quantifier~$\forall i \gg 0$ being the usual substitute for the pair of quantifiers~$\exists i_0,\forall i > i_0$.}
\begin{equation} \label{eq0}
\forall i \gg 0,~g_i\cdot\Omega\cdot\gamma_i\cdot p_{L_0}\subset \Phi.
\end{equation}

In other words, the sequence~$\left( g_i\cdot \Omega\cdot \gamma_i\cdot p_{L_0}\right)_{i\geq 0}$ of subsets of~$V$ is uniformly boun\-ded, and any limit subset, for the Hausdorff topology, is contained in~$D$.

By Lemma~\ref{Gorbit discrete} the orbit~$\Gamma\cdot p_{L_0}$ is discrete inside~$V$. Choose, for each place~$s$ in~$S$, a~$s$-adic norm~$\Nm{-}_s:V\tens_{\Q_S}\Q_s\to\R_{\geq0}$, and introduce the product norm~$\Nm{-}:V\to\R_{\geq0}, (x_s)\mapsto\max_{s\in S} \Nm{x_s}_s$ 
(confer~\cite[(2.6), {\S}8.1]{KT} for such a setting.) We note that this norm is a proper map. Hence the previous discreteness property translates into
\begin{equation}\label{eq1}
\lim_{x\in\Gamma\cdot p_{L_0}} \Nm{x} =+\infty,
\end{equation}

\subsubsection{Linearised nondivergence and consequences} For any subset~$A$ of~$V$, we abbreviate~$\Nm{A}=\sup_{a\in A}\Nm{a}$.
We now use hypothesis~\eqref{Hypo} (see~\cite[Theorem~2.1 under the form~Theorem~\ref{thm21bis}]{LemmaA}) about the~$g_i$.  In particular
\begin{equation}\label{eq2}
\exists c>0, \forall \gamma \in G, \forall i\geq 0,~\Nm{g_i\Omega\gamma p_{L_0}}\geq c\cdot \Nm{\gamma\cdot p_{L_0}}.
\end{equation}
Combining~\eqref{eq1} with~\eqref{eq2} results in~
$$\lim_{x\in\Gamma\cdot p_{L_0}}\inf_i \Nm{g_i\Omega\gamma x} \geq c\cdot \lim_{x\in\Gamma\cdot p_{L_0}} \Nm{x} =+\infty$$
As a consequence, for any bound~$\Lambda<+\infty$, the set 
\begin{equation}\label{eq3}
\left\{\gamma\cdot p_{L_0}\in\Gamma p_{L_0}
~\middle|~\exists i, \Nm{g_i\cdot \Omega\gamma\cdot p_{L_0}}\leq\Lambda\right\}
\end{equation}
is finite. If~$\Lambda>\Nm{D}$, there exists a neighbourhood~$\Phi$ of~$D$ such that~$\Nm{\Phi}\leq\Lambda$. Fix such a~$\Phi$. Applying~\eqref{eq0} to this~$\Phi$, we get that, for~$i\gg0$, the element~$\gamma_i p_{L_0}$ belongs to the finite set~\eqref{eq3}. Consequently, the set~$\left\{\gamma_ip_{L_0}~\middle|~i\geq0\right\}$ described by the sequence~$(\gamma_ip_{L_0})_{i\geq0}$ implied in~\eqref{eq0} is finite.

%Consequently, for any subset~$\Phi$ of~$V$ such that~$\Nm{\Phi}\leq\Lambda$, 
%$$\left\{\gamma_i\cdot p_{L_0}~\middle|~i\in\Z_{\geq0},~g_i\cdot \Omega\gamma_i\cdot p_{L_0}\subseteq\Phi \right\},$$
%which is a subset of~\eqref{eq3}, is finite. Take~$\Lambda>\Nm{D}$, so that the set of~$\Phi$ such~$\Nm{\Phi}\leq\Lambda$ contains a basis of neighbourhoods of~$D$. Note that the finite set~\eqref{eq3} is independant of such~$\Phi$.

%
%Consequently,
%\begin{equation}\label{eq3}
%\left\{\gamma\cdot p_{L_0}\in\Gamma\cdot p_{L_0} \middle| \exists i, \Nm{g_i\cdot \Omega\gamma\cdot p_{L_0}}\leq\Nm{\Phi} \right\}
%\end{equation}
%is finite as soon as~$\Phi$ is bounded. Even more: choose a bound~$\Lambda$. Then
%$$\left\{\gamma_i\cdot p_{L_0}\in\Gamma\cdot p_{L_0} \middle|~i\geq0,~g_i\cdot \Omega\gamma_i\cdot p_{L_0}\subseteq\Phi \right\}$$
%is contained in a finite subset of~$\Gamma\cdot p_{L_0}$, independent of~$\Phi$, provided~$\Nm{\Phi}$ is bounded by~$\Lambda$. Take~$\Lambda>\Nm{D}$, so that the set of such~$\Phi$ contain a basis of neighbourhood of~$D$. 

%Using~$\eqref{eq0}$ for such~$\Phi$, we deduce that only finitely many~$\gamma_i\cdot p_{L_0}$ are involved in~\eqref{eq0}.

Let us decompose~$(g_i)_{i\geq0}$ into finitely many subsequences according to the value of~$\gamma_i\cdot p_{L_0}$. Passing to anyone of these subsequences, we can assume~$(\gamma_i\cdot p_{L_0})_{i\geq0}$ is constant. We set~$L={\gamma_0}L_0{\gamma_0}^{-1}$. It belongs to the Ratner type of~$L_0$, that is~$[L]=[L_0]$, and is such that~$p_L=\gamma_0\cdot p_{L_0}=\gamma_i\cdot p_{L_0}$ for all~$i\geq 0$. We will show that this group~$L$ is the one involved in the conclusions of the theorem.

We have deduced the following stronger form of~\eqref{eq0}. For any neighbourhood~$\Phi$  of~$D$,
\begin{equation}\label{eq5}
 \forall i\gg0, g_i\cdot\Omega\cdot p_L \subseteq \Phi.
\end{equation}
We can choose such a~$\Phi$ to be bounded. The sequence~$\left( \Nm{g_i\cdot \Omega\cdot p_L}\right)_{i\geq0}$ is then uniformly bounded (and eventually bounded by~$\Nm{\Phi}$).

\subsection{Linearised focusing and consequences}\label{section-lin-focusing} Recall the definitions~\eqref{defLF}.
The stabiliser in~$G$ of~$p_L$ is~$N$, and the fixator\footnote{By~\emph{fixator} we understand the point-wise stabiliser, in general distinct from the stabiliser (as a whole).} of~$\Omega\cdot p_L$ is~$F$.
Applying \cite[Theorem~2]{Lemmanew}, we deduce the following important ingredient of our proof
$$\text{the sequence~}\left( g_i\right)_{i\geq 0}\text{ is of class }O(1)\cdot F.$$

%Theoreme invariant par O(1) ? contexte de la preuve
Without loss of generality, translating by a bounded sequence, we may assume that~$g_i$ belongs to~$F$.

By definition~$g_i\in F$ implies
\begin{equation}\label{stabilitybygi}
\omega N=g_i\cdot \omega\cdot N=\omega\cdot(\omega^{-1} {g_i}\omega)\cdot N.
\end{equation}
Hence that each conjugate~$\omega^{-1}{g_i}\omega$ belongs to~$N$.
This holds for any~$\omega$ in~$\Omega$; hence for any~$\omega$ in the $\Q_S$-Zariski closure of~$\Omega$; in particular for any~$\omega$ in the topological closure~$\overline{\Omega}$.
It follows~$g_i\cdot \overline{\Omega}\cdot N=\overline{\Omega}\cdot N$, and~$g_i\overline{\Omega}\cdot p_L=\overline{\Omega}\cdot p_L$.

%But recall that for any~$\Phi$ in some basis of neighbourhood of~$D$,
%$$\forall i\gg0, g_i\cdot\Omega\cdot p_L \subseteq \Phi.$$
Consequently, in~\eqref{eq5}, we have actually~$g_i\cdot\overline{\Omega}\cdot p_L=\overline{\Omega}\cdot p_L\subseteq \overline{\Phi}$. As we can vary~$\Phi$ through arbitrary small neighbourhoods of~$D$, we conclude
\begin{equation}\label{keyvariant}
\overline{\Omega}\cdot p_L\subseteq D.
\end{equation}
 Hence~$\overline{\Omega}\cdot p_L$ is in~$\left(G\cdot p_L\right)\cap V_W$. In other words,
\begin{equation}\label{keypoint}
\text{$\overline{\Omega}\cdot N$ is contained in~$X(L,W)$.}
\end{equation}
It follows that~$\overline{\Omega}\cdot N\Gamma/\Gamma$ is contained in~$X([L],W)$. 

By Corollary~\ref{coro loc compact closed}, the set~$\overline{\Omega}N\Gamma/\Gamma$ is closed inside~$G/\Gamma$.  But~$\overline{\Omega}N\Gamma/\Gamma$ contains the support of~$\mu_\Omega$. From~\eqref{stabilitybygi}, it is also stable under the~$g_i$, and will contain the support of each of the~$g_i\mu_\Omega$. Finally,
\begin{equation}\label{contains support}
 \text{$\overline{\Omega}N\Gamma/\Gamma$~contains the support of~$\mu_\infty$,}
\end{equation}
and the support of~${\mu_\infty}^{[L]}$.
 
Hence the support of~$\mu_\infty$ is contained in~$X([L],W)$. Consequently (recall~\eqref{notationrestriction} and~\eqref{eqmini}),
\begin{equation}\label{purity}
\mu_\infty=\mu_\infty|_{X([L],W)}={\mu_\infty}^{[L]}|_{X^*([L],W)}={\mu_\infty}^{[L]}.
\end{equation}
\textbf{We have proved that the Ratner decomposition~\eqref{Ratnerdecomposition} of~$\mu_\infty$ has a unique Ratner type.}

\section{Proof of Theorem~\ref{Theorem} -- Lifting ergodic decomposition}
\emph{We will henceforth drop the exponent~${}^{[L]}$ and write~$\mu_\infty$ instead of~${\mu_\infty}^{[L]}$.}

We know that the~$W$-ergodic components of~${\mu_\infty}$ are all translates of~$\mu_{\Lpp}$ (see~\ref{ss section Ratner decomposition} and~\ref{ss section 412}).
If~$c\cdot\mu_{\Lpp}$ is one of the~$W$-ergodic components of~$\mu_\infty$, we necessarily have
\begin{itemize}
\item firstly~$c\Gamma\in\overline{\Omega}N\Gamma/\Gamma$, as, by~\eqref{contains support}, the support of~$\mu_\infty$ is contained in~$\overline{\Omega}N\Gamma/\Gamma$;
\item secondly~$c\in X^*(L,W)$, because~$W$ acts ergodically on~$c\cdot\mu_{\Lpp}$.
\end{itemize}
In other words
\begin{subequations}
\begin{equation}\label{cbelongs}
\text{
the element~$c$ belongs to the intersection~$\left(\overline{\Omega}N\Gamma\right)^*:=\left(\overline{\Omega}N\Gamma\right)\cap X^*(L,W).$
}
\end{equation}
One can consider~$\left(\overline{\Omega}N\Gamma\right)^*$ as a to~$G$ of the following subset of~$G/\Gamma$
\begin{equation}\label{eqa}\left.\left(\overline{\Omega}N\Gamma\right)^*\Gamma\middle/\Gamma\right.=\left(\left.\overline{\Omega}N\Gamma\middle/\Gamma\right.\right)\cap X^*([L],W).
\end{equation}
On the right hand side of equality~\eqref{eqa}, the measure~${\mu_\infty}$ is concentrated on the first factor, by~\eqref{purity}, and the second factor contains the support of~${\mu_\infty}$, by~\eqref{contains support}. As a consequence,
\begin{equation}\label{Full measure}
\text{the measure~${\mu_\infty}$ is concentrated on~$\left.\left(\overline{\Omega}N\Gamma\right)^*\Gamma\middle/\Gamma\right.$.}
\end{equation}	
\end{subequations}
\subsection{Linearisation and self-intersection}\label{sec lin self}
We now use~\cite[Prop.~3.3, Cor.~3.5]{DM}\footnote{The role played by~$H$ in~\cite[pp.99-100]{DM} is played by~$L$ here. The Ratner class~$\RatQ$ here is a substitute for~$\mathscr{H}$ in~\cite[pp.99-100]{DM}. The~$\GammaN$ we are about to define is denoted~$\Gamma_L$ in~\cite[pp.99-100]{DM}.} under the form of Appendix~\ref{sectionA2}.
Let~$\GammaN=\Gamma\cap N(L)$.
After~\cite[Prop.~3.3]{DM}, the set~$X^*(L,W)$ ``does not have~$(L,\GammaN)$-self-intersection" in the sense of~\cite[{\S}3, p.100]{DM}. Namely\footnote{We note that the implication is actually an equivalence, by~\eqref{right invariance GammaN}.}
\begin{subequations}
\begin{equation}\label{self1}
\forall x\in X^*(L,W), \forall \gamma\in\Gamma, x\gamma\in X^*(L,W)\Rightarrow \gamma\in\GammaN.
\end{equation}
Equivalently the quotient map~$\varphi:G/\GammaN\to G/\Gamma$ induces a bijection\footnote{A similar geometric interpretation of~\cite[p.100]{DM} ``self-intersetion" property can be found at the end of \cite[Corollary~3.5]{DM} "The quotient map~[\ldots] is injective. that is~[\ldots]".}
\begin{equation}\label{self2}
\left.X^*(L,W)\,\GammaN\middle/\GammaN \right.
\xrightarrow{}
 \left.X^*(L,W)\,\Gamma\middle/\Gamma\right. .
\end{equation}
In yet other terms, for any subset~$A$ of~$X^*(L,W)$, we have then
\begin{equation}\label{self3}
\left(A\Gamma\right)\cap X^*(L,W)=\left(A\GammaN\right)\cap X^*(L,W).
\end{equation}
\end{subequations}

\begin{subequations}
Recall that~$\overline{\Omega}N$ is contained in~$X(L,W)$, by~\eqref{keypoint}. As~$X(L,W)$ is right~$N(L)$-invariant, see~\ref{right invariance N(L)}, we have furthermore~$\overline{\Omega}N\GammaN\subseteq\overline{\Omega}N(L)\subseteq X(L,W)$. We apply~\eqref{self3} taking~$A$ to be the intersection
\begin{equation}\label{omegaself1}
\left(\overline{\Omega}N\GammaN\right)^*:=\left(\overline{\Omega}N\GammaN\right)\cap X^*(L,W).
\end{equation}
We get
\begin{equation}\label{omegaself2}
\left(\overline{\Omega}N\GammaN\right)^*\Gamma\cap X^*(L,W)=\left(\overline{\Omega}N\GammaN\right)^*\GammaN\cap X^*(L,W).
\end{equation}
\end{subequations}
From~\eqref{right invariance GammaN}, namely that~$X^*(L,W)$ is right saturated by~$\GammaN$, we get
\begin{equation}
\left(\overline{\Omega}N\GammaN\right)^*=\left(\overline{\Omega}N\GammaN\right)^*\GammaN=\left(\overline{\Omega}N\right)^*\GammaN.
\end{equation}
We will now prove that
\begin{equation}\label{gammaoops}
\left(\overline{\Omega}N\GammaN\right)^*=\left(\overline{\Omega}N\Gamma\right)^*.
\end{equation}
\begin{proof} We prove the identity by double inclusion. The inclusion~$$\left(\overline{\Omega}N\GammaN\right)^*\subseteq\left(\overline{\Omega}N\Gamma\right)^*$$ is immediate. Let us prove the reverse inclusion.

Let~$x$ be in~$\left(\overline{\Omega}N\Gamma\right)^*$. We can write~$x=y\gamma$ with~$y\in\overline{\Omega}N$ and~$\gamma\in\Gamma$. We already know that~$x$ belongs to~$X^*(L,W)$, on of the factor defining~$\left(\overline{\Omega}N\Gamma\right)^*$ in~\eqref{cbelongs}. This is also the second factor defining~$x\in\overline{\Omega}N\GammaN$, in~\eqref{omegaself1}. It remains to prove that~$x$ belongs to the other factor~$\overline{\Omega}N\GammaN$ in~\eqref{omegaself1}.

%ERREUR SE PROPAGE ICI
Assume~$y$ belongs to~$\left(\overline{\Omega}N\right)^*$. Then~\eqref{omegaself2} implies~$\gamma\in\GammaN$, and hence~$x$ belongs to~$y\GammaN\subseteq \overline{\Omega}N \GammaN$.  We are done.

Assume~$y$ belongs to~$\overline{\Omega}N$ but not to~$X^*(L,W)$. It then belongs to~$X(L,W)\smallsetminus X^*(L,W)$, by~\eqref{keypoint}. But the right hand side of~\eqref{X3} is right-invariant under~$\Gamma$, by footnote~\ref{rightgammainv} on page~\ref{rightgammainv}. So~$x=y\gamma$ can't belong to~$X^*(L,W)$. This case doesn't occur.
\end{proof}
We recall that the map~$\phi$ is injective on~$\left(\overline{\Omega}N\GammaN\right)^*$ by~\ref{self2}, and
that the image of~$\left(\overline{\Omega}N\Gamma\right)^*$ is given by a subset~\eqref{eqa} on which~$\mu_\infty$ is concentrated, by~\eqref{Full measure}. To conclude we have a continuous bijection
\begin{equation}\label{finalbij}
\left.\left(\overline{\Omega}N\right)^*\GammaN\middle/\GammaN\right.
=
\left.\left(\overline{\Omega}N\Gamma\right)^*\middle/\GammaN\right.
\xrightarrow{\psi}
\left.\left(\overline{\Omega}N\Gamma\right)^*\Gamma\middle/\Gamma\right.
\end{equation}
which we'll write for short
\[
Y\xrightarrow{\psi} Z.
\]

\subsection{Lifting along~$G/\GammaN\xrightarrow{\varphi}G/\Gamma$ of the measures}
Our next objectives are
\begin{itemize}
\item to lift the limit measure~$\mu_\infty$ back along~$\psi$, which is done with \S~\eqref{lift};
\item to lift the ergodic decomposition of~$\mu_\infty$, done with \S~\ref{section lift ergo};
\item to lift the sequence~$(g_i\cdot\mu_\Omega)_{i\geq0}$, explained with~\eqref{naivelift};
\item to  show that we have still convergence of measures, achieved in \S~\eqref{liftlimit};.
\end{itemize}
We start with a remark.
\begin{quote}
{\hspace{\parindent}\emph{We are about to use several operations on measures: restriction, lifting, direct image, desintegration, convergence and, later on, quotient by a Haar measure. The spaces we will concerned with, the one studied in~\S~\ref{sec lin self} above, are complement in a closed subset of~$G$, of countably many real subvarieties, and the image thereof in~$G/\Gamma$ or~$G/\GammaN$. A prototype example is~$\R\smallsetminus\Q$, the space of irrational real numbers. It may happen that these spaces are not locally compact (this prototype isn't); rigour will require some extra care. We will rely on the theory of measure along the lines of \cite[IX]{BouINT}, that we will call Radon measures. We will see that the topological spaces we will be dealing with are \emph{Radon spaces}: every bounded Borel measure --- as a function on the Borel~$\sigma$-algebra --- is inner regular, corresponds to a Radon measure. These will also be \emph{completely regular spaces} and \emph{Lusin spaces} (and probably Polish spaces).}}
\end{quote}
\subsubsection{}\label{subsec lift mu infty} We first want to restrict the probability measure~$\mu_\infty$ on~$G/\Gamma$, to the image~$Z$ of~$\psi$. In order to do this restriction operation, let us prove the  claim
\begin{subequations}
\begin{equation}\label{Borel claim 1}
\text{the image~$Z=\left.\left(\overline{\Omega}N\Gamma\right)^*\Gamma\middle/\Gamma\right.$ of~$\psi$ is a Borel subset of~$G/\Gamma$}.
\end{equation}
\begin{proof} We know from Corollary~\ref{coro loc compact} that~$\overline{\Omega}N\Gamma$ is closed in~$G$,
hence~$\overline{\Omega}N\Gamma/\Gamma$ is closed in~$G/\Gamma$ (the closedness is a local property on~$G/\Gamma$.)
Since
\[\left.\left(\overline{\Omega}N\Gamma\right)^*\Gamma\middle/\Gamma\right.
=
\left(\overline{\Omega}N\Gamma/\Gamma\right)\cap \left( X^*(L,W)\Gamma/\Gamma\right)
=\left(\overline{\Omega}N\Gamma/\Gamma\right)\cap X^*([L],W)
\]
it suffices to prove that~$X^*([L],W)$ is a Borel subset of~$G/\Gamma$. We refer to~\eqref{Borel}.
\end{proof}
We may hence consider the restriction~${\mu_\infty}\restriction_Z$, as a bounded positive map on the Borel $\sigma$-algebra of~$Z$. This is actually a Radon measure, by the following claim
\begin{equation}\label{Borel claim 2}
\text{The topological space~$Z$ is a Radon\footnote{A Radon space is a topological space on which bounded Borel measures are inner regular. They can be described by measures, as defined in~\cite[Chap. IX]{BouINT}, by its restrictions to compact subsets, till inner regular, which corresponds to linear functionals by Riesz representation theorem.} space.}
\end{equation}
\begin{proof} As~$Z$ was proved above to be a Borel subset of~$G/\Gamma$, it actually suffices (cf. \cite[IX \S3.3 Cor. to Prop.\,2]{BouINT}) that~$G/\Gamma$ be a Radon space, which is standard\footnote{We left to the reader to check that~$G$ and~$G/\Gamma$ are separable (weak approximation) and metrisable (cf.~\cite[IX \S3.1]{TOP}), that is ``Polish''.} (cf.~\cite[IX \S3.3 Prop.\,3]{BouINT}).
\end{proof}
(The restriction~${\mu_\infty}\restriction_Z$ can hence be understood in the sense of~\cite[IX \S2.1]{BouINT}, the sense of Radon measures). Recall~\eqref{Full measure} that the measure~$\mu_\infty$ is concentrated on the target~$Z$. We claim
\begin{equation}
\text{That~$Y$ is a Borel subset of~$G/\GammaN$.}
\end{equation}
\begin{proof} We could argue similarly than to the case of~$Z$, \emph{mutatis mutandis}. We will deduce it from the latter case. Let us note that~$\left(\overline{\Omega}N\Gamma\right)^*\Gamma$
is the inverse image of~$Z$ in~$G$ hence a Borel subset. This is also the inverse image of~$Y$, this time by the map~$G\to G/\GammaN$. We are done as, by~\cite[IX \S6.6 Cor.\,2]{TOP}, a subset of~$G/\GammaN$ is a Borel subset if and only if its inverse image in~$G$ is a Borel subset. It is also a consequence of the existence of a Borel section~$G/\GammaN\to G/\Gamma$ (cf.~\cite[IX \S6.9]{TOP}).
\end{proof}
It follows (\cite[IX \S6.7 Lem.\,6]{TOP}) that
\begin{equation*}
\text{$Y$ is a Lusin space,}
\end{equation*}
and that we may apply~\cite[IX\,\S3.3 Rem. on p.\,IX.49]{BouINT}
\begin{equation}\label{BBK Rem}
\text{$\psi$ is a Borel isomorphism that identifies Radon measures on~$Z$ and on~$Y$.}
\end{equation}
\subsubsection{Lifting of~$\mu_\infty$}\label{lift infty}
As an instance of~\eqref{BBK Rem}, there is a unique lift~$\mu_\infty\restriction_Z$ to a Radon measure
\begin{equation}\label{lift}
{\widetilde{\mu}_\infty}\restriction_Y:=
{\psi^{-1}}_\star\left({\mu_\infty}|_Z\right)
%|_{ \left.\left(\overline{\Omega}N\right)^*\Gamma_N\middle/\Gamma_N\right. }
\end{equation} 
on~$Y$. Recall that~$\mu_\infty$ is concentrated on~$Z$, by~\eqref{Full measure}; its restriction~${\mu_\infty}|_Z$ is then a probability measure; hence the measure in~\eqref{lift} is a probability measure on~$Y$. The latter has a direct image by~$Y\hookrightarrow G/\GammaN$  (cf. \cite[IX \S2.3]{BouINT}), which is a probability Radon measure, and whose direct image satisfies
\begin{equation}\label{Radon lifted}
{\varphi}_\star\left({{\widetilde{\mu}}_\infty}\right)={\mu_\infty}.
\end{equation}
\end{subequations}
Its support is contained in any closed set containing~$Y$, for instance the closed subset~$\left.\overline{\Omega}N\GammaN\middle/\GammaN\right.$ (this is closed by~Corollary~\ref{coro loc compact closed}).

We stress two consequences of~\eqref{Radon lifted}: firstly
\begin{equation}\label{remarque determination}
\text{$\mu_\infty$ is determined by this lifting~${\widetilde{\mu}_\infty}$;}
\end{equation}
secondly, any left translation by an element~$g$ of~$G$ that leaves~${\widetilde{\mu}_\infty}$ invariant also leaves~$\mu_\infty$ invariant. 

\subsubsection{Lifting of the ergodic decomposition}\label{section lift ergo}
We use another instance of~\eqref{BBK Rem}. As~$\mu_\infty$ is concentrated on~$Z$, so is almost all~$W$-ergodic component~$c\mu_\Lpp$ of~$\mu_\infty$. For such a component, we may likewise restrict it to~$Z$ and obtain its unique lift it to~$Y$, which will be a probability measure, say
\[
\widetilde{c\mu_\Lpp}\restriction_Y:=
{\psi^{-1}}_\star\left(\left( c\mu_\Lpp\right)|_Z\right).
\]
Let~$(T,\tau)$ be a probability space, and~$t\mapsto\mu_t$ a measurable map from~$T$ into the space of probability measures on~$G/\Gamma$ such that we have the ergodic desintegration formula
\[
\mu_\infty=\int_{t\in T} \mu_t\hspace{1.5em}\tau(t).
\]
(cf.~\cite[\S6.1 Th.~6.2]{EinsiedlerGTM} in the case of dynamics of a continuous transformation.)
The corresponding lifting~\(\int_{t\in T} \widetilde{\mu_t}\hspace{1em}\tau(t)\) satisfies
\[
\phi_\star\left(\int_{t\in T} \widetilde{\mu_t}\hspace{1.5em}\tau(t)\right)=\int_{t\in T} \phi_\star\left(\widetilde{\mu_t}\right)\hspace{1.5em}\tau(t)
=\int_{t\in T} {\mu_t}\hspace{1.5em}\tau(t)=\mu_\infty.
\]
and is concentrated on~$Y$. This must be the unique lifting~$\widetilde{\mu_\infty}$ of~$\mu_\infty$.

Denote~$\widetilde{\mu}_{\Lpp}$  the left~$\Lpp$-invariant probability with support~$\Lpp\GammaN/\GammaN$. We have an explicit lift~$c\cdot\widetilde{\mu}_{\Lpp}$ of~$c\cdot\mu_{\Lpp}$ from~$G/\Gamma$ to~$G/\GammaN$. We will prove the identity
\[
	c\cdot\widetilde{\mu}_{\Lpp}=\widetilde{c\mu_{\Lpp}}.
\]
\begin{proof} We first recall that~$c$ belongs to~$\left(\overline{\Omega}N\Gamma\right)^*$ by~\ref{cbelongs}, which is also~$\left(\overline{\Omega}N\right)^*\GammaN$, by~\ref{gammaoops}. We note that~$\left(\overline{\Omega}N\GammaN\right)$ is right~$\Lpp$-invariant (as~$N\GammaN$ contains~$L$.) Thus the support of~$c\cdot\widetilde{\mu}_{\Lpp}$ is contained in~$\left(\overline{\Omega}N\right)\GammaN/\GammaN$, which is contained in~$X(L,W)\GammaN/\GammaN$, which is in turn equal to~$X(L,W)/\GammaN$. But~$Z$ is contained in~$X([L],W)^*$ the inverse image thereof in~$X(L,W)$ is~$X(L,W)^*$. As~$c\mu_\Lpp=\phi_\star(c\widetilde{\mu}_\Lpp)$ is concentrated on~$Z$, hence on~$X([L],W)^*$ the measure~$c\widetilde{\mu}_\Lpp$ is concentrated on~$X(L,W)^*$. Finally~$c\widetilde{\mu}_\Lpp$ is concentrated on~$X(L,W)^*$ and~$\left(\overline{\Omega}N\right)\GammaN/\GammaN$, hence on~$Y$.
\end{proof}
By Proposition~\ref{propo right inv},
\begin{equation}\label{claimLinvariant}\text{${\widetilde{\mu}_\infty}$ is the quotient of a \emph{right} $\Lpp$-invariant measure on~$G$.}\end{equation}
(Lemma~\ref{Lemma unimodular} applies to the situation.) This will be used in order to invoke Proposition~\ref{prop quotient measures}.

We will use another consequencce of the decomposition of~$\widetilde{\mu}_\infty$. 
\begin{lemma}\label{Mpp invariance de mu tilde}
 The measure~$\widetilde{\mu}_\infty$ is left~$\Mpp$-invariant.
\end{lemma}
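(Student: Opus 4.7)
The plan is to use the $W$-ergodic decomposition of $\widetilde{\mu}_\infty$ established in \S\ref{section lift ergo}, which writes this measure as an integral of components of the form $c\,\widetilde{\mu}_{\Lpp}$ with $c\in(\overline{\Omega}N)^*\GammaN$. Since left translation by any fixed $m\in G$ is a continuous linear operation on bounded measures and thus commutes with this desintegration, the $\Mpp$-invariance of $\widetilde{\mu}_\infty$ will follow as soon as I verify the invariance of each component, i.e.\ that $m\cdot(c\,\widetilde{\mu}_{\Lpp})=c\,\widetilde{\mu}_{\Lpp}$ for every $m\in\Mpp$ and every $c\in(\overline{\Omega}N)^*\GammaN$.

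In order to reduce to a purely group-theoretic statement, I would rewrite the desired identity as $(c^{-1}mc)\cdot\widetilde{\mu}_{\Lpp}=\widetilde{\mu}_{\Lpp}$, which, thanks to the left $\Lpp$-invariance of $\widetilde{\mu}_{\Lpp}$, is implied by the containment $c^{-1}mc\in\Lpp$. Factor now $c=\omega\,n\,\gamma$ with $\omega\in\overline{\Omega}$, $n\in N$ and $\gamma\in\GammaN$; note that $\overline{\Omega}\subseteq H$ by Zariski density of $\Omega$ in $H$, so $\omega n\in HN$. The very definition $\Mpp=\bigcap_{x\in HN}x\Lpp x^{-1}$ applied to $x=\omega n$ yields at once $(\omega n)^{-1}m(\omega n)\in\Lpp$. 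Moreover, $\GammaN=\Gamma\cap N(L)$ acts on $L$ by group automorphisms, so it normalises $L^+$ (unipotent one-parameter subgroups being preserved under conjugation) as well as $\Gamma\cap L(\Q_S)$, hence their product and its topological closure $\Lpp$; in particular $\gamma^{-1}\Lpp\gamma=\Lpp$. Combining these facts,
\[c^{-1}mc=\gamma^{-1}\big((\omega n)^{-1}m(\omega n)\big)\gamma\in\gamma^{-1}\,\Lpp\,\gamma=\Lpp,\]
as required.

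The argument is essentially formal once the ergodic decomposition of \S\ref{section lift ergo} is at hand, and no individual step should present a real obstacle. The one point worth underlining is that the refinement $(\overline{\Omega}N\Gamma)^*=(\overline{\Omega}N)^*\GammaN$ established in~\eqref{gammaoops} is crucial here: it allows $\gamma$ to be chosen in $\GammaN$ rather than merely in $\Gamma$, and it is precisely this constraint that ensures conjugation by $\gamma$ preserves $\Lpp$ — without it the final containment $c^{-1}mc\in\Lpp$ would fail.
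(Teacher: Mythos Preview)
Your proof is correct and follows essentially the same route as the paper: reduce to the components~$c\,\widetilde{\mu}_{\Lpp}$, note that the stabiliser of such a component is~$c\Lpp c^{-1}$, factor~$c=\omega n\gamma$ with~$\omega n\in HN$ and~$\gamma\in\GammaN$, then use that~$\GammaN$ normalises~$\Lpp$ (this is Lemma~\ref{gamma normalise Lpp}) together with the definition of~$\Mpp$. One small correction: your justification ``$\overline{\Omega}\subseteq H$ by Zariski density of~$\Omega$ in~$H$'' is not the right reason --- Zariski density alone would not give this. The correct argument is that~$\Omega$ is relatively compact in~$H$, so its closure in~$H$ is compact, hence (via the continuous inclusion~$H\hookrightarrow G$) compact and therefore closed in~$G$; this forces the closure of~$\Omega$ in~$G$ to coincide with the closure in~$H$, and in particular to lie in~$H$.
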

\begin{proof} It suffices to prove that every component~$c\mu_\Lpp$ is. The invariance group of~$c\mu_\Lpp$ is~$c\Lpp c^{-1}$. But~$c$ belongs to~$HN\GammaN$. Recall that~$\GammaN$ normalises~$\Lpp$. Hence~$c\Lpp c^{-1}$ is of the form~$x\Lpp x^{-1}$ with~$x$ in~$HN$. But such~$x\Lpp x^-1$ contains~$\Mpp$, by the very definition~\ref{defLF} of~$\Mpp$.
\end{proof}
 
\subsubsection{Lifting of the converging sequence}
We will now lift the sequence~$(g_i\cdot\mu_\Omega)_{i\geq0}$. This time we will be explicit. Mimicking the definition~\eqref{def mu Omega} of~$\mu_\Omega$, we define~$\widetilde{\mu}_\Omega$ to be the direct image in~$G/\GammaN$ (instead of~$G/\Gamma$) of the restriction~$\mu\restriction_\Omega$ to~$\Omega$ of the Haar measure~$\mu$ on~$H$ normalised so that~$\mu(\Omega)$. We can then consider the sequence
\begin{equation}\label{naivelift}
\left( g_i\cdot\widetilde{\mu}_\Omega\right)_{i\geq 0}
\end{equation}
of probabilities on~$G/\GammaN$, whose direct image in~$G/\Gamma$ gives back~$\left( g_i\cdot\mu_\Omega\right)_{i\geq 0}$:
\[
\forall i\geq 0,~\phi_\star( g_i\cdot\widetilde{\mu}_\Omega)=_i\cdot\mu_\Omega.
\]
 Note that the~$g_i\cdot\widetilde{\mu}_\Omega$ are supported inside the closed subset~$\overline{\Omega}N\GammaN/\GammaN$.
 (cf. the argument between~\eqref{keypoint} and~\eqref{purity}.)

\subsubsection{Convergence of the lifted sequence}\label{section 633}
In order to apply Proposition~\ref{Propoliftcvg} to the map
\[\overline{\Omega}N\GammaN/\GammaN\xrightarrow{\pi}\overline{\Omega}N\Gamma/\Gamma,\] we need to check a few things. As~$\overline{\Omega}N\GammaN/\GammaN$ (resp.~$\overline{\Omega}N\Gamma/\Gamma$) is locally isomorphic to~$\overline{\Omega}N\GammaN$ (resp.~$\overline{\Omega}N\Gamma$) these are locally compact spaces by Corollary~\ref{coro loc compact}.
\begin{lemma}
\begin{enumerate}
\item  The locus 
\[
S=\left\{x\in\overline{\Omega}N\Gamma/\Gamma\middle|~\#\stackrel{-1}{\pi}(\{x\})=1\right\},
\]
of points~$x$ in~$\left.\overline{\Omega}N\Gamma\middle/\Gamma\right.$, for which the fiber along~$\pi$ is a singleton  is an open subset of~$\overline{\Omega}N\Gamma/\Gamma$.
\item The projection~$\pi:\left.\overline{\Omega}N\GammaN\middle/\GammaN\right.\to\left.\overline{\Omega}N\Gamma\middle/\Gamma\right.$
is proper, and the induced res\-tric\-ted map~$\stackrel{-1}{\pi}(S)\to S$ above~$S$ is an homeomorphism.
\end{enumerate}
\end{lemma}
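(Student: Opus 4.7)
The plan is to first work out the fibre structure of $\pi$, then establish its properness as the main technical step, and finally deduce both the openness of $S$ and the homeomorphism statement from it.

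Both $Y:=\overline{\Omega}N\GammaN/\GammaN$ and $Z:=\overline{\Omega}N\Gamma/\Gamma$ are locally compact Hausdorff, being closed subspaces of $G/\GammaN$ and $G/\Gamma$ by Corollary~\ref{coro loc compact closed}, and $\pi$ is continuous. Picking a representative $y\in\overline{\Omega}N$ of $x = y\Gamma\in Z$, the fibre $\pi^{-1}(x)$ is in natural bijection with
\[
\{\gamma\GammaN \in \Gamma/\GammaN : y\gamma\in\overline{\Omega}N\},
\]
so $x\in S$ if and only if every $\gamma\in\Gamma$ with $y\gamma\in\overline{\Omega}N$ already lies in $\GammaN$.

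The main technical step is the properness of $\pi$. Given a compact $K\subset Z$, lift it to a compact $\widetilde K\subset\overline{\Omega}N$ via a local section of the étale quotient $\overline{\Omega}N\to Z$ (these exist because $\Gamma$ is discrete in $G$). Any $y\GammaN\in\pi^{-1}(K)$ admits a representative $y\in\overline{\Omega}N$ with $y\gamma\in\widetilde K$ for some $\gamma\in\Gamma$, whence $\gamma\in y^{-1}\widetilde K\subset (\overline{\Omega}N)^{-1}\widetilde K$. The inclusion $\Gamma\cap N\subseteq\GammaN$ (valid since $N\subseteq\mathbf{N}(L)$) together with the Ratner-class structure of $L$ recalled in Appendix~\ref{AppRatner} (in particular, that $\Lpp$ is open of finite index in $L(\Q_S)$ and that $\Gamma\cap L(\Q_S)$ is a lattice in $\Lpp$) reduces each relevant $\gamma$ modulo $\GammaN$ to a representative in a fixed compact subset of $G$. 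Consequently $\pi^{-1}(K)$ is the image in $Y$ of a compact subset of $\overline{\Omega}N$, and, being closed by continuity of $\pi$, is compact.

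For the openness of $S$, I argue by contradiction. Suppose $x_i\to x_0\in S$ in $Z$ with $x_i\notin S$. Each fibre $\pi^{-1}(x_i)$ then contains two distinct classes $y_i\GammaN\neq y_i\gamma_i\GammaN$ with $\gamma_i\in\Gamma\smallsetminus\GammaN$. By properness both sequences remain in a compact subset of $Y$; extracting, both converge to elements of $\pi^{-1}(x_0)=\{y_0\GammaN\}$. Choose representatives $\tilde y_i = y_i\delta_i\to y_0$ and $\tilde y_i' = y_i\gamma_i\delta_i'\to y_0$ in $G$, with $\delta_i,\delta_i'\in\GammaN$. Then $\tilde y_i^{-1}\tilde y_i' = \delta_i^{-1}\gamma_i\delta_i'$ lies in $\Gamma$ and tends to $e$; by discreteness of $\Gamma$ it equals $e$ for large $i$, forcing $\gamma_i=\delta_i(\delta_i')^{-1}\in\GammaN$, a contradiction.

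Finally, the restriction $\pi|_{\pi^{-1}(S)}\colon\pi^{-1}(S)\to S$ is a continuous bijection by the very definition of $S$, it is proper (any compact $K\subset S$ stays compact in $Z$, so $\pi^{-1}(K)$ is compact in $Y$ by the previous step and lies in the open $\pi^{-1}(S)$), and a proper continuous bijection between locally compact Hausdorff spaces is closed, hence a homeomorphism. The hardest step is the properness argument: the naive quotient $G/\GammaN\to G/\Gamma$ is not proper in general, and it is the specific geometry of $\overline{\Omega}N$ relative to $\GammaN$, controlled by the Ratner-class hypothesis on $L$, that makes its restriction $\pi$ proper.
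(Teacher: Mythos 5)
Your proof of part~(1) (openness of $S$) and of the homeomorphism in part~(2), conditional on properness, is correct, but it departs from the paper: the paper establishes openness independently, via the counting function $\chi_E$ of Lemma~\ref{upper semicontinuity} and its Corollary~\ref{coro upper semicontinuity} (the locus $\chi_E<2$ is open, and $S$ is precisely $\{\chi_E=1\}\cap\overline{\Omega}N\Gamma/\Gamma$). Your sequential argument from properness is shorter, and the trade-off is that it introduces a logical dependency of openness on properness that the paper avoids. The closing homeomorphism step (bijective continuous proper map between locally compact Hausdorff spaces is a homeomorphism) matches the paper's use of Lemmas~\ref{lemma restriction proper} and~\ref{Lemma homeo}.

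The properness argument, however, has a genuine gap. You place $\gamma$ in $(\overline{\Omega}N)^{-1}\widetilde K$, which is unbounded because $N$ is, and then assert that ``the Ratner-class structure of $L$ \ldots reduces each relevant $\gamma$ modulo $\GammaN$ to a representative in a fixed compact subset of $G$.'' The facts you cite — that $\Lpp$ is open of finite index in $L(\Q_S)$ and that $\Gamma\cap L(\Q_S)$ is a lattice in $\Lpp$ — are internal to $L$ and do not yield the needed boundedness. What the argument actually requires is the discreteness of $\Gamma\cdot p_L$ in the linearising representation $V=\bigwedge^{\dim L}\lie{g}$ (Lemma~\ref{Gorbit discrete}, which rests on $p_L$ being $\Q$-rational and $\Gamma$ being $S$-arithmetic), together with the identification $\Gamma/\Gamma_N\simeq\Gamma\cdot p_L$ where $\Gamma_N=\Gamma\cap N$: then $\gamma\cdot p_L$ ranges over the finite set $\Gamma\cdot p_L\cap\overline{\Omega}^{-1}\widetilde K\cdot p_L$ (discrete meets compact), which bounds $\gamma\Gamma_N$ and \emph{a fortiori} $\gamma\GammaN$. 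Without this step your argument has no source of finiteness. A secondary issue: your ``lift $K$ to a compact $\widetilde K\subset\overline{\Omega}N$'' is not automatic, since $\overline{\Omega}N\to Z$ is locally injective but not obviously a covering (the set $\overline{\Omega}N$ is closed, not open, so the image of a small neighbourhood need not be a neighbourhood). The paper sidesteps this by first reducing to the properness of $\overline{\Omega}N/\Gamma_N\to\overline{\Omega}N\Gamma/\Gamma$ and working there, rather than lifting all the way to $\overline{\Omega}N$.
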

\begin{proof}[Proof of the openness of~$S$ in statement~(1)]
The subset~$E=\overline{\Omega}N\cdot p_L=\overline{\Omega}p_L$ of~$V$ is compact. We
consider the associated function~$\chi_E$ as in Lemma~\ref{upper semicontinuity}. According to Corollary~\ref{coro upper semicontinuity} to this Lemma~\ref{upper semicontinuity},
%this is an upper semicontinuous function on~$G/\Gamma$. In particular
the locus
\[
\chi_E< 2
\]
defines an open subset~$U$ of~$G/\Gamma$. We necessarily have~$\chi_E\geq 1$ on~$\overline{\Omega}N\Gamma/\Gamma$.
By construction the locus~$S$ is the locus on~$\overline{\Omega}N\Gamma/\Gamma$ where~$\chi_E=1$. This~$S$ is then the intersection of the open subset~$U$ with our~$\overline{\Omega}N\Gamma/\Gamma$, which open is an open subset of the latter.

% Let~$C$ be an arbitrary large compact subset of~$N$. For~$c$ in~$C$, we consider the fiber~$c\Gamma/\Gamma_N$ of~$\varphi$ at~$c$, and its intersection with~$\overline{\Omega}N/\Gamma_N$, namely
% $$c\Gamma/\Gamma_N\cap \overline{\Omega}N/\Gamma_N.$$
% an element of this intersection can be written
% $$ c\gamma\Gamma_N= \omega n \Gamma_N$$
% with~$\gamma\in\Gamma$, with~$\omega\in\overline{\Omega}$ and with~$n$ in~$N$.
%
%For such~$\gamma,\omega$ and~$n$, we have
%$$\gamma\Gamma_N=c^{-1}\omega n \Gamma_N.$$
%The coset~$\gamma N$ belongs to~$C^{-1}\overline{\Omega}N/N$, which is the image of a compact subset in the quotient space~$G/N$. It is a compact subset. But the orbit~$\Gamma N/N$ is closed, and discrete. Hence the coset~$\gamma N$ belong to the finite set
%$$ F_C:=C^{-1}\overline{\Omega}N/N\cap \Gamma N/N.$$
%
%Consider the function
%\begin{equation}\label{semicontinu}
%\chi:c\mapsto \# \left( c\Gamma/\Gamma_N \cap \overline{\Omega}N/\Gamma_N \right)
%\end{equation}
%over~$C$. It is the sum over the cosets~$f\Gamma_N$ in~$F_C$ of the characteristic function~$\chi_{f\Gamma_N}$ of the set
%$$\{c\in G~|~cf\Gamma_N\in \overline{\Omega} N/\Gamma_N \}= \overline{\Omega} N f^{-1}.$$
%The latter is a closed subset. The function~$\chi_{f\Gamma_N}$ is upper semi-continuous. Then
%$$\chi=\sum_{f\Gamma_N\in F_C}\chi_{f\Gamma_N}$$
%is upper semi-continuous.
%
%As a consequence, the locus~$\chi\geq 2$ is closed in~$C$. As~$C$ is arbitrarily large in~$N$, the locus~$\chi\geq 2$ is closed in~$N$.
%
%As the function~$\chi$ is right~$\Gamma$-invariant, we get the conclusion.
\end{proof}
\begin{proof}[Proof of the properness of~$\pi$ in statement~(2)] Let~$\Gamma_N=\Gamma\cap N$. It will suffice to prove instead that the map~$\overline{\Omega}N/\Gamma_N\to \overline{\Omega}N\Gamma/\Gamma$ is proper. Indeed, for a compact subset~$K$ of~$\overline{\Omega}N\Gamma/\Gamma$ its inverse image in~$\overline{\Omega}N/\Gamma_N$, proved to be compact, will maps surjectively to the inverse image of~$K$ in~$\overline{\Omega}N\GammaN/\GammaN$, as
\[
\overline{\Omega}N/\Gamma_N\to\overline{\Omega}N\GammaN/\GammaN
\]
is surjective. The latter inverse image, being the image of a compact, is compact.

By Corollary~\ref{coro loc compact closed}, we note that~$\overline{\Omega}N\Gamma/\Gamma$ is closed of~$G/\Gamma$, and that~$\overline{\Omega}N/\Gamma_N$ is closed in~$G/\Gamma_N$. It will suffice to prove the properness of~$\overline{\Omega}N/\Gamma_N\to G/\Gamma$. 

Let~$K$ be a compact of~$G/\Gamma$. This~$K$ is contained in~$C\Gamma/\Gamma$ for a sufficiently large compact subset~$C$ of~$G$.  We want to show that its inverse image
\[
\stackrel{-1}{\pi}(C\Gamma/\Gamma)= C\Gamma/\Gamma_N\cap\overline{\Omega}N/\Gamma_N
\]
is relatively compact in the closed subset~$\overline{\Omega}N/\Gamma_N$ or, equivalently, in~$G/\GammaN$, as~$\overline{\Omega}N/\Gamma_N$ is closed in~$G/\Gamma_N$. We have
\[
C\Gamma/\Gamma_N\cap\overline{\Omega}N/\Gamma_N
\subseteq
C\cdot
\left[
\Gamma/\Gamma_N\cap C^{-1}\overline{\Omega}N/\Gamma_N
\right].
\]
It suffices to see that the square bracket is bounded. As~$\Gamma/\Gamma_N\xrightarrow{\sim}\Gamma N/N$,
we have
\[
\Gamma/\Gamma_N\cap C^{-1}\overline{\Omega}N/\Gamma_N
\simeq
\Gamma N/N\cap C^{-1}\overline{\Omega}N/N\simeq \Gamma\cdot p\cap C^{-1}\overline{\Omega}\cdot p.
\]
But~$\Gamma\cdot p$ is discrete in~$V$ and~$C^{-1}\overline{\Omega}\cdot p$ is compact. These intersection
are finite, and in particular bounded. We have proved the properness.

\end{proof}
\begin{proof}[Proof of statement~(2)] 
%
%Let~$K$ be a compact subset of~$\overline{\Omega}N\Gamma/\Gamma$. It is contained in~$\overline{\Omega}C$, for a sufficiently large compact subset~$C$ of~$N$. As above, its inverse image~$\stackrel{-1}{\pi}(K)$ in~$\overline{\Omega}N/\Gamma_N$ is contained in~$\overline{\Omega}C F_C$, which is compact in~$G/\Gamma_N$. The intersection with the closed subset~$\overline{\Omega}N/\Gamma_N$ is a compact~$\overline{\Omega}C F_C\cap\overline{\Omega}N/\Gamma_N$.
%
%The inverse image~$\stackrel{-1}{\pi}(K)$ is contained in a compact of~$\overline{\Omega}N/\Gamma_N$. But it is closed, as~$K$ is closed and~$\pi$ is continuous. Thus~$\stackrel{-1}{\pi}(K)$ is compact. As~$K$ is arbitrary, the application~$\pi$ is proper.
By lemma~\ref{lemma restriction proper}, the restriction~$S\to \pi(S)$ of~$\pi$ is also proper. This is a bijective continuous proper map between locally compact spaces. By Lemma~\ref{Lemma homeo} this an homeomorphism.
This concludes the proof of the Lemma~\ref{naivelift}.
\end{proof}
We may now apply Proposition~\ref{Propoliftcvg}, with~$X\xrightarrow{\pi}Y=\overline{\Omega}N/\Gamma_N\xrightarrow{\pi}\overline{\Omega}N\Gamma/\Gamma$, and for~$U_X\xrightarrow{\sim}U_Y=\stackrel{-1}{\pi}(S)\to S$. We note that~$
\left.\left(\overline{\Omega}N\Gamma\right)^*\Gamma\middle/\Gamma\right.$, on which~$\mu_\infty$ is concentrated, is contained in~$S$ (cf. Proposition~\ref{No intersection}). The sequence~\eqref{naivelift} has a tight limit, and this limit is given by~\eqref{lift}.
\begin{equation}\label{liftlimit}
\lim_{i\geq 0} g_i\cdot \widetilde\mu_\Omega=\widetilde\mu_\infty.
\end{equation}

\section{Proof of Theorem~\ref{Theorem} -- Reduction, induction and conclusion}
\emph{We begin our induction process. To facilitate reading, we briefly explain the strategy. We will eventually work with a closed subset~$X$ of a quotient~$G/(\GammaN\Lpp)$ of our homogeneous space~$G/\GammaN$, on which a subquotient group~$\widehat{G}$ of~$G$ acts. The measure~$\widetilde{\mu}_\infty$ on~$G/\GammaN$ will correspond to a measure~$\widehat{\mu}_\infty$ concentrated on~$X$, and the ergodic components of~$\widetilde{\mu}_\infty$ will be corresponding to Dirac masses: passing to~$X$ we have killed the dynamics of~$W$. Actually~$W$ maps in~$\widehat{G}$ onto the neutral element. The measure~$\widehat{\mu}_\infty$ will be likewise a limit of translated measures~$g_i\cdot\widehat{\mu}_\infty$. We will then prove by contradiction that, on~$X$ the acting elements~$g_i$ on will involve no more dynamic than the case a bounded sequence, which is trivial: the limit~$\widehat{\mu}_\infty$ is then obvious, it will just be some translate~$g_\infty\widehat{\mu}_\infty$.}

\subsection{Reduction to a quotient space}\label{induction} 
By Lemma~\ref{Lemma unimodular} the subgroup~$\GammaN\cdot\Lpp$ of~$G$ is a closed (and unimodular) subgroup.
Let us define the measure~${\widehat{\mu}_\infty}$ as the direct image of~${\widetilde{\mu}_\infty}$ along the quotient map\footnote{As a piece of warning, though map~$G/\Lpp\to G/(\GammaN\Lpp)$ is a quotient map by the right action of~$\GammaN$, a group which normalises~$\Lpp$, in contrast our map~\eqref{induction quotient space} is not a necessarily quotient by the right action of~$\Lpp$, which may not be defined. Nevertheless, the notion of right~$\Lpp$ ``orbits'' in~$G/\GammaN$ does make sense, and~\eqref{induction quotient space} is a quotient map by the associated equivalence relation. This cumbersomeness is the reason we will rely, with~Proposition~\ref{prop quotient measures}, on the theory of quotient measures, on~$G$.}
\begin{equation}\label{induction quotient space}
G/\GammaN\xrightarrow{}\left.G\middle/\left(\GammaN\cdot \Lpp\right)\right..
\end{equation} 
Likewise, we define~$\widehat{\mu}_\Omega$ to be the direct image of~$\widetilde{\mu}_\Omega$, and hence, of~$\mu\restriction_\Omega$. Thanks to the structural property~\eqref{claimLinvariant} of~$\widetilde{\mu}_\infty$, we may apply Proposition~\ref{prop quotient measures}, from which we learn that~${\widehat{\mu}_\infty}$ and~${\widetilde{\mu}_\infty}$ determine each other completely.
\subsubsection{}
We stress that, in order to establish formula~\eqref{limitformula} identifying~$\mu_\infty$ in the Theorem~\ref{Theorem}, it will suffice to identify~$\widehat{\mu}_\infty$ (cf. the observation~\eqref{remarque determination}). More precisely, in order to prove
\begin{equation}
\text{$\widetilde{\mu}_\infty$ is of the form~$
g_\infty\cdot\int_{\omega\in\Omega} \left(\omega\cdot n_\infty\cdot\widetilde{\mu}_{\Lpp}\right)~~\mu(\omega)$}
\end{equation}
which is clearly the quotient of the right invariant measure, with~$\beta_{\Lpp}$ a Haar measure of~$\Lpp$, of the right~$\Lpp$-invariant measure
\[
g_\infty\cdot\int_{\omega\in\Omega} \left(\omega\cdot n_\infty\cdot\beta_{\Lpp}\right)~~\mu(\omega)
\]
it is enough to prove
\begin{equation}
\widehat{\mu}_\infty=g_\infty\cdot\int_{\omega\in\Omega} \omega\cdot n_\infty\cdot\delta_{\Lpp\GammaN}~~\mu(\omega).
\end{equation}

%N.B.: the space~$G\middle/\left(\GammaN\cdot \Lpp\right)$ is a quotient of~$G/\GammaN$ not by a right action of~$\Lpp$, which may not be defined, but a relation...
\subsubsection{}
Similarly, again by Proposition~\ref{prop quotient measures}, for each~$g$ in~$G$ the probability measure~$g\widetilde{\mu}_\infty$ is uniquely determined by~$g\widehat{\mu}_\infty$ as its unique lift which is a quotient of a right $\Lpp$-invariant measure on~$G$. In particular, direct image map induces 
\begin{equation}\label{G-bij}
\text{a~$G$-equivariant bijection~}
G\cdot\widetilde{\mu}_\infty\xrightarrow{\sim} G\cdot\widetilde{\mu}_\infty.
\end{equation}

\subsection{About~$W$ and left-invariance of~$\mu_\infty$}\label{subsubsection-W} The definitions of~$W$ and of~$M=\M(\Q_S)$ are found at~\eqref{def W} and in~\ref{defLF} respectively.

%By~\ref{}, $M$ is an open subgroup of finite index in~$\M(\Q_S)$. %It follows~$\M^+\subseteq M$, and hence~$M^+=\M^+$.
These~$M$ and~$W$ are related by the following.
% ERREUR Lemme OK DANS ENONCE CAR + EST INSENSIBLE AUX SOUS GROUPES OUVERTS
\begin{lemma}\label{Lemme W M} We have
\begin{equation}\label{WM}
W=(\Mpp)^+=M^+.
\end{equation}
\end{lemma}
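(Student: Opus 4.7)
The statement splits naturally into two equalities $(\Mpp)^+ = M^+$ and $M^+ = W$; the second will be shown by proving $M^+ \subseteq W$ and $W \subseteq M^+$ separately. The first equality is essentially formal, the second inclusion $M^+ \subseteq W$ is a direct consequence of Lemma~\ref{Mpp invariance de mu tilde}, and the reverse inclusion $W \subseteq M^+$ is where the real work (but actually a short step) takes place, using~\eqref{keyvariant} to force the inclusion of $\overline{\Omega}$ into $X(L,W)$.

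For the first equality, $\Mpp$ is an open subgroup of finite index in $M$ (this is Lemma~\ref{Lemme M finite index}). I will argue that every one-parameter unipotent $\Q_S$-subgroup $U=\U(\Q_S)\subseteq M$ automatically lies in $\Mpp$: at an archimedean place, $\U_v(\R)$ is connected and must map trivially to the finite discrete group $M/\Mpp$; at an ultrametric place $v$, $\U_v(\Q_v)$ is isomorphic to $(\Q_v,+)$, which is divisible, so its continuous image in the finite group $M/\Mpp$ is trivial. Thus all generators of $M^+$ sit in $\Mpp$, giving $M^+\subseteq (\Mpp)^+$; the reverse inclusion is automatic.

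For $M^+\subseteq W$, Lemma~\ref{Mpp invariance de mu tilde} states that $\widetilde{\mu}_\infty$ is left $\Mpp$-invariant. Since the quotient map $\varphi:G/\GammaN\to G/\Gamma$ is $G$-equivariant and $\varphi_\star(\widetilde{\mu}_\infty)=\mu_\infty$ (by~\eqref{Radon lifted}), the probability $\mu_\infty$ is itself left $\Mpp$-invariant. Hence $\Mpp\subseteq \Stab_G(\mu_\infty)$, so $(\Mpp)^+\subseteq\Stab_G(\mu_\infty)^+=W$, which combined with the first step gives $M^+\subseteq W$.

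For the reverse inclusion $W\subseteq M^+$ -- which I expect to be the subtlest point although it will also be short -- the crucial input is~\eqref{keyvariant}: $\overline{\Omega}\cdot p_L\subseteq D\subseteq A_{L_0}\subseteq V_W$, where the last inclusion comes from the very definition~\eqref{def A} of $A_{L_0}$. Since $X(L,W)\cdot p_L = (G\cdot p_L)\cap V_W$ by the linearisation construction (see the paragraph after~\eqref{def A}), we obtain $\overline{\Omega}\cdot p_L\subseteq X(L,W)\cdot p_L$. Using that $N$ is the stabiliser of $p_L$ and that $X(L,W)$ is right-$N(L)$-invariant by~\eqref{right invariance N(L)}, this yields $\overline{\Omega}\subseteq X(L,W)\cdot N = X(L,W)$. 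The condition ``$h^{-1}Wh\subseteq L$'' defining $X(L,W)\cap H$ is $\Q_S$-Zariski closed in $H$; since it holds on $\Omega$ and $\Omega$ is Zariski dense in $H$, it holds on all of $H$. Hence $W\subseteq hLh^{-1}$ for every $h\in H$, i.e.\ $W\subseteq\bigcap_{h\in H}hLh^{-1}=M$. Since $W=W^+$ is generated by one-parameter unipotent subgroups, each of its generators lies in $M$ and hence in $M^+$, giving $W\subseteq M^+$. Assembling the three inclusions concludes the proof.
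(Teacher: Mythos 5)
Your proof is correct and follows essentially the same route as the paper's. The same three pieces appear in the same roles: the equality $(\Mpp)^+ = M^+$ via divisibility of unipotent groups and finite index of $\Mpp$ in $M$ (which is precisely the content of the paper's Corollary~\ref{coro finite index +}, which you re-derive inline); the inclusion $(\Mpp)^+\subseteq W$ via $\Mpp$-invariance of $\mu_\infty$ (you route this through Lemma~\ref{Mpp invariance de mu tilde} and push-forward along $\varphi$, whereas the paper reads it off directly from the ergodic decomposition of $\mu_\infty$ --- but Lemma~\ref{Mpp invariance de mu tilde} is itself proved by exactly that ergodic-component argument); and the inclusion $W\subseteq M^+$ via the containment of $\overline{\Omega}$ in $X(L,W)$ and Zariski density of $\Omega$ in $H$ (you re-derive this from~\eqref{keyvariant}, whereas the paper invokes~\eqref{keypoint}, which is the already-established reformulation of the same fact). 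The only cosmetic advantage of your step 3 is that going through $W\subseteq hLh^{-1}$ for all $h\in H$ and then using $W=W^+$ at the end avoids an awkward chain of equalities of intersections of conjugates of $L^+$ in the paper's writeup, which reads as a small typo; the mathematical content is identical.
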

\begin{proof}
By Lemma~\ref{Lemme M finite index}, the group~$\Mpp$ is open of finite index in~$M$, the latter being the Zariski closure of the former. We may then apply Corollary~\ref{coro finite index +}, from which we obtain~\eqref{M+ et Mpp+} whose translation in our setting readily is the second identity~$(\Mpp)^+=M^+$ in~\eqref{WM}

We argue by double inclusion, establishing~\eqref{WM eq1} and~\eqref{WM eq2} below. 

% PROPAGATION DE LERREUR OK PAR Lpp Gamma_N invariant (Lemme final)
We rely on~\eqref{cbelongs}, namely that each of the~$W$-ergodic components of~$\mu_\infty$ are of the form~$c\cdot\mu_{\Lpp}$ where~$c$ belongs to~$\left(\overline{\Omega}N\Gamma\right)^*$ (cf.~{\S}\ref{RatnerNotations}). By~\eqref{gammaoops}, one gets
\[
c\in \left(\overline{\Omega}N\GammaN\right)^*\subseteq\overline{\Omega}N\GammaN.
\]
%We may write~$c=hn$ with~$h\in H$ and~$n\in N(L)$. 
Then~$c\cdot\mu_{\Lpp}$ is left invariant under~$c\Lpp n^{-1}c^{-1}$. Hence
%ERREUR FAUT N conjugaison aussi
\begin{equation}\label{M invariance}
\text{$\mu_\infty$ is left invariant under~$ \Mpp:= \bigcap_{c\in HN\GammaN} c{\Lpp}c^{-1}$.}
\end{equation}
(N.B.: we may drop the~$\GammaN$ factor by Lemma~\ref{gamma normalise Lpp}.)
Thus, we have~$\Mpp\subseteq \Stab(\mu_\infty)$, whence
\begin{equation}\label{WM eq1}
(\Mpp)^+\leq \Stab(\mu_\infty)^+=W.
\end{equation}
%For each~$h$ in~$H$, the subgroup~$h{\Lpp}h^{-1}\cap M$ of~$hLh^{-1}\cap M =M$ is of finite index (at most~$[hLh^{-1}:h{\Lpp}h^{-1}]=[L:\Lpp]$). As~$M^+$ has no proper subgroup of finite index, we have~$h{\Lpp}h^{-1}\cap M^+=M^+$. As~$h$ is arbitrary~$M^+\subseteq\Mpp\subseteq\Stab(\mu_\infty)$. Hence~$M^+\subseteq\Stab(\mu_\infty)^+=W$.

For the converse inclusion we invoke~\eqref{keypoint}, that~$\overline{\Omega}N$ is contained in~$X(L,W)$. Namely,~$W\subseteq cLc^{-1}$ for any $c$ in~$\overline{\Omega}N$. Hence~$W=W^+\subseteq \left(cLc^{-1}\right)^+=cL^+c^{-1}$. It follows 
$$W\subseteq 
\bigcap_{c \in \overline{\Omega}N}c{L^{+}}c^{-1}
= \bigcap_{\omega \in \overline{\Omega}} \omega{L^{+}}\omega^{-1}
= \bigcap_{\omega\in H} h{L}h^{-1} = M.$$
(The equalities arise from~$L^+$ being characteristic subgroup of~$L$ and~$\Omega$ being Zariski dense in~$H$.)
We conclude with the reverse inclusion
\begin{equation}\label{WM eq2}
W=W^+\subseteq M^+.
\end{equation}
\end{proof}
\begin{lemma}\label{Lemma Mpp trivial sur X}
 In~$\left.G\middle/\left(\GammaN\cdot \Lpp\right)\right.$ we consider the closed subset
\begin{equation}\label{def X}
X=\overline{\left.HN\GammaN\middle/\left(\GammaN\cdot \Lpp\right)\right.}.
\end{equation}

Then the left action of~$HF$ stabilises~$X$ and the subgroup~$\Mpp$ of~$HF$ fixes every point of~$X$.
\end{lemma}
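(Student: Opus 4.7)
\bigskip

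The plan is to verify both assertions first on the dense subset $HN\GammaN/(\GammaN\Lpp)$ of $X$ and then transfer to $X$ itself by continuity, since left multiplication by any fixed element of $G$ is a homeomorphism of $G/(\GammaN\Lpp)$. Thus the stabiliser of $X$ under left translation is closed in $G$, and so is the fixator of any single point; both conditions are therefore closed properties that propagate from the dense part to the closure.

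For the first statement (that $HF$ stabilises $X$), I would first note that $H$ normalises $F$. Indeed $F = \bigcap_{h\in H}hNh^{-1}$ is by construction $H$-invariant under conjugation, so $HF = FH$. Next, I would observe that $F\subseteq N$ (take $h=e$ in the intersection defining $F$), and moreover that for every $f\in F$ and $h\in H$ the element $h^{-1}fh$ lies in $N$, so $fh = h(h^{-1}fh) \in HN$. Hence $FH\subseteq HN$. Putting these together,
\[
 HF\cdot HN\GammaN \;=\; FH\cdot HN\GammaN \;\subseteq\; HN\cdot N\GammaN \;=\; HN\GammaN,
\]
so that $HF$ already stabilises $HN\GammaN/(\GammaN\Lpp)$ as a subset of $G/(\GammaN\Lpp)$; by the above continuity argument it stabilises the closure~$X$.

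For the second statement, I would exploit the very definition $\Mpp = \bigcap_{x\in HN} x\Lpp x^{-1}$ from~\eqref{defLF}. For any $c\in HN$ and any $m\in\Mpp$, one has $c^{-1}mc\in\Lpp$, whence
\[
 m\cdot c\GammaN\Lpp \;=\; c\cdot (c^{-1}mc)\cdot\GammaN\Lpp \;\subseteq\; c\Lpp\GammaN\Lpp.
\]
Since $\GammaN\subseteq N(L)$ normalises $L$ and therefore also $\Lpp$ (as already invoked earlier in the paper, e.g.\ through Lemma~\ref{gamma normalise Lpp}), the product $\GammaN\Lpp$ is a subgroup and $\Lpp\GammaN\Lpp = \GammaN\Lpp$. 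Hence $m\cdot c\GammaN\Lpp = c\GammaN\Lpp$. Any point of $HN\GammaN/(\GammaN\Lpp)$ has a representative of the form $c\GammaN\Lpp$ with $c\in HN$ (absorbing the $\GammaN$-factor into the quotient), so every element of $\Mpp$ fixes every point of $HN\GammaN/(\GammaN\Lpp)$; by continuity the fixed locus contains~$X$.

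There is no real obstacle here: both claims reduce to simple group-theoretic identities involving the defining intersections of $F$ and $\Mpp$, together with the fact that $\GammaN$ normalises $\Lpp$. The only points requiring a moment of care are (i) the observation that $H$ normalises $F$, which makes the cancellation $HF\cdot H = HF$ possible, and (ii) ensuring that $\GammaN\Lpp$ is a genuine subgroup so that $\Lpp\GammaN\Lpp$ collapses to $\GammaN\Lpp$ — both of which are already built into the earlier framework.
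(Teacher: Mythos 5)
Your proof is correct and takes essentially the same approach as the paper: verify the two assertions on the dense subset $HN\GammaN/(\GammaN\Lpp)$ by direct group-theoretic computations with the defining intersections of $F$ and $\Mpp$, then transfer to the closure $X$ by continuity. The paper's own proof simply declares this immediate and notes the continuity step, so you have merely written out the routine details it omits.
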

\begin{proof} The proof is immediate and straightforward. The only thing worth mentioning is that the action of~$\Mpp$ and~$HF$ being continuous, the closure of an~$HF$ invariant subset is invariant, and of a $\Mpp$-fixed subset is closed $\Mpp$-fixed.
\end{proof}
A rephrasing gives the following.
\begin{corollary} \label{coro X}
The space~$X$ is a locally compact space acted upon on the left by~$\widehat{G}=HF/\Mpp$.
\end{corollary}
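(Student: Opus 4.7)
The plan is to derive the corollary as a rather formal consequence of Lemma~\ref{Lemma Mpp trivial sur X} together with standard facts about quotients of locally compact groups, breaking the argument into two essentially independent parts.

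First I would verify local compactness of $X$. Recall from Lemma~\ref{Lemma unimodular} that $\GammaN \cdot \Lpp$ is a closed (and unimodular) subgroup of $G$. Since $G$ is a locally compact Hausdorff topological group and $\GammaN \cdot \Lpp$ is closed in $G$, the homogeneous space $G/(\GammaN \cdot \Lpp)$ is a locally compact Hausdorff space in its quotient topology (standard fact, see e.g.~\cite[III \S4.6]{TOP}). By definition $X = \overline{HN\GammaN/(\GammaN \cdot \Lpp)}$ is a closed subset of $G/(\GammaN \cdot \Lpp)$, and a closed subspace of a locally compact Hausdorff space is itself locally compact.

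Next I would factor the action. By the discussion following the definitions in~\eqref{defLF} of~{\S}\ref{secnotations}, $\Mpp$ is a normal subgroup of $HF$, so the quotient group $\widehat{G} := HF/\Mpp$ is well defined as a topological group. Lemma~\ref{Lemma Mpp trivial sur X} furnishes a continuous left action of $HF$ on $X$ (the restriction of the continuous left translation action of $G$ on $G/(\GammaN \cdot \Lpp)$) under which every element of the normal subgroup $\Mpp$ acts trivially on $X$. By the universal property of the quotient topology on $HF/\Mpp$, this action descends to a continuous left action of $\widehat{G} = HF/\Mpp$ on $X$.

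There is no real obstacle here; the substance was already packaged in Lemma~\ref{Lemma unimodular} and Lemma~\ref{Lemma Mpp trivial sur X}. The only point that requires any care is to remember that one needs $\Mpp$ to be \emph{normal} in $HF$ (not merely contained in $HF$) so that the group $\widehat{G}$ is defined; this is explicitly recorded in~{\S}\ref{secnotations} just after~\eqref{defLF}. The remainder is a routine descent of a continuous group action through a quotient by a normal subgroup acting trivially.
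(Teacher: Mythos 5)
Your proof is correct and matches the paper's intent: the paper itself dismisses the corollary as ``a rephrasing'' of Lemma~\ref{Lemma Mpp trivial sur X}, and your write-up simply unpacks the two routine ingredients left implicit there, namely closedness of $X$ in the locally compact space $G/(\GammaN\cdot\Lpp)$ (via Lemma~\ref{Lemma unimodular}) and descent of the continuous $HF$-action through the normal subgroup $\Mpp$ acting trivially. Nothing to add.
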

Combining Lemma~\ref{Lemma Mpp trivial sur X} with Lemma~\ref{Lemme W M} gives the following.
\begin{corollary}The left action of~$W$ stabilises~$X$ and fixes every one of its point.
\end{corollary}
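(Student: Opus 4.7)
The plan is to assemble the corollary directly from the two lemmas immediately preceding it, using only the inclusion $W\subseteq\Mpp$. Recall from Lemma~\ref{Lemme W M} that $W=(\Mpp)^+=M^+$, and that by the definition~\eqref{Notation Lp} of the operator $(-)^+$, one has $(\Mpp)^+\subseteq\Mpp$ (it is the subgroup generated by the one-parameter unipotent algebraic subgroups contained in $\Mpp$). Thus $W\subseteq\Mpp$. Since $\Mpp\subseteq HF$ by construction, the first assertion follows: $W$ is a subgroup of $HF$, and by Lemma~\ref{Lemma Mpp trivial sur X} the group $HF$ stabilises $X$, so $W$ stabilises $X$.

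For the second assertion, again by Lemma~\ref{Lemma Mpp trivial sur X}, every element of $\Mpp$ fixes every point of $X$. Since $W\subseteq\Mpp$, the same is true for every element of $W$. This gives the corollary.

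The argument is essentially bookkeeping: the substantial content was already packaged into Lemma~\ref{Lemme W M} (identifying $W$ with $(\Mpp)^+$) and Lemma~\ref{Lemma Mpp trivial sur X} (the behaviour of $\Mpp$ on the quotient $X$). I do not anticipate any obstacle, the only point worth stating explicitly is the trivial inclusion $(\Mpp)^+\subseteq\Mpp$ which is used to pass from the $\Mpp$-triviality on $X$ to the $W$-triviality on $X$.
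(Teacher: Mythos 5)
Your proof is correct and follows exactly the route the paper intends: the paper's own justification is the single line "Combining Lemma~\ref{Lemma Mpp trivial sur X} with Lemma~\ref{Lemme W M} gives the following," and you have simply spelled out the (trivial but worth noting) inclusion $W=(\Mpp)^+\subseteq\Mpp$ that makes the combination work.
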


\subsection{Stabilisers, subquotient, and unipotent elements}
As a consequence of~\eqref{G-bij}, the stabiliser of~${\widehat{\mu}_\infty}$ in~$G$ equals the stabiliser of~${\widetilde{\mu}_\infty}$ in~$G$. We write it
\begin{equation}\label{stab identity}
\Stab_G(\widetilde{\mu}_\infty)=\Stab_G(\widehat{\mu}_\infty),
\end{equation}
with an index denoting the acting group.
Another direct image of~$\widetilde{\mu}_\infty$, the one along~$\varphi:G/\GammaN\to G/\Gamma$,  is~$\mu_\infty$. The~$G$-equivariance of~$\varphi_\star$ implies
\[
\Stab_G(\widetilde{\mu}_\infty)\subseteq\Stab_G(\mu_\infty).
\]
A particular consequence is
\[
\Stab_G(\widetilde{\mu}_\infty)^+\subseteq\Stab_G(\mu_\infty)^+=W.
\]
Recall from~Lemma~\ref{Lemme W M} that~$W=M^+\subseteq \Mpp$. By Lemma~\ref{Mpp invariance de mu tilde}, the group~$\Mpp$ is a subgroup of~$\Stab_G(\widetilde{\mu}_\infty)$. This gives a the reverse of the preceding inclusion, whence the identity
\[
\Stab_G(\widetilde{\mu}_\infty)^+=\Stab_G(\mu_\infty)^+=W.
\]

\subsubsection{} We will consider another acting group than~$G$, which is its following subquotient.
By~Lemma~\ref{lemdebut}, the subgroups~$H$ and~$F$ of~$G$ both normalise~$M$. As~$F$ is contained~$N$ they both normalise~$\Mpp$. We can define a subquotient group of~$G$ by
\begin{equation}\label{Ghat}
\widehat{G}:=HF/\Mpp.
\end{equation}
%Attention HF\Mpp/\Mpp.
As~$\Mpp$ stabilises~$\widetilde{\mu}_\infty$, hence also stabilises~${\widehat{\mu}}_\infty$. We can hence consider
\[
\Stab_{HF}(\widetilde{\mu}_\infty)/\Mpp=\Stab_{\widehat{G}}(\widetilde{\mu}_\infty)=\Stab_{\widehat{G}}(\widehat{\mu}_\infty)=\Stab_{HF}(\widehat{\mu}_\infty)/\Mpp.
\]
We want to study~$\widehat{G}$ in terms of an algebraic group, the algebraic subquotient
\begin{equation}\label{Gcheck}
\widecheck{\G}:=\H\F/\M\text{ and denote }
\widecheck{G}=\widecheck{\G}(\Q_S).
\end{equation}
The groups~$\widehat{G}$ and~$\widecheck{G}$ are closely related by the following.
\begin{lemma} The map
\[
\widehat{G}\xrightarrow{}\widecheck{G}
\]
given by~$g\Mpp\mapsto gM$ 
is a a open proper map, with finite kernel and finite cokernel.
\end{lemma}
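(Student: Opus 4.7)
The plan is to verify four properties in sequence: well-definedness, finiteness of the kernel, openness, and finiteness of the cokernel together with properness.

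First, since $\Mpp \subseteq M$ by Lemma~\ref{Lemme M finite index}, the formula $g\Mpp \mapsto gM$ descends to a well-defined group homomorphism. Its kernel consists of cosets $g\Mpp$ with $g \in HF$ and $gM = M$, i.e.\ with $g \in M \cap HF$; since $M \leq F \leq HF$ by Lemma~\ref{lemdebut}\,\eqref{lem1}, this intersection equals $M$, so the kernel is $M/\Mpp$, which is finite by Lemma~\ref{Lemme M finite index}.

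Next, I show the map is open by exhibiting it as a local isomorphism at the identity. Because $H$, $F$, $\Mpp$ are Zariski dense in their algebraic envelopes $\H$, $\F$, $\M$, and the Zariski closure of $H$ is reductive by hypothesis~\eqref{H reductif dans bis}, the respective $\Q_S$-Lie algebras coincide, so $\widehat G$ and $\widecheck G$ share the Lie algebra $(\Lie\,\H+\Lie\,\F)/\Lie\,\M$, and the induced morphism is an isomorphism of $\Q_S$-Lie algebras. The inverse function theorem for $\Q_S$-analytic Lie groups then produces a local homeomorphism at the identity, which for a group homomorphism propagates to open-ness globally.

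The cokernel is analysed via the Galois cohomology sequence associated to $1 \to \M \to \H\F \to \widecheck\G \to 1$: at each place $v \in S$, the cokernel of $(\H\F)(\Q_v) \to \widecheck\G(\Q_v)$ embeds in $H^1(\Q_v, \M)$, which is finite since $\M$ is reductive (as the intersection of reductive conjugates of $\L$). Combined with the finiteness of the index of $HF$ in $(\H\F)(\Q_S)$---same Lie algebra plus finitely many Hausdorff components at each place of an algebraic group's $\Q_v$-points---one obtains a finite cokernel. Properness then follows because an open finite-index subgroup is automatically closed: the map factors as a finite covering onto a closed subgroup of $\widecheck G$, each step is proper, and the composition is proper.

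The main obstacle lies in the coincidence of Lie algebras between $H$ and $\H$ at ultrametric places, since a $\Q_v$-analytic Lie subgroup need not be open in its Zariski closure. The reductivity of $\H$ combined with the Zariski density of $H$ should force the Lie dimensions to agree (via algebraicity of the Lie algebra in the sense of Chevalley), but this requires careful invocation of the local structure theory. The Galois cohomology step likewise needs attention, to confirm that $\M$ inherits enough reductivity for $H^1(\Q_v,\M)$ to be finite at every place.
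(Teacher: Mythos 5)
Your proposal takes a genuinely different route from the paper for both the openness and the cokernel parts. The paper obtains these two conclusions simultaneously by a single application of Lemma~\ref{Lemme epi ouvert cofini} to the epimorphism of algebraic groups $\H\times\F\times\M\to\widecheck{\G}$: the image $HFM/M$ is open of finite index, hence also closed, and together with the finite kernel $M/\Mpp$ from Lemma~\ref{Lemme M finite index} the map becomes a finite cover of a closed subgroup, whence properness. You instead split the work into a Lie-algebra computation with the inverse function theorem (for openness) and a Galois-cohomology bound (for the cokernel), followed by the same finite-cover-of-closed argument for properness.

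The difficulty you identify in your closing paragraph is, however, a real gap and not a technicality to be handled "with care". You assert that the $\Q_S$-Lie algebras of $H$ and of its Zariski closure $\H$ coincide because $H$ is Zariski dense and $\H$ is reductive. This implication is false: Zariski density of a $\Q_v$-analytic Lie subgroup does not force its Lie algebra to be that of the algebraic envelope, and reductivity does not rescue it. A concrete counterexample, a variant of one appearing in the Shimura-variety part of this volume, is the subgroup $\{(e^x,e^{\alpha x})\mid x\in p^2\Z_p\}$ of the torus $\GL_1\times\GL_1$ over $\Q_p$ with $\alpha\in\Z_p$ transcendental over $\Q$; it is a one-dimensional Lie subgroup, Zariski dense in a reductive group, but with strictly smaller Lie algebra. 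Your inverse-function-theorem step therefore does not go through as written. (Worth noting: the paper's own invocation of Lemma~\ref{Lemme epi ouvert cofini} is also only valid if the image of $HF$, rather than of $\H(\Q_S)\F(\Q_S)$, is of finite index, so the issue is not dismissed so much as left implicit there.) A secondary error: you justify the finiteness of $H^1(\Q_v,\M)$ by claiming $\M$ is reductive as an intersection of conjugates of $\L$. But $\L$ is of Ratner class, so it has semisimple Levi factors yet generally a nontrivial unipotent radical and is not reductive. The finiteness of $H^1$ holds regardless, by Borel--Serre, for any linear algebraic group over a $p$-adic field of characteristic zero --- but the justification you give is wrong.
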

\begin{proof}
The image of~$\widehat{G}$ is~$HFM/M$ which is open of finite index by Lemma~\ref{Lemme epi ouvert cofini} applied to the epimorphism~$\H\times\F\times\M\to\widecheck{\G}$. It implies that the image is a closed subgroup.
The kernel of the map~$HF\Mpp/\Mpp\to HFM/M$ is the group~$M/\Mpp$, which is finite according to Lem\-ma~\ref{Lemme M finite index}. To conclude, the considered map is a finite cover of a closed subset, hence proper.
\end{proof}
\subsubsection{}
We can use the algebraically defined notion of unipotent element in~$\widecheck{G}$. The following will be used in a forthcoming proof by contradiction.
\begin{lemma}\label{Lemme unipotent contradiction}
The image~$(\Stab_{HF}(\widetilde{\mu}_\infty))M/M$ of~$\Stab_{HF}(\widetilde{\mu}_\infty)=HF\cap\Stab_{G}(\widetilde{\mu}_\infty)$ in~$\widecheck{G}$ does not contain a non trivial one parameter unipotent subgroup: we have
\[
\left(\Stab_{HF}(\mu_\infty)M/M\right)^+=\{e\}.
\]
Moreover the image of~$\Stab_{\widehat{G}}(\widehat{\mu}_\infty)$ in~$\widecheck{G}$ is also~$(\Stab_{HF}(\widetilde{\mu}_\infty))M/M$.
\end{lemma}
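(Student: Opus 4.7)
The plan is to establish the second assertion by unwinding definitions, then to prove the first by contradiction via a double lift of a putative one-parameter unipotent subgroup, arriving at a contradiction with the identity $\Stab_G(\widetilde\mu_\infty)^+ = W = M^+$ already established in Lemma~\ref{Lemme W M} and~\eqref{stab identity}.

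For the second assertion, the $G$-equivariant bijection~\eqref{G-bij}, restricted to the action of $HF$, gives $\Stab_{HF}(\widehat\mu_\infty) = \Stab_{HF}(\widetilde\mu_\infty)$, so together with $\Mpp \leq \Stab_{HF}(\widetilde\mu_\infty)$ (Lemma~\ref{Mpp invariance de mu tilde}) one obtains $\Stab_{\widehat G}(\widehat\mu_\infty) = \Stab_{HF}(\widetilde\mu_\infty)/\Mpp$. The projection $\widehat G \to \widecheck G$ of the preceding lemma has kernel $M/\Mpp$, which is finite by Lemma~\ref{Lemme M finite index}; the image of $\Stab_{\widehat G}(\widehat\mu_\infty)$ is therefore exactly $\Stab_{HF}(\widetilde\mu_\infty) \cdot M/M$, as asserted.

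For the first assertion I argue by contradiction. Suppose $u$ is a non-trivial one-parameter unipotent $\Q_S$-algebraic subgroup of $\widecheck\G$ with $u(\Q_S) \subseteq \Stab_{HF}(\widetilde\mu_\infty) \cdot M/M$. I first lift $u$ across the finite isogeny $\widehat G \to \widecheck G$: in characteristic zero the unique Lie-algebra lift of its nilpotent infinitesimal generator exponentiates to a unique one-parameter unipotent subgroup $\hat u$ in $\widehat G$ projecting to $u$. By the second assertion and connectedness of $\hat u(\Q_S)$ modulo the finite kernel $M/\Mpp$, we get $\hat u(\Q_S) \subseteq \Stab_{\widehat G}(\widehat\mu_\infty)$. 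Next, through the quotient $HF \to HF/\Mpp = \widehat G$, the Lie-algebra surjection $\Lie(HF) \to \Lie(\widehat G)$ (with kernel $\Lie(\Mpp) = \Lie(M)$) lets us lift the nilpotent generator of $\Lie(\hat u)$ to a nilpotent element $X \in \Lie(HF)$, and $\tilde u(t) := \exp(tX)$ then defines a one-parameter unipotent subgroup of $HF \subseteq G$ whose image in $\widehat G$ is $\hat u$; using the $\Mpp$-invariance of $\widetilde\mu_\infty$ to absorb the remaining ambiguity, we arrange $\tilde u(\Q_S) \subseteq \Stab_{HF}(\widetilde\mu_\infty) \subseteq \Stab_G(\widetilde\mu_\infty)$.

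To conclude, $\tilde u$ is then a one-parameter unipotent algebraic subgroup of $G$ contained in $\Stab_G(\widetilde\mu_\infty) = \Stab_G(\mu_\infty)$, so by definition~\eqref{Notation Lp} of the $+$-operation together with Lemma~\ref{Lemme W M} we obtain $\tilde u(\Q_S) \subseteq \Stab_G(\mu_\infty)^+ = W = M^+ \subseteq M$; projecting to $\widecheck G = \H\F/\M$ then makes $u$ trivial, contradicting the non-triviality hypothesis. The main technical obstacle lies in the second lift: ensuring that the nilpotent lift $X$ of the generator of $\Lie(\hat u)$ can be chosen inside $\Lie(HF)$ (not merely $\Lie(\H\F)$), and that the resulting flow $\exp(tX)$ can be adjusted to stabilize $\widetilde\mu_\infty$ exactly rather than only modulo $\Mpp$. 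Both points rely on the fact that $HF \to \widehat G$ is an analytic quotient by the algebraic normal subgroup $\Mpp$ whose Lie algebra fills out $\Lie(M)$, combined with Lemma~\ref{Mpp invariance de mu tilde}.
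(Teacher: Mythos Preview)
Your treatment of the second assertion is correct and matches the paper: it uses \eqref{stab identity} (equivalently \eqref{G-bij}) together with $\Mpp\subseteq\Stab_{HF}(\widetilde\mu_\infty)$ from Lemma~\ref{Mpp invariance de mu tilde}.

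For the first assertion, your strategy (lift a putative unipotent one-parameter subgroup and contradict $\Stab_G(\widetilde\mu_\infty)^+=W\subseteq M$) is exactly the idea behind the paper's argument, but your execution through $\widehat G$ and explicit Lie-algebra lifts has a gap at precisely the point you flag. The problem is that $HF$ is not an algebraic group: even if you choose a nilpotent lift $X\in\Lie(HF)$, the set $\{t\in\Q_S:\exp(tX)\in HF\}$ is a closed subgroup of $(\Q_S,+)$ containing a neighbourhood of $0$, but at ultrametric places this need not be all of $\Q_S$. Consequently you cannot conclude that $\tilde u(\Q_S)\subseteq HF$, nor that it lies in $\Stab_{HF}(\widetilde\mu_\infty)$ globally. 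Your appeal to ``connectedness'' is also misplaced in the $S$-adic setting; the correct substitute is that one-parameter unipotent groups have no proper finite-index subgroups (Lemma~\ref{Lemme Unip prof}).

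The paper sidesteps this entirely by invoking Corollary~\ref{corliftuni}, whose proof stays in the algebraic category: rather than lifting the one-parameter group $\U$, it pulls back the \emph{full} algebraic subgroup $\L=\varphi^{-1}(\U)\subseteq\H\F$, so that $L=\L(\Q_S)$ automatically sits inside $W\cdot M$ (where $W=\Stab_{HF}(\widetilde\mu_\infty)$); since $W$ has finite index in $W\cdot M$, Lemma~\ref{Lemme Unip prof} gives $(W\cdot M)^+\subseteq W^+\subseteq\Stab_G(\widetilde\mu_\infty)^+\subseteq M$, hence $L^+\subseteq M$ and $\varphi(L^+)=\{e\}$. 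The contradiction then comes from Proposition~\ref{propoliftuni}\,\eqref{propplus2}, which gives $\varphi(L^+)=\U(\Q_S)^+=\U(\Q_S)$ without ever needing to construct an explicit unipotent lift. This is the lemma you are missing: the identity $\varphi(L(\Q_S)^+)=\varphi(L(\Q_S))^+$ for algebraic quotients replaces your delicate exponential construction.
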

\begin{proof} The first assertion is an instance of~Corollary~\ref{corliftuni}. We use~$\H\F$ as~$\H$, we use~$\M$ as~$\N$, we use~$\widecheck{G}$ as~$\Q$ and~$\Stab_{HF}(\widetilde{\mu}_\infty)$ as~$W$. The hypothesis of this corollary is satisfied, as we have~\[\Stab_{HF}(\widetilde{\mu}_\infty)^+\subseteq\Stab_{G}(\widetilde{\mu}_\infty)^+=W\subseteq M\]
by the definition of~$W$ and Lemma~\ref{WM}.

The second assertion follows from~\eqref{stab identity}, after intersecting with~$HF$, and taking images by the quotient by~$M$.
\end{proof}

N.B.: This Lemma stems from the choice, at the very beginning of our proof, of~$W$ as the biggest group generated by one parameter unipotent subgroups which stabilises~$\mu_\infty$. So far, most of the argumentation could go through with any group~$W$ generated by unipotent subgroups which stabilises~$\mu_\infty$.

%\subsection{Algebraisation} We will relate~$\widehat{G}$ to an algebraic group over~$\Q_S$.
%We note, using Lemma~\ref{Lemme M finite index}, that the subgroup~$\Mpp$ in~$M$ is open of finite index.
%
%As a consequence of this claim the comparison map
%\begin{equation}\label{hat check comparison}
%\widehat{G}=HF/\Mpp\to \widecheck{G}:=HF/M
%\end{equation}
%is surjective with finite kernel: one could say an isogeny. Let~$\H$,~$\F$ and~$\M$
%denote the~$\Q_S$-algebraic groups associated with~$H$,~$F$ and~$M$.
%Then $\widecheck{G}$ is open of finite index (\cite[{\S}3.1 Prop.~3.3 Cor.~1, {\S}6.4]{PR}) in the group of
%rational points~$\check{\G}(\Q_S)$ of 
%\[\check{\G}=\H\,\F\,/\M.\]
%In particular,~$\widehat{G}$ is locally isomorphic to the linear algebraic group~$\check{\G}(\Q_S)$
%over~$\Q_S$. We can then define the Lie algebra~$\widehat{\lie{g}}$ of~$\widehat{G}$ over~$\Q_S$, which we will identify
%with the Lie algebra of~$\widecheck{G}$. The adjoint representation of~$\widehat{G}$ on~$\widehat{\lie{g}}$ factors through~$\widecheck{G}$
%via the comparison map~\eqref{hat check comparison} above.

\subsection{Proving boundedness by contradiction}\label{provingbdd}
For~$i\geq0$, the translating element~$g_i$, which belongs to~$F$, has an imege in~$\widehat{G}$, say~$\widehat{g}_i$ . We defined~$\widehat{\mu}_\Omega$ and~$\widehat{\mu}_\infty$ in~\S\ref{induction}. Then the image of~$g_i\widetilde{\mu}_\Omega$ is~$\widehat{g}_i\widehat{\mu}_\Omega$. As~$\widehat{g}_i\widehat{\mu}_\Omega$ converges to~$\widetilde{\mu}_\infty$, we deduce, taking pushforwards, the corresponding convergence
\[
\widehat{\mu}_\infty= \lim\left(\widehat{g_i}\widehat{\mu}_\Omega\right)_{i\geq0}.
\]

Let~$\widehat{H}$ be the image of~$H$ in~$\widehat{G}$, 
and~$\widehat{\lie{h}}$ the corresponding Lie subalgebra of~$\widehat{\lie{g}}$.
We denote~$Z_{\widehat{G}}(\widehat{\lie{h}})$ for the centraliser of~$\widehat{\lie{h}}$ in~$\widehat{G}$. 

%Let us denote~$\left(\hat{g_i}\right)_{i\geq 0}$ the image sequence, in~$\widehat{G}$ of our sequence~$\left({g_i}\right)_{i\geq 0}$.
Our goal here is too prove the following claim
\begin{equation}\label{claim1}
\text{``the sequence~$\left(\widehat{g_i}\right)_{i\geq 0}$ is of class~$O(1)Z_{\widehat{G}}(\widehat{\lie{h}})$''.}
\end{equation}

%Let~$\widehat{\lie{h}}$ denote the~$\Q_S$-Lie algebra of~$\widehat{H}$. 
By virtue of Proposition~\ref{Propobddcrit} (applied to~$\widecheck{G}$), the claim~\eqref{claim1} is actually equivalent to
\begin{equation}\label{claim2}
\text{``the sequence~$\left(\Ad_{\widehat{g_i}}\right)_{i\geq 0}$ is uniformly bounded, as a linear map, on~$\widehat{\lie{h}}$''.}
\end{equation}
\emph{We will prove~\eqref{claim2} by contradiction. We first show that the failure of~\eqref{claim2} implies that~$\widehat{\mu}_i$ is invariant under non trivial elements of~$\widehat{G}$ which are unipotent (viewed in~$\widecheck{G}$). This part of the argument is well known. We stress this argument is the main reason 
why Ratner theory of unipotent flows is relevant to our subject, and is responsible for the strategy of proof we have been following since the start, that is studying $W$-ergodic decomposition.}

\noindent{N.B.: Here we rely on the hypotheses that the Zariski closure of~$H$ is reductive, and that~$\Omega$ is open in~$H$.}
\begin{proof}[Proof of~\eqref{claim2} by contradiction]
Let us recall the closed subset~$X$ of~$G/(\Gamma_N\cdot \Lpp)$ defined as the closure of~$HN/(\Gamma_N\cdot \Lpp)$. 
\begin{equation}\label{defX}
X=\overline{HN/(\Lpp\Gamma_N)}\subseteq G/(\Gamma_N\cdot \Lpp)
\end{equation}
We noted in Corollary~\ref{coro X} that~$X$ is a locally compact space under which~$\widehat{G}$ acts continuously. Let us note again that
\begin{itemize}
\item that~$\widetilde{\mu}_\Omega$ is supported in~$X$;
\item that the action~$\widehat{g}_i$ stabilises~$X$;
\item that~$X$ is closed in~$G/(\Lpp\GammaN)$.
\end{itemize}
It follows notably that
\begin{equation}
\text{the support of~$\widehat{\mu}_\infty$ is contained in~$X$.}
\end{equation}

Assume by contradiction that,  that property~\eqref{P3} of Proposition~\eqref{Propobddcrit} fails. Namely, possibly passing to a subsequence, there is a sequence  in~$\widehat{\lie{h}}$, say~$(X_i)_{i\geq0}$, converging to~$0$ in~$\widehat{\lie{h}}$ such that the sequence~$\left(\Ad_{\widehat{g_i}}(X_i)\right)_{i\geq 0}$ has a nonzero limit~$X_\infty$ in~$\widehat{\g}$. We will prove that~$\lambda\mapsto\exp_{\widecheck{G}}(\lambda X_\infty):\Q_S\to\widecheck{G}$ is a nontrivial one parameter unipotent subgroup contained in the image of~$\Stab_{\widehat{G}}(\widehat{\mu}_\infty)$, thus contradicting Lemma~\ref{Lemme unipotent contradiction}.

We note that for any non zero scalar~$\lambda$ in~$\Q_S$, the sequence~$(\lambda\cdot X_i)_{i\geq0}$ converges to~$0$ in~$\widehat{\lie{h}}$ and the sequence~$\left(\Ad_{\widehat{g_i}}(X_i)\right)_{i\geq 0}$ has a, possibly zero, limit~$\lambda\cdot X_\infty$ in~$\widehat{\g}$.

Associated to the Lie group~$\widehat{G}$ (resp. algebraic Lie group~$\widecheck{G}$) there is an exponential map~$\exp_{\widehat{G}}$ (resp.~$\exp_{\widecheck{G}}$) converging in a neighbourhood of the origin of~$\widehat{\lie{g}}=\widecheck{\lie{g}}$. We may uniquely extend the definition of the exponential map~$\exp_{\widecheck{G}}$ by to the conjugacy classes meeting this neighbourhood. We may choose~$\exp_{\widecheck{G}}$ to be the composite of~$\exp_{\widecheck{G}}$ with the quotient map~$\widehat{G}\to\widecheck{G}$. Let us prove the claim
\begin{equation}
\text{For~$g=\exp_{\widehat{G}}(X)$ with~$X\in\widehat{\lie{h}}$ sufficiently close to~$0$, we have~$g\in\widehat{H}$}.
\end{equation}
\begin{proof}We have, whenever defined, that~$\exp_{\widecheck{G}}(X)$ belongs to~$\H\M/\M(\Q_S)$. The map
\[HM/M\to HM/M\to \H\M/\M(\Q_S)\] is open, hence so is~$HM/\Mpp\to \H\M/\M(\Q_S)$. Furthermore,~$H\Mpp/\Mpp$ is closed of finite index in~$HM/\Mpp$. For~$X$ sufficiently close to~$0$, we have a well defined equality~$\exp_{\widecheck{G}}(X)=\exp_{\widehat{G}}(X)M/M$. The element~$\exp_{\widehat{G}}(X)$ is in the inverse image, say~$I$, of~$\H\M/\M(\Q_S)$, of which the $M$-saturated~$HM/\Mpp$ is an open subgroup, and so is~$H$. This~$H$ contains a neighbourhood of the neutral element in~$I$, hence contains~$\exp_{\widehat{G}}(X)$ for~$X$ sufficiently close to~$0$, element
\end{proof}
We assume that~$\exp_{\widehat{G}}$ is defined on a sufficiently small neighbourhood of~$0$, so that we may write its restriction~$\exp_{\widehat{H}}=\exp_{\widehat{G}}\restriction_{\widehat{H}}:\widehat{\lie{h}}\to\widehat{H}$.
% For~$X$ in~$\lie{h}$ The map~$HM\to , and~$H\Mpp$ is afinite inde hence for~$$
%ATTENTION (depend de check(g) e(gXg-)=?ge(X)g-indep du releve? Notamment g dans M)

We note the following. 
\begin{itemize}
\item For large enough~$i$, the element~$\lambda X_i$ will be close enough to~$0$ so that the exponential map~$\exp_{\widehat{H}}:\widehat{\lie{h}}\to\widehat{H}$ converges at~$\lambda X_i$, to some element, say~$\widehat{h}_i(\lambda)$, which belongs to~$\widehat{H}$. We write~$\widecheck{h}_i(\lambda)$ its image in~$\widecheck{G}$.
\item For such~$i$, one has~$\exp_{\widecheck{G}}\left(\Ad_{\widehat{g_i}}(\lambda X_i)\right)=\widehat{g_i}\exp_{\widecheck{G}}(\lambda X_i)\widehat{g_i}^{-1}=\widehat{g_i}\widecheck{h}_i(\lambda)\widehat{g_i}^{-1}$;
\item The measure~$\widehat{\mu}_\infty$ is supported inside~$X$, this is the limit of~$(\widehat{g}_i\cdot\widehat{\mu}_\Omega)_{i\geq0}$, and each~$\widehat{g}_i\cdot\widehat{\mu}_\Omega$ is also supported inside~$X$.
\item the sequence~$(\widehat{h_i}(\lambda))_{i\geq0}$ converges, in~$\widehat{H}$, to the neutral element, and, by Lemma~\ref{Lemma strong convergence}, the sequence~$(\widehat{h}_i(\lambda)\cdot\widehat{\mu}_\Omega)_{i\geq0}$ converges strongly to~$\widehat{\mu}_\Omega$;
\item the sequence~$\left(\widehat{g_i}\widecheck{h}_i(\lambda)\widehat{g_i}^{-1}\right)_{i\geq0}$ converges the element we denote~$\widecheck{g}_\infty(\lambda):=\exp_{\widecheck{G}}(\lambda X_\infty)$;
\item as~$\widehat{G}\to\widecheck{G}$ is proper, the lifted sequence~$\left(\widehat{g_i}\widehat{h}_i(\lambda)\widehat{g_i}^{-1}\right)_{i\geq0}$ admits a subsequence converging to some element, say~$\widehat{g}(\infty)$, possibly depending on the subsequence, but such that its image in~$\widecheck{G}$ is~$\widecheck{g}(\infty)$.
\end{itemize}
Thus the hypotheses of Proposition~\ref{unipotescence} are satisfied. From this Proposition we learn that~$\widehat{g}_\infty(\lambda)$ fixes~$\widehat{\mu}_\infty$. We consider the image
\[
\widecheck{g}_\infty(\lambda)=\exp_{\widecheck{G}}(\lambda X_\infty)
\]
of~$\widehat{g}_\infty(\lambda)$ in~$\widecheck{G}$. We note that~$X_\infty$ is nilpotent by Lemma~\ref{Lemme nilpotent stabilite}. Hence~$\lambda\mapsto \widecheck{g}_\infty(\lambda)$ is a one parameter unipotent subgroup, which is non trivial as~$X_\infty\neq0$, and is contained in the image of~$\Stab_{\widehat{G}}(\widehat{\mu}_\infty)$ in~$\widecheck{G}$.
This contradicts Lemma~\ref{Lemme unipotent contradiction}.
%
%That is
%$$g_\infty\cdot\widetilde{\mu}_\infty=\widetilde{\mu}_\infty$$
%and finally
%$$g_\infty\cdot{\mu}_\infty={\mu}_\infty.$$
%Consequently,~$g_\infty$ is an element of~$\Stab_G(\mu_\infty)$ which does not belong to~$W$.
%
%Renormalising the sequence~$(X_i)_{i\geq0}$ by a constant non zero factor, we can assume~$X_\infty$
%is in a chosen neighbourhood of~$0$ in~$\g$. We can choose such a neighbourhood such that~$\exp_{\widehat{G}}$
% and~$\exp_{\widecheck{G}}$ induce (compatible) local  homeomorphisms between~$\widehat{G}$, $\widecheck{G}$ and
% this neighbourhood. As a consequence~$\widehat{g}_\infty$ will also correspond to (will map to) a non neutral element~$\check{g}_\infty$
% in~${\widecheck{G}}$.
% 
%Note that~$\check{g}_\infty$ is unipotent\footnote{We needed to work with~$\widehat{G}$ in order to use its action on measures such as~$\mu_\infty$,
%and we need to pass to~$\widecheck{G}$ which is algebraic, in order to use the algebraic definition for a unipotent element.} in~$\widecheck{G}$ (its conjugacy class is adherent to the neutral element).
%
%%TODO Porpoliftuni
%But, by {\S}\ref{subsubsection-liftuni} this contradicts the definition of~$W$ (see p.~\pageref{defW}).
\end{proof}

We now have proved~\eqref{claim1}. This actually is a form of the focusing criterion, from which 
we will derive~\eqref{focusingcriterion}.

\subsection{Conclusion: focusing criterion and limit distribution formula.}

\emph{ We will now collect the information we have obtained so far.
In~\S\ref{proof first focusing} we prove the first part of the focusing criterion. We then  prove (in \ref{conclusion number}) a finer form of focusing, and finally determine the limit distributions~$\widehat{\mu}_\infty$, $\widetilde{\mu}_\infty$ and~$\mu_\infty$.}

\subsubsection{Focusing criterion}\label{proof first focusing}
 Let~$\widecheck{H}$ be the image of~$H$ in~$\widecheck{G}$. It is Zariski connected
with Lie algebra~$\widehat{\lie{h}}$. The centraliser~$\widecheck{Z}$ of~$\widecheck{H}$ in~$\widecheck{G}$ is that of~$\widehat{\lie{h}}$.
Let~$Z$ be its inverse image in~$HF$; this is also the inverse image of~$\widehat{Z}$. It contains~$(Z_G(H)\cap HF)\cdot M$. By~\eqref{centralisermodulo} from
Lemma~\ref{Lemmacentmodulo2}, 
the Zariski closure of~$(Z_G(H)\cap HF)\cdot M$ is open of finite index in that of~$Z$. As~$(Z_G(H)\cap HF)\cdot M$ of open of finite
index in its Zariski closure, it is open of finite index in~$Z$. % It is also normal in~$Z$.
It follows that the following are equivalent
\begin{itemize}
\item the sequence~$(g_i)_{i\geq0}$ is of class~$O(1)\cdot Z$ in~$HF$;
\item the sequence~$(g_i)_{i\geq0}$ is of class~$O(1)\cdot (Z_G(H)\cap HF) M$ in~$HF$;
\item the sequence~$(\widehat{g}_i)_{i\geq0}$ is of class~$O(1)\cdot \widehat{Z}$ in~$\widehat{G}$;
\item the image sequence in~$\widecheck{G}$ is of class~$O(1)\cdot \widecheck{Z}$.
\end{itemize}
But we proved the latter with~\eqref{claim1}. Hence we have that
\begin{subequations}
\begin{equation}
\text{``the sequence~$\left(g_i\right)_{i\geq 0}$ is of class~$O(1)\cdot (Z_G(H)\cap HF)\cdot M$".}
\end{equation}
We recall from~{\S}\ref{section-lin-focusing} that the sequence~$\left(g_i\right)_{i\geq 0}$ also evolves in~$F$. 
Let us write
\[g_i=b_i\cdot z_i\cdot m_i\]
with a bounded sequence~$(b_i)_{i\geq0}\in O(1)$ in~$G$, a sequence~$(z_i)_{i\geq0}$ in~$(Z_G(H)\cap HF)$
and~$(m_i)_{i\geq0}$ in~$M$. Reducing in~$HF/F$, the~$m_i$ maps to the neutral element. It follows 
that the sequence of cosets~$(z_iF)_{i\geq0}$ is bounded in
\[
(Z_G(H)\cap HF)F/F\simeq(Z_G(H)\cap HF)/(Z_G(H)\cap F),
\]
as it is inverse
to the sequence~$(b_iF)_{i\geq0}$ (these quotients are quotient groups.)
As a consequence we have actually
\begin{equation}%\label{claim1} already used
\text{``the sequence~$\left(g_i\right)_{i\geq 0}$ is of class~$O(1)\cdot (Z_G(H)\cap F)\cdot M$".}
\end{equation}
\begin{equation*}
\textbf{We obtained the first part~\eqref{focusingcriterion} of the focusing criterion from Theorem~\ref{Theorem}.}
\end{equation*}

%As~$F\leq N$, it normalises~$\Mpp$. As~$\Mpp$ is a finite index in~$M$ there is a finite subset of representatives~$A$ of the right~$\Mpp$ cosets in~$M$.
%Actually, as~$\Mpp$ is normalised by~$F$, and~$\Mpp$ is open of finite index in~$M$, we know by Lemma~\ref{Lemma(F)}, that~$\Mpp$
%contains~$\bigcap_{f\in F} f\Mpp f^{-1}$ as an open subgroup of finite index normalised by~$F$.

As~$\Mpp$ is normal in~$HF$ we have an epimorphism
\[
HF/\Mpp\to HF/M
\]
As~$\Mpp$ is open of finite index in~$M$, this is a finite cover, and a proper map.
Hence
\begin{equation}%\label{claim1} already used
\text{``the sequence~$\left(g_i\right)_{i\geq 0}$ is of class~$O(1)\cdot (Z_G(H)\cap F)\cdot \Mpp$".}
\end{equation}
\end{subequations}
\subsubsection{Limit formula and fine focusing}\label{conclusion number}
We now now prove a finer version of focusing. We can write
\[g_i=b_i\cdot z_i\cdot m_i\]
with a bounded sequence~$(b_i)_{i\geq0}\in O(1)$ in~$F$, a sequence~$(z_i)_{i\geq0}$ in~$(Z_G(H)\cap F)$
and~$(m_i)_{i\geq0}$ in~$\Mpp$. We write~$\widehat{b}_i$ and~$\widehat{z}_i$ the images in~$\widehat{G}$ of~$b_i$ and~$z_i$ respectively; these give the corresponding factorisation 
\[\widehat{g}_i=\widehat{b}_i\cdot \widehat{z}_i\]
in~$\widehat{G}$.
From the convergence~\eqref{liftlimit} in~$G/\GammaN$ we deduce, in~$X\subseteq G/(\GammaN\Lpp)$,
\begin{equation}
\lim_{i\to\infty} \widehat{g_i}\cdot \widehat{\mu}_\Omega=\widehat{\mu}_\infty.
\end{equation}

\emph{Contrasting with the lifted situation, in~$G/\GammaN$, the translating elements~$\widehat{g}_i$ are made of a bound\-ed factor~$\widehat{b}_i$ and a factor~$\widehat{z}_i$ commuting with the image of~$\Omega$ in~$\widehat{G}$. The dynamics is mostly trivial in this situation, and fits in the context of Lemma~\ref{lemme trivial dyn}.}

%The support of~$ \widehat{g_i}\cdot \widehat{\mu}_\Omega$ is
%\[\overline{\widehat{b}_i\cdot \widehat{z}_i \cdot\Omega\cdot (\GammaN \Lpp)/(\GammaN \Lpp)}=\widehat{b}_i\cdot \overline{\Omega}\cdot \widehat{z}_i\cdot (\GammaN \Lpp)/(\GammaN \Lpp).\]
%Both the~$\widehat{b_i}$ and~$\overline{\Omega}$ are bounded.

By~\eqref{trivial dyn 1} of Lemma~\ref{lemme trivial dyn}, if the sequence of cosets~$\left(\widehat{z}_i\cdot (\GammaN \Lpp)\right)_{i\geq0}$ is not bounded, then, possibly extracting a subsequence, we have the~$\lim_{i\to\infty} \widehat{g_i}\cdot \widehat{\mu}_\Omega=0$ for vague convergence, which contradicts that the limit, which is~$\widehat{\mu}_\infty$, is a probability measure. 
Hence the sequence~$(\widehat{z}_i)_{i\geq0}$, which is in~$Z_G(H)\cap F$, is also of class~$O(1)\cdot (\GammaN \Lpp)$.
\begin{equation*}
\textbf{This proves the assertion~\eqref{thm Fa'} of Theorem~\ref{Theorem}.}
\end{equation*}
Possibly extracting a subsequence we may assume that the bounded sequence~$z_i\cdot (\GammaN \Lpp)$ of cosets converges to some~$x_\infty$ in~$G/(\GammaN \Lpp)$. We stress that the orbit of~$Z_G(H)\cap F$, in~$X=\overline{HF/(\GammaN\Lpp})$, trough the neutral coset need not be closed in general. But~$x_\infty$ belongs to the 
closed orbit~$N/(\GammaN\Lpp)$. It can be lifted to an element~$n_\infty$ of~$N$ in the topological closure of~$(Z_G(H)\cap F)\GammaN$. 

Possibly extracting a subsequence we may assume moreover that, in~$F$, the se\-quen\-ce~$(b_i)_{i\geq0}$ is convergent to, say, some~$g_\infty$. Then, in~$\widehat{G}$, the image sequence~$(\widehat{b}_i)_{\i\geq 0}$, converges to its image~$\widehat{g}_\infty$ of~$g_\infty$. 

We use the convolution notation~$\mu\convolution\nu=\int_Gg\cdot\nu~\mu(g)$.
Applying Lemma~\ref{lemme trivial dyn} we conclude that
\[\widehat{\mu}_\infty=\lim_{i\to\infty} \widehat{b}_i\widehat{z}_i\cdot\widehat{\mu}_\Omega={(g_\infty\cdot\mu_{\Omega})}\convolution{(\delta_{n_\infty\GammaN\Lpp})}\]
where the right hand side is the image measure of~$\mu|_\Omega$ via the map
\[\Omega\to {G/(\Gamma_N\Lpp)}:\omega\mapsto g_\infty\cdot\omega \cdot n_\infty(\GammaN\Lpp).\]
In~$G/\GammaN$, the measure~\(
(g_\infty\cdot\mu_{\Omega})\convolution (n_\infty\widetilde{\mu}_\Lpp)\)
is the quotient of a right~$\Lpp$-invariant measure on~$G$, and has direct image~$\widehat{\mu}_\infty$. These properties characterise~$\widetilde{\mu}_\infty$. We have
\[
\widetilde{\mu}_\infty={(g_\infty\cdot\mu_{\Omega})}\convolution{(n_\infty\widetilde{\mu}_\Lpp)}.
\]
By direct image, in~$G/\Gamma$, we deduce
\[
{\mu}_\infty={(g_\infty\cdot\mu_{\Omega})}\convolution{(n_\infty{\mu}_\Lpp)}.
\]
which is a form of formula~\eqref{limitformula}. 
\begin{equation*}
\textbf{We proved formula~\eqref{limitformula}.}
\end{equation*}

%This last part of the proof may depend on~$\Omega$.
%
%The sequence~$(b_i)_{i\geq0}$ is bounded in~$G$ and contains a subsequence of class~$g_\infty o(1)$ 
%where~$g_\infty$ is an adherence value in~$G$. We conclude to~\eqref{finefocusing}.
%
%Moreover we obtain
%\[\widehat{\mu}_\infty=g_\infty\mu_{\Omega\cdot n_\infty\Gamma_N\Lpp}.\]
%Which, recalling how~$\widetilde{\mu}_\infty$ and hence~$\mu_\infty$ are characterised by~$\widehat{\mu}_\infty$,
%gives~\eqref{limitformula}.

%We claim the statement of Theorem~\ref{Theorem} reflects what we have %proved so far.
\end{proof}
This concludes the proof of Theorem~\ref{Theorem} modulo the statements we postponed to the forthcoming appendices.
\appendix
\section{Proof of Theorem~\ref{Theorem} -- Postponed statements.}
In this section we collected various statements which reflects arguments of the main proof of Theorem~\ref{Theorem}.
We give them a separate treatment to alleviate the main proof and to get room to give a proper and clean treatment of the arguments. 
\subsection{Discreteness of~$\Gamma$-orbits of $\Q$-rational vectors, closed subgroups.}

For reference, we make the following remark. We refer to~\cite[Theorem~3.4.]{DM} in the archimedean case, which also covers non arithmetic lattices.
\begin{lemma}\label{Gorbit discrete} The notations are as in §\ref{subsection linearisation}.
The orbit~$\Gamma\cdot p_{L_0}$ is discrete in~$V$.
\end{lemma}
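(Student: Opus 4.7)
The plan is to exhibit an explicit discrete $\Z$-structure in $V$ that contains the orbit $\Gamma\cdot p_{L_0}$. First I would observe that, since $L_0$ is defined over~$\Q$, the one-dimensional $\Q$-line $\det(\lie{l}_{0,\Q})\subseteq V$ admits a generator $p_{L_0}$ lying in the $\Q$-rational locus $V(\Q)$, diagonally embedded inside $V=V(\Q)\otimes_{\Q}\Q_S$. I would then fix a faithful $\Q$-rational embedding $\G\hookrightarrow\GL_N$ together with an integral model, and choose a $\Z$-lattice $V_\Z\subseteq V(\Q)$ stable under the integral model of $\G$ acting via the exterior adjoint representation.

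Next I would use that $\Gamma$ is $S$-arithmetic (in particular containing the archimedean place, which is implicit in $\Gamma$ being a lattice), hence commensurable with $\G(\Z[1/S'])$ where $S'=S\smallsetminus\{\infty\}$. After replacing $\Gamma$ by a finite-index subgroup --- an operation that preserves discreteness of the orbit --- we may assume $\Gamma\subseteq\G(\Z[1/S'])$. Then $\Gamma$ stabilises $V_{\Z[1/S']}:=V_\Z\otimes\Z[1/S']$, and after rescaling $p_{L_0}$ if necessary we may assume $p_{L_0}\in V_{\Z[1/S']}$. Consequently
\[
\Gamma\cdot p_{L_0}\;\subseteq\; V_{\Z[1/S']}.
\]

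The proof then concludes by invoking the classical adelic fact: $\Q$ is discrete in $\AAA_\Q$, and restricting to the $S$-component one gets that $\Z[1/S']$ is a discrete subgroup of $\Q_S=\R\times\prod_{p\in S'}\Q_p$ (with compact quotient). Tensoring with the finite-dimensional $\Q$-vector space underlying $V$ preserves discreteness, so $V_{\Z[1/S']}$ is discrete in $V$; hence so is $\Gamma\cdot p_{L_0}$. The main subtlety --- more bookkeeping than serious obstacle --- lies in aligning the arithmetic structure on $\Gamma$ with the chosen $\Z$-lattice on $V$ and justifying that the archimedean place belongs to $S$. Alternatively, since $L_0$ is of Ratner class, its stabiliser $L_0\subseteq N^1(L_0)=\mathrm{Stab}_{\G}(p_{L_0})$ has no nontrivial $\Q$-character, so a Borel--Harish-Chandra-style argument gives closedness of $\G(\Q)\cdot p_{L_0}$ in $V$, from which discreteness of the $\Gamma$-orbit follows directly.
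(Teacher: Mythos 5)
Your proof follows essentially the same route as the paper's: pass to a finite-index subgroup of $\Gamma$ that stabilises a $\Z[1/S]$-lattice in the $\Q$-rational part of $V$, rescale $p_{L_0}$ into that lattice, and conclude from discreteness of $\Z[1/S]$ in $\Q_S$ (which, as you note, requires $\infty\in S$; the paper uses this implicitly). The paper isolates exactly this argument as a standalone sublemma valid for any $\Q$-rational representation and any $\Q$-rational vector, so the two proofs agree in substance.
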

This is a particular case of the following. We refer to~\cite[Argument in~{\S}8.1]{BorelIntro}.
\begin{lemma}\label{Lemma Q discrete}
Consider a linear representation, defined over~$\Q$, of~$\G$ on a vector space~$V_\Q$.
Write~$V=V\tens\Q_S$. For any~$p$ in~$V_\Q$, and any $S$-arithmetic lattice~$\Gamma$
in~$G$, the orbit~$\Gamma\cdot p$ is discrete in~$V$.
\end{lemma}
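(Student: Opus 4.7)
The plan is to sandwich the orbit $\Gamma\cdot p$ between two explicit objects: an algebraically-defined $\Z_S$-lattice in $V_\Q$ on one side, and the ambient topological vector space $V=V_\Q\otimes\Q_S$ on the other. Writing $\Z_S=\Z[1/\ell:\ell\in S_{\mathrm{fin}}]$ (so that $S$ includes the archimedean place, as required for $\Gamma$ to be a lattice in $G$), the key classical fact to invoke is that the diagonal embedding $\Z_S\hookrightarrow\Q_S$ is discrete; this is a reformulation of the product formula / strong approximation for $\Q$. Once that discreteness is in hand, it passes to any finitely generated $\Z_S$-submodule of $V_\Q$, and the lemma reduces to exhibiting such a module that contains the whole orbit $\Gamma\cdot p$.

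To produce this module, first choose a faithful $\Q$-representation $\G\hookrightarrow\mathbf{GL}(N)$ and a $\Z$-lattice $M\subset\Q^N$ defining the $S$-arithmetic structure underlying~$\Gamma$. By definition of an $S$-arithmetic subgroup, a finite-index subgroup $\Gamma'\leq\Gamma$ preserves $M\otimes\Z_S$. Using the tannakian/tensor-construction description of the $\Q$-representation $V_\Q$ of $\G$ as a subquotient of some $\bigoplus T^{a,b}(\Q^N)$, one obtains a $\Z_S$-lattice $\Lambda_0\subset V_\Q$ which is stabilised by $\Gamma'$; after clearing a denominator we may assume $p\in\Lambda_0$. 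Writing $\Gamma=\bigsqcup_{i=1}^m\gamma_i\Gamma'$ as a finite disjoint union of cosets and setting $\Lambda=\sum_{i=1}^m\gamma_i\Lambda_0$, we obtain a $\Z_S$-lattice in $V_\Q$ with $\Gamma\cdot p\subseteq\Lambda$.

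It remains to observe that $\Lambda$ is discrete in $V=V_\Q\otimes\Q_S$: choose a $\Z_S$-basis $(e_1,\ldots,e_r)$ of $\Lambda$, which is automatically a $\Q_S$-basis of $V$, and note that $\Lambda=\bigoplus\Z_S\, e_i$ sits inside $V=\bigoplus\Q_S\, e_i$ coordinatewise, so discreteness of $\Z_S$ in $\Q_S$ yields discreteness of $\Lambda$ in $V$. The subset $\Gamma\cdot p$ of the discrete set $\Lambda$ is then itself discrete in $V$, proving the lemma. The main (and only non-formal) step is the classical discreteness of $\Z_S$ in $\Q_S$; everything else is linear algebra plus the definition of $S$-arithmeticity, and no serious obstacle is expected.
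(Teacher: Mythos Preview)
Your proof is correct and follows essentially the same route as the paper: both arguments exhibit a $\Z_S$-lattice in $V_\Q$ that is (after passing to a finite-index subgroup of $\Gamma$) stable under the action, note that such a lattice is discrete in $V=V_\Q\otimes\Q_S$ because $\Z_S$ is discrete in $\Q_S$, and handle the remaining finitely many cosets by observing that a finite union of discrete sets is discrete. The only cosmetic difference is that the paper produces the lattice more directly---it simply takes the $\Z_S$-span of a $\Q$-basis of $V_\Q$ and invokes the commensurability in the definition of $S$-arithmeticity to see that a finite-index subgroup of $\Gamma$ stabilises it---whereas you take a slight detour through a faithful representation and tensor constructions; both lead to the same place.
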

\begin{proof} Let~$M$ be the $\Z[1/S]$-submodule generated by a basis of~$V_\Q$. Because~$\Z[1/S]$ is a lattice in~$\Q_S$, passing to the cartesian product of~$\dim(V)$ copies, we deduce that~$M$ is a lattice in~$V$. It is in particular discrete in~$V$.

Write~$G(\Z[1/S])$ for the stabiliser in~$G(\Q_S)$ of this~$M$. By definition of an~$S$-arithmetic lattice,~$\Gamma$
is commensurable with~$G(\Z[1/S])$. 

As a finite union of discrete sets is discrete, we may replace~$\Gamma$ by a finite index subgroup, for instance the stabiliser~$\Gamma\cap G(\Z[1/S])$ of~$M$ in~$\Gamma$. 

As~$p$ is defined over~$\Q$ --- that is belongs in~$V_\Q$ --- it belongs to a rational multiple of~$\lambda\cdot M$ of~$M$ (pick~$\lambda$ to be the inverse of the lowest common multiple of the denominators of the coordinates of~$p$).
We may assume~$p$ is non zero. Then this multiplicator is non zero. It is then invertible in~$\Q_S$, and acts by homeomorphism. It follows that~$\Gamma\cdot p_L$ is included in~$\lambda\cdot M$, which is discrete.
\end{proof}
We provide to topological consequences of Lemma~\ref{Gorbit discrete}.
\begin{corollary}\label{coro closed}
Let~$N$ be the stabiliser of~$p_{L_0}$. Then for any subgroup~$\Lambda$ of~$\Gamma$, the subset~$N\Lambda$ is closed in~$G$.
\end{corollary}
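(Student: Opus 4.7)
The plan is to identify $N\Lambda$ (up to the homeomorphism $g \mapsto g^{-1}$ of $G$) as the preimage of a discrete, closed subset of $V$ under the continuous orbit map at $p_{L_0}$, and then apply Lemma~\ref{Gorbit discrete}.

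\smallskip
\noindent More precisely, I would consider the continuous orbit map
\[
\pi \colon G \longrightarrow V, \qquad g \longmapsto g\cdot p_{L_0},
\]
whose fibre over $p_{L_0}$ is, by definition, the stabiliser $N$. First, I would observe that by Lemma~\ref{Gorbit discrete} the orbit $\Gamma\cdot p_{L_0}$ is discrete in $V$; moreover the proof of Lemma~\ref{Lemma Q discrete} actually shows that it is contained in a $\Q_S$-lattice $\lambda\cdot M$, which is both discrete and closed in $V$. Hence $\Gamma\cdot p_{L_0}$ is itself a closed and discrete subset of the Hausdorff space $V$. The key elementary point to note is that every subset of a closed discrete subset of a Hausdorff space is again closed: indeed, for each point $s$ of the discrete set one can choose a neighbourhood of $s$ in $V$ which meets the discrete set only at $s$, which shows that the complement of any subset of the discrete set is open in $V$. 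Applied to $\Lambda\cdot p_{L_0}\subseteq \Gamma\cdot p_{L_0}$, this yields that $\Lambda\cdot p_{L_0}$ is closed in $V$.

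\smallskip
\noindent Next, I would compute $\pi^{-1}(\Lambda\cdot p_{L_0})=\Lambda N$ by the standard manipulation: an element $g\in G$ satisfies $g\cdot p_{L_0}\in\Lambda\cdot p_{L_0}$ if and only if there exists $\lambda\in\Lambda$ with $\lambda^{-1}g\cdot p_{L_0}=p_{L_0}$, i.e.\ $\lambda^{-1}g\in N$, i.e.\ $g\in\lambda N\subseteq\Lambda N$. By continuity of $\pi$, the preimage $\Lambda N$ of the closed set $\Lambda\cdot p_{L_0}$ is closed in $G$.

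\smallskip
\noindent Finally, to pass from $\Lambda N$ to $N\Lambda$ I would use the inversion $\iota\colon G\to G$, $g\mapsto g^{-1}$, which is a homeomorphism. Since $\Lambda$ and $N$ are subgroups, one has $\Lambda^{-1}=\Lambda$ and $N^{-1}=N$, and so
\[
(N\Lambda)^{-1}=\Lambda^{-1}N^{-1}=\Lambda N,
\]
which is closed by the preceding step; hence $N\Lambda=\iota(\Lambda N)$ is closed in $G$, as required. The only non-routine step is the topological observation that a subset of a closed discrete subset of a Hausdorff space is still closed, but this is entirely elementary; no further ingredients beyond Lemma~\ref{Gorbit discrete} (equivalently Lemma~\ref{Lemma Q discrete}) are needed.
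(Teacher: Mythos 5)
Your proof is correct and follows essentially the same route as the paper's: realise the set as the preimage of a closed discrete subset of $V$ under the continuous orbit map $g\mapsto g\cdot p_{L_0}$, using Lemma~\ref{Gorbit discrete}. The one point where you are more careful than the paper is the direction of the product. As you correctly compute, the preimage of $\Lambda\cdot p_{L_0}$ under $g\mapsto g\cdot p_{L_0}$ is $\Lambda N$, not $N\Lambda$; the paper's one-line proof identifies $N\Lambda$ directly with that preimage, which is a small slip. Your additional inversion step, using $(N\Lambda)^{-1}=\Lambda N$, cleanly closes that gap. Your justification that any subset of a closed discrete subset of a Hausdorff space is itself closed is also a sensible elaboration of the paper's ``discrete, hence closed'', which tacitly relies on the fact (visible from the proof of Lemma~\ref{Lemma Q discrete}, where $\Gamma\cdot p_{L_0}$ sits inside a $\Z[1/S]$-lattice) that the orbit is not merely discrete but closed in $V$.
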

\begin{proof} This is the inverse image of the discrete, hence closed, subset~$\Lambda\cdot p_L\subseteq\Gamma\cdot p_L$
along the continuous orbit map~$g\mapsto g\cdot p_L$.
\end{proof}
\begin{corollary}\label{coro loc compact}
 For any compact subset~$\overline{\Omega}$ of~$G$, the subset~$\overline{\Omega}\cdot N\cdot \Lambda$ is closed in~$G$, and is locally compact.
\end{corollary}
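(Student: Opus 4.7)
The plan is to leverage Corollary~\ref{coro closed} (which already gives that $N\Lambda$ is closed in $G$) together with compactness of $\overline{\Omega}$ and the fact that $G$ is a locally compact topological group. The closedness part is essentially the standard principle that in a topological group the product of a compact set with a closed set is closed.

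First, I would reduce the closedness of $\overline{\Omega} \cdot N \cdot \Lambda$ to the closedness of $N\Lambda$. Suppose $(g_\alpha)$ is a net in $\overline{\Omega} N \Lambda$ converging to some $g \in G$, and write $g_\alpha = \omega_\alpha n_\alpha \lambda_\alpha$ with $\omega_\alpha \in \overline{\Omega}$, $n_\alpha \in N$, $\lambda_\alpha \in \Lambda$. By compactness of $\overline{\Omega}$, extract a subnet so that $\omega_\alpha \to \omega \in \overline{\Omega}$. Then $n_\alpha \lambda_\alpha = \omega_\alpha^{-1} g_\alpha$ converges to $\omega^{-1} g$ by continuity of multiplication and inversion. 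Since $N\Lambda$ is closed in $G$ by Corollary~\ref{coro closed}, the limit $\omega^{-1} g$ lies in $N\Lambda$, whence $g = \omega \cdot (\omega^{-1} g) \in \overline{\Omega} \cdot N \cdot \Lambda$.

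For local compactness, I would simply observe that $G$ is locally compact (as a finite product of locally compact groups $\G(\Q_v)$), and any closed subset of a locally compact Hausdorff space inherits local compactness. Having just established that $\overline{\Omega} N \Lambda$ is closed in $G$, this is immediate.

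I do not expect a genuine obstacle here; the only minor point to attend to is the use of nets (rather than sequences) in the subnet extraction, though since $G$ is first-countable and metrizable, sequences suffice. The substantive input, that $\Lambda \cdot p_{L_0}$ is discrete in $V$, has already been absorbed into Corollary~\ref{coro closed}, and the rest is a routine compactness–closedness manipulation in a topological group.
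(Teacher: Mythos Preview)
Your proof is correct and takes essentially the same approach as the paper: both reduce to the fact that $N\Lambda$ is closed (Corollary~\ref{coro closed}) and then argue that the product of a compact set with a closed set in a topological group is closed, with local compactness following immediately. The only cosmetic difference is that the paper phrases the closedness argument locally, showing $\overline{\Omega}\cdot N\Lambda \cap K = \overline{\Omega}\cdot(N\Lambda \cap \overline{\Omega}^{-1}K)\cap K$ is compact for any compact neighbourhood $K$, whereas you use a net/sequence extraction; these are interchangeable standard arguments.
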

\begin{proof} The closedness is  local statement on~$G$. Pick~$g$ in~$G$ and a compact neighbourhood~$K$ of~$g$ in~$G$. We know~$C=N\cdot \Lambda$ is closed. Let
$x=\omega\cdot c$
be an element of~$K$ with~$\omega\in\overline{\Omega}$ and~$c$ in~$C$. Then~$c$ belongs to~$\overline{\Omega}^{-1}\cdot K$, which is a continuous image of the compact~$\overline{\Omega}^{-1}\times K$, hence compact. It follows
\[
\overline{\Omega}\cdot C\cap K=\overline{\Omega}\cdot (C\cap\overline{\Omega}^{-1}\cdot K) \cap K.
\]
We note that~$C\cap\overline{\Omega}^{-1}\cdot K$ is compact, as the intersection of a compact and a closed subset. Hence the product~$\overline{\Omega}\cdot (C\cap\overline{\Omega}^{-1}\cdot K)$ is compact (the same argument as above). Finally it intersection with~$K$ is compact, and in particular closed.

The local compactness follows form the closedness, as~$G$ is locally compact.
\end{proof}
\begin{corollary}\label{coro loc compact closed}
The subset~$\overline{\Omega}\cdot N\cdot \Lambda/\Lambda$ of~$G/\Lambda$ is closed.
\end{corollary}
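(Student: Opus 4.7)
The plan is to deduce this immediately from the preceding Corollary~\ref{coro loc compact}. Set $E := \overline{\Omega}\cdot N\cdot \Lambda$. By construction $E$ is right-$\Lambda$-invariant, i.e.\ $E\cdot \Lambda = E$, and by Corollary~\ref{coro loc compact} it is closed in $G$. The subset to be studied, $\overline{\Omega}\cdot N\cdot \Lambda/\Lambda$, is precisely the image $\pi(E)$ of $E$ under the quotient map $\pi : G \to G/\Lambda$, so it suffices to show that $\pi(E)$ is closed.

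First I would record that $\pi$ is an open map. This is the standard fact that a quotient of a topological group by a subgroup acting by translation is open: for any open $U\subseteq G$, one has $\pi^{-1}(\pi(U)) = U\Lambda = \bigcup_{\lambda\in\Lambda} U\lambda$, which is open in $G$ as a union of translates of $U$; by definition of the quotient topology this means $\pi(U)$ is open in $G/\Lambda$. Note that no assumption on $\Lambda$ (discreteness, closedness, etc.) is needed for this step.

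Next I would apply this openness to the complement $G\setminus E$, which is itself right-$\Lambda$-invariant because $E$ is. Thus $\pi(G\setminus E)$ is open in $G/\Lambda$. The key set-theoretic identity is
\[
\pi(G\setminus E) \;=\; (G/\Lambda)\setminus \pi(E),
\]
which holds precisely because $E$ is $\Lambda$-saturated: a coset $g\Lambda$ belongs to $\pi(E)$ if and only if some (equivalently every, by saturation) representative $g$ lies in $E$. Consequently $(G/\Lambda)\setminus \pi(E)$ is open, so $\pi(E) = \overline{\Omega}\cdot N\cdot \Lambda/\Lambda$ is closed in $G/\Lambda$, as required. There is no real obstacle here; the entire content of the corollary is already carried by Corollary~\ref{coro loc compact}, and the step from $G$ to $G/\Lambda$ is the routine one of pushing a saturated closed set down through an open quotient map.
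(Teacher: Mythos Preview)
Your proof is correct and complete. The paper takes a slightly different, though equally elementary, route: it observes that closedness is a local property on $G/\Lambda$, that the quotient map $G\to G/\Lambda$ is a local homeomorphism because $\Lambda$ is discrete, and that under this local identification the subset in question corresponds to the closed set $\overline{\Omega}\cdot N\cdot \Lambda$ in $G$. Your argument via openness of the quotient map and $\Lambda$-saturation of $E$ is in fact a bit more general, since it does not use discreteness of $\Lambda$; the paper's local-homeomorphism argument does. In the present context $\Lambda\leq\Gamma$ is discrete anyway, so both approaches work, and both amount to a routine passage from Corollary~\ref{coro loc compact} to the quotient.
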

\begin{proof} Closedness is a local property on~$G/\Gamma$. But~$G\to G/\Gamma$ is a local homeomorphism, as~$\Lambda$ is discrete, and is such that the subset~$\overline{\Omega}\cdot N\cdot \Lambda/\Lambda$ of~$G/\Lambda$ corresponds to~$\overline{\Omega}\cdot N\cdot \Lambda$. As the latter is closed, it follows that locally \emph{on $G/\Gamma$}, the subset~$\overline{\Omega}\cdot N\cdot \Lambda/\Lambda$ is closed in~$G/\Lambda$ which concludes.
\end{proof}

\subsection{A Variant of~\cite[Theorem~2.1]{LemmaA}}
Here we provide a variant of~\cite[Theorem~2.1]{LemmaA} which differs in two ways. Firstly we refer to the (product) norm
$$\left(x_v\right)_{v\in S}\mapsto\max_{v\in S}\Nm{x_v}$$
rather that the product function
$$\left(x_v\right)_{v\in S}\mapsto\prod_{v\in S}\Nm{x_v}.$$
The latter may be not proper. Secondly we will state it for any bounded open subset~$\Omega$, instead of a product~$\prod_{v\in S}\Omega_{v}$.

This Theorem is not used in the proof of Theorem~\ref{Theorem}. It is used to remove hypothesis~\eqref{Hypo} in the Theorems~\ref{Theointro} and~\ref{Theointro2} from the Introduction (see~{\S}\ref{analytic stability}).
\begin{theorem}\label{thm21bis} The Theorem~2.1 from~\cite{LemmaA} holds with the following modifications\footnote{Other than the syntactically erroneous~"given".} applied together. We use the set~$Y=Y_S$ (minding the typo)
\begin{itemize}
\item Let~$\Omega$ be any bounded open subset of~$H$;
\item the concluding formula~\cite[(9)]{LemmaA} is replaced by
\begin{equation}
\forall y\in Y,~\forall\left(x_v\right)_{v\in S}\in V,
\sup_{\omega=\left(\omega_v\right)_{v\in S}\in\Omega}
\max_{v\in S}\Nm{\rho_v(y\cdot\omega_v)(x_v)}_v
\geq
\max_{v\in S}\left.\Nm{x_v}_v\right/c.
\end{equation}
\end{itemize}
\end{theorem}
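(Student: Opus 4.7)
The plan is to reduce the variant to the original Theorem~2.1 of~\cite{LemmaA} by two elementary manipulations: a \emph{shrinking} step, which replaces the arbitrary bounded open $\Omega$ by a product of bounded open subsets; and a \emph{norm conversion} step, which passes from the product-type formulation of~\cite[(9)]{LemmaA} to the max-norm formulation asserted here. Both steps are formal and the original theorem does the substantive work.

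First I would handle the shrinking. Since $H=\prod_{v\in S} H_v$ carries the product topology, and $\Omega$ is non-empty and open in $H$, one can pick $\omega_0\in\Omega$ and a basic open neighbourhood $\Omega'=\prod_{v\in S}\Omega'_v\subseteq\Omega$ with each $\Omega'_v$ a bounded open subset of $H_v$. Because the supremum over $\Omega$ dominates the supremum over $\Omega'$, it suffices to prove the conclusion with $\Omega$ replaced by such a product $\Omega'$. This reduces matters to the setting in which the original~\cite[Thm.~2.1]{LemmaA} directly applies: at each place $v\in S$ one obtains a constant $c_v>0$ with
\[
\forall y\in Y_S,\ \forall x_v\in V_v,\ \sup_{\omega_v\in\Omega'_v}\Nm{\rho_v(y\cdot\omega_v)(x_v)}_v\geq \Nm{x_v}_v/c_v.
\]
(This place-by-place statement is extracted from the product formulation by fixing nonzero vectors at the other places and dividing them out; alternatively one can apply the original theorem in the single-place case.)

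Second I would convert to the max-norm. Given $x=(x_v)_{v\in S}\in V$ and $y\in Y_S$, pick $v_0\in S$ realising $\max_{v\in S}\Nm{x_v}_v=\Nm{x_{v_0}}_{v_0}$. By the place-$v_0$ stability just obtained, choose $\omega_{v_0}\in\Omega'_{v_0}$ with
\[
\Nm{\rho_{v_0}(y\cdot\omega_{v_0})(x_{v_0})}_{v_0}\geq \Nm{x_{v_0}}_{v_0}/c_{v_0}.
\]
Complete $\omega_{v_0}$ to an element $\omega=(\omega_v)_{v\in S}\in\Omega'\subseteq\Omega$ by choosing the remaining coordinates arbitrarily. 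Then
\[
\sup_{\omega\in\Omega}\max_{v\in S}\Nm{\rho_v(y\cdot\omega_v)(x_v)}_v
\geq \Nm{\rho_{v_0}(y\cdot\omega_{v_0})(x_{v_0})}_{v_0}
\geq \max_{v\in S}\Nm{x_v}_v/c_{v_0}.
\]
Setting $c=\max_{v\in S} c_v$ (which is finite because $S$ is finite) gives the uniform bound with the same constant for every choice of $v_0$, completing the conclusion.

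The only potential obstacle is epistemological rather than mathematical: one must verify that the proof of~\cite[Thm.~2.1]{LemmaA} does not implicitly rely on $\Omega$ being of product form or on the product-norm formulation in an essential way. Reading the proof of that theorem shows it proceeds place by place via the $(C,\alpha)$-good function machinery (as in Kleinbock–Tomanov), which never couples the places and only needs that the projection of $\Omega$ to each factor is bounded open with non-empty interior. The two formal reductions above then suffice, and no new ingredient beyond those of~\cite{LemmaA} is required.
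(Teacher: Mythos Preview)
Your proof is correct and follows essentially the same route as the paper: shrink $\Omega$ to a basic product open set $\prod_v\Omega_v$, invoke the place-by-place inequality with constants $c_v$, and then pass to the max-norm with $c=\max_{v\in S}c_v$. The paper's norm-conversion step is written as a chain of max/sup inequalities rather than by picking an explicit maximising place $v_0$, but the content is identical.
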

\begin{proof}[Proof of Theorem~\ref{thm21bis}]
As~$\Omega$ is nonempty open in~$H$, it contains a basic non-empty open subset of the form~$\prod_{v\in S}\Omega_v$, as these form a basis of the product topology. As each~$\Omega_v$ is non-empty and open, it is Zariski dense in~$H_v$.

The proof then goes forward analogously as in~\cite[p.VI-6/124]{LemmaA}. We have at each place
\[\forall y\in Y_v,~\forall x_v\in V_v, \sup_{\omega\in\Omega_v}\Nm{y_v \omega_y x_v}_v\leq\Nm{x_v}_v/c_v.\] 
We deduce the inequality
\[
\max_{v\in S}\sup_{\omega\in\Omega_v}\Nm{y_v \omega_y x_v}_v\geq \max_{v\in S}\left(\Nm{x_v}_v\middle/c_v\right)
\]
and finally
\[
\sup_{\omega\in\Omega_v}\max_{v\in S}\Nm{y_v \omega_y x_v}_v\geq \left.\left(\max_{v\in S}\Nm{x_v}_v\right)\middle/\left(\max_{v\in S}c_v\right)\right.	
\]
which gives the conclusion with~$c=\max_{v\in S}c_v$.
\end{proof}

\subsection{Lifting weak convergence of probabilities outside a negligible closed set.}
This property is used in the main proof, at section~\ref{section 633}, to lift convergence~$g_i\mu_\Omega\to\mu_\infty$ 
inside~$G/\Gamma$ to a convergence~$g_i\widetilde{\mu}_\Omega\to\widetilde{\mu}_\infty$ in~$G/\Gamma_N$,
using the map~\eqref{finalbij}.

\begin{proposition}\label{Propoliftcvg}
Let~$X\xrightarrow{\pi} Y$ be a continuous map between locally compact spaces inducing an homeomorphism from an open subset~$U_X$ of~$X$ to an open subset~$U_Y$ of~$Y$.

Let~$(\mu_i)_{i\geq0}$ be a sequence of probabilities on~$Y$ with weak limit a probability~$\mu_\infty$, such that~$\mu_\infty$ is concentrated on~$U_Y$.
Then for any sequence~$(\widetilde{\mu}_i)_{i\geq0}$ of probabilities on~$X$ such that
$$\forall i\geq 0, \pi_\star\widetilde{\mu}_i={\mu}_i,$$
the weak limit~$\widetilde{\mu}_\infty:=\lim (\widetilde{\mu}_i)_{i\geq0}$ exists, is a tight limit, and satisfies
\begin{equation}\label{eq Prop 11}
	\pi_\star\widetilde{\mu}_\infty={\mu}_\infty.
\end{equation}

\end{proposition}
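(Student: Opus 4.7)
The plan is to exhibit a canonical candidate $\widetilde{\mu}_\infty$ on $X$ and then establish both tightness of $(\widetilde{\mu}_i)_{i\geq0}$ and uniqueness of all its tight cluster points. First, note that the argument tacitly requires $U_X=\pi^{-1}(U_Y)$, which holds in the intended application (section~\ref{section 633}, where $U_Y$ is the singleton-fibre locus and $U_X$ its full preimage). Granting this, define $\widetilde{\mu}_\infty$ as the direct image of $\mu_\infty$ under the inverse homeomorphism $(\pi|_{U_X})^{-1}\colon U_Y\to U_X$, viewed as a Radon probability on $X$ supported in $U_X$; this makes sense because $\mu_\infty$ is a probability concentrated on $U_Y$, and by construction $\pi_\star\widetilde{\mu}_\infty=\mu_\infty$.

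The main step is to prove tightness. Given $\epsilon>0$, use inner regularity of the Radon probability $\mu_\infty$ on the open set $U_Y$ to find a compact $K\subseteq U_Y$ with $\mu_\infty(K)>1-\epsilon$. Local compactness of $Y$ lets us fit $K\subseteq V\subseteq\overline{V}\subseteq U_Y$ with $V$ open and $\overline{V}$ compact. The Portmanteau theorem for weak convergence of probabilities gives $\liminf_i\mu_i(V)\geq\mu_\infty(V)>1-\epsilon$. Under $U_X=\pi^{-1}(U_Y)$, the set $\widetilde{V}:=\pi^{-1}(\overline{V})$ coincides with $(\pi|_{U_X})^{-1}(\overline{V})$, a compact subset of $U_X$. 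Since $\pi_\star\widetilde{\mu}_i=\mu_i$, one has $\widetilde{\mu}_i(\widetilde{V})=\mu_i(\overline{V})\geq\mu_i(V)>1-2\epsilon$ for $i$ large. By Prokhorov's theorem, $(\widetilde{\mu}_i)_{i\geq0}$ is tightly relatively compact.

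It remains to identify all cluster points. Let $\widetilde{\mu}$ be any tight limit of a subsequence. By continuity of the pushforward under tight convergence, $\pi_\star\widetilde{\mu}=\mu_\infty$. The tightness estimate above, applied to $\widetilde{\mu}$, shows $\widetilde{\mu}(X\smallsetminus\widetilde{V})\leq 2\epsilon$ for every $\epsilon>0$ with $\widetilde{V}\subseteq U_X$ compact, hence $\widetilde{\mu}$ is concentrated on $U_X$. Because $\pi|_{U_X}$ is a homeomorphism, the identity $\pi_\star(\widetilde{\mu}|_{U_X})=\mu_\infty|_{U_Y}=\mu_\infty$ uniquely determines $\widetilde{\mu}|_{U_X}$, and therefore $\widetilde{\mu}=\widetilde{\mu}_\infty$. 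Since every tight cluster point equals $\widetilde{\mu}_\infty$, the entire sequence converges tightly to $\widetilde{\mu}_\infty$, establishing both the existence of the limit and the identity~\eqref{eq Prop 11}. The only genuine subtlety is bookkeeping around the non-compactness of $X$ and $Y$: one must ensure that the spaces are Radon so that inner regularity, Portmanteau, and Prokhorov are available, and that the tacit condition $\pi^{-1}(U_Y)=U_X$ is recorded and used.
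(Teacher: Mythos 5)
Your proof is correct and follows essentially the same route as the paper's: inner regularity of $\mu_\infty$ plus local compactness to find a compact neighbourhood inside $U_Y$, the Portmanteau lower bound on open sets to push this to the $\mu_i$, pullback along $\pi$ to get tightness of $(\widetilde{\mu}_i)$, and then uniqueness of cluster points via the characterisation on $U_X$. Your explicit flagging of the tacit hypothesis $U_X=\pi^{-1}(U_Y)$ is a fair observation — the paper's step ``$\pi^{-1}(C)$ is a compact subset of $U_X$'' also uses this identification silently — but in the intended application ($U_Y$ the singleton-fibre locus, $U_X$ its full preimage) it holds by construction, so this is a clarification rather than a correction.
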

\begin{proof} By~\eqref{vague implique tight} Lemma~\ref{lemme recap mesures}, 
\begin{equation}\label{proof Prop11}
\text{the sequence~$(\mu_i)_{i\geq0}$ converges tightly to~$\mu_\infty$.}
\end{equation}
%By~\cite[\S5.5, Theorem~2]{BouINT} in the locally compact space~$Y$, its satisfies Prokhorov's property that, for any~$\epsilon>0$, there exists a compact~$K$ in~$Y$ such that
%\[
%	\limsup(\mu_i (K))_{i\geq0} >1-\epsilon
%\]

Our next objective is to show that 
\begin{equation}\label{claim tightness}
\text{the sequence~$(\widetilde{\mu}_i)_{i\geq0}$ is tight as well.}
\end{equation}
\begin{proof}
By inner regularity of~$\mu_\infty$, for any~$\eps>0$ there is a compact subset~$K$ of~$U_Y$ such that~$\mu_\infty(K)>\mu_\infty(U_Y)-\eps=1-\eps$.

We now prove there is a compact neighbourood~$C$ of~$K$ in~$U_X$.
 As~$Y$ is locally compact, so is~$U_Y$, hence every point~$x$ of~$K$ has a compact neighbourhood~$C_x$. As~$K$ is compact, we may extract a finite cover~$K\subseteq C=C_{x_1}\cup\ldots\cup C_{x_n}$. 
 
Then~$C$ is a compact subset of~$U_Y$ whose interior~$\mathring{C}$ contains~$K$. We have
\[
\mu_\infty(\mathring{C})\geq\mu_\infty(K)\geq 1-\eps.
\]
By~\eqref{proof Prop11} we may apply~\eqref{semicontinu ouvert} of Lemma~\ref{lemme recap mesures}, whence follows
\[
\liminf (\mu_i(\mathring{C}))_{i\geq 0}\geq 1-\eps.
\]
A fortiori we have the lower bound
\[
\liminf (\mu_i(C))_{i\geq 0}\geq\liminf (\mu_i(\mathring{C}))_{i\geq 0}\geq 1-\eps.
\]
As~$\pi_\star(\widetilde{\mu}_i)=\mu_i$ we will be able to substitute
\[
\widetilde{\mu}_i(\stackrel{-1}{\pi}(C))=\mu_i(C).
\]
As~$\pi$ is an homeomorphism from~$U_X$ to~$U_Y$, we get that~$\stackrel{-1}{\pi}(C)$
is a compact subset of~$U_X$. We also have
\[
	\liminf_{i\geq0}\widetilde{\mu}_i(\stackrel{-1}{\pi}(C))
		=
	\liminf (\mu_i(\mathring{C}))_{i\geq 0}
		\geq
	1-\eps.
\]
%Denote~$Z_Y=Y\smallsetminus U_Y$. By hypothesis~$\mu_\infty(Z_Y)=0$.
%By outer regularity of~$\mu_\infty$, there is an open neighbourhood~$V$ of~$Z_Y$ in~$Y$ such that~$\mu_\infty(V)<\eps$. 
%By~\eqref{semicontinu ouvert} of Lemma~\ref{lemme recap mesures}, we have
%\[
%\limsup(\mu_i(V))
%\]
%Let~$W=Y\smallsetminus$. We have
%\[
%\mu
%\]
%We have then
%\[
%\mu_\infty (K\smallsetminus V) >1-2\epsilon.
%\]
%As~$K\smallsetminus V$ is closed, we have
%$$\limsup_{i\geq0} \mu_i (K\cap Z_Y) \leq \mu_\infty(K\setminus V)\geq 1-2\eps,$$ we can erase from~$K$ a small enough neighbourhood~$V$ of~$K\cap Z_Y$ in~$K$ and get, for the compact~$K\setminus V$,
%\begin{equation}\limsup_{i\geq0} \mu_i (K\setminus V) >1-2\epsilon \label{tightout}\end{equation}
%
%Take a sequence~$(\widetilde{\mu}_i)_{i\geq0}$ as in the statement. Let us prove that this is a tight sequence. 
%From
%$$\tilde{\mu_i} \left(\stackrel{-1}{\pi}(K\setminus V)\right)=\mu_i (K\setminus V),$$
%we can lift~\eqref{tightout} to~$X$:
%$$\limsup_{i\geq0} \tilde{\mu_i} \left(\stackrel{-1}{\pi}(K\setminus V)\right) >1-2\epsilon.$$
%Note that~$\stackrel{-1}{\pi}(K\setminus V)$ is a compact because~$\pi$ is an homeomorphism on~$U_X$.
As~$\epsilon$ is arbitrarily small, this proves our claim~\eqref{claim tightness} of tightness for~$(\widetilde{\mu}_i)_{i\geq0}$.
\end{proof}

By tightness, the set~$\{\widetilde{\mu}_i|i\geq 0\}$ is relatively compact, for tight topology, in the space of probabilities on~$X$.
Let~$\mu$ be any weak limit of the sequence~$(\widetilde{\mu}_i)_{i\geq0}$. This is necessarily a probability. Then, by continuity of~$\pi_\star$ from the space of probabilities of~$X$ towards the space of probabilities of~$Y$, we have
\begin{equation}\label{eq proof Prop 11}
\pi_\star(\mu)=\mu_\infty.
\end{equation}

Consequently~$\mu$ is concentrated on~$U_X=\stackrel{-1}{\pi}(U_Y)$, and~$\mu$ is characterised by its restriction to~$U_X$, which is necessarily~$\stackrel{-1}{\pi}_\star({\mu_\infty}|_{U_Y})$.

The sequence~$\widetilde{\mu}_\infty$ is tight, and has a unique limit point. This is hence a limit~$\widetilde{\mu}_\infty$ of the sequence~$(\widetilde{\mu}_i)_{i\geq0}$. The relation~\eqref{eq Prop 11} is~\eqref{eq proof Prop 11} above. This concludes.
\end{proof}

We regroup in following lemma some standard facts we use regularly.
\begin{lemma}\label{lemme recap mesures}
 Let~$(\mu_i)_{i\geq 0}$ be a weakly convergent sequence of probability measures on a locally compact space~$X$, with vague (resp. weak) limit a probability measure~$\mu_\infty$.
\begin{enumerate}
\item \label{vague implique tight} Then~$(\mu_i)_{i\geq 0}$ is tightly converging to~$\mu_\infty$. 
\item \label{semicontinu ouvert}	For any open subset~$U$ of~$X$ one has
\[
\liminf \mu_i(U)\geq\mu_\infty(U).
\]
\item \label{semicontinu ferme} For any closed subset~$V$ of~$X$ one has
\[
\limsup \mu_i(V)\leq\mu_\infty(V).
\]
%\item \label{inner regularity} The measure~$\mu_\infty$ is outer regular?
\end{enumerate}
\end{lemma}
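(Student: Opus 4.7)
The statement is the classical Portmanteau theorem for probability measures on a locally compact (Hausdorff) space, and the proof plan is the standard one. The three parts are intertwined: (2) is the backbone, (3) follows from (2) by complementation, and (1) is the upgrade from vague to tight convergence which uses (2) together with inner regularity of $\mu_\infty$. I would handle them in the order (2), (3), (1).

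For (2), fix an open $U$. Since $\mu_\infty$ is a Radon probability on the locally compact space $X$, it is inner regular: $\mu_\infty(U)=\sup\{\mu_\infty(K)\mid K\subseteq U\ \text{compact}\}$. For each compact $K\subseteq U$, Urysohn's lemma in a locally compact Hausdorff space produces a continuous function $\phi_K:X\to[0,1]$ with compact support contained in $U$ and $\phi_K\equiv 1$ on $K$. Then $\mu_i(U)\geq \int \phi_K\,d\mu_i$, and since $\phi_K\in C_c(X)$ the vague (equivalently, weak) convergence gives $\int\phi_K\,d\mu_i\to\int\phi_K\,d\mu_\infty\geq \mu_\infty(K)$. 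Hence $\liminf_i\mu_i(U)\geq \mu_\infty(K)$, and taking the supremum over $K$ yields (2).

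For (3), apply (2) to the open set $V^c$ and use the identity $\mu_i(V)=1-\mu_i(V^c)$ valid for all $i$ (including $i=\infty$) since all measures are probabilities: $\limsup_i\mu_i(V)=1-\liminf_i\mu_i(V^c)\leq 1-\mu_\infty(V^c)=\mu_\infty(V)$.

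For (1), the task is to pass from vague convergence (testing against $C_c(X)$) to tight convergence (testing against bounded continuous functions). Given $\varepsilon>0$, inner regularity of $\mu_\infty$ furnishes a compact $K_\varepsilon$ with $\mu_\infty(K_\varepsilon)>1-\varepsilon$. By Urysohn, pick $\phi\in C_c(X)$ with $0\leq\phi\leq 1$ and $\phi\equiv 1$ on $K_\varepsilon$, so that $\int\phi\,d\mu_\infty>1-\varepsilon$ and by vague convergence $\int\phi\,d\mu_i\to\int\phi\,d\mu_\infty$. For any bounded continuous $f$, decompose
\[
\int f\,d\mu_i-\int f\,d\mu_\infty=\Bigl(\int f\phi\,d\mu_i-\int f\phi\,d\mu_\infty\Bigr)+\int f(1-\phi)\,d\mu_i-\int f(1-\phi)\,d\mu_\infty.
\]
The first bracket tends to $0$ by vague convergence applied to $f\phi\in C_c(X)$. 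The remaining two terms are each bounded in absolute value by $\|f\|_\infty\bigl(1-\int\phi\,d\mu_i\bigr)$ and $\|f\|_\infty\bigl(1-\int\phi\,d\mu_\infty\bigr)$, both less than $2\|f\|_\infty\varepsilon$ for $i$ large enough. Letting $\varepsilon\to 0$ yields $\int f\,d\mu_i\to\int f\,d\mu_\infty$, i.e.\ tight convergence. The only point requiring care is the use of inner regularity of $\mu_\infty$, which is automatic because $X$ is locally compact and $\mu_\infty$ is a Radon probability; no substantive obstacle arises.
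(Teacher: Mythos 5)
Your proof is correct. The paper's own proof is much terser: it disposes of all three statements by direct citation to Bourbaki's \emph{Int\'egration} (it invokes [BouINT, \S5.3, Prop.~9] for the upgrade from vague to tight convergence, then appeals to the lower semicontinuity of $\mu\mapsto\mu(f)$ for a lower semicontinuous $f$ — taking $f=\mathbf{1}_U$ — again as a quoted fact, and complements for the closed case). You instead reprove these facts from first principles: for (2) you combine inner regularity of $\mu_\infty$ with Urysohn's lemma to approximate $\mathbf{1}_U$ from below by $C_c$ functions, which is precisely the mechanism underlying Bourbaki's lower-semicontinuity statement; for (1) you build the cut-off $\phi$ and run the decomposition $f = f\phi + f(1-\phi)$, which is the standard proof that vague convergence to a probability forces tight convergence. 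The mathematical content is the same, but your argument is self-contained and more elementary, so it could be read without the Bourbaki reference in hand; the trade-off is length, which the paper avoids by delegating to the cited results. Both are fine. One stylistic remark: you prove (2), (3), (1) in that order, while (1) is logically independent of (2)--(3) here, so the reordering is harmless.
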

\begin{proof}
%By definition vague convergence is weaker (compactly supported continuous functions) than we.
As~$\mu_\infty$ is a probability (there is no escape of mass at infinity),
it follows by~\cite[\S5.3, Prop.~9]{BouINT} that vague convergence implies tight convergence.

It suffices to prove one conclusion. The other follows as~$\mu(V)=1-\mu(U)$ for any probability measure.

Let~$f$ be the indicator function of~$U$. As~$U$ is open,~$f$ is lower semi-continuous. By~\cite[\S5.3]{BouINT}, the evaluation~$\mu\mapsto\mu(f)=\mu(U)$ against the test function is lower semicontinuous.

We may argue similarly with a closed subset, or deduce it using the symme\-try relation~$\mu(V)=1-\mu(U)$ for any probability measure~$\mu$, with the complementary open subset~$U=X\smallsetminus V$.
\end{proof}

We shall have a use for the two following formal facts
\begin{lemma}\label{lemma restriction proper} Let~$f:X\to Y$ be a proper map. Then for any subset~$\Sigma\subseteq Y$, with preimage~$\Pi={\stackrel{-1}{f}(\Sigma)}$, the restriction~$f|_\Pi:\stackrel{-1}{f}(\Sigma)\to \Sigma$ is proper.
\end{lemma}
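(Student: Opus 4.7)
The plan is to observe that $f|_\Pi \colon \Pi \to \Sigma$ is obtained from $f$ by base change along the inclusion $\Sigma \hookrightarrow Y$, since the preimage $\Pi = f^{-1}(\Sigma)$ is canonically identified with the fibre product $X \times_Y \Sigma$. Properness being stable under base change (Bourbaki TOP I~\S10, Prop.~3), $f|_\Pi$ is automatically proper. I would however favour a self-contained one-line verification using the characterisation of properness as ``preimages of compact sets are compact'', since that is the only property of properness the main text actually uses.

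Concretely, I would let $K \subseteq \Sigma$ be compact. Then $K$ is also compact when viewed in $Y$ via the inclusion $\Sigma \hookrightarrow Y$, so by properness of $f$ the preimage $f^{-1}(K)$ is compact in $X$. Since $K \subseteq \Sigma$ we have $f^{-1}(K) \subseteq f^{-1}(\Sigma) = \Pi$, and $f^{-1}(K)$ is closed in $\Pi$ (it is even closed in $X$), hence compact in $\Pi$. But $f^{-1}(K)$ is precisely the preimage of $K$ under $f|_\Pi$, so $f|_\Pi$ pulls compact subsets of $\Sigma$ back to compact subsets of $\Pi$, which is the sought-after properness.

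There is essentially no obstacle: the statement is purely formal. The only minor subtlety, should one prefer Bourbaki's definition of a proper map (universally closed map), is to verify that $f|_\Pi \times \mathrm{id}_Z$ is closed for every auxiliary space $Z$; but this follows immediately from the closedness of $f \times \mathrm{id}_Z$ by restricting to the closed subspace $\Pi \times Z \subseteq X \times Z$ and noting that its image lands inside $\Sigma \times Z$, so that closedness of a set in $\Pi \times Z$ as a subset of $X \times Z$ implies closedness of its image in $\Sigma \times Z$ as a subset of $Y \times Z$, hence in $\Sigma \times Z$ itself. Either presentation yields a two-line proof.
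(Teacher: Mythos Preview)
Your proof is correct and follows exactly the same approach as the paper: take a compact $K\subseteq\Sigma$, note that $f^{-1}(K)$ is compact in $X$ by properness of $f$, observe it lies in $\Pi$ and coincides with the preimage under $f|_\Pi$. The paper's proof is precisely your middle paragraph, without the additional commentary on base change and the universally-closed formulation.
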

\begin{proof} Let~$K$ be a compact subset of~$\Sigma$. Then its inverse image~$\stackrel{-1}{f}(K)$ in~$X$ is compact, by properness. It is contained in~$\Pi$ by definition. We conclude
\begin{itemize}
\item that this is also the inverse image by~$f|_\Pi$;
\item this is a compact subset of~$\Pi$.
\end{itemize}
\end{proof}
\begin{lemma}\label{Lemma homeo}
Let~$f:X\to Y$ be a bijective continuous proper map between locally compact spaces. Then this is an homeomorphism.
\end{lemma}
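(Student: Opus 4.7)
The plan is to reduce the statement to showing that $f$ is a closed map, since a bijective continuous closed map is automatically a homeomorphism: the inverse $f^{-1}$ sends closed sets to closed sets, hence is continuous. So the real content is the implication ``proper between locally compact (Hausdorff) spaces $\Rightarrow$ closed'', which is a classical fact. I would reproduce a short proof of it directly rather than invoke it as a black box, since doing so is essentially as quick.

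Concretely, let $C \subseteq X$ be closed and let $y$ lie in the closure of $f(C)$ in $Y$. Using local compactness of $Y$, pick a compact neighbourhood $K$ of $y$, and let $V = \operatorname{int}(K)$, so that $y \in V \subseteq K$. Any neighbourhood of $y$ meets $f(C)$, hence meets $f(C) \cap V$, so $y$ lies in the closure of $f(C) \cap V$, and a fortiori in the closure of $f(C) \cap K$. Now $f(C) \cap K = f\bigl(C \cap f^{-1}(K)\bigr)$ since $f$ is a bijection (injectivity is what is used here). By properness $f^{-1}(K)$ is compact, hence $C \cap f^{-1}(K)$ is closed inside a compact set, hence itself compact; its continuous image $f(C \cap f^{-1}(K))$ is therefore compact in the (locally compact, in particular Hausdorff) space $Y$, and so closed. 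We conclude $y \in f(C \cap f^{-1}(K)) \subseteq f(C)$, proving $f(C)$ is closed.

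The only mild subtlety is ensuring that $y$ is in the closure of $f(C) \cap K$ rather than just of $f(C)$; this is what the neighbourhood $V \subseteq K$ takes care of. The Hausdorff assumption hidden inside ``locally compact'' (in the Bourbaki convention, which this paper follows throughout) is needed to guarantee that compact subsets of $Y$ are closed. No real obstacle is expected — the argument is entirely formal and the proof should fit in a few lines once this outline is written out.
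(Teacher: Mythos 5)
Your proof is correct and takes essentially the same route as the paper's: reduce the statement to showing $f$ is closed, check closedness of $f(C)$ locally by intersecting with a compact $K$, and use properness together with the identity $f(C)\cap K = f\bigl(C\cap f^{-1}(K)\bigr)$ to exhibit $f(C)\cap K$ as a compact (hence closed) set. Your version is merely a bit more explicit in the point-chase that justifies the reduction to compacts; the one small inaccuracy is the parenthetical remark that injectivity is needed for the identity $f(C)\cap K = f(C\cap f^{-1}(K))$ — that identity in fact holds for arbitrary maps — but this does not affect the validity of the argument.
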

\begin{proof} We have to show that~$f$ is an open map, or equivalently, as it is bijective, a closed map.

Let~$C$ be a closed subset of~$X$. We want to prove that~$f(C)$ is closed. We can work locally on~$Y$. 
As~$Y$ is locally compact, it suffices to show that~$f(C)\cap K$ is closed for any compact subset~$K$ of~$Y$.
We use the properness~$\stackrel{-1}{f}(K)$ is compact. Hence~$C\cap\stackrel{-1}{f}(K)$ is compact.
We see with the projection relation
\[f(C)\cap K=f(C\cap\stackrel{-1}{f}(K))\]
that~$f(C)\cap K$ is the image of a compact, hence compact, and indeed closed.
\end{proof}
\subsection{Unimodularity and quotient measures}
\begin{lemma}\label{Lemma unimodular} Let~$L$ be a subgroup of~$G$ defined over~$\Q$, let $\Lpp$ be an open subgroup of~$L$. Let~$\Gamma$ be an~$S$-arithmetic subgroup of~$G$ and~$\Lambda$ be subgroup of~$\Gamma$ normalising~$\Lpp$.
\begin{itemize}
\item Then~$\Lambda \Lpp$ is a closed subgroup of~$G$.
\item Assume that~$L$ is unimodular. Then~$\Lambda L$ is unimodular.
\end{itemize}
\end{lemma}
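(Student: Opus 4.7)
The plan to establish the first claim is to reduce the closedness of $\Lambda\Lpp$ in $G$ to that of $\Lambda L$. First I would note that since $\Lpp$ is an open subgroup of the topological group $L=\mathbf{L}(\Q_S)$, it is automatically closed in $L$ (the complement being a union of open cosets), and since $L$ is closed in $G$ as the $\Q_S$-points of a $\Q$-algebraic subgroup, $\Lpp$ is closed in $G$. Next I would show that $\Lambda$ normalises not just $\Lpp$ but all of $L$: conjugation by $\lambda\in\Lambda$ is a $\Q_S$-algebraic automorphism of $G$ preserving $\Lpp$ and hence its Zariski closure, which equals the neutral component $L^0$ of $L$ (since $\Lpp$ is open, hence of full dimension); the finitely many cosets of $L^0$ in $L$ are then permuted by $\Lambda$, so $\Lambda$ normalises $L$. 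In particular $\Lambda L$ is a subgroup of $N_G(L)$, and since $\Lpp$ is open in $L$, the set $\Lambda\Lpp$ is an open subgroup of $\Lambda L$; as open subgroups of a topological group are closed, it suffices to show that $\Lambda L$ is closed in $G$.

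For the closedness of $\Lambda L$, I would pass to the quotient map $\pi\colon N_G(L)\to N_G(L)/L$ and aim to show $\pi(\Lambda)$ is closed in $N_G(L)/L$, since then $\Lambda L=\pi^{-1}(\pi(\Lambda))$ is closed in $N_G(L)$ and hence in $G$. The key point is the discreteness of the image $\pi(\Gamma\cap N_G(L))$ in $N_G(L)/L$: once this is known, any subgroup of this discrete subgroup, in particular $\pi(\Lambda)$, is itself discrete and therefore closed. To establish this discreteness, I would use the exterior power representation of Lemma~\ref{Lemma Q discrete}: taking a non-zero generator $p_L\in\bigwedge^{\dim L}\mathfrak{g}$ of the $\Q$-line $\det(\mathrm{Lie}(L)_\Q)$, the orbit $\Gamma\cdot p_L$ is discrete in the corresponding $\Q_S$-module. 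Since $N_G(L)$ stabilises the line $\Q_S\cdot p_L$ and acts on it through a $\Q$-rational character whose kernel contains $L$, the image $\pi(\Gamma\cap N_G(L))$ is commensurable with $\Gamma\cdot p_L$ viewed inside the one-dimensional quotient, whence its discreteness in $N_G(L)/L$.

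For the unimodularity assertion, assume $L$ is unimodular. The group $\Lambda L$ has $L$ as a closed normal subgroup with discrete (hence unimodular) quotient $\Lambda L/L\cong \Lambda/(\Lambda\cap L)$; the modular function $\Delta_{\Lambda L}$ is trivial on $L$ (by unimodularity of $L$) and factors through the quotient, so I only need $\Delta_{\Lambda L}(\lambda)=1$ for every $\lambda\in\Lambda$. For such $\lambda$, this value equals the $\Q_S$-module of the determinant $d_\lambda$ of the adjoint action of $\lambda$ on the $\Q_S$-Lie algebra of $L$. Since $\lambda\in\Gamma$ preserves an $S$-integral lattice in the $\Q$-vector space $\mathrm{Lie}(L)_\Q$, the quantity $d_\lambda$ lies in $\Z[S^{-1}]^{\times}$; the product formula for $\Q^\times$, together with $|d_\lambda|_v=1$ at places $v\notin S$, then forces $|d_\lambda|_S=1$, so $\Delta_{\Lambda L}(\lambda)=1$.

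The main obstacle will be establishing the discreteness of $\pi(\Gamma\cap N_G(L))$ in $N_G(L)/L$; this is where the arithmeticity of $\Gamma$ and the $\Q$-rationality of $L$ are essential, and the reduction to Lemma~\ref{Lemma Q discrete} requires careful handling of the character by which $N_G(L)$ acts on $\det\mathrm{Lie}(L)_\Q$, as well as a fundamental-domain argument for the reductive (or unipotent) factors of $N_G(L)/L$. The unimodularity portion is more routine and hinges on the product formula applied to units in $\Z[S^{-1}]$.
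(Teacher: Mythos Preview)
Your overall architecture matches the paper's: reduce closedness of $\Lambda\Lpp$ to that of $\Lambda L$ by noting $\Lambda\Lpp$ is open in $\Lambda L$, then pass to the quotient $N(L)\to N(L)/L$ and show the image of $\Lambda$ is discrete. The unimodularity argument via the product formula on $\Z[S^{-1}]^\times$ is also essentially the paper's.

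There is, however, a genuine gap in your discreteness step. The vector $p_L$ has stabiliser $N^1(L)$ (the kernel of the determinant character), not $L$; so the discreteness of $\Gamma\cdot p_L$, together with the factorisation of the character through $N(L)/L$, only yields discreteness of the image of $\Gamma\cap N(L)$ in $N(L)/N^1(L)\hookrightarrow\GL(1)$. Since $N^1(L)/L$ may well be positive-dimensional, this says nothing about discreteness in $N(L)/L$. Your proposed remedy---a fundamental-domain argument on the factors of $N(L)/L$---would be considerably harder than what is actually needed. The paper's route is much shorter: the quotient map $N(L)\to N(L)/L$ is a morphism of affine algebraic groups defined over $\Q$, and the image of the $S$-arithmetic group $\Gamma\cap N(L)$ under a $\Q$-rational homomorphism is again $S$-arithmetic in $N(L)/L$, hence discrete. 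No appeal to $p_L$ is needed for this half of the lemma.

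A minor point: your argument that $\Lambda$ normalises all of $L$ (not just $L^0$) does not go through as written---conjugation by $\lambda$ sends components of $L$ to cosets of $L^0$ in $G$, with no a priori reason to land back in $L$. The paper sidesteps this by asserting that the Zariski closure of $\Lpp$ is $L$ itself, which is correct in the intended applications where $\L$ is Zariski connected; you should simply make that hypothesis explicit.
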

We first prove the closedness.
\begin{proof}We note that~$\Lambda$ normalises the Zariski closure of~$\Lpp$, which is~$L$, that is~$\Lambda$ is contained in the normaliser~$N(L)$ of~$L$ in~$G$. This~$N(L)$ is algebraic and defined over~$\Q$, hence closed.
 It suffices to prove  that~$\Lambda\Lpp$ is a closed subgroup in its normaliser~$N(L)$. 
We will prove that~$\Lambda\Lpp$ is locally equal to~$L$ at the neutral element. It will in particular be locally closed (Lemma~\ref{loc closed 1}), hence closed~(Lemma~\ref{loc closed 2}). We consider the quotient map
\[N(L)\to N(L)/L,\] which is defined over~$\Q$. We denote~$\GammaN=\Gamma\cap N(L)$; it contains~$\Lambda$.
The image of~$\GammaN$ is then an arithmetic subgroup of the affine algebraic group~$N(L)/L$, and is in particular discrete. Hence the image of~$\Lambda$ is discrete too. It follows that~$L$ is open and closed in~$\Lambda\cdot L$. Then~$\Lambda\cdot \Lpp$ is locally isomorphic to~$(\Lambda\cdot \Lpp)\cap L$, which contains~$\Lpp$, which is an open subgroup of~$L$. So~$\GammaN\cdot \Lpp$ is the same as~$L$ in a neighbourhood of the identity. But~$L$ is a closed subgroup. We conclude by applying Lemma~\ref{loc closed 1} and then Lemma~\ref{loc closed 2}.
\end{proof}
We now prove the unimodularity.
\begin{proof}
In~$\bigwedge^{\dim(L)}\g$, we consider a generator~$p_L$, defined over~$\Q$, of the free~$\Q_S$-submodule~$\det(\l)$ of rank one. This~$p_L$ is the value at the origin of a non zero left-(resp. right-)invariant maximal differential form on~$L$, to which is associated a left-(resp. right-)invariant Haar measures~$\nu_{p_L}$ on~$L$. The action of~$\Lambda$ on~$p_L$ stabilises the associated rational line~$\Q\cdot p_L$, and acts through a character defined~$N(L)\to GL(1)$ over~$\Q$ of the normaliser~$N(L)$ of~$L$. The image of~$\Lambda$ is an arithmetic subgroup of~${\Q_S}^\times$, hence contains with finite index a subgroup of~$\Z[1/S]^\times$. We consider the map~$\abs{-}_S:{\Q_S}^\times\to{\R}^\times$ given by the product of the normalised absolute values. this map is constant over~$\Z[1/S]^\times$, as can be checked on a set of generators:~$-1$ and the primes in~$S$. As~$\R^\times$ is without torsion, this map is constant on the image of~$\Lambda$. By the Jacobian formula, for~$\lambda$ in~$\Q_S$ we have that~$\nu_{\lambda p_L}=\abs{p_L}_S\cdot \nu_{p_L}$, which can be checked place by place. It follows that the conjugation action of~$\Lambda$ on~$\nu_{p_L}$ (hence right, resp. left, action on left, resp right, invariant~$\nu_{p_L}$) is trivial.

By hypothesis the (left, right, conjugation) action of~$L$ is trivial on~$\nu_{p_L}$ too. We can pick as a Haar measure on~$\Lpp$ the restriction~$\nu_{p_L}\restriction_\Lpp$ of~$\nu_{p_L}$, which is also invariant under~$\Lpp\Lambda$ and~$\Lambda$. 
As we saw, in the previous proof,~$\Lambda\Lpp$ is locally equal to~$\Lpp$. We can right (resp. left) transport~$\nu_{p_L}\restriction_\Lpp$ to other~$\Lpp$ cosets to build up a left (resp. rigt) invariant Haar measure on~$\Lambda\Lpp$. By the~$\Lambda$ invariance by conjugation this is the same measure, which is both right and left invariant.
\end{proof}
\begin{lemma}\label{loc closed 1} Let~$H$ be a subgroup of topological group~$G$, and~$U$ be neighbourhood in~$G$ of an element~$h$ in~$H$ such that~$Z=H\cap U$ is closed in~$U$. Then~$H$ is locally closed.
\end{lemma}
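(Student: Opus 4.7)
The plan is to exploit the group structure of $G$ to propagate the hypothesis, which is given only at the single point $h$, to every point of $H$. Concretely, I will show that for every $h' \in H$ there exists an open neighbourhood $V$ of $h'$ in $G$ such that $H \cap V$ is closed in $V$; this is a standard equivalent formulation of $H$ being locally closed (equivalently, $H$ is open in its closure~$\overline{H}$).

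To achieve this, fix $h' \in H$ and consider the self-homeomorphism $\tau \colon G \to G$ given by left translation $g \mapsto h'h^{-1} \cdot g$. It maps $h$ to $h'$, so $V := \tau(U)$ is an open neighbourhood of $h'$ in $G$. Because $h'h^{-1}$ belongs to the subgroup $H$, left translation by it preserves $H$ setwise, giving the identity $H \cap V = \tau(H \cap U) = \tau(Z)$. Since $\tau$ restricts to a homeomorphism $U \to V$, and $Z$ is closed in $U$ by hypothesis, $\tau(Z)$ is closed in $V$, which is exactly what we wanted.

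Applying the above to each $h' \in H$ produces the required neighbourhoods, completing the proof. I expect no real obstacle: the lemma is purely topological and the only observation involved is that a subgroup is invariant under left translation by any of its own elements, so the "good" neighbourhood at $h$ is transported along the whole of $H$. The subsequent Lemma~\ref{loc closed 2} (used right after in the unimodularity argument) will then upgrade "locally closed subgroup" to "closed subgroup", but that is an independent statement.
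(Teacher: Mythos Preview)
Your proof is correct and essentially identical to the paper's own argument: both use left translation by $h'h^{-1}\in H$ to transport the neighbourhood $U$ at $h$ to a neighbourhood $V$ at an arbitrary $h'\in H$, observe that this homeomorphism carries $H\cap U$ onto $H\cap V$, and conclude closedness of $H\cap V$ in $V$.
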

\begin{proof} It is essentially an argument by homogeneity. 

Every element~$h'$ of~$H$ can be put under the form~$h''\cdot h$ by taking~$h''=h'h^{-1}$ in~$H$.
Then~$V=h''U$ is a neighbourhood of~$h'$ in~$G$ and is such that~$V\cap H=h''U\cap H=h''(U\cap H)=h'Z$. The map~$g\mapsto h''g$ is an homeomorphism from~$U$ to~$V$ sending~$Z$ to~$V\cap H$. Hence~$V\cap H$ is closed in~$V$. That is~$H$ is locally closed at~$h'$. We remind being locally closed is a local property on~$H$. As~$h'$ was arbitrary,~$H$ is locally closed in~$G$. 
\end{proof}
\begin{lemma}\label{loc closed 2} A locally closed subgroup~$H$ of a topological group~$G$ is a closed subgroup.
\end{lemma}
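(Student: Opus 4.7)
The plan is to exploit the homogeneity of topological groups to upgrade ``locally closed'' to ``closed''. The key intermediate object is the topological closure $\overline{H}$ of $H$ in $G$, and the strategy is to show first that $H$ is open in $\overline{H}$ and then that every open subgroup of a topological group is in fact closed.

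First I would recall the standard fact that the closure $\overline{H}$ of a subgroup in a topological group is again a subgroup; this comes from the continuity of the multiplication and inversion maps, which send $\overline{H} \times \overline{H}$ into $\overline{H \cdot H} = \overline{H}$ and $\overline{H}$ into $\overline{H^{-1}} = \overline{H}$ respectively. Thus $\overline{H}$ is itself a topological group and $H$ is a subgroup of it which is, tautologically, dense.

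Next I would show that the hypothesis ``locally closed'' translates into ``$H$ is open in $\overline{H}$''. By assumption we can write $H = U \cap F$ with $U$ open in $G$ and $F$ closed in $G$. Since $H \subseteq F$ and $F$ is closed we have $\overline{H} \subseteq F$, and therefore $U \cap \overline{H} \subseteq U \cap F = H$. The reverse inclusion $H \subseteq U \cap \overline{H}$ is immediate, so $H = U \cap \overline{H}$, which exhibits $H$ as an open subset of $\overline{H}$.

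Finally I would invoke the standard principle that an open subgroup of a topological group is automatically closed: its complement is the union of its non-trivial cosets, each of which is open by translation. Applied inside $\overline{H}$, this shows $H$ is closed in $\overline{H}$. Combined with the density of $H$ in $\overline{H}$ this forces $H = \overline{H}$, so $H$ is closed in $G$. The main (and only) subtlety to be careful about is the passage from the pointwise formulation of ``locally closed at each point of $H$'' to the global decomposition $H = U \cap F$, but by homogeneity under left translation by elements of $H$ this presents no real obstacle.
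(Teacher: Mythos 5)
Your proof is correct and takes essentially the same route as the paper: pass to the closure $\overline{H}$, observe that ``locally closed'' is equivalent to ``open in $\overline{H}$'', and then use the standard fact that an open subgroup is closed. The only cosmetic difference is that the paper invokes ``a closed subset of a closed set is closed'' to conclude $H$ is closed in $G$, whereas you finish by density forcing $H = \overline{H}$; these are the same observation. (One small remark: the equivalence between the pointwise formulation of local closedness and the global $H = U \cap F$ decomposition is a purely topological fact that does not actually require the homogeneity you mention — though homogeneity is what the paper's companion Lemma~\ref{loc closed 1} uses to promote closedness near a \emph{single} point of $H$ to local closedness.)
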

This is a standard fact. Here is a proof borrowed from~\cite[1 \S3.2 p.\,20]{Robert}.
\begin{proof} The closure~$Z$ of~$H$ in~$G$ is a closed subgroup of~$G$. A closed subset of a closed subset being closed, it suffices to show that~$H$ is closed in~$Z$. By hypothesis~$H$ is locally closed; equivalently~$H$ is open subgroup in its closure~$Z$. The complement of~$H$ in~$Z$ is a union of translates of~$H$, which are open as well in~$Z$. This complement is then open. Equivalently~$H$ is closed in~$Z$. 
\end{proof}
The following applies notably in the situation of Lemma~\ref{Lemma unimodular}. It is used to interpret our measure~$\tilde{\mu}_\infty$ composed of the~$c\mu_\Lpp$ in terms of a measure~$\widehat{\mu}_\infty$ on~$G/\GammaN\Lpp$.
\begin{proposition}\label{prop quotient measures} Assume~$L$ and~$\Lambda$ are closed unimodular subgroups of a unimodular group~$G$. Assume that~$\Lambda$ normalises~$L$ and~$L\Lambda$ is a closed unimodular subgroup. Assume~$\Lambda$ is discrete and intersects~$L$ in a lattice.

Let~$\mu$ be a probability measure on~$G/\Lambda$ which is quotient~$\mu^\sharp/\beta'$, by a Haar measure~$\beta'$ on~$\Lambda$, of a right $L$-invariant measure~$\mu^\sharp$ on~$G$. We consider its direct image~$\pi_\star(\mu)$ along the quotient map
\[
\pi:
G/\Lambda\to G/(L\Lambda).
\]
Then we recover~$\mu$ from~$\pi_\star(\mu)$ by the desintegration formula
\begin{equation}\label{desintegration quotient}
\pi_\star(\mu)\mapsto\mu=\int_{cL\Lambda\in G/(\Lambda L)} c\cdot\mu_L~\pi_\star(\mu).
\end{equation}
\end{proposition}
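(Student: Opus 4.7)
The plan is to combine associativity of quotient measures with a uniqueness argument on the fibers of $\pi$. First, I would observe that although the right action of $L$ on $G/\Lambda$ is not globally well-defined (it would require $L \subseteq N_G(\Lambda)$), it does descend well to each individual fiber of $\pi$: the fiber of $\pi$ over a coset $cL\Lambda$ is $cL\Lambda/\Lambda \simeq L\Lambda/\Lambda$, and since $L\Lambda$ is a group and $\Lambda \subseteq L\Lambda$, this is homeomorphic to $L/(L\cap\Lambda)$. By the hypotheses that $L$ is unimodular and $L\cap\Lambda$ is a lattice in $L$, this fiber carries a unique $L$-invariant probability measure, which is precisely the left-translate $c\cdot \mu_L$ of the canonical $L$-invariant probability $\mu_L$ on $L\Lambda/\Lambda \simeq L/(L\cap \Lambda)$.

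Next I would apply associativity of the quotient measure construction to the chain $G \to G/\Lambda \to G/(L\Lambda)$ (cf.\ \cite[VII \S2]{BBKINT}). Since $\mu^\sharp$ is right $L\Lambda$-invariant (it is right $L$-invariant by hypothesis and right $\Lambda$-invariant because it descends to $G/\Lambda$), and $L\Lambda$ is unimodular and closed in $G$ with $\Lambda$ discrete in $L\Lambda$, one can choose Haar measures $\beta_L$ on $L$, $\beta'$ on $\Lambda$ and $\beta_{L\Lambda}$ on $L\Lambda$ so that $\beta_{L\Lambda}$ is the quotient of $\beta_L \otimes \beta'$ by $\beta'$ (i.e.\ $\beta_L$ is, after normalisation, the measure on $L\Lambda/\Lambda$ coming from $\mu_L$). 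Associativity then gives $\pi_\star(\mu) = \mu^\sharp/\beta_{L\Lambda}$, which makes sense as a Radon measure on the locally compact space $G/(L\Lambda)$.

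Finally I would establish the disintegration formula~\eqref{desintegration quotient} by showing that the conditional measures of $\mu$ along $\pi$ are exactly the $c\cdot\mu_L$. To do so, I would fix a relatively compact Borel section $\Sigma \subset G$ of a neighbourhood in $G/(L\Lambda)$ and write, for a test function $f$ on $G/\Lambda$, the integral $\mu(f)$ as an integral over $\Sigma$ of the integrals on the fibers $\sigma L\Lambda/\Lambda$. The key step is: since $\mu^\sharp$ is right $L$-invariant on $G$, its restriction to $\Sigma \cdot L\Lambda$, disintegrated over $\Sigma$, gives on each fiber a right $L$-invariant Radon measure. Pushing down to $\Sigma\cdot L\Lambda/\Lambda$, one gets that the conditional measure on each fiber $\sigma L\Lambda/\Lambda$ is right-$L$-invariant, hence a multiple of $\sigma\cdot\mu_L$ by uniqueness. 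Normalising so that the fiber measures are probabilities (a choice absorbed into $\pi_\star(\mu)$) yields the formula~\eqref{desintegration quotient}.

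The main obstacle will be the careful bookkeeping of Haar-measure normalisations so that the two quotient operations, and the $\mu_L$ appearing in the integrand, are compatible; concretely, one must verify that with a coherent normalisation of $\beta_L$, $\beta'$, $\beta_{L\Lambda}$, the fiber conditional measures are exactly the probabilities $c\cdot \mu_L$ and not some scalar multiple thereof. This is a standard verification using the fact that $\mu$ is a probability, so that the total mass $\int_{G/(L\Lambda)} 1 \, \pi_\star(\mu) = 1$ matches $\mu(G/\Lambda) = 1$.
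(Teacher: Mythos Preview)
Your proposal is correct and follows essentially the same route as the paper, which also works within Bourbaki's quotient-measure framework (specifically \cite[VII~\S2.8]{Integration}). The one notable difference is economy: the paper invokes \cite[VII~\S2.8, Prop.~12]{Integration} directly, which immediately gives the disintegration
\[
\mu=\mu^\sharp/\beta'=\int_{u\in G/(L\Lambda)} (\beta/\beta')_u\;\,(\mu^\sharp/\beta)(u),
\]
with~$(\beta/\beta')_u=c\cdot\mu_L$ for~$u=cL\Lambda$, and then identifies the base measure~$\mu^\sharp/\beta$ with~$\pi_\star(\mu)$ by pushing the formula forward along~$\pi$ (each~$c\cdot\mu_L$ maps to the Dirac mass~$\delta_u$). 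Your Step~2 (associativity of quotient measures) already contains this identification, so your Step~3 (the hands-on Borel-section argument to recover the fiber measures) is redundant: once you have~$\pi_\star(\mu)=\mu^\sharp/\beta_{L\Lambda}$ and the Bourbaki decomposition, you are done. Dropping Step~3 would make your argument coincide with the paper's almost verbatim.
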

Some of the hypotheses of this Proposition are superfluous. Being discrete,~$\Lambda$ is necessarily unimodular (the counting measure is both left and right Haar measure); admitting a lattice,~$L$ is automatically unimodular (\cite[Prop. 2.4.2]{BasicLieTheory}).% W note to that the direct image map~$\pi_\star$ is necessarily  equivariant.
\begin{proof} 
This is essentially a particular case of the situation studied in~\cite[VII\S2.8]{Integration}.

By definition of quotient measures,~$\mu$ is the quotient~$\mu^\sharp/\beta'$ of a unique measure~$\mu^\sharp$ on~$G$, and this~$\mu^\sharp$ is right~$\Lambda$-invariant. By assumption~$\mu^\sharp$ is also right~$L$-invariant; it is then right~$L\Lambda$-invariant. For every the Haar measure~$\beta$ on~$L\Lambda$, the quotient measure~$\mu_L=\beta/\beta'$ is a~$L$-invariant measure on~$L\Lambda/\Lambda$. We assume~$\beta'$ is normalised so that~$\mu_L$ is a probability measure. For~$c$ in~$G$, the translated measure~$c\mu_L$ are the one denoted~$(\beta'/\beta)_u$ in \emph{loc. cit.} with~$u=cL\Lambda/L\Lambda$.

We have the decomposition, by \cite[VII\S2.8 Prop.~12]{Integration},
\[
\mu^\sharp/\beta'=\int_{u\in G/(\Lambda L)} (\beta/\beta')_u\qquad\mu^\sharp/\beta(u),
\]
that is, with our notations,
\[\mu=\int_{u=cL\Lambda\in G/(\Lambda L)} c\cdot\mu_L\qquad\mu^\sharp/\beta(u).
\]
All is left to prove is that~$\mu^\sharp/\beta(u)=\pi_\star(\mu)$. But~$c\mu_L$ is a probability measure, and its support is~$\stackrel{-1}{\pi}(cL\Lambda/L\Lambda)$, hence~$\pi_\star(c\mu_L)$ is a probability measure supported on the singleton~$\{cL\Lambda/L\Lambda\}$, which is necessarily
\[\pi_\star(c\mu_L)=\delta_{cL\Lambda/L\Lambda}.\]
It follows
\[
\pi_\star(\mu^\sharp/\beta')
=\int_{u\in G/(\Lambda L)} \pi_\star((\beta/\beta')_u)\quad\mu^\sharp/\beta(u)
=\int_{u\in G/(\Lambda L)} \delta_u\quad\mu^\sharp/\beta(u)=\mu^\sharp/\beta.
\]
\end{proof}
We will see that the measures~$\mu$ satisfying the hypothesis of last Proposition are actually the one which can be put in the form of formula~\eqref{desintegration quotient}.
\begin{lemma} Every translate probability measure~$c\mu_L$ on~$G/\Lambda$ is the quotient of a right $L$-invariant measure on~$G$.
\end{lemma}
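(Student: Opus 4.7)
The plan is as follows. Fix a Haar measure $\beta$ on the closed unimodular subgroup $L\Lambda$ of $G$ (whose existence was established in Lemma~\ref{Lemma unimodular}) and a Haar measure $\beta'$ on the discrete group $\Lambda$ (i.e.\ a scalar multiple of counting measure), normalized so that the quotient $\beta/\beta'$ is precisely $\mu_L$ on $L\Lambda/\Lambda$. For a given $c\in G$, the natural candidate for the desired right $L$-invariant measure on $G$ is
\[
\nu^{\sharp}_c := \left(\text{left-translation by } c\right)_\star\bigl(j_\star \beta\bigr),
\]
where $j:L\Lambda\hookrightarrow G$ denotes the closed embedding. Concretely, $\nu^\sharp_c$ is supported on the closed subset $cL\Lambda\subseteq G$ and, for any nonnegative Borel $f:G\to\R$, satisfies
\[
\nu^\sharp_c(f) = \int_{L\Lambda} f(cg)\, d\beta(g).
\]

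First I would check that $\nu^\sharp_c$ is right $L$-invariant. Since $L\subseteq L\Lambda$ is a subgroup, right translation by any $l\in L$ preserves $L\Lambda$ as a subset of $G$, and on $L\Lambda$ it coincides with the right translation under which the Haar measure $\beta$ is invariant; since left translation by $c$ on $G$ commutes with right translation by $l$, the measure $\nu^\sharp_c$ is right $L$-invariant. The same argument with $\lambda\in\Lambda\subseteq L\Lambda$ in place of $l$ shows that $\nu^\sharp_c$ is also right $\Lambda$-invariant, so the quotient $\nu^\sharp_c/\beta'$ is well-defined as a measure on $G/\Lambda$.

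Second, I would identify $\nu^\sharp_c/\beta'$ with $c\mu_L$. The quotient construction is equivariant under any automorphism of $G$ commuting with the right $\Lambda$-action. Left translation by $c\in G$ is such an automorphism, and under it $j_\star\beta$ corresponds (as a right-$\Lambda$-invariant measure on $G$ supported on $L\Lambda$) to $\beta$ itself; its quotient by $\beta'$ is exactly $\mu_L$, supported on $L\Lambda/\Lambda$. Applying $c_\star$ both on $G$ and on $G/\Lambda$ and using that the two operations intertwine via the quotient map yields
\[
\nu^\sharp_c/\beta' \;=\; c_\star\bigl(j_\star\beta/\beta'\bigr) \;=\; c_\star \mu_L \;=\; c\mu_L,
\]
which completes the proof.

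The only delicate point is ensuring that the quotient operation commutes with left translation by $c$ and with direct image along $j$, but this is a general feature of Bourbaki's quotient measure formalism (cf.\ \cite[VII\S2]{Integration}), so I expect no serious obstacle; the argument really is the two-line construction sketched above, written with due care for the measure-theoretic bookkeeping.
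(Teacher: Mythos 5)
Your proof is correct and follows essentially the same route as the paper's: the paper observes that $\mu_L$ is the quotient of a Haar measure $\beta$ on $L\Lambda$ and that $c\mu_L$ is then the quotient of the left-translate $c\beta$, which is precisely your $\nu^\sharp_c$. You have simply written out the equivariance and invariance bookkeeping that the paper leaves implicit.
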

\begin{proof} By definition~$\mu_L$ is the quotient of a Haar measure~$\beta$ on~$L\Lambda$ (viewed as a measure on~$G$).
The translated probability measure~$c\mu_L$ is the quotient of~$c\beta$.
\end{proof}
\begin{proposition}\label{propo right inv}
 Let~$t\mapsto\mu_t$ be a measurable function defined on a probability space~$(T,\tau)$ into the space of translates~$c\mu_L$ of~$\mu_L$. Then
\[
\mu=\int_T \mu_t\quad\tau(t)
\]
is the quotient measure of a right~$L$-invariant measure on~$G$.
\end{proposition}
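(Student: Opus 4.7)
The plan is to reduce this statement to the preceding Proposition~\ref{prop quotient measures} by an integration argument. First I would note that since each $\mu_t$ is a translate $c_t\cdot\mu_L$ of $\mu_L$, the support of $\mu_t$ is the coset $c_tL\Lambda/\Lambda$, so $\mu_t$ uniquely determines the coset $c_tL\Lambda\in G/(L\Lambda)$, and the assignment $t\mapsto c_tL\Lambda$ is measurable because it factors through the measurable map $t\mapsto\mu_t$ followed by the Borel map sending a translate of $\mu_L$ to its support in $G/(L\Lambda)$. Denote by $\nu$ the pushforward of $\tau$ along $t\mapsto c_tL\Lambda$; then the change of variables rewrites
\[
\mu=\int_T\mu_t\,\tau(t)=\int_{G/(L\Lambda)}c\cdot\mu_L\,\nu(cL\Lambda).
\]

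Next I would construct $\mu^\sharp$ on $G$ as follows. Pick a Borel section $s\colon G/(L\Lambda)\to G$ of the quotient map $G\to G/(L\Lambda)$; such a section exists because $G/(L\Lambda)$ is a locally compact Polish space (cf.\ the argument used around \cite[IX \S6.9]{TOP} for the analogous section $G/\GammaN\to G/\Gamma$), and $L\Lambda$ is closed in $G$ by Lemma~\ref{Lemma unimodular}. Fix a Haar measure $\beta$ on $L\Lambda$ normalized so that $\beta/\beta'=\mu_L$ as in the proof of Proposition~\ref{prop quotient measures}, and define
\[
\mu^\sharp=\int_{G/(L\Lambda)} s(cL\Lambda)\cdot\beta\,\nu(cL\Lambda),
\]
which makes sense as a positive Radon measure on $G$ by the measurability of $s$ and standard desintegration (\cite[VII\S2.8]{Integration}). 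Since $\beta$ is right $L\Lambda$-invariant, each $s(cL\Lambda)\cdot\beta$ is right $L$-invariant (indeed right $L\Lambda$-invariant), and the right $L$-action commutes with the integration in $t$, so $\mu^\sharp$ is right $L$-invariant.

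It remains to verify $\mu^\sharp/\beta'=\mu$. For this I would apply the definition of quotient measure (\cite[VII\S2.2]{Integration}) to each $s(cL\Lambda)\cdot\beta$: by Proposition~\ref{prop quotient measures} applied to the single translate $c\mu_L$ (equivalently by the preceding Lemma), $(s(cL\Lambda)\cdot\beta)/\beta'=c\cdot\mu_L$. Since the quotient-by-$\beta'$ operation commutes with integration against $\nu$ (this is \cite[VII\S2.4 Prop.~4]{Integration}, continuity/linearity of the quotient in the numerator), we obtain
\[
\mu^\sharp/\beta'=\int_{G/(L\Lambda)}(s(cL\Lambda)\cdot\beta)/\beta'\,\nu(cL\Lambda)=\int_{G/(L\Lambda)} c\cdot\mu_L\,\nu(cL\Lambda)=\mu,
\]
as required. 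The main obstacle I anticipate is the bookkeeping of the measurable selection and verifying that the quotient-measure construction genuinely commutes with the parameter integration; once those foundational points are handled (all available in \cite{Integration} in our Radon-space setting), the argument reduces to Proposition~\ref{prop quotient measures} fibrewise.
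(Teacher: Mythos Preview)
Your proof is correct and follows the same strategy as the paper's: construct a right $L$-invariant measure $\mu^\sharp$ on $G$ as an integral of left translates of a Haar measure $\beta$ on $L\Lambda$, then use that the quotient-by-$\beta'$ operation commutes with integration to conclude $\mu^\sharp/\beta'=\mu$. The paper's version is shorter because it bypasses the Borel section entirely: each $\mu_t$ already has a \emph{unique} lift $\mu_t^\sharp$ on $G$ (by the general quotient-measure theory, and explicitly $\mu_t^\sharp=c_t\beta$ by the preceding lemma), so one can directly set $\mu^\sharp=\int_T\mu_t^\sharp\,\tau(t)$ without first pushing forward to $G/(L\Lambda)$ and choosing a section.
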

\begin{proof} For every~$t$ in~$T$, the probability measure~$\mu_t$ is the quotient of a unique measure~$\mu_t^\sharp$ on~$G$. We consider the $\tau$-averaged measure~$\mu^\sharp=\int_T \mu^\sharp_t\quad\tau(t)$ on~$G$. Its quotient is~$\mu$. (We used that the quotient operation is compatible with barycentres and is continuous.) Each~$\mu_t^\sharp$ is right $L$-invariant by the previous lemma, and so is~$\mu^\sharp$.
\end{proof}
We remark that the space of translates of~$\mu_L$ can be identified with~$G/\Stab(\mu_L)$ which is actually the quotient~$G/L\Lambda$. The identification can be achieved via the explicit correspondence
\[G\cdot\mu_L\ni c\mu_L\mapsto\pi_\star(c\mu_L)=\delta_{cL\Lambda}\mapsfrom cL\Lambda \in G/L\Lambda.\]

\subsection{Unipotent generated subgroups, liftings, finite generation}\label{subsubsection-liftuni} 
\subsubsection{Unipotent elements and unipotent subgroups}We refer to~{\S}\ref{secnotations} for the~$\text{◌}^+$ notation. In this section~\ref{subsubsection-liftuni} we will also make use the following notation.
\begin{definition}\label{defi Mu} Let~$M$ be a subgroup of the group~$\H(k)$ of points of an algebraic group~$\H$ over a field~$k$. We denote
\begin{equation}
M^u
\end{equation}
the subgroup generated by elements of~$M$ which are unipotent in~$\H(k)$.
\end{definition}
Obviously, we always have~$M^+\leq M^u$. The converse inclusion is not always true for~$M=SL(2,\Z)\leq GL(2,\R)$ (with~$k=\R$) and for~$M=SL(2,\Z_p)\leq  GL(2,\Q_p)$ (for~$k=\Q_p$) one has~$M^+=\{e\}\neq M^u=M$. Nevertheless we have the following.
\begin{lemma}\label{Lemma u+} Within the setting of preceding Definition~\ref{defi Mu}, assume~$M$ is Zariski closed in~$\H(k)$. Assume~$k$ is of characteristic~$0$. Then we have~$\H(k)^u=\H(k)^+$.
\end{lemma}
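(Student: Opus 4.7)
The plan is to show the two inclusions. The inclusion $\H(k)^+\leq \H(k)^u$ is automatic: any one-parameter unipotent algebraic subgroup of $\H$ over $k$ consists entirely of unipotent elements, so its $k$-points contribute only unipotent generators.

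For the reverse inclusion $\H(k)^u\leq \H(k)^+$, the strategy is to show that every unipotent element of $\H(k)$ lies in some one-dimensional unipotent algebraic subgroup of $\H$ defined over $k$. First I would embed $\H$ as a closed $k$-subgroup of some $\mathbf{GL}(V)$ via a faithful $k$-rational representation. Given a unipotent $u\in\H(k)$, the matrix $n:=u-\mathrm{Id}$ acts nilpotently on $V$, so the series
\[
X:=\log(u)=\sum_{i\geq 1}(-1)^{i+1}\frac{n^i}{i}
\]
terminates and defines an element of $\mathrm{End}(V)$ rational over $k$. Since $k$ has characteristic zero, the exponential and logarithm set up a $k$-rational bijection between the variety of unipotent elements of $\mathbf{GL}(V)$ and the variety of nilpotent elements of $\mathrm{End}(V)$, inverse to each other on their natural domains; in particular $u=\exp(X)$ and the one-parameter subgroup
\[
U_X:=\{\exp(tX)\mid t\in\mathbf{G}_a\}
\]
is an algebraic subgroup of $\mathbf{GL}(V)$ of dimension at most one, defined over $k$, and entirely unipotent.

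The key step is to verify that $X$ lies in $\Lie(\H)\otimes k$, so that $U_X$ is contained in $\H$. This follows because $\log$ and $\exp$ are mutual inverses realised by $k$-rational polynomial maps between the unipotent variety of $\H$ and the nilpotent variety of $\Lie(\H)$: in characteristic zero these maps are $\H$-equivariant isomorphisms of $k$-varieties (one may invoke e.g.\ \cite[II.\S7]{BorelLAG} or the standard theory of the exponential for unipotent groups over a field of characteristic zero). Alternatively, letting $\mathbf{U}$ denote the Zariski closure in $\H$ of the cyclic group generated by $u$, which is a commutative unipotent algebraic $k$-subgroup, one notes that $\mathbf{U}$ is isomorphic to a product of copies of $\mathbf{G}_a$ and contains the one-parameter subgroup $U_X$, which is therefore defined over $k$ and contained in $\H$.

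Having $u\in U_X(k)\subseteq \H(k)^+$, and since by definition $\H(k)^u$ is generated by such unipotent elements, we conclude $\H(k)^u\leq \H(k)^+$, whence equality. The main subtlety to handle carefully is the $k$-rationality of the logarithm and of the resulting one-parameter subgroup; once this is established from the characteristic zero hypothesis (which makes the division by $i$ legitimate and the polynomial identities $\exp\circ\log=\mathrm{Id}$ valid), the rest is formal.
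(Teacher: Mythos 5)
Your proof is correct and rests on the same core idea as the paper's: for each unipotent $u$, exhibit a one-parameter unipotent $k$-subgroup of $\H$ through $u$ by means of the logarithm. Where you differ is in certifying that this one-parameter group lies in $\H$ and is defined over $k$. Your primary argument invokes, as a black box, the $\H$-equivariant $k$-isomorphism $\exp/\log$ between the unipotent locus of $\H$ and the nilpotent cone of $\mathfrak{h}$ --- a genuine classical theorem, and a heavier input than the paper uses. The paper instead takes the Zariski closure $\mathbf{C}$ of $\langle u\rangle$, shows $\mathbf{C}$ is unipotent by means of the Jordan decomposition $\mathbf{C}=\mathbf{S}\cdot\mathbf{U}$ of a commutative algebraic group (the semisimple factor $\mathbf{S}$ must be trivial because $u$ is unipotent and generates $\mathbf{C}$), observes that $\mathbf{C}$ is then connected since unipotent groups over a field of characteristic $0$ are connected, and only then applies the elementary polynomial exponential for a connected unipotent group. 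Your ``alternative'' argument is exactly this route, but you assert without justification that the Zariski closure of $\langle u\rangle$ is unipotent --- which is precisely the point the Jordan decomposition step settles. Finally, the hypothesis that $M$ is Zariski closed (together with the companion corollary that follows it) shows the intended conclusion is $M^u=M^+$ for an arbitrary Zariski closed subgroup $M\leq\H(k)$; you tacitly specialize to $M=\H(k)$, which renders that hypothesis vacuous. For a general such $M$ the closedness is what guarantees the $k$-points of the one-parameter subgroup through $u$ lie in $M$, these being the Zariski closure in $\H(k)$ of the infinite subset $\langle u\rangle$ of $M$.
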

\begin{proof} We argue by double inclusion. Only one the inclusion~$M^u\leq M^+$ is left to prove. By Lemma~\ref{Unipotent vs 1param} below, it is enough to prove that every unipotent element~$u$ of~$M$ is contained in an algebraic connected unipotent subgroup of~$M$. We use the Jordan decomposition (as in Proposition~\ref{JCD} below) in the algebraic subgroup generated by~$u$, which is commutative: this commutative subgroup is actually unipotent. It is also connected as, in characteristic~$0$ unipotent subgroups are connected. 

\noindent N.B.: we could have applied Lemma~\ref{Unipotent vs 1param} only to commutative unipotent algebraic subgroups.
\end{proof}
\begin{corollary}\label{coro finite index +}
 In Lemma~\ref{Lemma u+}, the conclusion still hold if we assume instead merely that~$M$ is of finite index in its Zariski closure in~$\H(k)$. If~$\M$ denotes the Zariski closure of such an~$M$ in~$\H$, we have
\begin{equation}\label{M+ et Mpp+}
\M(k)^+=M^+.
\end{equation}
\end{corollary}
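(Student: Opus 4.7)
The plan is to prove the two assertions of the corollary separately: first the equality $\M(k)^+ = M^+$ displayed in \eqref{M+ et Mpp+}, and then the generalization of Lemma~\ref{Lemma u+}, namely $M^u = M^+$.

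First I would address $\M(k)^+ = M^+$. The inclusion $M^+ \leq \M(k)^+$ is trivial since $M \leq \M(k)$. The core of the argument lies in the reverse direction. Given a one-parameter unipotent algebraic subgroup $U \leq \M$ defined over $k$, I would use that in characteristic zero such a $U$ is isomorphic over $k$ to $\mathbb{G}_a$, so $U(k) \cong (k,+)$. The additive group of any field of characteristic zero is divisible (for any integer $n \geq 1$, multiplication by $n$ on $k$ is surjective), hence admits no proper subgroup of finite index: a finite quotient of a divisible abelian group is divisible and finite, thus trivial. Since $M$ has finite index in $\M(k)$, the subgroup $U(k) \cap M$ has finite index in $U(k)$, and therefore must coincide with $U(k)$. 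This forces $U(k) \leq M$, hence $U(k) \leq M^+$; taking the subgroup generated by all such $U(k)$ gives $\M(k)^+ \leq M^+$.

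Next, for the assertion $M^u = M^+$, I would observe that $\M(k)$ is Zariski closed in $\H(k)$, being the intersection of $\H(k)$ with the Zariski closed algebraic subgroup $\M$. Lemma~\ref{Lemma u+} therefore applies to $\M(k)$, yielding $\M(k)^u = \M(k)^+$. To finish I need $M^u = \M(k)^u$: while $M^u \leq \M(k)^u$ is obvious, the reverse inclusion uses that any unipotent element $u \in \M(k)$ in characteristic zero lies in a one-parameter unipotent subgroup $U \leq \M$, for instance the image of the algebraic morphism $t \mapsto \exp(t \log u)$ (well-defined because $\log u$ is nilpotent). By the first step, $U(k) \leq M$, so $u \in M$, giving $\M(k)^u \leq M^u$. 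Combining all of this,
\[
M^u \;=\; \M(k)^u \;=\; \M(k)^+ \;=\; M^+,
\]
as desired.

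The main conceptual ingredient is the divisibility observation in characteristic zero—once it is noted, both inclusions reduce to formal manipulations. The only minor subtlety to be careful about is the distinction between ``Zariski closure of $M$ inside the topological group $\H(k)$'' and ``$k$-points of the Zariski closure of $M$ inside the algebraic group $\H$,'' which coincide because the defining polynomials of $\M$ can be chosen with coefficients in $k$ (as $M \subseteq \H(k)$), so that $\M$ is defined over $k$ and $\M \cap \H(k) = \M(k)$.
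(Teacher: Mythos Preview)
Your proof is correct and follows essentially the same approach as the paper: both use the divisibility of $(k,+)$ in characteristic~$0$ to show that one-parameter unipotent subgroups of $\M$ must already lie in the finite-index subgroup $M$, yielding $\M(k)^+=M^+$, and then invoke Lemma~\ref{Lemma u+} for the Zariski-closed group $\M(k)$ to obtain $\M(k)^u=\M(k)^+$ and conclude $M^u=M^+$. Your version is slightly more detailed (you explicitly argue $\M(k)^u=M^u$ where the paper only needs the inclusion $M^u\leq\M(k)^u$), but the substance is identical.
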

\begin{proof} Here again we want to prove~$M^u\leq M^+$. Let~$\M(k)$ be the Zariski closure of~$M$ in~$\H(k)$, where~$<M$ is the Zariski closure in~$\H$. 

As~$k$ is of characteristic~$0$, the group~$(k,+)$ is a~$\Q$-module, that is a divisible group. Then so are the homomorphic images of~$(k,+)$. Then these images cannot be a non trivial finite abelian group: by a theorem of Lagrange, multiplication by the cardinal of the group is the zero map, hence is surjective only for a trivial group. We conclude that~$(k,+)$ has no non trivial finite index subgroup.

For any one-parameter subgroup~$\U$ of~$\M$, there is no non trivial finite index subgroup in~$\U(k)$: indeed one has~$\U(k)$ is isomorphic to~$\{0\}$ or to~$\U(k)\simeq (k,+)$. It follows that~$\U(k)$ is contained in~$M$, and hence~$\M(k)^+\leq M$, which the non trivial inclusion in the identity~$\M(k)^+=M^+$. 

From Lemma~\ref{Lemma u+} we have~$\M(k)^+=\M(k)^u$. We conclude with
\[
M^u\leq \M(k)^u=\M(k)^+=M^+.
\]
\end{proof}
\begin{lemma}\label{Unipotent vs 1param}
For every connected algebraic unipotent group~$\U$ over a field~$k$ of characteristic~$0$, every element~$u$ of the group~$\U(k)$ is contained in a one parameter unipotent subgroup.
\end{lemma}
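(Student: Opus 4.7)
The plan is to invoke the classical fact that, in characteristic zero, a connected unipotent algebraic group is isomorphic, as an algebraic variety, to its Lie algebra via the exponential map, and to use the logarithm to produce an explicit one-parameter subgroup through any given element.

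More precisely, let $\mathfrak{u}$ denote the Lie algebra of $\U$, which is nilpotent since $\U$ is unipotent. Because $\mathrm{char}(k)=0$, the usual power series
\[
\exp(X)=\sum_{n\geq 0}\frac{X^{n}}{n!},\qquad \log(1+Y)=\sum_{n\geq 1}\frac{(-1)^{n+1}}{n}Y^{n}
\]
reduce to polynomial expressions on $\mathfrak{u}$ (after choosing a faithful representation of $\U$ by unipotent upper-triangular matrices, which exists by Kolchin's theorem), and define mutually inverse morphisms of $k$-varieties
\[
\exp\colon \mathfrak{u}\longrightarrow \U,\qquad \log\colon \U\longrightarrow \mathfrak{u}
\]
defined over $k$, as recorded for instance in~\cite[III.9.5]{BorelLAG} or~\cite[VII.3.4]{Hochschild}.

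Given $u\in\U(k)$, set $X=\log(u)\in\mathfrak{u}(k)$. Since $[tX,sX]=0$ for all $t,s$, the Baker--Campbell--Hausdorff formula reduces to $\exp(tX)\exp(sX)=\exp((t+s)X)$, so the map
\[
\varphi\colon \mathbf{G}_{a}\longrightarrow \U,\qquad t\longmapsto \exp(tX)
\]
is a homomorphism of algebraic groups defined over $k$. Its image $\U_{X}:=\varphi(\mathbf{G}_{a})$ is a closed connected unipotent algebraic subgroup of $\U$ of dimension at most one (exactly one if $X\neq 0$, trivial otherwise), and $u=\exp(X)=\varphi(1)$ belongs to $\U_{X}(k)$. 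This exhibits $u$ as an element of a one-parameter unipotent subgroup.

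The only mild subtlety, and thus the step to be careful with, is ensuring that $\exp$ and $\log$ are genuinely algebraic and defined over $k$, which is where the characteristic zero hypothesis (to divide by factorials) and the nilpotency of $\mathfrak{u}$ (to truncate the series) are used; once these standard facts are available, the construction of $\varphi$ is immediate and the conclusion follows.
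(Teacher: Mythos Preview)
Your proof is correct and follows essentially the same approach as the paper's: both write $u=\exp(X)$ using the fact that $\exp:\mathfrak{u}\to\U$ is a polynomial isomorphism in characteristic zero, and then observe that $t\mapsto\exp(tX)$ is a homomorphism from $\mathbf{G}_a$ because the elements $tX$ commute with one another. The paper's version is terser and omits the references and the explicit mention of BCH, but the argument is the same.
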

\begin{proof} We use that the exponential map~$\exp_\U:\lie{u}\to\U$ is a polynomial isomorphism of affine algebraic varieties. An element~$u$ of~$\U(k)$ can be put under the form~$\exp(X)$. We recall that the exponential of commuting elements is the product of the exponential of these elements. It follows that the map~$\G_a\to\G_a\cdot X\to \exp(\G_a\cdot X)$ is an homomorphism; that is hence a one parameter subgroup of~$\U$ passing through~$u$. 

\noindent N.B.: if~$u$ is non trivial, one can show along these lines that there is a unique such one parameter subgroup, up to change of parametrisation, through~$u$.
\end{proof}
This last lemma ensures that our notation~$\text{◌}^+$ is compatible with~\cite[\S6]{BorelTits}. We can also use it in the following.
\begin{lemma}\label{Lemme Unip prof}
For every connected algebraic unipotent group~$\U$ over a field~$k$ of characteristic~$0$, the group~$\U(k)$ does not contain a non trivial finite index subgroup.
\end{lemma}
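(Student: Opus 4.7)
The plan is to reduce the statement to the analogous fact for the additive group $(k,+)$, which was already essentially used in the proof of Corollary~\ref{coro finite index +}. First, I would recall that since $k$ has characteristic~$0$, the additive group $(k,+)$ is a $\Q$-vector space, hence divisible. A divisible abelian group admits no proper subgroup of finite index: if $A$ is divisible and $B\leq A$ has finite index $n$, then multiplication by $n$ maps $A$ into $B$ and is surjective on $A$, forcing $B=A$. In particular, $(k,+)$ has no proper finite-index subgroup.

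Next I would pass from one-parameter subgroups to the full group $\U(k)$ via Lemma~\ref{Unipotent vs 1param}. Let $H\leq\U(k)$ be a subgroup of finite index $n$, and let $u\in\U(k)$ be arbitrary. By Lemma~\ref{Unipotent vs 1param}, there exists a one-parameter unipotent subgroup $\U_u\leq\U$ with $u\in\U_u(k)$, and the algebraic isomorphism $\U_u\simeq\G_a$ induces a group isomorphism $\U_u(k)\simeq (k,+)$. The intersection $H\cap\U_u(k)$ has index at most $n$ in $\U_u(k)$ (a general elementary fact: the inclusion $\U_u(k)/(H\cap\U_u(k))\hookrightarrow\U(k)/H$ of coset spaces is injective). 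By the preceding paragraph applied to $\U_u(k)\simeq(k,+)$, this forces $H\cap\U_u(k)=\U_u(k)$, hence $u\in H$.

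Since $u\in\U(k)$ was arbitrary, we conclude $H=\U(k)$, which is the claim. I do not expect any serious obstacle: the only slightly delicate point is not to assume $H$ normal (so as to avoid needing $\U(k)$ abelian), but this is handled by the general bound $[K:K\cap H]\leq[\U(k):H]$ for an arbitrary subgroup $K$ and any finite-index $H$, which holds without normality.
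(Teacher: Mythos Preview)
Your proof is correct and follows essentially the same approach as the paper: both reduce to the one-parameter case via Lemma~\ref{Unipotent vs 1param}, then use that $(k,+)$ is divisible in characteristic~$0$ to conclude that $H\cap\U_u(k)=\U_u(k)$. Your treatment is in fact slightly more careful than the paper's in explicitly noting the index bound $[\U_u(k):H\cap\U_u(k)]\leq[\U(k):H]$ holds without assuming $H$ normal.
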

\begin{proof} In other words we want to prove that any finite index subgroup~$H$ contains~$\U(k)$. As the characteristic is~$0$, we may apply Lemma~\ref{Unipotent vs 1param}, and we are reduced to the case of a one parameter unipotent subgroup, that to the case of~$(k,+)$. Again, thanks to the characteristic hypothesis, we know that~$(k,+)$ is a~$\Q$-module, in other words divisible. As~$(k,+)$ is abelian, the finite index subgroup~$H\cap(k,+)$ is the kernel of a map into a finite abelian group~$A$. The image of~$(k,+)$ is divisible, as~$(k,+)$ is, and torsion, as~$A$ is, hence trivial. Hence~$H$ contains~$(k,+)$.

N.B.: this proves that~$\U(k)$ is divisible. Actually it is uniquely divisible, and the corresponding maps~$g\mapsto\exp(\log(g)/n)$  are polynomial. It is possible to argue directly that a divisible group has no non trivial subgroup of finite index.
\end{proof}

For the following we refer to~\cite[I.\S4]{BorelLAG}, especially \S4.4 Theorem (1) and (4), and Theorem~4.7. (Care the remark following Theorem~4.7 too.)
\begin{proposition}[Jordan decomposition]\label{JCD} \begin{enumerate}
\item \label{Jordan1} In a linear algebraic group~$\G$ over a perfect field~$k$ every commutative algebraic sub\-group~$\mathbf{C}$ can be written uniquely as a direct product~$\mathbf{S}\cdot\mathbf{U}$ of an algebraic subgroup~$\mathbf{S}$ of semisimple elements of~$\mathbf{C}$ and a unipotent subgroup~$\U$ of~$C$.
\item \label{Jordan2} The decomposition
\[
g=g_s\cdot g_u
\]
of an element~$g$ of~$\mathbf{C}(k)$, with~$g_s$ in~$\mathbf{S}(k)$ and~$g_u$ in~$\U(k)$ is called the \emph{(Che\-\mbox{valley-)}Jordan decomposition} of~$g$, and does not depend on the group~$\mathbf{C}$.
\item \label{Jordan3} The Jordan decomposition is functorial in the sense that for any algebraic homomorphism~$\varphi:\G\to \G'$, and every element~$g$ in~$\G(k)$, one has~$\varphi(g)_s=\varphi(g_s)$ and~$\varphi(g)_u=\varphi(g_u)$.
\end{enumerate}
\end{proposition}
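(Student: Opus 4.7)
My plan is to reduce everything to the classical multiplicative Jordan decomposition of a single matrix and then use commutativity of $\mathbf{C}$ to package the decompositions compatibly. I would first fix a faithful closed embedding $\G\hookrightarrow\mathbf{GL}_n$ defined over $k$; this reduces us to working inside $\mathrm{GL}_n(\overline{k})$. Recall the classical fact: each invertible matrix $g$ admits a unique decomposition $g=g_s\cdot g_u$ with $g_s$ semisimple, $g_u$ unipotent, the two factors commuting, and $g_s,g_u$ polynomials in $g$. The polynomial expressions have coefficients in $k$ as soon as $k$ is perfect (one uses the Frobenius-stability of the spectral projectors for perfect $k$); this handles \eqref{Jordan2} at the level of $\mathrm{GL}_n(k)$, including the fact that $g_s,g_u$ lie in any algebraic subgroup containing $g$ (since they are polynomial in $g$).

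Next, for statement \eqref{Jordan1} about commutative $\mathbf{C}$, the idea is that the subset $\mathbf{S}$ of semisimple elements and the subset $\mathbf{U}$ of unipotent elements of $\mathbf{C}$ are each closed subgroups, because $\mathbf{C}$ being commutative means that $(g,h)\mapsto g\cdot h$ respects the semisimple/unipotent parts. Using simultaneous trigonalisation of the commuting family $\mathbf{C}$ (valid because $\mathbf{C}$ is commutative), one sees that $\mathbf{S}$ is precisely the kernel of $g\mapsto g_u$ and $\mathbf{U}$ the kernel of $g\mapsto g_s$, and that $\mathbf{S}\cap\mathbf{U}=\{e\}$. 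Since Jordan factors are polynomial in $g$, these maps are morphisms of algebraic groups over $k$; hence the multiplication $\mathbf{S}\times\mathbf{U}\to\mathbf{C}$ is a bijective morphism with inverse $g\mapsto(g_s,g_u)$, giving the direct product decomposition.

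Independence of the decomposition from the choice of $\mathbf{C}$ in \eqref{Jordan2} now follows: since $g_s,g_u$ are polynomials in $g$, they only depend on $g$ inside $\mathrm{GL}_n$, not on whichever commutative $\mathbf{C}$ we chose to contain $g$. For functoriality \eqref{Jordan3}, let $\varphi\colon\G\to\G'$ be a homomorphism of algebraic groups defined over $k$. Apply the previous steps to the commutative subgroup $\mathbf{C}=\overline{\langle g\rangle}$ and its image $\varphi(\mathbf{C})$: one has $\varphi(g)=\varphi(g_s)\varphi(g_u)$, the factors commute, and (working in a faithful representation of $\G'$) $\varphi(g_s)$ is semisimple and $\varphi(g_u)$ is unipotent because $\varphi$ is a morphism of algebraic groups, so it sends semisimple (resp.\ unipotent) elements to semisimple (resp.\ unipotent) elements. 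By the uniqueness in \eqref{Jordan2} applied in $\G'$, these must be $\varphi(g)_s$ and $\varphi(g)_u$.

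The main technical obstacle is the descent of the factors $g_s,g_u$ from $\overline{k}$ to $k$; this is exactly where the hypothesis that $k$ be perfect enters, through the Galois-invariance of the spectral projectors. Everything else is bookkeeping. Since the statement is a compilation of standard facts from \cite[I.\S4]{BorelLAG}, I would in practice only indicate this skeleton and refer the reader to Borel's treatment for the details.
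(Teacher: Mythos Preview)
Your proposal is correct and matches the paper's approach exactly: the paper does not prove this proposition at all but simply refers the reader to \cite[I.\S4]{BorelLAG}, specifically \S4.4 Theorem (1) and (4) and Theorem~4.7, which is precisely what you end up doing after your sketch.
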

We note the following corollary of point~\ref{Jordan3}.
\begin{corollary}\label{coro unipotent lift} Let~$u$ be an element in~$\varphi(\G(k))$ which is unipotent element in~$\G'$. Then~$u$ is the image of a unipotent element of~$\G(k)$.
\end{corollary}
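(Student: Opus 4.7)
The plan is to invoke the functoriality of the Jordan decomposition (point~\ref{Jordan3} of Proposition~\ref{JCD}) directly. First I would pick an arbitrary preimage $g \in \G(k)$ with $\varphi(g) = u$; such a $g$ exists by the hypothesis $u \in \varphi(\G(k))$. There is no reason to expect this $g$ itself to be unipotent, so the strategy is to replace $g$ by its unipotent part $g_u$ and check that this replacement still maps to $u$.

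Next I would apply the Jordan decomposition to $g$ in $\G(k)$, writing $g = g_s \cdot g_u$ with $g_s$ semisimple and $g_u$ unipotent (Proposition~\ref{JCD}, part~\ref{Jordan1} applied via part~\ref{Jordan2}, which guarantees this makes sense for a single element without choosing an ambient commutative subgroup). Note that this requires $k$ to be perfect, which is implicit in the Proposition's hypotheses and inherited here; in the applications of interest ($k$ a local field of characteristic zero) this is automatic.

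Then, using the functoriality clause (part~\ref{Jordan3}), I would compute the Jordan decomposition of $u = \varphi(g)$ as
\[
u = \varphi(g) = \varphi(g_s)\cdot\varphi(g_u),
\]
so that $u_s = \varphi(g_s)$ and $u_u = \varphi(g_u)$ in $\G'(k)$. But by hypothesis $u$ is unipotent in $\G'$, which forces $u_s = e$ and $u_u = u$. Consequently $\varphi(g_s) = e$, from which $u = \varphi(g_u)$ follows. Since $g_u \in \G(k)$ is unipotent by construction, it is the desired preimage, and the proof is complete.

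I do not anticipate any real obstacle: the argument is a mechanical consequence of the uniqueness and functoriality of the Jordan decomposition. The only subtlety to flag is that the statement relies on $k$ being perfect, inherited from Proposition~\ref{JCD}; the reader should keep in mind that in positive characteristic one would have to be more careful about the existence of $g_s, g_u$ in $\G(k)$ rather than merely in $\G(\bar k)$.
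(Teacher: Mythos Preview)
Your proof is correct and follows exactly the same approach as the paper: pick a preimage $g$, apply functoriality of the Jordan decomposition to obtain $u = \varphi(g)_u = \varphi(g_u)$, and conclude. The paper's version is simply more terse, compressing your argument into a single line.
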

\begin{proof} We may write~$u=\varphi(g)$ for some~$g$ in~$\G(k)$. We are done as~$u=\varphi(g)_u=\varphi(g_u)$ and~$g_u$ is unipotent in~$\G(k)$.
\end{proof}
Noting that, by~\ref{Jordan3}, the image of unipotent element is unipotent, we may immediately deduce another corollary, which we will combine with Lem\-ma~\ref{Lemma u+} in characteristic~$0$.
\begin{corollary}We have
\[
\varphi(\G(k)^u)=\varphi(\G(k))^u\leq\G'(k)^u.
\]
\end{corollary}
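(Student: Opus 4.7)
The plan is to prove the two inclusions making up the equality, and then the final inclusion, essentially by chasing definitions and invoking the preceding Corollary together with functoriality of Jordan decomposition (Proposition~\ref{JCD}\,\eqref{Jordan3}).

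First I would establish $\varphi(\G(k)^u)\subseteq \varphi(\G(k))^u$. By definition $\G(k)^u$ is generated by the unipotent elements of $\G(k)$. Since a group homomorphism sends a generating set to a generating set of the image, $\varphi(\G(k)^u)$ is generated by $\varphi(\{\text{unipotent elements of }\G(k)\})$. By the functoriality of the Jordan decomposition (Proposition~\ref{JCD}\,\eqref{Jordan3}), if $g\in \G(k)$ is unipotent then $\varphi(g)\in \varphi(\G(k))$ is unipotent. So the generating set consists of unipotent elements of $\varphi(\G(k))$, and hence lies in $\varphi(\G(k))^u$ by definition.

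Next I would establish the reverse inclusion $\varphi(\G(k))^u\subseteq \varphi(\G(k)^u)$. By definition $\varphi(\G(k))^u$ is generated by the unipotent elements of $\varphi(\G(k))$. It therefore suffices to show that every such unipotent element already lies in $\varphi(\G(k)^u)$. Let $u\in \varphi(\G(k))$ be unipotent in $\G'$. By the immediately preceding Corollary~\ref{coro unipotent lift}, $u=\varphi(g_u)$ for some unipotent $g_u\in \G(k)$. But $g_u\in \G(k)^u$ by definition, so $u=\varphi(g_u)\in \varphi(\G(k)^u)$, as required.

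Finally, the inclusion $\varphi(\G(k))^u\leq \G'(k)^u$ is immediate from the definition of $(-)^u$: since $\varphi(\G(k))\leq \G'(k)$, every unipotent element of $\varphi(\G(k))$ is \emph{a fortiori} a unipotent element of $\G'(k)$, so the subgroup they generate in $\varphi(\G(k))$ is contained in the subgroup they generate in $\G'(k)$. No step presents a real obstacle here; the substance of the corollary is entirely concentrated in the lifting statement already proved in Corollary~\ref{coro unipotent lift}, which itself relies on the functoriality assertion of Proposition~\ref{JCD}\,\eqref{Jordan3}.
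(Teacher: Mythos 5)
Your proof is correct and takes essentially the paper's route: the paper "immediately deduces" the corollary from the observation that images of unipotents are unipotent (Proposition~\ref{JCD}\,\eqref{Jordan3}) together with the preceding lifting corollary, which is exactly the decomposition into three inclusions you carry out. The only thing worth noting is that the inclusion $\varphi(\G(k))^u\subseteq\varphi(\G(k)^u)$ can also be obtained directly from functoriality without citing Corollary~\ref{coro unipotent lift} (write $u=\varphi(g)$ and observe $u=u_u=\varphi(g)_u=\varphi(g_u)$), but since Corollary~\ref{coro unipotent lift} is precisely that computation, your reference is appropriate and the argument is the same.
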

\subsubsection{}This statement explains, for algebraic linear groups over a field of characteristic~$0$, the behaviour of unipotent elements when passing to a quotient group. This amounts to the Jordan decomposition in algebraic groups. 
We then give a consequence which is used in our main proof to relate the maximality property satisfied by~$W$ with the induction step~{\S}{\S}\ref{induction}--\ref{provingbdd}.
%Algebrique de trop. %TODO Hypothèse en trop ?
\begin{proposition}\label{propoliftuni}
 Let~$\G$ be an linear algebraic group over a local field~$k$ of characteristic~$0$, with a subgroup~$\H$, and a normal subgroup~$\N$ of~$\H$, with subquotient group~$\Q=\H/\N$. 

Then 
\begin{enumerate}
\item \label{propplus1} we have~${\H(k)}^+\subseteq{\G(k)}^{+}\cap \H(k)$;
\item \label{propplus2} write~$\varphi:\H\to \Q$ the quotient map, we have~${\Q(k)}^+=\varphi\left({\H(k)}^{+}\right)$.
\end{enumerate}
\end{proposition}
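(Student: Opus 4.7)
The plan is to dispatch point~\eqref{propplus1} essentially from unwinding definitions and then invest the work in point~\eqref{propplus2}, whose nontrivial direction is a surjectivity statement requiring the vanishing of~$H^1(k,-)$ for connected unipotent groups in characteristic zero.

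For~\eqref{propplus1} I would argue as follows. By the definition in~\eqref{Notation Lp}, ${\H(k)}^+$ is generated by the subgroups~$\mathbf{U}(k)$ where~$\mathbf{U}$ ranges over one-parameter unipotent algebraic subgroups of~$\G$ contained in~$\H$. Any such~$\mathbf{U}$ is \emph{a fortiori} a one-parameter unipotent algebraic subgroup of~$\G$, so~$\mathbf{U}(k)\subseteq{\G(k)}^+$, and also~$\mathbf{U}(k)\subseteq \H(k)$. Taking the subgroup generated gives the desired inclusion ${\H(k)}^+\subseteq{\G(k)}^+\cap \H(k)$.

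For~\eqref{propplus2}, the inclusion~$\varphi({\H(k)}^+)\subseteq{\Q(k)}^+$ is immediate: if~$\mathbf{U}\subseteq\H$ is a one-parameter unipotent algebraic subgroup then~$\varphi(\mathbf{U})$ is an algebraic subgroup of~$\Q$, connected and unipotent, of dimension at most one. If it is trivial its $k$-points land in the neutral element, otherwise~$\varphi(\mathbf{U})$ is one-parameter unipotent and~$\varphi(\mathbf{U})(k)\subseteq{\Q(k)}^+$. In either case~$\varphi(\mathbf{U}(k))\subseteq\varphi(\mathbf{U})(k)\subseteq{\Q(k)}^+$. The reverse inclusion will be the main obstacle, so I describe it in more detail below.

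To prove~$\varphi({\H(k)}^+)\supseteq{\Q(k)}^+$, it suffices, by the very definition of~${\Q(k)}^+$, to show that for every one-parameter unipotent algebraic subgroup~$\V\subseteq\Q$ one has~$\V(k)\subseteq\varphi({\H(k)}^+)$. Let~$\mathbf{U}=\varphi^{-1}(\V)$, an algebraic subgroup of~$\H$ containing~$\N$, and let~$\Radu(\mathbf{U})$ be its unipotent radical. The image~$\varphi(\Radu(\mathbf{U}))$ is a normal unipotent subgroup of the unipotent~$\V$, hence it equals~$\V$ (since the quotient~$\V/\varphi(\Radu(\mathbf{U}))$ is both unipotent and without unipotent normal subgroup). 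This shows that~$\Radu(\mathbf{U})\twoheadrightarrow\V$ is surjective as algebraic groups. The key input is now that for a surjection of connected unipotent algebraic groups over a field~$k$ of characteristic zero, the induced map on~$k$-points is surjective (equivalently, $H^1(k,\ker)$ vanishes; in characteristic zero any connected unipotent group is a successive extension of copies of~$\G_a$, and~$H^1(k,\G_a)=0$ for every field~$k$). Hence~$\Radu(\mathbf{U})(k)\to\V(k)$ is surjective. To conclude it is enough to observe that~$\Radu(\mathbf{U})(k)\subseteq{\H(k)}^+$: by Lemma~\ref{Unipotent vs 1param}, every element~$u\in\Radu(\mathbf{U})(k)$ belongs to a one-parameter unipotent subgroup of the connected unipotent algebraic group~$\Radu(\mathbf{U})\subseteq\H$, which contributes to~${\H(k)}^+$ by definition. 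Thus~$\V(k)\subseteq\varphi(\Radu(\mathbf{U})(k))\subseteq\varphi({\H(k)}^+)$, which finishes the proof.

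The principal technical point, and the only place where characteristic zero is essential beyond what has already been used above, is the surjectivity of~$\Radu(\mathbf{U})(k)\to\V(k)$; everything else is bookkeeping with the Jordan decomposition (Proposition~\ref{JCD}) and the definition of~$\text{\textopenbullet}^+$.
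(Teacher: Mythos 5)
Your proposal is correct, but it reaches the reverse inclusion in part~\eqref{propplus2} by a genuinely different route from the paper's. The paper first uses the hypothesis that~$k$ is a \emph{local} field: by Lemma~\ref{Lemme epi ouvert cofini} (Galois cohomology finiteness over local fields), $\varphi(\H(k))$ has finite index in~$\Q(k)$, and then by Lemma~\ref{Lemme Unip prof} a unipotent~$\U(k)$ has no proper finite-index subgroup, whence~$\Q(k)^+\leq\varphi(\H(k))$; the inclusion~$\Q(k)^+\leq\varphi(\H(k)^+)$ is then obtained by chasing through the~$\text{◌}^+/\text{◌}^u$ dictionary of Lemma~\ref{Lemma u+} and Corollary~\ref{coro unipotent lift}. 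Your argument instead pulls a one-parameter unipotent~$\V\subseteq\Q$ back to~$\mathbf U=\varphi^{-1}(\V)$, shows~$\Radu(\mathbf U)\twoheadrightarrow\V$ at the level of algebraic groups (via a Levi decomposition argument), and then invokes~$H^1(k,K)=0$ for a smooth connected unipotent~$K$ in characteristic zero (devissage to~$H^1(k,\G_a)=0$) to get surjectivity on~$k$-points; the inclusion~$\Radu(\mathbf U)(k)\subseteq\H(k)^+$ follows from Lemma~\ref{Unipotent vs 1param}. The two approaches buy different things: the paper's is quite close to the machinery it has already set up (epimorphisms are open with finite cokernel; unipotent groups are divisible), and it naturally dovetails with the other finiteness arguments in the appendix; yours is more elementary in the sense that it avoids the local-field finiteness entirely and therefore proves the statement over \emph{any} field of characteristic zero, which is a stronger result. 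Two small points worth spelling out if you wrote this up fully: in the claim~$\varphi(\Radu(\mathbf U))=\V$ you should pass to~$\mathbf U^0$ and invoke Mostow's Levi decomposition (so that~$\mathbf U^0=\mathbf L\ltimes\Radu(\mathbf U)$, with~$\varphi(\mathbf L)$ reductive and hence trivial inside unipotent~$\V$); and characteristic zero is used not only in the~$H^1$ vanishing but also in Lemma~\ref{Unipotent vs 1param} and in the Levi decomposition, so it is not quite the sole pressure point as your last sentence suggests.
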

\noindent(cf.~\cite[{\S}III 4.3]{SerreLNM5} and~\cite[{\S}3.18-19]{BorelTits} in the case of local fields of  non zero characteristic.)

We will use the following complement.
\begin{lemma}\label{Lemme epi ouvert cofini}
 For any epimorphism~$\phi:\H\to\Q$ of algebraic groups over~$\Q_S$, the subgroup~$\varphi(\H(\Q_S))$ of~$\Q(\Q_S)$ is open of finite index.
\end{lemma}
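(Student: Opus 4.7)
The plan is to argue place by place, then apply the analytic implicit function theorem for openness and invoke finiteness of Galois cohomology for the finite index statement.

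First, I would use that $S$ is finite, so $\Q_S = \prod_{v\in S}\Q_v$ and both $\H$ and $\Q$ decompose accordingly as products of their base changes to the $\Q_v$. The morphism $\varphi$ splits as a product $(\varphi_v)_{v\in S}$. Since openness and finite index in a finite product of topological groups are equivalent to the corresponding properties at each factor, I would reduce to the case where $\Q_S$ is a single local field $k$ of characteristic zero.

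For openness: in characteristic zero every algebraic group is smooth, so $\varphi$ is a smooth surjective morphism of smooth algebraic groups over $k$. Its differential $d\varphi_e$ at the identity is therefore surjective onto $\mathrm{Lie}(\Q)$. By the analytic implicit function theorem over $k$, there exists an open neighbourhood $U$ of $e$ in $\H(k)$ whose image $\varphi(U)$ contains an open neighbourhood of $e$ in $\Q(k)$. Then $\varphi(\H(k))$ is a subgroup of $\Q(k)$ containing an open neighbourhood of $e$, hence is open (translating by group elements).

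For finite index: let $\K = \ker \varphi$, which is again smooth over $k$. The short exact sequence $1 \to \K \to \H \to \Q \to 1$ gives rise to an exact sequence of pointed Galois cohomology sets
\[
\H(k) \xrightarrow{\varphi} \Q(k) \to H^1(k, \K),
\]
and the coset space $\Q(k)/\varphi(\H(k))$ injects into $H^1(k, \K)$. For $k$ archimedean the absolute Galois group is finite, so this cohomology set is obviously finite; for $k$ non-archimedean of characteristic zero, the finiteness of $H^1(k,\K)$ for a smooth linear algebraic group $\K$ is the Borel--Serre finiteness theorem. In either case the set of cosets is finite, giving the claim.

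The main obstacle, more of a citation than a technical difficulty, is the Borel--Serre finiteness of $H^1(k, \K)$ for arbitrary smooth linear algebraic $\K$ over a non-archimedean local field; the rest is routine implicit function theorem and group-theoretic bookkeeping.
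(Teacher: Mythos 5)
Your proposal is correct and matches the paper's approach exactly: reduce to a single place $\Q_v$ using finiteness of $S$, obtain openness from smoothness of $\varphi$ via the implicit function theorem, and obtain finite index from the coset space injecting into $H^1(k,\ker\varphi)$, which is finite over a local field of characteristic zero. The paper simply cites Platonov--Rapinchuk for both steps in a single sentence rather than Borel--Serre, but the underlying argument is the same.
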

\begin{proof} We may argue place by place. The openness use that we have a submersion; the finiteness uses Galois cohomology bounds. For this we refer to~\cite[{\S}6.4: Proposition~6.13 p.\,316; Corollary~2 {\S}6.4 p.\,319]{PR}.
\end{proof}
\begin{proof}
Let~$u$ be unipotent element of~$\H(k)$. It is in particular a unipotent element of~$\G(k)$ which belongs to~$\H(k)$. Thus~${\H(k)}^{u}\cap \G(k)$ contains a generating set of~${\H(k)}^u$. It finally contains~${H(k)}^u$. We have proved
\[{\H(k)}^u\subseteq{\G(k)}^{u}\cap \H(k)\]
which, given Lemma~\ref{Lemma u+}, is our first point. (N.B.: This inclusion may be strict.)

We now prove the second point. We use Corollary~\ref{coro unipotent lift} for~$\H=\G$ and~$\Q=\G'$ together with  Lem\-ma~\ref{Lemma u+} for~$\H$ and for~$\Q$. This yields
\[
\varphi(\H(k)^+)=\varphi(\H(k)^u)=\varphi(\H(k))^u\leq\Q(k)^u=\Q(k)^+.
\]
We want to prove the converse inclusion. As~$k$ is a local field of characteristic~$0$, we have that~$\varphi\left({\H}(k)\right)$ is a finite index subgroup of~$\Q(k)$ by Lemma~\ref{Lemme epi ouvert cofini}.  By~Lemma~\ref{Lemme Unip prof},~$\varphi(\H(k))$ contains~$\U(k)$ for every algebraic unipotent subgroup~$\U$ of~$\Q$. That is~$\Q(k)^+\leq\varphi(\H(k))$. It follows
\[\Q(k)^+\leq\varphi(\H(k))^+\leq\varphi(\H(k))^u.\]
%Let~$g=g_s\cdot g_u$ the Jordan decomposition of~$g$ (\cite[I.4.4 Theorem,~(1)]{BorelLAG}). By~\cite[I.4.4 Theorem,~(4)]{BorelLAG},~$\phi(g_s)\phi(g_u)$ is the Jordan decomposition of~$\phi(g)$. But~$\phi(g)=u$ has manifestly decomposition~$u=e\cdot u$, where~$e$ is the neutral element of~$Q(k)$. We get~$\phi(g_u)=u$ with a unipotent element~$g_u$ of~$H(k)$  (and we get~$\phi(g_s)=e$). In particular,~$g_u$ belongs to~${H(k)}^+$. 
%As~$u$ was arbitrary, the image of~${H(k)}^+$ in~$Q(K)$ contains the unipotent elements of the latter, hence generates~${Q(k)}^+$.
\end{proof}
We deduce immediately the following.
\begin{corollary}\label{corliftuni}
Let~$W$ be a subgroup of~$\H(k)$ containing~$\N(k)$ such that~$W^+\subseteq \N(k)$. Then~$\varphi(W)^+$ is reduced to the neutral element~$e$ of~$\Q(k)$.
\end{corollary}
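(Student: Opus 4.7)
The plan is to prove that every one-parameter unipotent algebraic subgroup of $\Q$ whose $k$-points sit in $\varphi(W)$ must be trivial, which forces $\varphi(W)^+=\{e\}$ by definition of the $\text{◌}^+$ operator.

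Fix a one-parameter unipotent algebraic subgroup $\mathbf U\subseteq\Q$ (so $\mathbf U\cong \G_a$) with $\mathbf U(k)\subseteq\varphi(W)$. First I would form the scheme-theoretic preimage $\mathbf U':=\varphi^{-1}(\mathbf U)$, an algebraic subgroup of $\H$ which fits in an extension $1\to \N\to\mathbf U'\to\mathbf U\to 1$. The key auxiliary observation is that $\mathbf U'(k)\subseteq W$: indeed, for any $h\in\mathbf U'(k)$, the element $\varphi(h)\in\mathbf U(k)\subseteq\varphi(W)$ can be written $\varphi(h)=\varphi(w)$ with $w\in W$; then $hw^{-1}\in\N(k)\subseteq W$, so $h=(hw^{-1})w\in W$. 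This is the place where the hypothesis $\N(k)\subseteq W$ is essential.

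Next I would apply Proposition~\ref{propoliftuni}\,\eqref{propplus2} to the quotient $\varphi|_{\mathbf U'}\colon \mathbf U'\to\mathbf U$, which yields
\[
\mathbf U(k)^+=\varphi\bigl(\mathbf U'(k)^+\bigr).
\]
Since $\mathbf U$ is itself a one-parameter unipotent algebraic subgroup of $\Q$, its $k$-points satisfy $\mathbf U(k)^+=\mathbf U(k)$. On the other hand, any one-parameter unipotent algebraic subgroup of $\G$ contained in $\mathbf U'(k)$ is, a fortiori, a one-parameter unipotent subgroup of $\G$ contained in $W$ (using $\mathbf U'(k)\subseteq W$ from the previous step), so $\mathbf U'(k)^+\subseteq W^+$. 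Invoking the standing hypothesis $W^+\subseteq \N(k)$ and pushing forward by $\varphi$, we obtain
\[
\mathbf U(k)=\varphi\bigl(\mathbf U'(k)^+\bigr)\subseteq \varphi(W^+)\subseteq \varphi(\N(k))=\{e\}.
\]

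Since $k$ is infinite and $\mathbf U\cong\G_a$, the vanishing $\mathbf U(k)=\{e\}$ forces $\mathbf U$ itself to be trivial. As $\mathbf U$ was an arbitrary one-parameter unipotent algebraic subgroup of $\Q$ with $\mathbf U(k)\subseteq\varphi(W)$, the group $\varphi(W)^+$ has no non-trivial generators, hence $\varphi(W)^+=\{e\}$, as asserted. The only subtle point—and essentially the single place where something non-formal happens—is the identification of the preimage $\mathbf U'$ and the verification that $\mathbf U'(k)\subseteq W$; once that is in hand, the rest is an immediate application of Proposition~\ref{propoliftuni}\,\eqref{propplus2}.
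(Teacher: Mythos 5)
Your proof is correct and takes essentially the same route as the paper's: both pass to the preimage $\varphi^{-1}(\mathbf U)$, use $\N(k)\subseteq W$ to conclude that its rational points lie in $W$, push $W^+\subseteq\N(k)$ through $\varphi$, and invoke Proposition~\ref{propoliftuni}\,\eqref{propplus2} to identify $\mathbf U(k)^+=\mathbf U(k)$ with the image of those rational points, forcing $\mathbf U$ trivial. You spell out the inclusion $\mathbf U'(k)\subseteq W$ a bit more explicitly than the paper (which asserts $\varphi^{-1}(\varphi(W))=W$ ``by hypothesis''), but the underlying argument is identical.
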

\begin{proof}Assume by contradiction that~$\varphi(W)$ contains~$U=\U(k)$ for a non trivial algebraic (one parameter) unipotent subgroup~$\U$ of~$\Q$.
Then
\[
\stackrel{-1}{\varphi}(\U(k))\leq\stackrel{-1}{\varphi}(\varphi(W)).
\]
By hypothesis the right hand side is~$W$. The left-hand side is the group~$L=\L(k)$ of rational points of the algebraic subgroup~$\stackrel{-1}{\varphi}(\L)$ of~$\H$. Reformulating, we have~$L\leq W$, hence
\[
L^+\leq W^+\leq \N(k).
\]
and thus~$\varphi(L^+)=\{e\}$. We also have, using Proposition~\ref{propoliftuni} \eqref{propplus2}, 
\[
\{e\}=\varphi(L^+)=U^+=U.
\]
Thus~$U$, and hence~$U$, is trivial, and this is our contradiction.
\end{proof}

\subsubsection{Finite generation and Finite index subgroups}
% We know that~$\Lpp$ is a open finite index subgroup of~$L$. The following argument is used in the main proof to establish the same relationship between~$\bigcap_{h\in H} h\Lpp h^{-1}$ with~$\bigcap_{h\in H} hLh^{-1}$. It follows from structure properties of linear group over real or $p$-adic fields. (see references given below for the positive characteristic local fields).
We start with a standard fact.
\begin{lemma} \label{Lemma(F)}
Let~$M=\mathbf{M}(\Q_S)$ the topological group of rational points of an algebraic group~$\mathbf{M}$ over~$\Q_S$. For every integer~$n$, there are only finitely many open subgroups of index~$n$ in~$M$.
\end{lemma}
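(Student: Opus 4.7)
The plan is to reduce the finiteness of open subgroups of index $n$ to counting continuous homomorphisms of $M$ into finite groups, and then to exploit the product structure $M=\prod_{v\in S}\mathbf{M}(\Q_v)$ together with the structure of each factor. First I would note that if $H\leq M$ is open of index $n$, its normal core $N_H=\bigcap_{g\in M}gHg^{-1}$ is an open normal subgroup of index dividing $n!$, and the subgroups of index $n$ between $N_H$ and $M$ correspond to index-$n$ subgroups of the finite quotient $M/N_H$, hence are finite in number. So it suffices to show that for each integer $m$ there are only finitely many open normal subgroups of index $m$ in $M$; and since each such is the kernel of a continuous surjective homomorphism $M\to F$ with $|F|=m$, and each finite $F$ has only finitely many automorphisms, the statement reduces to showing that for every finite group $F$ the topological group $M$ admits only finitely many continuous homomorphisms into $F$.

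A continuous homomorphism $\phi:\prod_{v\in S}\mathbf{M}(\Q_v)\to F$ is determined by its restrictions $\phi_v:\mathbf{M}(\Q_v)\to F$ to each factor (whose images must pairwise commute in $F$), so it is enough to bound, for each place $v$ and each finite group $F$, the number of continuous homomorphisms $\mathbf{M}(\Q_v)\to F$. For archimedean $v$, the identity component $\mathbf{M}(\R)^\circ$ is a connected real Lie group admitting no proper open subgroup, and since $\mathbf{M}(\R)/\mathbf{M}(\R)^\circ$ is finite, every continuous homomorphism into the discrete group $F$ factors through this finite component group, yielding only finitely many possibilities.

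The main work lies in the ultrametric case. Here I would pass first to the identity component $\mathbf{M}^\circ$, which has finite index at the level of $\Q_v$-points, and then apply the Levi decomposition $\mathbf{M}^\circ=\mathbf{L}\ltimes\mathbf{R}_u\mathbf{M}$ valid in characteristic zero. By Lemma~\ref{Lemme Unip prof}, the group $\mathbf{R}_u\mathbf{M}(\Q_v)$ admits no nontrivial finite-index open subgroup, so every continuous homomorphism to a finite group factors through the reductive Levi quotient $\mathbf{L}(\Q_v)$. Writing $\mathbf{L}$ as an almost product $\mathbf{Z}\cdot\mathbf{L}_{\mathrm{der}}$ of its connected centre (a torus) and its derived semisimple subgroup, I would handle each factor separately: on the semisimple factor $\mathbf{L}_{\mathrm{der}}(\Q_v)$ the subgroup ${\mathbf{L}_{\mathrm{der}}(\Q_v)}^+$ has finite index by Borel--Tits and admits no nontrivial finite-index open subgroup (by Lemma~\ref{Lemme Unip prof} applied to the one-parameter unipotent subgroups that generate it); on the torus $\mathbf{Z}(\Q_v)$ the maximal compact open subgroup $\mathbf{Z}(\Q_v)^c$ is a compact $v$-adic analytic group and hence topologically finitely generated with finitely many open subgroups of each given index, while the quotient $\mathbf{Z}(\Q_v)/\mathbf{Z}(\Q_v)^c$ is a finitely generated free abelian group. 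The main obstacle is the torus step, where one relies on the structure theory of algebraic tori over local fields; once those finitenesses are collected, combining them along the filtration $\mathbf{R}_u\mathbf{M}\subset\mathbf{M}^\circ\subset\mathbf{M}$ yields the desired finiteness of continuous homomorphisms $\mathbf{M}(\Q_v)\to F$, and hence the lemma.
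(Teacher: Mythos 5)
The proposal follows essentially the same outline as the paper — place-by-place reduction, Levi decomposition, separate treatment of unipotent radical, torus, and semisimple part — but there is a genuine gap in the handling of the semisimple factor at ultrametric places. You claim that ``on the semisimple factor $\mathbf{L}_{\mathrm{der}}(\Q_v)$ the subgroup $\mathbf{L}_{\mathrm{der}}(\Q_v)^+$ has finite index by Borel--Tits.'' This is false when $\mathbf{L}_{\mathrm{der}}$ has $\Q_v$-anisotropic quasi-factors: such factors contain no unipotent elements over $\Q_v$, so they contribute nothing to the ${}^+$-subgroup, and for an anisotropic almost-simple factor $\mathbf{F}$ one has $\mathbf{F}(\Q_v)^+=\{e\}$ sitting inside the infinite compact group $\mathbf{F}(\Q_v)$. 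Borel--Tits (and, for simply connected groups, Kneser--Tits) only give that $G(\Q_v)^+$ has finite index for \emph{isotropic} simple factors. Your ``no nontrivial finite-index open subgroup'' argument, which rests on Lemma~\ref{Lemme Unip prof} applied to the generating one-parameter unipotents, therefore says nothing about the anisotropic part.

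The anisotropic semisimple factors need a separate argument, which is exactly what the paper's proof supplies: $\mathbf{F}(\Q_v)$ is compact (by Bruhat--Tits/Prasad, see the reference to \cite{PR} in the paper), the exponential map converges on a neighbourhood of the origin so the group is topologically finitely generated, and Serre's finiteness criterion (\cite[III-\S4.1 Prop.~9]{SerreLNM5}) then yields finitely many open subgroups of each index. Interestingly, you already invoke precisely this circle of ideas for the compact part of the central torus (``a compact $v$-adic analytic group and hence topologically finitely generated with finitely many open subgroups of each given index''); the same reasoning applies verbatim to the anisotropic semisimple factors, and inserting it there closes the gap. Once that is done, the rest of your structure (reduction to continuous homomorphisms into a fixed finite group via normal cores, the product over places, the archimedean case via the finite component group, the unipotent radical via Lemma~\ref{Lemme Unip prof}, the split part of the torus via $\Q_v^\times$) is sound and tracks the paper's argument closely, modulo the cosmetic difference that you split the Levi into centre times derived group whereas the paper splits it into split central torus, isotropic quasi-factors, and anisotropic part.
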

\begin{proof} We can argue for each place separately. 

For an archimedean place, we know that~$\M(\R)$ has finitely many connected components (by Whitney Theorem, cf.~\cite[{\S}3.2 Cor.~1 p.\,120]{PR}), from which follows: the neutral component of~$\mathbf{M}(\R)$ is open, hence is a minimal open subgroup, and is of finite index.

We turn to ultrametric places for the rest of the proof. We may assume~$M$ is (Zariski) connected. We can decompose~$\M(\Q_p)$ as a product of its unipotent radical~$\U(\Q_p)$ with a Levi factor~$\mathbf{S}(\Q_p)$. This Levi factor~$\mathbf{S}$ is algebraically the almost direct product three type of components: the isotropic quasi-simple quasi-factors, the maximal split central torus, and anisotropic groups (the anisotropic quasi-factors and the maximal anisotropic central torus). The image in~$\mathbf{S}(\Q_p)$ of the group of rational points of these components is of finite index (use Lemma~\ref{Lemme epi ouvert cofini} to the epimorphism from the direct product of these components to their product in~$\M$).

Let start with the case where~$\M$ is anisotropic. Then~$\M(\Q_p)$ is compact (\cite[3.18-19]{BorelTits},\cite[{\S}3.1 Theorem~3.1]{PR}). We notice that some neighbourhood of the origin is (topologically) finitely generated, using for instance the Lie group exponential which has a positive radius of convergence. Hence~$\M(\Q_p)$ is itself topologically finitely generated. By applying~\cite[III-\S4.1 Proposition9]{SerreLNM5} this implies that there are finitely many open subgroups of finite index. 

The case of a split torus is easily directly checked, it amounts to the finiteness of finite index subgroups of~${\Q_p}^\times$ of given index. (Actually the profinite completion of~${\Q_p}^\times$, isomorphic to~$\Z_p^\times\times\widehat{\Z}$ is topologically of finite type.)

By Lemma~\ref{Lemme Unip prof}, the unipotent group~$\U(\Q_p)$ has no non trivial finite index subgroup, even if we replace~$\Q_p$ by any characteristic~$0$ field. (N.B.: The profinite completion of~$\U(\Q_p)$ is thus trivial.)

We are left with the case of an isotropic quasi-simple group~$\M$. By~\cite[Corollaire~6.7]{BorelTits}, every subgroup of finite index of~$M$ will contain~$M^+$. (In characteristic~$0$, relying on the argument above for~$\U$ suffice.) We are done using the Proof of~\cite[Proposition~6.14]{BorelTits}, where in characteristic~$0$ the inseparable degree~$q$ is~$1$, starting from~``Il reste à faire voir[...]''.

N.B.: Along these lines, one actually proves that the profinite completion of~$M$ is topologically of finite type, which is a stronger property by~\cite[III-\S4.1 Proposition9]{SerreLNM5}. More explicitly, at a $p$-adic place, if~$r$ is the maximal rank of a central split torus (that is the rank of the group~$X_{\Q_p}(\M)$ of $\Q_p$-rational characters), then the profinite completion of~$M$ is an extension of a finite group by the product of~$\widehat{\Z}^r$ by a topologically $p$-nilpotent algebraic $p$-adic Lie subgroup of~$M$.
%To conclude in the general case, we use the structure theory from~\cite{BorelTits}: Proposition~6.14, its proof, and Proposition~3.19 
\end{proof}
\begin{lemma}\label{Lemme M finite index} Let~$\L$ be an algebraic subgroup of an algebraic group~$\G$ over~$\Q_S$. 
Let~$\Lpp$ be a finite index subgroup of~$L=\L(\Q_S)$, and~$H$ be a subset of~$G=\G(\Q_S)$.

Then~$\Mpp=\bigcap_{h\in H} h^{-1}\Lpp h$ is a finite index subgroup of~$L^H=\bigcap_{h\in H} h^{-1}L h$.
 
\end{lemma}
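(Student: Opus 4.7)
The plan is to reduce the \emph{a priori} infinite intersection defining $M^\ddagger$ to a finite one, using both the Noetherianity of the Zariski topology and the finiteness statement of Lemma~\ref{Lemma(F)}. Clearly $M^\ddagger \subseteq L^H$, so we must bound $[L^H : M^\ddagger]$.

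First I would handle $L^H$ itself: for each $h \in H$ the set $h^{-1}\L h$ is an algebraic subgroup of $\G$ over $\Q_S$, and by Noetherianity there exist finitely many $h_1,\dots,h_k \in H$ such that $\bigcap_{i=1}^{k} h_i^{-1}\L h_i = \bigcap_{h\in H} h^{-1}\L h$ as algebraic subgroups. Taking $\Q_S$-points gives $L^H = \mathbf{L}^H(\Q_S)$ for an algebraic subgroup $\mathbf{L}^H$ of $\G$, hence Lemma~\ref{Lemma(F)} applies to $L^H$.

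Next let $n := [L : L^\ddagger]$, which is finite by hypothesis. For each $h \in H$, the conjugate $h^{-1}L^\ddagger h$ has index $n$ in $h^{-1}L h$, so
\[
K_h := L^H \cap h^{-1}L^\ddagger h
\]
is a subgroup of $L^H$ of index at most $n$. Moreover $L^\ddagger$ is open in $L$ (a finite-index subgroup of a $\Q_S$-Lie group must contain the neutral analytic component, hence is open), so each $K_h$ is open in $L^H$. Now invoke Lemma~\ref{Lemma(F)} for the algebraic group $\mathbf{L}^H$: there are only finitely many open subgroups of $L^H$ of each index $\le n$. Consequently the family $\{K_h : h \in H\}$ contains only finitely many distinct subgroups, say $K_{h'_1}, \dots, K_{h'_m}$.

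Finally, since $M^\ddagger = \bigcap_{h \in H} h^{-1}L^\ddagger h \subseteq L^H$, we have
\[
M^\ddagger = \bigcap_{h \in H} K_h = \bigcap_{j=1}^{m} K_{h'_j},
\]
which is a finite intersection of open subgroups of $L^H$, each of index at most $n$. Hence $[L^H : M^\ddagger] \le n^m < \infty$, as required. The main obstacle is the ``collapsing'' step that turns the potentially infinite family $\{K_h\}$ into a finite one; this is precisely where Lemma~\ref{Lemma(F)}, combined with the openness of $L^\ddagger$ and the algebraicity of $L^H$ obtained via Zariski Noetherianity, does the work.
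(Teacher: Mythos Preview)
Your proof is correct and follows essentially the paper's approach: establish that $L^H$ is the group of $\Q_S$-points of an algebraic subgroup, then apply Lemma~\ref{Lemma(F)} to the family of open subgroups $\{L^H \cap h^{-1}L^\ddagger h\}_{h\in H}$ of index at most $n=[L:L^\ddagger]$ to collapse the infinite intersection to a finite one, yielding $[L^H:M^\ddagger]\le n^m$. Your formulation via $K_h = L^H \cap h^{-1}L^\ddagger h$ is in fact slightly cleaner than the paper's, which phrases the same objects as $H$-conjugates of $M_0 := L^\ddagger \cap L^H$ and invokes that ``$H$ normalises $L^H$'' --- a step that is automatic only when $H$ is a subgroup rather than an arbitrary subset as in the lemma's hypothesis.
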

\begin{proof} Let us first observe The group~$L^H$ is algebraic over~$\Q_S$.

As~$\Lpp$ is open and of finite index, say~$n=[L:\Lpp]\in\Z_{>0}$, in~$L$, the subgroup~\[M_0=\Lpp\cap L^H\] of~$L^H$ is open and of finite index at most~$n\in\Z_{>0}$. The group~$L^H$ is normalised by~$H$, the conjugate each conjugate~$h^{-1} M_0 h$, with~$h$ in~$H$, is also open subgroup of~$L^H$ of index~$n$.
But by previous Lemma~\ref{Lemma(F)}, applied for~$\mathbf{M}=\bigcap_{h\in H} h^{-1}\L h$, the subgroups~$h^{-1} M_0 h$ range through at most finitely many subgroups. Their intersection, which is actually~$M$ is hence of finite index, at most~$n^m$ where~$m=\#\{h^{-1} M_0 h| h\in H\}$.
\end{proof}

\subsection{Creating unipotent invariance} Here is a version of a standard argument to obtain unipotent groups stabilising a limit of translates of homogeneous measures.

We say that a sequence~$(\mu_i)_{i\geq 0}$ of bounded measures \emph{converges strongly} to a limit~$\mu_\infty$ if there is a sequence~$(\eps_i)_{i\geq 0}$ of class~$o(1)$ in~$\R$ such that, for any bounded function~$\varphi$
\[
\abs{(\mu_i-\mu_\infty)(\varphi)}\leq\eps_i\cdot \Nm{\varphi}.
\]

\begin{proposition}\label{unipotescence} Let a locally compact group~$G$ acts continuously on a locally compact topological space~$X$. Let~$(g_i)_{i\geq0}$ be a sequence in~$G$, let~$\mu$ be a probability on~$X$, and assume the sequence of translated measures~$(g_i\cdot\mu)$ is weakly converging toward some limit~$\mu_\infty$. Assume there is a sequence~$h_i$ converging to the neutral element such that
\begin{itemize}
\item the sequence~$h_i\cdot \mu$ converges strongly to~$\mu$;
\item the sequence~$g_i h_i {g_i}^{-1}$ has a limit point~$g_\infty$.
\end{itemize}

Then~$\mu_\infty$ is fixed under translation by~$g_\infty$.
\end{proposition}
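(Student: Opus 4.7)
The plan is to compare two natural ways of writing the translate $g_i h_i \cdot \mu$ and show that both compute $\mu_\infty$ and $g_\infty \cdot \mu_\infty$ respectively. First, I would pass to a subsequence along which $g_i h_i g_i^{-1}$ actually converges to $g_\infty$ (allowed since $g_\infty$ is only assumed to be a limit point). After this extraction, I still have $g_i \cdot \mu \to \mu_\infty$ weakly.

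The key identity is
\[
g_i h_i \cdot \mu \;=\; (g_i h_i g_i^{-1}) \cdot (g_i \cdot \mu) \;=\; g_i \cdot (h_i \cdot \mu).
\]
From the first factorisation and the joint continuity of the action $G \times \mathcal{P}(X) \to \mathcal{P}(X)$ on probabilities (for the weak topology, using that $X$ is locally compact and the action continuous), the left-hand sequence converges weakly to $g_\infty \cdot \mu_\infty$.

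From the second factorisation, I would use the strong convergence $h_i \cdot \mu \to \mu$: for any continuous compactly supported test function $\varphi$, writing $\varphi \circ g_i$ for the function $x \mapsto \varphi(g_i x)$, one has $\Nm{\varphi \circ g_i}_\infty = \Nm{\varphi}_\infty$ (as $g_i$ acts by homeomorphisms), so
\[
\bigl| (g_i \cdot (h_i \cdot \mu))(\varphi) - (g_i \cdot \mu)(\varphi) \bigr|
\;=\; \bigl| (h_i \cdot \mu - \mu)(\varphi \circ g_i) \bigr|
\;\leq\; \eps_i \,\Nm{\varphi}_\infty \;\longrightarrow\; 0.
\]
Combined with $g_i \cdot \mu \to \mu_\infty$ weakly, this gives $g_i \cdot (h_i \cdot \mu) \to \mu_\infty$ weakly. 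By uniqueness of weak limits, $g_\infty \cdot \mu_\infty = \mu_\infty$.

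The only genuinely subtle step is the joint continuity invoked in the second paragraph; concretely it amounts to checking that if $g_n \to g_\infty$ in $G$ and $\nu_n \to \nu_\infty$ weakly on $X$, then $(g_n \nu_n)(\varphi) = \nu_n(\varphi \circ g_n) \to \nu_\infty(\varphi \circ g_\infty)$ for every $\varphi$ continuous with compact support. This reduces to noting that $\varphi \circ g_n$ converges to $\varphi \circ g_\infty$ uniformly on any fixed compact set containing the eventual supports, which follows from continuity of the action on a neighbourhood of $\mathrm{Supp}(\varphi)\cdot g_\infty^{-1}$ together with tightness (provided by weak convergence of $\nu_n$ to the probability $\nu_\infty$). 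This is the only place where I expect to have to argue carefully rather than by routine bookkeeping.
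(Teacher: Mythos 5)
Your proof is correct and follows essentially the same approach as the paper: both compute $g_i h_i\cdot\mu$ in two ways (once via $g_i\cdot(h_i\cdot\mu)$ using strong convergence of $h_i\mu\to\mu$, once via $(g_ih_ig_i^{-1})\cdot(g_i\mu)$ using convergence of the conjugate) and conclude by uniqueness of the weak limit; the paper simply runs the joint-continuity estimate inline with $o(1)$ bookkeeping on test functions rather than packaging it as a separate lemma. One small inaccuracy in your final paragraph: tightness of $\nu_n$ is not actually needed there, since the uniform convergence of $\varphi\circ g_n$ to $\varphi\circ g_\infty$ already follows from continuity of the action map on the compact set $K\times K^{-1}\mathrm{Supp}(\varphi)$, where $K$ is a compact neighbourhood of $g_\infty$ eventually containing the $g_n$, and all the $\varphi\circ g_n$ vanish outside this compact.
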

\begin{proof} Let~$\varphi$ be a test function: continuous with compact support. We write
$$g_i h_i {g_i}^{-1} g_i\mu (\varphi)=g_i h_i \mu (\varphi) = g_i \mu (\varphi) +o(1)\Nm{\varphi}$$
by strong convergence. But~$g_i \mu (\varphi)=\mu_\infty(\varphi)+o(1)$, by weak convergence. We get
\begin{equation}\label{combi1} g_i h_i {g_i}^{-1} g_i\mu (\varphi)=\mu_\infty(\varphi)+o(1).\end{equation}
On the other hand, we can write~$g_i h_i {g_i}^{-1} =o(1)g_\infty$. Note that, by uniform continuity of~$\varphi$, one has 
$\Nm{o(1)\cdot\varphi-\varphi}=o(1)$. Hence, for any probability~$\nu$, we have
$$o(1)\nu(\varphi)=\nu(o(1)\varphi)=\nu(\varphi)+o(1),$$
with error term independant of~$\nu$. Applying to~$\nu=g_i\mu$, we get
$$g_i h_i {g_i}^{-1} g_i\mu (\varphi)=g_\infty o(1) g_i\mu (\varphi)=g_\infty g_i\mu (\varphi) + o(1).$$
But~$g_i\mu$ converges weakly to~$\mu_\infty$, and
\begin{equation}\label{combi2}
g_\infty g_i\mu (\varphi) =g_i\mu ({g_\infty }^{-1}\varphi)= \mu_\infty ({g_\infty }^{-1}\varphi)+o(1)=g_\infty\mu_\infty (\varphi) +o(1).
\end{equation}
Combining~\eqref{combi1} and~\eqref{combi2} yields
$$\mu_\infty(\varphi)+o(1)=g_\infty\mu_\infty (\varphi) +o(1).$$
This concludes.
\end{proof}

\begin{lemma}\label{Lemma strong convergence}
 Let~$G$ be a locally compact group with countable basis of neighbourhood. Let~$\mu$ be a bounded measure on~$G$ and~$\Omega$ an open subset. 

Then~$g\mu\restriction_\Omega$ converges strongly to~$\mu\restriction_\Omega$ as~$g$ converges to the neutral element~$e$ of~$G$.
\end{lemma}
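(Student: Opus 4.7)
The plan is to recast ``strong convergence'' as total-variation (TV) convergence, and to reduce the statement to the classical $L^1$-continuity of the left regular representation, applied to the indicator function of~$\Omega$. Concretely, ``strong'' means $\sup_{\|\varphi\|_\infty\le 1}\bigl|(g\cdot\mu|_\Omega-\mu|_\Omega)(\varphi)\bigr|\to 0$, so what I need is a bound of the form
\[
\bigl\|g\cdot\mu|_\Omega-\mu|_\Omega\bigr\|_{\mathrm{TV}}\le\varepsilon(g),\qquad\varepsilon(g)\xrightarrow[g\to e]{}0,
\]
where the rate $\varepsilon(g)$ is intrinsic (independent of~$\varphi$). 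The countable basis of neighborhoods of~$e$ lets me replace net convergence by a sequential statement and extract the sequence $(\varepsilon_i)_{i\ge 0}$ required by the definition.

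First I would unwind the two integrals and perform a change of variable $y=gx$. Writing~$\mu$ (as is the case in all applications of this lemma to $\widehat{\mu}_\Omega$) as (a direct image of) a left Haar measure restricted to a bounded open piece, the change of variable is exact in the unimodular case and in the general case introduces only the modular function $\Delta(g)$, which is continuous with $\Delta(e)=1$ and hence tends to $1$ uniformly on the bounded domain of interest as $g\to e$. After this substitution,
\[
(g\cdot\mu|_\Omega-\mu|_\Omega)(\varphi)=\int\varphi(y)\bigl(\mathbf 1_{g\Omega}(y)-\mathbf 1_\Omega(y)\bigr)\,d\mu(y)+o(1)\,\|\varphi\|_\infty,
\]
and taking the supremum over $\|\varphi\|_\infty\le 1$ bounds the TV distance by $\mu(g\Omega\triangle\Omega)+o(1)$.

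Second, I would prove $\mu(g\Omega\triangle\Omega)\to 0$ as $g\to e$. This is the standard continuity of left translation on $L^1(\mu)$, applied to the characteristic function $\mathbf 1_\Omega\in L^1(\mu)$ (which lies in $L^1$ because $\mu$ is bounded and~$\Omega$ has finite measure). Approximate $\mathbf 1_\Omega$ in $L^1$-norm by a compactly supported continuous function $f$ within $\varepsilon/3$; by left Haar invariance, $\|L_g\mathbf 1_\Omega-L_g f\|_{L^1}=\|\mathbf 1_\Omega-f\|_{L^1}<\varepsilon/3$; by uniform continuity of $f$ on the locally compact group~$G$, $\|L_gf-f\|_\infty\to 0$ as $g\to e$, and since $L_g f-f$ is supported in a fixed compact, this transfers to $L^1$-convergence. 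The triangle inequality yields $\|L_g\mathbf 1_\Omega-\mathbf 1_\Omega\|_{L^1}<\varepsilon$ for $g$ in a neighborhood of~$e$.

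The main subtlety, and really the only obstacle, is the interpretation of ``bounded measure~$\mu$'': for a generic finite measure (e.g.\ $\mu=\delta_e$) the conclusion manifestly fails, so the argument above implicitly uses that $\mu$ is absolutely continuous with respect to a Haar measure, which is precisely the case in every place the lemma is invoked in this paper (the restriction $\mu|_\Omega$ of a Haar measure of~$H$ to a bounded open piece, possibly pushed forward to a homogeneous space via a local homeomorphism, where quasi-invariance and the modular-function estimate still apply). Once this is settled, the proof is simply the combination of the two steps above.
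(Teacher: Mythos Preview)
Your proposal is correct and arrives at the same controlling quantity $\mu(g\Omega\triangle\Omega)$ as the paper; you also correctly flag that the statement fails for a generic bounded measure (take $\mu=\delta_e$), and indeed the paper's own proof tacitly uses left invariance at the step $\abs{g\mu\restriction_\Omega-\mu\restriction_\Omega}=\mu\restriction_{\Omega\triangle g\Omega}$. In the application $\mu$ is the Haar measure on~$H$ restricted to~$\Omega$, so this is harmless.

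The two proofs diverge only in how they show $\mu(g\Omega\triangle\Omega)\to 0$. You invoke the $L^1$-continuity of the left regular representation, proving it by approximating $\mathbf 1_\Omega$ by a $C_c$ function. The paper instead gives a direct, self-contained argument tailored to open $\Omega$: with $(V_n)$ a decreasing neighborhood basis of~$e$, set $\Omega_n=\{\omega\in\Omega:\omega V_n\subseteq\Omega\}$; openness of~$\Omega$ gives $\Omega=\bigcup_n\Omega_n$, whence $\mu(\Omega\smallsetminus\Omega_n)\to 0$ by countable additivity; for $g\in V_n$ one has $\Omega\smallsetminus g\Omega\subseteq\Omega\smallsetminus\Omega_n$, and the symmetric-difference bound follows. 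This is slightly more elementary (no $C_c$ approximation, no need to pass through uniform continuity), and the countable-basis hypothesis is used structurally rather than just to sequentialise. Also, your modular-function $o(1)$ term is unnecessary: for a \emph{left} Haar measure, left translation gives $g\cdot\mu\restriction_\Omega=\mu\restriction_{g\Omega}$ exactly, so the identity is clean without any correction.
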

\begin{proof} Let~$(V_n)_{n\geq 0}$ be a decreasing countable basis of neighbourhood of~$e$ in~$G$. Define
\[
\Omega_n=\{\omega\in\Omega|\omega\cdot V_n\subseteq \Omega\}.
\]
As~$\Omega$ is open, and a neighbourhood of each of its points, for each~$\omega$ in~$\Omega$ there is some~$n$ for which~$\omega\cdot V_n\subseteq\Omega$. In short, we have the increasing union
\[
\Omega=\bigcup_{n\geq 0}\Omega_n.
\]
By countable additivity~$\lim_{n\to\infty}\mu(\Omega_n)\to\mu(\Omega)$. Or equivalently (as~$\mu$ is bounded)
\[
\mu(\Omega\smallsetminus\Omega_n)\to 0.
\]
As~$g$ converges to~$e$, it eventually belongs to~$V_n$. Hence, eventually,
\[\Omega\smallsetminus g\Omega\subseteq \Omega\smallsetminus\Omega_n.\]
It follows~$\lim_{g\to e}\Omega\smallsetminus g\Omega\leq \mu(\Omega\smallsetminus\Omega_n)$. As~$n$ is arbitrary,
\[\lim_{g\to e}\Omega\smallsetminus g\Omega\leq \lim_{n\to\infty}\mu(\Omega\smallsetminus\Omega_n)=0.\]
We now write, in terms of the symmetric difference~$\Omega\Delta g\Omega$
\[
\abs{g\mu\restriction\Omega-\mu\restriction_\Omega}
=
\mu\restriction_{\Omega\Delta g\Omega}
=
\mu\restriction_{\Omega\smallsetminus g\Omega}+g\mu\restriction_{g\Omega\smallsetminus\Omega}
=
\mu\restriction_{\Omega\smallsetminus g\Omega}+g\mu\restriction_{\Omega\smallsetminus g^{-1}\Omega}.
\]
For any bounded test function~$\varphi$ on~$G$, using~$\mu_X(\abs{\varphi})\leq\mu(X)\Nm{\varphi}$, we have
\begin{align*}
		\abs{(g\mu\restriction\Omega-\mu\restriction_\Omega)(\varphi)}
	\leq
		\abs{g\mu\restriction\Omega-\mu\restriction_\Omega}(\abs{\varphi})
	&=
		\mu\restriction_{\Omega\smallsetminus g\Omega}(\abs{\varphi})
		+
		g\mu\restriction_{\Omega\smallsetminus g^{-1}\Omega}(\abs{\varphi})
	\\&\leq
		\mu({\Omega\smallsetminus g\Omega})\cdot\Nm{\varphi}
		+
		\mu(\Omega\smallsetminus g^{-1}\Omega)\cdot\Nm{g^{-1}\varphi}
	\\&=
		o(1)\cdot\Nm{\varphi}.
\end{align*}
This is what we had to prove.
\end{proof}

\begin{lemma} Let~$G=\G(\Q_S)$ be the group of points of a linear algebraic group over~$\Q_S$ with Lie algebra~$\lie{g}$. Let~$(X_i)_{i\geq0}$ resp.~$(g_i)_{i\geq0}$ be a sequence in~$\lie{g}$ resp.~$G$ such that~$\lim_{i\geq0} X_i$ is nilpotent and that~$(g_iX_i{g_i}^{-1})_{i\geq0}$ has a limit~$X_\infty$ in~$\lie{g}$. 

Then~$X_\infty$ is nilpotent.
\end{lemma}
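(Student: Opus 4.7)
The plan is to exploit the fact that nilpotency of an element of $\lie{g}$ can be detected by its characteristic polynomial in a faithful representation, and that the characteristic polynomial is continuous and conjugation-invariant. Since $\G$ is linear algebraic, we can fix a faithful $\Q_S$-rational representation $\rho\colon\G\hookrightarrow\GL_N$, which differentiates to a faithful Lie algebra embedding $d\rho\colon\lie{g}\hookrightarrow\lie{gl}_N(\Q_S)$. Under this embedding, an element $X\in\lie{g}$ is nilpotent if and only if the matrix $d\rho(X)$ is a nilpotent matrix, equivalently its characteristic polynomial $\chi_X(t)\in\Q_S[t]$ equals $t^N$.

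First, I would work place by place: since $\Q_S=\prod_{v\in S}\Q_v$ is a product of fields and all the objects decompose accordingly, it suffices to establish the result for each $v$-component separately. So let me suppose $\Q_S=\Q_v$ is a local field, in which case $\lie{gl}_N(\Q_v)$ carries its natural topology and the coefficients of the characteristic polynomial are polynomial (hence continuous) functions of the matrix entries.

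The key computation is then: for every $i\geq 0$,
\[
\chi_{g_i X_i g_i^{-1}}(t)=\chi_{X_i}(t),
\]
because conjugation by $\rho(g_i)\in\GL_N(\Q_v)$ preserves characteristic polynomials. Let $X_0:=\lim_{i\to\infty} X_i$, which exists and is assumed nilpotent, so $\chi_{X_0}(t)=t^N$. Passing to the limit on the left using continuity of $\chi$ and the hypothesis $g_i X_i g_i^{-1}\to X_\infty$, and on the right using $X_i\to X_0$, we obtain $\chi_{X_\infty}(t)=\chi_{X_0}(t)=t^N$, so $d\rho(X_\infty)$ is nilpotent and thus $X_\infty$ is nilpotent in $\lie{g}$.

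The only mild obstacle is making sure that the intrinsic notion of a ``nilpotent element of $\lie{g}$'' used elsewhere in the paper agrees with the one tested via $\rho$; this is standard in characteristic zero, since Jordan decomposition is functorial under rational homomorphisms of linear algebraic groups (Proposition~\ref{JCD}\eqref{Jordan3}), so the nilpotent part of $X$ is mapped to the nilpotent part of $d\rho(X)$, and in particular $X$ is nilpotent iff $d\rho(X)$ is. With this identification the argument above is complete.
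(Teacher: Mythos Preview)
Your proof is correct and follows essentially the same approach as the paper: embed $\G$ in a general linear group, use that the characteristic polynomial is continuous and conjugation-invariant, and conclude that $\chi_{X_\infty}=t^N$. Your version is slightly more careful (working place by place and justifying via Jordan decomposition that nilpotency in $\lie{g}$ agrees with nilpotency of the image matrix), but the underlying argument is identical.
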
\label{Lemme nilpotent stabilite}
\begin{proof} We may embed~$G$ in a general linear group~$GL(V)$ and assume~$G=GL(V)$. We view~$\lie{gl}(V)$ as a matrix space. We consider the characteristic polynomial, as a continuous function~$\chi$ on~$\lie{gl}(V)$. 
Then, on the one hand
\[
\lim_i \chi(X_i)=\chi(0)
\]
and on the other hand
\[
\lim_i \chi(g_iX_i{g_i}^{-1})=\chi(X_\infty).
\]
But~$\chi$ is a conjugation invariant: for all~$i\geq0$, we have~$ \chi(g_iX_i{g_i}^{-1})= \chi(X_i)$. Finally~$\chi(X)=\chi(0)$, which is a criterion of nilpotence for~$X_\infty$.
\end{proof}

\subsection{Boundedness criterion and Geometric stability} This kind of results is related to the work of Richardson:~\cite{RichardsonConjugacy,Richardsontuple}. It is an instance of a phenomenon coined as ``stability''
by Mumford (see~\cite{Richardsontuple} for discussion of Mumford stability in this context; see also Corollary~\ref{corostabilityrichardson} below).
We give provide a method which relies on~\cite{Lemmanew}, thus emphasising the meaning of~\cite{Lemmanew} in the context of stability properties.

\begin{proposition}\label{Propobddcrit} Let~$ \G $ be a Zariski connected linear~$\Q_S$-group, and let~$H$ be a~$\Q_S$ Lie subgroup whose Zariski closure~$\H^\alg$ in~$G$ is a Zariski connected~$\Q_S$-subgroup which is reductive in~$ \G $,
following~\eqref{H reductif dans}. For any sequence~$\left( g _i\right)_{i\geq0}$ in the group~$G=\G(\Q_S) $, the following are equivalent.
\begin{enumerate}
\begin{subequations}
\item \label{P1} If~$Z=\Z_\G(H)(\Q_S)$ denotes the centraliser of~$H$ in~$G$, then
\begin{equation}\label{eqp1}
\left( g _i\right)_{i\geq0}\text{ is of class }O(1)  Z.
\end{equation}
\item \label{P2} If~$\lie{h}$ (resp.~$\lie{g}$, resp.~$\Ad$) denotes the Lie algebra of~$ H $ (resp. of~$ G $, resp the adjoint representation of~$ G $ on~$\lie{g}$), then for any element~$X$ in~$\lie{h}$, the sequence
\begin{equation}\label{eqp2}
\left(\Ad_{ g _i}(X)\right)_{i\geq0}
\end{equation}
is bounded in~$\lie{g}$.
\item \label{P3} (with the same notations) For any bounded sequence~$\left( X_i\right)_{i\geq0}$ in~$\lie{h}$, the sequence
\begin{equation}\label{eqp3}
\left(\Ad_{ g _i}(X_i)\right)_{i\geq0}
\end{equation}
is bounded in~$\lie{g}$.
\item \label{P4} There does not exist a sequence~$\left( X_i\right)_{i\geq0}$ in~$\lie{h}$, converging to~$0$, and such that
\begin{equation}\label{eqp4}
\left(\Ad_{ g _i}(X_i)\right)_{i\geq0}
\end{equation}
has a non zero limit point in~$\lie{g}$.
\end{subequations}
\end{enumerate}
\end{proposition}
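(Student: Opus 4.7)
The plan is to handle the routine equivalences among (P2), (P3), (P4), together with (P1)$\Rightarrow$(P3), by formal linear algebra in finite dimensions, and then to tackle the substantive implication (P3)$\Rightarrow$(P1), which is where the reductivity of $\mathbf{H}^{\text{alg}}$ in $\mathbf{G}$ enters essentially.

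For (P1)$\Rightarrow$(P3): write $g_i = b_i z_i$ with $(b_i)$ bounded and $z_i \in Z$. Since $Z$ is the pointwise centralizer of $H$ and $H$ is Zariski dense in the Zariski connected group $\mathbf{H}^{\text{alg}}$, elements of $Z$ centralize $\mathbf{H}^{\text{alg}}$; differentiating, $\Ad_{z_i}$ acts trivially on $\mathfrak{h}$, so $\Ad_{g_i}(X_i) = \Ad_{b_i}(X_i)$, which is bounded for bounded $(X_i)$ and $(b_i)$. For (P3)$\Rightarrow$(P2) take constant sequences; for (P2)$\Rightarrow$(P3), pick a basis $e_1,\ldots,e_n$ of $\mathfrak{h}$, write $X_i=\sum_j c_{ij}e_j$ with bounded coefficients (since $(X_i)$ is bounded), and deduce boundedness of $\Ad_{g_i}(X_i)=\sum_j c_{ij}\Ad_{g_i}(e_j)$ from (P2) applied to each $e_j$. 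For (P3)$\Leftrightarrow$(P4), argue by rescaling: if $X_i\to 0$ but $\Ad_{g_i}(X_i)$ has a nonzero limit $Y$, then $X_i'=X_i/\|X_i\|$ is bounded (in fact of norm one) while $\|\Ad_{g_i}(X_i')\|\to\infty$, contradicting (P3); conversely if $(X_i)$ is bounded but $\Ad_{g_i}(X_i)$ is unbounded, then $Y_i=X_i/\|\Ad_{g_i}(X_i)\|\to 0$ while $\Ad_{g_i}(Y_i)$ has norm $1$, producing (after extraction) a nonzero limit and contradicting (P4).

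For the substantive implication (P3)$\Rightarrow$(P1), fix a $\mathbf{Q}_S$-basis $X_1,\ldots,X_n$ of $\mathfrak{h}$ and consider the orbit map
\[
\Phi\colon G\longrightarrow \mathfrak{g}^n,\qquad g\longmapsto (\Ad_g X_1,\ldots,\Ad_g X_n).
\]
The stabilizer of $(X_1,\ldots,X_n)$ under the diagonal adjoint action of $\mathbf{G}$ equals $\mathbf{Z}=Z_{\mathbf{G}}(\mathbf{H}^{\text{alg}})$, since fixing a basis of $\mathfrak{h}$ is equivalent to centralizing the Lie algebra of the Zariski connected group $\mathbf{H}^{\text{alg}}$, hence $\mathbf{H}^{\text{alg}}$ itself. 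Because $\mathbf{H}^{\text{alg}}$ is reductive in $\mathbf{G}$, the orbit $\mathbf{G}\cdot(X_1,\ldots,X_n)$ is Zariski closed in $\mathfrak{g}^n$ (Richardson-type theorem on tuples, cf.\ \cite{Richardsontuple}), and Matsushima's criterion ensures $\mathbf{G}/\mathbf{Z}$ is affine. Passing to $\mathbf{Q}_S$-points and the analytic topology, the map $G/Z\to \Phi(G)\subseteq\mathfrak{g}(\mathbf{Q}_S)^n$ is proper: preimages of bounded sets are bounded. Since (P3) applied to the constant sequences $X_j$ gives that $\Phi(g_i)$ is a bounded sequence in $\mathfrak{g}^n$, properness yields that $g_i Z$ is bounded in $G/Z$, which is precisely (P1).

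The main obstacle will be making the properness statement fully rigorous in the $S$-adic setting: while the archimedean case is classical, the ultrametric case requires a careful treatment of Galois cohomology and the orbit map at the level of rational points. A cleaner path, more in line with the methods of this paper, is to invoke the $S$-adic Mostow-type decomposition $G = Z\cdot Y$ provided by \cite{1-RS} and \cite[Partie~2]{1-R}, where $Y$ satisfies the analytic stability property of \cite{Lemmanew} with respect to the adjoint representation of $\mathbf{G}$ restricted to $\mathfrak{h}$. Writing $g_i=z_i y_i$, the equality $\Ad_{g_i}|_{\mathfrak{h}}=\Ad_{y_i}|_{\mathfrak{h}}$ together with the stability inequality translates boundedness of the adjoint action on $\mathfrak{h}$ into boundedness of $(y_i)$, and hence into $(g_i)\in O(1)\cdot Z$, concluding (P1).
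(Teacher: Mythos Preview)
Your treatment of the elementary equivalences (P1)$\Rightarrow$(P3), (P2)$\Leftrightarrow$(P3), (P3)$\Leftrightarrow$(P4) is essentially the same as the paper's, with one technical caveat: your rescalings $X_i/\|X_i\|$ and $X_i/\|\Ad_{g_i}(X_i)\|$ use real scalars, but $\mathfrak{h}$ is only a $\mathbf{Q}_S$-module, so when $S$ contains no archimedean place this does not produce an element of $\mathfrak{h}$. The paper uses scalars $\lambda_i\in\mathbf{Q}_S$ instead; this is a minor fix.

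The substantive gap is in (P3)$\Rightarrow$(P1). Both of your routes---Richardson's closed-orbit theorem for tuples, and the Mostow-type decomposition $G=Z\cdot Y$ from \cite{1-RS}, \cite{1-R}---are stated for \emph{reductive} (or semisimple) ambient $\mathbf{G}$, whereas the proposition is for an arbitrary Zariski connected linear $\mathbf{Q}_S$-group. This generality is not cosmetic: the paper applies the proposition to the subquotient~$\widecheck{G}=HF/M$, which need not be reductive even when the original~$G$ is. Your argument therefore covers the semisimple case (and this is exactly what the paper does there, via the $Y$ from \cite{LemmaA} and \cite[Theorem~2]{Lemmanew} applied to $V=\Hom_{\mathbf{Q}_S}(\mathfrak{h},\mathfrak{g})$), but stops short of the general case.

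The paper completes the argument in two further steps. First, reductive~$\mathbf{G}$: write $\mathbf{G}=C\cdot D$ with $C$ the centre and $D$ the derived group, and observe that the $C$-component of $g_i$ can be absorbed into $Z$ without affecting either hypothesis or conclusion, reducing to the semisimple case. Second, general~$\mathbf{G}$: take a Levi decomposition $\mathbf{G}=M\cdot U$ with $M$ reductive containing $H$, pass to $\overline{G}=G/U$, apply the reductive case there to conclude $(g_i)$ is of class $O(1)\cdot U\cdot A\cdot Z$ for a finite set $A$ (via Lemma~\ref{Lemmacentmodulo2}), and finally use the Kostant--Rosenlicht theorem that unipotent orbits in linear representations are closed to show the $U$-component is itself bounded modulo $U\cap Z$. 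Neither of your approaches supplies this reduction, and Richardson's result does not extend to non-reductive $\mathbf{G}$ without it.
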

\begin{proof} 
The implication~$\eqref{P1}\Rightarrow\eqref{P2}$ is immediate. Assume~\eqref{P1}. As~$ \Z_\G(\H)$ fixes~$\lie{h}$, and in particular, fixes~$X$, we may replace~$\left( g _i\right)_{i\geq0}$ by a bounded sequence. Thus the sequence~\eqref{eqp2} wil be bounded.

The equivalences~$\eqref{P2}\Leftrightarrow\eqref{P3}\Leftrightarrow\eqref{P4}$ are merely reformulations. Here are some details.

Firstly,~$\eqref{P2}$ is a specialisation of~$\eqref{P3}$ to constant sequences~$(X_i)_{i\geq0}=(X)_{i\geq0}$.

Conversely,~$\eqref{P3}$ reduces to~$\eqref{P2}$, by decomposing~$X_i$ into a fixed base:~$X_i=\sum_j a_{i,j} e_j$, using finitely many bounded sequences~ $(a_{i,j})_{i\geq0}$. We then apply~$\eqref{P2}$ to each of the case~$X=e_j$, then deduce that each sequence~$\left(\Ad_{ g _i}(e_j)\right)_{i\geq0}$ is bounded. Consequently, each sequence~$\left(\Ad_{ g _i}(a_{i,j}e_j)\right)_{i\geq0}$ is bounded. Finally, taking a finite sum~$\left(\Ad_{ g _i}(\sum_j a_{i,j} e_j)\right)_{i\geq0}$ of bounded sequences yields a bounded sequence.

\paragraph{NB:}
 The equivalence~$\eqref{P2}\Leftrightarrow\eqref{P3}$ falls merely under an (easy) instance of Banach-Steinhaus uniform boundedness principle.
The former~$\eqref{P2}$ states pointwise boundedness, the latter~$\eqref{P3}$ states a sequential form of boundedness.

The negation of~$\eqref{P3}$ claim there is a sequence~$(X_i)_{i\geq0}$ such that the sequence~\eqref{eqp3} is unbounded. Choose a $\Q_S$-norm on~$\lie{g}$. We can find a sequence of scalars~$(\lambda_i)_{i\geq0}$ in~$\Q_S$ such that~$0<\limsup\Nm{\lambda_i\cdot\Ad_{ g _i}(X_i)}<\infty$ .
Consequently, the sequence
\[\left(\Ad_{ g _i}(\lambda_i\cdot X_i)\right)_{i\geq0}\] takes infinitely many value in the compact set
$$\left\{X\in\lie{g}~\middle|~1/2\leq\frac{\Nm{X}}{\limsup\Nm{\lambda_i\cdot\Ad_{ g _i}(X_i)}} \leq 2\right\}.$$
The sequence~$\left(\Ad_{ g _i}(\lambda_i\cdot X_i)\right)_{i\geq0}$ has a limit point in this compact. This limit point may not be~$0$. This negates~$\eqref{P4}$ for the sequence~$\left(\lambda_i\cdot X_i)\right)_{i\geq0}$.

Assume the negation of~$\eqref{P4}$. We can find a sequence of non invertible scalars such that~$(\lambda_i)_{i\geq0}$ with limit~$0$ and such that~$\Nm{X_i}=O(\Nm{\lambda_i})$. One can deduce the negation of~$\eqref{P3}$ for the sequence~$\left( X_i/\lambda_i)\right)_{i\geq0}$. 

The remaining implication~$\eqref{P1}\Leftarrow\eqref{P2}$ is proved below.
\end{proof} 
%The proof of the implication~$\eqref{P2}\Leftarrow\eqref{P1}$ will rely on the use of ...

\begin{lemma}\label{Lemma centralisateur fixtateur} In the setting of Proposition~\ref{Propobddcrit}, the centraliser of~$H$ in~$\G$ is the the fixator of~$\lie{h}$ in the adjoint representation action~$\Ad$ of~$\G$.
\end{lemma}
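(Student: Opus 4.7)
The plan is to prove both inclusions of the equality $\Z_\G(H) = \mathrm{Fix}_\G(\lie{h})$, where $\mathrm{Fix}_\G(\lie{h})$ denotes the pointwise fixator of $\lie{h}$ under the adjoint representation.

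The inclusion $\Z_\G(H) \subseteq \mathrm{Fix}_\G(\lie{h})$ is obtained by differentiating the centraliser condition: if $g$ centralises $H$ and $X \in \lie{h}$ is close enough to $0$ that $\exp(tX)$ lies in $H$ for $|t|$ small, then the identity $g \exp(tX) g^{-1} = \exp(tX)$ yields $\exp(t \Ad_g(X)) = \exp(tX)$, hence $\Ad_g(X) = X$ upon differentiating at $t = 0$; by $\Q_S$-linearity $\Ad_g$ fixes all of $\lie{h}$.

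For the reverse inclusion, I will first reduce to the Zariski closure $\H^{\alg}$ of $H$: since $g h g^{-1} = h$ is a Zariski-closed condition on $h$, the centraliser of any subset coincides with the centraliser of its Zariski closure, so $\Z_\G(H) = \Z_\G(\H^{\alg})$. Then I will use the classical fact that for a Zariski connected algebraic group $\K$ in characteristic zero, the centraliser $\Z_\G(\K)$ coincides with the pointwise fixator $\mathrm{Fix}_\G(\mathrm{Lie}(\K))$ of its Lie algebra under the adjoint representation (a connected algebraic group in characteristic zero is determined by its Lie algebra, and an algebraic automorphism of $\K$ is trivial iff its differential at $e$ is trivial; compare \cite[Cor.\,7.4]{BorelLAG}). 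Applied to $\H^{\alg}$ this gives $\Z_\G(\H^{\alg}) = \mathrm{Fix}_\G(\mathrm{Lie}(\H^{\alg}))$.

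The remaining and most delicate step is the identification $\lie{h} = \mathrm{Lie}(\H^{\alg})$, which together with the two reductions above completes the chain $\mathrm{Fix}_\G(\lie{h}) = \mathrm{Fix}_\G(\mathrm{Lie}(\H^{\alg})) = \Z_\G(\H^{\alg}) = \Z_\G(H)$. At archimedean places this identification is clear from the connectedness of $H_v$ together with its Zariski density in the Zariski connected group $\H^{\alg}_v$, which forces $H_v$ to contain an analytic neighborhood of $e$ of dimension $\dim \H^{\alg}_v$. At ultrametric places it follows from $v$-adic Lie theory \emph{\`a la} Serre \cite{SerreLNM5}, under the convention that a ``$v$-adic Zariski connected Lie subgroup'' is a closed $v$-adic Lie subgroup whose Lie algebra agrees with that of its (Zariski connected) Zariski closure; equivalently, $H_v$ is open in $\H^{\alg}_v(\Q_v)$. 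The main obstacle of the proof is thus this Lie-algebra identification at the ultrametric places; once it is secured, the rest of the argument is largely bookkeeping.
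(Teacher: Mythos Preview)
Your reduction to the Zariski closure $\H^{\alg}$ and the invocation of the classical fact $\Z_\G(\H^{\alg}) = \mathrm{Fix}_\G(\mathrm{Lie}(\H^{\alg}))$ are fine, but the step $\lie{h} = \mathrm{Lie}(\H^{\alg})$ is not available in the generality of Proposition~\ref{Propobddcrit}. The paper explicitly allows $H_v$ to be a (possibly non-algebraic) connected real Lie subgroup at the archimedean place, and at either type of place an irrational one-parameter subgroup of a torus --- say $H=\{\mathrm{diag}(e^t,e^{\alpha t})\}$ in $GL_2$ with $\alpha\notin\Q$ --- satisfies all hypotheses (Zariski connected, reductive Zariski closure) yet has $\dim\lie{h}=1<2=\dim\mathrm{Lie}(\H^{\alg})$. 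Your archimedean justification (``connectedness plus Zariski density forces $H_v$ to contain an open set of $\H^{\alg}_v$'') fails on exactly this example, and your ultrametric justification simply postulates the identification as a convention, which the paper does not make.

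The paper's proof bypasses this entirely by not passing through $\H^{\alg}$. It argues symmetrically via the single relation $g\exp(X)g^{-1}=\exp(\Ad_g X)$: since $\exp$ is injective near $0$, this yields directly that the centraliser of $\exp(U)$ equals the fixator of $U$ under $\Ad$, hence (by linearity, $U$ being open in $\lie{h}$) the fixator of $\lie{h}$. It then uses only that $\exp(U)$, being a nonempty open subset of $H$, is Zariski dense in $H$ --- a property the paper records explicitly in~\S\ref{secmuomega} and which holds in the irrational-line example --- to conclude that the centraliser of $\exp(U)$ equals the centraliser of $H$. This is strictly weaker than $\lie{h}=\mathrm{Lie}(\H^{\alg})$, and it is all that is needed. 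Your argument can be repaired by replacing the detour through $\H^{\alg}$ with this direct step.
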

\begin{proof} Let~$U$ be a sufficiently small neighbourhood of~$0$ in~$\lie{h}$ such that the exponential map~$\exp_H:U\to H$ is convergent on~$U$. The exponential map intertwins the adjoint representation~$\Ad$ of~$G$ and the adjoint action of~$G$ on itself: for~$g$ in~$G$ and~$X$ in~$U$, we have
\[
g\exp(X)g^{-1}=\exp(\Ad_g(X)).
\]
Hence the centraliser of~$\exp(U)$ is the fixator of~$\lie{h}$. The former is the centraliser of~$H$, as~$U$ is Zariski dense in~$H$ (recall that~$H$ is Zariski connected); the latter is the fixator of~$\lie{h}$, by linearity.
\end{proof}

We first start with a special case since that is the setting of~\cite{Lemmanew}. (See also~{\cite[Theorem 2.16]{PR}}.)
\begin{proof}[Proof of the implication~$\eqref{P1}\Leftarrow\eqref{P2}$, for semisimple~$ G $. ]
~
Assume that~$ G $ is semisimple.

Applying \cite[Theorem~2.1]{LemmaA}, we get be a subset~$Y$ such that~$ G =Y\cdot  Z$. Consequently any class~${g_i}  Z$ can be written~$ y_i  Z$ with~$y_i$ in~$Y$. We can replace~$( g _i)_{i\geq0}$ by~$(y_i)_{i\geq0}$ without changing the validity of~\eqref{P1}.

As~$Z$ acts via identity on~$\lie{h}$, we have
$$ \forall X \in\lie{h}, \Ad_{ g _i}(X)=\Ad_{{y}_i}(X).$$
Thus we can replace~$( g _i)_{i\geq0}$ by~$(y_i)_{i\geq0}$ without changing the validity of~\eqref{P2}.

From now on, we assume  the identity~$( g _i)_{i\geq0}=(y_i)_{i\geq0}$. We consider the space
\[{V}:=\Hom_{\Q_S}\left(\lie{h},\lie{g}\right)\]
of~$\Q_S$-linear maps~$\lie{h}\to\lie{g}$, with adjoint action of~$ G $ on the values on a $\Q_S$-linear homomorphisms
\[( g \phi)(X):=\Ad_{ g }(\phi(X)).\]

Note that the stabiliser of the identity embedding~$\iota:\lie{h}\xrightarrow{X\mapsto X}\lie{g}$ is the centraliser of its image~$\lie{h}$. This centraliser is~$ Z$, as~$H$ is Zariski connected and characteristics are~$0$. For any~$\omega$ in~$ H $, the map~$\omega\cdot\iota$ has same image~$\lie{h}$, and same stabiliser~$ Z$.

Consider a Zariski dense bounded subset~$\Omega$ in~$H$. 
Remark that, for~$\omega\in\Omega$ and~$g$ in~$G$, the map~$g\cdot\omega\cdot \iota$ is the composite~$\left.\Ad_{g}\right|_\h \circ\Ad_\omega:\h\to\h\to\g$. Fix some norm on~$\h$ and on~$\g$, and consider the corresponding operator norm on~$V$. For the operator norm, 
$$\Nm{g_i\cdot\omega\cdot \iota}=\Nm{\left.\Ad_{g}\right|_\h \circ\Ad_\omega}\leq\Nm{\left.\Ad_{g_i}\right|_\h}\cdot\Nm{\Ad_\omega}= \Nm{g_i\cdot\iota}\cdot\Nm{\Ad_\omega}.$$
From hypothesis~\eqref{P2}, we know that~$\Ad_{y_i}(\iota)$ is bounded in~${V}$. As~$\Omega$ is also bounded, so is~$\Ad_\omega$ as~$\omega$ ranges through~$\Omega$.
Consequently,~$\Nm{g_i\cdot\Omega\cdot \iota}$ is uniformly bounded.
We may apply~\cite[Theorem~2]{Lemmanew} to the representation~$V$. Its conclusion uses the (pointwise) fixator of~$\Omega\cdot \iota$, which is~$Z$ by Lemma~\ref{Lemma centralisateur fixtateur}, and is normalised by~$\Omega$. This conclusion asserts that the sequence~$(y_i)_{i\geq0}$ is of class~$O(1) Z$.
\end{proof}
We will need a slight extension of this special case.
\begin{proof}[Proof of the implication~$\eqref{P2}\Leftarrow\eqref{P1}$, assuming~$ G $ is reductive.] Assume that~$ G $ is reductive.

Let us write~$ G =C\cdot D$ as an almost direct product of its center~$C$ by its derived group~$D$. Decompose correspondingly~$ g _i=c_i\cdot d_i=d_i\cdot c_i$.

Note that~\eqref{eqp1} does not depend on~$c_i$ as~$C$ is contained in~$Z$. Neither do the sequence~\eqref{eqp2}, has~$C$ is contained  in~$Z$ which fixes~$\lie{h}$, to which~$X$ belongs.

We can replace~$ g _i$ by~$d_i$. The result can be deduced from the preceding case, applied to the semisimple group~$D$, its subgroup~$( H  C)\cap D$ and the sequence~$(d_i)_{i\geq0}$.
\end{proof}

We will now reduce the general case to the preceding special case. We need this general case as we will apply it to a subquotient~$\widecheck{G}$ of our reductive group~$G$; the former may not be reductive even though the latter is.
\begin{proof}[Proof of the implication~$\eqref{P1}\Leftarrow\eqref{P2}$, in general]
Let~$U$ be the unipotent Radical of~$ G $. By~\cite[Theorem~7.1]{Mostow56}, we can decompose~$ G =M\cdot U$ as a semi product of a maximal reductive subgroup~$M$, which we choose to contain~$ H $, with~$U$.

%As~$ H $ is reductive, there is a Levi subgroup~$L$ of~$ G $ such that~$LT$ contains $ H $. (cf.~\cite[{\S} 6.8, Corrolaire~2]{BBKLie1}

Let~$\overline{G}=G/U$ be the quotient of~$ G $ by its unipotent radical.  We denote~$\overline{\lie{g}}$ the corresponding Lie algebra. Let us denote the image of the sequence~$( g _i)_{i\geq0}$ in~$\overline{G}$ with~$(\overline{g}_i)_{i\geq0}$.

Note that the quotient map from~$M$ to~$\overline{G}$ is an isogeny, and thus its differential at the neutral element identifies the Lie algebra of~$M$ with that, say~$\overline{\lie{g}}$, of~$\overline{G}$. We will denote~$\lie{h}$ the Lie algebra of~$H$ and~$\overline{\lie{h}}$ its image in~$\overline{\lie{g}}$. This is the Lie algebra of the image group~$\overline{H}$ of~$H$ in~$\overline{G}$.

As above, we consider the space~${V}:=\Hom_{\Q_S}(\lie{h},\lie{g})$, with its element~$\iota$ and we consider moreover its variant~$\overline{V}:=\Hom_{\Q_S}(\overline{\lie{h}},\overline{\lie{g}})$, both as representations of~$ G $. We denote with~$\overline{\iota}$ the identity map~$\overline{\lie{h}}\to\overline{\lie{g}}$. The representation~$\overline{V}$ is a quotient of the representation~${V}$, via a quotient map which sends~$\iota$ to~$\overline{\iota}$.
 
By hypothesis~\eqref{P2}, the sequence~$({g_i}\cdot{\iota})_{i\geq0}$ is bounded in~${V}$. It follows that~$(\overline{g_i}\cdot\bar\iota)_{i\geq0}$ is bounded in~$\overline{V}$. We apply the previous case to~$\overline{G}$, its subgroup~$ \overline{H }$, and the sequence~$(\overline{g_i}\cdot\bar\iota)_{i\geq0}$, and deduce that the sequence
\[
(\overline{g_i})_{i\geq0}\text{ is of class~}O(1)Z_{\overline{G}}(H),
\] 
where~$Z_{\overline{G}}(H)$ is the centraliser of~$H$ in~$\overline{G}$. 

By~Lemma~\ref{Lemmacentmodulo2} we deduce that, for some finite set~$A$, the sequence%ATTENTION ref: propcentmodnormal?
\[
({g_i})_{i\geq0}\text{ is of class~}O(1)U\cdot A\cdot Z,
\] 
that is: we can write~$g_i=b_i u_i a_i z_i$ where~$(b_i)_{i\geq0}$  is a bounded sequence, the~$u_i$ belong to~$U$, the~$a_i$ to~$A$ and the~$z_i$ to~$H^\prime$.
We get
\[g_i\cdot \iota=b_i\cdot u_i\cdot a_i\cdot  z_i \cdot \iota= b_i\cdot u_i\cdot a_i\cdot  \iota.\]
Thus the boundedness of~$(g_i\cdot \iota)_{i\geq 0}$ in~$V$ reduces to
that of the~$(u_i\cdot a\cdot \iota)_{i\geq 0}$ for~$a$ in~$A$. But the orbit of unipotent group in a linear representation is closed, by the Kostant-Rosenlicht theorem \cite[2.4.14]{SpringerLAG}. For each~$a$ in~$A$, the orbit map~$U/(U\cap Z) \to U\cdot a\cdot\iota$ is proper. Thus the sequence~$(u_i)_{i\geq 0}$ is actually bounded in~$U/(U\cap Z)$.
The sequence~$a_i$ is obviously bounded. Finally~$g_i=b_i u_i a_i z_i$ becomes
\[
({g_i})_{i\geq0}\text{ is of class~}O(1)(O(1)O(1)(U\cap Z))\cdot Z=O(1)Z.
\]

%Note that~$ Z$ decomposes analogously~$( Z\cap M)\cdot ( Z\cap U)$. It can be seen at the Lie algebra level
%(the characteristic is~$0$). As the representations of~$ H $ are fully reducible, we can use~\ref{}.

%Let us decompose~$ g _i=  u_i m_i$ following a Levi decomposition~$ G =M\cdot U$, where the Levi factor~$M$ identifies with~$\underline{G}$, and is assumed to contain~$ H $.
%We deduce that~$(\bar{m_i}\cdot\iota)_{i\geq0}$ is of class~$O(1) Z_{M}( H )$ where~$Z_{M}( H )$ is the centraliser of~$ H $ in~$M$. We can write~$m_i=b_i z_i$ with bounded~$b_i$ and~$z_i$ in~$Z_{M}(H)$. Thus
%$$ g _i=  u_i m_i = u_i b_i z_i = b_i v_i z_i$$
%with~$v_i:= b_i^{-1}\cdot u_i \cdot b_i$. As the unipotent radical~$U$ is normal in~$ G $, the element~$v_i$ belongs to~$U$.
%
%By hypothesis the sequence~$(\hat{g_i}\cdot\iota)_{i\geq0}$ is bounded in~$\hat{V}$. We can rewrite
%$$ g _i \cdot\iota=  b_i v_i z_i \cdot\iota = b_i v_i \cdot\iota$$
%as~$z_i$ belongs to~$Z_{M}( H )$, which is contained in~$Z_{ G }( H )$, which is the stabiliser of~$\iota$. As~$b_i$ is bounded, we deduce that
%$$ v_i \cdot\iota$$
%is bounded.
%
%But the orbit of unipotent group in a linear representation is closed. Consequently the orbit map~$U/Z_U( H )\to V$ is a closed immersion, and is thus a proper map. 
%As a consequence,~$v_i \cdot\iota$ is bounded if and only if the sequence~$v_i$ is~$O(1)Z_U( H )$. 
%
%Finally, we can write~$ g _i$ as~$b_iO(1)Z_U(H)Z_L(H)=O(1)Z_{ G }$. 
This conclude the proof of the implication~$\eqref{P1}\Leftarrow\eqref{P2}$ in the general case, and conclude the proof of~Proposition~\ref{Propobddcrit}.

\end{proof}

Let us note the following corollary, which, arguably, summarise the part of information inside Proposition~\ref{Propobddcrit} with the deepest meaning.
\begin{corollary}\label{corostabilityrichardson} The orbit
$$ G \cdot \iota $$
is closed in~${V}$. Equivalently, the map
$$ \left. G \middle/{Z}\right.\xrightarrow{g\mapsto g\cdot \iota}{V}$$
is proper.
\end{corollary}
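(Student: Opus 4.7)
The plan is to deduce the statement directly from Proposition~\ref{Propobddcrit}, after identifying the stabilizer of $\iota$ and recasting the boundedness hypotheses in the language of operator norms on $V = \Hom_{\Q_S}(\lie{h}, \lie{g})$.

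First I would record the basic setup. By Lemma~\ref{Lemma centralisateur fixtateur}, the centraliser $Z = Z_\G(H)(\Q_S)$ coincides with the fixator of $\lie{h}$ under the adjoint representation, and in particular $Z$ is exactly the stabiliser of $\iota \in V$ for the $G$-action on $V$. Consequently the orbit map
\[
\pi \colon G/Z \longrightarrow V, \qquad gZ \longmapsto g \cdot \iota
\]
is a well-defined, continuous, injective map whose image is $G \cdot \iota$. The two conclusions of the corollary will follow once I show that $\pi$ is proper, because a continuous injective proper map into a Hausdorff space is a closed embedding onto its image.

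Next I would verify properness. Fix any norm on $\lie{h}$ and on $\lie{g}$ and give $V$ the associated operator norm. Let $K \subseteq V$ be a compact subset; I want to show that $\pi^{-1}(K)$ is relatively compact in $G/Z$. Choose any sequence $(g_i Z)_{i \geq 0}$ in $\pi^{-1}(K)$; then $(g_i \cdot \iota)_{i \geq 0}$ stays in $K$, hence is bounded in $V$. Unwinding the definition, $g_i \cdot \iota$ is the linear map $X \mapsto \Ad_{g_i}(X)$ on $\lie{h}$, so boundedness in $V$ means that the operators $\Ad_{g_i}|_\lie{h}$ are uniformly bounded; this is exactly assertion \eqref{P3} of Proposition~\ref{Propobddcrit} (applied to any bounded sequence in $\lie{h}$), or equivalently its pointwise variant \eqref{P2}. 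By the implication $\eqref{P2} \Rightarrow \eqref{P1}$ of Proposition~\ref{Propobddcrit}, the sequence $(g_i)_{i \geq 0}$ is of class $O(1) \cdot Z$, so the cosets $(g_i Z)_{i \geq 0}$ form a bounded sequence in $G/Z$. As $G/Z$ is locally compact, this sequence has a convergent subsequence, which gives relative compactness of $\pi^{-1}(K)$ and hence the desired properness.

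Finally I would conclude. Since $\pi$ is a continuous proper map from the locally compact Hausdorff space $G/Z$ to the Hausdorff space $V$, it is a closed map, so its image $G \cdot \iota$ is closed in $V$; combined with injectivity, $\pi$ is a homeomorphism onto its image, which is the equivalent reformulation stated in the corollary. The only nontrivial input is Proposition~\ref{Propobddcrit} itself; I do not expect any independent obstacle, since the proof is essentially a translation between ``bounded orbit in $V$'' and ``bounded coset sequence in $G/Z$.''
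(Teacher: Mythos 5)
Your proof is correct and is precisely the derivation from Proposition~\ref{Propobddcrit} that the paper leaves implicit when it calls the statement a corollary thereof: you identify $Z$ as the stabiliser of $\iota$ via Lemma~\ref{Lemma centralisateur fixtateur}, translate boundedness of $g_i\cdot\iota$ in $V$ into condition~\eqref{P2}/\eqref{P3}, and invoke $\eqref{P2}\Rightarrow\eqref{P1}$ to get the $O(1)\cdot Z$ class, hence relative compactness of $\pi^{-1}(K)$. One small clarification worth making is that the convergent subsequence exists not because $G/Z$ is merely locally compact, but because being of class $O(1)\cdot Z$ places the cosets $g_iZ$ inside the compact set $CZ/Z$ for a single compact $C\subseteq G$; with that noted, the argument is complete.
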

Our proof, relying on~\cite{Lemmanew}, actually uses convexity properties for symmetric spaces (and Bruhat-Tits buildings), similar to the
work of Kempf-Ness \cite{KempfNess}, with a generalised
Cartan decomposition of Mostow (and an analogue for buildings). The ultrametric part of~\cite{Lemmanew}, which is the suject of~\cite{Lemmap}, does rely on Rosenlicht results.

\subsection{Centralizer and cosets} We give some lemmas about centraliser and quotient groups. This is an adaptation of arguments which can be found
for instance in~\cite[{\S}5]{EMSAnn}.

\begin{lemma}\label{Lemmacentmodulo}Let~$G$ be a group of rational points of an algebraic group~$\G$ over a field~$k$ of\footnote{It may be that the algebraic groups are smooth suffices in general characteristic.} characteristic~$0$, and let~$\H$ and~$\L$ be two algebraic subgroups of~$\G$.

 Write, after~\cite[{\S}5]{EMSAnn}
$$\Z(\H,\L)=\left\{g\in \G~\bigl|\forall h\in \H,~[g,h]\in \L\right\},$$
the ``$\pmod \L$-centraliser'' of~$\H$ in~$\G$.

\begin{enumerate}
\item We remark that~$\Z(\H,\L)$ is invariant  under the centra\-li\-ser~$\Z(\H)$ acting by right translations.
\item Assume~$\L$ is normalised by~$\H$. Then~$\Z(\H,\L)$ is invariant  under~$\L$ acting by left translations.
\item Under the latter assumption,~$\Z(\H,\L)$ is moreover a finite union of double co\-sets~$\L g\Z(\H)$. \label{centralisermodulo}
\end{enumerate}
\end{lemma}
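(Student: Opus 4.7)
My plan is to dispatch parts (1) and (2) by direct commutator computations, and then to reduce (3) to a finiteness statement for algebraic lifts, which will be the main obstacle. For (1), take $g\in\Z(\H,\L)$ and $z\in\Z(\H)$; since $z$ centralises $\H$ one has $zhz^{-1}=h$ for every $h\in\H$, so $[gz,h]=g(zhz^{-1})g^{-1}h^{-1}=[g,h]\in\L$, whence $gz\in\Z(\H,\L)$. For (2), assume $\H$ normalises $\L$ and take $l\in\L$, $g\in\Z(\H,\L)$, $h\in\H$; a direct expansion gives
\[
[lg,h]\;=\;l\,g\,h\,g^{-1}\,l^{-1}\,h^{-1}\;=\;l\cdot[g,h]\cdot(h\,l^{-1}\,h^{-1}),
\]
where $l\in\L$, $[g,h]\in\L$ by hypothesis, and $hl^{-1}h^{-1}\in\L$ by the normalisation assumption, hence $lg\in\Z(\H,\L)$.

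For (3) the plan is to encode each $g\in\Z(\H,\L)$ by the conjugation morphism it induces on $\H$. I would define a morphism of varieties $\Phi\colon\Z(\H,\L)\to\mathrm{Hom}_{\mathrm{alg}}(\H,\L\H)$ by $\Phi(g)(h)=ghg^{-1}=[g,h]\cdot h$: the image lies in $\L\H$ because $[g,h]\in\L$, $\Phi(g)$ is a group homomorphism because conjugation is, and its composition with the quotient $\L\H\twoheadrightarrow\L\H/\L\simeq\H/(\H\cap\L)$ equals the canonical projection, independently of $g$. The fibres of $\Phi$ are exactly the right $\Z(\H)$-cosets in $\Z(\H,\L)$, since $\Phi(g)=\Phi(g')$ amounts to $g^{-1}g'\in\Z(\H)$, and the left $\L$-translation on the source corresponds under $\Phi$ to $\L$-conjugation of homomorphisms, because $\Phi(lg)(h)=l\,\Phi(g)(h)\,l^{-1}$. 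Consequently $\Phi$ descends to an injection from the double-coset set $\L\backslash\Z(\H,\L)/\Z(\H)$ into the set of $\L$-conjugacy classes of algebraic homomorphisms $\H\to\L\H$ that lift the canonical map $\H\to\L\H/\L$.

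The main obstacle is then to prove that this latter set of conjugacy classes is finite. My plan is to fix the inclusion $\iota\colon\H\hookrightarrow\L\H$ as a basepoint and parametrise the other lifts by algebraic non-abelian $1$-cocycles $c\colon\H\to\L$ satisfying $c(hh')=c(h)\cdot\iota(h)\,c(h')\,\iota(h)^{-1}$, two such cocycles producing $\L$-conjugate lifts if and only if they differ by a coboundary. Then I would work along a Levi decomposition $\L=\L_r\ltimes\L_u$ (legitimate in characteristic zero): the contribution of the unipotent radical vanishes because the algebraic non-abelian $H^1(\H,\L_u)$ is trivial in characteristic zero (standard filtration and exponential argument on the central descending series of $\L_u$), which reduces the problem to the reductive quotient; there, finiteness of $\L_r$-conjugacy classes of algebraic lifts follows from a Richardson-type rigidity statement, namely that the variety of algebraic lifts is an affine $k$-variety of finite type and each $\L_r$-orbit is open in its irreducible component, so there are only finitely many. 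Combining these two steps yields the desired finiteness and hence (3).
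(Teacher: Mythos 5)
Parts (1) and (2) match the paper's argument: the same commutator identities, in the same form. Part (3) is where you and the paper diverge genuinely.

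The paper works directly with the orbit map
\[
\L\times\Z(\H)\longrightarrow V,\qquad (l,z)\longmapsto lgz,
\]
on each irreducible component~$V$ of~$\Z(\H,\L)$. Conjugating so that~$g=e$, the tangent space of~$V$ at~$e$ is shown to lie in~$\lie{l}+\lie{z}$ by decomposing~$\lie{g}=\lie{l}\oplus\lie{l}'$ into $\ad(\lie{h})$-stable pieces: if~$X=X_{\lie{l}}+X_{\lie{l}'}$ is tangent to~$\Z(\H,\L)$, the $\bmod\,\L$-centralising condition forces~$[\lie{h},X_{\lie{l}'}]\subseteq\lie{l}\cap\lie{l}'=0$, so~$X_{\lie{l}'}\in\lie{z}$. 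The closed-orbit lemma then makes each orbit open in~$V$, so~$V$ is a single orbit. Your proof instead packages~$g$ as the homomorphism~$\Phi(g)=\bigl(h\mapsto ghg^{-1}\bigr)\colon\H\to\L\H$, checks correctly that~$\Phi$ descends to an injection of double cosets into $\L$-conjugacy classes of algebraic lifts of~$\H\to\L\H/\L$, and then invokes vanishing of the non-abelian~$H^1(\H,\L_u)$ plus Richardson-type rigidity to bound those classes. This works, but it is neither simpler nor more general: the ``orbit is open'' statement you cite from Richardson is proved by exactly the same tangent-space/deformation computation the paper does by hand, and the reduction through the Levi decomposition together with the central filtration of~$\L_u$ needs a nontrivial amount of bookkeeping (compatibility of the filtration with the~$\H$-action, lifting from the reductive quotient, \latin{etc.}) that you are sketching rather than supplying. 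What the paper's route buys is self-containedness at the cost of a local Lie-algebra computation; what your route buys is a conceptual dictionary with the cohomology of crossed homomorphisms.

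One thing you should flag explicitly: both your proof and the paper's use in an essential way that~$\H$ is reductive, even though the lemma statement omits this. You use it when you assert that~$H^1(\H,\L_u)$ vanishes (false for non-reductive~$\H$), and the paper uses it when it splits~$\lie{g}=\lie{l}\oplus\lie{l}'$ into $\lie{h}$-stable summands. Without reductivity the statement is simply false: take~$\G$ the Heisenberg group,~$\H$ a one-parameter unipotent subgroup, and~$\L$ the centre. Then~$\Z(\H,\L)=\G$,~$\L\subseteq\Z(\H)$, and the double cosets~$\L g\Z(\H)$ are the translates of the two-dimensional subgroup~$\Z(\H)$, of which there are infinitely many. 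In the paper the hypothesis is invisible in the statement but present in the proof; in your write-up it is invisible in both, which is a gap you should close by adding ``$\H$ reductive'' to the hypotheses before making the cohomological vanishing claim.
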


\begin{proof}With start with the two first points, for which this is straightforward.
 
 The condition~$[h^{-1},g]=-[g,h^{-1}]\in \L$ can be rewritten as~
\begin{equation}\label{eq1 Z}
ghg^{-1}\L=h\L.
\end{equation}
 The element~$h$ is ``centralised by~$g$ mod~$\L$'' (if~$\L$
is normal in~$\G$, the cosets~$g\L$ and~$h\L$ commute in~$\G/\L$).

If~$z$ belongs to~$\Z(\H)$, then, for any~$h$ in~$\H$
\begin{equation}\label{eq2 Z}
zhz^{-1}=h.
\end{equation}
Invoking~\eqref{eq2 Z} and then~\eqref{eq1 Z}, we get
$${gz}h({gz})^{-1}\L={g}(zhz^{-1})g^{-1}\L=g·h·g^{-1}\L=h\L$$
for any~$g$ in~$\Z(\H,\L)$. Hence the first remark in the conclusion.

If~$l$ belongs to~$\L$, then obviously~$l^{-1}\L=\L$. If~$h$ is in~$\H$, which is assumed to normalise~$\L$, we have~$h^{-1}lh\in\L$. For any~$g$ in~$\Z(\H,\L)$
$${lg}h(lg)^{-1}\L={l}(ghg^{-1})(l^{-1}\L)=l({g}hg^{-1}\L)=l(h\L)=h\cdot (h^{-1}lh)\L=h\L.$$
Hence~$l\cdot g$ belongs to~$\Z(\H,\L)$ as soon as each of the~$h^{-1}lh$ belong to~$\L$. This establishes the second statement of the conclusion.

We now prove the third point. The algebraic variety~$\Z(\H,\L)$ is made of finitely many irreducible components. Consider an irreducible component~$V$ of~$\Z(\H,\L)$. We will show that for any point~$g$ in~$V$, the map
$$\L\times \Z(\H)\to \G:(l,z)\mapsto lgz$$
is submersive at the origin. By the ``closed orbit lemma'', this orbit, with reduced structure is smooth and locally closed. By submersivity its tangent space is that of~$V$. Hence it is an open subvariety: all the orbits of~$\L\times \Z(\H)$ in~$V$ have non empty interior. As~$V$ is irreductible, two such orbits meets at some point: there can be only one orbit.

We prove the submersivity. We follow an argumentation from~\cite[{\S}5]{EMSAnn}. Without loss of generality, possibly substituting~$\H$ with~$g^{-1}\H g$, we may assume~$g$ is the neutral element. We work inside the tangent space~$\lie{g}$ at the neutral element. We let~$\lie{l}$ and~$\lie{h}$ denote the Lie algebras of~$L$ and~$H$ respectively. As~$H$ normalises~$L$, we have
\[
[\lie{h},\lie{l}]\subseteq\lie{l}.
\]
As~$H$ is reductive, the adjoint action of~$\lie{h}$ is completely reducible. We use that there is a~$\lie{h}$ stable supplementary space~$\lie{l}'$: we have
\[
[\lie{h},\lie{l}']\subseteq\lie{l}'\text{ and }\lie{g}=\lie{l}\oplus\lie{l}'.
\]
Let~$X$ be a tangent vector to~$Z(\H,\L)$ at the neutral element. We decompose accordingly
\[
X=X_\ell+X_{\lie{l}'}\text{ with }X_{\lie{l}}\in\lie{l}\text{ and }X_{\lie{l}'}\in\lie{l}'.
\]
We know use that~$X$ belongs to~$Z(\H,\L)$ (viewed as a point defined over the dual numbers~$k[\eps]/(\eps)^2$).
For a tangent vector~$Y\in\lie{h}$ of~$H$, we have that~$YXY^{-1}=\Ad_Y(X)$ belongs to~$L$. As this is a tangent vector at the origin (specialising to~$\eps=0$), we have~$\Ad_Y(X)\in\lie{l}$. The adjoint action~$\Ad$ of~$H$, at the level of Lie algebras, is given by the adjoint representation~$\mathbf{ad}$ of~$\lie{h}$, which gives that~$\Ad_Y(X)$ is represented by~$\mathbf{ad}_Y(X)=[Y,X]$. We have, for arbitrary~$Y$ in~$\lie{h}$,
\[
[Y,X]\in\lie{l}.
\]
As~$[Y,X_{\lie{l}}]\in\lie{l}$ already, we have
\[
[Y,X_{\lie{l}'}]=[Y,X]-[Y,X_{\lie{l}}]\in\lie{l}.
\]
But we have~$[Y,X_{\lie{l}}']\in\lie{l}'$ too. Hence
\[
[Y,X_{\lie{l}'}]\in\lie{l}\cap\lie{l}'=\{0\}.
\]
That is~$X_{\lie{l}'}$ commutes with~$\lie{h}$. The commutant of~$\lie{h}$ in~$\lie{g}$ is
the Lie algebra~$\lie{z}$ of~$\Z(\H)$. We have
\[
X_{\lie{l}'}\in\lie{z}.
\]
Finally
\[
X=X_{\lie{l}}+X_{\lie{l}'}\in\lie{l}+\lie{z}.
\]
On the other hand the differential of the action of~$\L\times\Z(\H)$ is given by the sum map
\[
\lie{l}\times\lie{z}\to\lie{l}+\lie{z}\subseteq\lie{g}.
\]
It can be checked on~$\lie{l}$ and~$\lie{z}$ individually, that is for the action of~$\L$ and~$\Z(\H)$ individually,
in which case this orbit map is the embedding of the respective group. The image of this differential is the tangent space of~$V$
at the neutral element. In other words we have a submersion.
\end{proof}
\begin{lemma}\label{Lemmacentmodulo2}
In Lemma~\ref{Lemmacentmodulo} assume that~$k$ is a local field of characteristic~$0$. Then, in the last point
there is a finite set~$A$ such that~$\Z(\H,\L)(k)=Z_{\G(k)}(\H(k))A\L(k)$.
\end{lemma}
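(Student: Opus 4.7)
The plan is to pass from the algebraic decomposition of $\Z(\H,\L)$ given in Lemma~\ref{Lemmacentmodulo}\,\eqref{centralisermodulo} to a statement about $k$-points, using Galois cohomology as the main descent tool. First I would identify $Z_{\G(k)}(\H(k))$ with $\Z(\H)(k)$: since we are in characteristic $0$, the algebraic group $\H$ is smooth and Zariski connected, and $\H(k)$ is Zariski dense in $\H$ (as $k$ is infinite), so the centraliser of $\H(k)$ in $\G(k)$ coincides with $\Z(\H)(k)$. Thus the desired statement is equivalent to $\Z(\H,\L)(k) = \L(k)\,A\,\Z(\H)(k)$ (up to inverting elements, using that $\Z(\H,\L)$ is stable under inversion, which follows from the symmetry $[g,h]\in\L\iff [g^{-1},h]\in\L$ granted by $\H$ normalising $\L$).

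Next, I would invoke Lemma~\ref{Lemmacentmodulo}\,\eqref{centralisermodulo} to obtain finitely many elements $g_1,\dots,g_n\in\Z(\H,\L)(\bar k)$ giving an algebraic decomposition
\[
\Z(\H,\L) \;=\; \bigcup_{i=1}^{n} \L\,g_i\,\Z(\H),
\]
as a disjoint union of $(\L,\Z(\H))$-orbits. For each $i$, the orbit map
\[
\phi_i\colon \L\times\Z(\H)\longrightarrow \L g_i\Z(\H),\qquad (l,z)\longmapsto l g_i z,
\]
is surjective and smooth (its fibres form a torsor under the stabiliser $S_i\subseteq \L\times\Z(\H)$ of $g_i$, which is a linear algebraic $k'$-group, where $k'$ is a finite extension of $k$ over which $g_i$ is defined).

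The key step will then be the descent of the $\bar k$-orbit decomposition to $k$-points. The image of $\phi_i$ on $k$-points is $\L(k)\, g_i\,\Z(\H)(k)$, but $(\L g_i\Z(\H))(k)$ can be strictly larger; by the standard long exact sequence of non-abelian Galois cohomology, the quotient
\[
\left. (\L g_i \Z(\H))(k) \;\middle/\; \L(k)\,g_i\,\Z(\H)(k) \right.
\]
injects into $H^1\bigl(\mathrm{Gal}(\bar k/k),\,S_i\bigr)$. The main input I need is the finiteness of $H^1(k,S_i)$; this is the classical theorem of Borel--Serre, valid for any linear algebraic group over a local field of characteristic zero (real or $p$-adic). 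Granted this, one picks finitely many $k$-rational coset representatives $a_{i,1},\dots,a_{i,m_i}\in(\L g_i \Z(\H))(k)$, and setting $A=\{a_{i,j}\}_{i,j}$, one obtains
\[
\Z(\H,\L)(k) \;=\; \bigcup_{i,j} \L(k)\, a_{i,j}\, \Z(\H)(k) \;=\; \L(k)\,A\,\Z(\H)(k),
\]
which, upon identifying $\Z(\H)(k)=Z_{\G(k)}(\H(k))$ and using inversion as indicated, yields the stated form $\Z(\H,\L)(k)=Z_{\G(k)}(\H(k))\,A\,\L(k)$.

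The main technical obstacle will be ensuring that the representatives $g_i$ can be chosen (or at least that each algebraic component is accessed) so that the Galois cohomology argument applies cleanly, and then invoking the Borel--Serre finiteness of $H^1(k,S)$ in enough generality — in particular, the stabilisers $S_i$ need not be reductive nor connected. In characteristic zero this is handled by the standard d\'evissage: the unipotent radical contributes trivial $H^1$ (as $H^1(k,\mathbf G_a)=0$ and successive extensions are controlled by vanishing of higher cohomology of vector groups), and the reductive quotient is covered directly by Borel--Serre. The rest of the argument is formal bookkeeping of double cosets.
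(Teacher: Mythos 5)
Your approach is, in substance, the paper's: the published proof reduces to each irreducible component of $\Z(\H,\L)$, which by Lemma~\ref{Lemmacentmodulo}\,\eqref{centralisermodulo} is a single $\L\times\Z(\H)$-homogeneous variety, and then cites Platonov--Rapinchuk \S6.4 (Proposition~6.13 and Corollary~2) — precisely the finiteness of rational orbits on the $k$-points of a homogeneous space over a local field of characteristic zero, via Borel--Serre finiteness of $H^1$. You unwind that citation (twisting, long exact sequence in non-abelian $H^1$, d\'evissage to the reductive quotient), so the mathematical content coincides and you are merely re-deriving the quoted result.

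One step of yours does not hold up as written: $\Z(\H,\L)$ is \emph{not} in general closed under inversion. From $[g,h]\in\L$ for all $h\in\H$ one computes $g\,[g^{-1},h]\,g^{-1}=[g,h]^{-1}$, hence $[g^{-1},h]\in g^{-1}\L g$ rather than $\L$, and this gives nothing unless $g$ happens to normalise $\L$. The factor-order discrepancy you are trying to resolve by inversion — the statement writes $Z_{\G(k)}(\H(k))\,A\,\L(k)$, while the orbit decomposition of Lemma~\ref{Lemmacentmodulo}\,\eqref{centralisermodulo} is into cosets $\L g\,\Z(\H)$ — is a slip in the statement, not a gap in the argument: the $k$-points decompose into finitely many cosets of the form $\L(k)\,a\,\Z(\H)(k)$, and that is exactly the form used downstream (the proof of Proposition~\ref{Propobddcrit} writes $(g_i)$ of class $O(1)\cdot U\cdot A\cdot Z$, with the $\L$-factor on the left). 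So you should simply keep the order coming from the orbit decomposition and drop the inversion step. A second, more cosmetic point: a homogeneous $k$-variety $\L g_i\Z(\H)$ need not contain a $k$-rational base point $g_i$, so one should run the cohomological argument on the $k$-form of the variety itself (as Platonov--Rapinchuk's Corollary~2 does) rather than fixing $g_i\in\Z(\H,\L)(\bar k)$ and a stabiliser $S_i$ over a finite extension; your eventual choice of $k$-rational representatives $a_{i,j}$ quietly assumes what must first be established. Finally, your identification $Z_{\G(k)}(\H(k))=\Z(\H)(k)$ is correct for Zariski-connected $\H$, since $\H(k)$ is then Zariski dense over the infinite field $k$.
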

\begin{proof} It suffices to work with each irreducible component of~$\Z(\H,\L)$ individually, which have been shown to be an~$\L\times\Z(\H)$ orbit. To this orbit, an homogeneous space of~$\L\times\Z(\H)$, we can apply~\cite[{\S}6.4: Proposition~6.13 p.\,316; Corollary~2 {\S}6.4 p.\,319]{PR}.
\end{proof}

\subsection{Trivial dynamics of translates}
Let~$\mu$ be a probability measure on a locally compact group~$G$ with support~$\Omega$, and let~$G$ act continuously on a locally compact Radon space~$X$, with distinguished point~$x$. We denote~$\mu_\Omega=\mu\convolution\delta_x$ the image probability measure by the orbit map at~$x$. We denote~$Z$ the centraliser of~$\Omega$ in~$G$, and consider the family of translates
\[(g\mu_\Omega)_{g\in G}\]
in the space of probabilities on~$X$.
\begin{lemma}\label{lemme trivial dyn} Let~$b_i$ be a bounded sequence in~$G$. A sequence
\begin{equation}\label{suite lemme trivial}
(\mu_i)_{i\geq 0}=(b_iz_i\mu_\Omega)_{i\geq 0},
\end{equation}
with~$z_i$ in~$Z$,
\begin{enumerate}
\item \label{trivial dyn 1}	is vaguely converging to~$0$ if and only if the sequence~$x_i=z_i\cdot x$ is diverging to infinity.
\item \label{trivial dyn 2} is a tight family if and only if the sequence~$x_i=z_i\cdot x$ is relatively compact.
\end{enumerate}
If~$(b_i)_{i\geq0}$ converges to~$g_\infty$ in~$G$ and~$(x_i)_{i\geq0}$ to~$x_\infty$ then~$(z_i\mu_\Omega)_{i\geq 0}$ 
is tightly convergent, with limit
\[
g_\infty\cdot \mu\convolution \delta_{x_\infty}: f\mapsto \int_{x\in X} f(x)\quad g_\infty\cdot\mu\convolution \delta_{x_\infty}=\int_{\omega \in G} f(g_\infty\omega\cdot x)\quad\mu,
\]
the direct image of~$\mu$ by the orbit map at~$x_\infty$, with support~$\overline{\Omega\cdot x_\infty}$.
\end{lemma}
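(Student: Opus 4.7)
The strategy is to observe that the centraliser hypothesis turns the sequence into a very explicit pushforward, after which all three assertions reduce to elementary properness statements about the orbit map.

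The key remark I would start from is that for each $\omega$ in the support $\Omega$ of $\mu$ and each $z_i \in Z$ one has $z_i\omega = \omega z_i$, hence $b_i z_i\omega\cdot x = b_i\omega\cdot x_i$, where $x_i = z_i\cdot x$. Consequently, writing $\psi_i\colon G\to X$, $\omega\mapsto b_i\omega\cdot x_i$, one has the identity
\[
\mu_i = b_i z_i\mu_\Omega = (\psi_i)_\star\mu,
\]
so that $\mu_i$ is the image of $\mu$ under a single continuous orbit map, and its support is contained in $b_i\cdot\overline{\Omega}\cdot x_i$. Note that $\overline{\Omega}$ is compact because $\mu$ is bounded and $\Omega$ is bounded in the setting of the lemma (which is how it is used in the body of the text).

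For~\eqref{trivial dyn 1}, fix a compact $C\subseteq G$ containing all the $b_i$. If $x_i\to\infty$ in $X$, then for any compact $K\subseteq X$ the preimage $\{\omega\in G\mid b_i\omega\cdot x_i\in K\}$ is contained in $\overline{\Omega}\cap(\omega^{-1}b_i^{-1}\cdot K \text{ for some }\omega\in\overline\Omega)$; equivalently $\mu_i(K)>0$ forces $x_i\in \overline{\Omega}^{-1}C^{-1}\cdot K$, a fixed compact subset of $X$ (continuous image of the compact $\overline{\Omega}^{-1}\times C^{-1}\times K$). Hence $\mu_i(K)=0$ for $i$ large, whence $\mu_i\to 0$ vaguely. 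Conversely, if $(x_i)$ admits a bounded subsequence then, after further extraction, one has $b_i\to g_\infty$ and $x_i\to x_\infty$ by compactness; the third assertion of the lemma (proved below) will then give a nonzero tight limit along this subsequence, contradicting $\mu_i\to 0$ vaguely.

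For~\eqref{trivial dyn 2}, if $(x_i)$ is relatively compact then $\overline{\{x_i\}}$ is compact, so the common support locus $C\cdot \overline{\Omega}\cdot\overline{\{x_i\mid i\geq 0\}}$ is a single compact subset of $X$ containing every $\mathrm{supp}(\mu_i)$; the family $(\mu_i)$ is therefore trivially tight. Conversely, if $(x_i)$ is not relatively compact then some subsequence diverges to infinity, and~\eqref{trivial dyn 1} gives vague convergence to $0$ along that subsequence, which rules out tightness.

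Finally, assume $b_i\to g_\infty$ and $x_i\to x_\infty$. The orbit map $G\times X\to X$, $(\omega,y)\mapsto \omega\cdot y$ is continuous and restricts to a map $\overline{\Omega}\times\overline{\{x_i\}\cup\{x_\infty\}}\to X$ that is uniformly continuous on this compact set. Therefore, for any bounded continuous test function $f$ on $X$,
\[
\int f\,d\mu_i \;=\; \int_{\omega\in G} f\bigl(b_i\omega\cdot x_i\bigr)\,d\mu(\omega)
\;\longrightarrow\;\int_{\omega\in G} f\bigl(g_\infty\omega\cdot x_\infty\bigr)\,d\mu(\omega)
\]
by dominated convergence (using that $\overline{\Omega}$ is compact and $f\circ\psi_i$ converges uniformly on $\overline{\Omega}$ to $\omega\mapsto f(g_\infty\omega\cdot x_\infty)$). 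This exhibits the limit as $g_\infty\cdot\mu\convolution\delta_{x_\infty}$; since all supports are contained in a common compact as in~\eqref{trivial dyn 2}, the convergence is in fact tight. The support of the limit is visibly $g_\infty\cdot\overline{\Omega\cdot x_\infty}$, as claimed. The only mildly delicate point is the uniform-continuity step, which is routine once one fixes the compact set on which everything lives; all the rest is direct unwinding of the centraliser identity $z_i\omega = \omega z_i$.
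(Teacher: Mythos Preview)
Your proof is correct and follows essentially the same route as the paper: the centraliser identity $z_i\omega=\omega z_i$ reduces everything to the orbit map $\omega\mapsto b_i\omega\cdot x_i$, after which (1) is a properness argument with the compact $\overline{\Omega}^{-1}C^{-1}K$, the converse implications follow by contraposition from each other and from the final assertion, and the limit formula is dominated/uniform convergence on the compact $\overline{\Omega}$. Two minor remarks: the paper also treats the case where $\Omega$ is not assumed compact for part~(1) (via an $\epsilon$-truncation $\mu\!\restriction_C$ and strong approximation), though it explicitly notes this is not needed for the main result and, like you, drops this generality for the final assertion; and your proof of the ``if'' direction of~(2) is a bit more direct than the paper's, which argues via ultrafilters and the final assertion rather than simply observing that all supports sit in the single compact $C\cdot\overline{\Omega}\cdot\overline{\{x_i\}}$.
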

We start by proving the ``if'' part of the two numbered statements. We start with first statement.
\begin{proof} We first note that if~$\mu$ has compact support and~$z_i\cdot x$ diverges to the infinity, then, for any compact subset~$K$ of~$X$, 
\[
\Supp(b_iz_i\mu_\Omega)=b_iz_i\Omega\cdot x=b_i\Omega\cdot z_ix
\]
will, for~$i$ big enough, be disjoint from~$K$.
\begin{proof}Let~$U$ be a compact set containing~$\{b_i|i\geq 0\}$.
 We consider the compact
\[C=\Omega^{-1}U^{-1}\cdot K.\]
By hypothesis, for~$i$ big enough, $z_ix$ is not in~$C$. Let us write~$b_i\omega z_ix$ an arbitrary element of~$\Supp(b_iz_i\mu_\Omega)$. We have
\[
\omega^{-1}{b_i}^{-1}K\subseteq C\not\ni z_ix
\]
from which we deduce~$b_i\omega z_ix\notin K$.
\end{proof}
It follows that~$z_i\mu_\Omega$ converges vaguely to~$0$, hence does not converge to a probability measure.

Let us deduce the general case, in which the support~$\Omega$ is not assumed to be compact. (This case is not needed for our main result)
For general~$\Omega$, let~$C$ be a big enough compact subset of~$G$ so that~$\Omega\cap C$ is Zariski dense in~$\Omega$. Then the centraliser of~$\Omega$ and of~$\Omega\cap C$ --- which is the support of~$\mu\restriction_C$ --- are the same. From previous argumentation, we get that~$\mu_{\Omega\cap C}:=z_i\mu\restriction_{C}\convolution\delta_x$ converges vaguely to~$0$. But, as~$C$ gets bigger,~$\mu\restriction_{C}$ converges strongly to~$\mu$. We have, for the strong norm
\[
\Nm{z_i\mu_\Omega-z_i\mu_{\Omega\cap C}}=\Nm{\mu_\Omega-\mu_{\Omega\cap C}}=\mu(\Omega\smallsetminus C)=o_{C\to G}(1).
\]
Let~$\mu_\infty$ be a limit measure of the sequence~$(z_i\mu_\Omega)_{i\geq 0}$. 
We have, for a test function~$f$, along the corresponding extracted subsequence,
\[
\mu_\infty(f)=z_i\mu_{\Omega} + o_{i\to\infty}(1)=z_i\mu_{\Omega\cap C}(f)+o_{C\to G}(1)\Nm{f}_\infty + o_{i\to\infty}(1).
\]
But we proved
\[
z_i\mu_{\Omega\cap C}(f)=0+o_{i\to\infty}(1).
\]
Finally~$\Nm{\mu_\infty}\leq o_{C\to G}(1)$, hence~$\mu_\infty=0$.

\end{proof}
We know pass to the ``if'' part of statement~\eqref{trivial dyn 2}.
\begin{proof} As~$(b_i)_{i\geq0}$ is bounded in~$G$ and~$(x_i)_{i\geq0}$ in~$X$, these sequences converge to some~$g_\infty$ in~$G$ and~$x_\infty$ in~$X$ along any ultrafilter of the integers. By the concluding assertion of the Lemma, the sequence~\eqref{suite lemme trivial} of measures converges to a probability measure along this ultrafilter. We proved that the sequence of measures converges in the space of probabilities along any ultrafilter of the integers. This is one characterisation that the sequence~\ref{suite lemme trivial} of measures is relatively compact in the space of probabilities. This is the definition of a tight family.
\end{proof}
We turn to the ``only if'' statements. They are implied by the ``if'' part of the other numbered assertion. We start with assertion~\eqref{trivial dyn 1}.
\begin{proof} If the sequence~$(x_i)_{i\geq0}$ is not divergent we may extract a bounded subsequence. As we have proved, that implies that the corresponding subsequence of~$(\mu_i)_{i\geq0}$ is relatively compact in the space of probabilities, hence there is a convergent subsequence to a probability, that is to a non zero measure. The original sequence~$(\mu_i)_{i\geq0}$ cannot converge to~$0$.
\end{proof}
We now finish the converse implication in assertion~\eqref{trivial dyn 2}.
\begin{proof}If the sequence~$(x_i)_{i\geq0}$ is not bounded then we may extract a divergent subsequence. As we have proved, that implies that the corresponding subsequence of~$(\mu_i)_{i\geq0}$ converges to~$0$. It cannot be bounded in the space of probabilities, hence neither can the original sequence.
\end{proof}
We still have to prove the concluding assertion. We will only cover the compact support case.
\begin{proof}What is to prove is that for any (bounded continuous) test function~$f:X\to\R$, we have the convergence
\[
\int_\Omega f(b_i\omega z_ix)\xrightarrow[i\to\infty]{} \int_\Omega f(g_\infty\omega x_\infty).
\]
We define the following functions~$\Omega\to\R$
\[
f_i:\omega\mapsto f(b_i\omega z_ix)\quad f_\infty:\omega\mapsto f(g_\infty\omega x_\infty).
\]
We note that we have simple convergence of~$(f_i)_{i\geq0}$ to~$f_\infty$, hence uniform convergence over compacts.
For~$\eps>0$ let~$K$ be a big enough compact of~$G$ so that~$\mu(\Omega\smallsetminus K)\leq\eps$.
We decompose
\[
\mu(f_i-f_\infty)=\mu\restriction_K(f_i-f_\infty)+\mu\restriction_{\Omega\smallsetminus K}(f_i-f_\infty)
\]
where the latter term is absolutely bounded by
\[
\abs{\mu\restriction_{\Omega\smallsetminus K}(f)}\leq\Nm{f}\cdot\Nm{\mu\restriction_{\Omega\smallsetminus K}}\leq\Nm{f}\cdot\eps.
\]
and the first term by
\[
\abs{\mu\restriction_K(f_i-f_\infty)}\leq\mu(K)\Nm{(f_i-f_\infty)\restriction_K)}=o_{i\to\infty}(1).
\]
Combining the bounds, we get
\[
\abs{\mu(f_i-f_\infty)}\leq\eps+o_{i\to\infty}(1)\quad\text{ hence }\limsup\abs{\mu(f_i-f_\infty)}\leq \eps
\]
and as~$\eps>0$ was arbitrary, we get~$\limsup\abs{\mu(f_i-f_\infty)}\leq0$, and, as absolute values are positive,
\[\lim\abs{\mu(f_i-f_\infty)}=0.\]
This translates into the desired convergence.
\end{proof}

\section{Linearisation. A Review in the $S$-arithmetic setting}\label{AppA}
\emph{We use the notations from section~\ref{RatnerNotations} (singular sets) and section~\ref{subsection linearisation} (linearisation).}
\vspace{.5em}

The linearisation method of Dani and Margulis is the second essential ingredient to our proof. Specifically, our proof (we refer to equation~\ref{eq0}) relies on an $S$-arithmetic version of \cite[Proposition~3.13]{EMSAnn}. In this appendix, we prove this analogue, namely Proposition~\ref{AProp313}.

The proof we present follows the original proof of \cite{EMSAnn}, but makes use of simplifications introduced in \cite{KT} using the Besicovich covering property and $(C,\alpha)$-good functions. With these simplifications, the multidimensional case is treated e\-qual\-ly to the one-dimensional case.

\subsection{Double fibration of $G/\Gamma_N$} We defined the vector~$p_L$ of~$V$ in section~\ref{subsection linearisation}.
The fixator of~$p_L$ is~$N$ and we denote~$\Gamma_N=\Gamma\cap N$.

We have a double fibration of $G/\Gamma_N$ arising on the one hand from the natural projection
\(
\phi: G/\Gamma_N\to G/\Gamma,
\) and on the other hand from
\begin{equation}\label{orbit map eta}
\text{the orbit map $\eta_L:G\to V$ given by $g\mapsto g\cdot p_L$}
\end{equation}
(see~\ref{subsection linearisation}), which factors through a map $\overline{\eta}_L$ from $G/\Gamma_N$. 

\emph{These two maps induce a correspondence~$G/\Gamma\xleftarrow{\phi} G/\Gamma_N\to V$ from~$G/\Gamma$ to~$V$ which allows to translate dynamical questions on~$G/\Gamma$ into problems in the linear space~$V$.}

The orbit $\Gamma\cdot p_L$ is discrete by Lemma~\ref{Gorbit discrete}.
There are two important consequences for us. Firstly, the product map
\[\phi\times\overline{\eta}_L:G/\Gamma_N\to G/\Gamma\times V\] 
is proper. Secondly, for any compact set $E$ of $V$, the formula
\[g\Gamma\mapsto\#\left(\stackrel{-1}{\phi}(g\Gamma)\cap\stackrel{-1}{\overline{\eta}_L}(E)\right)\] 
takes finite values and defines a  counting func\-tion~$\chi_E:G/\Gamma\to\Z_{\geq 0}$. As the stabiliser of~$p_L$ in~$\Gamma$ is~$\Gamma_N=N\cap \Gamma$, we have the alternative formula
\[
\chi_E(g\Gamma)=\#\left(g\Gamma\cdot p_L\cap E\right).
\]

We shall make use of the following.
\begin{lemma}\label{upper semicontinuity} The function~$\chi_E$
is upper semi-continuous.
\end{lemma}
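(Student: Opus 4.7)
Since $\chi_E$ takes values in $\Z_{\geq 0}$, upper semi-continuity amounts to showing that, for every integer $n \geq 0$, the sublevel set $\{x \in G/\Gamma : \chi_E(x) \geq n\}$ is closed. The plan is therefore to take a convergent sequence $x_k \to x_\infty$ in $G/\Gamma$ with $\chi_E(x_k) \geq n$ and to produce $n$ distinct points of $x_\infty \Gamma \cdot p_L$ lying in $E$.

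First I would lift. Since the quotient map $G \to G/\Gamma$ is a local homeomorphism ($\Gamma$ being discrete), I can choose any lift $g_\infty$ of $x_\infty$ and then, for $k$ large, lift $x_k$ to $g_k \in G$ in such a way that $g_k \to g_\infty$. By hypothesis, for each such $k$ there exist elements $\gamma_{k,1}, \ldots, \gamma_{k,n}$ of $\Gamma$ (unique modulo $\Gamma_N$) whose images $v_{k,j} := g_k \gamma_{k,j} \cdot p_L$ are $n$ pairwise distinct elements of $E$.

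The key step will be to extract subsequences along which the vectors $\gamma_{k,j}\cdot p_L \in V$ are eventually constant. This is where the discreteness input from Lemma~\ref{Gorbit discrete} enters: the identity $\gamma_{k,j}\cdot p_L = g_k^{-1}\cdot v_{k,j}$ expresses $\gamma_{k,j}\cdot p_L$ as a product of the bounded sequence $(g_k^{-1})$ (which converges to $g_\infty^{-1}$) and the bounded sequence $(v_{k,j})$ (contained in the compact $E$). Hence $\{\gamma_{k,j}\cdot p_L : k \geq 0\}$ is a bounded subset of the discrete orbit $\Gamma \cdot p_L$, and therefore finite. A diagonal extraction over $j = 1,\ldots, n$ then yields a subsequence along which $\gamma_{k,j}\cdot p_L = w_j$ is constant in $k$ for each $j$.

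To conclude, I would observe that the $w_j$ remain pairwise distinct (they are distinct for every fixed $k$ before extraction) and that $v_{k,j} = g_k \cdot w_j \to g_\infty \cdot w_j =: v_{\infty,j}$; closedness of $E$ gives $v_{\infty,j} \in E$, and injectivity of left multiplication by $g_\infty$ guarantees the $v_{\infty,j}$ are pairwise distinct points of $g_\infty \Gamma \cdot p_L \cap E$. Thus $\chi_E(x_\infty) \geq n$, as required. I expect the only delicate point to be the distinctness after passing to subsequences, which is exactly what the discreteness of $\Gamma \cdot p_L$ delivers; once that is in hand, the rest is a standard compactness argument.
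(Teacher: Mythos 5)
Your proof is correct, but its structure differs from the paper's. You argue sequentially via the closed-superlevel-set characterisation: given $x_k \to x_\infty$ with $\chi_E(x_k) \geq n$, you lift, observe that the relevant lattice vectors $\gamma_{k,j} \cdot p_L = g_k^{-1} \cdot v_{k,j}$ stay in a compact subset of the discrete orbit $\Gamma \cdot p_L$ and are therefore drawn from a finite set, and then extract so that each coordinate stabilises; closedness of $E$ and injectivity of left translation finish the job. The paper instead works locally around a lift $g$ (replacing $E$ by $g^{-1}E$ so that $g$ becomes the neutral element), fixes a compact neighbourhood $U$, and uses the finiteness of $F = U^{-1}E \cap \Gamma\cdot p_L$ to write $\chi_E(u\Gamma) = \sum_{f \in F} 1_E(u \cdot f)$ as a finite sum of upper semicontinuous indicator functions, from which upper semicontinuity is immediate. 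Both arguments rest on exactly the same two ingredients — discreteness of $\Gamma \cdot p_L$ (Lemma~\ref{Gorbit discrete}) plus compactness of $E$ and a compact lift neighbourhood — and differ mainly in packaging: the paper's finite-sum formula is more structural and avoids the diagonal extraction, while your version makes the role of the closed-superlevel characterisation explicit. One minor inaccuracy worth flagging: the parenthetical "(unique modulo $\Gamma_N$)" is not needed, and as stated the $\gamma_{k,j}$ are only unique modulo the full point stabiliser $\Gamma \cap N$; nothing in the argument depends on this.
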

More specifically we will use its immediate corollary.
\begin{corollary}\label{coro upper semicontinuity}
The locus where~$\chi_E<2$ is an open subset.
\end{corollary}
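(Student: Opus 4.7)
The goal is to show that for every integer $n$, the superlevel set $S_n := \{g\Gamma \in G/\Gamma : \chi_E(g\Gamma) \geq n\}$ is closed, equivalently that $\{\chi_E < n\}$ is open. The plan is to use the discreteness of $\Gamma \cdot p_L$ in $V$ (Lemma~\ref{Gorbit discrete}), the compactness of $E$, and the continuity of the linear action of $G$ on $V$.

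Fix a point $g_0 \Gamma$ with $\chi_E(g_0\Gamma) = m < n$; I want to find a neighbourhood of $g_0 \Gamma$ on which $\chi_E \leq m$. Lift to $g_0 \in G$ and rewrite
\[
\chi_E(g\Gamma) = \#\bigl(g\Gamma \cdot p_L \cap E\bigr) = \#\bigl(\Gamma\cdot p_L \cap g^{-1}E\bigr),
\]
which is independent of the choice of representative because $\Gamma$ acts on $\Gamma\cdot p_L$. Pick a relatively compact neighbourhood $U$ of $g_0$ and set $C := \overline{U}^{-1}\cdot E$, which is a compact subset of $V$ containing $g^{-1}E$ for every $g \in U$. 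By discreteness of $\Gamma\cdot p_L$ and compactness of $C$, the intersection $\Gamma\cdot p_L \cap C$ is a finite set, say $\{v_1, \ldots, v_N\}$.

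Among these finitely many points, exactly $m$ lie in $g_0^{-1}E$; relabel so that $v_1, \ldots, v_m \in g_0^{-1}E$ and $v_{m+1}, \ldots, v_N \notin g_0^{-1}E$. Since $g_0^{-1}E$ is compact and the remaining $v_j$'s form a finite set disjoint from it, each $v_j$ with $j>m$ has a positive distance from $g_0^{-1}E$; let $\delta > 0$ be the minimum such distance (or $+\infty$ if $N=m$). By continuity of the map $g \mapsto g^{-1}E$ for the Hausdorff topology on compact subsets of $V$, there is a neighbourhood $U' \subseteq U$ of $g_0$ on which $g^{-1}E$ lies within Hausdorff distance $<\delta$ of $g_0^{-1}E$. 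Then for $g \in U'$, none of the points $v_{m+1}, \ldots, v_N$ can lie in $g^{-1}E$.

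Consequently, for $g \in U'$,
\[
\Gamma \cdot p_L \cap g^{-1}E \;\subseteq\; \Gamma\cdot p_L \cap C \;=\; \{v_1,\ldots, v_N\},
\]
and the intersection avoids $v_{m+1}, \ldots, v_N$, hence has at most $m$ elements. Thus $\chi_E(g\Gamma) \leq m < n$ for every $g \in U'$, which exhibits the required open neighbourhood. The only mildly delicate step is the Hausdorff-continuity of $g \mapsto g^{-1}E$, but this is immediate from the joint continuity of the $G$-action on $V$ and compactness of $E$: any neighbourhood of $g_0^{-1}E$ in $V$ contains $g^{-1}E$ for $g$ sufficiently close to $g_0$. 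This completes the argument.
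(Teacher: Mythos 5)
Your proof is correct and takes essentially the same approach as the paper's: both reduce to a relatively compact neighbourhood $U$ of a lift $g_0$, use discreteness of $\Gamma\cdot p_L$ together with compactness of $\overline{U}^{-1}E$ to get a finite set of candidate lattice points, and then use closedness of $E$ and continuity of the $G$-action to rule out the extraneous ones on a possibly smaller neighbourhood. The paper packages this as upper semi-continuity by writing $\chi_E$ locally as a finite sum of indicator functions of a closed set composed with continuous orbit maps, whereas you argue directly with distances and Hausdorff continuity of $g\mapsto g^{-1}E$; the underlying mechanism is identical.
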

We prove the Lemma.
\begin{proof} This is a local property. We can work locally in a neighbourhood of~$g\Gamma$. 
As~$G\to G/\Gamma$ is a local homeomorphism, we may pick this neighbourhood of the form~$U\Gamma/\Gamma$
for a neighbourhood~$U$ of~$g$ in~$G$. As~$G$ is locally compact, we may even assume~$U$ is compact. 
Changing~$E$ for~$g^{-1}E$, we may assume~$g$ is the neutral element. As~$U^{-1}\cdot E$ is compact, and~$\Gamma\cdot p$ is discrete, the  set
\[F=U^{-1}\cdot E\cap \Gamma\cdot p\]
 is finite. 
As~$u$ ranges through~$U$, we must have, in terms of characteristic functions,
\[{\chi_E}(u\Gamma)=\# \left[E\cap \left( u\Gamma\cdot p\right)\right]=\#\left[\left( u^{-1} E\right)\cap\left(\Gamma\cdot p\right)\right]=\#\left[\left( u^{-1} E\right)\cap F\right]=\sum_{f\in F}1_{E}(u\cdot f).\] 
As~$E$ is closed,~$1_E$ is an upper semicontinuous function on~$V$. As~$u\cdot f$ depends continuously on~$u$, 
each~$u\mapsto 1_{E}(u\cdot f)$ is upper semicontinuous as well. Hence their finite sum~$\chi_E$ is upper continuous too.
\end{proof}

\subsection{Linearisation of singular sets}\label{sectionA2} Following the terminology of~\cite[\S 3]{DM}, the \emph{subset of $(L,\Gamma_N)$-self-intersection of a set $X$ in $G$} is the set where the map~$\phi:G/\Gamma_N\to G/\Gamma$ restricted to $X\Gamma_N/\Gamma_N$ fails to be bijective.
%is said to have no $(L,\Gamma_N)$-self-intersection if the restriction of $\phi$ to $A\Gamma_N/\Gamma_N$ is bijective. 

%Borel Zariski density~\cite{BorelCRELLE} and $S$-arithmetic analogue (\cite[Lemma 3.1]{MargulisTomanov}).

\begin{proposition}[\emph{c.f.}~\cite{DM} Prop.~3.3]\label{No intersection} For any~$L$ in~$\RatQ$, the set of points of~$(L,\Gamma_N)$-self-intersection of~$X(L,W)$ is contained in\[X^*(L,W)=\bigcup_{K\in \RatQ, \dim(K)<\dim(L)} X(K,W).\]
\end{proposition}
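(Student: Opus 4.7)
The plan is to follow the classical Dani--Margulis argument (\cite{DM}, Proposition~3.3), adapted to the $S$-arithmetic setting of the paper. Note that by the notation established in \S\ref{RatnerNotations} the set displayed in the statement is really $X(L,W)\smallsetminus X^{*}(L,W)$; so what must be shown is: if $g\in X(L,W)$ and there exists $\gamma\in\Gamma\smallsetminus\Gamma_N$ with $g\gamma\in X(L,W)$, then $g$ lies in $X(K,W)$ for some $K\in\RatQ$ with $\dim(K)<\dim(L)$.

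First I would unpack the hypothesis. From $g\in X(L,W)$ and $g\gamma\in X(L,W)$ the definition \eqref{X1} gives
$g^{-1}Wg\subseteq L\text{ and }\gamma^{-1}(g^{-1}Wg)\gamma\subseteq L,$
so
$g^{-1}Wg\subseteq L\cap\gamma L\gamma^{-1}=:L'.$
Because $L$ is a $\Q$-subgroup and $\gamma\in\Gamma\subseteq\G(\Q)$, the conjugate $\gamma L\gamma^{-1}$ is also defined over~$\Q$, and therefore so is~$L'$. Moreover the hypothesis $\gamma\notin\Gamma_N$ means precisely $\gamma\notin N(L)$, so $\gamma L\gamma^{-1}\neq L$; as both groups are connected $\Q$-subgroups of equal dimension (a property I would justify by noting that elements of $\RatQ$ are Zariski-connected, cf.\ \eqref{defRatclass} and Appendix~\ref{sectionB1}), their intersection is a proper subgroup and thus $\dim(L')<\dim(L)$.

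Next I would produce a witness~$K\in\RatQ$. Since $W=W^{+}$ is generated by one-parameter unipotent subgroups over~$\Q_S$ and conjugation by~$g$ sends unipotents to unipotents, $g^{-1}Wg$ is likewise generated by one-parameter unipotent $\Q_S$-subgroups, each of which lies in~$L'$; hence
$g^{-1}Wg\subseteq (L')^{+}.$
Let $K$ be the $\Q$-Zariski closure of $(L')^{+}$ inside~$\G$. Then by construction $K$ is defined over~$\Q$; it satisfies $(L')^{+}\subseteq K^{+}$, and $(L')^{+}$ is $\Q$-Zariski dense in~$K$, so a fortiori $K^{+}$ is $\Q$-Zariski dense in~$K$. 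Thus $K\in\RatQ$. Because $K\subseteq L'$ we have $\dim(K)\leq\dim(L')<\dim(L)$, while the inclusion $g^{-1}Wg\subseteq (L')^{+}\subseteq K$ shows that $g\in X(K,W)$, as required.

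The main obstacle, such as it is, lies in the last paragraph: one must know that the group generated (inside $\G(\Q_S)$) by one-parameter unipotent $\Q_S$-subgroups of a $\Q$-group~$L'$ has a $\Q$-Zariski closure that still has the Ratner-class property, i.e.\ is generated over~$\Q$ by unipotents. This is a purely structural fact about groups in $\RatQ$ which the authors have collected in Appendix~\ref{AppRatner} (compare the formalism of $\mathscr{F}$ in \cite{TomanovOrbits}); once it is granted, the above argument is an essentially formal conjugation manipulation. A minor additional check is the connectedness of $L$ and of $\gamma L\gamma^{-1}$ needed for the dimension drop, which likewise follows from the definition of~$\RatQ$.
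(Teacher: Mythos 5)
Your proof follows exactly the same line as the paper's: from $g,g\gamma\in X(L,W)$ you obtain $g^{-1}Wg\subseteq L\cap\gamma L\gamma^{-1}$, use $W=W^+$ to land inside $(L\cap\gamma L\gamma^{-1})^+$, take the $\Q$-Zariski closure $K$ of that unipotent-generated group, invoke the characterization~\eqref{CriterionB2} to see $K\in\RatQ$, and use $\Q$-irreducibility of $L$ and $\gamma\notin N(L)$ to get the dimension drop. This matches the paper's proof step for step.
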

This implies in particular the following
\begin{equation}
\text{$X^*(L,W)$ has no point of~$(L,\Gamma_N)$-self-intersection.}
\end{equation}
\begin{proof} We follow the proof of~\cite{DM} Prop.~3.3. 
%Let $\L\in\RatQ$ and $\gamma\in\Gamma$. By construction~$\Lpp\Gamma/\Gamma$ is a closed subset in~$G/\Gamma$, and thus so is~$\gamma\Lpp\Gamma/\Gamma$. Equivalently~$\Lpp\Gamma$ and~$\gamma\Lpp\Gamma$ are closed in~$G$.
% Since $\Lpp\Gamma$ and $\gamma \Lpp\Gamma$ are closed, so is $(\Lpp\cap\leftexp{\gamma}{\Lpp})\Gamma=(L\cap\leftexp{\gamma}{\Lpp})\Gamma$. 
%Let~$\K$ be the~$\Q$-Zariski closure of~$(L\cap\leftexp{\gamma}{L})^{+}$.
%It is a subgroup of class~$\RatQ$ by criterion~\eqref{CriterionB2}. Remark that~$\K$ is a~$\Q$-subgroup of $\L\cap\leftexp{\gamma}{\L}$.

%Let $L_\gamma$ be the smallest closed subgroup of $\Lpp$ containing $(L\cap\leftexp{\gamma}{L})^{+}$ and such that $L_\gamma\Gamma$ is closed. Then $L_\gamma\cap\Gamma$ is a lattice in $L_\gamma$\footnote{This can be seen from Ratner's Orbit Closure Theorem [...], or by an $S$-arithmetic adaptation of [Shah Prop2.3]}. We can then apply [tomanov, thm3] to conclude that the Zariski closure $\widetilde{\L}$ of $L_\gamma\cap\Gamma$ is of class $\RatQ$ and is such that $L_\gamma$ is of finite index in $\widetilde{L}=\widetilde{\L}(\Q_S)$. Remark that $\widetilde{\L}$ is a $\Q$-subgroup of $\L\cap\leftexp{\gamma}{\L}$.

If $g\in X(L,W)$ is a point of~$(L,\Gamma_N)$-intersection, then there exists $\gamma\in \Gamma\setminus \Gamma_N$ such that $g^{-1}Wg\subset L\cap \leftexp{\gamma}{L}$. 
We recall~$W=W^+$.  The $\Q$-Zariski closure~$\K$ of~$ (L\cap \leftexp{\gamma}{L})^+$ is a group of class~$\RatQ$ by criterion~\eqref{CriterionB2}. Let~$K=\K(\Q_S)$

We recall~$W=W^+$. We have
\[
g^{-1}Wg=g^{-1}W^+g=(g^{-1}Wg)^+\subseteq (L\cap \leftexp{\gamma}{L})^+\subseteq K.
\]
Therefore, $g\in X(K,W)$.

Finally, since $\gamma$ does not normalise $\L$,
%% INEXACT ERREUR
 the group $\L\cap\leftexp{\gamma}{\L}$, and a fortiori~$\K$, is a proper subgroup of $L$. As $\L$ is $\Q$-connected (and hence actually irreducible), we have~$\dim(\K)<\dim(\L)$.
\end{proof}

Recall that, in~\eqref{def A}, we defined~$A_L$ as the $\Q_S$-submodule generated by $X(L,W)\cdot p_L$.
\begin{proposition}[\emph{c.f.}~\cite{DM} Cor.~3.5]\label{linearisation of neighbourhood}
Let~$D$ be a compact subset of~$A_L$. Let~$Y$ be the subset of $(L,\Gamma_N)$-self-intersection points of~$\eta_L^{-1}(D)$. Let~$K$ be a compact subset of~$G\smallsetminus Y_H\Gamma.$ Then there exists a neighbourhood~$\Phi$ of~$D$ in~$V$ such that~$\eta_L^{-1}(\Phi)\cap(K\Gamma)$ has no point of~$(L,\Gamma_N)$-self-intersection.
\end{proposition}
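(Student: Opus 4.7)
The plan is to proceed by contradiction, exploiting discreteness of $\Gamma\cdot p_L$ in $V$ (Lemma~\ref{Gorbit discrete}) to force a limiting self-intersection point to lie in $Y$, thereby contradicting $K\subseteq G\smallsetminus Y\Gamma$.

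First I would reformulate the self-intersection condition concretely: a point $g\in\eta_L^{-1}(D)$ lies in $Y$ iff there exists $\gamma\in\Gamma\smallsetminus\Gamma_N$ with $g\gamma\in\eta_L^{-1}(D)$, equivalently two distinct elements of $\Gamma\cdot p_L$ (namely $p_L$ and $\gamma p_L$) are sent into $D$ by left multiplication by $g$. Since $V$ is a finite dimensional $\Q_S$-vector space hence metrizable, I can choose a decreasing sequence of open neighbourhoods $(\Phi_n)_{n\geq 0}$ of the compact set $D$ with $\bigcap_n\overline{\Phi_n}=D$ and each $\overline{\Phi_n}$ compact. If the conclusion of the proposition fails, then for every $n$ there exists $g_n\in K\Gamma\cap\eta_L^{-1}(\Phi_n)$ and $\gamma_n\in\Gamma\smallsetminus\Gamma_N$ with $g_n\gamma_n\in\eta_L^{-1}(\Phi_n)$.

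Next, write $g_n=k_n\delta_n$ with $k_n\in K$ and $\delta_n\in\Gamma$, and set $\alpha_n=\delta_n\cdot p_L$, $\beta_n=\delta_n\gamma_n\cdot p_L$ in $\Gamma\cdot p_L$. Note that $\alpha_n\neq\beta_n$ because $\gamma_n\notin\Gamma_N=\Stab_\Gamma(p_L)$. By construction $k_n\alpha_n,k_n\beta_n\in\overline{\Phi_1}$, so $\alpha_n$ and $\beta_n$ stay in the compact subset $K^{-1}\cdot\overline{\Phi_1}$ of $V$. The discreteness of $\Gamma\cdot p_L$ (Lemma~\ref{Gorbit discrete}) forces these orbit points to take only finitely many values; passing to a subsequence we may assume $\alpha_n=\alpha$ and $\beta_n=\beta$ are constant, with $\alpha\neq\beta$. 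By compactness of $K$, we may further assume $k_n\to k_\infty\in K$. Since $k_n\alpha,k_n\beta\in\overline{\Phi_n}$ and $\bigcap_n\overline{\Phi_n}=D$, passing to the limit gives $k_\infty\alpha\in D$ and $k_\infty\beta\in D$.

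Finally, I would convert this back to a self-intersection statement. Writing $\alpha=\gamma'\cdot p_L$ and $\beta=\gamma''\cdot p_L$ with $\gamma',\gamma''\in\Gamma$, the relation $\alpha\neq\beta$ gives $\gamma'^{-1}\gamma''\in\Gamma\smallsetminus\Gamma_N$. Setting $g_\infty=k_\infty\gamma'$, we have $g_\infty\cdot p_L=k_\infty\alpha\in D$ and $g_\infty\cdot(\gamma'^{-1}\gamma'')\cdot p_L=k_\infty\beta\in D$, so $g_\infty\in Y$. But $g_\infty=k_\infty\gamma'\in K\Gamma$, so $k_\infty\in Y\Gamma^{-1}=Y\Gamma$, contradicting the hypothesis $K\subseteq G\smallsetminus Y\Gamma$.

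The only delicate step is the discreteness argument: one must be careful to use $\Gamma\cdot p_L$ (a single orbit) rather than a larger set, and to keep track that the two orbit points extracted in the limit are genuinely distinct, which is what ensures the limiting element actually belongs to $Y$. Everything else is standard compactness and continuity, and no further input from Ratner theory or from $(C,\alpha)$-good functions is needed at this stage—the non-divergence machinery is only used upstream, to supply the ambient compact set $D$ via Proposition~\ref{AProp313}.
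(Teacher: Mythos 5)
Your proof is correct. It takes a genuinely different (though equivalent) route from the paper's. The paper packages the key discreteness fact (Lemma~\ref{Gorbit discrete}) into the upper semi-continuity of the counting function $\chi_E(g\Gamma)=\#(g\Gamma\cdot p_L\cap E)$ established in Lemma~\ref{upper semicontinuity} and Corollary~\ref{coro upper semicontinuity}, then argues by covering: the hypothesis on $K$ is restated as $\chi_D<2$ on $K\Gamma/\Gamma$, the open sets $\{\chi_\Psi<2\}$ for $\Psi$ ranging over compact neighbourhoods of $D$ cover the compact $K\Gamma/\Gamma$ (since $\chi_D=\inf_\Psi\chi_\Psi$), and one takes $\Phi$ to be a finite intersection. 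Your argument instead unfolds the same discreteness explicitly in a sequential contradiction: a hypothetical sequence of self-intersection points $g_n=k_n\delta_n$ with shrinking neighbourhoods $\Phi_n$, extraction of constant orbit-point pairs $\alpha\neq\beta$ by discreteness inside the compact $K^{-1}\overline{\Phi_1}$, a convergent subsequence $k_n\to k_\infty\in K$, and a limiting self-intersection point $g_\infty=k_\infty\gamma'\in Y$, contradicting $K\cap Y\Gamma=\emptyset$. The paper's phrasing via $\chi_E$ is more reusable (and the Lemma~\ref{upper semicontinuity} machinery is used again elsewhere in the paper, e.g. in Section 6.3.3 on properness), whereas your argument is more self-contained and elementary, not requiring the reader to have internalised the counting function. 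Both hinge on exactly the same inputs — discreteness of $\Gamma\cdot p_L$ and compactness of $K$ — so the mathematical content is the same; only the packaging differs. One minor presentation point worth making explicit in your step on discreteness: you should note that $\alpha\neq\beta$ persists after the two successive subsequence extractions (it does, since $\alpha_n\neq\beta_n$ holds for every $n$), and that the containment $g_n\gamma_n\in K\Gamma$, needed implicitly, is automatic because $K\Gamma$ is $\Gamma$-saturated.
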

\begin{proof} Let $\mathfrak{N}$ be the collection of compact neighbourhoods of $D$ in $V$. For any~$\Psi\in\mathfrak{N}$, applying Corollary~\ref{coro upper semicontinuity} to~$E=\Psi$, we get that the locus~$U_\Psi$ where $\chi_\Psi< 2$ is open.

By hypothesis, $\eta_L^{-1}(D)\cap(K\Gamma)$ has no point of $(L,\Gamma_N)$-self-intersection. In other words, for any $x\in K\Gamma/\Gamma$, we have~$\chi_D(x)< 2$. Together with the fact that $\chi_D(x)=\inf\{\chi_\Psi(x)|\Psi\in\mathfrak{N}\}$, it implies that the collection $\{ U_\Psi|\Psi\in\mathfrak{N}\}$ covers the compact set $K\Gamma/\Gamma$. 

By compacity of~$K$, there is a finite subset~$\mathfrak{N}_f\subseteq\mathfrak{N}$ so that we have a finite subcovering~$\left\{U_\Psi\middle| \Psi\in\mathfrak{N}_f\right\}$. Let~$\Phi=\cap_{\mathfrak{N}_f}\Psi$. As $\chi_\Phi\leq\chi_\Psi$ for any $\Psi\in\mathfrak{N}_f$, it follows that for any $x\in K\Gamma/\Gamma$, we have~$\chi_\Phi(x)<2$. In other words, $\eta_L^{-1}(\Phi)\cap (K\Gamma)$ has no $(L,\Gamma_N)$-self-intersection.
\end{proof}

The proof above is essentially the same as the original proof of~\cite{DM}. However, our argument proves a little more, as the number $2$ could be any arbitrary number.

\subsection{Besicovich Property and Good Functions}%, after D.~Kleinbock, G.~A.~Margulis and G.~Tomanov}

%Here we recall, via references, the setting of~$(C,\alpha)$-good functions and the Besicovich property used in~\cite{KT}. We also refer to~\cite[\S~\,3 and~5]{LemmaA} which explain how to apply this setting to our Borel space~$(\Omega,\mu|_\Omega)$ inside~$H$.
This is preparatory work for our main statements to come in forthcoming section~\ref{section A4}.

Following~\cite{KT}, the Besicovich covering lemma and good functions are used in conjunction to cleanly generalize the one-dimensional real case of~\cite[Prop.3.13]{EMSAnn} to the multi-dimensional $S$-arithmetic setting.

In this appendix, for any $\Q_S$-module $\prod_{v\in S}{\Q_v}^{d_v }$, with non-negative integers $d_v$, we consider the metric induced from the max-norm of each component in $\Q_v$ and consider a measure~$\lambda$ which is a product of Haar measures on the $\Q_v$. As a metric and measure space, it is \emph{doubling}, by which we mean that there exists a constant $c_d$ such that for any ball~$B$, 
\begin{equation}\label{doubling}
\lambda(3B)\leq c_d\cdot\lambda(B),
\end{equation}
where $3B$ denotes the ball with same centre as $B$ and thrice its radius .

\subsubsection{Good Functions} Let $X$ be a metric space endowed with a nowhere zero locally finite positive Borel measure~$\nu$.  For a function $f:X\to \R$ and a non empty subset~$B$ in~$X$, we denote $\Nm{f}_B$ the supremum of~$\abs{f}$ on~$B$. 
\begin{definition} For two constants $C>0$ and $\alpha>0$.
A real continuous function~$f$ on~$X$ is~\emph{$(C,\alpha)$-good} if for any ball~$B$ in~$X$ and for any $\varepsilon>0$,
\begin{equation}\label{def good} 
	\nu\left(\left\{ x\in B\,\middle|\, \abs{f(x)}<\varepsilon \Nm{f}_B^{\vphantom{l}}\right\}^{\vphantom{l}}\right) \leq C\varepsilon^\alpha\nu(B).
\end{equation}
\end{definition}	
We remark that due to the strict inequality on the left hand side, the zero function is $(C,\alpha)$-good. 
In some sense, the~$\alpha$ constant rules out functions to have zeroes of arbitrarily larger order (the function~$\exp(-1/x):(0;1]\to\R$ does not qualify); the~$C$ constant provide a sort of uniformity. The next proposition shows that one can find uniform constants ~$(C,\alpha)$ on some small classes
of analytic functions (in a $S$-adic sense).

%From a dynamical point of view, $(C,\alpha)$-goodness quantifies the notion that on time intervals, if the function is very close to zero for a significant portion of the time, then it is close to zero \emph{all} the time. %balls, the set where the function is relatively small has itself relatively small measure.
%
%Due to the lack of a global parametrisation of $\Omega$ in the ultrametric case, we will however need to consider a slightly modified class of subsets
% 
%For integers $d_v$ and $e_v$, a map $\phi:\prod_{v\in S}{\Q_v}^{d_v }\to\prod_{v\in S}{\Q_v}^{e_v }$ is analytic if each component $\phi_v:\Q_v^{d_v}\to\Q_v^{e_v}$ is analytic. An analytic function on a bounded set is good, and we will need that composing by linear maps preserves its goodness. 

\begin{proposition}\label{goodness} %Let $\rho:G\to GL(V)(\Q_S)$ be a finite dimensional Lie group representation of $G$ and let $\Nm{\cdot}$ be a norm on $V(\Q_S)$. Then there exist constants $C$ and $\alpha$ such that  for any endomorphism $\Lambda\in\operatorname{End}(V)(\Q_S)$, the function $\Nm{\Lambda\circ\rho}$ is $(C,\alpha)$-good on $\big(\rho(\Omega),\rho_*\mu\big)$.

 Let $U\subset\prod_{v\in S}{\Q_v}^{d_v }$ be a bounded neighbourhood of zero and let $\phi:U\to \prod_{v\in S}{\Q_v}^{e_v }$ be analytic. Then there exist $C$, $\alpha$ and a neighbourhood $V\subset U$ of zero such that for any $\Q_S$-linear map $\Lambda:\prod_{v\in S}{\Q_v}^{e_v }\to\prod_{v\in S}{\Q_v}^{f_v }$, the function $\Nm{\Lambda\circ\phi}$ is $(C,\alpha)$-good on $V$.
\end{proposition}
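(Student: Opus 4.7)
The plan is to reduce the statement, first, to a place-by-place problem, and then, at each place, to the well-known fact that polynomials of bounded degree are uniformly $(C,\alpha)$-good with constants depending only on the degree and the ambient dimension. The main obstacle is controlling the uniformity in $\Lambda$: a priori the constant could blow up as $\Lambda$ varies, so we need to extract a finite-dimensional parameter controlling $\Lambda\circ\phi$ up to a bounded distortion on a small neighbourhood $V$.

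First, since $S$ is finite, it suffices to treat one place $v$ at a time: the product norm we are using is the maximum of the component norms, and the max of finitely many $(C,\alpha)$-good functions is again $(C',\alpha)$-good (with $C'= \# S\cdot C$). So fix $v\in S$ and work on $U_v\subseteq\Q_v^{d_v}$ with $\phi_v:U_v\to \Q_v^{e_v}$ analytic. In the archimedean case this is the setting of Kleinbock--Margulis; in the ultrametric case it is the setting of~\cite{KT}. In both cases one knows that a polynomial $P:\Q_v^{d_v}\to\Q_v$ of degree $\leq k$ is $(C_0,\alpha_0)$-good with constants $C_0=C_0(d_v,k)$ and $\alpha_0=\alpha_0(d_v,k)$ depending only on $d_v$ and $k$; this is proved via the Besicovich covering property~\eqref{doubling} together with the one-dimensional polynomial estimate of Lagrange type. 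Taking finite maxima, the norm of any polynomial map $Q:\Q_v^{d_v}\to\Q_v^{f_v}$ of coordinate-degree $\leq k$ is uniformly $(C_1,\alpha_1)$-good, with constants depending only on $d_v$, $f_v$, $k$.

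Now I would handle the analytic function $\phi_v$. Shrinking $U$ to a small polydisc $V_v$ around $0$, I can write $\phi_v=P_v+R_v$ where $P_v$ is the Taylor polynomial of $\phi_v$ of some fixed degree $k$ (to be chosen) and $R_v$ is an analytic remainder bounded uniformly by $c\cdot \|x\|^{k+1}$ on $V_v$. For any $\Q_v$-linear map $\Lambda_v$, this yields
\[
\|\Lambda_v\circ\phi_v\|\;=\;\|\Lambda_v\circ P_v\|\cdot(1+\eta(\Lambda_v,x))
\]
where $|\eta(\Lambda_v,x)|\leq c'\|x\|$ on $V_v$, \emph{uniformly in }$\Lambda_v$, provided we compare $\|\Lambda_v\circ P_v\|$ on a ball $B\subseteq V_v$ to its sup on $B$: on a ball of small enough radius the polynomial dominates the remainder because the lowest-order non-vanishing jet of $\Lambda_v\circ P_v$ is homogeneous. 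Concretely, one shows that on balls $B\subseteq V_v$ one has
\[
\tfrac{1}{2}\|\Lambda_v\circ P_v\|_B \;\leq\;\|\Lambda_v\circ \phi_v\|_B\;\leq\; 2\|\Lambda_v\circ P_v\|_B,
\]
after which the $(C,\alpha)$-good property for $\|\Lambda_v\circ\phi_v\|$ follows, up to inflating the constant $C$ by a factor $4^{\alpha}$, from the $(C_1,\alpha_1)$-good property of $\|\Lambda_v\circ P_v\|$. The polynomial $\Lambda_v\circ P_v$ has coordinate degree $\leq k$, so the constants $(C_1,\alpha_1)$ are uniform in $\Lambda_v$.

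Finally, combining the factors over $v\in S$, set $V=\prod_{v\in S}V_v$, take $\alpha=\min_v\alpha_{1,v}$ and $C$ the product of the $C_{1,v}$ times the combinatorial factor from the maximum, and unwrap the definitions: for any $\Lambda=\prod_v\Lambda_v$ and any ball $B\subseteq V$,
\[
\lambda\!\left(\{x\in B:\|\Lambda\circ\phi(x)\|<\varepsilon\|\Lambda\circ\phi\|_B\}\right)\leq C\varepsilon^\alpha\lambda(B),
\]
which is exactly the $(C,\alpha)$-good property. The hard point, as announced, is the control on $\Lambda$: it is resolved by noting that the degree-$k$ Taylor polynomial approximation dominates the remainder on a neighbourhood of $0$ \emph{uniformly} in $\Lambda$, an observation that rests purely on the fact that $\Lambda\circ R_v$ is of order at least $k+1$ in $x$ regardless of $\Lambda$.
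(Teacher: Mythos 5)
Your reduction to a single place and a single linear functional $\Lambda_v:\Q_v^{e_v}\to\Q_v$ matches the paper's opening step, and the idea of appealing to uniform $(C,\alpha)$-goodness of bounded-degree polynomials is the right instinct. But the heart of your argument — approximating $\phi_v$ by its degree-$k$ Taylor polynomial $P_v$ and claiming the two-sided comparison
\[
\tfrac{1}{2}\|\Lambda_v\circ P_v\|_B \leq \|\Lambda_v\circ \phi_v\|_B \leq 2\|\Lambda_v\circ P_v\|_B
\]
uniformly in $\Lambda_v$ on small balls — has a genuine gap. The issue is that $\Lambda_v$ can annihilate the Taylor polynomial $P_v$ while not annihilating $\phi_v$. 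Concretely, take $d_v=1$, $e_v=2$, $\phi(x)=(x^2,x^3)$, $k=2$ and $\Lambda(a,b)=b$: then $\Lambda\circ P\equiv 0$ but $\Lambda\circ\phi(x)=x^3\not\equiv 0$, so the upper bound is false and no $(C,\alpha)$-goodness of $\Lambda\circ\phi$ can be extracted from that of $\Lambda\circ P$. Your closing remark that the argument "rests purely on the fact that $\Lambda\circ R_v$ is of order at least $k+1$ regardless of $\Lambda$" misses precisely this: knowing $\Lambda\circ R_v$ has high order says nothing if $\Lambda\circ P_v$ has even higher order (i.e.\ vanishes identically).

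The paper's proof avoids this trap by an extra reduction you omitted: restrict the target to $\operatorname{span}\phi(U)$, so that without loss of generality $\phi(U)$ spans $\Q_v^e$. Analyticity then forces $\phi$ to be \emph{nondegenerate} at $0$ in the sense of Kleinbock--Tomanov, meaning there is a fixed finite order $k$ such that the partial derivatives of $\phi$ at $0$ up to order $k$ span $\Q_v^e$ — which guarantees that for every nonzero $\Lambda_v$, some jet of $\Lambda_v\circ\phi_v$ of order $\leq k$ is nonzero. At that point the paper simply invokes \cite[Thm.~4.3]{KT}, which is exactly the statement that linear combinations $c_1\phi_1+\cdots+c_e\phi_e$ of the coordinates of a nondegenerate analytic map are uniformly $(C,\alpha)$-good on a small neighbourhood. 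To repair your argument you would need to (i) insert the span reduction, (ii) choose $k$ according to the nondegeneracy order so $\Lambda_v\circ P_v\not\equiv 0$ whenever $\Lambda_v\neq 0$, and (iii) still give a real argument for the claimed uniform two-sided comparison — and steps (ii) and (iii) together amount to reproving the KT theorem rather than citing it.
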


\begin{proof} Since the max-norm and the supremum of $(C,\alpha)$-good functions is itself $(C,\alpha)$-good, it is enough to consider a single place $S=\{v\}$ and~$\Lambda:\Q_v^{e}\to\Q_v$ a single linear functional.
% For $1\leq i\leq f_v$, let $\pi_{v,i}$ be the projection of $\Q_v^{e_v}$ onto the $i^{\text{th}}$-component, extended to be zero at other places. Note that $\Nm{L\circ\phi}=\max\abs{\pi_{v,i}\circ L\circ\phi}_v$.

Write~$\phi=(\phi_1,...,\phi_{e})$ into coordinates. If $\phi\equiv 0$, then we are done since the zero function is $(C,\alpha)$-good for any constants $C$ and $\alpha$. Otherwise, restricting the maps to $\operatorname{span}\phi(U)$, we can assume that $\phi(U)$ spans $\Q_v^{e }$. Since $\phi$ is analytic, it implies that it is nondegenerate at zero. By~\cite[Thm.4.3]{KT},  there are $C$, $\alpha$ and a neighbourhood $V\subset U$ of zero such that for any constants $c_i\in\Q_v$, $\abs{c_1\phi_1+...+c_e\phi_e}$ is $(C,\alpha)$-good on $V$. In other words, for any linear functional $\Lambda:\Q_v^{e}\to\Q_v$, $\abs{\Lambda\circ\phi}$ is $(C,\alpha)$-good on $V$, thus concluding the proof.
%First consider linear functionals $L:\Q_v^{e_v}\to\Q_v$, extended to be zero at the other places. Since $\phi$ is analytic, $L\circ\phi$ is non degenerate at zero if and only if it is nonzero. By~\cite[Thm.4.3]{KT}, there exists $C$ and $\alpha$ and a neighbourhood $V\subset U$ of zero on which $L\circ\phi$ is $(C,\alpha)$-good.
%
% By the previous paragraph, there exists a neighbourhood $V_{v,i}\subset U$ and constants $(C_{v,i},\alpha_{v,i})$ such that $\abs{\pi_{v,i}\circ L\circ\phi}_v$ is $(C_{v,i},\alpha_{v,i})$-good on $V_{v,i}$.  
%
\end{proof}
%\paragraph{Good functions --  {\cite[\S~\,2]{KT}}} Let~$X$ be a metric space, with a nowhere zero locally finite positive Borel measure~$\mu$, and fix constants~$C,\alpha>0$. A real continuous function:~$X\to \R$ is~$(C,\alpha)$-good if for any ball~$B$ in~$X$,
%$$\forall \varepsilon >0, \mu\left(\left\{ x\in B\middle| \abs{f(x)}<\varepsilon \right\}\right) \leq C\left(\frac{\varepsilon}{\Nm{f|_B}}\right)^\alpha\mu(B).$$
%In rough words, this properties implies that a function which is small on a ball on big subset, is actually uniformly small. A big subset means a subset containing enough proportion of the measure.

\subsubsection{Good Parametrisation}\label{parametrisation} In the presence of non-archimedean places, a good pa\-ra\-me\-tri\-sa\-tion of $\Omega$ would exist only locally, due to the lack of convergence of the exponential map. However, since our~$\Omega$ are bounded, a given local pa\-ra\-me\-tri\-sa\-tion has positive measure, and this will be enough for our purposes.

  Let~$H$ be a Lie subgroup of~$G=\G(\Q_S)$, by which we mean~$H$ is locally a product of Lie subgroups of $\G(\Q_v)$ at each places of $S$. In particular, it is locally analytic, by virtue of the locally convergent exponential map.
  
  Let~$\Omega$ be a bounded open subset of~$H$, and let us pick any coordinate chart $\Theta:U\to H$ around a point of $h\in\Omega$, where $U$ is a neighbourhood of $0$ in $\prod_{v\in S}{\Q_v}^{d_v }$ for nonnegative integers $d_v$ for each $v\in S$.
  
  Also, for any representation $\rho:G\to GL(V)(\Q_S)$ of Lie groups, possibly shrinking $U$, the map $\rho\circ\Theta$ is analytic on~$U$.

We fix such a representation $\rho$ of $G$ and a neighbourhood $U$ on which $\rho\circ\Theta$ is analytic. There is a smaller neighbourhood~$V\subseteq U$ of zero on which Proposition~\ref{goodness} holds. Let $B$ be a ball centred at zero and of radius $r$ small enough such that:
\begin{enumerate}
\item the ball~$3B$, with centre zero and radius, $3r$, is contained in $V$,
\item the projection $G\to G/\Gamma$ is injective on $\Theta(B)$.
\item $\Theta(B)$ is contained in $\Omega$.
\end{enumerate}
Then $\Theta$ provides our desired parametrisation by~$B$ of the bounded open subset~$\Omega_0=\Theta(B)$ of $H$.

Let us prove the claim
\begin{lemma} Let~$\lambda$ (resp.~$\mu$) denote a Haar measure on~$\prod_{v\in S}{\Q_v}^{d_v }$ (resp. on~$H$).
The direct image~$\Theta_*\lambda|_B$ of the restriction of~$\lambda$ to~$B$ is comparable with the restriction of~$\mu$ to~$\Theta(B)$.
\end{lemma}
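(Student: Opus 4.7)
The plan is to compare the two measures via the Jacobian of the analytic chart $\Theta$. Because $H$ is a Lie subgroup of $G$ and $\Theta: U \to H$ is a coordinate chart around $h$, shrinking $B$ if necessary we may assume $\Theta$ is an analytic diffeomorphism from a neighbourhood of $\overline{B}$ onto a neighbourhood of $\overline{\Theta(B)}$ in $H$. In particular the derivative $d\Theta_x$ is everywhere invertible on $\overline{B}$.

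First I would recall that a Haar measure $\mu$ on $H$ is, at each place $v \in S$, given by a nonzero left-invariant top-degree differential form $\omega_v$ on $H_v$, whose values are transported by left translation from a choice of generator of $\det(\mathfrak{h}_v)$. Pulling back along $\Theta$ gives an analytic top-degree form $\Theta^*\omega = J(x)\,dx_1 \wedge \cdots \wedge dx_d$ (place by place, then combined), and the pullback of $\mu$ to $U$ satisfies $\Theta^*\mu = |J|\,\lambda$ in the sense of Radon measures, where $|J|$ denotes the product of the $v$-adic normalised absolute values of the Jacobian components at each place.

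Next I would observe that $J$ is analytic on $U$, hence continuous, and does not vanish on $\overline{B}$ because $\Theta$ is a local diffeomorphism there. Consequently $|J|$ attains a positive minimum $m > 0$ and a finite maximum $M < \infty$ on the compact set $\overline{B}$, so that
\[
m \cdot \lambda|_B \;\leq\; \Theta^*(\mu|_{\Theta(B)}) \;\leq\; M \cdot \lambda|_B.
\]
Pushing this forward by $\Theta$ (which is injective on $B$ by the choice of parametrisation in \S\ref{parametrisation}) yields
\[
m \cdot \Theta_*(\lambda|_B) \;\leq\; \mu|_{\Theta(B)} \;\leq\; M \cdot \Theta_*(\lambda|_B),
\]
which is the desired comparability.

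The only mild obstacle is the non-archimedean bookkeeping: the product structure $\prod_{v\in S} \Q_v^{d_v}$ means the Jacobian is a product, at each place, of $v$-adic determinants, and the pullback formula must be phrased in terms of the product of normalised absolute values rather than a single real Jacobian. Once one fixes consistent conventions for the Haar measures on the $\Q_v^{d_v}$ and for the invariant form $\omega_v$ at each place, this is a routine place-by-place verification, and the compactness argument on $\overline{B}$ then applies in exactly the same way as in the archimedean case.
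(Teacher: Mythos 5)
Your proof is correct and takes essentially the same route as the paper: both compare $\Theta_*\lambda|_B$ and $\mu|_{\Theta(B)}$ via the Jacobian density of the analytic chart $\Theta$, and both invoke boundedness of $B$ (resp.\ $\Theta(B)$) to pass from a continuous nowhere-vanishing density to two-sided comparability. Your write-up is somewhat more explicit than the paper's (it names the Jacobian, records its non-vanishing via $d\Theta$ being invertible, and isolates the compactness argument), but there is no difference in substance.
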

\begin{proof}
The Haar measure $\lambda$ on $\prod_{v\in S}{\Q_v}^{d_v }$ is associated to an invariant top differential form $\omega$. Then $\Theta_*\omega|_{B}$ is a continuous top differential form on $\Omega_0\subset H$. Therefore, the associated measure $\Theta_*\lambda|_{B}$ is absolutely continuous with respect to any Haar measure on $H$ restricted to $\Omega_0$, hence in particular to the probability measure $\mu|_{\Omega_0}$. Moreover, since $\Omega_0$ is bounded, both measures are comparable: 
\begin{equation}\label{comp meas}
\exists\, c_m\geq 1\text{ such that } c_m^{-1}\mu|_{\Omega_0}\leq \Theta_*\lambda|_{B}\leq c_m\,\mu|_{\Omega_0}.
\end{equation}
\end{proof}

%\paragraph{Good parametrisation -- \cite[\S~\,3 -- 5]{LemmaA}} Let~$H$ be a Lie subgroup of~$G=\G(\Q_S)$, with Lie algebra~$\h=\prod_{v\in S}\h_v.$ Endow~$\h$ with a product norm, write~$0$ for the origin, and~$B(0,r)$ for the open ball of radius~$r$ centred at the origin. Note that forsmall enough~$r$,  the exponential map~$\g$ is normally convergent on~$B(0,3r)$, and injective and immersive from~$B(0,r)\to G$.
%
%Write~$\Omega_r=\exp(B(0,r)\cap \h)$. Then, for small enough~$r$, it is an open bounded subset of~$H$. 

\subsubsection{Besicovich Covering Property} It is proved in~\cite[\S~\,1.1]{KT} that the metric space~$X=\prod_{v\in S}{\Q_v}^{d_v }$ (with the max-norm metric) satisfies the Besicovich covering property: for any boun\-ded subset~$A$ of~$X$, and for any family~$\mathcal{B}_0$ of nonempty open balls such that any point in~$A$ is the centre of some ball of~$\mathcal{B}_0$, there is an at most countable subfamily~$\mathcal{B}\subseteq\mathcal{B}_0$ which covers~$A$ and has multiplicity at most~$N_X$: in terms of characteristic functions, 
\begin{equation}\label{besicovich}
 1_A \leq \sum_{B\in\mathcal{B}}1_B\leq N_X\cdot1_X.
 \end{equation}

%\paragraph{Besicovich covering property -- {\cite[\S~\,1]{KT}}} After~\cite[\S~\,1.1]{KT}, a metric space~$X$ is \emph{Besicovich} if there is a constant~$N_X$  such that the following hold: for any bounded subset~$A$ of~$X$, and for any family~$\mathcal{B}_0$ of nonempty open balls, in~$X$ such that
%$$ \text{any point in~$A$ is the centre of some ball of~$\mathcal{B}$},$$
%there is a finite or countable subfamily~$\mathcal{B}\subseteq\mathcal{B}_0$ which covers~$A$ and has multiplicity at most~$N_X$ everywhere on~$X$: in terms of characteristic functions, 
%$$ 1_A \leq \sum_{B\in\mathcal{B}}1_B\leq N_X\cdot1_X.$$
%
%We use the following: with the product metric,~$\Q_S$ is a Besicovich metric space. More generally: for any finite dimensions~$\left(d_v\right)_{v\in S}$, the space~$\prod_{v\in S}{\Q_v}^{d_v }$, is Besicovich (with ? metric, \cite[\S~\,1.6]{KT}).

\subsection{Linearisation of focusing, after A.~Eskin, S.~Mozes and N.~Shah}\label{section A4}
We use the notations of section~\ref{parametrisation} about~$\Omega_0$.
\subsubsection{} We provide a piece of introduction. The following statement studies ``good maps'' in the neighbourhood of the singular locus~$X([L],W)$ in term of the linearisation~$V$. One feature is that one passes from a measure theoretic property (spending some~$\eps$ proportion of time in the neighbourhood corresponding to~$\Psi$) to an everywhere property (being contained in the neighbourhood associated with~$\Phi$). Such a feature fits in the realm of the $(C,\alpha)$-good properties recalled before. Another feature of this kind of results, \emph{a feat actually}, is that the element~$\gamma$ in case~\eqref{alternative 1} is \emph{independent from} the element~$\omega$ in~$\Omega_0$. To these effect one uses Besicovich covering property. To conclude, we remark that the analysis stability hypothesis, our new input in this article, allows this~$\gamma$ to be furthermore independent of~$g$.

\begin{proposition}[\emph{c.f.}~{\cite[Prop.~3.8, Prop.~3.12]{EMSAnn}}]\label{EMS312}
We consider a $(C,\alpha)$-good pa\-ra\-metrisation~$\Theta:3B\to\Omega$, and~$\Omega_0=\Theta(B)$, as in section~\ref{parametrisation}, with respect to the linear representation~$\rho$ on~$V$.

%Let $K$ be a compact set contained in $X^*(L,W)\Gamma/\Gamma$ and 
Let $\eps>0$. For any compact $D_0\subset A_L$, there exists a compact $D\subset A_L$ (which is explicit) such that for any neighbourhood $\Phi$ of $D$ in $V$, there exists a neighbourhood $\Psi$ (which is explicit) of $D_0$ in $V$ such that for any $g\in G$, at least one of the following holds.
\begin{enumerate}
\item \label{alternative 1}	There exists $x=\gamma\cdot p_L\in\Gamma\cdot p_L$ such that $g\Omega_0\cdot x\subset \Phi$.
\item \label{alternative 2}	One has the upper bound~$\mu\left( \left\{\omega \in\Omega_0 \bigl| %g\cdot\omega\Gamma\ \in K, 
g\cdot\omega\cdot \Gamma\cdot p_L\cap\Psi\neq\emptyset \right\}\right)<\eps.$
\end{enumerate}
\end{proposition}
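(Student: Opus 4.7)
The plan is to pull everything back to the parameter space $B$ via $\Theta$ and reduce to a linear estimate on $B$. The key observation is that, for every $g\in G$ and $\gamma\in\Gamma$, the map $t\mapsto g\Theta(t)\gamma p_L$ factors as $\Lambda_{g,\gamma}\circ\phi$, where $\phi:=\rho\circ\Theta\colon U\to\mathrm{End}(V)$ is a fixed analytic function (independent of $g$ and $\gamma$) and $\Lambda_{g,\gamma}\colon\mathrm{End}(V)\to V$ is the $\Q_S$-linear map $M\mapsto\rho(g)\,M(\gamma p_L)$. Applying Proposition~\ref{goodness} to $\phi$ (shrinking $U$ to contain $3B$ if necessary) then yields constants $C,\alpha$, depending only on $\Theta$ and $\rho$, such that $t\mapsto\Nm{g\Theta(t)\gamma p_L}$ is $(C,\alpha)$-good on $3B$ \emph{uniformly} in $g\in G$ and $\gamma\in\Gamma$. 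This uniformity is the cornerstone of the whole argument.

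\textbf{Choice of $D,\Psi$ and the dichotomy.} Fix a small parameter $\eta\in(0,1)$ to be determined, and set $R_0:=\sup_{D_0}\Nm{\cdot}$. Define $D$ as the intersection of $A_L$ with the closed norm ball of radius $R_0/\eta$; this is an explicit compact in $A_L$. Given a neighbourhood $\Phi$ of $D$ in $V$, pick $\rho_\Phi>0$ with $D+B_V(0,\rho_\Phi)\subseteq\Phi$ and take $\Psi:=D_0+B_V(0,\eta\rho_\Phi)$, which shrinks $\Phi$ by the factor $\eta$ relative to the corresponding tube around $D_0$. Now suppose alternative~\eqref{alternative 2} fails for some $g\in G$, so the pullback $\tilde E_g\subseteq B$ of $E_g$ satisfies $\lambda(\tilde E_g)\geq c_m^{-1}\epsilon\,\lambda(B)$ by~\eqref{comp meas}. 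For each $t\in\tilde E_g$ select $\gamma(t)\in\Gamma$ with $g\Theta(t)\gamma(t)p_L\in\Psi$, and let $B_t$ be the largest open ball centred at $t$, contained in $3B$, on which $g\Theta(\cdot)\gamma(t)p_L$ remains inside $\Phi$. If some $B_t$ contains $B$, then $g\Omega_0\gamma(t)p_L\subseteq\Phi$, giving alternative~\eqref{alternative 1} with $x=\gamma(t)p_L\in\Gamma\cdot p_L$. Otherwise each $B_t$ is a proper subball of $3B$, the supremum $\Nm{g\Theta(\cdot)\gamma(t)p_L}_{B_t}$ is (up to the $\rho_\Phi$-buffer) equal to $R_0/\eta$ by maximality, while the value at $t$ itself has norm $\leq R_0+\eta\rho_\Phi$: an $\eta$-fraction of that supremum. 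The uniform $(C,\alpha)$-good property therefore gives
\[
\lambda\bigl(\{t'\in B_t:g\Theta(t')\gamma(t)p_L\in\Psi\}\bigr)\leq C\eta^\alpha\,\lambda(B_t).
\]
Extracting a Besicovich subfamily $\{B_{t_i}\}$ of $\{B_t\}_{t\in\tilde E_g}$ covering $\tilde E_g$ with multiplicity $\leq N_X$, and using the doubling bound $\lambda(3B)\leq c_d\,\lambda(B)$, summation yields $\lambda(\tilde E_g)\leq N_X\,C\,c_d\,\eta^\alpha\,\lambda(B)$. Choosing $\eta$ so that $N_X\,C\,c_d\,\eta^\alpha<c_m^{-1}\epsilon$ contradicts the lower bound on $\lambda(\tilde E_g)$, forcing alternative~\eqref{alternative 1}.

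\textbf{Main obstacle.} The main obstacle, as in~\cite{EMSAnn} and~\cite{KT}, is to secure the \emph{uniformity} of the good-function constants over all $g\in G$ and $\gamma\in\Gamma$; without it the Besicovich sum could not be controlled. The factorisation $\Lambda_{g,\gamma}\circ\phi$ with a \emph{fixed} analytic $\phi$ is precisely what lets Proposition~\ref{goodness} supply such uniform constants. A secondary point is to ensure that $D$ is compact in $A_L$ (rather than merely in $V$); this is handled by defining $D$ intrinsically as a norm ball inside $A_L$ and observing that, under alternative~\eqref{alternative 1}, the orbit $g\Omega_0\gamma p_L$ is pinned into an arbitrarily thin $V$-tube around $A_L$ as $\Phi$ shrinks, so the dichotomy is detected entirely by the $A_L$-valued compact $D$.
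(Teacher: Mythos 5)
You have the right architecture — uniform $(C,\alpha)$-goodness via the factorization $\Lambda_{g,\gamma}\circ(\rho\circ\Theta)$ with $\phi=\rho\circ\Theta$ fixed, maximal-ball construction, Besicovich covering, doubling, and a final choice of $\eta$ to contradict $\lambda(\tilde E_g)\geq c_m^{-1}\epsilon\lambda(B)$. The goodness step and the Besicovich/doubling bookkeeping match the paper's proof. But there is a genuine gap in the dichotomy step.

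The problematic claim is: \emph{``Otherwise each $B_t$ is a proper subball of $3B$, the supremum $\Nm{g\Theta(\cdot)\gamma(t)p_L}_{B_t}$ is (up to the $\rho_\Phi$-buffer) equal to $R_0/\eta$ by maximality.''} Maximality of $B_t$ only tells you that $N_{\gamma(t)}:=g\Theta(\cdot)\gamma(t)p_L$ leaves $\Phi$ somewhere on $\partial B_t$. Since $\Phi$ is a thin neighbourhood of the lower-dimensional compact $D\subset A_L$, leaving $\Phi$ does \emph{not} force $\Nm{N_{\gamma(t)}}$ to be of order $R_0/\eta$: the orbit can escape in the direction \emph{transverse} to $A_L$ while its norm stays bounded, in which case your single good-function inequality on $\Nm{N_{\gamma(t)}}$ gives no contradiction at all. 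Concretely, if $w\notin D+B_V(0,\rho_\Phi)$ and the nearest point of $A_L$ to $w$ lies in $D$, then necessarily $\mathrm{dist}(w,A_L)>\rho_\Phi$ while $\Nm{w}$ can be as small as $\rho_\Phi$, far below $R_0/\eta$. Your final ``secondary point'' remark acknowledges the issue (``pinned into an arbitrarily thin $V$-tube around $A_L$'') but offers no mechanism to enforce it; nothing in the argument as written pins the orbit near $A_L$.

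The paper's proof closes exactly this hole. It fixes a linear map $\Lambda$ with $\ker\Lambda=A_L$ and replaces the single tube by the ``split'' neighbourhoods
$\Phi=\{\Nm{v}<MR,\,\Nm{\Lambda v}<b\}$ and $\Psi=\{\Nm{v}<R,\,\Nm{\Lambda v}<b/M\}$. Escape from $\Phi$ then forces \emph{either} $\Nm{N_x}_{\overline B_{s,x}}\geq MR$ \emph{or} $\Nm{T_x}_{\overline B_{s,x}}\geq b$ with $T_x:=\Lambda\circ N_x$, and the $(C,\alpha)$-good estimate is applied to whichever of the two scalar functions is responsible. The factorization through the fixed analytic $\phi$ furnishes uniform $(C,\alpha)$ constants for \emph{both} families $\Nm{N_x}$ and $\Nm{T_x}$ simultaneously (Proposition~\ref{goodness} holds for any post-composing linear map $\Lambda$). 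To repair your proof you should replace the tube $\Psi=D_0+B_V(0,\eta\rho_\Phi)$ and the tube $\Phi$ by split neighbourhoods of this kind, and run the good-function estimate on $\Nm{\Lambda\circ N_{\gamma(t)}}$ in the transverse-escape case; otherwise the Besicovich sum simply does not control $\lambda(\tilde E_g)$.
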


\begin{proof} %Choose a finite collection $\mathcal{L}$ of $\Q_S$-linear functionals with values in a single $\Q_p$ such that 
%\[ \bigcap_{f\in\mathcal{L}}f^{-1}(0) = A_L. \]
 Choose $M\geq 1$ a large enough real number, to be fixed later (cf.~\eqref{eq M}), and write
 \[\Nm{D_0}=\max_{x\in D_0}\Nm{x}.\]
\paragraph{First step}
Depending on~$M$ and~$R$, our first task is to make explicit the compact~$D$, and the neighbourhoods~$\Phi$ and~$\Psi$ we will be working with.

 We choose for~$D$ the closed ball in~$A_L$ of radius~$MR$:
\[ D=\left\{v\in A_L^{\vphantom{M}}\ \middle|\ \Nm{v}\leq M\Nm{D_0}\right\}.\]
Let $\Phi$ be a given neighbourhood of $D$ in~$V$. If the conclusion of the proposition holds for a neighbourhood~$\Phi$ of $D$ in $V$, then it necessarily holds for any larger neighbourhood.
% Thus, possibly shrinking $\Phi$, we can assume by Proposition~\ref{linearisation of neighbourhood} that
%\begin{equation}\label{A7} \text{for any~$y\in K$, we have~$\chi_\Phi(y)\leq 1$.} \end{equation}

Let $\Lambda$ be a $\Q_S$-linear endomorphism of~$V\to V$ with kernel exactly $A_L$. Since $\Lambda$ is linear, it is an open map, and thus there is a constant $b>0$ and~$R>\Nm{D_0}$ such that
\begin{equation}\label{Choice Phi}
	\left\{
		v\in V^{\vphantom{M}}\ 
	\middle|\ \Nm{v}< MR\text{ and }\Nm{\Lambda(v)}<b
	\right\}
		\subseteq
	\Phi.
\end{equation}
As we may shrink~$\Phi$, we will assume this inclusion is an equality. We note~$\Phi$ is then an open neighbourhood.
%\[ \{v\in V\ |\ \Nm{v}\leq MR\text{ and }\forall f\in\mathcal{L},\ |f(v)|<b\}\subset \Phi.\] 
Associated with~$\Phi$, and these~$b$ and~$R$, we define the neighbourhood~$\Psi$ of~$D_0$ to be
\begin{equation}\label{Choice Psi}
\Psi = \left\{v\in V\ \middle|\ \Nm{v}< R\text{ and }\Nm{\Lambda(v)}<\frac{b}{M}\right\}.
\end{equation}
As we ensured~$M\geq1$, we have
\begin{equation}\label{A8}
\Psi\subseteq \Phi.
\end{equation}
%\[ \Psi = \{v\in V\ |\ \Nm{v}\leq R\text{ and }\forall f\in\mathcal{L},\ |f(v)|<\frac{b}{M}\}.\]

We fix~$g\in G$.
We will prove that for these choices of $D$, of~$\Phi$ and $\Psi$, the alternative between~\ref{alternative 1} and~\ref{alternative 2} is exhaustive, for a suitable~$M$, yet to be fixed, but fixed independently from~$g$. 
\paragraph{Second step} We introduce some notations to interpret~\ref{alternative 1} and~\ref{alternative 2}.
%\begin{itemize}
%\item we recall~$\rho$ denotes the representation of $G$ on~$V=\bigwedge^{\dim(L)}\mathfrak{g}_{\Q_S}$,
%\item and we invoke $\Theta:B\to\Omega_0$ a parametrisation associated to $\rho$ as in \S\ref{parametrisation},
%\end{itemize}
Let us denote, for any $v\in V$, the following the two maps
\begin{align*}
N_{v}&:t\mapsto g\Theta(t)\gamma\cdot p_L \text{ and}\\
T_{v}=\Lambda\circ N_{v}&:t\mapsto\Lambda(g\Theta(t)\gamma\cdot p_L)
\end{align*}
from~$3B$ to~$V$. We define the subset
\[ 
	E=
		\left 
			\{t\in B^{\vphantom{M}}\ 
		\middle|
			\ \exists x\in\Gamma\cdot p_L,
			%\ g\Theta(t)\gamma\Gamma/\Gamma\in K
			%\text{ and }
			N_x(t)\in\Psi
		\right\}\subseteq B.
\]
The case~\eqref{alternative 2} readily means
\begin{equation}\label{A11}
\mu(\Theta(E))<\eps.
\end{equation}
%Since $\Psi\subset\Phi$, by~\eqref{A8}, and $\chi_\Phi(y)\leq 1$ on $K$, by~\eqref{A7},  the vector $\gamma\cdot p_L$ appearing in the definition of $E$ is unique. For $s\in E$, let us denote it $\gamma_s\cdot p_L$. %Let $3B$ denotes the ball with same centre as $B$ and radius three times the one of $B$. 

For~$s$ in~$E$ and~$x=\gamma\cdot p_L$ in~$\Gamma\cdot p_L$, we denote~$B_{s,x}=B(s,r_{s,x})$ the open ball centred at~$s$ with largest radius~$r_{s,x}$ such that
\begin{enumerate}[label=(\roman*)]
\item \label{As2} the ball~$B_{s,x}$ is contained in~$3B$,
\item \label{As3} its image by~$N_x$ satisfies
\begin{equation}\label{As eq}
N_{x}(B_{s,x})\subseteq\Phi.
\end{equation}
\end{enumerate}
This~$E$ and the~$B_{s,x}$ are so defined that the case~\ref{alternative 1}, for some~$x$ in~$\Gamma\cdot p_L$, would be implied (actually equivalent in a pure ultrametric setting) by
\begin{equation}\label{A10}
B\subseteq B_{s,x}
\end{equation}
for some~$s$.

Let~$s$ be in~$E$. By definition of~$E$ there is~$x$ in~$\Gamma\cdot p_L$ such that~$N_{x}(s)\in\Psi$. By~\eqref{A8}, we have a fortiori~$N_x(s)\in\Phi$. Since~$\Phi$ is is open and~$N_{x}$ is continuous, the radius~$r_{s,x}$ is non zero, and~$B_{s,x}$ is non empty. Note that~$E$ is contained in~$B$ hence bounded. As a consequence
\begin{equation}\label{induce covering}
\text{ the balls~$B_{s,x}$ induce a covering of the bounded subset~$E$.}
\end{equation}
This will be used to invoke Besicovich covering property.

\paragraph{Third step}\label{step 3} In step we will put aside the case~\ref{alternative 1} of the Proposition.
Let~$r$ be the radius of~$B$. By maximality of the radius~$r_{s,x}$ there is a point~$t$ at distance~$r_{s,x}$ form~$s$ for which one of the two properties~\ref{As2} or~\ref{As3} fails. 
Assume~\ref{As2} fails: $t$ does not belong to~$3B$. We have~$\Nm{s}<r$ as~$s$ belong to~$b$, and we have~$\Nm{t}\geq 3$ as~$t$ does not belong to~$3B$. By triangular inequality we get
\[
r_{s,x}=\Nm{t-s}\geq\Nm{t}-\Nm{s}> 3r-r=2r.
\] 
and, for~$b$ in~$B$,
\[
\Nm{s-b}\leq\Nm{s}+\Nm{b}<r+r=2r<r_{s,x}.
\]
As this holds for every~$b$ in~$B$, we have~$B\subseteq A_s$: by~\eqref{A10} we are in the case~\eqref{alternative 1} of the Proposition.
\paragraph{Fourth step} We now assume that the case~\eqref{alternative 1} of the Proposition does not occur. Our objective is to prove the case~\eqref{alternative 2} occurs, by proving~\eqref{A11}.
By contraposition of the argument in the previous step~\ref{step 3}, it follows that, for any~$s$ in~$E$, and~$x$ in~$\Gamma\cdot p_L$, we have the following.
\begin{itemize}
\item Firstly~$r_{s,x}\leq 2r$, which implies that~$r_{s,x}$ is not the maximal radius for which~\ref{As2} holds: the closed ball, say~$\overline{B}_{s,x}$, of radius~$r_{s,x}$ centred at~$s$ (not necessarily the closure of~$B_{s,x}$) satisfies
\begin{equation}\label{A contained B}
\overline{B}_{s,x}\subseteq 3B.
\end{equation}
As~$s$ and~$x$ are arbitrary it follows 
\begin{equation}\label{Union contained 3B}
\bigcup_{s,x}B_{s,x}\subseteq 3B.
\end{equation}
\item and thus that~$r_{s,x}$ is a maximal radius for which~\ref{As3} holds: there is a point~$t$ at distance~$r_{s,x}$ from~$s$ for which
\begin{equation}\label{not in Phi}
N_x(t)\notin\Phi.
\end{equation}
\end{itemize}
By our choice~\eqref{Choice Phi} of~$\Phi$, this means that, at least one of the following occurs
\[
	\Nm{N_x}_{\overline{B}_{s,x}}
		=
	\sup_{\overline{B}_{s,x}}\Nm{N_{x}}
		\geq
	\Nm{N_{x}(t)}
		\geq 
	MR
	~\text{ or }~
	\Nm{N_x}_{\overline{B}_{s,x}}
		=
	\sup_{\overline{B}_{s,x}}\Nm{T_{x}}
		\geq
	\Nm{T_{x}(t)}
		\geq 
	b.
\]
Let~$f_{s,x}=\Nm{N_{x}}$ if the left inequalities occur, or~$f_{s,x}=\Nm{T_{x}(t)}$ otherwise. 

We shift our interest to~$E$. 
If an element~$t\in \overline{B}_{s,x}$ also belong to~$E$, it satisfies the property~$N_x(t)\in\Psi$. This implies similarly, by our choice~\eqref{Choice Psi} of~$\Psi$, that both~$\Nm{N_x(t)}\leq R$ and~$\Nm{T_x(t)}\leq b/M$ occur. Hence
\[
	\Nm{N_x}_{\overline{B}_{s,x}\cap E}
		=
	\sup_{\overline{B}_{s,x}\cap E}\Nm{N_{x}}
		\leq 
	R 
		\leq \Nm{N_x}_{\overline{B}_{s,x}}
	~\text{ and }~
	\Nm{T_x}_{\overline{B}_{s,x}\cap E}
		=
	\sup_{B_{s,x}\cap E}\Nm{T_x} 
		\leq 
	\frac{b}{M}
		\leq 
	\Nm{T_x}_{\overline{B}_{s,x}}.
\]

\paragraph{Fifth step} We invoke the~$(C,\alpha)$ good property.

 Note that the maps~$N_x$ and~$T_x$ are obtained from $\rho\circ\Theta:3B\to\mathrm{End}(V)$ by post-composing by linear maps into~$V$, namely the evaluation map~$\rho(g)\mapsto \rho(g)\cdot x$ and our~$\Lambda:V\to V$. 

From our choice of $\Theta$ and from Proposition~\ref{goodness}, there exist constants $(C,\alpha)$ such that~$\Nm{T_\gamma}$ and $\Nm{N_\gamma}$ are all $(C,\alpha)$-good functions on~$3B$. The ball~$A_{s,x}$ is contained in~$3B$ by~\eqref{A contained B}.

%Let $s\in E$. The set $\Phi$ was so defined that $g\Theta(t)\gamma_s\cdot p_L\in\partial\Phi$ implies that either $\sup_{A_s}T_{\gamma}\geq b$ or $\sup_{A_s}N_\gamma \geq MR$. 
Applying the $(C,\alpha)$-good property apply to~$f=f_{s,x}=\Nm{N_x}$ or~$\Nm{T_x}$ as above, with~$\eps=R$ or~$b/M$ respectively, we have
\begin{equation}\label{eq good} 
	\text{for any }s\in E,
	\quad
	\lambda(B_{s,x}\cap E)
		\leq
	\lambda(\overline{B}_{s,x}\cap E)
		\leq 
	CM^{-\alpha}\lambda(\overline{B}_{s,x}).
\end{equation}
\paragraph{Sixth step} We use the Besicovitch Property~\eqref{besicovich} to the covering of~$E$ by the~$B_{s,x}$: there exists a constant $N$ (not depending on~$M$) and a finite subset~$F\subseteq E\times\Gamma\cdot p_L$ such that $1_E\leq \sum_{(s,x)\in F} 1_{B_{s,x}}\leq N$. Applying~$\lambda$ gives
\begin{equation}\label{eq besicovich1} \lambda(E)\leq \sum_{(s,x)\in F} \lambda(B_{s,x}\cap E)
\end{equation}
and, using~\eqref{Union contained 3B},
\begin{equation}\label{eq besicovich2}
\quad \sum_{(s,x)\in F} \lambda(B_{s,x})\leq N\lambda\left(\bigcup_{(s,x)\in F} B_{s,x}\right)\leq N\lambda(3B).
\end{equation}
%Recall from~\eqref{comp meas}  that there is a constant $c$ such that $\mu(\Theta(E))\leq c\lambda(E)$.
\paragraph{Final step}
We combine equations~\eqref{comp meas}, \eqref{eq besicovich1}, \eqref{eq good},  \eqref{eq besicovich2} and \eqref{doubling} to obtain
\begin{align*}
\mu(\Theta(E))
	&\leq c_m\lambda(E)\\
	&\leq c_m\sum_{(s,x)\in F} \lambda(B_{s,x}\cap E)\\ 
	&\leq c_mCM^{-\alpha}\sum_{(s,x)\in F}\lambda(B_{s,x})\\
	&\leq c_mNCM^{-\alpha}\lambda(3B)\\ 
	&\leq c_dc_mNCM^{-\alpha}\lambda(B).
\end{align*}
Choosing $M$ greater than 
\begin{equation}\label{eq M}
(\varepsilon^{-1}c_dc_mNC\lambda(B))^{1/\alpha}
\end{equation} yields
\[  \mu(\Theta(E))= \mu\left( \left\{\omega \in\Omega_0\, \bigl|\, g\cdot\omega\Gamma \in K,\, g\cdot\omega\Gamma\cap\Psi\neq\emptyset \right\}\right)<\eps.\]
We have proven~\eqref{A11} as we announced. This completes the proof.
\end{proof}
\subsubsection{} At last we conclude with the objective of this whole appendix~\ref{AppA}.

A general setting can be the following. We fix~$L$ in~$\RatQ$ and denote~$\rho$ the linear representation of~$G$ on~$V=\bigwedge^{\dim(L)}\lie{g}_{\Q_S}$.
We consider a probability measure~$\mu$ on~$G$ of the following kind. There is a measurable bounded subset~$\Omega$ of~$G$ of full measure, meaning~$\mu(\Omega)=1$, and such that for every~$\omega$ in~$\Omega$ there is neighbourhood~$\Omega_0$ of~$\omega$ in~$\Omega$ and a good parametrisation~$\Theta:B\to \Omega_0$ with respect to~$\rho$ such that:~$\Omega_0$ is Zariski dense in~$\Omega$, and~$\Theta(\lambda\restriction_B)$ is comparable with~$\mu\restriction_{\Omega_0}$. We let~$\mu_\Omega$ denote the direct image of~$\mu$ in~$G/\Gamma$.
\begin{proposition}[\emph{c.f.}~{\cite[Prop.~3.13]{EMSAnn}}]\label{AProp313}The setting is as above.
We consider a sequence of translated measures~$\mu_i=g_i \cdot \mu_\Omega$ of probabilities on~$G/\Gamma$. Suppose that it converges to a limit~$\mu_\infty$ such that~$\mu_\infty(X([L],W))>0$.

Then there is a compact subset~$D$ in~$A_L$, and a sequence~$\left(\gamma_i\cdot p_L\right)_{i\geq0}$ in~$\Gamma\cdot p_L$, such that, for any neighbourhood~$\Phi$ of~$D$ in~$V$, 
\[\forall i\gg 0,\ g_i\cdot\Omega\cdot\gamma_i\cdot p_L\subseteq \Phi.\]
\end{proposition}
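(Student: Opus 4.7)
My plan is to follow the classical Dani--Margulis linearisation strategy, but in a form streamlined by Proposition~\ref{EMS312}: produce a compact target $D_0$ in $A_L$ by transferring mass from the singular locus into the linearisation, then use Proposition~\ref{EMS312} together with weak convergence of $(\mu_i)_{i\geq 0}$ to $\mu_\infty$ to force case (1) of that proposition to occur for every $i \gg 0$.

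First, I would reduce to the situation where $L$ has minimal dimension among groups in $\RatQ$ with $\mu_\infty(X([L],W)) > 0$; this ensures $\mu_\infty(X^*([L],W)) > 0$, because the strictly smaller-dimensional strata contribute zero mass. By inner regularity (the set $X^*([L],W)$ is a Borel, hence Radon, subset of $G/\Gamma$, cf.~\eqref{Borel}--\eqref{Radon}), choose a compact $S \subset X^*([L],W)$ with $\mu_\infty(S) > 0$. Using the no-self-intersection bijection~\eqref{self2}, lift $S$ to a compact $\tilde S \subset X^*(L,W) \subset G$, and set $D_0 := \eta_L(\tilde S) \subset A_L$, which is compact. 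Because $\Omega$ is bounded and every point of $\Omega$ has a good local parametrisation, cover $\Omega$ by finitely many pairwise disjoint good parametrised pieces $\Omega_0^{(1)},\dots,\Omega_0^{(m)}$.

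Second, fix $\varepsilon < \mu_\infty(S)/(4m)$ and apply Proposition~\ref{EMS312} to each $\Omega_0^{(j)}$ with target $D_0$ and parameter $\varepsilon$. This produces a compact $D^{(j)} \subset A_L$; set $D := \bigcup_j D^{(j)}$. For a neighbourhood $\Phi$ of $D$, take the common refinement $\Psi \subset \bigcap_j \Psi^{(j)}(\Phi)$, still a neighbourhood of $D_0$. The set $U_\Psi = \{x \in G/\Gamma : x \cdot p_L \cap \Psi \neq \emptyset\}$ is open (it is the complement of the upper semicontinuity locus $\chi_\Psi \geq 1$) and it contains the image of $\tilde S$, hence $\mu_\infty(U_\Psi) \geq \mu_\infty(S)$. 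By lower semicontinuity (Lemma~\ref{lemme recap mesures}\eqref{semicontinu ouvert}), $\liminf_i \mu_i(U_\Psi) \geq \mu_\infty(S)$. Writing $\mu_i(U_\Psi)$ via $\mu_i = g_i \mu_\Omega$ and the disjoint decomposition of $\Omega$, this gives $\sum_j \mu(\{\omega \in \Omega_0^{(j)} : g_i\omega\Gamma\cdot p_L\cap\Psi\neq\emptyset\}) \geq \mu_\infty(S)/2$ for $i \gg 0$, so the pigeonhole principle forces at least one summand to exceed $\varepsilon$. Hence for each such $i$ there is some index $j(i)$ for which case (2) of Proposition~\ref{EMS312} fails, so case (1) supplies $\gamma_i \in \Gamma$ with $g_i\,\Omega_0^{(j(i))}\,\gamma_i p_L \subseteq \Phi$.

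The main obstacle is to upgrade this ``piecewise'' conclusion into the required statement over the \emph{full} $\Omega$ with a \emph{single} sequence $(\gamma_i)_{i\geq 0}$ that works for every shrinking $\Phi$. The approach is: (i) refine the parametrisation cover by choosing the $\Omega_0^{(j)}$'s to have pairwise nontrivial overlap, and use Proposition~\ref{linearisation of neighbourhood} (no self-intersection in a neighbourhood) to force, for $\Phi$ small enough, all the witnesses $\gamma^{(j)}$ arising from different pieces to satisfy $\gamma^{(j)} p_L = \gamma p_L$ (since on an overlap two such witnesses yield lifts of the same point of $G/\Gamma$ in $X^*(L,W)$, which differ by an element of $\Gamma_N = \mathrm{Stab}_\Gamma(p_L)$); (ii) distribute the mass so that case (2) fails for \emph{every} $j$ simultaneously, obtained by enlarging $S$ inside $X^*([L],W)$ to guarantee positive $\mu_\infty$-weight in each ``column'' $g_i \Omega_0^{(j)}$, or equivalently by replacing the single $S$ with a finite disjoint family; (iii) exploit the discreteness of $\Gamma \cdot p_L$ (Lemma~\ref{Gorbit discrete}) together with a standard diagonal/Cantor extraction across a countable basis of shrinking neighbourhoods of $D$ to produce one sequence $(\gamma_i)_{i\geq 0}$ that works uniformly. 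The discreteness is crucial: it ensures that as $\Phi$ shrinks to $D$, the admissible values $\gamma_i p_L$ accumulate on the compact $D$ in a controlled way, yielding a stable sequence rather than one that drifts.
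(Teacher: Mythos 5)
Your overall linearisation strategy---transferring mass to the linearisation via a compact $D_0\subset A_L$, applying Proposition~\ref{EMS312} piecewise, and combining lower semicontinuity on open sets with a pigeonhole count to rule out the small-mass alternative---coincides with the paper's, and the first two-thirds of your sketch parallel the paper's constructions closely. But the step you yourself flag as ``the main obstacle'' is exactly where the proof lives, and the decisive ingredient is a lemma you have not located: Lemma~\ref{lemma bound Zariski}. For $\Omega_0$ Zariski dense in a bounded $\Omega\subset G$ and a linear representation $\rho$, it gives a single constant $M$ with $\sup_{\omega\in\Omega}\lVert\Lambda(\omega\cdot v)\rVert < M\cdot\sup_{\omega\in\Omega_0}\lVert\Lambda(\omega\cdot v)\rVert$ for \emph{every} endomorphism $\Lambda$ of $V$ and every $v\in V$. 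Since any one good piece $\Omega_0^{(j)}$ is open, hence Zariski dense, in $\Omega$, case~(1) of Proposition~\ref{EMS312} on a \emph{single} piece---$g_i\Omega_0^{(j_i)}\gamma_i p_L\subseteq\Phi'$ for a suitably scaled-down $\Phi'$---upgrades at once to $g_i\Omega\gamma_i p_L\subseteq\Phi$. This is precisely why the paper builds its compact $D$ with an extra multiplicative factor $M$ relative to $D'=\bigcup_j D^{(j)}$: the Zariski-density bound carries containment in $\Phi'$ (near $D'$) to containment in $\Phi$ (near $D$), with no patching of witnesses across pieces, no consistency argument, and no diagonal extraction.

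Your proposed workaround in steps (i)--(iii) would not fill the gap as sketched. The no-self-intersection bijection~\eqref{self2} holds on $X^*(L,W)$, not on the neighbourhoods $\eta_L^{-1}(\Phi)$ of $\eta_L^{-1}(D)$ you actually need to control; transporting it there requires Proposition~\ref{linearisation of neighbourhood}, which in turn imposes a compactness restriction on the ambient region, and even granting it, the witnesses $\gamma^{(j)}$ arising from distinct overlapping pieces would only be determined modulo $\Gamma_N$, leaving a coherent-choice problem you do not resolve. Step~(ii), forcing case~(1) on \emph{every} piece simultaneously, asks for more than Lemma~\ref{LemmaCover} delivers (one failing piece per $i$) and more than is needed once Lemma~\ref{lemma bound Zariski} is in hand. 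Step~(iii), the Cantor diagonal across shrinking neighbourhoods, is superfluous: the $\Phi$-uniformity is supplied by baking the $M$-factor into $D$ once and for all. A smaller point: the paper lifts a compact $K$ directly from $X([L],W)$ (any compact in $G/\Gamma$ lifts to a compact in $G$ because $G\to G/\Gamma$ is a covering); your preliminary reduction to minimal-dimension $L$ and to $X^*([L],W)$ is therefore unnecessary for the proposition as stated---that reduction happens upstream, in the main proof of the theorem, before the proposition is invoked.
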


\begin{proof} We note that~$\mu_\infty$ is a bounded measure and~$G/\Gamma$ is a Radon space, hence is inner regular~(\cite[IX\S3.3 Prop.\,2\,a)]{BouINT}). As~$X([L],W)$ is a Borel subset (see~\eqref{Borel}),
%\begin{itemize}
%\item that~$X^*(L,W)\Gamma/\Gamma$ is a Radon space (cf.~\cite[\S3.1, Cor. to Prop.\,2]{BouINT});
%\item that~${\mu_\infty}$ is bounded on (the Borel~$\sigma$-algebra of)~$X^*(L,W)\Gamma/\Gamma$.
%\end{itemize}
for any~$\delta>0$, there exists a compact set $K\subset X([L],W)$ such that 
\[\mu_\infty(K)> \eps'=\mu_\infty(X([L],W)-\delta.\]
We choose~$\delta$ small enough so that~$\eps'>0$, for instance~$\delta=\mu_\infty(X([L],W)/2$.
We can lift $K$ to a compact set $K'$ in $X(L,W)$. !!!
%!!! Take~$\eps''=\eps'/2$ and let~$C$ be a compact subset of~$\Omega$ such that~$\mu(C)\geq\mu(\Omega)-\eps''$.

Similarly, as~$\mu$ is bounded,~$G$ is Radon, and~$\Omega$ is measurable, taking~$0<\eps''<\eps'$, for instance~$\eps''=\eps'/2$, there is a compact subset~$C$ of~$\Omega$ such that
\[\mu(C)\geq\mu(\Omega)-\eps''.\]

By hypothesis, For every~$\omega$ in~$C$ there is a neighbourhood~$\Omega_0$ of~$\omega$ in~$\Omega$ that admits a good parametrisation~$\Theta:B\to\Omega_0$. As~$C$ is compact, we may extract a finite cover~$\Omega_1,\ldots ,\Omega_n$ of~$C$
by subsets of~$\Omega$, each admitting a good parametrisation, say~$\Theta_i$ resp., with respect to the representation~$\rho$ as in \S\ref{parametrisation}.

We choose~$0<\eps<(\eps'-\eps')/n$ (say~$\eps=\mu_\infty(X([L],W)/8n$ with above values.) 
We take~$D_0=K'\cdot p_L$, which is a compact subset of $A_L\subset V$. For every~$1\leq j\leq n$, we apply Proposition~\ref{EMS312} for~$\Omega_0:=\Omega_j$, for this~$\eps$ and for this~$D_0$. There is a compact subset~$D_j=D\subset A_L$ such that Proposition~\ref{EMS312} holds for $\Omega_0=\Omega_j$, and $\eps$. Note that each~$D_j$ is arbitrarily large. Writing~$D'=\bigcup_{1\leq j\leq n} D_j$, which is a compact subset of~$A_L$, we may assume~$D_j=D'$ for every~$1\leq j\leq n$.

We let~$M_j$ be as in Lemma~\ref{lemma bound Zariski} applied to~$\Omega_0=\Omega_j$. Denote~$M=\max_{1\leq j\leq n} M_j$.
As each~$M_j$ is arbitrarily large, we may assume~$M=M_j$ for every~$1\leq j\leq n$. We finally define
\begin{equation}
D=\left\{v\in A_L\,\middle|\,\Nm{v}\leq M\max_{d\in D'}\Nm{d} \right\}
\end{equation}
We will prove Proposition~\ref{AProp313} for this~$D$. To this effect let us now argue with an arbitrary neighbourhood~$\Phi$ of~$D$. As~$\Phi$ can be chosen arbitrary small, we may, and will, take it of the form~\eqref{Choice Phi} with~$R>\Nm{D'}$. Let~$\Phi'$ be correspondingly given by~\eqref{Choice Psi}. This is a neighbourhood of~$D'$.

We ensured that, for every~$1\leq j\leq n$, Proposition~\ref{EMS312} holds for~$\Omega_0=\Omega_j$, for~$D_0$, for~$\eps$ and for~$D$ as~$D'$. To the neighbourhood~$\Phi'$ of~$D'$ as~$\Phi$, there is a corresponding arbitrarily small neighbourhood of~$D_0$, say~$\Psi_j$, as~$\Psi$. We set~$\Psi=\bigcap_{1\leq j\leq n}\Psi_j$, as~$\Psi_j$ is arbitrary small, and we may assume that, for every~$1\leq j\leq n$, we have~$\Psi=\Psi_j$.

We now move back along the correspondence from~$G/\Gamma$ to~$V$. Firstly~$\stackrel{-1}{\eta_L}(\Psi)$ is a neighbourhood of~$K'$ in~$G$, and, as~$\phi$ is an open map,~$\Psi'=\phi\circ\stackrel{-1}{\eta_L}(\Psi)$ is a neighbourhood of~$K=\phi(K')$.

We defined~$\eps$ so as to ensure that we may apply Lemma~\ref{LemmaCover} below. We deduce that for each~$i\gg 0$, there is~$1\leq j_i\leq n$ such that we have
\[g_i\mu_{\Omega_{j_i}}(\Psi')>\eps.\] 
We claim  
\begin{equation*}
\text{that case~\eqref{alternative 2} of Proposition~\ref{EMS312} does not occur.}
\end{equation*}
\begin{proof}
We now go from~$G/\Gamma$ to~$V$. We have~$\widetilde{\Psi}\cdot\Gamma=\stackrel{-1}{\phi}(\Psi)$, hence
\[
g_i\mu\restriction_\Omega(\widetilde{\Psi}\cdot\Gamma)=g_i\mu_{\Omega_{j_i}}(\Psi')>\eps.
\]
We have then
\[
g_i\mu\restriction_\Omega(\widetilde{\Psi}\cdot\Gamma)=\mu(\{\omega\in\Omega_{j_i}~|~g_i\cdot \omega\in\widetilde{\Psi}\cap\Gamma\}.
\]
We note finally that
\begin{align*}
g_i\cdot \omega\in\widetilde{\Psi}\cap\Gamma
&\Leftrightarrow
g_i\omega\Gamma\cap\widetilde{\Psi}\Gamma\neq \emptyset
\Leftrightarrow
g_i\omega\Gamma\cap\widetilde{\Psi}\neq \emptyset
\Leftrightarrow
g_i\omega\Gamma\cdot p_L\cap\widetilde{\Psi}\cdot p_L\neq \emptyset
\\&\Leftrightarrow
g_i\omega\Gamma\cdot p_L\cap\Psi\neq\emptyset.
\end{align*}
Hence~$\mu(\{\omega\in\Omega|g_i\omega\Gamma\cdot p_L\cap\Psi\neq\emptyset\})=g_i\mu_{\Omega_{j_i}}(\Psi')>\eps$. This proves our claim
\end{proof}
Necessarily this is case~\eqref{alternative 1} of Proposition~\ref{EMS312} that occurs. Namely there is~$\gamma=\gamma_i$ in~$\Gamma$ such that
\[
g_i\cdot \Omega_{j_i}\cdot \gamma_i\cdot p_L\subseteq\Phi'.
\]
By Lemma~\ref{lemma bound Zariski} (for~$\Lambda=\mathrm{Id}_V$ and~$\Lambda$ as in the definition of~$\Phi$), our choice of~$M$, of~$\Phi$ and of~$\Phi'$, this implies
\[
g_i\cdot \Omega\cdot \gamma_i\cdot p_L\subseteq\Phi.
\]

\end{proof}
\begin{lemma}\label{LemmaCover}
 Let~$\Omega_j\subseteq \Omega$, where~$1\leq j\leq n$ be finitely many subsets of~$\Omega$ such that~
\[
\mu\left(\Omega\smallsetminus\bigcup_{1\leq j\leq n}\Omega_j\right)<\eps''.
\]
Assume that for a sequence of translates, we have a limit measure~$\mu_\infty=\lim_{i\geq0}g_i\cdot\mu_\Omega$ and let~$K$ be such that~$\mu_\infty(K)>\eps’$. 

Then for every neighbourhood~$\Phi$ of~$K$, there is a sequence~$(j_i)_{i\geq0}$ in~$\{1;\ldots;n\}$ such that
\[
\forall i\gg 0,~g_i\cdot\mu_{\Omega_{j_i}}(\Phi)\geq\frac{\eps''-\eps'}{n}.
\]
\end{lemma}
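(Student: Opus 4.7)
The plan is essentially a pigeonhole argument combined with the lower semicontinuity of the weak convergence of probabilities on open sets. As stated the bound $\tfrac{\eps''-\eps'}{n}$ in the conclusion only makes sense as $\tfrac{\eps'-\eps''}{n}$ (positive in the intended application where $\eps'' < \eps'$); I shall prove this corrected version.

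First I would use that $\Phi$ is a neighbourhood of $K$: replacing $\Phi$ by its interior if necessary, we have $K\subseteq\Phi$ with $\Phi$ open, so $\mu_\infty(\Phi)\geq\mu_\infty(K)>\eps'$. Applying the lower semicontinuity statement~\eqref{semicontinu ouvert} from Lemma~\ref{lemme recap mesures} to the weak convergence $g_i\cdot\mu_\Omega\to\mu_\infty$, I get
\[
\liminf_{i\to\infty}(g_i\cdot\mu_\Omega)(\Phi)\geq \mu_\infty(\Phi)>\eps',
\]
so for all $i$ large enough, $(g_i\cdot\mu_\Omega)(\Phi)>\eps'$.

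Next, I would decompose the measure $\mu|_\Omega$. Writing $U=\Omega\smallsetminus\bigcup_{j=1}^n\Omega_j$ and setting $\Omega'_j=\Omega_j\smallsetminus\bigcup_{k<j}\Omega_k$ to obtain disjoint pieces, we have $\mu|_\Omega=\mu|_U+\sum_{j=1}^n\mu|_{\Omega'_j}$, and since $\Omega'_j\subseteq\Omega_j$ we have $\mu|_{\Omega'_j}\leq\mu|_{\Omega_j}$ as positive measures. Pushing this forward under $g\mapsto g\Gamma$ and applying $g_i$ yields the pointwise inequality of measures
\[
g_i\cdot\mu_\Omega\ \leq\ g_i\cdot\mu|_U\ +\ \sum_{j=1}^n g_i\cdot\mu_{\Omega_j}.
\]
Evaluating at the Borel set $\Phi$, and using $(g_i\cdot\mu|_U)(\Phi)\leq\mu(U)<\eps''$, I obtain
\[
\eps'\ <\ (g_i\cdot\mu_\Omega)(\Phi)\ <\ \eps''+\sum_{j=1}^n(g_i\cdot\mu_{\Omega_j})(\Phi),
\]
whence $\sum_{j=1}^n(g_i\cdot\mu_{\Omega_j})(\Phi)>\eps'-\eps''$ for all sufficiently large $i$.

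Finally, a pigeonhole on the index $j$ gives, for each such $i$, at least one index $j_i\in\{1,\ldots,n\}$ with
\[
(g_i\cdot\mu_{\Omega_{j_i}})(\Phi)\ \geq\ \frac{\eps'-\eps''}{n},
\]
which is the desired conclusion. There is no genuine obstacle here; the only subtlety is applying semicontinuity to an open set, and the correct interpretation of $\mu_{\Omega_j}$ as the pushforward of the (unnormalised) restriction $\mu|_{\Omega_j}$, which is the way these measures appear in the calling proof of Proposition~\ref{AProp313}.
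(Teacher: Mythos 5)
Your proof is correct and is essentially the paper's argument run in the contrapositive direction: the paper assumes the conclusion fails for all $j$ and derives $\limsup_i (g_i\cdot\mu_\Omega)(\Phi)\leq\eps'$, contradicting the lower semicontinuity bound $\mu_\infty(\mathring\Phi)>\eps'$, whereas you establish $(g_i\cdot\mu_\Omega)(\Phi)>\eps'$ first and then pigeonhole. You also correctly identified the sign typo in the stated bound, which the paper's own proof silently corrects as well.
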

\begin{proof} Assume by contradiction that we have a neighbourhood~$\Phi$ of~$K$ such that for~$i\gg 0$, there does not exist~$j$ such that
\[g_i\cdot\mu_{\Omega_{j}}(\Phi)\geq\frac{\eps''-\eps'}{n}.\]
Let~$A=\bigcup_{1\leq j\leq n}\Omega_j$ and~$B=\Omega\smallsetminus\bigcup_{1\leq j\leq n}\Omega_j$. We decompose
\[
g_i\cdot\mu_\Omega=g_i\cdot\left(\mu_\Omega\restriction_A\right)+g_i\cdot\left(\mu_\Omega\restriction_B\right).
\]
We bound
\[
	\left(g_i\cdot\left(\mu_\Omega\restriction_B\right)\right)(\Phi)
		\leq
	\left(g_i\cdot\left(\mu_\Omega\restriction_B\right)\right)(G/\Gamma)
		\leq
	\mu_\Omega\restriction_B(G/\Gamma)
		=
	\mu_\Omega(B)	
		=
	\eps'',
\]
and for~$i\gg0$,
\[
	\left(g_i\cdot\mu_\Omega\restriction_A\right)(\Phi)
		\leq
	\sum_{j=1}^n g_i\cdot\left(\mu_\Omega\restriction_{\Omega_j}\right)(\Phi)
		\leq
	\sum_{j=1}^n \frac{\eps'-\eps''}{n}
		=
	\eps'-\eps''.	
\]
Adding these bounds gives, for~$i\gg 0$
\[
	g_i\cdot\mu_\Omega(\Phi)
		=
	g_i\cdot\left(\mu_\Omega\restriction_A\right)(\Phi)
	+
	g_i\cdot\left(\mu_\Omega\restriction_B\right)(\Phi)
		\leq
	(\eps'-\eps'')
	+
	\eps''
		=
	\eps'.
\]
On the other hand, for the open neighbourhood~$\mathring{\Phi}$ of~$K$, we get~$\mu_\infty(\mathring{\Phi})\geq\mu_\infty(K)>\eps'$, whence
\[
	\lim g_i\cdot\mu_\Omega(\mathring{\Phi})
		=
	\mu_\infty(\mathring{\Phi})>\eps'.
\]
Combining these inequalities yields the contradiction
\[
	\eps'
		\geq
	\varlimsup g_i\cdot\mu_\Omega(\Phi)
		\geq 
	\varlimsup g_i\cdot\mu_\Omega(\mathring{\Phi})
		=
	\lim g_i\cdot\mu_\Omega(\mathring{\Phi})
		=
	\mu_\infty(\mathring{\Phi})
		>
	\eps'.
\]
\end{proof}
\begin{lemma}\label{lemma bound Zariski}
 Let~$\Omega_0$ be Zariski dense subset of a bounded subset~$\Omega$ of~$G$ and a linear representation~$\rho$ of~$G$ on~$ V$.

There is a constant~$M$ such that for every endomorphism~$\Lambda$ of~$V$, and every~$v$ in~$V$
\begin{equation}\label{eq lemma bound Zariski}
	\sup_{\omega\in \Omega}\Nm{\Lambda(\omega\cdot v)}
		<
	M\cdot\sup_{\omega\in \Omega_0}\Nm{\Lambda(\omega\cdot v)}.
\end{equation}
\end{lemma}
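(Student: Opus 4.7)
The plan is to recast the desired inequality as an equivalence of two seminorms on a finite-dimensional space of $V$-valued functions on $G$. The key observation is that even though $\Lambda$ and $v$ are allowed to vary, the maps $\omega\mapsto\Lambda\rho(\omega)v$ all lie in a fixed finite-dimensional space of $V$-valued regular maps, so a single equivalence of seminorms on that space produces the uniform constant $M$.

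First I would introduce the $\Q_S$-submodule $W$ of regular functions on $\G$ spanned by the matrix coefficients of the algebraic representation $\rho$, namely the functions $\omega\mapsto\langle\xi,\rho(\omega)u\rangle$ for $u\in V$ and $\xi\in V^\ast$. Since $V$ is finite-dimensional, $W$ has finite dimension over $\Q_S$, at most $(\dim V)^2$. For any $\Lambda\in\End(V)$ and $v\in V$, the coordinates of $\phi_{\Lambda,v}\colon\omega\mapsto\Lambda\rho(\omega)v$ in a fixed basis of $V$ are $\Q_S$-linear combinations of matrix entries of $\rho$, hence lie in $W$. Thus $\phi_{\Lambda,v}$ belongs to the finite-dimensional $\Q_S$-module $E := V \otimes_{\Q_S} W$, viewed as the space of $V$-valued maps on $G$ whose coordinates lie in $W$.

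Next, I would put two seminorms on $E$, namely
\[
p_\Omega(\phi)=\sup_{\omega\in\Omega}\Nm{\phi(\omega)}\quad\text{and}\quad p_{\Omega_0}(\phi)=\sup_{\omega\in\Omega_0}\Nm{\phi(\omega)}.
\]
Both are finite because functions in $W$ are regular, hence continuous, on $G$ and $\Omega$ is relatively compact. The lemma amounts to showing $p_\Omega \leq M \cdot p_{\Omega_0}$ on $E$, uniformly in $\phi$. The main step is to check that the two seminorms have the same kernel. The inclusion $\ker p_\Omega\subseteq\ker p_{\Omega_0}$ is immediate from $\Omega_0\subseteq\Omega$. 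For the converse, if $p_{\Omega_0}(\phi)=0$, then each coordinate $\phi_i\in W$ vanishes on $\Omega_0$; being regular on $\G$, it vanishes on the Zariski closure of $\Omega_0$, which by the density hypothesis contains $\Omega$; hence $p_\Omega(\phi)=0$ as well.

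Two seminorms on a finite-dimensional $\Q_S$-module with the same kernel are automatically equivalent: passing to the quotient by the common kernel produces two norms on a finite-dimensional $\Q_S$-module, and all such norms are equivalent (decompose $\Q_S=\prod_{v\in S}\Q_v$ and use the classical equivalence of norms on finite-dimensional $\Q_v$-vector spaces, together with the finiteness of $S$). This yields a constant $M'$, depending only on $\rho$, $\Omega$, $\Omega_0$ and the fixed norm on $V$ but not on $\Lambda$ or $v$, such that $p_\Omega(\phi)\leq M'\,p_{\Omega_0}(\phi)$ for every $\phi\in E$. Applied to $\phi_{\Lambda,v}$, this is the announced bound; to recover the strict inequality of the statement, one enlarges $M'$ to $M:=M'+1$, the case where the right-hand side vanishes being trivial since by the same Zariski density argument the left-hand side vanishes too. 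I do not foresee a substantial obstacle beyond the bookkeeping of the $\Q_S$-structure, which is benign because the whole argument reduces place-by-place to the classical finite-dimensional fact.
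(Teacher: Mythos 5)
Your proof is correct, and it takes a genuinely different route from the one in the paper. Both exploit the same two facts — Zariski density of $\Omega_0$ in $\Omega$, and boundedness of $\Omega$ — but differently packaged. The paper works inside $\End(V)$: it sets $W=\operatorname{span}_{\Q_S}\rho(\Omega_0)$, uses Zariski density to conclude $\rho(\Omega)\subseteq W$, picks a basis of $W$ drawn from $\rho(\Omega_0)$, expresses $\rho(\omega)=\sum_j c_j(\omega)\rho(\omega_j)$ and bounds the coordinate functions $c_j$ on the bounded set $\Omega$, then finishes with the triangle inequality. This yields a quasi-explicit constant $M>\dim(W)\cdot M'$. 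You instead work in the function space $E=V\otimes_{\Q_S}W$ (with your $W$ being the span of matrix coefficients rather than the span of operators) and recast the desired bound as an equivalence $p_\Omega\leq M\,p_{\Omega_0}$ of two seminorms on $E$: boundedness of $\Omega$ gives their finiteness, Zariski density gives equality of their kernels, and then the general finite-dimensional norm-equivalence principle (place by place over $\Q_S$, using that $S$ is finite) does the rest. Your argument is softer and more conceptual, but less explicit; the paper's proof is more elementary in that it avoids the kernel argument and the abstract equivalence principle, producing the constant directly. One small caution you already handle correctly but should flag cleanly: the place-by-place reduction implicitly requires the norm on $V$ and hence $p_\Omega$, $p_{\Omega_0}$ to be $\Q_S$-homogeneous in the product sense (e.g.\ a max-of-places norm), so that the equivalence-of-norms fact over each $\Q_v$ can be invoked; otherwise the quotient $E/\ker p_{\Omega_0}$ is not a normed $\Q_v$-space place by place. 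With that convention spelled out, the argument is sound.
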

\begin{proof} The subspace~$W$ generated by~$\rho(\Omega_0)$ in~$\mathrm{End}(V)$ contains~$\rho(\Omega)$: the condition~$\rho(\omega)\in W$ defined a Zariski closed subset which contains~$\Omega_0$. Pick a basis, say~$\omega_1,\ldots,\omega_{\dim(W)}$, of~$W$ made with elements of~$\Omega_0$. As~$\Omega$ is bounded in~$G$ so is~$\rho(\Omega)$ in~$W$. We may write~$\rho(\omega)=c_1(\omega)\omega_1+\ldots+c_{\dim(W)}\omega_{\dim(W)}$. Each coefficient~$c_j(\omega)$ is continuous in~$\rho(\omega)$ hence describe a bounded range of values as~$\omega$ ranges through~$\Omega$. Let~$M'$ be a common upper bound to the value of the~$c_j$ on~$\Omega$ and~$M>\dim(W)\cdot M'$.
Let~$X=\sup_{\omega\in \Omega_0}\Nm{\Lambda(\omega\cdot v)}$.
We have
\begin{align*}
	 \Nm{\Lambda(\omega\cdot v)}
 	&=
 	\Nm{\Lambda(c_1(\omega)\omega_1+\ldots+c_{\dim(W)}\omega_{\dim(W)})}
 	\\&=
 	\Nm{	c_1(\omega)\Lambda(\omega_1 v)+\ldots+c_{\dim(W)}\Lambda(\omega_{\dim(W)}v))}
 	\\&\leq
 	\Nm{c_1(\omega)}\cdot\Nm{\Lambda(\omega_1 v)}+\ldots+ \Nm{c_{\dim(W)}(\omega)}\cdot\Nm{\Lambda(\omega_{\dim(W)} v)}
 	\\&\leq
 	M' \cdot X+\ldots+M'\cdot X
 	\\&=
 	M' \cdot\dim(W)\cdot X
 	\\&<M\cdot X.
\end{align*}
Applying~$\sup_{\omega\in \Omega_0}$ we obtain~\eqref{eq lemma bound Zariski}
\end{proof}

\section{Review on Ratner's theorems and some variants}\label{AppRatner}

Confusingly, ``Ratner's theorem'' can refer to various theorems, depending on the author. These theorems have variants, of different degrees of generality. In addition, a same statement may be known under different names. 

Fortunately, the review article~\cite{RatnerICM} makes precise many such statements; with comments on the variants found in the literature, on the history of their proof, their evolution from weaker theorems, the logical links between the statements, etc.

Generally speaking, \emph{Ratner's theory} can be understood as 
$$\text{``rigidity'' properties of ``unipotent'' ``flows'' on ``homogeneous spaces''.}$$
Let us explain these terms. \emph{Rigidity} is an informal term (cf.~\cite{WhatisMeasureRig}) which means here that objects of a (``richer'') algebraic nature (the homogeneous subsets) are actually ubiquitous at the a priori ``weaker'' topological level (questions about orbit closure), or the ``even weaker'' measure-theoretic level (ergodicity of finite Borel measures).\footnote{Cartan's theorem about analyticity in Lie group theory, or Margulis arithmeticity theorem, Mostow-Prasad rigidity theorem may bring a grasp on the rigidity phenomenon.} 
 \emph{Unipotent group} can correspond to various kind of groups: one parameter, monogeneous, higher dimensional algebraic unipotent, $\Ad$-unipotent, connected, disconnected, discontinuous, generated by unipotents or~$\Ad$-unipotent. \emph{Flows} is a dynamical image to indicate the underlying action (of the unipotent group), which may be more suitable for a one parameter group (the parameter being seen as a time parameter). An \emph{Homogeneous space}~$G/\Gamma$ is understood in the theory of topological groups~$G$ and~$\Gamma$, but belonging to some quite general class, which encompass at least the connected real Lie groups, but with more general groups involved as the $S$-arithmetic regular Lie groups of~\cite{RatnerICM}; the stabiliser~$\Gamma$ is often chosen to be a lattice (sometimes assumed to be arithmetic, \cite{TomanovOrbits}), but not always (see~\cite{MT3}.)

Mostly three kind of theorems are pinpointed.
\begin{enumerate}
\item \emph{Ratner's orbit closure}. A topological kind of theorem, about the closure of orbits of unipotent groups, which is the original Raghunathan's conjecture. (\cite[Conjecture~1, Theorem~3, {\S}4 Theorem~S2]{RatnerICM})
\item \emph{Ratner's classification}. A measure-theoretic or ergodic-theoretic kind, about the algebricity of the measures with unipotent invariance. This is sometimes known as Raghunathan's measure conjecture, as Ratner's (strict) measure rigidity, or as Ratner's classification theorem.(\cite[Conjectures~2,3, Theorems~1,2, {\S}4 Theorem~S1]{RatnerICM} see also~\cite[Theorems~10,13,14, Remark after Theorems~14]{RatnerICM})
\item \emph{Ratner's equidistribution}. A dynamic kind, about the asymptotic distribution of a unipotent trajectory in the closure of their orbit. This is sometimes known as Ratner's uniform distribution theorem, or as distribution rigidity. One also refers to uniform distribution using the term ``equidistribution''. (\cite[Theorems~6,8, {\S}4 Theorem~S3]{RatnerICM} see also~\cite[Theorem~10]{RatnerICM}))
\end{enumerate} 
 
In classification statements, algebraicity of a probability measure~$\mu$ has a group theoretic meaning, namely, in an homogeneous space~$G/\Gamma$:
\begin{equation}\label{algebraicity def}
\begin{array}{c}
 \text{the support of~$\mu$ is a closed orbit of a closed group~$L$ of~$G$}\\
 \text{and~$\mu$ under which~$\mu$ is $L$-invariant.}
\end{array}
\end{equation}
This ``algebraicity'' is sometimes referred as ``homogeneity'' by recent authors. The definition~\eqref{algebraicity def} is implicit. It is sometimes useful to have a more explicit descriptions of the algebraic measures, namely of the groups~$L$ which can be involved in the statement above. In particular in the case in which~$\Gamma$ is an arithmetic or $S$-arithmetic lattice (when this definition makes sense for the considered group~$G$), we expect these groups to be algebraic and defined over~$\Q$. This can be found in~\cite[Proposition~3.2]{Shah91} and~\cite[Theorems~1,~2]{TomanovOrbits}.

Here, we will mostly rely on the measure-theoretic classification theorem, in the~$S$-arithmetic setting for algebraic groups, as can be found in~\cite{MargulisTomanov}. More particularly for groups generated by unipotents as in~\cite[Theorem~2]{MargulisTomanov}.

In the remaining of this appendix~\ref{AppRatner}, we will add some complement to~\cite[Theorem~2]{MargulisTomanov}. We need to explicit the class of algebraic measures involved in~\cite{MargulisTomanov}, in terms of algebraic group; we were unable to find such precisions in the available literature, when the context of~\cite[Theorem~2]{MargulisTomanov} is concerned. In the archimedean case~$\Q_S\ciso\R$, these complements are more simply stated, and well known. The knowledgeable reader interested in the archimedean case only can skip what remains of this appendix.

\subsection{On groups of Ratner class in the $S$-arithmetic case} \label{sectionB1}
\subsubsection{}
 When applying the classification theorem, one faces the following class of subgroups. For a subgroup~$L$ of~$G$, we denote by~$L^+$ the group generated by the ``unipotent subgroups'' (the definition of which depends on the context).

\begin{definition}\label{defiRG}	 Let~$\Lscr$ be the class of closed subgroups~$L$ of~$G$ fulfilling the following conditions.
\begin{enumerate}
\item[1a)] \label{RG1a}	The subset~$L\cdot\Gamma$ is closed. Equivalently, the orbit~$L\cdot\Gamma/\Gamma$ is closed in~$G/\Gamma$.
\item[1b)] \label{RG1b} (stronger than 1a) The intersection~$\Gamma_L:=\Gamma\cap L$ is a lattice in~$L$. Namely: the orbit~$L\cdot\Gamma/\Gamma$ is the support of a~$L$-invariant probability measure. We will denote~$\mu_L$ the mentioned probability measure (though it depends on~$\Gamma$ as well). 
\item[2)] \label{RG2} (assuming 1b.) The probability~$\mu_L$ is ergodic under the action of~$L^+$.
\end{enumerate}
Note that the condition~2) is actually equivalent to the seemingly weaker one: there exists a \emph{unipotent} subgroup in~$L$ which acts ergodically on~$\mu_L$. Another variant of condition~2) is
\begin{enumerate}
\item[2$^\prime$)] The orbit~$L^+\Gamma/\Gamma$ is dense in~$L\cdot\Gamma/\Gamma$.
\end{enumerate}

\end{definition}
\subsubsection{}

From now on, we assume that~$G$ is the topological group~$\G(\Q_S)$ of~$\Q_S$-points of a semi-simple linear algebraic group~$\G$. We assume that~$\Gamma$ is an $S$-arithmetic lattices.
The ``unipotent subgroups'' of~$G$ are the subgroups of the form~$\U(\Q_S)$ where~$\U$ is an algebraic unipotent subgroup of~$\G$ defined over~$\Q_S$. For a subgroup~$L$ in~$G$, we denote~$L^+$ the subgroup generated the subgroups in~$L$ of the form~$\U(\Q_S)$ as before.

 In this context, the previous class~$\Lscr$ is closely related with the following more explicit, and algebraically defined, one. This adds a slight precision to the work of \cite{TomanovOrbits}. We first settle some terminology.
\begin{definition}
\begin{enumerate}
\item We say that an linear algebraic group~$\H$ over~$\Q$ is~\emph{$\Q_S$-a\-ni\-so\-tro\-pic} if, for \emph{every} place~$\Q_v$ in~$S$, the algebraic group~$\H_{\Q_v}$ is anisotropic. Equivalently, the topological group~$\H(\Q_S)$ is compact.

\item We say that~$\H$ is \emph{of non compact type}
if every quasi-factor of a Levi factor is \emph{not} $\Q_S$-anisotropic.
\end{enumerate}
\end{definition}
Here is what we call the ``Ratner class'' of algebraic subgroups of~$\G$. This is also the class denoted~$\mathscr{F}$ in \cite{TomanovOrbits}.
\begin{definition}Let~$\RatQ$ be the class of algebraic subgroups~$\L$ of~$G$ defined over~$\Q$ which satisfy the following conditions.
\begin{enumerate}
\item The algebraic group~$\L$ is Zariski connected over~$\Q$.
\item The radical of~$\L$ is \emph{unipotent}. Equivalently the Levi factors of~$\L$ are \emph{semi-simple} (rather than merely reductive).
\item The algebraic group~$\L$ is of non compact type.
\end{enumerate}
Abusing notations, we will often~$L\in\RatQ$ instead of~$\L\in\RatQ$ when~$L=\L(\Q_S)$.
\end{definition}
The first two conditions together means that~$\L_{\overline{\Q}}$ admits no non constant character.

We claim that the conjunction of these three conditions is actually equivalent to any the following.
\begin{equation}\label{CriterionB2}
\text{The subgroup~$\L(\Q_S)^+$ is $\Q$-Zariski dense in~$\L$.}
\end{equation}
\begin{equation}\label{CriterionB3}
\text{The $S$-arithmetic lattice~$\Gamma_L=\Gamma\cap \L(\Q_S)$ is $\Q$-Zariski dense in~$\L$.}
\end{equation}
\begin{proof}
The condition~\eqref{CriterionB2} is proved firstly by decomposing a Levi factor into quasi-factors, in order to be reduced to the case of a connected almost simple group. We then check, in the latter case, the subgroup~$\L(\Q_S)^+$ is either trivial (anisotropic case), or otherwise is infinite and normal in the $\Q$-Zariski dense subgroup~$\L(\Q_S)$ of~$\L$. So is its Zariski closure, which must hence be~$\L$. For condition~\eqref{CriterionB3}, one implication is given by the following, the converse of which is easy.
\end{proof}
\begin{proposition}[Borel Zariski density~\cite{BorelDensity}~{\cite[Theorem~4.10 p.\,205]{PR}}, in its $S$-arithmetic variant of Wang {\cite{Wang}, cf. \cite[{\S}I.3.2 3.2.10]{Margulis}}
under the form of]\label{BorelWang} Let~$\H$ be a connected algebraic semi-simple group over~$\Q$ of non compact type.

Then any $S$-arithmetic subgroup is a $\Q_S$-Zariski dense lattice.
\end{proposition}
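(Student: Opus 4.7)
The plan is to split the statement into two parts: the lattice property, and the Zariski density, and to treat them independently. For the lattice property, I would simply invoke the $S$-arithmetic version of the Borel--Harish-Chandra theorem (as in~\cite[\S3.2]{Margulis} or~\cite[\S5.4]{PR}), which says that $S$-arithmetic subgroups of semisimple $\Q$-groups are always lattices, provided $\H$ admits no non-trivial $\Q$-rational character; this is automatic here since $\H$ is semisimple (and in any case the non-compact-type hypothesis is not needed for this part).

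For the Zariski density, let $\Gamma\subseteq \H(\Q_S)$ be an $S$-arithmetic subgroup and let $\mathbf{L}\subseteq\H$ be the $\Q_S$-Zariski closure of $\Gamma$. I would apply Chevalley's theorem (over $\Q_S$) to find a $\Q_S$-linear representation $\rho:\H\to\mathbf{GL}(V)$ and a line $\ell\subseteq V$ whose $\H(\Q_S)$-stabiliser is exactly the normaliser $\mathbf{N}_\H(\mathbf{L})(\Q_S)$. Then $\Gamma$ stabilises $\ell$, so the orbit map $\H(\Q_S)\to \mathbf{P}(V)(\Q_S)$ factors through a $\Gamma$-equivariant map from $\H(\Q_S)/\Gamma$ into $\mathbf{P}(V)(\Q_S)$. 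Pushing forward the $\H(\Q_S)$-invariant probability measure on $\H(\Q_S)/\Gamma$ (which exists because $\Gamma$ is a lattice), one obtains a Borel probability measure $\nu$ on $\mathbf{P}(V)(\Q_S)$ which is invariant under the whole group $\H(\Q_S)$.

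The key step is then a Furstenberg/Borel-type recurrence argument: for any one-parameter unipotent subgroup $\mathbf{U}\leq\H$ over $\Q_S$, the action of $\mathbf{U}(\Q_S)$ on $\mathbf{P}(V)(\Q_S)$ preserves $\nu$, and on the other hand all $\mathbf{U}(\Q_S)$-invariant probability measures on $\mathbf{P}(V)(\Q_S)$ are supported on the fixed-point set of $\mathbf{U}(\Q_S)$ (this is the classical lemma proved by looking at the eigenvalues of unipotent matrices and the behaviour of the push-forward under iteration; it works identically at archimedean and non-archimedean places). Therefore the support of $\nu$ consists of $\H(\Q_S)^+$-fixed points of $\mathbf{P}(V)(\Q_S)$. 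Since this support is in particular $\H(\Q_S)^+$-invariant and contains the class of $\ell$, it follows that $\ell$ is fixed by $\H(\Q_S)^+$, i.e.\ $\H(\Q_S)^+$ is contained in $\mathbf{N}_\H(\mathbf{L})(\Q_S)$.

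To conclude, I would invoke the criterion~\eqref{CriterionB2} already recorded in the excerpt: because $\H$ is of non-compact type, $\H(\Q_S)^+$ is $\Q_S$-Zariski dense in $\H$. Combined with the preceding inclusion, this forces $\mathbf{N}_\H(\mathbf{L})=\H$, i.e.\ $\mathbf{L}$ is normal in $\H$. A semisimple group of non-compact type decomposes, over $\overline{\Q_S}$, into an almost-direct product of its simple factors; the normal $\Q_S$-subgroup $\mathbf{L}$ is a product of some of these factors, together with unipotent and central contributions. Since each simple factor is of non-compact type, the argument applied to the projection of $\Gamma$ onto the quotient by $\mathbf{L}$ shows that every missing factor would have to be $\Q_S$-anisotropic, contradicting the hypothesis; hence $\mathbf{L}=\H$. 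The main technical obstacle is precisely this last step of promoting ``$\mathbf{L}\trianglelefteq\H$'' to ``$\mathbf{L}=\H$''; in the purely archimedean case it is standard, but in the $S$-arithmetic setting one has to argue place by place and control the contributions of the anisotropic factors, which is where Wang's refinement of Borel's original argument is really needed.
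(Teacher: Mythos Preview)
The paper's own proof is a two-line invocation: it cites Borel--Wang density as a black box (in the form ``the $\Q_S$-Zariski closure of any lattice contains $\H(\Q_S)^+$'') and then observes that the non-compact-type hypothesis is precisely the criterion~\eqref{CriterionB2} making $\H(\Q_S)^+$ $\Q$-Zariski dense in $\H$. Your plan, by contrast, is a self-contained re-proof of Borel--Wang via the Furstenberg lemma; that is indeed the classical route, so the difference is one of depth of citation rather than of strategy.

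There is, however, a genuine gap in your execution. You assert, invoking~\eqref{CriterionB2}, that $\H(\Q_S)^+$ is $\Q_S$-Zariski dense in $\H$. But~\eqref{CriterionB2} gives only $\Q$-Zariski density, and the stronger claim is false in general: if $\H$ is $\Q$-simple, non-compact type means merely that \emph{some} place $v\in S$ is isotropic; at an anisotropic place $w$ one has $\H(\Q_w)^+=\{e\}$, so the $\Q_S$-Zariski closure of $\H(\Q_S)^+$ is trivial in the $w$-factor. Hence the implication ``$\H(\Q_S)^+\subseteq\mathbf N_\H(\mathbf L)(\Q_S)$, therefore $\mathbf N_\H(\mathbf L)=\H$'' does not go through over $\Q_S$. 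Your fallback (decompose into simple factors over $\overline{\Q_S}$ and claim each is of non-compact type) has the same defect: non-compact type is a hypothesis on the $\Q$-quasi-simple factors, not on their $\Q_v$-constituents, and your ``projection of $\Gamma$ onto the quotient by $\mathbf L$'' is the trivial map since $\Gamma\subseteq\mathbf L$.

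The missing observation, which the paper's proof uses implicitly, is that $\Gamma\subseteq\H(\Q)$, so its $\Q_S$-Zariski closure $\mathbf L$ is automatically defined over $\Q$ (it is the base change of the $\Q$-Zariski closure). Once you know this, the $\Q$-Zariski density of $\H(\Q_S)^+$ supplied by~\eqref{CriterionB2} is exactly what you need. Incidentally, the detour through $\mathbf N_\H(\mathbf L)$ is unnecessary: applying Chevalley to $\mathbf L$ itself (whose line $\Gamma$ already stabilises) gives $\H(\Q_S)^+\subseteq\mathbf L(\Q_S)$ directly from the Furstenberg step, and then $\Q$-density finishes.
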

\begin{proof} Actually Borel-Wang density theorems concludes that the $\Q_S$-Zariski closure of a lattice~$\Gamma$ contains~$\H(\Q_S)^+$, without the non compact type assumption.

If~$\H$ is of non compact type, then~$\H(\Q_S)$ is~$\Q$-Zariski dense.

\end{proof}

We first state some easy consequences of criterion~\eqref{CriterionB2}.
\begin{lemma} Let~$W$ be a subgroup of~$G$ such that~$W^+=W$. Then the Zariski closure of~$W$ over~$\Q$ is of class~$\RatQ$.

Every group of class~$\RatQ$ can be achieved in such a way.
\end{lemma}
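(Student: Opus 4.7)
The plan is to establish the two assertions separately, using criterion~\eqref{CriterionB2} as the bridge between the topological condition $W^+=W$ and membership in $\RatQ$.

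For the first assertion, let $W$ be a subgroup of $G$ with $W^+=W$ and let $\L$ denote the $\Q$-Zariski closure of $W$ in $\G$. By definition of $L^+$, every subgroup of the form $\U(\Q_S)$ contained in $W$, with $\U$ a unipotent algebraic subgroup of $\G$ defined over $\Q_S$, satisfies $\U \subseteq \L_{\Q_S}$ (because $\U(\Q_S)$ is Zariski dense in $\U$ in characteristic zero), and hence $\U(\Q_S) \subseteq \L(\Q_S)^+$. Since $W$ is generated by such subgroups, we get the chain of inclusions $W \subseteq \L(\Q_S)^+ \subseteq \L(\Q_S)$. Taking $\Q$-Zariski closures and using that $W$ is $\Q$-Zariski dense in $\L$ by construction, we conclude that $\L(\Q_S)^+$ is also $\Q$-Zariski dense in $\L$. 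By criterion \eqref{CriterionB2}, this is exactly the statement that $\L \in \RatQ$.

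For the converse assertion, given $\L \in \RatQ$, take $W := \L(\Q_S)^+$. Then $W$ is by definition generated by subgroups of the form $\U(\Q_S)$ for $\U$ a $\Q_S$-unipotent algebraic subgroup of $\L$; these are also unipotent subgroups of $\G$ contained in $W$, so $W^+ \supseteq W$. The reverse inclusion is tautological since $W^+$ is a subgroup of $W$ by construction. Hence $W^+=W$. Finally, criterion \eqref{CriterionB2} applied to $\L \in \RatQ$ gives precisely that $W = \L(\Q_S)^+$ is $\Q$-Zariski dense in $\L$, so the $\Q$-Zariski closure of $W$ is $\L$, as required.

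The main potential obstacle is the verification that any $\Q_S$-unipotent subgroup $\U(\Q_S)$ contained in $W \subseteq \L(\Q_S)$ actually gives rise to an algebraic subgroup $\U$ of $\L_{\Q_S}$ and hence sits inside $\L(\Q_S)^+$; this relies on the Zariski density of $\U(\Q_S)$ in $\U$ in characteristic zero, which is standard. Everything else is then a direct application of the equivalence \eqref{CriterionB2} cited in the text.
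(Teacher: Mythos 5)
Your proof is correct and takes essentially the same route as the paper: show $W^+ = W \subseteq \L(\Q_S)^+$ so that Zariski density of $W$ in $\L$ transfers to $\L(\Q_S)^+$, then invoke criterion~\eqref{CriterionB2}; for the converse take $W=\L(\Q_S)^+$. The paper's version is a one-liner in each direction, and your extra detour through $\U\subseteq\L_{\Q_S}$ (via Zariski density of $\U(\Q_S)$ in $\U$) is not actually needed — the definition of $L^+$ only requires the inclusion $\U(\Q_S)\subseteq\L(\Q_S)$, which is immediate from $\U(\Q_S)\subseteq W\subseteq\L(\Q_S)$ — but it is harmless.
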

\begin{proof} As~$W^+$ is $\Q$-Zariski dense in its $\Q$-Zariski closure~$\L$, then~$W^+\subseteq\L(\Q_S)^+$ is a fortiori Zariski dense. Hence the first claim.

Given~$\L$ in~$\RatQ$ we may pick~$W=\L(\Q_S)^+$. This concludes.
\end{proof}
\subsubsection{}
We now turn to the relations between the classes~$\Lscr$ and~$\RatQ$.
\begin{itemize}
\item One passes from~$\Lscr$ to~$\RatQ$ by associating to~$L$ the Zariski closure (over~$\Q$) of a small enough neighbourhood in~$L$ of the neutral element. This is also the Zariski closure~(over~$\Q$) of~$L^+$.
\item Conversely, one passes from~$\RatQ$ to~$\Lscr$ by the following construction.
\end{itemize}
In the converse direction we introduce this definition, which makes explicit the results of~\cite{TomanovOrbits}.
\begin{definition}\label{defiLpp}
 For an algebraic group~$\L$ in~$\RatQ$, writing~$L=\L(\Q_S)$, we denote
\begin{equation}\label{defiLppeq}
\Lpp=\overline{(\Gamma\cap L)\cdot L^+}.
\end{equation}
\end{definition}
\begin{lemma}\label{LemmeLppclosed}
 This~$\Lpp$ is a closed subgroup of~$L$.
\end{lemma}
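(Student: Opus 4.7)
The plan is to show first that the set $(\Gamma\cap L)\cdot L^+$ is already a subgroup of $L$, and then to invoke the elementary fact that the closure of a subgroup in a topological group is again a subgroup, contained in any closed superset. The result will then follow once we observe that $L=\mathbf{L}(\mathbb{Q}_S)$ is closed in $G$.

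The first step is to show that $L^+$ is \emph{normal} in $L$. Recall that $L^+$ is the subgroup generated by all sets of the form $\mathbf{U}(\mathbb{Q}_S)$, where $\mathbf{U}$ ranges over the algebraic unipotent subgroups of $\mathbf{L}$ defined over $\mathbb{Q}_S$. For any $\ell\in L$ and any such $\mathbf{U}$, the conjugate $\ell\mathbf{U}\ell^{-1}$ is again an algebraic unipotent subgroup of $\mathbf{L}$ defined over $\mathbb{Q}_S$ (unipotence and the field of definition are preserved under inner automorphisms by $\mathbb{Q}_S$-points). Hence $\ell\,\mathbf{U}(\mathbb{Q}_S)\,\ell^{-1}=(\ell\mathbf{U}\ell^{-1})(\mathbb{Q}_S)$ is one of the generating sets of $L^+$, so $\ell L^+\ell^{-1}\subseteq L^+$. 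This shows $L^+\triangleleft L$.

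In particular, $\Gamma\cap L\subseteq L$ normalises $L^+$, so $(\Gamma\cap L)\cdot L^+=L^+\cdot(\Gamma\cap L)$. Routine checks then give that this set is closed under multiplication and inversion: for $\gamma_1,\gamma_2\in\Gamma\cap L$ and $u_1,u_2\in L^+$, we have
\[
(\gamma_1 u_1)(\gamma_2 u_2)=\gamma_1\gamma_2\bigl(\gamma_2^{-1}u_1\gamma_2\bigr)u_2\in(\Gamma\cap L)\cdot L^+,
\]
using normality to place $\gamma_2^{-1}u_1\gamma_2\in L^+$, and similarly $(\gamma u)^{-1}=\gamma^{-1}(\gamma u^{-1}\gamma^{-1})\in(\Gamma\cap L)\cdot L^+$. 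The identity evidently belongs to both factors. Hence $(\Gamma\cap L)\cdot L^+$ is a subgroup of $L$.

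It is a standard fact that the closure of a subgroup of a topological group is a subgroup (the inversion map is a homeomorphism and multiplication is separately continuous, so closures are stable under both). Thus $\Lpp=\overline{(\Gamma\cap L)\cdot L^+}$ is a subgroup of $G$. Finally, since $\mathbf{L}$ is an algebraic subgroup of $\mathbf{G}$ over $\mathbb{Q}$, its $\mathbb{Q}_S$-points form a closed subset $L\subseteq G$; as $(\Gamma\cap L)\cdot L^+\subseteq L$, the closure also lies in $L$, so $\Lpp$ is a closed subgroup of $L$. The only thing resembling a difficulty is the normality of $L^+$, and this is immediate once one unravels the algebraic definition of $L^+$; there is no analytic subtlety to overcome here.
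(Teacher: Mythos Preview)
Your proof is correct and follows essentially the same approach as the paper: establish that $L^+$ is normal in $L$ (the paper phrases this as $L^+$ being invariant under all $\mathbb{Q}_S$-algebraic automorphisms of $\mathbf{L}$), conclude that $(\Gamma\cap L)\cdot L^+$ is a subgroup, and then take closures inside the closed group $L$. You have simply spelled out the routine verifications in more detail than the paper does.
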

\begin{proof} Both~$\Gamma\cap L$ and~$L^+$ are subgroups of~$L$. The latter,~$L^+$, is invariant under all algebraic automorphisms of~$\L$ over~$\Q_S$. It is in particular normal in~$L$. As a consequence~$(\Gamma\cap L)\cdot L^+$ is a subgroup of~$L$. As~$L$ is closed, the closure is a closed subgroup of~$L$.
\end{proof}
\begin{lemma}  This~$\Lpp$ is of class~$\Lscr$.
\end{lemma}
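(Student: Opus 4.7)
The strategy is to verify each of conditions 1a), 1b) and 2) of Definition~\ref{defiRG} for $\Lpp$, the key enabling step being that $L^+$ is an open subgroup of finite index in $L$ (this is the content of Proposition~\ref{++finiteindex}, already invoked after \eqref{defi Lpp}). Openness of $L^+$ would come from Lie-algebra considerations: since $\L$ is in $\RatQ$, it has no $\Q_S$-anisotropic quasi-factor in a Levi, so the $\Q_S$-Lie subalgebras of the algebraic unipotent subgroups span the whole Lie algebra of $\L$; via the locally convergent exponential map at each place, $L^+$ therefore contains a neighbourhood of the identity in $L$ and is open. An open subgroup of a topological group is automatically closed, and the finite-index bound is a place-by-place statement, reduced via isogeny to the simply connected case and then handled by Kneser--Tits on the isotropic almost-simple factors (the unipotent radical contributes nothing, as $\U(\Q_v)$ in characteristic zero is uniquely divisible, hence already contained in $L^+$ by Lemma~\ref{Unipotent vs 1param}). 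Granting this, since $L^+$ is open and normal in $L$, the product $(\Gamma\cap L)\cdot L^+$ is an open subgroup, hence closed, so the closure in \eqref{defiLppeq} is redundant and
\[
\Lpp \;=\; (\Gamma\cap L)\cdot L^+
\]
is itself open of finite index in $L$.

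Conditions 1a) and 1b) are then immediate. Since $\Gamma\cap L \subseteq \Lpp \subseteq L$, one has $\Gamma\cap\Lpp = \Gamma\cap L$, which is a lattice in $L$ by the $S$-arithmetic form of Borel--Harish-Chandra (as recalled in Proposition~\ref{BorelWang}); being open of finite index in $L$, $\Lpp$ inherits the lattice, so $\Gamma\cap\Lpp$ is a lattice in $\Lpp$, establishing 1b). The standard fact that, for a closed subgroup $H$ of $G$ with $H\cap\Gamma$ a lattice in $H$, the orbit $H\Gamma$ is closed in $G$, then yields 1a) for free.

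For condition 2), the inclusions $L^+\subseteq \Lpp \subseteq L$ together with the fact that $L^+$ depends only on the algebraic group and not on a finite-index subgroup (Corollary~\ref{coro finite index +}) force $\Lpp^+ = L^+$. The equality $\Lpp = L^+(\Gamma\cap L)$ shows that $L^+$ acts transitively on $\Lpp\Gamma/\Gamma \simeq \Lpp/(\Gamma\cap L)$, so the unimodularity of $L^+$ (it is generated by unipotent, hence unimodular, subgroups) yields that a $L^+$-invariant probability measure on this transitive homogeneous space is unique up to scalar. Since $\mu_\Lpp$ is $\Lpp$-invariant, \emph{a fortiori} $L^+$-invariant, it is that unique probability, and uniqueness forces ergodicity. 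The main obstacle is the enabling step: openness of $L^+$ is straightforward from the Lie-algebra argument, but the finite-index bound is where the structure theory of semisimple groups over local fields does real work and where care must be taken to treat non-simply-connected factors and the unipotent radical correctly.
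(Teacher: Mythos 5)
There is a genuine gap, and it sits exactly at the step you single out as "the main obstacle." Your verification of 1a) and 1b), resting on $\Lpp$ being open of finite index in $L$ (which is Proposition~\ref{++finiteindex}, taken as given) and then on the standard lattice-in-open-subgroup facts, is fine. The problem is your claim that $L^+$ itself is open in $L$, from which you deduce that the closure in \eqref{defiLppeq} is redundant and that $L^+$ acts transitively on $\Lpp/(\Gamma\cap L)$; both of these conclusions are false, and condition~2) does not follow as you argue.

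The flaw is in the Lie-algebra step. Membership of $\L$ in $\RatQ$ rules out quasi-factors of a Levi that are $\Q_S$-anisotropic, i.e.\ anisotropic at \emph{every} place of $S$. It does \emph{not} rule out a $\Q$-quasi-factor $F$ that is anisotropic at a single place $v\in S$ but isotropic at some other $v'\in S$. At such a place $v$, the compact group $F(\Q_v)$ contains no nontrivial unipotent one-parameter subgroups, so $L^+$ contributes nothing in the $F(\Q_v)$-direction, and the Lie algebra of $L^+$ is a \emph{proper} subalgebra of the Lie algebra of $L$. In particular $L^+$ is not open, $(\Gamma\cap L)\cdot L^+$ need not be closed, and its closure $\Lpp$ genuinely exceeds it. The paper flags exactly this phenomenon in the note following Proposition~\ref{++finiteindex}: as $\Gamma$ shrinks, $\Lpp$ approaches $L^+$, which is lower-dimensional when there are factors anisotropic at some $\Q_v$. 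Consequently the transitivity of $L^+$ on $\Lpp/(\Gamma\cap L)$ fails, and the "unique invariant probability on a transitive homogeneous space" argument for ergodicity does not apply. The paper sidesteps this by verifying condition~$2^\prime)$ instead of $2)$: by the very definition $\Lpp\Gamma/\Gamma=\overline{L^+\Gamma/\Gamma}$, so the $L^+$-orbit is dense in the support of $\mu_\Lpp$, which is condition~$2^\prime)$; ergodicity ($2)$) is then one of the Ratner-type equivalences asserted in Definition~\ref{defiRG}. To repair your proof you would either need to invoke the density formulation ($2^\prime)$) as the paper does, or supply a genuine ergodicity argument that works for a dense-but-not-onto $L^+$-orbit --- the transitive-case uniqueness argument does not extend without work.
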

\begin{proof} The group~$L$ satisfies Borel and Harish-Chandra criterion. Hence conditions~1a) and~1b) of Definition~\ref{defiRG} are fulfilled. We will see below that~$\Lpp$ is open of finite index in~$L$. Consequently\footnote{Actually~$\mu_{\Lpp}$ can be constructed as  the restriction of~$[L:\Lpp]\cdot \mu_L$ to the open subset~$\Lpp\Gamma/\Gamma$ of~$\Supp(\mu_L)=L\Gamma/\Gamma$.}~$\Lpp\cdot\Gamma/\Gamma$ 
will also satisfy conditions~1a) and~1b). By construction~$\Lpp\cdot \Gamma/\Gamma=\overline{L^+\cdot\Gamma/\Gamma}$,
which is condition~2$^\prime$).
\end{proof}

The following is essentially from~\cite{TomanovOrbits}, to which we refer.
\begin{proposition}
Let~$L$ be a group of class~$\RatG$ (for an $S$-arithmetic lattice~$\Gamma$).
Let~$\L$ be the corresponding group of class~$\RatQ$ and let~$\Lpp$ be as above.

Then~$\Lpp$ is of class~$\RatG$ and~$\mu_L=\mu_\Lpp$.
\end{proposition}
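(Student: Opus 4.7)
The plan is to reduce both assertions to the single key statement that $\Lpp$ is an open subgroup of finite index in $L$; once this is established, membership of $\Lpp$ in $\RatG$ and the identity $\mu_L=\mu_{\Lpp}$ both follow formally from the characterisation of $L$ in $\Lscr$ via condition~2$^\prime$ and the uniqueness of the Haar probability.

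First I would establish the structural fact that $\Lpp$ is an open subgroup of finite index in $L$. By Lemma~\ref{LemmeLppclosed} the set $\Lpp$ is already a closed subgroup of $L$, so what remains is the index statement. Since $L^+$ is a normal subgroup of $L$ (it is invariant under all $\Q_S$-algebraic automorphisms of $\L$, in particular under inner conjugation), the product $(\Gamma\cap L)\cdot L^+$ is an honest subgroup and $\Lpp$ is its topological closure. Working place by place, using the structure theory of Borel--Tits for reductive groups over local fields (the group $\L(\Q_v)^+$ is open in $\L(\Q_v)$, and its cokernel there is controlled by the Galois cohomology of the anisotropic kernel of the centre of $\L$), combined with the $S$-arithmetic Borel--Wang density theorem~\ref{BorelWang} applied to the semisimple quotient of $\L$, one shows that $(\Gamma\cap L)\cdot L^+$ already meets every connected component of $\L(\Q_S)$ that sits above the image of $L^+$ in the relevant cocycle space. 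Since $\L$ has no non-trivial $\Q_S$-character, the full group $L$ is reached in finitely many steps, giving $[L:\Lpp]<\infty$ and openness.

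Next I would use this open finite-index property together with the defining hypothesis $L\in\RatG$ to show $\Lpp\Gamma=L\Gamma$. Indeed, the orbit $\Lpp\Gamma/\Gamma$ is open in $L\Gamma/\Gamma$ because $\Lpp$ is open in $L$; it is also closed, being a union of finitely many (hence closed) cosets of $L\Gamma/\Gamma$ under the finite group $L/\Lpp$ acting by translation; and it contains $L^+\Gamma/\Gamma$, which by condition~2$^\prime$ of Definition~\ref{defiRG} is dense in $L\Gamma/\Gamma$. A clopen subset containing a dense subset must be the whole space, so $\Lpp\Gamma=L\Gamma$.

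Having this, the remaining two assertions are essentially formal. For membership $\Lpp\in\RatG$: the closed subgroup $\Lpp$ of $G$ has $\Lpp\cdot\Gamma$ closed (since it equals $L\cdot\Gamma$, which is closed by hypothesis), and the subgroup $\Gamma\cap\Lpp$ is a lattice in $\Lpp$ since $\Lpp$ has finite index in $L$ in which $\Gamma\cap L$ was already a lattice. The identification $(\Lpp)^+=L^+$ (unipotent subgroups of $\Lpp$ are precisely those of $L$, because $\Lpp$ is open in $L$) together with condition~2$^\prime$ for $L$ and the equality $\Lpp\Gamma=L\Gamma$ yields condition~2$^\prime$ for $\Lpp$. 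For the measure identity $\mu_L=\mu_{\Lpp}$: since $L$ and $\Lpp$ both act transitively on the single closed orbit $L\Gamma/\Gamma=\Lpp\Gamma/\Gamma$, the $L$-invariant probability $\mu_L$ is in particular $\Lpp$-invariant on that finite-volume homogeneous space, and the unique $\Lpp$-invariant probability there is by definition $\mu_{\Lpp}$, whence $\mu_L=\mu_{\Lpp}$.

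The hard part is unquestionably the first step: showing $\Lpp$ is of finite index in $L$ without any extra rationality hypotheses on the individual factors requires a careful place-by-place analysis of how $L^+$ sits inside $L$ and how $\Gamma\cap L$ fills in the missing components, for which one must delicately combine Borel--Tits structure theory, Kneser--Bruhat--Tits results on the non-compact simple case, and Borel--Wang density. Everything else is essentially bookkeeping once this index is tamed.
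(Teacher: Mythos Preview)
Your argument conflates two distinct groups that the paper (unhelpfully) both calls~$L$. In the proposition, $L$ is an arbitrary member of $\Lscr=\RatG$ (Definition~\ref{defiRG}); the group~$\Lpp$, however, is built from the algebraic group $\L(\Q_S)$ (Definition~\ref{defiLpp}), and there is no a~priori containment between $L$ and $\L(\Q_S)$. When you invoke Lemma~\ref{LemmeLppclosed} and argue that ``$\Lpp$ is an open subgroup of finite index in~$L$'', you are proving exactly Proposition~\ref{++finiteindex}, which concerns $\Lpp\leq\L(\Q_S)$, not $\Lpp\leq L$. The clopen argument in your second paragraph therefore takes place in $\L(\Q_S)\Gamma/\Gamma$, not in $L\Gamma/\Gamma$, and does not yield $\Lpp\Gamma=L\Gamma$.

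One inclusion does survive your reasoning: since every one-parameter unipotent subgroup of $L$ lies in its $\Q$-Zariski closure $\L$, we have $L^+\subseteq\L(\Q_S)^+\subseteq\Lpp$; condition~2$^\prime$ for $L$ and the closedness of $\Lpp\Gamma/\Gamma$ then give $L\Gamma/\Gamma=\overline{L^+\Gamma/\Gamma}\subseteq\Lpp\Gamma/\Gamma$. The reverse inclusion, and with it the identification of $\mu_L$ with $\mu_{\Lpp}$, is exactly the content that requires Ratner's measure classification in the $S$-arithmetic form of Margulis--Tomanov and the structural refinement of Tomanov. This is why the paper does not attempt an elementary proof here and simply refers to~\cite{TomanovOrbits}: the statement is essentially a reformulation of that classification, not something one can extract from Borel--Tits and Borel--Wang alone.
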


We end with some properties of groups of the form~$\Lpp$, which were not explicitely 
constructed in~\cite{TomanovOrbits}.
%\begin{proposition}
% The group~$\Lpp$ is closed, open, Zariski dense and of finite index in~$\L(\Q_S)$; it is of class~$\Lscr$.
%\end{proposition}

\begin{proposition}\label{++finiteindex} Let~$\L$ be in~$\RatQ$. We write~$L=\L(\Q_S)$, and, for an arbitrary~$S$-a\-rith\-me\-tic lattice~$\Gamma_L$ in~$L$, then
\[
\Lpp=\overline{\Gamma_L\cdot L^+}.
\]
is an open subgroup of finite index in~$L$.
\end{proposition}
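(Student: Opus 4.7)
My approach is to prove openness of $\Lpp$ in $L$ by showing they share the same $\Q_S$-Lie algebra, and then deduce finite index. First I would reduce to the case where $\L$ is semisimple by peeling off its unipotent radical $\U$: by Lemma~\ref{Unipotent vs 1param}, every element of $\U(\Q_S)$ lies on a one-parameter unipotent subgroup, so $\U(\Q_S)\subseteq L^+\subseteq\Lpp$. The quotient map $\pi:\L\to\L/\U$ sends $\Gamma_L$ to an $S$-arithmetic lattice in $(\L/\U)(\Q_S)$ and sends $L^+$ onto $(\L/\U)(\Q_S)^+$ (by functoriality of Jordan decomposition, Proposition~\ref{JCD} and Corollary~\ref{coro unipotent lift}). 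Hence $\Lpp$ is the full preimage of the analogous ``$\ddagger$-group'' on the semisimple quotient, and both conclusions descend from the semisimple case.

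Assume now $\L$ is semisimple of non-compact type. Decompose it as an almost-direct product $\L=\prod_i\L_i$ of its $\Q$-almost-simple factors and decompose further $\L_i(\Q_S)=\prod_v\L_i(\Q_v)$. At each local pair $(i,v)$ where $\L_i$ is isotropic over $\Q_v$, Borel--Tits combined with the Kneser--Tits theorem in characteristic zero shows that $\L_i(\Q_v)^+$ is an open normal subgroup of finite index in $\L_i(\Q_v)$; at anisotropic $(i,v)$ the group $\L_i(\Q_v)$ is compact and contains no non-trivial $\Q_v$-unipotent. Therefore $L^+$ is open of finite index in the isotropic part $L^{\mathrm{iso}}$, and its Lie algebra equals the sum $\mathfrak{l}^{\mathrm{iso}}$ of the Lie algebras of the isotropic local factors.

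The crux of the argument is to upgrade this to $\mathrm{Lie}(\Lpp)=\mathfrak{l}$. By the Borel--Wang density theorem (Proposition~\ref{BorelWang}, applied factor by factor using the non-compact type hypothesis) $\Gamma_L$ is $\Q_S$-Zariski dense in $\L$, hence so is $\Lpp$, so $\mathrm{Ad}(\Lpp)$ is Zariski dense in $\mathrm{Ad}(\L)$. Consequently $\mathfrak{h}:=\mathrm{Lie}(\Lpp)$ is stable under the adjoint action of $\L$, and so place by place $\mathfrak{h}_v$ is a $\Q_v$-algebraic ideal of $\mathfrak{l}_v$, i.e.\ a sum of $\Q_v$-simple ideals. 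I claim no $\Q_v$-simple summand can be missing: the isotropic ones already lie in $\mathfrak{l}^{\mathrm{iso}}\subseteq\mathfrak{h}$, and if an anisotropic summand $\mathfrak{l}_v^j$ were missing, then the projection of $\Lpp$ to the compact group $\L_v^j(\Q_v)$ would be a closed subgroup with trivial Lie algebra there, hence discrete, hence finite; but that projection contains the image of $\Gamma_L$, which is Zariski dense in $\L_v^j$, contradicting positive-dimensionality of $\L_v^j$. Hence $\mathfrak{h}=\mathfrak{l}$, and $\Lpp$ is open in $L$.

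I expect the main obstacle to be this last density-to-Lie-algebra step, in particular checking carefully that Zariski density of $\Gamma_L$ over $\Q_S$ descends to each $\Q_v$-simple direct factor of $\L_{\Q_v}$ and that the ensuing discrete-compact-finite-Zariski-dense implication chain is rigorous in the $S$-arithmetic setting (a subtlety absent in the purely archimedean case). Once openness is in hand, finite index is routine: $L/\Lpp$ is discrete by openness, maps onto a finite quotient at each isotropic local factor by Borel--Tits, and has compact image at each anisotropic factor, so $L/\Lpp$ is a finite product of finite groups.
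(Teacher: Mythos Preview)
Your proposal is correct and follows essentially the same route as the paper: reduce to the semisimple case via the unipotent radical, handle isotropic local factors by Borel--Tits/Kneser--Tits, and for anisotropic factors combine Borel--Wang Zariski density of $\Gamma_L$ with almost-simplicity to force the Lie algebra of the closure to be full. The paper organises the last step slightly differently (it first reduces to $\Q$-almost-simple $\L$ and then projects $\Gamma_L$ to each anisotropic $\Q_v$-factor $F$, arguing the closure $K$ has nonzero hence full Lie algebra), but your global ideal argument and the paper's factor-by-factor argument are the same idea; your anticipated ``main obstacle'' is exactly the point the paper treats with care.
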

\begin{proof} According to Lemma~\ref{LemmeLppclosed},~$\Lpp$ is a closed subgroup of~$L$. It suffices to prove the finiteness of the index. The openness will follow.

We first reduce to the case of a semisimple~$\L$. consider the radical~$\R$ of~$\L$ and the projection~$\pi:\L\to \L/\R$. 
The radical~$\R$ of~$\L$ is unipotent, hence~$R=\R(\Q_S)$ is
contained in~$L^+$. It suffices to prove~$\overline{\pi(\Gamma_L)\pi( L^+)}$ is of finite index in~$L/R$. 
The map~$L\to (\L/\R)(\Q_S)$ is surjective, by additive Hilbert~90 for a perfect field~\cite[II~Prop.1, III~Prop.6]{SerreLNM5}.
Hence~$L/R=\L/\R(\Q_S)$. The algebraic group~$\L/\R$ is semisimple by hypothesis~$\L\in\RatQ$. The image of~$\Gamma_L$ is a~$S$-arithmetic subgroup of~$L/R$. Finally we prove~$(L/R)^+=L^+/R$ by double inclusion: on the one hand the image of an unipotent element is unipotent; on the other hand,~$\L/\R$ is isomorphic to a Levi factor of~$\L$, and unipotent elements lift to unipotent element in this Levi factor.

Without loss of generality, we can replace~$\Gamma_L$ by a subgroup of finite index, and assume~$\Gamma_L$ is the product of its intersections with the almost $\Q$-simple $\Q$-quasi-factors of~$\L$. We may reduce to the case of a almost~$\Q$-simple~$\L$.

By~\cite[Proof of Proposition~6.14]{BorelTits},~$L^+$ is open and of finite index in the
isotropic $\Q_S$-factors of~$L$ (see Proposition~\ref{KneserTits} below). It suffices to prove that the projection of~$\Gamma$ on any anisotropic $\Q_S$-factors~$F$ has a closure~$K$ which is a finite index subgroup of~$F$. This~$K$ is a closed subgroup, as~$F$ is compact, ~$K$ is compact. The stabiliser of the Lie algebra~$\lie{k}$ of~$K$, for the adjoint action, is~$\Q_S$-algebraic, and contains~$\Gamma$. Recall that the $\Q_S$-Zariski closure of~$\Gamma$  is~$\L$ by Borel-Wang Zariski density Proposition~\ref{BorelWang}. Consequently the image of~$\Gamma$ in~$F$ is infinite, and~$K$ is not discrete: the Lie algebra~$\lie{k}$ is non zero. But it is normalised by~$F$, and~$F$ is almost simple. The Lie algebra~$\lie{k}$ is the same as that of~$F$. In other words~$K$ is open in~$F$, hence of finite index.
%% Borel Wang Q_S Zariski?

%It needs to be a product of Lie algebras of some anisotropic factors.
%But this projection is non trivial and non discrete on every anisotropic factor, again by Zariski density.

%The Lie algebra of~$K$ is that of the product of the anisotropic factors.
%Hence~$K$ is open in the product of the anisotropic factors. The latter is compact.
%Hence~$K$ is also of finite index.
\end{proof}
As a remark we have the following.
\begin{corollary} 
If~$S$ contain the real place, then~$\Lpp$ contains the neutral component of~$L(\R)$.
\end{corollary}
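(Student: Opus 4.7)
The plan is to deduce this corollary essentially formally from Proposition~\ref{++finiteindex}, which asserts that $\Lpp$ is an open subgroup of finite index in $L = \mathbf{L}(\Q_S)$. No further structural work on $\mathbf{L}$ is needed; in particular we do not need to describe $L(\R)^0$ in terms of unipotent elements (which would in any case fail if the real factors of $\mathbf{L}$ have compact quasi-factors).

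First I would observe that because $S$ contains the real place, $L$ factors canonically as
\[
L = \mathbf{L}(\R) \times \prod_{\substack{v \in S \\ v \neq \infty}} \mathbf{L}(\Q_v),
\]
so that $L(\R) = \mathbf{L}(\R)$ embeds into $L$ as a closed subgroup, namely $\mathbf{L}(\R) \times \{e\}$. The inclusion is a topological embedding, so any open subset of $L$ pulls back to an open subset of $L(\R)$.

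Next, applying Proposition~\ref{++finiteindex} gives that $\Lpp$ is open in $L$. Intersecting with $L(\R)$, the subgroup $\Lpp \cap L(\R)$ is open in $L(\R)$ and contains the neutral element. It is then a standard topological-group fact that an open subgroup of a topological group contains the connected component of the identity (its complement in the ambient group is a union of cosets, all open, so the subgroup itself is also closed, hence contains $L(\R)^0$). Therefore
\[
L(\R)^0 \subseteq \Lpp \cap L(\R) \subseteq \Lpp,
\]
which is the desired conclusion.

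There is no real obstacle: the only point worth checking is that Proposition~\ref{++finiteindex} applies, which it does since $\mathbf{L} \in \RatQ$ by hypothesis, and that the projection $L \to L(\R)$ is continuous with continuous section, which is immediate from the product decomposition of $\Q_S$ when $\R \in S$. The subtlety that $L(\R)^0$ need not lie in $L^+$ itself (for instance if $\mathbf{L}$ has an $\R$-anisotropic $\Q$-quasi-factor that is isotropic at some finite place in $S$) is already absorbed into Proposition~\ref{++finiteindex}: the closure in the definition of $\Lpp$ allows $\Gamma_L$ to ``fill in'' the compact real directions via Borel-Wang density, yielding an open, not merely connected, subgroup.
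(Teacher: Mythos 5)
Your proof is correct, and it is the natural (indeed, essentially the only reasonable) argument: once one knows from Proposition~\ref{++finiteindex} that $\Lpp$ is open in $L(\Q_S)$, the standard fact that an open subgroup is also closed and hence contains the identity component finishes the matter, since the identity component of $L(\Q_S) = L(\R)\times\prod_{v\in S,\,v\neq\infty}L(\Q_v)$ is precisely $L(\R)^0\times\{e\}\times\cdots\times\{e\}$ (the ultrametric factors being totally disconnected). The paper states the corollary without proof, evidently intending exactly this one-line deduction; your closing remark correctly identifies that the potential subtlety (real-compact but $\Q_S$-isotropic quasi-factors) is already resolved inside the proof of Proposition~\ref{++finiteindex} via the arithmetic closure in the definition of $\Lpp$.
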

%Lets prove the second statement.
%\begin{proof} Find references.
%ATTENTION

%\end{proof}
The following was used  in the proof of \ref{++finiteindex}.
\begin{proposition}[Kneser-Tits for characteristic~$0$ local fields, \cite{TitsBBK}{\cite[2.3.1 (d)]{Margulis}}\cite{GilleKneserTits}]\label{KneserTits} Let~$\H$ be an algebraic simply connected semi-simple group over~$\Q_S$ without~$\Q_S$-anisotropic quasi-factor.

 Then~$\H(\Q_S)=\H(\Q_S)^+$.
\end{proposition}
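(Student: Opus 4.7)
My plan is to reduce the statement, via two standard decompositions, to the classical Kneser--Tits theorem for simply connected, almost $\Q_v$-simple, isotropic groups over a local field $\Q_v$ of characteristic zero, and then to invoke that result directly from the cited references.

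The first step is to reduce to a single place. Since $\Q_S=\prod_{v\in S}\Q_v$ and $S$ is finite, one has canonically $\H(\Q_S)=\prod_{v\in S}\H(\Q_v)$ as topological groups. For the ``$+$'' part, I would check that every one-parameter $\Q_S$-algebraic unipotent subgroup $\U\subseteq\H$ factorizes as a product $\prod_{v\in S}\U_v$ of one-parameter $\Q_v$-algebraic unipotent subgroups of $\H_{\Q_v}$, so that $\U(\Q_S)=\prod_v \U_v(\Q_v)$; conversely every $\U_v$ gives rise to such a $\U$ by extension by the trivial group at the other places. Consequently $\H(\Q_S)^+=\prod_{v\in S}\H(\Q_v)^+$, and it suffices to prove $\H(\Q_v)=\H(\Q_v)^+$ for each $v\in S$ individually.

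The second step is to reduce to the almost simple case. Since $\H$ is simply connected and semisimple, $\H_{\Q_v}$ decomposes as a direct product $\H_{\Q_v}=\prod_j \mathrm{Res}_{k_j/\Q_v}(\H^{(j)})$ of $\Q_v$-simple factors, each of the form a restriction of scalars from a finite separable extension of a geometrically almost simple simply connected group (cf.\ the structure theory for simply connected semisimple groups; this is where simple-connectedness is crucial, to avoid the ``almost direct'' issues which would otherwise require separate treatment of isogeny kernels). Taking $\Q_v$-points and using that $\mathrm{Res}_{k_j/\Q_v}(\H^{(j)})(\Q_v)=\H^{(j)}(k_j)$, and that unipotent subgroups descend appropriately under restriction of scalars, this reduces the statement to each almost simple factor $\H^{(j)}$ over the local field $k_j$ (still of characteristic zero). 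The hypothesis that $\H$ has no $\Q_S$-anisotropic quasi-factor guarantees that every such $\H^{(j)}$ is isotropic over $k_j$.

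The third step is to invoke the classical Kneser--Tits theorem: for a simply connected, almost simple, isotropic algebraic group $\H^{(j)}$ over a local field $k_j$ of characteristic zero, the group $\H^{(j)}(k_j)$ is generated by the $k_j$-rational points of the unipotent radicals of $k_j$-parabolic subgroups, and hence $\H^{(j)}(k_j)=\H^{(j)}(k_j)^+$. This is precisely the content of~\cite[2.3.1 (d)]{Margulis} and is established in~\cite{TitsBBK,GilleKneserTits} (and originally due to Platonov over $p$-adic fields and to classical real Lie theory over $\R$). The hard step of this whole proof is of course this last one---the Kneser--Tits conjecture fails over general fields---but since the characteristic-zero local case is settled in the literature, here it simply amounts to a citation. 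Combining the three reductions finishes the proof.
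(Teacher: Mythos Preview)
Your reduction argument is correct and is the standard way to pass from the classical single-field Kneser--Tits theorem to the $\Q_S$-version. The paper, however, does not give a proof at all: the proposition is simply stated with attribution to \cite{TitsBBK}, \cite[2.3.1 (d)]{Margulis}, and \cite{GilleKneserTits}, and is used as a black box. So there is nothing to compare your argument against---you have supplied a proof sketch where the paper offers only a citation.
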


\subsubsection*{Note} We add a few comments about the definition of $\Lpp$..
Suppose~$S$ consists of the real place only. Then we could have taken the neutral component of~$L=\L(\R)$ instead of ~$L^\ddagger$ . In our definition here,~$\Lpp$ is the union of the connected components of~$\L(\R)$ that meet~$\Gamma$. For~$\Gamma$ small enough these two constructions coincide. On the other hand, in an ultrametric setting, considering the connected component is no longer the right object to consider: we require~$\Lpp$ as constructed above. 
%We prove below that~$\Lpp$ is open of finite index. 
Note that, as~$\Gamma$ gets smaller,~$\Lpp$ does not generally stabilise: it will get closer to~$\L(\Q_S)^+$ which, in the case where there is an anisotropic factor  \emph{defined over~$\Q_v$}, will be a lower dimensional subgroup.

Lastly we prove the lemma that had been used previously in the paper:

\begin{lemma}\label{gamma normalise Lpp}
 Let~$N$ be a subgroup of~$G$ normalising~$L$.
Then~$\Gamma\cap N$ normalises~$\Lpp$.
\end{lemma}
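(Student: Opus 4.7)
The plan is to unfold the definition $\Lpp = \overline{(\Gamma\cap L)\cdot L^+}$ from~\eqref{defiLppeq} and verify that conjugation by an arbitrary $\gamma \in \Gamma\cap N$ fixes both factors inside the closure, then invoke continuity of conjugation.

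First, I would check that $\gamma$ normalises $L^+$. Since $\gamma \in N$ normalises $L$, conjugation by $\gamma$ is an automorphism of $L$ as a topological group, and, more importantly, as a group of $\Q_S$-points of a $\Q_S$-algebraic group (the $\Q_S$-Zariski closure of $L$ is conjugation-preserved by $\gamma$ seen inside $\G(\Q_S)$). Hence $\gamma$ sends every $\Q_S$-algebraic unipotent one-parameter subgroup $\U(\Q_S) \subseteq L$ to another such subgroup $\gamma\U\gamma^{-1}(\Q_S) \subseteq L$. Since $L^+$ is defined (see~\eqref{Notation Lp}) as the subgroup generated by these, we get $\gamma L^+ \gamma^{-1} \subseteq L^+$, and by symmetry (apply the same argument to $\gamma^{-1}$) the inclusion is an equality.

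Next, I would check that $\gamma$ normalises $\Gamma \cap L$: this is immediate from $\gamma \in \Gamma$ (so $\gamma\Gamma\gamma^{-1} = \Gamma$) together with $\gamma \in N$ normalising $L$, which yields $\gamma(\Gamma\cap L)\gamma^{-1} \subseteq \gamma\Gamma\gamma^{-1}\cap\gamma L\gamma^{-1} = \Gamma\cap L$, and equality again by symmetry. Combining the two, $\gamma$ normalises the product $(\Gamma\cap L)\cdot L^+$ as a set.

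Finally, conjugation by $\gamma$ is a homeomorphism of $G$, so it commutes with the topological closure operation. Therefore
\[
\gamma\, \Lpp\, \gamma^{-1} = \gamma\, \overline{(\Gamma\cap L)\cdot L^+}\, \gamma^{-1} = \overline{\gamma\bigl((\Gamma\cap L)\cdot L^+\bigr)\gamma^{-1}} = \overline{(\Gamma\cap L)\cdot L^+} = \Lpp,
\]
which is exactly the claim. There is no real obstacle here: the only non-trivial point is the invariance of $L^+$ under $\gamma$, which rests on the characteristic nature of the set of unipotent algebraic subgroups of $L$ inside $L$, and this is formally the same argument used earlier in the paper (e.g.\ in the proof of Lemma~\ref{Lemma unimodular} and in the discussion following~\eqref{Notation Lp}).
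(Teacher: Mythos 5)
Your proof is correct and follows essentially the same approach as the paper: you show $\gamma\in\Gamma\cap N$ normalises $\Gamma\cap L$ (from $\gamma\in\Gamma$ and $\gamma\in N$ normalising $L$), normalises $L^+$ (because $L^+$ is characteristic in $L$, being generated by the one-parameter unipotent subgroups), and conclude by passing the two invariances through the product and the topological closure. The paper's proof is a more terse statement of exactly this argument.
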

\begin{proof}
We note that~$\Gamma$ normalises itself and~$N$ normalises~$L$. Hence~$\Gamma\cap N$ normalises~$L\cap\Gamma$.

We know that~$L^+$ is invariant under~$\Q_S$-algebraic automorphisms, and in particular under conjugation by~$N$.
The subgroup~$\Gamma\cap N$ of normalises a fortiori~$L^+$.

Finally~$\Gamma\cap N$ normalises~$\Gamma\cap L$ and~$L^+$, hence normalises the product~$(\Gamma\cap L)L^+$ and its closure~$\Lpp$.
\end{proof}

\part{Résultat géométrique sur les représentations de groupes réductifs sur un corps ultramétrique \author{R. Richard}.}
\selectlanguage{french}
%\author{Rodolphe \textsc{Richard}}
%\maketitle
\setcounter{tocdepth}{0}
%\parttoc%\tableofcontents
\renewcommand{\SL}{\mathrm{SL}}
\renewcommand{\GL}{\mathrm{GL}}
\renewcommand{\Ad}{\mathrm{Ad}}
% !TEX root = ../Maitre.tex

\section{Introduction}
Cet article adapte les résultats de \cite{RS09}\footnote{Dont l'article publié~\cite{RichardShah} est une version augmentée.} au cas d'un groupe algébrique semi\-sim\-ple~$G$ sur un corps local~$\kk$ muni d'une valeur absolue ultramétrique~$\abs{-}$.

Nous suivons, dans les grandes lignes, la méthode développée dans~\cite{RS09} pour le contexte archimédien (dont nos Propositions~\ref{Prop31} et~\ref{Prop34} reprennent certains arguments). Pour pallier l'absence du théorème de décomposition de Mostow (\cite{Mos55}) pour~$G(\kk)$ ainsi que la propriété de convexité de l'application exponentielle, qui n'est plus partout définie, nous considérons le plongement~$\Theta:\Ik(G)\to G^\an$ de l'\emph{immeuble de Bruhat-Tits}~$\Ik(G)$ de~$G$ sur~$\kk$ dans l'\emph{espace analytique}~$G^\an$, au sens de Berkovich, associé à~$G$.

Notre démonstration repose en effet sur le Théorème~\ref{TheoF3} de l'annexe, qui se base sur les travaux~\cite{RTW09} de Bertrand Rémy, Amaury Thuillier et Anette Werner. Ces auteurs généralisent une construction du chapitre 5 de~\cite{Ber90}, où V.~Berkovich se restreint aux groupes de Chevalley (semi-simples déployés). Le Théorème~\ref{TheoF3} met à profit les propriétés de convexité dans~$\Ik(G)$, et remplace la propriété de convexité de l'application exponentielle de~\cite{RS09} par la convexité de fonctions de la forme~$x\mapsto \abs{f(x)}$ (Proposition~\ref{Prop36}) lorsque~$f$ est dans~$\kk[G]$, une fonction régulière.

La décomposition de Mostow est remplacée par la décomposition moins
précise du Théorème~\ref{TheoF1} et sa conséquence en la Proposition~\ref{Prop21}. Pour obtenir l'énoncé~\ref{Prop21}, notre démonstration utilise l'existence de points fixes pour l'action
de groupes d'isométries compacts sur les immeubles de Bruhat-Tits. Ces propriétés
découlent de l'existence de métriques hyperboliques, et justifient le choix
de la géométrie ultramétrique au sens de Berkovich, plutôt que rigide, qui permet de considérer des
espaces métriques complets.

Les résultats de~\cite{RichardShah} dans leur intégralité (et pas seulement de~\cite{RS09}), qui sont archimédiens, et leur analogue ultramétrique, qui repose sur le présent article, sont à la base du travail de~\cite{RichardZamojski} en dynamique homogène.

Les travaux de~\cite{RichardZamojski} sont à leur tour à la base du travail~\cite{RichardYaffaev} établissant la conjecture d'André-Pink-Zannier, dans certains cas seulement, mais, en revanche, sous une forme améliorée: on y détaille l'équidistribution et l'adhérence topologique en plus de l'adhérence de Zariski.

Pour résumer, le texte ici présent est l'un des socles sur lequel sont bâtis les travaux~\cite{RichardShah,RichardZamojski,RichardYaffaev} 

\section*{Remerciements}
Que Bertrand Remy, Amaury Thuillier et Georges Tomanov reçoivent ici mes
remerciements pour leur accueil chaleureux et leur conversation enrichissante à
l'occasion de mon déplacement à l'institut Camille Jordan de l'université Lyon 1
Claude Bernard.

C'est en côtoyant, à l'IRMAR, Antoine Chambert-Loir, Antoine Ducros et Jérôme
Poineau que j'ai pu me familiariser avec les espaces de Berkovich. Que cet article
leur témoigne de ma reconnaissance.
Emmanuel Breuillard qui a trouve une erreur dans une version ant\'erieure de ce texte.

\addtocontents{toc}{\protect\setcounter{tocdepth}{1}}
\section{Hypothèses et Énoncé}

Nous convenons qu'un \emph{groupe algébrique linéaire}, et plus généralement un sous-groupe algébrique linéaire, est supposé \emph{affine de type fini}, \emph{réduit} et \emph{connexe}.
La nécessité de ces hypothèse n'a pas été vérifiée : notons que, 
comme notre résultat principal ne concerne que les groupes de points rationnels, il peut s'appliquer aux groupes non réduits, quitte à passer au sous-groupe réduit associé. Remarquons aussi que, dans un groupe algébrique linéaire, le centralisateur d'un sous-groupe algébrique linéaire n'est pas toujours réduit, ni toujours connexe. Par exemple, en caractéristique non nulle~$p$, le centre de~$\SL(p)$ est connexe, de dimension~$0$ mais a une algèbre de Lie
non nulle. En caractéristique~$0$ le centre de~$\SL(2)$ n’est pas connexe.

Soit~$G$ un groupe algébrique linéaire sur un corps~$\kk$. Étant donnée une représentation linéaire de degré fini~$\rho : G \rightarrow \GL(V)$, on notera~$\Ad_\rho : G \rightarrow GL (\gl(V ))$ l'action par conjugaison~$G$ sur~$\gl(V)$.
\begin{equation}\label{eq1}
\text{Pour~$g$ dans~$G$, et un endomorphisme~$e$ de~$V$, on a~$\Ad_\rho(g)=\rho(g)e\rho(g)^{-1}$.}
\end{equation}
Pour tout sous-groupe algébrique linéaire~$H$ de~$G$, on notera~$C_H(\Ad_\rho)$ l'espace vectoriel sur~$\kk$ engendré par les coefficients matriciels de l'action de~$H$ sur~$\gl(V)$. L'espace~$C_H(\Ad_\rho)$ est formé de fonctions régulières sur~$H$. Étant donné un ensemble~$\Omega$ de points de~$H$, nous considérons la propriété suivante.
\begin{equation}\tag{\text{$\ast$}}\label{*}
\text{Tout coefficient matriciel de~$\Ad_\rho$ qui s'annule sur~$\Omega$ s'annule en fait sur~$H$.} 
\end{equation}
Cette propriété est notamment vérifiée si~$\Omega$ est Zariski dense dans~$H$. Mettons en exergue deux autres conditions sur le $H$-module~$V$.
\begin{multline}\tag{\text{$\ast\ast$}}\label{**}
	\text{L'action~$\rho$ de~$H$ sur~$V$ est telle que~$V^H$, le plus grand sous-module de}\\
	\shoveleft{\text{points fixes, a un unique supplémentaire $H$-stable.}}
\end{multline}
\begin{multline}\tag{\text{$\ast\ast^\prime$}}\label{**'}
	\text{En surcroît de~\eqref{**}, ce supplémentaire de~$V^H$, comme représentation }\\ \shoveleft{\text{de~$H$, n'a pas de quotient isomorphe à la représentation triviale.}}
\end{multline}
La condition \eqref{**} revient à supposer que~$V$ ne contient pas d'extension non triviale \emph{de} la représentation triviale, et la condition~\eqref{**'} que~$V$ ne contient pas non
plus d'extension non triviale \emph{par} la représentation triviale. Les conditions~\eqref{**}~et~\eqref{**'} sont automatiquement vérifiées si le $H$-module~$V$ est semi-simple. C'est le cas si~$H$ est réductif et le corps~$\kk$ de caractéristique nulle.

\subsubsection*{Remarqes:}{~}
\newline\noindent
\textit{i)}\label{Remarquei}
 Lorsque la condition~\eqref{**} est vérifiée, le supplémentaire~$H$-stable de~$V^H$ est le noyau d'un \emph{unique} projecteur~$H$-équivariant de~$V$ sur~$V^H$ (le \emph{projecteur de
Reynolds}, \emph{confer} \cite{Dem76}). Étant donné un morphisme $H$-équivariant~$\Phi : V \rightarrow W$ entre deux $H$-mo\-du\-les~$V$ et~$W$ satisfaisant~\eqref{**}, ce morphisme commute aux projecteurs sur le lieu fixe si~$V$ satisfait la condition~\eqref{**'}.
\newline
\textit{ii)}\label{Remarqueii}
En revanche, en caractéristique non nulle~$p$, l'action adjointe de ~$\GL(p)$ sur~$\gl(p)$ ne vérifie pas la condition~\eqref{**}. Le sous-espace fixe est la droite~$\kk\cdot\Id$ formée des homothéties. Or cette droite n'a pas de supplémentaire stable: c'est déjà le cas 
pour l'action des matrices de permutation sur le sous-espace diagonal. Lorsque~$p$ est impair, l'action induite de~$\GL(p)$ sur~$\frac{\lie{gl}(p)}{\kk\cdot\Id}$ satisfait la condition~\eqref{**} car il n'y a pas d'élément invariant non nul (cf.~\cite{Bou60},~\S6, Exercice~24). Elle ne satisfait pas la condition~\eqref{**'} car l'action quotient~$\left.\frac{\lie{gl}(p)}{\kk\cdot\Id}\middle/\frac{\lie{sl}(p)}{\kk\cdot\Id}\right.$ est triviale.

Il nous faut encore considérer la propriété suivante, de Richardson.
\begin{equation}\tag{\text{$\ast\ast\ast$}}\label{***}
\text{Le sous-groupe~$H$ de~$G$ est «~fortement réductif dans~»~$G$.}
\end{equation}
En caractéristique nulle elle est  satisfaite si et seulement si~$H$ est réductif. Un propriété aisément vérifiable qui implique~\eqref{***} est la suivante, dite~$H$ est «~réductif dans~» $G$ selon les termes de~\cite{RichardShah}.
\begin{equation}\tag{\text{$\ast\ast\ast^\prime$}}\label{***'}
\text{L'action de~$H$ action sur~$\lie{g}$ est semi-simple.}
\end{equation}

Notre résultat principal est le suivant. Pour motiver cet énoncé nous ren\-voy\-ons à~\cite{RS09} et~\cite{Ric09}.
\begin{theoreme}\label{Theo11}

Soit~$(\kk,\abs{-})$ un corps local normé ultramétrique, soit~$G$ un groupe linéaire algébrique semisimple connexe sur~$\kk$, et soit~$H$ un sous-groupe algébrique linéaire fortement réductif dans~$G$. Notons~$Z$ le centralisateur de~$H$ dans~$G$. Alors il existe une partie~$Y$ de~$G(\kk)$, fermée pour la topologie ultramétrique, et telle que
\begin{enumerate}[label=\textrm{\arabic*}.]
\item[\textup{1.}] d'une part on ait~$G(\kk)=Y\cdot Z(\kk)$,
\item[\textup{2.}] d'autre part, étant donnés
\begin{itemize}
\item une représentation linéaire~$\rho:G\rightarrow \GL(V)$ de degré fini et définie sur~$\kk$, telle que les~$H$-modules~$\gl(V)$ et~$C_H(\Ad_\rho)$ satisfassent~\eqref{**}, et que~$\lie{gl}(V)$ satisfasse~\eqref{**'},
\item une partie non vide~$\Omega$ de~$H(\kk)$ ayant la propriété~\eqref{*},
\item une norme~$\Nm{-}$ sur~$V$, supposée homogène relativement à~$\abs{-}$,
\end{itemize}
il existe une constante~$c>0$ telle que
\begin{equation}\label{eq2}
\forall y\in Y,~\forall v\in V,~\sup_{\omega\in\Omega}\Nm{\rho(y\cdot \omega)(v)}\geq\Nm{v}/c.
\end{equation}
\end{enumerate}
\end{theoreme}

Fixons~$\rho$. L'inégalité~\eqref{eq2} est vérifiée pour toute constante~$c$ lorsque le vecteur~$v$
est nul. Le théorème est donc vérifié, avec~$Y = G(\kk)$, si~$V$ est de dimension nulle.
\emph{Dorénavant nous supposerons que la représentation~$\rho$ a un degré non nul}. \label{constantes coeffs}	En particulier les fonctions constantes sont des coefficients matriciels. En effet, tout coefficient diagonal de~$g\mapsto \Ad_\rho(g)\Id_V$ vaut la constante~$1$. Ainsi~$C_G(\Ad_\rho)$ et~$C_H(\Ad_\rho)$ seront non nuls. Dans ce cas, la non vacuité de~$\Omega$ découle de la
condition~\eqref{*}.

Remarquons que pour établir la formule~\eqref{eq2}, on peut remplacer~$\Omega$ par un sous-ensemble~$\Omega_b$ de~$\Omega$, car cela a pour effet de diminuer le membre de gauche de l'inégalité, sans modifier le membre de droite. Montrons que, comme~$\Omega$ satisfait~\eqref{*}, il
existe un sous-ensemble fini~$\Omega_b$ de~$\Omega$ satisfaisant la propriété~\eqref{*}.

\begin{proof}
La condition~\eqref{*} signifie que lorsque~$\omega$ décrit~$\Omega$, les morphismes d'é\-va\-lu\-a\-tion~$f \mapsto f (\omega)$, définis sur~$C_H(\Ad_\rho)$, engendrent~$C_H(\Ad_\rho)^\vee$, le dual algébrique de l’epace~$C_H(\Ad_\rho)$. 
Comme V est de dimension finie,~$C_H(\Ad_\rho)$, qui
est un quotient de~$\gl(V ) \otimes \gl(V )^\vee$, est de dimension finie.
Il suffit donc d'extraire
de la famille génératrice précédente une base, nécessairement finie, et de choisir pour~$\Omega_b$ un sous-ensemble fini de~$\Omega$ paramétrant cette base.
\end{proof}

\emph{Dorénavant~$\Omega$ sera supposé borné dans~$H(\kk)$.}

Notre démonstration utilise les énoncés~\ref{TheoF3} et~\ref{TheoF1} de l'annexe, à laquelle nous renvoyons pour les définitions et conventions utilisées, en particulier concernant la notion de convexité telle que définie dans la section~\ref{sectionF}. Pour plus d'approfondissement,
on pourra également consulter~\cite{RTW09}, ainsi que~\cite[chapitre 5]{Ber90} pour le cas des groupes semisimples déployés (« de Chevalley »).

Dans la section suivante, nous rappelons la situation et fixons les notations
utilisées jusque la fin de la démonstration, soit les sections~\ref{section2}, \ref{section3} et~\ref{section4}. Nous y explicitons en particulier la
partie~$Y $. La première conclusion du Théorème~\ref{Theo11} résulte de la Proposition~\ref{Prop21},
que nous déduisons de l'énoncé~\ref{TheoF1}. Nous énonçons également, avec la Proposition~\ref{Prop22}, une variante effective de la seconde conclusion du Théorème~\ref{Theo11} pour la
partie~$Y$ construite. 

Dans la section~\ref{section3}, nous réunissons quelques énoncés indépendants qui seront
utilisés dans la démonstration de la Proposition~\ref{Prop22}. Les énoncés~\ref{Prop31} à~\ref{Prop34} reprennent des arguments de~\cite{RS09}. La démonstration de la Proposition~\ref{Prop36}
repose sur l'énoncé~\ref{TheoF3}. Le cœur de la démonstration de la
Proposition~\ref{Prop22} occupe la section~\ref{section4}.

\section{Notations}\label{section2}

Rappelons la situation. Nous désignons par~$\kk$ un corps local muni d'une valeur
absolue ultramétrique~$\abs{-}$, par~$G$ un groupe algébrique linéaire semi-simple sur~$\kk$,
par~$H$ un sous-groupe algébrique linéaire fortement réductif, et notons~$Z_G(H)$ le centralisateur de~$H$ dans~$G$, vu
comme groupe algébrique affine non nécessairement réduit. Notons que~$Z_G (H)$ n'interviendra toutefois que via son groupe~$Z_G (H)(\kk)$ des points rationnels.

Nous nous sommes fixés~$\rho : G \rightarrow \GL(V )$, une représentation linéaire de~$G$ de degré fini non nul et définie sur~$\kk$ et~$\Nm{-} : V \rightarrow \R$ une norme $(\kk,\abs{-})$-homogène sur~$V$. Nous notons~$\gl(V )$ l'algèbre de Lie des endomorphismes de~$V$, et~$\Ad_\rho$ la
représentation adjointe~\eqref{eq1} de~$G$ sur~$\gl(V )$. Le sous-$G$-module de la représentation régulière~$\kk[G]$ engendré par les coefficients matriciels de~$\Ad_\rho$ est noté $C_G(\Ad_\rho)$, et le~$H$-module formé de
la restriction à~$H$ de ces fonctions régulières est noté~$C_H(\Ad_\rho)$.

Nous désignons par~$\Omega$ une partie bornée non vide de~$H(\kk)$ sur laquelle aucune
fonction régulière non nulle sur~$H$ issue de~$C_H(\Ad_\rho)$ ne s'annule identiquement.

Notons~$\z$ le centralisateur de~$H$ dans~$\gl(V )$. D'après l'hypothèse~\eqref{**} pour~$\Ad_\rho$, il existe un unique projecteur $H$-équivariant de~$\gl(V )$ sur~$\z$ (cf. Remarque~\emph{i)} en page~\pageref{Remarquei}).
Comme~$V$ est de dimension non nulle, les coefficients diagonaux de~$h\mapsto \Ad_\rho(h)(\Id_V )$ forment un coefficient matriciel constant non nul de~$\Ad_\rho$. En outre le sous-module fixe de~$C_H(\Ad_\rho)$ est contenu dans celui de~$\kk[H]$, qui est aussi donné par les fonctions constantes. Donc le sous-module fixe de~$C_H(\Ad_\rho)$ est le sous-module formé des fonctions
constantes, et s'identifie à~$\kk$, muni de la représentation triviale. Utilisant l'hypothèse~\eqref{**} pour~$C_H(\Ad_\rho)$ nous obtenons un projecteur~$H$-équivariant de~$C_H(\Ad_\rho)$ sur~$\kk$. En vertu de l'hypothèse~\eqref{**'} pour~$\Ad_\rho$, tout morphisme~$H$-équivariant~$\gl(V)\rightarrow C_H(\Ad_\rho)$ commute aux projecteurs~$\pi_\kk$ et~$\pi_\z$ (cf. Remarque~\emph{i)} en page~\pageref{Remarquei}).

Nous nous fixons un tore déployé maximal~$T$ de~$G$ sur~$\kk$, notons
\begin{equation*}
Y(T)=\Hom(T,\GL(1))
\end{equation*}
le groupe des cocaractères et~$\Lambda=Y(T)\tens\R$ l'espace vectoriel associé. L'\emph{immeuble de Bruhat-Tits de~$G$ sur~$\kk$} est noté~$\Ik(G)$.
C'est le quotient de~$G(\kk) \times \Lambda$ par la relation d'équivalence considérée dans \cite[(1.3.2)]{RTW09} (cf. \cite[2.1]{Tit79}).
Rappelons que~$G^\an$ désigne l'espace analytique associé à~$G$, vu comme espace topologique des semi-normes multiplicatives bornées sur l'algèbre~$\kk[G]$ des fonctions régulières sur~$G$, pour la topologie de la convergence simple. Notons~$\theta:\Ik(G)\rightarrow G^\an$ une application telle que dans l'énoncé~\ref{TheoF3}, et notons~$G_\theta$ son stabilisateur \emph{à droite} dans~$G(\kk)$ qui est compact et ouvert.

Nous fixons un point~$o$ de~$\Ik(G)$, et notons~$G_o$ son stabilisateur dans~$G_\theta$. Le grou\-pe~$G_o$ est compact dans~$G(\kk)$ (\cite[3.2]{Tit79}) et Zariski dense dans~$G$~,\cite[Lemma~1.4]{RTW09}. Nous notons~$H_o$ le groupe compact~$G_o\cap H(\kk)$, et~$\Ik(G)^{H_o}$ le lieu fixe de l'action de~$H_o$ sur~$\Ik(G)$. D'après le Théorème~\ref{TheoF1}, nous pouvons choisir un compact~$C$ de~$\Ik(G)$ tel que~$\Ik(G)^{H_o}=Z_G(H)(\kk)\cdot C$.

La notion de convexité utilisée est celle introduite dans la section~\ref{sectionF}.

\subsubsection*{Définition}\label{Defi Y}
Soit alors~$Y$ le lieu des points~$y$ de~$G(\kk)$ tels que dans l'enveloppe
convexe de~$H_o \cdot y^{-1}\cdot o$, il se trouve un point de~$C$.

On notera que la construction de~$Y$ ne dépend que (de~$G$, de~$H$ et) du choix
de~$\theta$, de~$o$ et de~$C$. La partie~$Y$ ne dépend donc ni de~$\rho$, ni de~$\Omega$. L'énoncé suivant
démontre que la partie~$Y$ est fermée dans~$G(\kk)$ et satisfait la première condition du Théorème~\ref{Theo11}.

\begin{proposition}\label{Prop21}
La partie~$Y$ de~$G(\kk)$ est fermée ; l'intersection~$Y \cap Z_G (H)(\kk)$ est compacte; on a~$G(\kk) = Y \cdot Z_G (H)(\kk)$.
\end{proposition}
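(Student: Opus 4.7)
Le plan est d'établir séparément les trois conclusions, en combinant le théorème de point fixe de Bruhat-Tits pour les groupes compacts d'isométries sur l'immeuble~$\Ik(G)$ et les propriétés de convexité fournies par le Théorème~\ref{TheoF3}.

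\textbf{Pour la décomposition~$G(\kk)=Y\cdot Z_G(H)(\kk)$.} Étant donné~$g\in G(\kk)$, l'orbite~$H_o\cdot g^{-1}\cdot o$ est compacte (image continue du compact~$H_o$), donc son enveloppe convexe~$K$ (au sens de la section~\ref{sectionF}) est un compact convexe~$H_o$-stable non vide. L'immeuble~$\Ik(G)$ étant CAT(0) complet et~$H_o$ agissant par isométries, le théorème de point fixe de Bruhat-Tits fournit un point~$p\in K$ fixe par~$H_o$; il appartient donc à~$\Ik(G)^{H_o}=Z_G(H)(\kk)\cdot C$, d'où l'écriture~$p=z\cdot c$ avec~$z\in Z_G(H)(\kk)$ et~$c\in C$. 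Posons~$y=g\cdot z$. Comme~$z^{-1}$ commute avec~$H_o$, on a~$H_o\cdot y^{-1}\cdot o=z^{-1}\cdot(H_o\cdot g^{-1}\cdot o)$; l'élément~$z^{-1}$ agissant par isométries, son action commute à la prise d'enveloppe convexe, et celle-ci contient~$z^{-1}\cdot p=c\in C$. D'où~$y\in Y$, puis~$g=y\cdot z^{-1}$ avec~$z^{-1}\in Z_G(H)(\kk)$.

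\textbf{Pour la compacité de~$Y\cap Z_G(H)(\kk)$.} Puisque~$H_o\subseteq G_o$ fixe~$o$ et que~$z\in Z_G(H)(\kk)$ commute avec~$H_o$, on obtient~$H_o\cdot z^{-1}\cdot o=z^{-1}\cdot H_o\cdot o=\{z^{-1}\cdot o\}$, dont l'enveloppe convexe est elle-même. Ainsi~$z\in Y$ si et seulement si~$z^{-1}\cdot o\in C$: l'intersection~$Y\cap Z_G(H)(\kk)$ est l'image par l'inversion du sous-ensemble~$\{w\in Z_G(H)(\kk)\mid w\cdot o\in C\}$. L'action de~$G(\kk)$ sur~$\Ik(G)$ étant propre (stabilisateurs compacts ouverts), sa restriction au sous-groupe fermé~$Z_G(H)(\kk)$ l'est aussi; l'image réciproque du compact~$C$ par l'application d'orbite~$w\mapsto w\cdot o$ est donc compacte, et son image par inversion également.

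\textbf{Pour la fermeture de~$Y$.} Soit~$(y_n)$ une suite de~$Y$ convergeant vers~$y\in G(\kk)$, et choisissons~$c_n\in C$ dans l'enveloppe convexe de~$H_o\cdot y_n^{-1}\cdot o$; quitte à extraire, on a~$c_n\to c\in C$ par compacité de~$C$. On caractérise l'enveloppe convexe dans~$\Ik(G)$ par la famille des sous-niveaux des fonctions convexes continues de la forme~$\phi=\log|f|\circ\theta$ pour~$f\in\kk[G]$ non nulle, fournies par le Théorème~\ref{TheoF3}: un point~$p$ appartient à~$\textrm{conv}(A)$ si et seulement si, pour toute telle~$\phi$, on a~$\phi(p)\leq\sup_{a\in A}\phi(a)$. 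La fonction~$y'\mapsto\sup_{h\in H_o}\phi(h\cdot {y'}^{-1}\cdot o)$ étant continue (compacité de~$H_o$, continuité de l'action sur~$\Ik(G)$ et continuité de~$\phi$), le passage à la limite dans~$\phi(c_n)\leq\sup_{h\in H_o}\phi(h\cdot y_n^{-1}\cdot o)$ donne~$\phi(c)\leq\sup_{h\in H_o}\phi(h\cdot y^{-1}\cdot o)$ pour toute~$\phi$, donc~$c\in\textrm{conv}(H_o\cdot y^{-1}\cdot o)$, et finalement~$y\in Y$.

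\textbf{Obstacle principal.} L'étape délicate est la fermeture de~$Y$: elle requiert une caractérisation robuste de l'enveloppe convexe dans~$\Ik(G)$ permettant le passage à la limite, là où, dans le cadre archimédien de~\cite{RS09}, la convexité de l'application exponentielle fournissait un outil direct. C'est précisément le rôle du plongement~$\theta:\Ik(G)\to G^\an$ et du Théorème~\ref{TheoF3}: la convexité des fonctions~$\log|f|$ pour~$f$ régulière fournit une classe séparante de fonctions convexes continues, suffisante pour remplacer l'exponentielle et assurer la fermeture recherchée.
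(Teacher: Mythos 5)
Les deux dernières conclusions (décomposition~$G(\kk)=Y\cdot Z_G(H)(\kk)$ et compacité de~$Y\cap Z_G(H)(\kk)$) sont établies par vous essentiellement comme dans l'article, à une reparamétrisation près. En revanche, votre argument pour la fermeture de~$Y$ s'écarte radicalement de celui du texte, et il comporte une lacune sérieuse.

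Vous passez à la limite~$c_n\to c$ puis concluez~$c\in\textrm{conv}(H_o\cdot y^{-1}\cdot o)$ en invoquant la caractérisation \emph{bidirectionnelle}: «~$p\in\textrm{conv}(A)$ si et seulement si~$\phi(p)\leq\sup_A\phi$ pour toute~$\phi=\log\abs{f}\circ\theta$~». Le sens «~seulement si~» est bien la convexité du Corollaire~\ref{coroF4}, établie par l'article et utilisée dans la Proposition~\ref{Prop38}. Mais le sens «~si~» — c'est-à-dire que la famille des fonctions~$\log\abs{f}\circ\theta$ \emph{sépare} un point d'un convexe fermé — n'est démontré nulle part et n'est pas anodin. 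Ces fonctions sont convexes (maximum de fonctions affines par appartement) mais rien ne garantit que leurs sous-niveaux engendrent tous les convexes fermés de l'immeuble. Affirmer, comme vous le faites dans votre dernier paragraphe, que cette classe est «~séparante~» est précisément le point qui manque: le Théorème~\ref{TheoF3} ne fournit que la convexité, pas la séparation. Sans ce sens réciproque, l'inégalité limite~$\phi(c)\leq\sup_{h\in H_o}\phi(h\cdot y^{-1}\cdot o)$ ne permet pas de conclure que~$c$ est dans l'enveloppe convexe, et la fermeture de~$Y$ reste en suspens.

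L'article évite entièrement cette difficulté par un argument typiquement ultramétrique. Comme le stabilisateur~$G_o$ de~$o$ est \emph{ouvert} (c'est un sous-groupe parahorique) et fixe~$o$, on a~$H_o\cdot(g_0 y)^{-1}\cdot o=H_o\cdot y^{-1}\cdot o$ pour tout~$g_0\in G_o$; la partie~$Y$ est donc saturée à gauche par~$G_o$, c'est-à-dire réunion de classes~$G_o\,y$, toutes ouvertes. Le complémentaire de~$Y$ est saturé de la même façon, donc lui aussi ouvert; $Y$ est par conséquent clopen, en particulier fermé. Aucune convexité n'est requise ici, seulement l'ouverture du parahorique — outil qui n'a pas d'analogue archimédien et qui rend cette étape bien plus élémentaire que vous ne le supposiez.
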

\begin{proof}
Montrons que la partie~$Y$ est fermée dans~$G(\kk)$. Par construction,
la partie~$Y$ est invariante à gauche sous le stabilisateur de~$o$. C'est donc une partie stable sous~$G_o$, qui est ouvert. Le complémentaire de~$Y$ est donc ouvert, car
stable sous~$G_o$.

Montrons que le saturé~$Y \cdot Z_G (H)(\kk)$ de~$Y$ par~$Z_G (H)(\kk)$ vaut~$G(\kk)$. Soit~$g$
dans~$G(\kk)$, et formons l'enveloppe convexe~$\langle H_o g^{-1}o\rangle$ de~$H_o g^{
-1}o$ dans~$\Ik(G)$.
L'action de~$H_o$ sur le convexe~$\langle H_o g^{-1}o\rangle$ a au moins un point fixe, d'après~\cite[2.3.1]{Tit79}. Choisissons-en un, disons~$p$. Comme~$p$ est fixe sous~$H_o$, il s'écrit, d'après~\ref{TheoF1}, sous
la forme~$z^{-1}\gamma$ avec~$\gamma$ dans un compact~$C$ et~$z$ dans~$Z_G (H)(\kk)$.
Comme l'action de~$z$ commute à celle de~$H_o$, nous avons~$zH_o g^{
-1}o = H_o zg^{-1}o$. Comme l'action de~$z$ sur~$\Ik(G)$
échange les appartements, et est affine sur chaque appartement, nous avons
\begin{equation*}
\langle H_o zg^{-1}o \rangle=\langle z H_o g^{-1}o \rangle=z\langle H_o g^{-1}o \rangle.
\end{equation*}
Par conséquent~$\langle H_o zg^{-1}o \rangle$ contient le point~$zp = z(z^{-1}\gamma) = \gamma$, qui appartient à~$C$.
Autrement dit~$g z^{-1}$ appartient à~$Y$, ce qu'il fallait démontrer.

Montrons que l'intersection~$Y \cap Z_G (H)(\kk)$ est compacte. Tout d'abord c'est l'intersection de deux fermés, donc c'est un fermé. Comme le stabilisateur~$G_o$ de~$o$ est compact, et que l'action de~$G(\kk)$ sur~$\Ik(G)$ est propre, il suffit de montrer que l'intersection~$(Y^{-1}\cdot o)\cap(Z_G (H)(\kk)\cdot o)$ est relativement compacte. Or~$o$ étant fixe sous~$G_o$, donc sous~$H_o$, l'en\-sem\-ble~$Z_G (H)(\kk) \cdot o$ est contenu dans le lieu fixe~$\Ik(G)^{H_o}$. Mais, par définition même de~$Y$, l'intersection de~$Y^{-1}\cdot o$ avec~$\Ik(G)^{H_o}$ est contenue dans le compact~$C$.
\end{proof}

Ceci étant, il nous reste à démontrer la seconde condition du Théorème~\ref{Theo11}, autrement dit à établir la formule~\eqref{eq2}. Quitte à changer la constante~$c$, la validité
de la formule~\eqref{eq2} ne dépend de la norme~$\Nm{-}$ qu'à équivalence près. Or,~$V$ étant de
dimension finie, toutes les normes (homogènes) sont équivalentes. 
%(cf. section C.1). CENSURÉ
Soit~$B$ une
boule du dual de~$V$. Quitte à appliquer la section C.1, nous pouvons supposer que les hypothèses de la Proposition~\ref{Prop22} concernant la norme~$\Nm{-}$ sont satisfaites.\footnote{\label{pied B}N.d.É.: Cette hypothèse est vraisemblablement superfétatoire, si on considère la boule unité~$B$ du dual après être passé à une extension ultramétrique~$\kk'$ de~$\kk$ à groupe de valuation non discret (dense dans~$\R_>0$). On tâchera de modifier en conséquence la définition de~$c_4$.} 
\begin{proposition}\label{Prop22}
La situation est celle du Théorème~\ref{Theo11}. Nous utilisons les notations
précédentes. En particulier~$\rho$ et~$V$ sont fixés, et nous avons choisi~$\theta$,~$o$ et~$C$ (de sorte, la partie~$Y$ est bien définie).

Supposons que~$\Omega$ soit bornée, et que~$\Nm{-}$ soit~$G_o$-invariante, ultramétrique et\footnote{Voir la note de pied de page précédente.} ne prenne que des valeurs prises par~$\abs{-} : \kk \rightarrow \R$. Alors la formule (2) est satisfaite avec la constante~$c_4/(c_1c_2c_3)$, où
\[
c_1=1+\sup_{f\in C_G(\Ad_\rho)}
	\frac{\pi_\kk(f)}
	{
		\displaystyle{\sup_{\omega\in\Omega}\abs{f}(\omega)}
	},
c_2=\min_{\omega\in\Omega}\NM{\rho(\omega)}^{-1},
c_3=\sup_{f\in C_G(\Ad_\rho)}
	\frac{\abs{f}(\theta(o))}
	{
		\displaystyle{\sup_{k\in K}\abs{f}(k)}
	}
\]
et~$c_4$ sont obtenues en appliquant les Propositions~\ref{Prop32},~\ref{Prop31},~\ref{Prop35}, et~\ref{Prop39} respectivement.
\end{proposition}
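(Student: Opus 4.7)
Fixons $y\in Y$ et $v\in V\setminus\{0\}$. Le but est d'établir l'inégalité
\[
\sup_{\omega\in\Omega}\Nm{\rho(y\omega)(v)}\ \geq\ \frac{c_4}{c_1c_2c_3}\,\Nm{v},
\]
par une chaîne d'inégalités, chaque maillon introduisant l'un des facteurs $c_1,c_2,c_3,c_4$. L'idée globale, qui suit le squelette archimédien de~\cite{RS09}, est de transformer la question en une question sur les coefficients matriciels de $\Ad_\rho$, puis d'y appliquer la convexité de $x\mapsto \abs{f}(\theta(x))$ sur l'immeuble (via le Théorème~\ref{TheoF3}), en remplaçant le couple « décomposition de Mostow / convexité de l'exponentielle » du cadre réel par le couple « Théorème~\ref{TheoF1} / Proposition~\ref{Prop36} ».

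Le début est une linéarisation. À $v$ on associe un endomorphisme $e_v\in\gl(V)$ (par exemple, un projecteur de rang un d'image $\kk v$, construit à l'aide de la boule $B$ du dual normalisée) de sorte qu'on contrôle $\Nm{v}$ en fonction de $\Nm{e_v}_{\mathrm{op}}$ et réciproquement. Puis, pour tout $\omega\in\Omega$, la fonction $f_{y,\omega}:g\mapsto \bigl(\Ad_\rho(g)\,e_v\bigr)$, évaluée en un vecteur fixé, fournit un coefficient matriciel de $\Ad_\rho$, et $\Nm{\rho(y\omega)(v)}$ se minore (à facteur $c_2$ près, grâce à la Proposition~\ref{Prop31}) par $\Nm{\Ad_\rho(y\omega)e_v}_{\mathrm{op}}$. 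Ainsi le problème se ramène à majorer $\Nm{e_v}_{\mathrm{op}}$ par $\sup_{\omega\in\Omega}\abs{f}(y\omega)$ pour une fonction $f\in C_H(\Ad_\rho)$ adéquate.

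Le cœur de l'argument consiste maintenant à exploiter la définition de $Y$. Par construction de $Y$, il existe $p\in C$ dans l'enveloppe convexe des $h\cdot y^{-1}\cdot o$ pour $h\in H_o$. On choisit ensuite $f\in C_H(\Ad_\rho)$ obtenu à partir de $e_v$ (par la Proposition~\ref{Prop32}, qui fait intervenir $c_1$ via le projecteur de Reynolds $\pi_\z$ et son transposé $\pi_\kk$ sur les coefficients, en utilisant les hypothèses~\eqref{**} et~\eqref{**'}). L'action à gauche de $H_o$ sur $\Ik(G)$ préserve $\abs{f}\circ\theta$ puisque $\Nm{-}$ est $G_o$-invariante et $f$ est issue de coefficients de $\Ad_\rho$; donc la fonction convexe $x\mapsto\abs{f}(\theta(x))$ prend en $p$ la même valeur que sur tout point de la famille $H_o\cdot y^{-1}\cdot o$, d'où par convexité (Proposition~\ref{Prop36}) la majoration
\[
\abs{f}(\theta(y^{-1}\cdot o))\ \leq\ \sup_{h\in H_o}\abs{f}\bigl(\theta(h\cdot y^{-1}\cdot o)\bigr)
\]
contrôlée par $\abs{f}(\theta(p))$ avec $p\in C$. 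Le facteur $c_4$ de la Proposition~\ref{Prop39} traduit précisément cette comparaison, uniformément en $p\in C$, entre la valeur en $\theta(o)$ (après translation par $y$) et la valeur en $\theta(p)$.

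Il reste à redescendre de $\theta(y^{-1}\cdot o)$ vers les valeurs $\abs{f}(y\omega)$ pour $\omega\in\Omega$; c'est la fonction du facteur $c_3$ (Proposition~\ref{Prop35}), qui compare l'évaluation en $\theta(o)$ au sup sur $K=G_o$, à combiner avec la Zariski-densité de $G_o$ et l'hypothèse~\eqref{*} sur $\Omega$. En chaînant les quatre maillons on obtient la borne annoncée. Le point le plus délicat sera la rédaction soignée de l'étape de convexité: il faut vérifier que la fonction $\abs{f}\circ\theta$ est bien convexe sur tout l'enveloppe convexe impliquée (ce qui repose sur le Théorème~\ref{TheoF3}), que l'invariance par $H_o$ se propage bien à l'enveloppe convexe, et que l'on peut effectivement prendre le supremum sur $H_o$ après projection par le projecteur de Reynolds sans perdre la positivité des coefficients; c'est là qu'interviennent de façon cruciale les hypothèses~\eqref{**} et~\eqref{**'} et la nature ultramétrique de la norme $\Nm{-}$.
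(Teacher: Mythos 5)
Ton plan retrouve bien la \emph{silhouette} de la démonstration du papier : chaîner quatre inégalités associées aux constantes $c_1,\dots,c_4$, passer aux coefficients matriciels de $\Ad_\rho$, utiliser le projecteur de Reynolds (Propositions~\ref{Prop32} et~\ref{Prop34}), et exploiter la convexité de $x\mapsto\abs{f}(\theta(x))$ sur l'enveloppe convexe de $H_o\cdot y^{-1}\cdot o$ à l'aide de la Proposition~\ref{Prop36} et de la Proposition~\ref{Prop38}. Les propositions invoquées sont les bonnes. Cependant, la mécanique précise contient plusieurs confusions qui constituent des lacunes réelles et non pas seulement des détails de rédaction.

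Le point le plus problématique est l'attribution du facteur $c_2$. La Proposition~\ref{Prop31} n'a rien à voir avec une « linéarisation » par un endomorphisme $e_v$ ni avec une comparaison à $\Nm{\Ad_\rho(y\omega)e_v}_{\mathrm{op}}$ : c'est le \emph{pas de conjugaison intérieure}, qui minore $\sup_{\omega}\Nm{\rho(g\omega)(v)}$ par $c_2\sup_{\omega}\Nm{\rho(\omega^{-1}g\omega)(v)}$ au prix de la borne uniforme sur $\NM{\rho(\omega)}$ pour $\omega\in\Omega$. C'est précisément cette conjugaison par $\omega^{-1}$ qui transforme la quantité en un coefficient matriciel de $\Ad_\rho$ \emph{restreint à $H$} (via la Proposition~\ref{Prop33}, que tu n'invoques pas), et qui permet ensuite d'appliquer la Proposition~\ref{Prop32}. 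Sans elle, $\omega\mapsto\phi(\rho(ky\omega)(v))$ n'est pas un coefficient de $\Ad_\rho$ sur $H$ et la suite de l'argument ne démarre pas. Il n'y a d'ailleurs aucun besoin d'emballer $v$ dans un projecteur de rang un : la dualité $\Nm{v}=\sup_{\phi\in B}\abs{\phi(v)}$ suffit, et c'est ce que fait le papier.

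Deuxième confusion : tu écris que « l'action à gauche de $H_o$ sur $\Ik(G)$ préserve $\abs{f}\circ\theta$ puisque $\Nm{-}$ est $G_o$-invariante ». Ce sont en fait deux faits distincts. La $G_o$-invariance de $\Nm{-}$ sert en amont, pour insérer un $\sup_{k\in G_o}$ dans la quantité à minorer (première ligne de la preuve). L'invariance à gauche de $y^{-1}o\mapsto\abs{\phi}(\pi_\z(\theta(y^{-1}o))(v))$ sous $H_o$ sur $Y^{-1}\cdot o$ provient quant à elle de la \emph{conjugaison}-invariance sous $H$ de la fonction régulière $g\mapsto\phi(\pi_\z(g)(v))$ (Proposition~\ref{Prop34}), combinée à l'équivariance de $\theta$ et au fait que $G_\theta$ contient $H_o$ : c'est le calcul $h\theta(y^{-1}o)h^{-1}=h\theta(y^{-1}o)=\theta(hy^{-1}o)$, complètement indépendant de la norme $\Nm{-}$. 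Enfin l'ordre que tu proposes ($c_4$ puis $c_3$) est inversé par rapport au papier : on descend d'abord de $\sup_{k\in G_o}$ à $\theta(o)$ via $c_3$ (Proposition~\ref{Prop35}), puis on translate en $\theta(y^{-1}o)$, et c'est \emph{alors} que la convexité et la Proposition~\ref{Prop38} ramènent à l'infimum sur $C$, minoré par $\Nm{v}/c_4$ grâce à la Proposition~\ref{Prop39}. La Proposition~\ref{Prop39} n'est pas une comparaison « $\theta(o)$ vs $\theta(p)$ » mais bien une minoration uniforme $\sup_{\phi\in B}\abs{\phi}(\pi_\z(\gamma)(v))\geq\Nm{v}/c_4$ pour $\gamma\in C$.
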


\section{Propositions}\label{section3}
Dans cette section nous réunissons quelques arguments généraux qui serviront à la démonstration de la Proposition~\ref{Prop22}. On pourra passer directement à la section suivante et se reporter aux énoncés ci-dessous au besoin. Les notations sont celles introduites dans la section précédente.

\begin{proposition}\label{prop31}\label{Prop31}
Il existe une constante positive inversible~$c_2$ telle que pour tout élément~$g$ de~$G(\kk)$ et tout vecteur~$v$ de~$V$, on ait
%%%
\begin{equation}\label{eq3}
\sup_{\omega\in\Omega}\Nm{\rho(g\cdot \omega)(v)}
	\geq
c_2\cdot\sup_{\omega\in\Omega}\Nm{\rho(\omega^{-1}\cdot g\cdot \omega)(v)}.
\end{equation}
\end{proposition}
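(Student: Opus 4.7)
The approach here is essentially a direct operator-norm estimate; the real substance is bookkeeping and making sure the constant $c_2$ matches the one announced in Proposition~\ref{Prop22}.

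My plan is to start from the group-theoretic identity $\omega^{-1}\cdot g\cdot \omega$, which via the representation $\rho$ yields, for every $\omega\in\Omega$ and every $v\in V$,
\[
\rho(\omega^{-1}\cdot g\cdot \omega)(v)=\rho(\omega)^{-1}\bigl(\rho(g\cdot\omega)(v)\bigr).
\]
Applying a norm and invoking submultiplicativity of the operator norm associated with $\Nm{-}$ (which holds by definition of $\NM{-}$), this gives
\[
\Nm{\rho(\omega^{-1}\cdot g\cdot \omega)(v)}
\leq \NM{\rho(\omega)^{-1}}\cdot \Nm{\rho(g\cdot\omega)(v)}.
\]
Taking the supremum on the left over $\omega\in\Omega$ and using that $\sup_\omega (A_\omega B_\omega)\leq (\sup_\omega A_\omega)(\sup_\omega B_\omega)$, we obtain
\[
\sup_{\omega\in\Omega}\Nm{\rho(\omega^{-1}\cdot g\cdot \omega)(v)}
\leq \Bigl(\sup_{\omega\in\Omega}\NM{\rho(\omega)^{-1}}\Bigr)\cdot
\sup_{\omega\in\Omega}\Nm{\rho(g\cdot \omega)(v)},
\]
which is exactly~\eqref{eq3} with reciprocal constant $c_2^{-1}=\sup_{\omega\in\Omega}\NM{\rho(\omega)^{-1}}$, i.e.\ $c_2=\min_{\omega\in\Omega}\NM{\rho(\omega)^{-1}}^{-1}$, in agreement with the formula declared in Proposition~\ref{Prop22} (up to the cosmetic identification $\omega\leftrightarrow\omega^{-1}$ once one notices that the hypothesis only requires $\Omega$ bounded, and that $\Omega^{-1}$ is likewise bounded).

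To conclude, I need to verify that this constant is strictly positive and finite. Both follow from the standing hypothesis that $\Omega$ is a \emph{bounded} subset of $H(\kk)$: since the maps $h\mapsto\rho(h)$ and $h\mapsto\rho(h)^{-1}=\rho(h^{-1})$ are continuous from $H(\kk)$ into $\GL(V)$, their composition with the operator norm $\NM{-}$ is continuous, hence bounded on the relatively compact set $\Omega$ (recall $\kk$ is a local field, so bounded subsets are relatively compact). Thus $\sup_{\omega\in\Omega}\NM{\rho(\omega)^{-1}}$ is finite and bounded away from $0$, giving $0<c_2<\infty$.

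There is no genuine obstacle here; the only point to be careful about is that the supremum and the operator-norm bound interact correctly, which is why one must pull the sup inside before applying submultiplicativity, and that boundedness of $\Omega$ is used both to ensure finiteness of $\sup_\omega\NM{\rho(\omega)^{-1}}$ and to keep $c_2$ independent of the pair $(g,v)$.
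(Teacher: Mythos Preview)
Your proof is correct and follows essentially the same route as the paper: factor $\rho(\omega^{-1}g\omega)(v)=\rho(\omega)^{-1}\bigl(\rho(g\omega)(v)\bigr)$, apply the operator-norm inequality, and use boundedness of $\Omega$ to ensure the resulting constant is finite and positive. The only difference is that your constant is $(\sup_{\omega\in\Omega}\NM{\rho(\omega)^{-1}})^{-1}$ while the paper records $\inf_{\omega\in\Omega}\NM{\rho(\omega)}^{-1}$; as you observe, these differ only by the substitution $\omega\leftrightarrow\omega^{-1}$, which is harmless since $\Omega^{-1}$ is bounded whenever $\Omega$ is, and the precise value of $c_2$ plays no role beyond being positive and invertible.
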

\begin{proof} Par définition de la norme d'opérateur~$\NM{\rho(\omega)}$, pour tous~$g$,~$\omega$ et~$v$ comme dans l'énoncé, nous avons l'inégalité
\begin{equation}\label{eq4}
\Nm{\rho(\omega^{-1}\cdot g\cdot\omega)}
\geq
\NM{\rho(\omega)}^{-1}\cdot\Nm{\rho(g\cdot\omega)(v)}.
\end{equation}
Posons~$c_2=\inf_{\omega\in\Omega}\Nm{\rho(\omega)}^{-1}$. Comme~$\Omega$ est une partie bornée et non vide, son image par~$\omega\mapsto\NM{\rho(\omega)}^{-1}$ est une partie bornée et non vide de~$\R_{<0}$. La constante~$c_2$ est donc positive et inversible, et répond à l'énoncé.
\end{proof}
%%%
\begin{proposition}\label{prop32}\label{Prop32}
Il existe une constante positive inversible~$c_1$ telle que pour tout coefficient matriciel~$f$ dans~$C_H(\Ad_\rho)$, on ait
\begin{equation}\label{eq5}
\sup_{\omega\in\Omega}\abs{f(\omega)}\geq\frac{1}{c_1}\abs{\pi_\lie{\kk}(f)}.
\end{equation}
%%%
\end{proposition}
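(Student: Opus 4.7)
\textbf{Proof plan for Proposition \ref{Prop32}.}

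The plan is to exhibit two seminorms on the finite-dimensional $\kk$-vector space $C_H(\Ad_\rho)$, one of which is actually a norm, and then to invoke the classical fact that on a finite-dimensional vector space over a complete valued field any seminorm is dominated by any norm.

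First, I would set
\[
N_1(f) := \sup_{\omega\in\Omega}\abs{f(\omega)}, \qquad N_2(f) := \abs{\pi_\kk(f)},
\]
for $f\in C_H(\Ad_\rho)$. Both are clearly seminorms (sub-additive and homogeneous in $\abs{-}$). Moreover $N_1(f)$ is finite for every $f$, since $\Omega$ is a bounded subset of $H(\kk)$ (as reduced to just before the statement of Proposition~\ref{Prop22}) and $f$ is a regular function on $H$, hence bounded on any bounded subset. The seminorm $N_2$ is continuous because it factors through the $\kk$-linear projector $\pi_\kk:C_H(\Ad_\rho)\to \kk$ followed by $\abs{-}$.

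Second, I would show that $N_1$ is in fact a norm on $C_H(\Ad_\rho)$. If $N_1(f)=0$, then $f$ is a coefficient matriciel of $\Ad_\rho$ which vanishes identically on $\Omega$; the defining property~\eqref{*} of $\Omega$ then forces $f$ to vanish on all of $H$, i.e. $f=0$ as an element of $C_H(\Ad_\rho)$. Hence $N_1$ separates points.

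Third, I would recall that $C_H(\Ad_\rho)$ is finite-dimensional over $\kk$: indeed, as noted in the preamble, it is a quotient of $\gl(V)\otimes\gl(V)^\vee$, and $V$ has finite dimension. Since $(\kk,\abs{-})$ is a complete valued field and $C_H(\Ad_\rho)$ is finite-dimensional, any two norms are equivalent, and in particular any seminorm is bounded above by any norm. Applying this to $N_2\leq c_1\cdot N_1$ yields the desired constant~$c_1>0$, which is the conclusion of the proposition. A concrete value can be read off as
\[
c_1 \;=\; \sup_{\substack{f\in C_H(\Ad_\rho)\\ N_1(f)=1}} N_2(f),
\]
the supremum being finite because the unit ``sphere'' $\{N_1=1\}$ is a closed bounded subset of the finite-dimensional normed space $(C_H(\Ad_\rho),N_1)$, hence compact, and $N_2$ is continuous.

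There is essentially no obstacle here: the only delicate points are the verifications that $\Omega$ being bounded ensures $N_1$ is everywhere finite, and that hypothesis~\eqref{*} upgrades the seminorm $N_1$ to a norm. Once these are in place, the result is an immediate application of finite-dimensional functional analysis over the local field~$\kk$.
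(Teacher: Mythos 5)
Your proof is correct and follows essentially the same route as the paper's: introduce the sup-norm over $\Omega$ on $C_H(\Ad_\rho)$, use hypothesis \eqref{*} to promote it from seminorm to norm, and then invoke finite-dimensionality to conclude that the linear functional $\pi_\kk$ is bounded with respect to it. The only cosmetic difference is that the paper phrases the final step as $\pi_\kk$ being a bounded operator and takes $c_1 = 1 + \NM{\pi_\kk}$ to sidestep the degenerate case $\NM{\pi_\kk}=0$, whereas you define $c_1$ as a supremum over the unit sphere; your $c_1$ could in principle be $0$ when $\pi_\kk = 0$, so you would want the same $1+$ fix (or note, as the paper does, that $\pi_\kk\neq 0$ once $V\neq 0$).
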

\begin{proof} Comme~$\Omega$ est borné, l'application~$\Nm{-}_\Omega:f\mapsto \sup_{\omega\in\Omega}\abs{f}(\omega)$ est bien définie sur~$C_H(\Ad_\rho)$. C'est manifestement une semi-norme. D'eprès la condition~\eqref{*}, elle ne s'annule pas: c'est une norme. Comme~$C_H(\Ad_\rho)$ est de dimension finie, l'application linéaire~$\pi_\kk$, de~$C_H(\Ad_\rho)$ sur~$\kk$, est un opérateur borné, relativement à~$\Nm{-}_\Omega$ et~$\abs{-}$. Si~$\NM{\pi_\kk}$ désigne sa norme en tant qu’opérateur~$(C_H(\Ad_\rho),\Nm{-}_\Omega)\to (\kk,\abs{-})$, alors
\begin{equation}\label{eq6}
\NM{\pi_\kk} \cdot \sup_{\omega\in\Omega}\abs{f(\omega)} \geq \abs{\pi_\kk(f)}.
\end{equation}
Par conséquent~$c_1=\NM{\pi_\kk}$ convient si~$\NM{\pi_\kk}\neq0$. Si\footnote{En définitive le cas~$\pi_\kk=0$ ne se produit pas si~$V$ est non nulle. Voir~p.\,\pageref{constantes coeffs} pourquoi les constantes non nulles sont coefficients matriciels dans~$C_H(\Ad_\rho)$.}, en revanche~$\NM{\pi_\kk}=0$, alors~$c_1$ convient. Quoiqu'il en soit,~$c_1=1+\NM{\pi_\kk}$ convient toujours.
\end{proof}
\begin{proposition}\label{prop33}\label{Prop33} Pour tout vecteur~$v$ de~$V$, pour toute forme linéaire~$\phi$ dans~$V^\vee$ et tout élément~$g$ de~$G(\kk)$,
\begin{equation}\label{eq7}
\omega\mapsto(\rho(\omega^{-1}\cdot y\cdot \omega)(v)|\phi)
\end{equation}
définit une fonction sur~$H$ (resp.~$G$) appartenant à~$C_H (\Ad_\rho )$ (resp.~$C_G (\Ad_\rho )$).
\end{proposition}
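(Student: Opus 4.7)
The statement should read $g$ (not $y$) for the fixed element of~$G(\kk)$; I follow this reading. The approach is a direct unwinding, in three steps: (i)~rewrite the function as a linear form evaluated on the adjoint orbit of~$\rho(g)$; (ii)~translate from~$\omega^{-1}$ to~$\omega$ using the trace pairing on~$\gl(V)$; (iii)~read off that the result is a matrix coefficient of~$\Ad_\rho$.

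\textbf{Step 1.} Since $\rho$ is a homomorphism,
\[
\rho(\omega^{-1}\cdot g\cdot \omega)
=\rho(\omega)^{-1}\,\rho(g)\,\rho(\omega)
=\Ad_\rho(\omega^{-1})\bigl(\rho(g)\bigr).
\]
Setting $A:=\rho(g)\in\gl(V)$ and defining $\phi'\in\gl(V)^\vee$ by $\phi'(X):=\phi(Xv)$ for $X\in\gl(V)$, the function in question becomes
\[
\omega\longmapsto\phi'\bigl(\Ad_\rho(\omega^{-1})(A)\bigr)=\bigl\langle \Ad_\rho(\omega^{-1})(A),\,\phi'\bigr\rangle.
\]
By definition of $C_H(\Ad_\rho)$ (resp.\ $C_G(\Ad_\rho)$), a matrix coefficient is a function of the form $h\mapsto \langle \Ad_\rho(h)(e),\psi\rangle$, with $e\in\gl(V)$ and $\psi\in\gl(V)^\vee$; the issue is therefore to replace the occurrence of $\omega^{-1}$ by $\omega$.

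\textbf{Step 2.} The trace pairing $(X,Y)\mapsto\Tr(XY)$ on~$\gl(V)$ is non-degenerate, and for any $g\in G(\kk)$, $X,Y\in\gl(V)$ the cyclic property of the trace gives
\[
\Tr\bigl(\Ad_\rho(g)(X)\cdot \Ad_\rho(g)(Y)\bigr)=\Tr\bigl(\rho(g)XY\rho(g)^{-1}\bigr)=\Tr(XY),
\]
so this pairing is $G$-invariant; equivalently, the isomorphism $\iota:\gl(V)\xrightarrow{\sim}\gl(V)^\vee$, $\iota(X)(Y)=\Tr(XY)$, is $G$-equivariant when $\gl(V)^\vee$ is given the contragredient action of $\Ad_\rho$. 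Write $\phi'=\iota(X_{\phi'})$ for a (unique) $X_{\phi'}\in\gl(V)$. Then, using cyclicity once more,
\begin{align*}
\phi'\bigl(\Ad_\rho(\omega^{-1})(A)\bigr)
&=\Tr\bigl(X_{\phi'}\,\rho(\omega)^{-1}A\,\rho(\omega)\bigr)\\
&=\Tr\bigl(\rho(\omega)\,X_{\phi'}\,\rho(\omega)^{-1}\cdot A\bigr)\\
&=\Tr\bigl(\Ad_\rho(\omega)(X_{\phi'})\cdot A\bigr).
\end{align*}

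\textbf{Step 3.} Define $\psi_A\in\gl(V)^\vee$ by $\psi_A(Y):=\Tr(YA)$. The computation in Step~2 gives
\[
\omega\longmapsto \bigl\langle \Ad_\rho(\omega)(X_{\phi'}),\,\psi_A\bigr\rangle,
\]
which is, by its very form, a matrix coefficient of the representation $\Ad_\rho$ evaluated at~$\omega$. Restricting to $H$ yields an element of $C_H(\Ad_\rho)$; considered on the whole of $G$ it is an element of $C_G(\Ad_\rho)$.

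The argument is elementary and involves no real obstacle: the only point to be careful about is verifying the $G$-equivariance of the trace pairing (Step~2), which is responsible for replacing $\omega^{-1}$ by~$\omega$ inside $\Ad_\rho$ and hence for landing in the space of matrix coefficients as defined in~\S\ref{section2}.
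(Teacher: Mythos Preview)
Your proof is correct. The paper's own proof is a single sentence: it observes that $e\mapsto(e(v)\mid\phi)$ is a linear form on~$\gl(V)$ and declares the function~\eqref{eq7} to be a matrix coefficient of~$\Ad_\rho$, without explicitly addressing the passage from $\omega^{-1}$ to~$\omega$. Your Steps~2--3 supply precisely the argument that this elision hides: the trace pairing on~$\gl(V)$ is $G$-invariant (so $\Ad_\rho$ is self-dual), and this is what guarantees that the span of functions $\omega\mapsto\psi(\Ad_\rho(\omega^{-1})e)$ coincides with the span of functions $\omega\mapsto\psi(\Ad_\rho(\omega)e)$. The paper treats this point as understood; you make it explicit. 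Both approaches are the same in spirit, but yours is more careful about matching the stated definition of $C_H(\Ad_\rho)$.
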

\begin{proof} Comme~$e\mapsto(e(v)|\phi)$ est une forme linéaire sur~$\gl(V)$, l'application~\eqref{eq7} est un coefficient matriciel de l’action~$\Ad_\rho$.
\end{proof}
\begin{proposition}\label{propr34}\label{Prop34}
Nous utilisons les hypothèses du Théorème~\ref{Theo11} concernant la représentation~$\rho$. Pour toute forme~$\kk$-linéaire~$\phi$ dans~$V^\vee$, la fonction constante sur~$H$
\begin{equation}\label{eq8}
\pi_\kk\left(\omega\mapsto\left(\rho\left(\omega^{-1}\cdot y\cdot\omega^{\vphantom{l}}\right)^{\vphantom{l}}(v)\middle|\phi\right)\right)
\end{equation}
vaut
\begin{equation}\label{eq9}
\left(\pi_{\z}\left(\rho(y)^{\vphantom{l}}\right)^{\vphantom{i}}(v)\middle| \phi \right)
\end{equation}
et, lorsque~$y$ varie dans~$G$, définit une fonction régulière appartenant à~$C_G(\Ad_\rho)$ et invariante sous l'action par conjugaison de~$H$ sur~$G$.
\end{proposition}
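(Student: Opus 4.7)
The plan is as follows. First I would recognise that the function
\[f_y \colon \omega\longmapsto \bigl(\rho(\omega^{-1}\cdot y\cdot\omega)(v)\bigm|\phi\bigr)\]
of Proposition~\ref{Prop33} admits the clean expression $f_y(\omega) = L\bigl(\Ad_\rho(\omega^{-1})(\rho(y))\bigr)$, where $L \in \gl(V)^\vee$ is the linear form $L(e) = (e(v)|\phi)$. This displays $f_y$ as the image $\Phi(\rho(y))$ of the element $\rho(y) \in \gl(V)$ under the linear map $\Phi \colon \gl(V) \to C_H(\Ad_\rho)$ given by $\Phi(e)(\omega) := L(\Ad_\rho(\omega^{-1})(e))$. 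A direct check shows that $\Phi$ is $H$-equivariant once $\gl(V)$ is equipped with the adjoint action $\Ad_\rho$ and $C_H(\Ad_\rho)$ with the $H$-action by left translations.

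Next I would invoke Remarque~\emph{i)} (page~\pageref{Remarquei}) applied to $\Phi$. The hypotheses~\eqref{**} on both $\gl(V)$ and $C_H(\Ad_\rho)$, together with~\eqref{**'} on $\gl(V)$, give the existence and uniqueness of the Reynolds projectors $\pi_\z \colon \gl(V) \to \z = \gl(V)^H$ and $\pi_\kk \colon C_H(\Ad_\rho) \to \kk$, and force the intertwining relation $\pi_\kk \circ \Phi = \Phi \circ \pi_\z$. Since $\pi_\z(\rho(y)) \in \z$ is centralised by~$H$, the function $\Phi(\pi_\z(\rho(y)))$ is constant on $H$ with value $L(\pi_\z(\rho(y)))$, yielding the identity
\[\pi_\kk(f_y) \;=\; L(\pi_\z(\rho(y))) \;=\; \bigl(\pi_\z(\rho(y))(v)\bigm|\phi\bigr)\]
asserted in the first part of the statement.

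For the remaining assertions, as $y$ varies over $G$ the function $F \colon y \mapsto L(\pi_\z(\rho(y)))$ is regular because it is the composition of the morphism of varieties $y \mapsto \rho(y)$ with the linear form $L \circ \pi_\z$ on $\gl(V)$. Invariance under $H$-conjugation follows from the $H$-equivariance of $\pi_\z$ and the triviality of the $H$-action on $\z$: for $h \in H$ one has
\[F(hyh^{-1}) = L\bigl(\pi_\z(\Ad_\rho(h)(\rho(y)))\bigr) = L\bigl(\Ad_\rho(h)(\pi_\z(\rho(y)))\bigr) = L(\pi_\z(\rho(y))) = F(y).\]
Membership in $C_G(\Ad_\rho)$ then follows from Proposition~\ref{Prop33}: for each fixed $\omega$ the function $y \mapsto f_y(\omega)$ lies in $C_G(\Ad_\rho)$, and $F$ is obtained as the image under the linear form $\pi_\kk$ of the regular family $y \mapsto f_y \in C_H(\Ad_\rho)$, so it lies in the same finite-dimensional $G$-stable subspace of $\kk[G]$.

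The main technical point requiring care is the verification of the intertwining relation $\pi_\kk \circ \Phi = \Phi \circ \pi_\z$: this amounts to matching the hypotheses~\eqref{**} on $\gl(V)$ and on $C_H(\Ad_\rho)$, together with~\eqref{**'} on $\gl(V)$, to the framework of Remarque~\emph{i)}, which asserts precisely that any $H$-equivariant morphism between two such modules commutes with the projections onto their fixed subspaces. The rest of the argument is essentially mechanical, combining the functoriality of Reynolds projectors with the defining formulas for $\Phi$ and $\pi_\z$.
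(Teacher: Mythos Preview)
Your argument for the equality of \eqref{eq8} and \eqref{eq9}, and for regularity and $H$-conjugation invariance of $F$, is correct and matches the paper's approach: both define the $H$-equivariant map $\Phi\colon \gl(V) \to C_H(\Ad_\rho)$, invoke Remarque~\emph{i)} to obtain $\pi_\kk \circ \Phi = \Phi \circ \pi_\z$, and evaluate at $e = \rho(y)$. (Your $\Phi(e)(\omega) = L(\Ad_\rho(\omega^{-1})(e))$ uses $\omega^{-1}$ where the paper uses $\omega$, but this is immaterial.)

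Your argument for membership in $C_G(\Ad_\rho)$ has a genuine gap. You invoke Proposition~\ref{Prop33} to claim that for each fixed $\omega$ the function $y \mapsto f_y(\omega)$ lies in $C_G(\Ad_\rho)$, but Proposition~\ref{Prop33} concerns the function of $\omega$ with $y$ fixed, not the reverse. For fixed $\omega$, the map $y \mapsto (\rho(\omega^{-1}y\omega)(v)\mid\phi)$ is a matrix coefficient of $\rho$, not of $\Ad_\rho$. In fact the assertion $F \in C_G(\Ad_\rho)$ fails already for $G = H = \SL(2)$ with $\rho$ the standard representation and $\phi(v) \neq 0$: one computes $F(y) = \tfrac{1}{2}\mathrm{tr}(y)\,\phi(v)$, a matrix coefficient of the standard representation lying in a Peter--Weyl block disjoint from $C_G(\Ad_\rho)$. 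The paper's own proof does not address this point either (it establishes only ``manifestement régulière'' and $H$-invariance), so the claim appears to be a slip in the statement. What is true, and what suffices for the later use via Proposition~\ref{Prop35}, is that $F$ lies in the fixed finite-dimensional space $\{y \mapsto M(\rho(y)) : M \in \gl(V)^\vee\}$; the proof of Proposition~\ref{Prop35} uses only finite-dimensionality.
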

\begin{proof}
La fonction~$y\mapsto\left(\pi_{\z}(\rho(y))(v)\middle|\pi\right)$ est manifestement régulière, et est invariante pour l'action par conjugaison de~$H$ sur~$G$, vu que, pour~$\omega$ dans~$H$,
\begin{equation}\label{eq10}
	\left(\pi_{\z}(\rho(\omega y \omega^{-1})\right)
		=
	\rho(\omega)\pi_{\z}(\rho(y))\rho(\omega^{-1})
		=
	\pi_{\z}(\rho(y))
\end{equation}
car~$\pi_\z$ est~$H$-équivariant et d'image dans~$\z$.

Pour établir l'égalité de~\eqref{eq8} et~\eqref{eq9}, considérons l'application~$\Phi:\gl(V)\to C_H(\Ad_\rho)$  qui envoie~$e$ vers le coefficient matriciel~$\omega\mapsto\left(\rho(\omega)e(\rho(y))\rho(\omega^{-1})\middle|\phi\right)$.
C'est une application~$H$-équivariante, et, d'après la Remarque~\emph{i)} en page~\pageref{Remarquei}, elle commute aux projecteurs~$\pi_\z$ et~$\pi_\kk$. Autrement dit
\[
\pi_\kk(\omega\mapsto(\rho(\omega)e\rho(\omega^{-1})(v)|\phi)=\omega\mapsto(\rho(\omega)\pi_\z(e)\rho(\omega^{-1})(v)|\phi).
\]
Prenons~$e=\rho(y)$. Alors le membre de gauche s'identifie à~\eqref{eq8}, et, d'après~\eqref{eq10}, le membre de droite s'identifie à~\eqref{eq9}.
\end{proof}
%%%%%%
\begin{proposition}\label{prop35}\label{Prop35}
Pour tout point~$o$ de~$\Ik(G)$, l'application~$f\mapsto \sup_{k\in G_o}\abs{f}(k)$ et la restriction de la semi-norme~$\theta(o)$ définissent, sur~$C_G(\Ad_\rho)$, deux normes comparables.

En particulier, il existe une constante positive et inversible~$c_3$ telle que, pour toute fonction~$f$ dans~$C_G(\Ad_\rho)$, on a~$\sup_{k\in G_o}\abs{f}(k)\geq\frac{1}{c_3}\abs{f}(\theta(o))$.
\end{proposition}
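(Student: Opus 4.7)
Le plan est de vérifier que les deux applications définissent effectivement des normes sur le~$\kk$-espace vectoriel~$C_G(\Ad_\rho)$, qui est de dimension finie puisque~$V$ l'est. La comparabilité en découlera aussitôt par l'équivalence des normes sur un espace vectoriel de dimension finie sur le corps localement compact~$\kk$ ; la constante~$c_3$ ne sera alors autre que le rapport entre les deux normes, réalisé sur la sphère unité de l'une d'elles.

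Que les deux applications soient des semi-normes est immédiat : $G_o$ étant compact et toute fonction régulière sur~$G$ étant continue, $f\mapsto \sup_{k\in G_o}\abs{f}(k)$ est bien définie et vérifie sous-additivité et homogénéité ; quant à~$\theta(o)$, c'est par construction une semi-norme multiplicative bornée sur~$\kk[G]$, donc sa restriction à~$C_G(\Ad_\rho)$ en est une.

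Pour la non-dégénérescence de la première, si~$\sup_{k\in G_o}\abs{f}(k)=0$, alors la fonction régulière~$f$ s'annule sur~$G_o$, qui est Zariski dense dans~$G$ d'après~\cite[Lemma~1.4]{RTW09} ; il vient~$f=0$. La non-dégénérescence de la seconde, qui est le point délicat, se traitera ainsi. Posons~$W=\{f\in C_G(\Ad_\rho)\,:\,\theta(o)(f)=0\}$ et observons d'abord que, $\theta(o)$ étant multiplicative, son noyau~$\mathfrak{p}=\{f\in\kk[G]\,:\,\theta(o)(f)=0\}$ est un idéal premier de~$\kk[G]$, correspondant à un point schématique~$\eta\in G$ dont l'adhérence~$Z=\overline{\{\eta\}}$ est un fermé irréductible propre. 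Comme~$G_o\subseteq G_\theta$ stabilise~$\theta(o)$ sous l'action à droite, le sous-espace~$W$ est stable par translation à droite sous~$G_o$ dans le~$G$-module algébrique de dimension finie~$C_G(\Ad_\rho)$ ; le stabilisateur de~$W$ dans~$G$ étant un sous-groupe Zariski fermé contenant le sous-groupe Zariski dense~$G_o$, on en déduit que~$W$ est stable sous translation à droite par~$G$ tout entier. Par suite, pour~$f\in W$ et~$g\in G(\kk)$, on a~$R_g f\in W\subseteq \mathfrak{p}$, ce qui se traduit géométriquement par l'annulation de~$f$ sur une translatée de~$Z$ (soit~$Z\cdot g$ selon la convention retenue). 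L'adhérence~$Y=\overline{\bigcup_{g\in G_o}Z\cdot g}$ est un fermé de~$G$ stable à droite sous~$G_o$, donc sous~$G$ par densité Zariski ; contenant~$Z$ non vide, et la translation à droite étant transitive sur~$G$, cette adhérence vaut nécessairement~$G$ tout entier. On conclut que~$f$ s'annule sur~$G$, d'où~$f=0$ et~$W=0$.

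L'obstacle principal est précisément cette non-dégénérescence de~$\theta(o)|_{C_G(\Ad_\rho)}$, qui requiert d'articuler l'invariance à droite sous le compact ouvert~$G_o$, la densité Zariski de celui-ci, la multiplicativité de~$\theta(o)$, et la transitivité de la translation à droite sur~$G$, pour propager l'annulation d'un élément de~$W$ en l'annulation d'une fonction régulière sur~$G$ entier. Une fois ce point acquis, la comparabilité des deux normes sur l'espace de dimension finie~$C_G(\Ad_\rho)$ est automatique, fournissant la constante~$c_3$ annoncée.
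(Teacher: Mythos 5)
Votre preuve est correcte et repose sur les mêmes ingrédients que celle du texte : dimension finie de~$C_G(\Ad_\rho)$ et comparabilité des normes homogènes sur un $\kk$-espace de dimension finie, densité de Zariski de~$G_o$ (donnée par~\cite[Lemma~1.4]{RTW09}), multiplicativité de~$\theta(o)$ qui fait de son noyau un idéal, et réduction de~$G$. Pour la première norme, vos arguments coïncident avec ceux du texte.

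La différence tient au traitement de la non-dégénérescence de~$\theta(o)$ : le texte montre directement que~$\theta(o)$ est une norme sur \emph{tout}~$\kk[G]$, sans avoir à considérer~$C_G(\Ad_\rho)$ — le noyau~$\mathfrak{p}$ est un idéal strict dont la sous-variété~$V(\mathfrak{p})$ est stable sous~$G_o$ donc sous~$G$ tout entier (son stabilisateur est Zariski fermé et contient~$G_o$ dense), est non vide, donc vaut~$G$, d'où~$\mathfrak{p}\subseteq \mathrm{nil}(\kk[G])=0$. La restriction à~$C_G(\Ad_\rho)$ est alors automatiquement une norme. Vous, en revanche, travaillez uniquement avec~$W=\mathfrak{p}\cap C_G(\Ad_\rho)$ et introduisez la structure de~$G$-module de dimension finie sur~$C_G(\Ad_\rho)$ ainsi que l'ensemble auxiliaire~$Y=\overline{\bigcup_{g\in G_o}Zg}$ ; ce chemin fonctionne (et implicitement redémontre~$\mathfrak{p}=0$), mais il est un peu plus long que nécessaire : une fois établi que~$\mathfrak{p}$ est stable sous~$G_o$, il est plus direct de raisonner sur la sous-variété~$V(\mathfrak{p})$ elle-même comme dans le texte, sans passer par~$W$ ni~$Y$. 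Notez aussi que votre argument sur le stabilisateur Zariski fermé de~$W$ requiert bien que~$C_G(\Ad_\rho)$ soit de dimension finie et soit un~$G$-module rationnel pour la translation à droite, deux points que vous invoquez mais dont l'un ($G$-stabilité à droite) mérite d'être explicité comme conséquence du fait que~$\Ad_\rho$ est une action par conjugaison.
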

\begin{proof}
L’existence de~$c_3$ découle de la définition de la comparabilité des nor\-mes de la section~\ref{sectionC1}. D’après cette section, il suffit de vérifier que l’on a
bien deux normes~$(k,\abs{-})$-homogènes.

L’application~$f \mapsto \sup_{k\in G_o}\abs{f}(k)$ est bien définie car~$G_o$ est compact. C’est manifestement
une semi-norme~$(\kk,\abs{-})$-homogène. Comme~$G_o$ est ouvert, donc Zariski dense dans~$G$,
cette semi-norme ne s’annule en aucune fonction régulière. C’est donc une norme.

Quant à~$\theta(o)$, comme il s’agit par définition d’une semi-norme (non nulle,
\cite[1.1]{Ber90}) multiplicative~$(\kk,\abs{-})$-homogène sur~$\kk[G]$, il suffit de vérifier que
c’est en fait une norme. Comme~$\theta(o)$ est multiplicative, son noyau définit un idéal
de~$\kk[G]$. Comme~$\theta(o)$ vaut~$1$ en~$1$, c’est idéal est strict. Or le stabilisateur de~$p$
dans~$G(\kk)$ est Zariski dense. Cet idéal définit une sous-variété~$G(\kk)$-invariante
de~$G$: cette sous-variété ou bien est vide ou bien vaut~$G$ lui-même. Or~$G$ est réduit,
et l’idéal considéré ne contient pas l’unité. Par conséquent cet idéal est nul:~$\theta(p)$
ne s’annule pas sur~$\kk[G]$ et \emph{a fortiori} sur~$C_G(\Ad_\rho)$.
\end{proof}
L'énoncé suivant est un corollaire à l’énoncé~\ref{TheoF3}. La notion de convexité est précisée dans la section~\ref{sectionF} correspondante, à laquelle nous renvoyons. Mentionnons juste que cette notion de convexité est naturellement induite par la structure affine par morceaux standard sur l'immeuble~$\Ik(G)$.
\begin{proposition}[Convexité]\label{prop36}\label{Prop36}
Pour toute fonction régulière~$f$ sur~$G$,
l'application~$p \mapsto \abs{f}(\theta(p))$ est convexe sur $\Ik (G)$.
\end{proposition}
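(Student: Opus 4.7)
Le plan est d'exploiter à la fois la géométrie des appartements et la description explicite de~$\theta$ fournie par le Théorème~\ref{TheoF3}. La notion de convexité sur~$\Ik(G)$ définie en section~\ref{sectionF} est une propriété à vérifier le long des segments. Or, un axiome fondamental des immeubles de Bruhat-Tits assure que tout segment est contenu dans un appartement. Il suffit donc d'établir, pour chaque appartement~$A$, la convexité de la fonction~$x\mapsto\abs{f}(\theta(x))=\theta(x)(f)$ sur~$A$, pour la structure affine usuelle. Par équivariance de~$\theta$ sous~$G(\kk)$ (Théorème~\ref{TheoF3}), on peut de plus se ramener au cas de l'appartement standard~$\Lambda=Y(T)\tens\R$, quitte à remplacer~$f$ par son translaté à droite par l'élément de~$G(\kk)$ envoyant~$A$ sur~$\Lambda$.

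Ensuite, le Théorème~\ref{TheoF3}, suivant la construction de~\cite{Ber90,RTW09}, décrit explicitement la restriction de~$\theta$ à~$\Lambda$: pour~$x\in\Lambda$ et~$f\in\kk[G]$, la valeur~$\theta(x)(f)$ s'obtient comme un maximum fini de contributions log-affines en~$x$, de la forme
\[
\theta(x)(f)=\max_{i\in I}\abs{a_i}\cdot\kappa^{-\langle\chi_i,x\rangle},
\]
où~$\{(a_i,\chi_i)\}_{i\in I}$ est une famille finie (dépendant de~$f$) de coefficients~$a_i\in\kk$ et de caractères~$\chi_i$ de~$T$, provenant d'une décomposition de~$f$ associée à la grande cellule de Bruhat ou au sous-groupe d'Iwahori attaché à un point spécial, et~$\kappa>1$ est une constante fixée (liée à l'uniformisante de~$\kk$). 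Chaque fonction~$x\mapsto\abs{a_i}\kappa^{-\langle\chi_i,x\rangle}$ est l'exponentielle d'une fonction affine de~$x$, donc convexe et positive. Un maximum d'une famille finie de fonctions convexes étant convexe, on obtient la convexité de~$x\mapsto\theta(x)(f)$ sur~$\Lambda$, puis sur~$\Ik(G)$ par la réduction précédente.

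L'obstacle principal est d'extraire, à partir du Théorème~\ref{TheoF3}, la description de~$\theta(x)(f)$ sur un appartement comme maximum fini de fonctions log-affines. Cette description constitue le cœur de la réalisation de Berkovich des immeubles de Bruhat-Tits: elle est due à~\cite[chapitre~5]{Ber90} pour les groupes de Chevalley (via la décomposition de Bruhat du gros ouvert) et étendue par~\cite{RTW09} aux groupes réductifs généraux au moyen des «~gauges~» associées aux sous-groupes parahoriques. On notera au passage que cet argument gère uniformément le franchissement des murs de~$\Lambda$, lorsque le «~terme actif~» du maximum change, puisque seule importe la stabilité de la convexité par passage au maximum fini. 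Une fois cette expression admise, la convexité sur~$\Ik(G)$ de~$p\mapsto\abs{f}(\theta(p))$ découle immédiatement de propriétés élémentaires des fonctions exponentielles affines.
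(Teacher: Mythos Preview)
Your approach is the paper's approach: the paper's proof of Proposition~\ref{Prop36} is the one-line reference to Corollaire~\ref{coroF4}, and you are essentially unpacking the content of that corollary (reduction to an apartment, then a max of log-affine contributions coming from the torus formula~\eqref{eq27}).

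One imprecision is worth flagging. On the standard apartment~$\Lambda$, Théorème~\ref{TheoF3} does not assert that~$\theta|_\Lambda=\Phi^\an\circ\theta_T$; it says~$\theta|_\Lambda=(\Phi^\an\circ\theta_T)\cdot p$ for some point~$p\in G^\an$ which need not lie in~$G(\kk)$. Thus your displayed formula with coefficients~$a_i\in\kk$ is not quite what one gets: for~$p\in G(\kk')$ with~$\kk'/\kk$ finite, the right translation sends~$f$ into~$\kk'[G]$, its restriction to~$T$ lies in~$\kk'[X(T)]$, and formula~\eqref{eq27} then applies with coefficients in~$\kk'$; for general~$p\in G^\an$ one uses the density of~$G(\kk_s)$ and passes to a simple limit (section~\ref{sectionD42}). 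Convexity survives this limit, so your conclusion stands, but the intermediate formula should not claim~$a_i\in\kk$. The paper isolates precisely this step as Proposition~\ref{propD1} and Corollaire~\ref{coroD2}, which is why Corollaire~\ref{coroF4} can be stated so tersely. Incidentally, the argument actually yields the stronger conclusion that~$\log\abs{f}\circ\theta$ is convex (the max of affine functions), which is what Corollaire~\ref{coroF4} records.
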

Il s'agit du Corollaire~\ref{coroF4} au Théorème~\ref{TheoF3}.
\begin{proposition}[Hyperbolicité]\label{prop37}\label{Prop37}
Pour tout point~$p$ de~$\Ik (G)$, l'enveloppe
convexe de~$H_o \cdot p$ contient un point fixe de~$H_o$.
\end{proposition}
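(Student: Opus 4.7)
The strategy is to apply the Bruhat--Tits fixed point theorem for compact group actions on $\Ik(G)$ (as invoked in the proof of Proposition~\ref{Prop21} via~\cite[2.3.1]{Tit79}) not to $\Ik(G)$ itself, but to the smaller subspace given by the convex hull of $H_o\cdot p$. For this we need two ingredients: the convex hull is stable under $H_o$, and it is bounded.

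First, I would observe that $H_o$ acts on $\Ik(G)$ by simplicial isometries that restrict to affine maps on each apartment, so the action preserves the piecewise affine structure and in particular sends convex subsets to convex subsets. Since $H_o\cdot p$ is visibly $H_o$-stable, its convex hull $\langle H_o\cdot p\rangle$ is also $H_o$-stable. This handles the invariance.

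Next, I would argue boundedness. The set $H_o\cdot p$ is bounded in $\Ik(G)$ because $H_o$ is compact and acts continuously (indeed properly) on $\Ik(G)$. In the CAT(0) metric on $\Ik(G)$ the closed convex hull of a bounded set is bounded --- one may for instance take a closed ball of finite radius centred at $p$ containing $H_o\cdot p$ and note that closed balls are convex. So $\langle H_o\cdot p\rangle$ is a non--empty bounded $H_o$-invariant convex subset of $\Ik(G)$.

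Finally, applying~\cite[2.3.1]{Tit79} to the compact group $H_o$ acting on this bounded non--empty convex subset yields a fixed point of $H_o$ inside $\langle H_o\cdot p\rangle$, which is precisely the conclusion. The only step requiring care is the boundedness of the convex hull, and this is a standard property of the CAT(0) structure on Bruhat--Tits buildings (or, equivalently, follows from the fact that the circumcentre construction produces finite-radius enclosing balls for any bounded set).
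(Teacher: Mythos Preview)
Your proof is correct and follows essentially the same approach as the paper: establish that the convex hull of $H_o\cdot p$ is a non-empty bounded $H_o$-stable convex subset, then invoke the Bruhat--Tits fixed point theorem~\cite[2.3.1]{Tit79}. The only minor difference is that the paper, instead of bounding the convex hull via convexity of CAT(0) balls, argues directly that it is \emph{compact} (using that $H_o\cdot p$ is compact and that $\Ik(G)$ is locally a finite union of apartments), and cites~\cite[3.2.3]{BT72} alongside~\cite[2.3.1]{Tit79}.
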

\begin{proof} Notons que comme~$\Ik(G)$ est localement réunion finie d'appartements, et que~$H_o\cdot p$ est compact (C'est l'image du groupe compact~$H_o$ par une application continue vers un espace séparé), l'enveloppe convexe de~$H_o \cdot p$ est compacte. Il suffit alors d'appliquer~\cite[2.3.1]{Tit79} et~\cite[3.2.3]{BT72}.
\end{proof}
\begin{proposition}\label{Prop38}Soient~$Y$ et~$C$ comme en page~\pageref{Defi Y}.
Soit~$f$~une fonction convexe sur~$\Ik (G)$ et~$H_o$-invariante à gauche sur~$Y \cdot o$, et un point~$p$ appartenant à~$Y \cdot o$ . Alors
\begin{equation}\label{eq11}
f(p)\geq\inf_{\gamma\in C }f(\gamma).
\end{equation}
\end{proposition}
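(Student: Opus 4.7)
The plan is to prove this by combining convexity of $f$ with $H_o$-invariance, the Bruhat–Tits fixed-point theorem (Proposition~\ref{Prop37}), and the characterization of $H_o$-fixed points in $\Ik(G)$ via Theorem~\ref{TheoF1}. Write $p = y \cdot o$ with $y \in Y$; the goal is to produce a point of $C$ in a convex set on which convexity plus invariance forces $f$ to take value at most $f(p)$.

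First I would check that the $H_o$-orbit of $p$ remains in $Y \cdot o$, so that the invariance hypothesis applies to it. For $h \in H_o$, since $h^{-1} \in H_o \subseteq G_o$ fixes $o$, we have $(hy)^{-1}\cdot o = y^{-1}h^{-1}\cdot o = y^{-1}\cdot o$, so the convex hull $\langle H_o\cdot(hy)^{-1}\cdot o\rangle$ still equals $\langle H_o\cdot y^{-1}\cdot o\rangle$ and still meets~$C$; hence $hy \in Y$, so $H_o\cdot p \subseteq Y\cdot o$. By the $H_o$-invariance of $f$ on $Y \cdot o$, the function $f$ is constant on the orbit $H_o \cdot p$ with value $f(p)$.

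Second, I would apply Proposition~\ref{Prop37} to the compact $H_o$-stable convex set $\langle H_o\cdot p\rangle$ to produce an $H_o$-fixed point $p^\star$ inside. By Theorem~\ref{TheoF1}, $p^\star = z^{-1}\gamma^\star$ for some $z \in Z_G(H)(\kk)$ and $\gamma^\star \in C$. By convexity of $f$ applied to any barycentric writing of $p^\star$ as a convex combination of points in $H_o\cdot p$, the constancy of $f$ on $H_o\cdot p$ yields $f(p^\star) \leq f(p)$.

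The main obstacle — and the step where the precise definition of $Y$ enters — is to pass from $f(p^\star)$ to $\inf_{\gamma\in C}f(\gamma)$. For this I would translate the configuration by $z$: since $z$ commutes with $H_o$ (elementwise) and acts by a building automorphism that is affine on each apartment (hence preserves convex hulls), we have
\[
 z\cdot\langle H_o\cdot p\rangle = \langle z H_o\cdot p\rangle = \langle H_o\cdot (zp)\rangle = \langle H_o\cdot(zy)\cdot o\rangle,
\]
and this convex hull contains $z\cdot p^\star = \gamma^\star \in C$. Combined with the defining property of $Y$ (which provided $\gamma \in C \cap \langle H_o\cdot y^{-1}\cdot o\rangle$) and the $H_o$-invariance/convexity of $f$ applied to the displayed hull, this should identify $\gamma^\star$ (or the original $\gamma$) as a point of $C$ for which $f(\gamma^\star) \leq f(p)$, thereby giving
\[
 \inf_{\gamma\in C} f(\gamma) \leq f(\gamma^\star) \leq f(p),
\]
which is the desired inequality. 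The delicate bookkeeping in this last step — reconciling the convex hull $\langle H_o\cdot y^{-1}\cdot o\rangle$ appearing in the definition of $Y$ with the convex hull $\langle H_o\cdot p\rangle = \langle H_o\cdot y\cdot o\rangle$ controlling the values of $f$ — is the technical heart of the argument, and relies essentially on the commutation of $Z_G(H)(\kk)$ with $H_o$ together with Theorem~\ref{TheoF1}.
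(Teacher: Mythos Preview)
You correctly identified the discrepancy between $\langle H_o\cdot y^{-1}\cdot o\rangle$ (in the definition of $Y$) and $\langle H_o\cdot p\rangle=\langle H_o\cdot y\cdot o\rangle$, but your proposed resolution via Proposition~\ref{Prop37} and Theorem~\ref{TheoF1} does not close the gap. After translating by $z$ you obtain $\gamma^\star\in\langle H_o\cdot(zy)\cdot o\rangle$; to exploit convexity there you would need $f$ constant on $H_o\cdot(zy)\cdot o$ (which requires $zy\in Y$, not established) and then $f(zy\cdot o)=f(y\cdot o)$ (which would require $Z_G(H)(\kk)$-invariance of $f$, not assumed). You never actually obtain $f(\gamma^\star)\leq f(p)$, only $f(z^{-1}\gamma^\star)\leq f(p)$.

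The discrepancy is in fact a typo in the statement: the proposition is meant with $Y^{-1}\cdot o$ in place of $Y\cdot o$, as is clear both from its application in Section~\ref{section4} (where $H_o$-invariance is checked on $Y^{-1}\cdot o$ and the proposition is invoked at points $y^{-1}\cdot o$) and from the paper's own proof, which simply asserts $C\cap\langle H_o\cdot p\rangle\neq\emptyset$ as following from the definition of $Y$. With the corrected reading $p=y^{-1}\cdot o$, $y\in Y$, the definition of $Y$ furnishes directly a point $\gamma\in C\cap\langle H_o\cdot p\rangle$, and neither Proposition~\ref{Prop37} nor Theorem~\ref{TheoF1} is needed:
\[
 f(p)=\sup_{H_o\cdot p}f=\sup_{\langle H_o\cdot p\rangle}f\geq f(\gamma)\geq\inf_{C}f,
\]
the first equality by $H_o$-invariance (your step~1 argument, with $y^{-1}$ in place of $y$, gives $H_o\cdot p\subseteq Y^{-1}\cdot o$), the second by convexity (a convex function attains its supremum over a convex hull already on the generating set).
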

\begin{proof}Comme~$f$ est~$H_o$-invariante sur~$Y\cdot o$, on a~$f(p)=\sup_{x\in H_o\cdot p} f(x)$. Notons~$\langle H_o\cdot p\rangle$ l'enveloppe convexe de~$H_o\cdot p$. Comme~$f$ est convexe,~$\sup_{x\in H_o\cdot p} f(x)=\sup_{x\in \langle H_o\cdot p\rangle} f(x)$. Lorsque~$p$ appartient à~$Y\cdot o$, l'intersection~$C\cap\langle H_o\cdot p\rangle$ est non vide. D'où
\[
f(p)=\sup_{x\in\langle H_o\cdot p\rangle} f(x)\geq \sup_{x\in \langle H_o\cdot p\rangle\cap C} f(x)\geq\inf_{x\in \langle H_o\cdot p\rangle\cap C} f(x)\geq \inf_{\gamma\in C} f(\gamma).
\]
\end{proof}
Dans la proposition suivante,~$C$ désigne le compact de~$\Ik(G)$ défini dans la section~\ref{section2} précédente, et~$B$ la boule unité du dual de~$V$ (cf. la note de l'éd. au pied de la page~\pageref{pied B}).

Dans cette proposition, on étudie les coefficients matriciels qui sont de la forme~$g\mapsto\phi(\pi_\z(\rho(g))(v)$ comme fonction sur~$G^\an$, et en particulier sur l'image de~$C$ dans~$G^\an$ par l'application~$\theta:\Ik(G)\to G^\an$. On notera donc
\[
\abs{\phi}(\pi_\z(\rho(\theta(\gamma)))(v))
\]
a valeur obtenue en appliquant, pour~$\gamma$ dans~$C$ la norme~$\theta(\gamma)$ au coefficient matriciel~$g\mapsto\phi(\pi_\z(\rho(g))(v))$. Pour alléger les notations, on pourra omettre~$\theta$ et~$\rho$ dans les notations, soit
\[
\abs{\phi}(\pi_\z(\gamma)(v))
=
\abs{\phi}(\pi_\z(\rho(\theta(\gamma)))(v)).
\]
\begin{proposition}\label{Prop39}
Il existe une constante~$c_4$ telle que pour tout~$\gamma$ de~$C$, et tout vecteur~$v$ dans~$V$, on a
\begin{equation}\label{eq12}
\sup_{\phi\in B}\pi_{\z}(\rho(\theta(\gamma)))(v)\geq\Nm{v}/c_4.
\end{equation}
\end{proposition}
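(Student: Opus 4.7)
Le plan est de définir, pour chaque $\gamma \in C$, une seminorme sur $V$ par
\[
N_\gamma(v) := \sup_{\phi \in B} \theta(\gamma)\bigl(g \mapsto \phi(\pi_{\z}(\rho(g))(v))\bigr),
\]
de montrer que $N_\gamma$ est en réalité une norme équivalente à $\Nm{-}$, puis d'exploiter la compacité de $C$ pour obtenir un contrôle uniforme. La constante $c_4$ sera alors $\sup_{\gamma\in C}\sup_{v\neq 0} \Nm{v}/N_\gamma(v)$.

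L'observation pivot sera que la fonction régulière $V$-valuée $F_v : g \mapsto \pi_{\z}(\rho(g))(v)$ vérifie $F_v(e)=v$. En effet, $\Id_V$ commute avec l'action de $H$ et appartient donc à $\z$ ; puisque $\pi_{\z}$ est un projecteur sur $\z$, on a $\pi_{\z}(\Id_V)=\Id_V$, d'où $F_v(e)=v$. Pour établir ensuite que $N_\gamma$ est une norme, je supposerai $N_\gamma(v)=0$ : alors $\theta(\gamma)(\phi\circ F_v)=0$ pour tout $\phi\in B$. À l'instar de la démonstration de la Proposition~\ref{Prop35}, je vérifierais que $\theta(\gamma)$ est une norme sur $\kk[G]$ (et pas seulement une seminorme) : son noyau, un idéal invariant sous le stabilisateur de $\gamma$ dans $G(\kk)$, lequel est ouvert et donc Zariski dense dans $G$, doit être nul puisque $\theta(\gamma)(1)=1$. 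Il en résultera $\phi\circ F_v=0$ pour tout $\phi\in V^\vee$, puis $F_v=0$, et finalement $v=F_v(e)=0$.

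L'étape d'uniformité reposera sur un argument de compacité. La fonction
\[
\Psi : C\times\{v\in V:\Nm{v}=1\}\to\R_{>0},\quad (\gamma,v)\mapsto N_\gamma(v)
\]
sera continue : la continuité en $\gamma$ résulte de la continuité de $\theta$ et de la topologie de convergence simple sur $G^{\an}$ ; la continuité en $v$ est manifeste car $v\mapsto F_v$ est linéaire et $V$ est de dimension finie ; la compacité du fermé borné $B\subset V^\vee$ permet de passer au $\sup$ sans perdre la continuité. Le domaine étant compact -- rappelons que, $V$ étant de dimension finie sur le corps localement compact $\kk$, sa sphère unité l'est également -- la fonction $\Psi$ strictement positive y atteindra un infimum $1/c_4>0$, et par homogénéité de $N_\gamma$ on conclura $N_\gamma(v)\geq\Nm{v}/c_4$ pour tout $\gamma\in C$ et tout $v\in V$.

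Le principal obstacle attendu est l'extension de la non-dégénérescence de $\theta(\gamma)$ (démontrée dans la Proposition~\ref{Prop35} pour $\gamma=o$) à un point $\gamma$ arbitraire de $\Ik(G)$. Cette extension exigera d'invoquer, en sus de la connexité de $G$, la Zariski-densité des fixateurs parahoriques dans $G$, laquelle découle des propriétés d'équivariance du plongement $\theta$ fournies par le Théorème~\ref{TheoF3}.
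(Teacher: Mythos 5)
Your proposal is correct and rests on the same two pillars as the paper's argument: the observation that $\pi_{\z}(\Id_V)=\Id_V$ (so $F_v(e)=v$) converts non-degeneracy of $\theta(\gamma)$ into non-degeneracy of $N_\gamma$, and a compactness argument to obtain a uniform lower bound. The difference is purely in the packaging of the compactness step: the paper first proves a uniform equivalence of the restrictions $\theta(\gamma)\restriction_{C_G(\Ad_\rho)}$ as $\gamma$ ranges over the compact $C$ (finite-dimensionality of $C_G(\Ad_\rho)$ plus compactness of $C$), reduces to a single $\gamma_0$, and then settles that case by a sequential compactness-and-contradiction argument on the annulus $V_{c''}$; you instead run a single joint-continuity-plus-compactness argument on $C\times\{\Nm{v}=1\}$. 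Both work, but note that the joint continuity of $(\gamma,v)\mapsto N_\gamma(v)$ is exactly the place where you implicitly need the paper's uniform-equivalence lemma (for a fixed $f\in C_G(\Ad_\rho)$ the map $\gamma\mapsto\abs{f}(\theta(\gamma))$ is continuous by definition of the $G^{\an}$ topology, but passing the $\sup_{\phi\in B}$ across requires boundedness of $\theta(\gamma)\restriction_{C_G(\Ad_\rho)}$ uniformly over $\gamma\in C$, which is the same finite-dimension-plus-compactness fact). You also use the unit sphere $\{\Nm{v}=1\}$ where the paper is more careful and uses the annulus $V_{c''}$: over a discretely valued field, the sphere can fail to meet every ray of nonzero vectors unless the norm's value group coincides with $\abs{\kk^\times}$; here this is fine because the hypotheses of Proposition~\ref{Prop22} force exactly that, but the annulus is the safer general device.

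The ``principal obstacle'' you anticipate at the end is in fact no obstacle at all: Proposition~\ref{Prop35} is stated \emph{pour tout point}~$o$ of~$\Ik(G)$, not merely for the fixed basepoint, and its proof (Zariski density of the parahoric stabilizer, the seminorm $\theta(o)$ being multiplicative and equal to $1$ at the unit) applies verbatim to any $\gamma\in C$. So the non-degeneracy of $\theta(\gamma)$ for arbitrary $\gamma$ is already available; no further extension argument is needed. You should simply cite Proposition~\ref{Prop35} directly, as the paper does in its opening remark.
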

\begin{proof} Remarquons tout d'abord que, d'après la Proposition~\ref{prop35}, la semi-nor\-me~$\theta(\gamma)$ est une norme, pour tout~$\gamma$ dans~$C$.

La restriction de la norme~$\theta(\gamma)$ sur~$\kk[G]$ à~$C_G(\Ad_\rho)$ dépend continûment de~$\gamma$, pour la
topologie faible. Comme~$C_G(\Ad_\rho)$ est de dimension finie et~$C$ est compact, ces nor\-mes sont « uniformément équivalentes » pour~$\gamma$ dans~$C$:  il existe une constante~$c'$ telle que pour tous~$\gamma$ et~$\gamma'$ dans~$C$ nous ayons
\[\forall f\in C_G(\Ad_\rho),\abs{f}(\theta(\gamma))\geq c'\abs{f}(\theta(\gamma')).\] Il suffira de vérifier la formule~\eqref{eq12} pour une constante~$c_4'$ et \emph{pour un seul~$\gamma$ de~$C$}: la constante~$c_4=c'c_4'$ conviendra alors pour tout~$\gamma$ dans~$C$.

Fixons ~$\gamma$ dans~$C$. La formule est évidente pour~$v = 0$. Nous pouvons donc supposer que~$v \neq 0$, et même, par homogénéité, que, pour une certaine constante~$c''$ ne dépendant que de~$\Nm{-}$, le vecteur~$v$ appartienne au compact~$V_{c''}$ où l'inégalité~$1/c'' \leq \Nm{v} \leq c''$ est satisfaite.

Tout revient ainsi à montrer que
\[
\inf_{v\in V_{c''}}\sup_{\phi\in B}(\pi_\z(\gamma)(v))>0.
\]
Soit, par l'absurde, un suite~$(v_n)_{n\in \Z_{\geq0}}$ de~$V_{c''}$ telle que
\[
\lim_{n\in \Z_{\geq0}}\sup_{\phi\in B}(\pi_\z(\gamma)(v_n))>0.
\]
Comme~$V_{c''}$ est compact, et n'adhère pas à~$0$, la suite~$v_n$ a une valeur d'adhérence non nulle~$v_\infty$. Nous allons montrer, ce qui sera une contradiction, que~$v_\infty$ est nécessairement nul.

Pour tout~$\phi_0$ dans~$B$, on conclut de l'encadrement
\[
0\leq\abs{\phi_0}\left(\pi_\z(\gamma)(v_n)\right)\leq\sup_{\phi\in B}\abs{\phi}\left(\pi_\z(\gamma)(v_n)\right)\to 0
\]
que~$\lim_{n\in \Z_{\geq0}}\abs{\phi_0}\left(\pi_\z(\gamma)(v_n)\right)=0$. Par continuité de~$v\mapsto\abs{\phi_0}\left(\pi_\z(\gamma)(v)\right)$, il s'ensuit que l’on a~$\abs{\phi_0}(\pi_\z(\gamma)(v_\infty)=0$.

Ainsi, pour tout~$\phi$ de~$B$, on a~$\abs{\phi}(\pi_\z(\gamma)(v_\infty))=0$. Autrement dit, comme~$\theta(\gamma)$ est une
norme, chaque coefficient matriciel~$g\mapsto\pi(\pi_\z(\rho(g))(v_\infty)$ 
est identiquement nul. Par conséquent, pour tout~$g$ dans~$G(\kk)$, 
le vecteur~$\pi_\z (\rho(g))(v_\infty)$ est nul. Mais,
lorsque~$g$ vaut l'élément neutre,
\[
\pi_\z(\rho(g))(v_\infty)=\pi_\z(\Id_V)(v_\infty)=\Id_V(v_\infty)=v_\infty\neq0.
\]

\end{proof}
%%%%%%%%%%%%%%%%%%%5
\section{Démonstration}\label{section4}
Démontrons la Proposition~\ref{Prop22}. Nous utilisons les notations de la section~\ref{section2}, et
les arguments de la section~\ref{section3}.
\begin{proof}
Comme la norme~$\Nm{-}$ est supposée~$G_o$-invariante, nous pouvons sub\-sti\-tuer~$\sup_{k\in G_o}\Nm{\rho(k\cdot y\cdot \omega)}$ à~$\rho(y\cdot \omega)$ dans la formule~\eqref{eq2}, ce qui donne
\begin{equation}\label{eq13}
\forall y\in Y, \forall v\in V, \sup_{\omega\in\Omega}\sup_{k\in G_o}\Nm{\rho(k\cdot y\cdot \omega)}\geq \Nm{v}/c.
\end{equation}
D'après la Proposition~\ref{Prop31}, il suffit d'établir
\begin{equation}\label{eq14}
\forall y\in Y, \forall v\in V, \sup_{\omega\in\Omega}\sup_{k\in G_o}\Nm{\rho(\omega^{-1}\cdot k\cdot y\cdot \omega)}\geq\frac{1}{c\cdot c_2} \Nm{v}.
\end{equation}
Soit~$V^\vee$ le dual algébrique de~$V$, et notons~$B$ sa boule unité.
D'après les hypothèses sur~$\Nm{-}$ 
%et la section C1 CENSURÉ
nous avons~$\Nm{v}=\sup_{\phi\in B}\abs{\phi(v)}$. La formule qui précède équivaut donc à la suivante.
\begin{equation}\label{eq15}
\forall y\in Y, \forall v\in V, \sup_{\omega\in\Omega}\sup_{\phi\in B}\sup_{k\in G_o}\abs{\phi}\left(\rho(\omega^{-1}\cdot k\cdot y\cdot \omega)\right)\geq\frac{1}{c\cdot c_2} \Nm{v}.
\end{equation}
D'après la Proposition~\ref{Prop33}, la fonction~$\omega\mapsto {\phi}\left(\rho(\omega^{-1}\cdot k\cdot y\cdot \omega)\right)$ appartient à~$C_H(\Ad_\rho)$. Appliquant la Proposition~\ref{Prop32}, il sort
\begin{equation}\label{eq16}
 \sup_{\omega\in\Omega}\abs{\phi}\left(\rho(\omega^{-1}\cdot k\cdot y\cdot \omega)\right)
 	\geq
\frac{1}{c_1}
	\abs{
		\pi_{\kk}
		\left(
			\omega
				\mapsto
			{\phi}\left(\rho(\omega^{-1}\cdot k\cdot y\cdot \omega)\right)
		\right)
		}.
\end{equation}
D'après la Proposition~\ref{Prop34}, le membre de droite de~\eqref{eq16} vaut
\begin{equation}\label{eq17}
\frac{1}{c_1}\abs{\phi}(\pi_{\z}(k\cdot y)(v)).
\end{equation}
D'après la Proposition~\ref{prop35}, il existe une constante positive et inversible~$c_3$ telle que
\begin{equation}\label{eq18}
\sup_{k\in G_o}\abs{\phi}(\pi_{\z}(k\cdot y)(v))\geq\frac{1}{c_3}\abs{\phi}(\pi_{\z}(\theta(o)\cdot y)(v)).
\end{equation}
Par conséquent, nous avons établi, combinant~\eqref{eq16}, \eqref{eq17} et~\eqref{eq18},
\begin{equation*}
\forall y\in Y, \forall v\in V, 
\sup_{\omega\in\Omega}\sup_{k\in G_o}\abs{\phi}\left(\rho(\omega^{-1}\cdot k\cdot y\cdot \omega)\right)
\geq
\frac{1}{c_1 c_3}\sup_{\phi\in B}\abs{\phi}(\pi_{\z}(\theta(o)y)(v)),
\end{equation*}
d'où, considérant la borne supérieure relative aux~$\phi$ dans~$B$,
\begin{equation}\label{eq19}
\forall y\in Y, \forall v\in V, 
\sup_{\omega\in\Omega}\sup_{\phi\in B}\sup_{k\in G_o}\abs{\phi}\left(\rho(\omega^{-1}\cdot k\cdot y\cdot \omega)\right)
\geq
\frac{1}{c_1 c_3}\sup_{\phi\in B}\abs{\phi}(\pi_{\z}(\theta(o)y)(v)).
\end{equation}
Ainsi, pour démontrer~\eqref{eq15}, il suffit d'établir
\begin{equation}\label{eq20}
\frac{1}{c_1 c_3}\sup_{\phi\in B}\abs{\phi}(\pi_{\z}(\theta(o)y)(v))
\geq
\Nm{v}\frac{1}{c\cdot c_2}.
\end{equation}
D'après la Proposition~\ref{Prop34}, la fonction~$g\mapsto \pi(\pi_{\z}(g)(v))$ est régulière sur~$G$ en la variable~$g$, invariante sous l'action de~$H$ par conjugaison. Or, pour~$h$ dans~$H_o$, et~$y^{-1}\cdot o$ dans~$Y^{-1}\cdot o$, nous avons
\begin{equation*}
h\theta(y^{-1}o)h^{-1} = h\theta(y^{-1}o)
\end{equation*}
car~$G_\theta$ contient~$h^{-1}$, et
\begin{equation*}
h\theta(y^{-1}o) = \theta(h y^{-1}o)
\end{equation*}
car~$\theta$ est équivariante. Sur~$Y^{-1}\cdot o$, la fonction~$y^{-1}o\mapsto\abs{\phi}(\pi_{\z}(\theta(y^{-1}o))(v))$ est donc invariante à gauche sous~$H_o$.

D'après la Proposition~\ref{Prop36}, la fonction~$p\mapsto\abs{\phi}(\pi_{\z}(\theta(p))(v))$ est convexe sur~$\Ik(G)$. Par conséquent la fonction~$g\mapsto \sup_{\phi\in B}\abs{\phi}(\pi_{\z}(g)(v))$
est convexe sur~$\Ik(G)$, et sa restriction à~$Y^{-1}\cdot o$ est invariante à gauche sous~$H_o$. D'après la Proposition~\ref{Prop38}, on a, pour tout~$y$ de~$Y$, 
\begin{equation*}
\sup_{\phi\in B} \abs{\phi}(\pi_{\z}(y^{-1}o)(v))
\geq
\inf_{\gamma\in C}
\sup_{\phi\in B}\abs{\phi}(\pi_{\z}(\gamma)(v)).
\end{equation*}
Or, d'après la Proposition~\ref{Prop39}, nous avons
\begin{equation*}
\forall v\in V, \forall\gamma\in C, 
\sup_{\phi\in B}\abs{\phi}(\pi_{\z}(\gamma)(v))\geq \Nm{v}/c_4.
\end{equation*}
Ce qui démontre bien la formule~\eqref{eq20}, avec la constante~$c=
\frac{c_1c_2c_3}{c_4}$.
\end{proof}

\appendix
%\renewcommand{\appendixpagename}{Annexe}
%\appendixpage
Dans cette annexe, nous faisons quelques rappels sur  les espaces analytiques et les immeubles. Nous y démontrons notamment (Thé\-o\-rè\-me~\ref{TheoF1})
un résultat de décomposition sur les immeubles et (Théorème~\ref{TheoF3}) un résultat sur la convexité logarithmique des fonctions régulières sur l'immeuble, une fois plongé dans l'espace analytique.

\section{Normes ultramétriques}\label{sectionC}\label{sectionC1}
Soit~$V$ un espace vectoriel sur un corps ultramétrique~$\kk$ de valeur absolue notée~$\abs{-}$. La to\-pologie de~$V$, topologie produit relative à une base de~$V$, est intrinsèque: les automorphismes de changement de base de~$\kk^{\dim(V)}$ sont des homéomorphismes. Nous appelons~\emph{norme} sur~$V$ une application~$V\to\R$ telle que l'application~$(x,y)\mapsto \Nm{x-y}$ définisse une distance compatible à la topologie. Cette norme est dite~\emph{$(\kk,\abs{-})$-homogène}, ou simplement~\emph{homogène}, lorsque toute homothétie de facteur~$\lambda$ agit sur les distances d'un facteur~$\abs{\lambda}$:
\addtocounter{equation}{1}%pour uniformiser avec la version originale
\begin{equation}\label{eq22}
\text{
pour tout~$\lambda$ dans~$\kk$ et~$v$ dans~$V$, nous avons~$\Nm{\lambda\cdot v}=\abs{\lambda}\cdot\Nm{v}$.}
\end{equation}
On dit que la norme~$\Nm{-}$ est \emph{ultramétrique} si
\begin{equation}\label{eq23}
\forall x,y\in V, \Nm{x+y}\leq\max\left\{\Nm{x\vphantom{y}};\Nm{y}\right\}.
\end{equation}

Nous dirons que deux normes~$\Nm{-}$ et~$\Nm{-}'$ sont \emph{comparables}, ou~\emph{équivalentes}, s'il existe une constante positive et inversible~$C$ telle que pour tout~$v$ dans~$V$ , nous
ay\-ons~$\Nm{v}\leq C\Nm{v}'$ et~$\Nm{v}'\leq C\Nm{v}$.

Remarquons que~$\abs{-}$ et~$\abs{-}^2$ sont deux normes sur~$\kk$ qui ne sont pas comparables, sauf si~$\kk$ est discret. \emph{A contrario} deux normes homogènes sur~$V$ sont toujours comparables. Nous ne l'appliquerons qu'à des corps~$\kk$ localement compacts, auquel cas c'est immédiat. Voir~\cite{TheseV} pour le cas général.

\emph{Dorénavant les normes seront supposées homogènes.}

Étant donné une norme sur~$V$, son dual algébrique acquiert une norme duale: à toute forme~$\kk$-linéaire sur~$V$ on associe  sa  norme
 en tant qu'application linéaire de~$V$ vers~$\kk$. Autrement dit on pose
$\Nm{\phi}=\NM{\phi}=\sup_{\Nm{v}\leq 1}\Nm{\phi(v)}$.

\begin{lemme}\label{LemmeC1} Soit~$V$ un espace vectoriel normé de dimension finie non nulle sur~$\kk$, et doit~$B$ la boule unité de son dual. Alors on a l'inégalité
\[
\forall v\in V, \Nm{v}\geq\sup\abs{\phi}(v),
\]
et il n'y a égalité, simultanément pour tout~$v$ de~$V$, que si et seulement si~$\Nm{-}$ est ultramétrique et si les valeurs, dans~$\R$, prises par~$\Nm{-}$ sont celles prises par~$\abs{-}$.
\end{lemme}
\begin{proof}
L'inégalité résulte de ce que pour tout~$\phi$ dans~$B$, nous avons~$\Nm{\phi}\leq 1$ et de ce que, par définition de la norme triple,~$\Nm{\phi(v)}\leq\NM{\phi}\cdot\Nm{v}$.

Le sens direct de l'équivalence découle de~\cite[II §1, Prop. 4 (p. 26)]{Wei74}.

Dans le sens réciproque, on vérifie directement que le membre de droite est une norme ultramétrique et ne prend que des valeurs prises par~$\abs{-}$.
\end{proof}

\section{Espaces analytiques d'après Berkovich}\label{sectionD}
Le but de cette section est tout d'abord de rappeler la construction des espaces analytiques de Berkovich, et de l'espace analytifié d'une variété algébrique. La référence exhaustive standard est~\cite{Ber90}.
L'autre but est d'étendre la propriété de convexité du polygone de Newton au fonctions analytiques sur restreinte à l'appartement d'un tore~$T$, une fois plongé dans l'espace analytique de~$T$ ou d'un groupe algébrique~$G$ contenant~$T$.
\subsection{Semi-normes multiplicatives homogènes}\label{sectionD1}
Soit~$\kk$ un corps local muni
d'une valeur absolue ultramétrique~$\abs{-}$. Sur une~$\kk$-algèbre commutative unifère~$A$, on appelle \emph{semi-norme multiplicative} une application\emph{non constante}~$\Nm{-} : A \xrightarrow{} \R_{\geq 0}$ 
qui soit multiplicative, c.-à-d. telle que
\begin{equation}\label{eq24}
\forall f,g\in A,~\Nm{fg}=\Nm{f}\cdot\Nm{g}
\end{equation}
et vérifiant l'inégalité triangulaire
\begin{equation}\label{eq25}
\forall f,g\in A,~\Nm{f+g}\leq\Nm{f}+\Nm{g}.
\end{equation}
Une telle application envoie l'unité~$1_A$ sur~$1$ et l'élément nul~$0_A$ sur~$0$. Elle est dite~\emph{$(k,\abs{-})$-homogène}, ou simplement \emph{homogène} lorsque
\begin{equation}\label{eq26}
\forall\lambda\in\kk, \forall f\in A,~\Nm{\lambda\cdot f}\leq \abs{f}\cdot\Nm{f}.
\end{equation}
Par multiplicativité, il revient au même d'imposer que~$\Nm{-}$ étende~$\abs{-}$, au sens où l’on a~$\Nm{\lambda\cdot 1_A} = \abs{\lambda}$.

\subsection{Analytification}\label{sectionD2}
Soit~$V$ une variété algébrique affine sur~$\kk$. Suivant
V. Berkovich, \cite[1.5.1]{Ber90}, nous appellerons \emph{espace analytique associé à}~$V$ l'espace topologique~$V^\an$ formé de l'ensemble des semi-normes multiplicatives
homogènes sur l'algèbre~$\kk[V ]$ des fonctions régulières sur~$V$, pour la topologie de la convergence simple.
Ce sont aussi les espaces topologiques sous-jacents à certains espaces analytiques que V. Berkovich définit en~\cite[3.1]{Ber90} (cf.~\cite[3.4.2]{Ber90}). Pour toute fonction régulière~$f$ sur~$V$, nous notons~$\abs{f}$ a fonction réelle~$x\mapsto x(f)$ sur~$V^\an$.  Par définition, la topologie sur~$V^\an$ est la plus grossière pour laquelle, pour toute fonction régulière~$f$, la fonction~$\abs{f}:V^\an\rightarrow \R_{\geq0}$ est continue.

La construction de l'espace analytique associé est fonctorielle, et covariante,
de la catégorie~$\Aff_\kk$ des variétés affines sur~$\kk$ dans celle des espaces topologiques. En effet tout
morphisme~$A \to A'$ d'algèbres permet, par composition, de produire une seminorme multiplicative~$A \rightarrow \R_{\geq0}$ à partir d'une semi-norme multiplicative homogène sur~$A' \rightarrow \R_{\geq0}$. Cette opération est bien sûr compatible à la convergence simple.
Pour tout morphisme~$\Phi:V\rightarrow V'$ entre variétés algébriques, nous notons~$\Phi^\an$ l'application continue correspondante~$V^\an\rightarrow V'^\an$.

\subsection{Des tores déployés analytiques \ldots}\label{sectionD3}

Soit~$T$ un tore déployé sur~$\kk$ et notons~$X(T)=\Hom(T,\GL(1))$ son groupe des caractères. On identifie l'algèbre~$\kk[T ]$ des fonctions régulières sur~$T$ à l'algèbre de groupe~$\kk[X(T )]$. Tout
caractère~$\chi: T \rightarrow\GL(1)$, définit, par composition une application additive
\[
Y(T)=\Hom(\GL(1),T)\xrightarrow{}\Hom(\GL(1),\GL(1))\simeq\Z.
\]
Par linéarité, chaque caractère définit une forme linéaire sur l'espace vectoriel réel~$\Lambda=Y(T)\tens\R$ que l'on notera~$\lambda\mapsto\langle\chi,\lambda\rangle$. Pour tout~$\lambda$ dans~$\Lambda$, l'application qui envoie une fonction régulière~$f=\sum_{\chi\in X(T)}a_\chi\cdot\chi$ dans~$\kk[T]$ sur
\begin{equation}\label{eq27}
\max_{\chi\in X(T)}\abs{a_\chi}\cdot\abs{\varpi}^{\langle\chi,\lambda\rangle},
\end{equation}
(où~$\abs{\varpi}$ est la valeur absolue d'une uniformisante~$\varpi$ de~$\kk$) définit clairement une norme homogène sur~$\kk[T]$; cette norme est multiplicative d'après~\cite[2.1, p.~21]{Ber90}.  La formule~\eqref{eq27} induit donc une application, que nous noterons~$\lambda \mapsto \theta_T (\lambda)$, de~$\Lambda$ dans~$T^\an$.

Lorsque l'on fixe~$f=\sum_{\chi\in X(T)}a_\chi\cdot\chi$ dans~$\kk[T]$, et 
que l'on fait varier le paramètre~$\lambda$, le logarithme
\[
\log\left(\abs{f}(\theta_T(\lambda))\right)
=\max_{\chi\in X(T)}
\left(
\log\abs{a_\chi}+\log\abs{\varpi}\cdot\langle\chi,\lambda\rangle
\right)
\]
est le maximum d'un nombre fini de fonctions affines en~$\lambda$. C'est donc une fonction convexe sur~$\Lambda$, et c'est en particulier une fonction continue. Pour toute fonction régulière~$f$ sur~$T$, la composition de~$\abs{f}$ avec~$\theta_T$, qui est logarithmiquement
convexe, est continue. Donc l'application~$\theta_T$ est continue par définition de la topologie sur~$T^\an$.

\subsubsection{}\label{sectionD31}
Le groupe~$T (\kk)$ agit sur~$T$ par translation. Il agit par transport de structure sur~$\kk[T ]$ puis sur~$T^\an$. Concrètement, un élément~$\mu$ de~$T (\kk)$ agit sur~$\kk[T ]$ en envoyant la fonction régulière
\[
x\mapsto\sum_{\chi\in X(T)} a_\chi\cdot \chi(x)
\]
sur la fonction régulière
\[
x\mapsto
\sum_{\chi\in X(T)} a_\chi\cdot \chi(x\mu^{-1})
=
\sum_{\chi\in X(T)} a_\chi\cdot \chi(\mu^{-1})\chi(x).
\]
Soit~$\lambda(\mu)$ l'élément de~$\Lambda$ tel que l'on ait
\[
\langle\chi,\lambda(\mu)\rangle=\log_{\abs{\varpi}}(\chi(\mu))
\]
pour tout~$\chi$ de~$X(T)$. Alors l'action de~$\mu$ envoie la norme~$\theta_T (\lambda)$ sur~$\theta_T (\lambda-\lambda(\mu))$.

\subsection{\ldots aux groupes algébriques affines.}\label{sectionD4}

Soit~$\Phi: T \to G$ un morphisme de
variétés algébriques affines du tore T dans un groupe algébrique affine G. Composant~$\theta_T : \Lambda \rightarrow T^\an$ par $\Phi^\an: T^\an \rightarrow G^\an$, nous obtenons une application continue
de~$\Lambda$ dans~$G^\an$. En outre, pour toute fonction régulière~$f$ sur~$G$, la fonction
réelle~$\abs{f}\circ\Phi^\an \circ\theta_T$ sur~$\Lambda$ s'identifie à~$\abs{f \circ\Phi}\circ\theta_T$, et est par conséquent logarithmiquement
convexe.

Notons que l'action à droite du groupe~$G(\kk)$ sur la variété~$G$ induit par une fonctorialité une action de~$G(\kk)$ sur~$G^\an$. En particulier nous pouvons définir, pour tout~$g$ dans~$G(\kk)$, l'application $(\Phi^\an \circ \theta_T )\cdot g$ translatée de~$\Phi^\an \circ\theta_T$ par~$g$.

\emph{Nous notons~$\kk_s$ une extension algébrique séparablement close de~$\kk$.}
\subsubsection{}\label{sectionD41}
Nous allons généraliser cette construction aux éléments de~$G(\kk_s)$.
Pour toute extension séparable finie~$\kk'$ de~$\kk$, on a un tore déployé~$T_{\kk'}=T\tens_\kk\kk'$ sur~$\kk'$, d'algèbre~$\kk'[T_{\kk'}]=\kk[T ] \tens_\kk\kk'$ isomorphe à~$\kk'[X(T)]$. Par restriction des normes de~$\kk'[T_{\kk'}]$ à~$\kk[T]$ on construit une application~${T_{\kk'}}^\an\to T^\an$. La formule~\eqref{eq27} s'étend à~$\kk[T_{\kk'}]$ et définit encore une norme~\emph{multiplicative} (\cite[2.1 (p.~21)]{Ber90}) homogène.
Nous obtenons ainsi une application~$\theta_{T_{\kk'}}:\Lambda\to {T_{\kk'}}^\an$ dont la composée avec l'application~${T_{\kk'}}^\an\to T^\an$ redonne l'application~$\theta_T$. De surcroît cette extension est compatibles aux morphismes d'extensions~$\kk\to\kk'\to\kk''$.

Nous en tirons une conséquence. Si~$g$ est un élément dans~$G(\kk')$, faisons agir~$g$ à droite sur le foncteur des points de~$G$ restreint à la catégorie des~$\kk'$-agèbres: pour une~$\kk'$-algèbre~$A$, l'élément~$g$ agit par translation à droite sur le groupe~$G(A)$, \emph{via} son image par~$G(\kk')\to G(A)$. Il correspond une action, disons~$a_g$, de~$g$ sur~$\kk'[G]$. D'où, par composition, un morphisme~$\kk[G]\to\kk'[G]\xrightarrow{a_g}\kk'[G]\xrightarrow{\Psi_{\kk'}}\kk'[T]$, où~$\Psi_{\kk'}$ est le morphisme de~$\kk'$-algèbres déduit de~$\Phi$. Ainsi, pour chaque~$\lambda$ dans~$\Lambda$, de la norme multiplicative homogène correspondante~$\kk[T]\to\R_{\geq0}$, par~\eqref{eq27}, on déduit, par composition, une norme multiplicative homogène~$\kk[G]\to\R_{\geq0}$.
Nous la noterons~$(\Phi^\an\circ\theta_T)g$ l'application de~$\Lambda$ dans~$G^\an$ ainsi obtenue.

Cette construction est manifestement compatible aux extensions: si~$\kk'$ est une extension finie de~$\kk'$ et~$g''$ l'image de~$g$ dans~$G(\kk'')$, alors~$(\Phi^\an\circ\theta_T)g=(\Phi^\an\circ\theta_T)g''$. Elle ne dépend donc que de l'image de~$g$ dans~$G(\kk_s)$, peu importe le morphisme~$\kk'\to\kk_s$. Autrement dit cette construction ne dépend que de l'image de~$g$ par~$G(\kk_s)\to G^\an$.

Pour~$f$ dans~$\kk'[T]$, la formule~\eqref{eq27} définit encore une fonction convexe de~$\lambda$. Il en résulte que pour~$f$ dans~$\kk[G]$ et~$g$ dans~$G(\kk')$, la composée~$\abs{f}\circ((\Phi\circ\theta_T)g)$ est une fonction convexe sur~$\Lambda$. Ainsi l'application~$(\Phi\circ\theta_T)g$ est continue.

\subsubsection{}\label{sectionD42} Étendons maintenant cette construction aux éléments~$p$ de~$G^\an$.

Soit un point~$p$ de~$G^\an$; ce point est dans l'adhérence de l'image de~$G(\kk_s)$ dans~$G^\an$. Nous allons alors construire l'application~$(\Phi^\an\circ\theta_T )\cdot p$ comme limite simple de fonctions
de la forme~$(\Phi^\an\circ\theta_T )\cdot g$, avec~$g$ dans~$G(\kk_s)$, lorsque l'image de~$g$ dans~$G^\an$ tend vers~$p$.

Montrons que cette limite simple existe et est unique.

\begin{proof}
Notons~$G_{\kk_s}$ le groupe groupe algébrique affine sur~$\kk_s$ d'algèbre~$\kk[G]\tens\kk_s$, notons~$T_{\kk_s}$ son tore sur~$\kk_s$ déployé d'algèbre~$\kk[T]\tens\kk_s$, et~$\Phi_{\kk_s}$ le morphisme~$G_{\kk_s}\to T_{\kk_s}$ issu de~$\Phi$. Soit~$\lambda$ dans~$\Lambda$, soit~$f$ dans~$\kk[G]$ et, pour tout~$g$ dans~$G(\kk_s)$, notons~$f\circ(\Phi_{\kk_s}\cdot g)=\sum_{\chi\in X(T)}a_\chi(g)\chi$ la fonction de~$\kk_s[X(T)]$ qui s'obtient en translatant~$\Phi_{\kk_s}:G_{\kk_s}\to T_{\kk_s}$ par~$g$ puis en composant par~$f$. Tout revient à montrer que lorsque l'image
de~$g$ converge dans~$G^\an$, le nombre réel
\[
\max_{\chi\in X(T)}\abs{a_\chi(g)}\cdot\abs{\varpi}^{\langle\chi,\lambda\rangle}
\]
tend vers une valeur limite.

Par définition,~$\abs{h(g )}$ tend vers une valeur limite pour toute fonction régulière~$h$ sur~$G$ définie sur~$\kk$. Il suffit donc de montrer que sauf pour un nombre fini de caractères~$\chi$, les fonctions 
$g\mapsto a_\chi(g)$ sont nulles et que, pour tout caractère~$\chi$ de~$T$, la formule~$g\mapsto a_\chi(g)$ définit une fonction régulière. Cela résulte de ce que l'action de~$G(\kk_s)$ sur~$\kk_s[G]$ est union de sous-espaces stables sous~$\mathrm{Aut}(\kk_s/\kk)$ de dimension finie, et que l'action de~$G(\kk_s)$ sur un tel sous-espce provient d'une représentation de~$G$ définie sur~$\kk$, \cite[1. \S1 1.9]{Bor91}.
\end{proof}

Comme une limite simple de fonctions convexes est convexe, pour tout~$f$
dans~$\kk[G]$, la composée~$\abs{f}\circ((\Phi^\an\circ\theta_T )\cdot p)$ est une fonction convexe sur~$\Lambda$. En particulier c'est une fonction continue. Par conséquent~$(\Phi^\an\circ\theta_T )\cdot p$ est continue

\subsection{Un Critère}\label{sectionD5} Pour toute extension finie~$\kk'$ de~$\kk$, notons~$G_{\kk'}$ la variété algébrique affine sur~$\kk'$ associée à l'algèbre de type fini~$\kk[G]\tens\kk'$, et~${G_{\kk'}}^\an$ l'espace analytique correspondant. Notons que~$G(\kk')$ est naturellement un \emph{groupe} algébrique affine sur~$\kk'$: son foncteur de points s'écrit~$G_{\kk'}(A)=G(A)$ pour une~$\kk'$-algèbre~$A$.

L'application~$\kk[G]\to\kk[G]\tens\kk'$ induit, par restriction des semi-normes, une application continue~${G_{\kk'}}^\an\to G^\an$. Ces applications sont manifestement compatibles aux morphismes d'extensions.

Notons que le morphisme~$\Phi:T\to G$ induit un morphisme~$\Phi_{\kk'}:T_{\kk'}\to G_{\kk'}$, d'où une application continue~${\Phi_{\kk'}}^\an:{T_{\kk'}}^\an\to {G_{\kk'}}^\an$. Le relèvement~$\Lambda\to{T_{\kk'  }}^\an$ de~$\theta_T:\Lambda\to T^\an$, obtenu en étendant la formule~\eqref{eq27}, est lui aussi compatibles aux extensions.
\begin{proposition} \label{propD1}\label{PropD1}
Soit~$\theta:\Lambda\to G$ une application continue. Supposons que pour toute extension finie~$\kk'$ de~$\kk$, il existe~$\theta_{\kk'}:\Lambda\to {G_{\kk'}}^\an$ telle que
\begin{enumerate}[label=\alph*.]
\item \label{D11} $\theta$ soit la composée de~$\theta_{\kk'}$ avec l'application~${G_{\kk'}}^\an\to G^\an$ ci-dessus;
\item \label{D12}	pour tout~$\mu$ dans~$T(\kk')$, l'action de~$T(\kk')$ par translation à gauche sur la fonction~$\theta_{\kk'}$ commute correspond à son action sur~$\Lambda$:
\begin{equation}
\forall\mu\in T(\kk'),~\theta_{\kk'}(x)\cdot\Phi(\mu)=\theta_{\kk'}(x-\lambda(\mu))
\end{equation}
\end{enumerate}
Alors~$\theta$ est l'application~$(\Phi^\an\circ\theta_T)\cdot p$ où~$p$ est le point~$\theta(0)$. 
\end{proposition}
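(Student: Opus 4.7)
The plan is to show $\theta = \eta$, where $\eta(\lambda) := (\Phi^\an \circ \theta_T)(\lambda) \cdot p$, via a density-and-continuity argument. Both $\theta$ and $\eta$ are continuous maps $\Lambda \to G^\an$ ($\theta$ by hypothesis, $\eta$ by the construction in~\S\ref{sectionD4}), and $G^\an$ is Hausdorff, so it suffices to verify $\theta = \eta$ on a dense subset of $\Lambda$. I would take this subset to be $\lambda(T(k_s))$. Its density in $\Lambda = Y(T)\tens\R$ follows from the fact that $|k_s^\times|$ is the divisible hull $|\varpi|^{\Q}$ of $|k^\times|$: the image of $\mu \mapsto \lambda(\mu)$ from $T(k_s)$ to $\Lambda$ is $Y(T)\tens\Q$, which is dense.

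At a rational point $\lambda(\mu)$ with $\mu \in T(k')$ for some finite extension $k'$ of $k$, both values can be computed explicitly. For $\theta$, hypothesis~(b) applied at $x = \lambda(\mu)$ yields
\[
\theta_{k'}(\lambda(\mu)) \cdot \Phi(\mu) = \theta_{k'}(0) = p_{k'}
\]
in $G_{k'}^\an$, hence $\theta_{k'}(\lambda(\mu)) = p_{k'} \cdot \Phi(\mu)^{-1}$; projecting to $G^\an$ via hypothesis~(a) gives $\theta(\lambda(\mu)) = p \cdot \Phi(\mu)^{-1}$. For $\eta$, the $T(k')$-equivariance of $\theta_T$ from~\S\ref{sectionD31}, extended to $T_{k'}$, reads $\theta_{T_{k'}}(\lambda(\mu)) = \theta_{T_{k'}}(0) \cdot \mu^{-1}$; applying the functorial map $\Phi_{k'}^\an$ and using that $\Phi$ is a group morphism, we obtain
\[
(\Phi_{k'}^\an \circ \theta_{T_{k'}})(\lambda(\mu)) = (\Phi_{k'}^\an \circ \theta_{T_{k'}})(0) \cdot \Phi(\mu)^{-1}.
\]
Unwinding the definition of $(\Phi^\an \circ \theta_T) \cdot p$ as a limit of the translates $(\Phi^\an \circ \theta_T) \cdot g_n$ for $g_n \to p$ in $G^\an$, and using the continuity of the right $G(k_s)$-action on $G^\an$, one deduces $\eta(\lambda(\mu)) = p \cdot \Phi(\mu)^{-1}$ as well. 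Hence $\theta$ and $\eta$ agree on $\lambda(T(k_s))$, and by continuity they coincide on all of $\Lambda$.

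The main difficulty lies in the identification $\eta(\lambda(\mu)) = p \cdot \Phi(\mu)^{-1}$ at rational points. The expression $(\Phi^\an \circ \theta_T) \cdot p$ is only defined as a pointwise limit of translates by elements of $G(k_s)$, so the identification requires carefully commuting this limit with right translation by the group element $\Phi(\mu)^{-1}$. Hypothesis~(a) — providing compatible lifts $\theta_{k'}$ to $G_{k'}^\an$ at every finite extension $k'$ — is essential here, since the translation by $\Phi(\mu)$ with $\mu \in T(k') \smallsetminus T(k)$ acts naturally only on $G_{k'}^\an$; the identity must first be verified there before being pushed down to~$G^\an$.
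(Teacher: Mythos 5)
Your approach — reduce to agreement on a dense subset via continuity, then compare $\theta$ and $\eta:=(\Phi^\an\circ\theta_T)\cdot p$ at the rational points $\lambda(T(\kk_s))$ using the transformation law — is the same strategy as the paper's. But the paper carries out an upfront reduction that you skip, and that reduction is precisely what makes the step you flag as ``the main difficulty'' go away. The paper's proof begins: by simple limits one may assume $p$ lies in the image of $G(\kk_s)$, then, passing to a finite extension, that $p\in G(\kk)$, then, translating on the right by $p^{-1}$, that $p=e$. After this reduction the right-hand side is the \emph{untranslated} map $\Phi^\an\circ\theta_T$, whose equivariance is exactly what is established in \S\ref{sectionD31} — no commutation of the pointwise-limit construction of \S\ref{sectionD42} with translation by $\Phi(\mu)$ is required, because there is no translation by a general Berkovich point left to handle. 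You instead work directly with $(\Phi^\an\circ\theta_T)\cdot p$ for a general $p\in G^\an$, and the ``unwinding'' that you acknowledge as delicate — extracting the transformation law for the translate from the transformation law for $\Phi^\an\circ\theta_T$, uniformly along a sequence $g_n\to p$ and over all finite extensions $\kk'$ — is left unperformed. You correctly diagnose where the difficulty sits, but you do not resolve it; the reduction to $p=e$ is the device the paper uses to avoid having to.

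Two smaller points. First, your claim that the image of $\mu\mapsto\lambda(\mu)$ on $T(\kk_s)$ is $Y(T)\tens\Q$ holds only in characteristic zero; in residue characteristic $p>0$ the separable closure admits roots of the uniformiser only of order prime to $p$, so the image is a strictly smaller dense sub-$\Z[1/m\,:\,(m,p)=1]$-lattice — still dense, which is all the argument needs, and the paper explicitly phrases the density claim this way. Second, the equalities $\theta(\lambda(\mu))=p\cdot\Phi(\mu)^{-1}$ and $\eta(\lambda(\mu))=p\cdot\Phi(\mu)^{-1}$ are ill-posed as identities \emph{in $G^\an$} when $\Phi(\mu)\in G(\kk')\smallsetminus G(\kk)$; you note this and point to $G_{\kk'}^\an$ as the right place to compute, but the final ``one deduces\ldots as well'' does not actually produce the identification $\eta(0)=p$ that your argument quietly relies on when you set $\mu=e$, and that identification is not formal — it is precisely the role the hypotheses~(a), (b) and the paper's normalisation to $p=e$ are there to secure.
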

\begin{proof}
Par limite simple on peut supposer que~$p$ est dans l'image
de~$G(\kk_s)$, puis quitte à considérer une extension finie, que~$p$ appartient à~$G(\kk)$, quitte à faire agir~$p$ à droite, que~$p$ est l'élément neutre.

Par continuité de~$\theta$ et~$\theta_T$ il suffit de montrer~$\theta(\lambda) = (\Phi^\an\circ\theta_T (\lambda))$ pour un ensemble
dense de~$\lambda$ dans~$\Lambda$.

Or l'orbite dans~$\Lambda$ de~$0$ sous l'action de~$T (\kk_s)$ est dense.
Cela provient de la description de cette action pour tout corps local contenu dans~$\kk_s$, grâce à la section~\ref{sectionD31} et au fait que quitte à considérer des racines de l'uniformisante d'ordre premier à la caractéristique, on peut approcher tout nombre réel positif par la valeur absolue d'éléments de~$\kk_s$.

Enfin~$\theta$ et~$(\Phi^\an \circ \theta_T (\lambda))$ concordent en~$0$ et vérifient la loi de transformation (cf. section~\ref{sectionD31}).
\end{proof}
Des sections~\ref{sectionD31} et~\ref{sectionD4} on déduit que cette Proposition~\ref{propD1} a pour corollaire le suivant.
\begin{corollaire}\label{coroD2}
Soit~$\theta:\Lambda\to G^\an$ une application telle que dans la Proposition~\ref{propD1}. Alors pour toute fonction régulière~$f$ sur~$G$, la fonction réelle~$\abs{f}\circ\theta$ est logarithmiquement convexe.
\end{corollaire}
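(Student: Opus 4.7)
The plan is to reduce the statement directly to Proposition \ref{propD1} and then invoke the logarithmic convexity already established in section \ref{sectionD4}. First I would apply Proposition \ref{propD1} to the application $\theta$: by its hypotheses, there exists a point $p\in G^\an$ (namely $p=\theta(0)$) such that
\[
\theta=(\Phi^\an\circ\theta_T)\cdot p.
\]
Thus for every regular function $f$ on $G$, one has $\abs{f}\circ\theta=\abs{f}\circ((\Phi^\an\circ\theta_T)\cdot p)$, and it suffices to know that the right hand side is logarithmically convex on $\Lambda$.

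That is exactly what was established in section \ref{sectionD4}. More precisely, I would recall the argument in two stages. For $g$ in $G(\kk_s)$, an appropriate finite extension $\kk'\subseteq \kk_s$ allows to translate $\Phi$ by $g$ inside $G_{\kk'}$ and compose with $f$, producing an element $f\circ(\Phi_{\kk'}\cdot g)=\sum_{\chi\in X(T)}a_\chi(g)\cdot\chi$ of $\kk'[X(T)]$. Applying the explicit formula \eqref{eq27} for $\theta_{T_{\kk'}}$, one gets
\[
\abs{f}\left((\Phi^\an\circ\theta_T)\cdot g\,(\lambda)\right)=\max_{\chi\in X(T)}\abs{a_\chi(g)}\cdot\abs{\varpi}^{\langle\chi,\lambda\rangle},
\]
whose logarithm is the maximum of finitely many affine functions of $\lambda\in\Lambda$, hence a convex function. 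So $\abs{f}\circ((\Phi^\an\circ\theta_T)\cdot g)$ is logarithmically convex on $\Lambda$ for every $g\in G(\kk_s)$.

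For a general $p\in G^\an$, by the density argument of section \ref{sectionD42}, $p$ lies in the closure of the image of $G(\kk_s)$ in $G^\an$, and $(\Phi^\an\circ\theta_T)\cdot p$ was defined precisely as the pointwise limit of the $(\Phi^\an\circ\theta_T)\cdot g$ as $g\to p$. A pointwise limit of logarithmically convex functions being logarithmically convex, the conclusion follows. The main obstacle is essentially bookkeeping: checking that the limit in the definition of $(\Phi^\an\circ\theta_T)\cdot p$ is compatible with the conclusion of Proposition \ref{propD1}; but this compatibility is immediate, since $p=\theta(0)$ by construction and the density of $T(\kk_s)$-orbits in $\Lambda$ used in the proof of \ref{propD1} is exactly the tool that makes the convexity pass to the limit. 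No new ingredient is required beyond Proposition \ref{propD1} and the formula \eqref{eq27}.
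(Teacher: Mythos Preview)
Your proof is correct and follows exactly the approach the paper indicates: apply Proposition~\ref{propD1} to identify $\theta$ with $(\Phi^\an\circ\theta_T)\cdot p$, then invoke the logarithmic convexity of $\abs{f}\circ((\Phi^\an\circ\theta_T)\cdot p)$ established in sections~\ref{sectionD31} and~\ref{sectionD4}. The paper's own justification is a single sentence pointing to these sections; you have simply unpacked what that sentence means.
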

\section{Immeubles euclidiens de Bruhat-Tits}

\subsection{Avant-propos} L'\emph{immeuble euclidien} de Bruhat-Tits d'un groupe algébrique semisimple~$G$ sur un corps local ultramétrique~$\kk$ est un analogue, dans le contexte ultramétrique, de l'espace riemannien symétrique des sous-groupes compacts maximaux associé à un groupe de Lie semi-simple réel~$L$, qui s'écrit~$L/K$, pour un sous-groupe compact maximal~$K$ de~$L$. Dans le contexte ultramétrique, l'analogue naïf du théorème de Cartan ne vaut plus:~$G(\kk)$ ne contient en général pas, à conjugaison près, d'unique sous-groupe compact maximal. Toutefois, du
moins pour les groupe déployés, les énoncés 5.3.1 et 5.3.3 de~\cite{Ber90} restituent \emph{a
posteriori} cette facette de l'analogie entre immeubles et espaces symétriques.

\subsection{Propriétés}
Soit~$\kk$ un corps local muni d'une valeur absolue ultramétrique~$\abs{-}$, et soit~$G$ un groupe algébrique semi-simple sur~$\kk$. L'\emph{immeuble euclidien de Bruhat-Tits} de~$G$ sur~$\kk$, ou plus simplement «~immeuble~», désigne un certain espace métrique~$\Ik(G)$ muni d'une action fidèle proprement continue du groupe topologique~$G(\kk)$ (à gauche, par isométries). L'immeuble est uniquement défini à unique isométrie près. (cf~\cite[2.1]{Tit79}). Le stabilisateur
de l'immeuble est réduit au centre de~$G$. Les stabilisateurs des points de~$\Ik(G)$ sont des sous-groupes dits \emph{parahoriques} de~$G(\kk)$; 
ils sont compacts et ouverts dans~$G(\kk)$: suivant~\cite[Introduction]{BT84} (voir aussi~\cite[3.4.1]{Tit79}), ce sont des groupes de la forme~$G(\mathscr{O}_\kk)$ pour certaines formes entières de~$G$ sur l'anneau des entiers ultramétriques~$\mathscr{O}_\kk$ («~schémas
en groupes plats prolongeant G~»).

L'immeuble~$\Ik(G)$ admet une famille distinguée de parties, appelées \emph{appartements}, réunissant les propriétés suivantes.
\begin{enumerate}[label=\alph*.]
\item \label{a.} L'ensemble des appartements est stable sous l'action de~$G(\kk)$, et l'action de~$G(\kk)$ sur l'ensemble des appartements est transitive.
\item \label{b.} Les appartements sont isométriques à un espace vectoriel euclidien. En particulier, pour tout appartement~$A$, et tout~$g$ dans~$G(\kk)$, l'isométrie~$A\xrightarrow{} g A$ est une application affine.
\item \label{c.} L'immeuble~$\Ik(G)$ est réunion de ses appartements, et tout point a un voisinage formé d'une réunion finie d'appartements.
\item \label{d.} Le stabilisateur dans~$G(\kk)$ d'un appartement donné agit via un réseau du grou\-pe d'isométries de cet appartement.
\item \label{e.} Deux points quelconques de~$\Ik(G)$ sont contenus dans un appartement commun \cite[7.14.18]{BT72}.
\item \label{f.} L'immeuble~$\Ik(G)$ un espace de Hadamard : l'inégalité CAT(0) est satisfaite \cite[3.2]{BT72}.
\item \label{g.} L'immeuble~$\Ik(G)$ admet une structure polysimpliciale~$G(\kk)$-invariante, dont un polysimplexe est domaine fondamental et intersection d'appartements. Le stabilisateur d'un appartement donné préserve un pavage issu d'une structure polysimpliciale qui s'étend en une structure polysimpliciale invariante
sur~$\Ik(G)$.
\end{enumerate}

\subsection{Avertissement}
Nous nous reposons sur~\cite{RTW09} pour la construction
de l'immeuble de Bruhat-Tits. Les hypothèses de travail de ces auteurs sont
énoncées en~\cite[1.3.4]{RTW09}, numéro qui, par ailleurs, indique explicitement que
ces hypothèse sont vérifiées si le corps de base~$\kk$ est un corps local, ce qui est
le cas considéré ici. Une autre référence dans le cas des corps locaux est~\cite{Tit79}.
Cette dernière, reposant sur l'exposé axiomatique~\cite{BT72}, indique, en~\cite[1.5]{Tit79},
quelques réserves sur la satisfiabilité des hypothèses de~\cite{BT72}, qui sont ramenées,
en ce qui concerne~\cite{Tit79}, aux propriétés 1.4.1 et 1.4.2 de~\cite{Tit79}. La
suite~\cite{BT84} de l'exposé~\cite{BT72}, suite postérieure à la référence~\cite{Tit79}, démontre
que les hypothèses de~\cite{BT72} sont satisfaites « pour tout groupe réductif sur un
corps de valuation discrète hensélien à corps résiduel parfait » (Introduction,
page 9).

Nous esquissons la construction de l'immeuble indiquée dans~\cite{RTW09}. Que
cette construction vérifie les propriétés indiquées plus haut résulte, pour certaines
de ces propriétés, de la construction même, qui procède par analyse-synthèse.
Pour les autres propriétés cela résulte d'une part de la satisfiabilité des hypothèse
de travail de~\cite{BT72} pour les corps locaux pour laquelle nous venons d'indiquer
des références ; d'autre part de certaines conclusions de~\cite{BT72}; enfin, dans le cas
des corps locaux notamment, de la référence~\cite{Tit79}.

\subsection{Construction}\label{sectionE4} Comme tout appartement A de l'immeuble~$\Ik(G)$ est une partie génératrice, on peut construire~$\Ik(G)$ comme quotient de~$G(k) \times A$. Les relations par lesquelles on quotiente sont engendrées par celles qui définissent le stabilisateur de chaque point de~$A$, et celles qui déterminent l'action sur~$A$ du stabilisateur
de~$A$. C'est l'approche utilisée dans~\cite[1.3]{RTW09}.

Ne discutons que du modèle de A muni de l'action de son stabilisateur.
Fixons un tore déployé maximal~$T$ de~$G$, et posons~$A = \Lambda = \Hom(\GL(1),T ) \tens \R$.
Le normalisateur~$N(T )(\kk)$ de~$T$ dans~$G(\kk)$ agit par transport de structure sur~$\Lambda = \Hom(\GL(1),T )\tens\R$, et le noyau de cette action est le centralisateur~$C(T )(\kk)$
de~$T$ dans~$G(\kk)$. Le groupe quotient~$N(T )(\kk)/C(T )(\kk)$ est un groupe fini, le groupe de Weyl sphérique de~$G$ relatif à~$T$. Alors le stabilisateur de l'appartement~$A$ dans~$G(\kk)$
est~$N(T )(\kk)$ et l'action de~$N(T )(\kk)$ est une action affine
\begin{itemize}
\item dont la partie linéaire est l'action précédente, 
\item et pour laquelle~$T (\kk)$, qui est contenu dans~$N(T )(\kk)$, agit par~\ref{sectionD5}.
\end{itemize}
\section{{Convexit\'{e}}}\label{sectionF}
\subsection{{Notion de convexit\'{e}}}
Rappelons qu'un~\emph{appartement} de~$\Ik(G)$ est l'image d'une partie de la forme~$\{g\}\times\Lambda$ par l'application~$G(\kk)\times\Lambda\rightarrow \Ik(G)$. Les appartements sont permutés transitivements sous l'action de~$G(\kk)$. En outre, le stabilisateur d'un appartement~$A$ agit sur~$A$ de manière affine et cette action contient l'action additive d'un réseau vectoriel de~$\Lambda$ (voir \cite[1.2, 1.3]{Tit79}).

\subsubsection{}\label{sectionF11}
En particulier, si~$F$ est un parallélotope fondamental de ce réseau,~$F$ est une partie bornée qui rencontre tout orbite de~$G(\kk)$ rencontrant cet appartement, ce qui est le cas de toute orbite de~$G(\kk)$. L'immeuble~$\Ik(G)$ contient donc une partie génératrice compacte.

Rappelons que deux points quelconques~$x$ et~$y$ de~$\Ik(G)$ sont contenus dans un appartement commun~(\cite[7.14.18]{BT72}). on peut donc définir le segment~$[x;y]$ joignant~$x$ à~$y$ dans~$A$. Comme le stabilisateur d'un appartement~$A$ agit de manière affine sur~$A$, ni le segment~$[x;y]$ ni la structure affine sur~$[x;y]$ ne dépendent de l'appartement choisi (cf.~\cite[2.2.1]{Tit79}).

\label{convexité}
Une fonction réelle continue sur~$\Ik (G)$ dont la restriction à tout segment
est \emph{con\-ve\-xe} (resp. \emph{affine}), sera dite convexe (resp. affine). Il revient au même
de dire que la restriction à chaque appartement est convexe (resp. affine),
relativement à la structure affine de cet appartement.

Bien évidemment, les fonctions affines et les fonctions dont le logarithme est
affine sont convexes. En outre, toute fonction réelle qui s'écrit comme borne
supérieure ou limite simple de fonctions convexes est convexe.

\label{convexite2}
Une partie de~$\Ik(G)$ sera dite convexe si elle contient tout segment dont elle contient les extrémités.

\subsection{Une décomposition de l'immeuble}
Soit~$k$ un sous-groupe compact de~$G(\kk)$. Le groupe compact~$k$ agit par isométries sur l'immeuble~$\Ik(G)$.
Notons~$\Ik(G)^k$ le lieu fixe de l'action de~$k$, et~$Z_G(k)$ le centralisateur de~$k$ dans~$G$.
Alors~$\Ik(G)^k$ est stable sous l'action du centralisateur~$Z_G(k)(\kk)$ de~$k$ dans~$G(\kk)$.
Nous allons montrer le premier énoncé suivant.

\begin{theoreme}\label{TheoF1}\label{propF1}
Soit~$k$ un sous-groupe compact de~$G(\kk)$ dont l'adhérence
de Zariski dans~$G$ est un sous-groupe fortement réductif, et soit~$Z_G (k)$ le centralisateur de~$k$ dans~$G$. Alors il existe une partie compacte non vide~$C$ de~$\Ik(G)^k$ qui est génératrice pour l'action de~$Z G (k)(\kk)$: on a~$\Ik(G)^k=Z_G (k)(\kk)\cdot C$.
\end{theoreme}
Commençons par un lemme. 

\begin{lemme}\label{LemmeF2}\label{lemmeF2}\footnote{N.d.É.: Bien que l'on se réfère à~\cite{PR94}, ce lemme est basé sur le travail~\cite{Richardson} de Richardson, algébrique, dont c'est une variante, et conséquence, ultramétrique.}
Soit~$k$ un sous-groupe compact de~$G(\kk)$ dont l'adhérence
de Zariski dans~$G$ est un sous-groupe fortement réductif dans~$G$,  et soit~$Z_G (k)$ le centralisateur de~$k$ dans~$G$.

Pour tout compact~$C$ de~$G(\kk)$, il existe un compact~$C'$ de~$G(k)$ tel que le transporteur
\begin{equation}
T (k, C) = \left\{g \in G(k)\middle|gkg^{-1} \subseteq C\right\}
\end{equation}
de~$k$ dans~$C$ s'écrive~$C'\cdot Z_G (k)(\kk)$.

\end{lemme}
\begin{proof}
 Tout d'abord ce transporteur est fermé. Pour chaque~$x$ dans~$k$ l'application~$g \mapsto g xg^{-1}$ est continue, l'image inverse de~$C$ est fermée; ce transporteur est l'intersection de ces images inverses, des fermés. Tout revient donc à montrer que~$T (k,C )$ est le
saturé par~$Z_G (k)(\kk)$ d'une partie relativement compacte. Comme~$T (k,C )$ est manifestement invariant par~$Z_G (k)(\kk)$, il suffit de montrer qu'il est contenu dans le saturé d'une partie compacte.

Remarquons que, pour la topologie de Zariski,~$k$ est noethérien. Par con-
séquent, pour cette topologie, il est topologiquement de type fini. En particulier, il existe une partie finie~$\{x_1 ; \ldots ; x_n \}$ topologiquement génératrice de~$k$ pour la topologie de Zariski. Le centralisateur de cette partie est le centralisateur de~$k$, c'est-à-dire~$Z_G (k)$.

Appliquant~\cite[Theorem 16.4]{Richardson}, on montre que l'application
\[
g\mapsto (gx_1g^{-1};\ldots;gx_ng^{-1})
\]
induit une immersion fermée de~$G/Z_G(k)$ dans~$G^n$. Par conséquent l'application correspondante
\[
\phi:\left(G/Z_G(k)\right)(\kk)\to G(\kk)^n
\]
est~\emph{propre}. En particulier l'image inverse de~$C^n$ dans~$\left(G/Z_G(k)\right)(\kk)$ est compacte. Or cette image inverse de~$C^n$ contient l'image de~$T(k,C)$ par~$G(\kk)\to\left(G/Z_G(k)\right)(\kk)$.

Il suffit donc de montrer que tout compact de~$\left(G/Z_G(k)\right)(\kk)$ rencontre l'image par~$\phi$ de~$G(\kk)$ en l'image d'un compact. D'après~\cite{PR94}, l'application~$\phi$ est ouverte, et les orbites de~$G(\kk)$ dans~$\left(G/Z_G(k)\right)(\kk)$ sont toutes ouvertes; elles sont donc aussi fermées. En particulier l'image~$\phi(G(\kk))$ est fermée, et intersecte donc tout compact en un compact. Il suffit de montrer que tout compact de~$G(\kk)/Z(\kk)$ est l'image d'un compact. D'après la propriété de Borel-Lebesgue, il suffit de travailler localement (c.à.d. montrer que tout ouvert assez petit est contenu dans l'image d'un compact). Or~$\phi$ est ouverte et~$G(\kk)$ est localement compact.
\end{proof}

Avant de démontrer le Théorème~\ref{TheoF1}, rappelons quelques faits, dont certains
sont bien connus.
\begin{enumerate}[label=\alph*.]
\item \label{a.} Il existe un compact~$F$ de~$\Ik(G)$ rencontrant toute orbite de~$G(\kk)$ (cf.~\ref{sectionF11}).
\item \label{b.} Pour toute partie bornée non vide~$P$ de~$\Ik(G)$, le stabilisateur de~$P$ dans~$G(\kk)$ est un sous-groupe compact et ouvert (cf.\cite[3.2]{Tit79}, \cite[Introduction]{BT72}).
En particulier, pour tout point~$p$ de~$\Ik (G)$, le fixateur de~$p$
dans~$G(k)$ est un sous-groupe compact et ouvert.
\item \label{c.} Le fixateur commun à tous les éléments d'une partie bornée non vide est
compact et ouvert (cf. \emph{supra}).
\item \label{d.} Pour tout sous-groupe ouvert~$U$ de~$G(\kk)$, le lieu fixe de~$U$ dans~$G$ est une partie compacte de~$\Ik(G)$.
\item \label{e.} Tout sous-groupe compact de~$G(\kk)$ est contenu dans un sous-groupe compact maximal.
\item \label{f.} Les sous-groupes compacts maximaux de~$G(\kk)$ sont ouverts.
\item \label{g.} Ils forment un nombre fini de classes de conjugaison (\cite[3.3.3]{BT72}).
\item \label{h.} Tout sous-groupe compact ouvert de~$G(\kk)$ est contenu dans un nombre fini de sous-groupe compact maximaux.
\end{enumerate}
\begin{proof}[Démonstration du Théorème~\ref{TheoF1}]
D'après le point~\ref{a.}, il existe un compact~$F$ de~$\Ik (G)$ rencontrant toute orbite de~$G(\kk)$. D'après le point~\ref{b.}, le stabilisateur commun à tous les points de~$F$ est un sous-groupe \emph{compact et ouvert} de~$G(\kk)$. Notons-le~$K_F$.

Pour tout point~$f$ de~$F$ le stabilisateur de~$f$, disons~$K_f$, est également un sous-groupe compact et ouvert de~$G(\kk)$ (point~\ref{b.} ci-dessus). Par construction ces groupes contiennent~$K_F$. Appliquant le point~\ref{h.}, il s'ensuit que l'ensemble~$E_F=\{K_f|f\in F\}$ de sous-groupes compacts et ouverts de~$G(\kk)$ est un ensemble \emph{fini}.

Pour~$K$ dans~$E_F$, notons~$F_K = \{ f \in F |K_f = K \}$. Par construction de~$E_F$, le compact~$F$ s'écrit comme l'union finie~$F = \bigcup\{F_K |K \in EF \}$. Notons que chaque~$F_K$, étant
contenu dans~$F$, est borné.

Soit~$k$ le groupe compact mentionné dans l'énoncé du théorème. Pour~$g$
dans~$G(\kk)$ et~$f$ dans~$F_K$, le point~$g\cdot f$ de~$\Ik (G)$ est fixé par~$k$ si et seulement si~$g^{-1}kg$ est contenu dans le stabilisateur de~$f$, c'est-à-dire dans~$K$.  Autrement dit~$g^{-1}$ est
dans le transporteur~$T (k, K )$ de~$k$ dans~$K$. D'après le Lemme~\ref{lemmeF2}, il existe un compact~$C_K$ de~$G(\kk)$ tel que~$T(k,K)$ s'écrive~$C_K\cdot Z_G(k)$. Par conséquent, l'élément~$g\cdot f$ ci-dessus appartient à la partie~$Z_G(k)(\kk)\cdot {C_K}^{-1}\cdot F_K$ du lieu fixe~$\Ik(G)^k$ de~$k$ agissant sur l'immeuble~$\Ik(G)$.

On a montré que tout point de la forme~$g\cdot f$, avec~$g$ dans~$G(\kk)$ et~$f$ dans~$F_K$ appartient en fait à~$Z_G(k)(\kk)\cdot {C_K}^{-1}\cdot F_K$.

Comme~$F$ rencontre toute orbite de~$G(\kk)$ dans~$\Ik (G)$, tout point~$p$ de~$\Ik (G)$ s'écrit~$f\cdot g$ avec~$f$ dans~$F$ et~$g$ dans~$G(\kk)$.  Ainsi tout point fixe de k appartient
à~$Z_G(k)(\kk)\cdot {C_K}^{-1}\cdot F_K$ pour un certain~$K$ dans~$F$.  Par conséquent,~$\Ik (G)^k$ est contenu dans~$\bigcup_{K\in E_F}Z_G(k)(\kk)\cdot {C_K}^{-1}\cdot F_K$.

Comme~$E_F$ est fini et que les~$F_K$ et~$C_K$ sont bornés, la partie~$\bigcup_{K\in E_F}Z_G(k)(\kk)\cdot {C_K}^{-1}\cdot F_K$ est bornée, donc son adhérence est compacte. Remarquons que~$\Ik(G)$ est fermé et invariant sous l'action de~$Z_G(k)(\kk)$. Par conséquent
\[
C=\overline{\bigcup_{K\in E_F}{C_K}^{-1}\cdot F_K}\cap \Ik(G)^k
\]
est un compact de~$\Ik(G)$ tel que~$\Ik(G)^k=CZ_(G)(k)(\kk)$.

\end{proof}
\subsection{Plongement analytique et Convexité des fonctions régulières sur l'immeuble} Nous basant sur~\cite{RTW09}, démontrons l'énoncé suivant.
\begin{theoreme}\label{TheoF3}
Soit~$\kk$ un corps local muni d'une valeur absolue ultramétrique, soit~$G$ un groupe algébrique semi-simple sur~$\kk$ et fixons un tore
algébrique déployé maximal~$T$ dans~$G$. Notons~$\Phi : T \rightarrow G$ le morphisme
d'inclusion, $\theta_T : \Lambda \rightarrow T^\an$ l'application~\eqref{eq23},~$\phi^\an : T^\an \rightarrow G^\an$ l'application correspondante, et~$\Ik(G)$ l'immeuble de Bruhat-Tits de~$G$ sur~$\kk$. 

Pour tout point~$p$ de~$G^\an$, on note~$((\Phi^\an \circ \theta_T )\cdot p) : \Lambda \rightarrow G^\an$ l'application définie dans la
section~\ref{sectionD4}, et pour tout~$g$ dans~$G(\kk)$ , on note~$g((\Phi^\an \circ \theta_T )\cdot p)$ l'application
translatée.

Alors il existe un point~$p$ de~$G^\an$ tel que l'application de~$G(\kk) \times \Lambda$ donnée
par
\begin{equation}\label{eq29}
(g,\lambda)\mapsto g\cdot ((\Phi^\an\circ \theta_T)\cdot p)(\lambda)
\end{equation}
passe au quotient en une application équivariante à gauche
\[
\theta:\Ik(G)\rightarrow G^\an.
\]

En outre on peut supposer que le stabilisateur à droite~$G_p$ de~$p$ dans~$G(k)$ est compact et ouvert.
\end{theoreme}

Dans le cas déployé, cet énoncé résulte de la construction explicite de V. Ber\-ko\-vich, dans \cite[5.3]{Ber90} et du Théorème 5.4.2 qui s'ensuit.

Dans le cas général, notre référence est~\cite{RTW09}, dont l'approche est différente, et repose sur les propriétés de fonctorialité des immeubles par des
extensions, non nécessairement algébriques, de corps ultramétriques. Pour pouvoir démontrer l'énon\-cé~\ref{TheoF3}, nous allons utiliser le critère~\ref{propD1}.
\begin{proof}
Soit~$\Theta:\Ik(G)\times\Ik(G)\to G^\an$ l'application de~\cite[2.3]{RTW09}.
Il suit de~\cite[Proposition~2.12]{RTW09} que l'application~$\Theta$ est continue et vérifie
\[
	\forall x,y\in \Ik(G),
		\forall g,h \in G(\kk),
				\Theta(gx,hy)=h\Theta(x,y)g^{-1}.
\]
Soit~$o$ dans~$\Ik(G)$ et notons~$\Theta_o$ l'application~$x\mapsto\theta(o,x)$. Alors~$\Theta_o$  est une application continue et équivariante de~$\Ik(G)$ dans~$G^\an$. Montrons que~$\Theta_o$ répond à
l'énoncé.

Par équivariance, il suffit donc de montrer qu'elle s'écrit sous la forme~\eqref{eq29} sur un seul appartement, par exemple~$\Lambda$, de~$\Ik (G)$.

Il suffit donc de montrer que la restriction de~$\Theta_o$ à~$\Lambda$ vérifie le critère~\ref{PropD1}. L'application~$\Theta_o$ est bien continue.
Soit~$\kk'$ une  une extension finie de~$\kk$, et soit~$\Theta_{\kk'}$ l'application composée issue du coin supérieur gauche du carré commutatif de \cite[Proposition 2.12 (ii).]{RTW09}.
Par commutativité, l'application~$(\Theta_{\kk'})_o:x\mapsto \Theta_{\kk'}(o,x)$ de~$\Lambda$ dans~$G_{\kk'}^\an$ répond  à la condition~\ref{D11} de~\ref{PropD1}.
Appliquant la proposition 2.12 de~\cite{RTW09} au corps~$\kk'$, nous obtenons que l'application~$(\Theta_{\kk'})_o$ est équivariante à gauche sur~$\Lambda$.
La seconde condition de la Proposition~\ref{PropD1} découle ainsi de la description de l'identité de l'action de~$T(\kk')$ sur~$\Lambda$, pris comme appartement (cf. section~\ref{sectionE4}), avec l'action donnée en section~\ref{sectionD31}.

\end{proof}
En appliquant le Corollaire~\ref{coroD2}, on en déduit ceci.
\begin{corollaire}\label{coroF4}
Soit~$\Theta$ une application~$\Ik(G) \rightarrow G^\an$ telle que dans le
Théorème~\ref{TheoF3}. 

Alors pour toute fonction régulière~$f$ sur~$G$, la fonction réelle~$\abs{f}  \circ\Theta$ est logarithmiquement convexe sur~$\Ik(G)$.
\end{corollaire}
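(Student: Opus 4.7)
La stratégie consiste à ramener l'énoncé, via l'équivariance et la structure d'appartements, au Corollaire~\ref{coroD2} appliqué à~$\Lambda$.

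Je rappellerais d'abord (cf.~\ref{convexité}) qu'une fonction continue sur~$\Ik(G)$ est (logarithmiquement) convexe si et seulement si sa restriction à chaque appartement l'est, relativement à la structure affine de cet appartement. Comme tout segment de~$\Ik(G)$ est contenu dans un appartement commun (\cite[7.14.18]{BT72}), il suffit donc de vérifier la convexité logarithmique de~$\abs{f}\circ\Theta$ sur~$\Lambda$, puis sur un appartement arbitraire~$A$ de~$\Ik(G)$ par équivariance.

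Par le Théorème~\ref{TheoF3}, la restriction de~$\Theta$ à~$\Lambda$ coïncide avec l'application~$(\Phi^\an\circ\theta_T)\cdot p$ pour un certain~$p\in G^\an$. Le Corollaire~\ref{coroD2} donne alors directement la convexité logarithmique de
\[
\lambda\mapsto\abs{f}\left(((\Phi^\an\circ\theta_T)\cdot p)(\lambda)\right)
\]
sur~$\Lambda$, pour toute fonction régulière~$f\in\kk[G]$.

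Pour un appartement~$A$ quelconque, par transitivité de l'action de~$G(\kk)$ sur les appartements (propriété~\ref{a.}), on peut écrire~$A=g\cdot \Lambda$ avec~$g\in G(\kk)$, et l'équivariance de~$\Theta$ (toujours par~\ref{TheoF3}) donne, pour~$\lambda\in\Lambda$,
\[
\abs{f}(\Theta(g\lambda))=\abs{f}(g\cdot\Theta(\lambda))=\abs{f\circ\ell_g}(\Theta(\lambda)),
\]
où~$\ell_g$ désigne la translation à gauche par~$g$, et~$f\circ\ell_g$ est encore une fonction régulière sur~$G$. Le cas précédent, appliqué à~$f\circ\ell_g$, fournit la convexité logarithmique cherchée sur~$A$. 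Comme la structure affine sur le segment joignant deux points donnés de~$A$ ne dépend pas du choix de~$A$ (cf.~\ref{sectionF11}), la convexité logarithmique obtenue appartement par appartement se recolle en la convexité logarithmique de~$\abs{f}\circ\Theta$ sur~$\Ik(G)$. La seule délicatesse éventuelle est de vérifier que~$\abs{f}\circ\Theta$ est bien continue sur~$\Ik(G)$, ce qui résulte de la continuité de~$\Theta$ (Théorème~\ref{TheoF3}) et de celle de~$\abs{f}:G^\an\to\R_{\geq0}$ (par définition même de la topologie de~$G^\an$).
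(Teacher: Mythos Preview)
Your argument is correct and is exactly the natural unfolding of the paper's one-line justification (``En appliquant le Corollaire~\ref{coroD2}''): reduce to apartments via the definition of convexity in~\S\ref{convexité}, identify the restriction of~$\Theta$ to~$\Lambda$ with~$(\Phi^\an\circ\theta_T)\cdot p$ by Theorem~\ref{TheoF3}, invoke Corollary~\ref{coroD2}, and transport to a general apartment by left $G(\kk)$-equivariance together with the observation that a left translate of a regular function is regular. There is nothing to add.
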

%%%%%%%%%%%%%%%%%%%%%
%\bibliographystyle{alpha}  
%\bibliography{ultrametrique}
%%%%%%%%%%%%%%%%%%%%%%%%

%%%%%%%%%%%%%%%%%%%%%%%%%%%%%%%%%%%%%%%%
%%%%%%%%%%%%%%%%%% PART 4 %%%%%%%%%%%%%%%%%%
%%%%%%%%%%%%%%%%%%%%%%%%%%%%%%%%%%%%%%%%

\renewcommand{\Nm}[1]{\left\|{#1}\right\|}
\renewcommand{\NM}[1]{\left|\!\left|\!\left|{#1}\right|\!\right|\!\right|}
\renewcommand{\lie}[1]{\mathfrak{#1}}
\renewcommand{\ciso}{\equiv}
\renewcommand{\GL}{GL{}}
\renewcommand{\SL}{SL{}}
\renewcommand{\d}{{\mathbf{d}}}
\renewcommand{\Z}{{\mathbf{Z}}}
\renewcommand{\Id}{\mathrm{Id}}
\renewcommand{\Hom}{\mathrm{Hom}}
\renewcommand{\R}{\mathbf{R}}
\renewcommand{\N}{\mathbf{N}}
\renewcommand{\C}{{\mathbf{C}}}
\renewcommand{\1}{\mathbf{1}}
\renewcommand{\A}{\mathbf{A}}
\renewcommand{\Tr}{\mathbf{Tr}}
\renewcommand{\Ad}{\mathrm{Ad}}
\renewcommand{\ad}{\mathrm{ad}}
\renewcommand{\der}{\mathrm{der}}
\renewcommand{\abs}[1]{\left|#1\right|}
\renewcommand{\Q}{\mathbf{Q}}
\renewcommand{\an}{\mathrm{an}}
\renewcommand{\N}{\mathbf{N}}

%\renewtheorem{theorem}[part]{Theorem}
%\renewtheorem{corollary}[theorem]{Corollary}
%\newtheorem{Theoreme}{Théorème}         %  Un théorème à la francophone numéroté 
%                                        % suivant le compteur des sections.
%\newtheorem{Theo}{Theorem}
%\newtheorem{Proposition}[Theoreme]{Proposition}   %
%\newtheorem{Lemme}[Theoreme]{Lemme}
%\newtheorem{Corollaire}[Theoreme]{Corollaire}
%\renewcommand{\theCorollaire}{}
\setcounter{theorem}{0}
\selectlanguage{english}

\part{On narrowness\except{toc}{\\} for translated algebraic probabilities\except{toc}{\\} in $S$-arithmetic homogeneous spaces \author{R. Richard}\except{toc}{}}

%%AFFILIATION
{
\let\thefootnote\relax\footnotetext
{
\author{Rodolphe \textsc{Richard}}
\address{IRMAR, Bâtiment 22-23, université de Rennes 1, Campus de Beaulieu, 35000~Rennes.}
{\email{\href{mailto:Rodolphe.RICHARD@Normalesup.org}{Rodolphe.RICHARD@Normalesup.org}}}
}
}

%\alttitle{Sur l'étroitesse\\ des familles de translatées S-arithmétiques de probabilités algébriques\\ dans les espaces homogènes localement principaux de volume fini}

%\maketitle
% 
% \selectlanguage{francais}
% \begin{abstract} Nous démontrons un résultat géométrique sur les orbites de sous-groupes réductifs dans les représentations linéaires de groupes de Lie réels semisimples. Cet énoncé répond à une question inspirée par l'utilisation dans~\cite{EMSGAFA} des méthodes de linéarisation des dynamiques homogènes initiées par Dani et Margulis. Comme première application de notre résultat, on obtient une nouvelle démonstration du théorème principal de~\cite{EMSGAFA} sur la non-divergence des translatées de sous-variétés homogènes dans les espaces homogènes de groupes réductifs. Notre méthode de démonstration est effective et permet même de préciser les résultats de~\cite{EMSGAFA}.
% \end{abstract}
\parttoc
% \begin{abstract} We prove a geometric property for orbits of reductive subgroups in linear representations of semisimple real Lie groups. This statement answer a question raised by the use in~\cite{EMSGAFA} of the  linearisation of dynamics technics initiated by Dani and Margulis. As a first application of our result, we get a new proof of main result of~\cite{EMSGAFA}. Our proof is effective and allow some refinements of the results of~\cite{EMSGAFA}.
% \end{abstract}
% \selectlanguage{francais}

% \end{document}
% \newpage
% !TEX root = ../Maitre.tex
\section*{Introduction}
In this article, we show how, along the lines of~\cite{EMSGAFA}, to combine
\begin{enumerate}
\item \label{i1}application of Dani-Margulis linearisation method, in the form given by D.~Kleinbock and G.~Tomanov in~\cite{KT}; % (\cf~Theorem~\ref{adaptation} below), 
\item \label{i2}with the geometric results of \cite{Lemma} and \cite{Lemmap} (or rather their combination in Theorem~\ref{SBombay})
\end{enumerate}
in order to prove a more general and precise variant of the main result of~\cite{EMSGAFA} encompassing the $S$-arithmetic setup (Theorem~\ref{T1}, \ref{T2} and \ref{T3} below). In particular, together with~\cite{Lemma} and~\cite{KT}, this article contains a complete alternative proof of the main result of~\cite{EMSGAFA}. %We also derive some consequences of our results (Corollary~\ref{C4}).%, Proposition~\ref{P4}, Corollary~\ref{C5}).

The result from~\cite{KT} that we will use is a consequence of \cite{KT}~Theorem~9.3 which is slightly more general than an application that~\cite{KT} already worked out, namely~Theorem~9.4. In order to obtain this generalisation %(Theorem \ref{adaptation} below), 
we will need to check that the lemmas from~\cite{KT} allow to apply results from~\cite{KT} to the case we consider. This is done in section \ref{preliminaries} and \ref{good}. Section~\ref{lattices} recalls some of the notations of~\cite{KT} that we will use in order to express how we will use Theorems~9.3 and 9.4 of~\cite{KT}.%\ref{adaptation}.
%\end{abstract}

The results established concern non-divergence of certain sequences of translated measures. They are generalised in~\cite{0-RZ} which determines the limits of such sequences. Nonetheless the reference~\cite{0-RZ} relies on the non-divergence proved here.

\section{Non-divergence relative to semisimple $S$-Arithmetic lattices} 

Let us fix once for all a \emph{finite} set~$S$ of places of the field~$\Q$ of rational numbers. Excluding this section,~$S$ will be assumed to contain the archimedean place. Denote the completion of~$\Q$ at~$v$ by~$\Q_v$ and endow the product~$\Q_S=\prod_{v\in S}\Q_v$ with the ``maximum'' norm 
$$\abs{(a_v)_{v\in S}}_S=\max\{\abs{a_v}|v\in S\}.$$ 
so the balls for the product metric below are exactly products of balls of the factors~$\Q_v$ 
$$d_S((a_v)_{v\in S},(b_v)_{v\in S})=\abs{(a_v)_{v\in S}-(b_v)_{v\in S}}_S=\max\{\abs{a_v-b_v}|v\in S\}.$$ We fix a natural integer~$m$ and endow the group~$GL(m,\Q_S)$ with its natural metric topology. We consider a \emph{smooth reductive} closed algebraic subgroup~$G$ of~$GL(m)$ over~$\Q$, and we endow~$G(\Q_S)$ with the induced topology. 
Define~$\Z_S$ to be the localised ring~$\Z[1/p_1;\ldots;1/p_k]$, where~$p_1$,\dots,$p_k$ are the primes corresponding to the finite places in~$S$. We denote by~$G(\Z_S)$ the intersection~$GL(m,\Z_S)\cap G(\Q_S)$, and for any algebraic subgroup~$H$ in~$G$, by~$H(\Z_S)$ the intersection~$H(\Q_S)\cap G(\Z_S)$.

% Note that~$G(\Q_S)$% is isomorphic to a product~$\prod_{v\in S} G_v$, where~$G_v$ denote a reductive linear algebraic group over~$\Q_v$. We denote by~$\Z_S$ the image ...
Recall that~$G(\Z_S)$ is a lattice in~$G(\Q_S)$ if and only if~$G$  has no non constant character~$G\to GL(1)$ defined over~$\Q$ (\cf~\cite{BH-C}). Of course, this holds for any reductive subgroup~$H$ of~$G$ instead of~$G$. We prove the following.
\begin{theorem}\label{T1}
 Assume that~$G(\Z_S)$ is a lattice in~$G(\Q_S)$, and consider a reductive subgroup~$H$ of~$G$ such that $H(\Z_S)$ is also a lattice in~$H(\Q_S)$. Let us denote by~$\mu_H$ the direct image by 
\begin{equation}\label{muH}
H(\Z_S)\sous H(\Q_S)\to G(\Z_S)\sous G(\Q_S)
\end{equation} 
of the $H(\Q_S)$-invariant probability on~$H(\Z_S)\sous H(\Q_S)$.

 For any~$g$ in~$G(\Q_S)$ we write $\mu_{Hg}$ for the direct image of the probability~$\mu_H$ by the right action of~$g$ on~$G(\Z_S)\sous G(\Q_S)$.

 Then the family~$\left(\mu_{Hg}\right)_{g\in G}$ is narrow if and only if the centraliser of~$H(\Q_S)$ in~$G(\Q_S)$ has compact image in~$G(\Z_S)\sous G(\Q_S)$.
\end{theorem}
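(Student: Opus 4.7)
The proof proceeds by establishing both implications. For the harder direction --- tightness assuming the centraliser has compact image --- I would start by invoking Theorem~\ref{SBombay} (combining~\cite{Lemma} and~\cite{Lemmap}) to decompose $G(\Q_S) = Y \cdot Z_G(H)(\Q_S)$, where $Y$ satisfies the analytic stability property: for any $\Q_S$-linear representation $\rho\colon G \to \GL(V)$ and any non-zero $v \in V$, the quantity $\sup_{\omega\in\Omega}\|\rho(y\omega)v\|$ is bounded below by a positive multiple of $\|v\|$ uniformly in $y \in Y$, for a fixed bounded neighbourhood $\Omega$ of the identity in $H(\Q_S)$. By the compactness hypothesis on $Z_G(H)(\Q_S)\cdot\Gamma/\Gamma$, any $g \in G(\Q_S)$ factors as $g = y \cdot z$ with $y \in Y$ and $z \in Z_G(H)(\Q_S)$, and $z$ may be further written as $k\gamma$ with $k$ ranging in a fixed compact subset of $Z_G(H)(\Q_S)$ and $\gamma \in \Gamma$. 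The right translation by $\gamma$ is a homeomorphism of $\Gamma\sous G(\Q_S)$, and that by $k$ moves things in a bounded fashion, so tightness of $(\mu_{Hg})_{g\in G(\Q_S)}$ reduces to the uniform tightness of $(\mu_{Hy})_{y \in Y}$.

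For that uniform tightness on $Y$, I would apply Kleinbock and Tomanov's non-divergence theorem~\cite[Theorem~9.3]{KT} to the family of probability measures parametrised by $y \in Y$, with source the Haar probability on $H(\Z_S)\sous H(\Q_S)$ (realised via a good analytic parametrisation of $H(\Q_S)$) and target $\Gamma\sous G(\Q_S)$. The hypotheses of that theorem are (i) that the matrix coefficients of a faithful linear representation of $G$, pulled back to local analytic charts on $H(\Q_S)$, are $(C,\alpha)$-good, which is what is verified in Section~\ref{good}; and (ii) a quantitative non-contraction condition on the translated orbit, expressed via a Mahler-type compactness criterion in the linear representations of $G$ cut out by the lattice (Section~\ref{lattices}). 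The analytic stability of $Y$ supplies precisely the lower bound required to verify (ii), uniformly in $y \in Y$, preventing any escape of mass to the cusps.

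The easier direction, necessity, I would prove by contraposition. If $Z_G(H)(\Q_S)$ has non-compact image in $\Gamma\sous G(\Q_S)$, one extracts a sequence $z_n \in Z_G(H)(\Q_S)$ with $\Gamma z_n$ leaving every compact subset of $\Gamma\sous G(\Q_S)$; then, because $z_n$ centralises $H(\Q_S)$, the support $\Gamma z_n H(\Q_S)/\Gamma$ of $\mu_{Hz_n}$ is a closed orbit that itself escapes to the cusps, forcing a loss of mass. One must avoid the degenerate case $z_n \in H(\Q_S)$ (where right $H$-invariance of $\mu_H$ gives $\mu_{Hz_n} = \mu_H$); the finite-volume property of $H(\Z_S)\sous H(\Q_S)$ allows one to select $z_n$ transverse to $H(\Q_S) \cap Z_G(H)(\Q_S) = Z(H)(\Q_S)$ inside the full centraliser. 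The main obstacle in the sufficiency direction will be to reconcile the analytic stability provided by Theorem~\ref{SBombay}, which is a statement about uniform lower bounds on $\|\rho(y\omega)v\|$ as $\omega$ ranges over $\Omega$, with the dynamical non-divergence condition of~\cite[Theorem~9.3]{KT}, which is formulated in terms of the growth of lattice vectors under the flow; the technical sections on $(C,\alpha)$-good functions and on lattice geometry serve precisely to effect this translation, and this is also what requires one to invoke the more general Theorem~9.3 rather than the ready-made Theorem~9.4 of~\cite{KT}.
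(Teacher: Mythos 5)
Your treatment of the necessity direction is sound, and your observation that one must choose the divergent sequence $z_n \in Z_G(H)(\Q_S)$ transverse to $H(\Q_S)$ is a genuine point of care that the paper leaves implicit (one can in fact show that $Z_G(H)(\Q_S)\cap H(\Q_S)$ always has compact image when $H(\Z_S)$ is a lattice, so the transversality can be arranged). The structure of the sufficiency direction --- decompose via Theorem~\ref{SBombay}, reduce to a family of translates satisfying analytic stability, then feed that into~\cite[Theorem~9.3]{KT} through the good-parametrisation and systole-via-$\bigwedge^{\bullet}$ machinery --- is also the paper's own route (through Theorems~\ref{T2} and~\ref{T3}).

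The gap is in the step where you factor $g = y\, k\, \gamma$ with $\gamma \in \Gamma \cap Z_G(H)(\Q_S)$ and assert that ``the right translation by $\gamma$ is a homeomorphism of $\Gamma\sous G(\Q_S)$ \dots\ so tightness of $(\mu_{Hg})$ reduces to the uniform tightness of $(\mu_{Hy})$.'' Being a homeomorphism is not enough: if $(\mu_i)$ is a tight family and $(R_{\gamma_i})$ is a sequence of right-translations by lattice elements that escape to infinity, the pushforward family $(R_{\gamma_i})_*\mu_i$ need not be tight, since $(R_{\gamma_i})_*\mu_i(K^c) = \mu_i(K^c\gamma_i^{-1})$ and no fixed compact $K$ can contain $K'\gamma_i$ for a prescribed compact $K'$ and unbounded $\gamma_i$. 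What the argument actually requires, and what the paper uses, is the stronger identity
\[
\mu_{H\gamma} = \mu_H \qquad\text{for every } \gamma \in \Gamma \cap Z_G(H)(\Q_S),
\]
which holds because both facts about $\gamma$ are used simultaneously: $\gamma$ centralises $H(\Q_S)$ (so it slides past $H$) and $\gamma \in \Gamma$ (so it is absorbed into the left quotient); concretely, $G(\Z_S)h\gamma = G(\Z_S)\gamma h = G(\Z_S)h$ for all $h \in H(\Q_S)$, so the composite of~\eqref{muH} with $R_\gamma$ is~\eqref{muH} itself. Crucially, this identity only helps if $\gamma$ sits \emph{leftmost} in the factorisation of $g$, i.e.\ $g = \gamma\, k\, y$, because then $\mu_{Hg} = (R_y)_*(R_k)_*(R_\gamma)_*\mu_H = (R_y)_*(R_k)_*\mu_H = \mu_{Hky}$, and the whole family $(\mu_{Hg})_{g}$ coincides, as a set, with the family $(\mu_{Hg'})_{g' \in C\,Y}$; Proposition~\ref{SC} and Theorem~\ref{T2} then apply. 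With $\gamma$ on the right as in your factorisation, one has $\mu_{Hg} = (R_\gamma)_*\mu_{Hyk}$, and $(R_\gamma)_*$ does \emph{not} fix $\mu_{Hyk}$ (right translations do not commute), so the identity is of no use there. You therefore need both the identity $\mu_{H\gamma} = \mu_H$ and the opposite ordering of the decomposition before the reduction to~\cite{KT} becomes valid.
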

Recall that a family~$(\mu_i)_{i\in I}$ of bounded positive measures on a locally compact space~$X$ is said to be narrow if for every~$\epsilon>0$, there is some compact~$K_\epsilon$ in~$X$ such that 
\begin{equation}\label{narrow}\forall i\in I, \mu_i(X\smallsetminus K_\epsilon)<\epsilon.\end{equation}
Equivalently this family is relatively compact in the space of probabilities on~$X$.

Theorem \ref{T1} will actually follows from this more general one.
\begin{theorem}\label{T2}
 Let~$Y_S$ be a subset of $G(\Q_S)$ satisfying Theorem~\ref{SBombay}.

Then the family~$\left(\mu_{Hy}\right)_{y\in Y_S}$ is narrow.
\end{theorem}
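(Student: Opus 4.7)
\textbf{Proof plan for Theorem \ref{T2}.} The plan is to prove narrowness via a Mahler-type compactness criterion in $G(\Z_S)\backslash G(\Q_S)$ combined with Kleinbock--Tomanov's quantitative non-divergence, using the analytic stability property of $Y_S$ granted by Theorem \ref{SBombay} as the crucial input. First I would fix a faithful $\Q$-rational representation $\rho\colon G\to \mathbf{GL}(V)$ together with a $\Z_S$-lattice $\Lambda\subset V\otimes\Q_S$ preserved by $G(\Z_S)$, and an $S$-adic norm $\|\cdot\|$ on $V\otimes\Q_S$. By the standard Mahler criterion in $S$-arithmetic form, for $\varepsilon>0$ the subset $K_\varepsilon\subset G(\Z_S)\backslash G(\Q_S)$ consisting of those cosets $G(\Z_S)g$ with $\inf_{v\in\Lambda\smallsetminus\{0\}}\|\rho(g)^{-1}v\|\geq\varepsilon$ is compact, and every compact subset is contained in such a $K_\varepsilon$. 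Thus to establish narrowness of $(\mu_{Hy})_{y\in Y_S}$ it suffices to produce, for every $\varepsilon>0$, a constant $\delta(\varepsilon)>0$ (independent of $y\in Y_S$) such that $\mu_{Hy}\bigl(\{\,x\,:\,\exists v\in\Lambda\smallsetminus\{0\},\ \|\rho(x)^{-1}v\|<\delta\}\bigr)<\varepsilon$.

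Next I would transport the problem to $H(\Z_S)\backslash H(\Q_S)$ via the definition of $\mu_{Hy}$ as a pushforward along $h\mapsto hy$. Choose a bounded open subset $\Omega\subset H(\Q_S)$ whose $H(\Z_S)$-translates cover $H(\Q_S)$ (a Siegel/fundamental set, which exists because $H(\Z_S)$ is a lattice in $H(\Q_S)$). Up to replacing $\Omega$ by a bounded open subset and appealing to countable subadditivity, $\mu_{Hy}(A)$ is controlled by Haar measures, on a single translate of $\Omega$, of the preimages of $A$ under the orbit map. For a small-vector set $A=A_\delta$ as above, this reduces the problem to bounding, uniformly in $y$ and $v\in\Lambda\smallsetminus\{0\}$, the normalized Haar measure of
\[
\{\,\omega\in\Omega\,:\,\|\rho(\omega y)^{-1}v\|<\delta\,\}.
\]

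I would then invoke quantitative non-divergence (Kleinbock--Tomanov, in particular the generalisation of Theorem~9.3 of~\cite{KT} checked in sections \ref{preliminaries}--\ref{good}). Two hypotheses must be verified on the family of functions $\omega\mapsto \rho(\omega y)^{-1}v$ restricted to $\Omega$: the $(C,\alpha)$-good property with constants \emph{independent of $y$ and $v$}, and a non-contraction (``large supremum'') property $\sup_{\omega\in\Omega}\|\rho(\omega y)^{-1}v\|\geq c\|v\|$ for a uniform constant $c>0$. The $(C,\alpha)$-good property will follow because these maps are polynomial in $\omega$ of bounded degree; uniformity in $y$ is obtained since right translation by $y$ only shifts the coefficients and the $(C,\alpha)$-good property depends on the function through its polynomial degree, not its coefficients. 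The second, non-contraction property is precisely the analytic stability property ($\mathrm{An.S.}$) enjoyed by $Y_S$ by the very definition supplied in Theorem~\ref{SBombay}, applied to the representation $v\mapsto \rho(y)^{-1}v$ (equivalently, after replacing $\rho$ by its contragredient and $\Omega$ by $\Omega^{-1}$, to $\rho$ itself). Feeding both into the Kleinbock--Tomanov estimate yields a bound of the form $\lambda(\{\omega\in\Omega\,:\,\|\rho(\omega y)^{-1}v\|<\delta\})\leq C'(\delta/\|v\|)^{\alpha'}\lambda(\Omega)$ with constants independent of $y\in Y_S$ and $v\in\Lambda\smallsetminus\{0\}$.

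Summing over the countable set $\Lambda\smallsetminus\{0\}$ requires a Besicovitch/covering argument together with the discreteness of $\Lambda$: there are only finitely many orbits of a relevant finite-index subgroup of $H(\Z_S)$ on short vectors of $\Lambda$ of a given norm class, and the local estimate above combines across fundamental-domain translates with controlled multiplicity (this is the step where the structure of the $S$-arithmetic Mahler criterion has to be married to the covering argument). Choosing $\delta$ small enough in function of $\varepsilon$, $C'$, and $\alpha'$ gives the desired uniform bound. The main obstacle I anticipate is this final bookkeeping step: transferring the \emph{local} (over $\Omega$) quantitative non-divergence estimate, which is what Kleinbock--Tomanov provides, into a \emph{global} measure estimate on $G(\Z_S)\backslash G(\Q_S)$ uniform in $y\in Y_S$, while correctly handling the multiplicity coming from the different fundamental-domain translates and from summation over lattice vectors; this is precisely the kind of argument carried out in~\cite{EMSGAFA} and in~\cite{KT}~Theorem~9.4 in closely related settings, and the present generalisation consists in checking that the analytic stability input from Theorem~\ref{SBombay} is strong enough to close these estimates uniformly over $Y_S$.
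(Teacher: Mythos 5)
Your overall strategy---Mahler's $S$-adic compactness criterion plus the Kleinbock--Tomanov quantitative non-divergence, with the analytic stability of Theorem~\ref{SBombay} supplying the required lower bound $\sup_{\omega}\Nm{\rho(y\omega)v}\geq\Nm{v}/c$---is indeed the route the paper takes. But there is a genuine gap in the globalization step, and it is not merely bookkeeping. You propose to ``choose a bounded open subset $\Omega\subset H(\Q_S)$ whose $H(\Z_S)$-translates cover $H(\Q_S)$''; such a bounded $\Omega$ exists only when $H(\Z_S)\backslash H(\Q_S)$ is compact, which is not assumed here. If instead you take an unbounded Siegel set and cut it into countably many bounded balls, the Kleinbock--Tomanov estimate of~\cite{KT}~Theorem~9.3 applies only to balls whose $3^m$-dilate still sits inside the fixed bounded domain of the good parametrisation $\Theta$, and ``countable subadditivity'' does not yield a uniform bound unless you separately establish that the constants stay controlled across pieces and that their contributions sum to something finite. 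This is precisely the content the paper supplies by passing through the a priori weaker Corollary~\ref{C3} (the estimate for one bounded piece of orbit, with one density $f$) and then upgrading to Theorem~\ref{T3} via the convolution/domination/barycentre argument, showing the class of admissible densities is closed under translation, positive linear combinations, monotone limits, and domination; Theorem~\ref{T2} is then the special case $f=\mathbf{1}_{\text{fundamental domain}}$. Without an argument of this sort (or some other way to propagate a local estimate on a bounded piece to a fundamental domain), your proof does not close.

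Two further points. First, the ``Besicovitch/summing over $\Lambda\smallsetminus\{0\}$'' step you flag as your final obstacle is not something the user of~\cite{KT}~Theorem~9.3 performs: the Besicovitch covering and the reasoning over all ranks and all primitive sublattices are internal to the proof of that theorem, and the conclusion you should be invoking is directly a bound on the measure of the small-systole set, once you have checked condition~(i) (the $(C,\alpha)$-good property for all covolume functions along $\Theta\cdot y$) and condition~(ii) (the non-contraction coming from Theorem~\ref{SBombay}). Your plan to sum over lattice vectors and count orbits of a finite-index subgroup of $H(\Z_S)$ is not the mechanism. Second, the $(C,\alpha)$-good property is checked for the parametrised maps $b\mapsto\Nm{\rho(\Theta(b)y)x}$ on the coordinate domain $\prod_v\Q_v^{d_v}$, not for ``polynomials in $\omega$''; the whole point of the good-parametrisation Propositions~\ref{pparam} and~\ref{param} is to put these functions into a form (bounded-degree polynomial or $E(m,n,\Lambda)$-class) where $(C,\alpha)$-goodness is available with constants uniform in $y$ and $x$, and for the paper's argument it is also essential that $\rho$ be the full exterior-algebra representation so that covolumes of sublattices of all ranks become norms of vectors in one fixed module.
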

Let us show how Theorem \ref{T1} implies Theorem \ref{T2}.
\begin{proof}
Let~$Z$ be the centraliser of~$H$ in~$G$. We first show that the assumption on~$Z$ is necessary. For any compact subset~$C$ of~$H(\Q_S)$ and any point~$x$ in the image of~(\ref{muH}), we have $xCz=xzC$ for any $z$ in~$Z(\Q_S)$. Consequently, as~$xz$ go to infinity in Alexandroff compactification of~$G(\Z_S)\sous G(\Q_S)$,~$xCz$ will uniformly go to infinity. Choosing~$C$ to have positive, as we may, $\mu_H$~measure, we contradict the claimed narrowness, namely formula~(\ref{narrow}) for~$\epsilon=\mu_H(C)$.

Let us now prove the sufficiency of the assumption on~$Z$: there is a compact subset~$C$ of~$Z(\Q_S)$ such that $Z(\Q_S)=Z(\Z_S)\cdot C$. From Theorem~\ref{SBombay}, the~$Y_S$ is such that~$G(\Q_S)=Z(\Q_S)Y_S$. Consequently we get~$G(\Q_S)=Z(\Z_S)\cdot C Y_S$.

We now remark that for any~$y$ in~$Z(\Z_S)$, one has~$\mu_{Hy}=\mu_H$. Indeed both sides are direct image of the same measure under the same map, because the composition of (\ref{muH}) with the action of~$y$ result in the map (\ref{muH}) itself: indeed, for~$h$ in~$H(\Q_S)$, 
$$G(\Z_S)H(\Q_S)y=~G(\Z_S)yH(\Q_S)=~G(\Z_S)H(\Q_S)$$
as~$y$ centralises~$H$ and belongs to~$G(\Z_S)$ respectively.

Consequently the families~$\left(\mu_{Hg}\right)_{g\in G}$ and~$\left(\mu_{Hg}\right)_{g\in CY_S}$ have the same members. But applying Proposition~\ref{SC} and then Theorem~\ref{T2}, we prove that the latter family~$\left(\mu_{Hg}\right)_{g\in CY_S}$ is narrow, whence the desired result.
\end{proof}

As we may write~$H(\Q_S)=\prod_{v\in S} H(\Q_v)$, and choose below for~$f$ the characteristic function of a fundamental domain (\cf~\cite{BH-C}) of~$H(\Z_S)$ in~$H(\Q_S)$ Theorem~\ref{T2} is a particular case of the following.
\begin{theorem}\label{T3}
 Assume that~$G(\Z_S)$ is a lattice in~$G(\Q_S)$. 

 For each~$v$ in~$S$, let~$G_{\Q_v}$ denote the group on~$\Q_v$ obtained by base change from~$\Q$ to~$\Q_v$. We consider, for each~$v$ in~$S$, 
 a reductive closed algebraic subgroup~$H_v$ of~$G_{\Q_v}$, and we write~$H=\prod_{v\in S} H_v(\Q_v)$.
% and an open subset~$\Omega_v$ in~$H_v(\Q_v)$ with finite and positive Haar measure. 
 Let~$\mu_H$ be some Haar measure on~$H$, let~$f$ be a positive bounded (measurable) $\mu_H$-summable real function on~$H$ and let us denote by~$\mu_f$ the direct image of $f\cdot\mu_H$ by~$$H\to G(\Z_S)\sous G(\Q_S).$$ For any~$g$ in~$G(\Q_S)$ we write~$\mu_{f\cdot g}$ for the direct image of the measure~$\mu$ by the right action of~$g$ on~$G(\Z_S)\sous G(\Q_S)$.

 Let~$Y_S$ be a subset of~$G(\Q_S)$ satisfying Theorem~\ref{SBombay}.

 Then the family~$\left(\mu_{y}\right)_{y\in Y_S}$ is narrow.
\end{theorem}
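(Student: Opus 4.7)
The plan is to deduce narrowness from an $S$--arithmetic quantitative non--divergence criterion, in the spirit of \cite{EMSGAFA}, using Kleinbock--Tomanov's Theorem~9.3 of \cite{KT} in place of the Dani--Margulis non--divergence theorem, and using Theorem~\ref{SBombay} as the geometric input replacing the anisotropy hypothesis of \cite{EMSGAFA}. First I would set up an $S$--adic Mahler criterion for the quotient $G(\Z_S)\backslash G(\Q_S)$: because $G(\Z_S)$ is a lattice, compact subsets can be described as those points which stay a definite distance away from the ``cusps'', and the cusps are detected by the smallness of vectors in a finite family of rational representations $\rho_i:G\to \GL(V_i)$ applied to a suitable finite set of rational vectors~$v_i$ (one uses Galois cohomology and reduction theory to produce such a family, as in \cite[\S8]{KT}). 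Accordingly, showing narrowness of $(\mu_y)_{y\in Y_S}$ amounts to proving that, for every $\varepsilon>0$, one can find $\delta>0$ such that for every $y\in Y_S$,
\begin{equation*}
\mu_H\!\left(\left\{\omega\in H~\middle|~\exists i,\ \exists \gamma\in G(\Z_S),\ \Nm{\rho_i(y\omega\gamma)\cdot v_i}_S<\delta\right\}\right)<\varepsilon/\Nm{f}_\infty.
\end{equation*}

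I would then apply Kleinbock--Tomanov's Theorem~9.3 to this measure estimate. For this, two ingredients are needed. On the one hand, the auxiliary functions $\omega\mapsto\Nm{\rho_i(y\omega\gamma) v_i}_S$ must be $(C,\alpha)$--good on suitable balls of $H$, uniformly over $y\in Y_S$ and over $\gamma\in G(\Z_S)$. This uniformity is provided by Section~\ref{good}: the constants $(C,\alpha)$ depend only on the representation $\rho_i$ and on the analytic structure of $H$, not on the particular $y$ or $\gamma$, because conjugation and scalar multiplication preserve $(C,\alpha)$--goodness of norms of polynomial (or locally analytic) maps. On the other hand, one needs a uniform lower bound preventing the functions $\omega\mapsto\Nm{\rho_i(y\omega\gamma)\cdot v_i}_S$ from being too small on the balls considered: this is where Theorem~\ref{SBombay} enters. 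For the set $Y_S$ produced there, one has an inequality of the form
\begin{equation*}
\sup_{\omega\in\Omega}\Nm{\rho_i(y\omega)\cdot w}_S\geq\Nm{w}_S/c_i
\end{equation*}
for some bounded neighbourhood $\Omega$ in $H$, uniformly in $y\in Y_S$ and in $w\in V_i\otimes\Q_S$. Applied to $w=\rho_i(\gamma)\cdot v_i$, this guarantees that along any ball centred at an element of $H$, the relevant function cannot have supremum much smaller than $\Nm{\rho_i(\gamma)\cdot v_i}_S/c_i$, while the discreteness of $G(\Z_S)\cdot v_i$ in $V_i\otimes\Q_S$ (Proposition~\ref{SC}, plus the lattice property of $\Z_S$--points in a $\Q$--representation) gives a uniform lower bound for this last quantity when $\gamma$ is chosen to minimise it. Combining these two inputs, Theorem~9.3 of \cite{KT} yields the required upper bound on the measure of the sublevel set, with $\delta$ depending only on $\varepsilon$ and on the fixed data $(G,H_v,\rho_i,v_i,\Omega,f)$, and not on $y\in Y_S$.

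A last step is to reduce from full balls in $H$ to the (possibly infinite--volume) support of $f\cdot\mu_H$: since $f$ is bounded and summable, one covers $\mathrm{supp}(f)$ up to $\mu_H$--negligible part by countably many balls $B_n$ on which the $(C,\alpha)$--good and analytic--stability hypotheses hold, applies the previous bound to each $B_n$, and sums weighted by $\Nm{f\cdot 1_{B_n}}_\infty$; the total measure contribution from $\{\omega~|~\mu_H(B_n)\cdot f(\omega)\leq\tau\}$ is made arbitrarily small by choosing $\tau$ accordingly. I expect the main obstacle to be the verification of the uniform $(C,\alpha)$--good property along the translates $y\omega\gamma$ when $y$ varies in $Y_S$ and $\gamma$ in $G(\Z_S)$: one must check carefully that the exponents and constants can be taken independently of $(y,\gamma)$, which requires showing that the representations composed with left translation by a fixed element of $G(\Q_S)$ remain in the same class of $(C,\alpha)$--good maps; this is precisely the content prepared in Section~\ref{good} and built upon the analyticity of $H$ and polynomiality of $\rho_i$ on algebraic local charts.
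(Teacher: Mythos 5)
Your proposal follows essentially the same route as the paper: express narrowness through the $S$-adic Mahler criterion, feed the quantitative non-divergence machinery of Kleinbock--Tomanov (Theorem~9.3 of \cite{KT}) with the $(C,\alpha)$-good property of the local parametrisations (Section~\ref{good}), and use Theorem~\ref{SBombay} (``analytic stability'' of $Y_S$) to supply the uniform lower bound needed for condition (ii) of that theorem. The paper likewise uses the exterior-algebra representation and the discreteness of rational orbits of the lattice so that $\cov(\Delta g)=\Nm{x\rho(g)}_S$, and it observes (in Proposition~\ref{param}) exactly the $g$-uniformity of the $(C,\alpha)$-good constants you flag as the main obstacle: translation by $g\in G(\Q_S)$ is absorbed into the vector $x$, so the constants only depend on the chart and the representation.

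The one place where you depart, and where your sketch is looser than the paper's, is the reduction from a general bounded summable $f$ to the basic case. You cover $\mathrm{supp}(f)$ by countably many balls and propose to sum the KT bounds weighted by $\Nm{f\cdot 1_{B_n}}_\infty$. As written, that sum need not converge --- $\mu_H$-summability controls $\sum \mu_H(B_n)\inf_{B_n} f$, not $\sum \mu_H(B_n)\sup_{B_n} f$, and KT~9.3 controls the $\mu_H$-proportion of the bad set in each ball, not the $f\,\mu_H$-proportion. The paper sidesteps this by first reducing Theorem~\ref{T3} to Corollary~\ref{C3} (existence of a \emph{single} suitable $f$) via the algebra of narrow families (domination, barycentres, disjunction, uniform limits) together with stability of the class~$E$ under right translation by $H(\Q_S)$ (which uses Proposition~\ref{SC} to absorb a compact set into $Y_S$), and only then proves Corollary~\ref{C3} for an explicit compactly supported density coming from the rational chart (Corollary~\ref{densite}). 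Your last step is salvageable if you, say, take a partition into balls and use that $f$ is bounded so $\Nm{f\cdot 1_{B_n}}_\infty\le\Nm{f}_\infty$ while the KT bound gives $C'\epsilon^\alpha\mu_H(B_n)$ per ball --- but you then need to show the bad set inside $B_n$ carries little $f\,\mu_H$-mass, and without the comparison to a fixed density this requires an extra argument. The paper's formal reduction avoids the issue cleanly; I would recommend adopting it.
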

Actually Theorem \ref{T3} will follow from the following \latin{a priori} weaker corollary.
\begin{corollary}\label{C3}
 There exits at least one positive essentially non zero bounded measurable $\mu_H$-summable function~$f$ on~$H$ such that the conclusion of Theorem~\ref{T3} holds for every subset~$Y_S$ as in Theorem~\ref{SBombay}.
\end{corollary}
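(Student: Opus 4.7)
The plan is to specialize the function $f$ to something whose support is a small analytic chart around the identity of $H$, for which the Kleinbock--Tomanov linearisation machinery (Theorem~9.3 of~\cite{KT}) applies directly, and then to import the geometric stability property of $Y_S$ from Theorem~\ref{SBombay} as the ``non-escape to infinity'' input required by that machinery.

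More precisely, for each $v\in S$ I would take a sufficiently small bounded open neighbourhood $\Omega_v\subseteq H_v(\Q_v)$ of the identity, analytically parametrised through the exponential map (or its ultrametric analogue in sections~\ref{preliminaries}--\ref{good}) by a ball in the Lie algebra, and set $\Omega=\prod_{v\in S}\Omega_v$, $f=\mathbf{1}_\Omega$. This clearly gives a positive, bounded, essentially non-zero, $\mu_H$-summable function. Mahler's criterion (in its $S$-arithmetic incarnation used by Kleinbock--Tomanov) reduces the narrowness of $(\mu_{f\cdot y})_{y\in Y_S}$ to showing that, for each $\Q$-algebraic subgroup $L$ of $G$ for which $\Gamma\cdot p_L$ is discrete in $V=\bigwedge^{\dim L}\operatorname{Lie}(G)$, the translated orbits $y\Omega\cdot p_L$ cannot come arbitrarily close to $0$ uniformly in $y\in Y_S$. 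This is precisely the kind of conclusion to which Theorem~9.3 of~\cite{KT} applies.

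The two ingredients needed to feed Theorem~9.3 are: (i) the $(C,\alpha)$-good property for the maps $t\mapsto \rho(\Theta(t))\cdot p_L$, where $\Theta$ is our analytic chart and $\rho$ is the relevant representation; and (ii) a uniform lower bound of the form
\begin{equation*}
\sup_{\omega\in \Omega}\,\Nm{\rho(y\omega)\cdot v}\;\geq\; c\cdot \Nm{v},\qquad y\in Y_S,\; v\in V,
\end{equation*}
for some $c>0$. Point~(i) follows from the analyticity of $\rho\circ\Theta$ via the non-degeneracy lemmas of section~\ref{good} (these adapt the good function lemmas of~\cite{KT} since we can check that the orbit coordinates of $\rho\circ\Theta$ span a non-trivial subspace at each place). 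Point~(ii) is \emph{exactly} the conclusion of Theorem~\ref{SBombay} applied to the representation $\rho$ on $V$ and to our product $\Omega$, which is precisely how $Y_S$ is defined.

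The main technical obstacle is reconciling Kleinbock--Tomanov's original framework, which was written for $\Q$-rational subgroups $H$, with our setting in which each $H_v$ is only assumed reductive over $\Q_v$. This is dealt with place by place: at each $v$ one applies~\cite[Th.~9.3]{KT} with the representation of $G(\Q_v)$ on $V\otimes\Q_v$, using that the relevant linearisation data (the rational points $p_L$ and their $\Gamma$-orbits) live on the global, rational side and hence remain discrete in $V$, independently of the non-rationality of the $H_v$. Once this is in place, combining the per-place good-function estimates with the global stability bound supplied by Theorem~\ref{SBombay} and pasting together via the maximum norm $\abs{-}_S$ produces a compact $K_\varepsilon\subseteq G(\Z_S)\backslash G(\Q_S)$ satisfying $\mu_{f\cdot y}(G(\Z_S)\backslash G(\Q_S)\smallsetminus K_\varepsilon)<\varepsilon$ uniformly for $y\in Y_S$, which is the required narrowness. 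Finally, Theorem~\ref{T3} follows from Corollary~\ref{C3} by covering an arbitrary bounded $\mu_H$-summable $f$ by finitely many right translates of the specific $\Omega$ above, using the $G(\Q_S)$-equivariance to transport the narrowness bound.
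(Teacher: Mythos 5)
Your strategy matches the paper's: apply Kleinbock--Tomanov's quantitative non-divergence (Theorem~9.3 of~\cite{KT}) to a good parametrisation~$\Theta$ of a small neighbourhood of the identity in~$H$, feed in the stability lower bound from Theorem~\ref{SBombay} as the hypothesis~(ii) of that theorem, and conclude by Mahler's criterion. That is exactly what the paper does, so the approach is not genuinely different. Two imprecisions are worth flagging, however.

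First, you frame Mahler's criterion in terms of orbits $\Gamma\cdot p_L$ in $V=\bigwedge^{\dim L}\operatorname{Lie}(G)$. That is the Dani--Margulis linearisation used for measure classification (in the companion paper of Richard--Zamojski), not the linearisation used here. The paper runs the Kleinbock--Margulis/Kleinbock--Tomanov version: the representation is the full exterior algebra~$\rho\colon GL(m)\to GL(2^m)$ on~$\Q_S^m$, and one controls covolumes of discrete $\Z_S$-submodules~$\Delta$ of~$\Q_S^m$, i.e.\ functions $t\mapsto\cov(\Delta\,\Theta(t)\,y)$, rather than translates of some $p_L$. Both are valid linearisations, but Theorem~9.3 of~\cite{KT} is stated in the lattice-covolume language, so mixing in~$p_L$ and~$\operatorname{Lie}(G)$ is a conceptual slip that would need correcting before the conditions of that theorem can be verified.

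Second, there is a gap in your final normalisation: you set $f=\mathbf{1}_\Omega$ and treat the non-divergence estimate as if it were directly an estimate for $\mathbf{1}_\Omega\cdot\mu_H$, but Theorem~9.3 of~\cite{KT} controls the measure $\Theta_*(\lambda\restriction_B)$ (the push-forward of Lebesgue/Haar on the parameter ball), not the restricted Haar measure on~$H$. To conclude that $(\mathbf{1}_\Omega\mu_H\cdot y)_{y\in Y_S}$ is narrow you would need $\mathbf{1}_\Omega\mu_H\leq C\,\Theta_*(\lambda\restriction_B)$, i.e.\ the Radon--Nikodym density of $\Theta_*\lambda$ with respect to $\mu_H$ bounded away from zero on~$\Omega$; this is plausible if~$\Theta$ is chosen non-singular on a compact ball, but you do not verify it. The paper avoids the issue by not insisting on $f=\mathbf{1}_\Omega$: Corollary~\ref{densite} gives only that $\Theta_*\lambda$ \emph{dominates} some measure with essentially nonzero bounded density~$h$, and the paper simply takes $f=h$ together with the domination property~\eqref{dom}. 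That is precisely why Corollary~\ref{C3} is phrased as ``there exists at least one'' $f$ rather than asserting that an indicator of a chart works. Your version can be repaired (choose~$\Theta$ so its Jacobian is bounded below on a compact~$B$), but as written the passage from parameter measure to~$\mathbf{1}_\Omega\mu_H$ is missing.
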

Let us see why Theorem \ref{T3} follows from Corrolary \ref{C3}.
\begin{proof}
 We will say that a function (resp. measure)~$f$ \emph{dominates} a function (resp. measure)~$g$ if there is a constant~$C$ such that~$f\leq g$. For measures, this inequality is an inequality of functions on the Borel algebras. We will say that a family of measures~$(\mu_i)_{i\in I}$ is \emph{dominated} by an other family~$(\nu_i)_{i\in I}$ on the same index set, if there is a constant~$C$ such that uniformly relative to~$i$ in~$I$, such that~$\mu_i\leq C\nu_i$.

 \emph{Barycenter} will means barycentre \emph{with positive coefficients}. One readily sees on formula~(\ref{narrow}) that
\begin{equation}\label{dom}
 \text{any family dominated by a narrow family is narrow,}
\end{equation}
\begin{equation}\label{bary}
 \text{the family of barycentres of the elements of a narrow family is narrow,}
\end{equation}
\begin{equation}\label{joint}
 \text{the disjunction of finitely many narrow families is a narrow family,}
\end{equation}
\begin{equation}\label{sous}
 \text{a subfamily extracted from a narrow family is narrow,}
\end{equation}
\begin{equation}\label{clos}
 \text{the family of limits elements of a narrow family is narrow.}
\end{equation}
Note that in the last property, we can equivalently use narrow or vague convergence. Indeed these are the same on narrow families, hence, by (\ref{sous}), on any converging subfamily. We will say a sequence~$((\mu_{(i,n)})_{i\in I})_{n\in\N}$ of families of measures uniformly converges to a family~$(\mu'_i)_{i\in I})$ if the mass~$\abs{\mu_{(i,n)}(1)-\mu'_i(1)}$ of~$\abs{\mu_{(i,n)}-\mu'_i}$ converges to~$0$, unifomrly with repect to~$i$.One shows that
\begin{equation}\label{suite}
 \text{If each family~$((\mu_{(i,n)})_{i\in I})$ is narrow, then the familly~$(\mu'_{i})_{i\in I}$ is narrow.}
\end{equation}
Indeed, for each positive invertible $\epsilon$, equation~(\ref{narrow}) is satisfied for~$(\mu'_{i})_{i\in I}$ if~$n$ is such that~(\ref{narrow}) it is satisfied~$((\mu_{(i,n)})_{i\in I})$ with~$\epsilon/2$  and~$\abs{\mu_{(i,n)}(1)-\mu'_i(1)}<\epsilon/2$ for every~$i$ in~$I$.

Let~$E$ denote the set of positive bounded measurable $\mu_H$-summable functions~$f$ on~$H$ such that the conclusion of Theorem~\ref{T3} holds for every subset~$Y_S$ as in Theorem~\ref{SBombay}.

Applying property~(\ref{dom}) we see that a bounded measurable $\mu_H$-summable positive function belongs to~$E$ as soon as the characteristic function of its support belongs to~$E$. Using properties~(\ref{joint}) and~(\ref{bary}) and (\ref{sous}), we see that~$E$ is invariant under barycentre, hence, by property (\ref{dom}), under positive linear combination. As every bounded measurable $\mu_H$-summable positive function is a monotone upper limit of functions with compact support,  it is enough, by~(\ref{suite}), to prove~$E$ contain the characteristic functions of compact subsets.

We now claim that~$E$ is stable under the action of~$H(\Q_S)$ by right translation. By Proposition~\ref{SC}, for any~$h$ in~$H(\Q_S)$ we can replace~$Y_S$ by~$\{h\}\cdot Y_S$. But Corollary~\ref{C3} for the translated function~$f\cdot h$ and the subset~$Y_S$ is equivalent to is the Corollary~\ref{C3} for the original function~$f\cdot h$ and the translated subset~$Y_S$, whence the claim.

Consequently, using Borel-Lebesgue criterion, invariance by linear combinations  and~(\ref{dom}),~$E$ will contain every characteristic function of a compact subset as soon as it contains the characteristic function of an open subset.

Let~$f$ be a function such that in Corollary~\ref{C3}. Then there exist a point~$h$ in~$H(\Q_s)$ such that~$f$ is essentially zero on no neighbourhood of~$h$. Possibly translating~$f$, we may assume that~$h$ is the neutral element. Let~$C'$ be a symmetric bounded (measurable) neighbourhood of~$h$; then $(f\mu_H)(C')>0$. Set~$C=C'\cdot C'$, and let~$f_C$ denote the convolution of~$f_C$ with the characteristic function of~$C$. Then one easily shows that, for any~$c$ in~$C'$, $f_{C}(c)\geq (f\mu_H)(C')>\epsilon$. Hence~$f_C$ dominate the characteristic function of~$C'$, and \latin{a fortiori} the characteristic function of the nonempty interior of~$C'$.

We will be done showing that~$f_C$ belongs to~$E$. Let~$\mu_C$ be the restriction of~$\mu_H$ to~$C$. Then~$f_C$ is proportional to the convolution of~$f$ with the probability~$\frac{\mu_C}{\mu(C)}$, as~$\mu(C)$ is finite and nonzero. But the latter convolution is a limit of barycentres of translates of~$f$, hence by (\ref{bary}) and (\ref{clos}), belong to~$E$. Consequently, by~(\ref{dom}),~$f_C$ belongs to~$E$, and we are done.
\end{proof}

 Note that Theorem~\ref{T3} is immediate if~$G(\Z_S)\sous G(\Q_S)$ is compact. From Borel and Harish-Chandra, this is the case if and only if~$G$ is anisotropic over~$\Q$. Note also that when~$S$ does not contain the archimedean place, then~$G(\Z_S)$ defines a lattice in~$G(\Q_S)$ if and only if~$G$ is anisotropic over~$\R$. In such a case~$G$ is also anisotropic over~$\Q$.

 From now on we assume that~$G$ is isotropic over~$\Q$. Hence~$S$ contains the archimedean place.
 We now turn to the statement of Theorem~\ref{SBombay} and the proof of Corollary~\ref{C3}.

\section{A $S$-adic combination of archimedean and $p$-adic stability}
%{Combination of \cite{Lemma} and \cite{Lemmap}}

We state a direct consequence of \cite{Lemma} and \cite{Lemmap}. Compare with Theorem \ref{T3} for some of the notations.
\begin{theorem}\label{theo}\label{SBombay}
Consider, for each~$v$ in~$S$, a closed reductive subgroup~$H_v$ of $G_{\Q_v}$ and let $H=\prod_{v\in S} H_v(\Q_v)$.
Let~$Z$ be the centraliser of~$H$ in~$G$. Then there exists a subset~$Y_S$ of~$G(\Q_S)$, which is closed for the metric topology, and such that 
 \begin{enumerate}
  \item on the one hand we have $G(\Q_S)=Y_S\cdot Z(\Q_S)$,\label{condition1}
  \item on the other hand, given\label{condition2}
  \begin{itemize}
   \item for each~$v$ in~$S$, a finite dimensional $\Q_v$-linear representation~$\rho_v:G_v\to GL(V_v)$,
   \item for each~$v$ in~$S$, a subset~$\Omega_v$ of~$H$ such that every matrix coefficient of some of the~$\rho_v$ that cancels on~$\Omega_v$ actually cancels on~$H_v$,
   \item for each~$v$ in~$S$, a $\Q_v$-homogeneous norm~$\Nm{-}_v$ on~$V_v$, 
  \end{itemize}
  there exists a constant~$c>0$ such that, 
  \begin{equation}\label{formuleS}
  \forall y\in Y, \forall (x_v)_{v\in S}\in V, \prod_{v\in S}\sup_{\omega_v\in\Omega_v}\Nm{\rho(y\cdot\omega_v)(v)}\geq\left.\left(\prod_{v\in S}\Nm{x_v}\right)\right/c.\end{equation}
 \end{enumerate}
\end{theorem}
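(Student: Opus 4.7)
The plan is to derive the theorem as a direct $S$-arithmetic combination of the place-by-place analogues already proven in \cite{Lemma} (archimedean case) and in the companion paper of Part III of the present volume (ultrametric case, cf. Theorem~\ref{Theo11}). For each place $v\in S$ I would first invoke the corresponding one-place theorem applied to the semisimple group $G_{\Q_v}$ and the reductive subgroup $H_v$: this yields a closed subset $Y_v\subseteq G(\Q_v)$ satisfying
\begin{enumerate}
\item the decomposition $G(\Q_v)=Y_v\cdot Z_v(\Q_v)$, where $Z_v$ denotes the centraliser of $H_v$ in $G_{\Q_v}$;
\item for every $\Q_v$-linear representation $\rho_v$, every subset $\Omega_v\subseteq H_v(\Q_v)$ on which no nonzero matrix coefficient of $\mathrm{Ad}_{\rho_v}$ vanishes, and every $\Q_v$-homogeneous norm $\|\cdot\|_v$, the existence of a constant $c_v>0$ such that
\[
\forall y\in Y_v,\ \forall x\in V_v,\quad \sup_{\omega\in\Omega_v}\|\rho_v(y\cdot\omega)(x)\|_v\geq\|x\|_v/c_v.
\]
\end{enumerate}

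Next I would set $Y_S:=\prod_{v\in S}Y_v\subseteq\prod_{v\in S}G(\Q_v)=G(\Q_S)$. Since each $Y_v$ is closed in $G(\Q_v)$ and $S$ is finite, $Y_S$ is closed in $G(\Q_S)$ for the product (i.e. metric) topology. The conclusion~\eqref{condition1} follows from the identification $Z(\Q_S)=\prod_{v\in S}Z_v(\Q_v)$ (which holds because $Z$ is the centraliser of a product subgroup, hence decomposes place by place) together with the place-wise decompositions from (1) above:
\[
G(\Q_S)=\prod_{v\in S}G(\Q_v)=\prod_{v\in S}Y_v\cdot Z_v(\Q_v)=\left(\prod_{v\in S}Y_v\right)\cdot\left(\prod_{v\in S}Z_v(\Q_v)\right)=Y_S\cdot Z(\Q_S).
\]

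To establish~\eqref{condition2}, I would feed the place-wise inequalities into a product. Given the data $(\rho_v,\Omega_v,\|\cdot\|_v)_{v\in S}$, the hypothesis that every matrix coefficient of $\mathrm{Ad}_{\rho_v}$ which vanishes on $\Omega_v$ vanishes on $H_v$ is exactly what is needed to apply the one-place statement to each factor, producing constants $c_v$. Then for any $y=(y_v)_{v\in S}\in Y_S$ and any $(x_v)_{v\in S}\in V=\prod_v V_v$, multiplying the inequalities
\[
\sup_{\omega_v\in\Omega_v}\|\rho_v(y_v\cdot\omega_v)(x_v)\|_v\geq \|x_v\|_v/c_v
\]
over $v\in S$ yields~\eqref{formuleS} with $c=\prod_{v\in S}c_v$. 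A small point of care: I should verify that the suprema commute with the product, which is clear since the variables $\omega_v$ are independent across places and each factor is non-negative.

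The main obstacle, as I see it, is not in the algebraic combination but in checking that the single-place inputs at ultrametric places apply with the hypotheses as stated: specifically, one must check that the conditions $(\ast)$, $(\ast\ast)$, $(\ast\ast')$, $(\ast\ast\ast)$ of Theorem~\ref{Theo11} are implied by (or can be replaced by) the hypotheses of our Theorem~\ref{SBombay}. Since $H_v$ is assumed closed and reductive in $G_{\Q_v}$, the strong reductivity condition $(\ast\ast\ast)$ holds, and in characteristic zero all representations we use are semisimple, so $(\ast\ast)$ and $(\ast\ast')$ are automatic. The condition $(\ast)$ on $\Omega_v$ is the one built into our hypothesis. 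For the archimedean place the corresponding verifications are already contained in \cite{Lemma}. Once these compatibilities are recorded, the combination described above is formal.
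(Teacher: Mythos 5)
Your proposal is correct and matches the paper's own argument essentially verbatim: the paper likewise invokes the one-place archimedean result of \cite{Lemma} and the one-place ultrametric result of \cite{Lemmap} (which is the content of Theorem~\ref{Theo11} in Part~III of this volume), sets $Y_S=\prod_{v\in S}Y_v$, and multiplies the place-wise inequalities to obtain $c=\prod_{v\in S}c_v$. The extra paragraph you add checking that the hypotheses $(\ast)$--$(\ast\ast\ast)$ of Theorem~\ref{Theo11} hold under the present assumptions is a sound and welcome clarification, but not a departure from the paper's route.
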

\begin{proof} For any~$v$ in~$S$ we get a subset~$Y_v$ of~$G(\Q_S)$ by applying
\begin{itemize}
\item Theorem~$1$ of \cite{Lemma} (relative to~$H_v$) if~$v$ is archimedean;
\item Theorem~$1.1$ of \cite{Lemmap} (relative to~$H_v$) if~$v$ is ultrametric;
\end{itemize}
and let~$Y$ be the subset of~$G(\Q_S)$ made of the product of the~$Y_v$.

 By construction, we have~$G(\Q_v)=Z(\Q_v)\cdot Y_v$ for any~$v$ in~$S$, whence $G(\Q_S)=Z(\Q_S)\cdot Y_S$.
On the other hand, for every~$v$ in~$S$, we have, for some invertible positive constant~$c_v$,
\begin{equation}\label{formulev}
\forall y\in Y_v, \forall x_v\in V_v, \sup_{\omega\in\Omega_v}\Nm{\rho_v(y_v\cdot\omega_v)(x_v)}_v\geq\Nm{x_v}_v/c_v.
\end{equation}
Taking the product over~$v$ in~$S$, this yields formula (\ref{formuleS}) with~$c=\prod_{v\in S} c_v$.
\end{proof}

\begin{prop}\label{SC}
 If~$Y_S$ is a subset of $G(\Q_S)$ satisfying Theorem~\ref{SBombay}, then, for any nonempty compact subset~$C$ of~$G(\Q_S)$, we may replace, in Theorem~\ref{SBombay},~$Y_S$ by the subset~$C\cdot Y_S$.
\end{prop}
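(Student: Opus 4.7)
The plan is to verify both conclusions of Theorem~\ref{SBombay} for the enlarged set $C \cdot Y_S$, by reducing everything to the corresponding properties of $Y_S$ combined with uniform bounds coming from compactness of $C$.

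For condition~\ref{condition1}, the inclusion $C \cdot Y_S \cdot Z(\Q_S) \subseteq G(\Q_S)$ is automatic; the reverse inclusion follows at once from the existing factorisation $G(\Q_S) = Y_S \cdot Z(\Q_S)$. Indeed, fixing once for all any point $c_0 \in C$ (which exists since $C$ is nonempty), for $g \in G(\Q_S)$ we apply the hypothesis to $c_0^{-1} g$ to write it as $y \cdot z$ with $y \in Y_S$ and $z \in Z(\Q_S)$, whence $g = c_0 \cdot y \cdot z \in C \cdot Y_S \cdot Z(\Q_S)$.

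For condition~\ref{condition2}, the key idea is that left multiplication by an element of a compact set distorts norms only by a bounded factor. More precisely, for each place $v \in S$, continuity of $\rho_v$ together with compactness of $C$ ensures that the operator norm $\NM{\rho_v(c_v)^{-1}}$ is bounded uniformly in $c \in C$ by some finite constant $M_v$. Writing an element $y' \in C \cdot Y_S$ as $y' = c \cdot y$ with $c \in C$ and $y \in Y_S$, at each place we have
\[
\Nm{\rho_v(y' \cdot \omega_v)(x_v)}_v = \Nm{\rho_v(c_v)\,\rho_v(y \cdot \omega_v)(x_v)}_v \geq \frac{1}{M_v}\,\Nm{\rho_v(y \cdot \omega_v)(x_v)}_v,
\]
and then, taking suprema over $\omega_v \in \Omega_v$, taking the product over $v \in S$, and applying the inequality~\eqref{formuleS} already available for $y \in Y_S$ with constant $c$, we obtain~\eqref{formuleS} for $y' \in C \cdot Y_S$ with the new constant $c' = c \cdot \prod_{v \in S} M_v$, independent of $c$, $y$, and the vectors $x_v$.

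No substantial obstacle is expected: the first part is purely formal, and the second part is a standard compactness-continuity argument. The only point worth mentioning is that we rely on $S$ being finite to guarantee that the product $\prod_{v \in S} M_v$ remains finite.
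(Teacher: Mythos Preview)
Your argument is essentially the same as the paper's, and your treatment of conditions~\ref{condition1} and~\ref{condition2} is correct. One point you omitted: Theorem~\ref{SBombay} also asserts that $Y_S$ is \emph{closed} in $G(\Q_S)$, and the paper's proof begins by noting that $C\cdot Y_S$ is closed (product of a compact set with a closed set in a topological group). This is a routine topological fact, but since ``replacing $Y_S$ by $C\cdot Y_S$'' means verifying all properties in the theorem, you should include it.
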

\begin{proof}
 We first note that~$Y_S$ being closed and~$C$ being compact, $C\cdot Y_S$ is closed in~$G(\Q_S)$. then that $C Y_S Z(\Q_S)=C G(\Q_S)=G(\Q_S)$, and finally that for all $(a,y)$ in $C\times Y_S$, for all $(x_v)_{v\in S}$ in $V$,
\begin{equation}
\left.\left(\prod_{v\in S}\Nm{x_v}\right)\right/ c 
\leq \prod_{v\in S}\sup_{\omega_v\in\Omega_v}\Nm{\rho(y\cdot\omega_v)(v)}
\leq \prod_{v\in S}\sup_{\omega_v\in\Omega_v}\NM{a^{-1}}_v\Nm{\rho(ay\cdot\omega_v)(v)}
\end{equation}
where~$\NM{a^{-1}}_v$ is the opertor norm of~$a$ acting on~$V_v$. As~$C$ is compact,~$\NM{a^{-1}}_v$ is bounded on~$C$, for any~$v$ in~$S$, by some~$\Lambda_v$. Putting~$\Lambda=\prod_{v\in S}\Lambda_v$, we get
$$\left.\left(\prod_{v\in S}\Nm{x_v}\right)\right/(c\Lambda) 
\leq \prod_{v\in S}\sup_{\omega_v\in\Omega_v}\Nm{\rho(ay\cdot\omega_v)(v)},$$
whence theorem \ref{SBombay} for $C\cdot Y_S$ with constant~$c\Lambda$ (depending on~$\rho_v$ and~$\Omega_v$).
\end{proof}

\section{Preliminary Lemmas}\label{preliminaries}
We first prove some lemmas that will allow us to adapt the proof of Theorem 9.3 in~\cite{KT} to our needs.
\begin{lemma}\label{ratio}
 Let~$p$ be a prime number, let~$n$ be a natural integer and let~$\Phi$ be a finite dimensional linear subspace of~$\Q_p(T_1,\ldots,T_n)$. Assume that every~$\phi$ is defined at the origin~$0$ of~${\Q_p}^n$. Then there exists 
\begin{itemize}
\item an arbitrarily small compact neighbourhood~$U$ of~$0$ in~$\Q_p^n$
\item and a polynomial~$P$ in~$\Q_p[T_1,\ldots,T_n]$
\end{itemize}
  such that
\begin{enumerate}
\item $P\cdot \Phi$ is included in~$\Q_p[T_1,\ldots,T_n]$\label{lem1}
\item and~$\abs{P(x)}=1$ for any~$x$ in~$U$.\label{lem2}
\end{enumerate}
\end{lemma}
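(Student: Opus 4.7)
The plan is straightforward and uses only the ultrametric property of $|\cdot|_p$ together with the hypothesis that every element of $\Phi$ is regular at $0$.

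First I would pick a basis $\phi_1,\ldots,\phi_k$ of the finite-dimensional space $\Phi$. Since each $\phi_i$ lies in $\Q_p(T_1,\ldots,T_n)$ and is defined at $0$, I can write $\phi_i = f_i/g_i$ with $f_i, g_i \in \Q_p[T_1,\ldots,T_n]$ and $g_i(0) \neq 0$. Setting $P_0 := \prod_{i=1}^k g_i$, one has $P_0(0) = \prod_i g_i(0) \neq 0$ and $P_0 \cdot \phi_i \in \Q_p[T_1,\ldots,T_n]$ for every $i$, so by linearity $P_0 \cdot \Phi \subseteq \Q_p[T_1,\ldots,T_n]$.

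Next I would normalize. Set $P := P_0(0)^{-1} \cdot P_0 \in \Q_p[T_1,\ldots,T_n]$; multiplication by the scalar $P_0(0)^{-1} \in \Q_p^\times$ preserves item~\eqref{lem1}, and now $P(0)=1$.

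For item~\eqref{lem2}, I would use continuity and the ultrametric inequality. As $P$ is a polynomial, hence continuous on $\Q_p^n$, and $P(0)=1$, for every $\varepsilon>0$ there exists a compact neighborhood $U$ of $0$ (e.g. a small closed polydisk) on which $|P(x) - 1|_p < 1$. The ultrametric triangle inequality then yields
\[
|P(x)|_p = |1 + (P(x)-1)|_p = \max\{|1|_p, |P(x)-1|_p\} = 1
\]
for all $x \in U$. Finally, since the property $|P(x)|_p=1$ is preserved by passage to any closed subneighborhood, $U$ can be taken arbitrarily small inside any prescribed neighborhood of $0$, which gives the last part of the statement. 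No substantial obstacle arises: the ultrametric strengthens the naive continuity argument (which in the archimedean case would only give $|P(x)|$ close to $1$) into the exact equality $|P(x)|_p = 1$.
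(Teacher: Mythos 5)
Your proof is correct and takes essentially the same approach as the paper: clear denominators via a basis of $\Phi$, normalise so that $P(0)=1$, then use the ultrametric structure to upgrade continuity near $0$ into exact equality $|P(x)|_p=1$ on a small compact neighbourhood. The only cosmetic difference is that the paper takes the lowest common denominator and argues via minimality that it does not vanish at $0$, whereas you take the product of denominators chosen so that each already satisfies $g_i(0)\neq 0$; both routes are equally valid and equally short.
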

\begin{proof}
 Choose a finite basis of~$\Phi$ and let~$P$ the lowest common denominator of the elements of~$\Phi$. As~$P$ is a common denominator, property~\ref{lem1} is satisfied. By minimality,~$P$ only cancels where some of the element of~$\Phi$ is undefined. In particular~$P(0)$ is nonzero, so we can renormalise~$P$ in such a way that~$P(0)=1$. Then~$\abs{P}$ is a real continuous function on~$\Q_p^n$ which takes value~$1$ at~$0$. But~$A$ is an isolated value of~$\abs{-}$, and \latin{a fortiori} of~$\abs{P}$. Hence~$\abs{P}=1$ holds on a some neighbourhood of~$0$. Let~$U$ be such a neighbourhood, we can assume to be arbitrarily small, and in particular compact. Then~$P$ and~$U$ answer the lemma.
\end{proof}
\begin{prop}\label{pparam}
 Let~$p$ be a prime number, let~$n$ be a natural integer and let~$H$ be a algebraic closed subgroup of~$GL(n)$ over $\Q_p$, for some natural integer~$n$. 

 Then there exist natural integers~$M$ and~$d$ and a continuous open map~$\Theta:{\Z_p}^d\to H(\Q_p)$ sending~$0$ to the neutral element~$e$ of~$H$ and such that for any $\Q_p$-linear form~$\phi$ on~$M_n(\Q_p)$, the real map~$\abs{\Phi\circ\Theta}$ can be written~$\abs{P}$ for some~$P$ in $\Q_p[T_1,\ldots,T_d]$ of degree at most~$M$.
\end{prop}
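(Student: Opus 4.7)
The plan is to construct $\Theta$ as a rescaled restriction of a $\Q_p$-rational algebraic parameterization of $H$ near the neutral element. Since $H$ is a connected smooth linear algebraic group over the characteristic-zero field $\Q_p$, it is unirational: there exists a dominant rational map
$$\sigma\colon \A^d_{\Q_p}\dashrightarrow H$$
for some integer $d\ge \dim H$, regular at $0\in\A^d(\Q_p)$, with $\sigma(0)=e$, and with surjective differential $d\sigma_0\colon\Q_p^d\to T_eH$. (Unirationality of connected linear algebraic groups in characteristic zero is classical; surjectivity of the differential at $0$ can be arranged by composing with a suitable linear automorphism of $\A^d$, after enlarging $d$ if necessary.) Embedding $H\hookrightarrow \GL(n)\hookrightarrow M_n$, let $m_{ij}$ denote the matrix-entry regular functions on $\GL(n)$. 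The pull-backs $m_{ij}\circ\sigma$ are rational functions on $\A^d$, regular at $0$, and they span a finite-dimensional $\Q_p$-subspace $\Phi\subset\Q_p(T_1,\ldots,T_d)$.

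I would then apply Lemma~\ref{ratio} to $\Phi$: there exist a polynomial $P_0\in\Q_p[T_1,\ldots,T_d]$ and an arbitrarily small compact neighborhood $U$ of $0$ in $\Q_p^d$ such that $P_0\cdot\Phi\subset\Q_p[T_1,\ldots,T_d]$ and $|P_0(x)|=1$ for every $x\in U$. Set
$$M:=\max_{i,j}\deg\bigl(P_0\cdot(m_{ij}\circ\sigma)\bigr).$$
For any $\Q_p$-linear form $\phi=\sum c_{ij}m_{ij}$ on $M_n(\Q_p)$, the product
$$P_0\cdot(\phi\circ\sigma)=\sum_{i,j}c_{ij}\,P_0\cdot(m_{ij}\circ\sigma)$$
is a polynomial of degree at most $M$, and on $U$ one has the pointwise identity $|\phi\circ\sigma|=|P_0\cdot(\phi\circ\sigma)|$ since $|P_0|=1$ throughout $U$. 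Shrinking $U$ once more if necessary, I may also ensure that $\sigma$ is regular on $U$ with $\sigma(U)\subset H(\Q_p)$ and that, by continuity of the Jacobian together with the $p$-adic implicit function theorem, $d\sigma_x$ remains surjective for every $x\in U$.

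Finally, pick $\lambda\in\Q_p^\times$ with sufficiently large $p$-adic valuation so that $\lambda\cdot\Z_p^d\subset U$, and define $\Theta\colon\Z_p^d\to H(\Q_p)$ by $\Theta(x):=\sigma(\lambda x)$. Then $\Theta$ is continuous, sends $0$ to $e$, and, being a submersion at every point of $\Z_p^d$, is open; the degree bound $M$ is preserved under the affine rescaling $T_i\mapsto\lambda T_i$, so $(\Theta,M,d)$ satisfies the conclusion. The principal obstacle is producing the genuine algebraic (rational) parameterization $\sigma$ with surjective differential at $0$ defined over $\Q_p$: the $p$-adic inverse function theorem alone only provides an analytic local section of an étale algebraic map, from which one cannot read off the polynomial identity $|\phi\circ\Theta|=|P|$ required by the statement. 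It is precisely unirationality of connected linear algebraic groups in characteristic zero that upgrades this analytic section to a rational parameterization, after which Lemma~\ref{ratio} uniformly clears denominators and the submersion theorem delivers openness.
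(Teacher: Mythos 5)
Your proof is correct and takes essentially the same route as the paper: unirationality of $H$ over $\Q_p$ gives a rational parameterization regular and submersive near $0$, Lemma~\ref{ratio} clears the denominators of the pulled-back matrix coefficients and supplies the uniform degree bound $M$ on a neighborhood $U$ where the denominator has unit absolute value, and an affine rescaling of $\Z_p^d$ into $U$ produces the open map $\Theta$. The only cosmetic difference is that you arrange the surjective differential at $0$ by composing with a linear automorphism after enlarging $d$, whereas the paper simply notes that $u$ is generically submersive and then translates a good point to the origin; both work.
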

\begin{proof}
 Note that~$H$ is a linear algebraic group. Recall that~$H$ is a unirational variety: there exists an integer~$d$ and a dominant birational map~$u:\A^d_{\Q_p}\to H$. Such a map is generically submersive, and as~${\Q_p}^d$ is Zariski dense in~$\A^d_{\Q_p}$, there is point~$x$ in~${\Q_p}^d$ at which~$u$ is defined and submersive. Possibly translating by~$-x$ we may assume that~$x=0$, and possibly left translating by $u(0)^{-1}$, we may assume that~$u(0)=e$.

 As~$u$ is submersive at~$0$, the corresponding map~$u(\Q_p):{\Q_p}^d\to H(\Q_p)$ is open on a neighbourhhod of~$0$. As the map~$u(\Q_p)$ is rational, for any $\Q_p$-linear form~$\phi$ on~$M_d(\Q_p)$, the map~$\phi\circ u$ is a rational function. For varying~$\phi$, these rational functions generate a finite dimensional linear subspace~$\Phi$ of~$\Q_p(T_1,\ldots,T_d)$ made of rational functions which are defined at~$0$. Applying the lemma we find a polynomial~$P$ such that~$P\Phi$ is a finite dimensional linear space of polynomials. Let~$M$ be the maximum degree of these polynomials. The lemma also gives a neighbourhood~$U$ of~$0$, which we choose to be sufficiently so that~$u(\Q_p)$ is open on~$U$, such that~$\abs{P}=1$. Then, writing~$\abs{\phi}=\abs{P\phi}/\abs{P}$, and using the fact that~$\abs{P}=1$ on~$U$, we conclude that for any $\Q_p$-linear form~$\phi$ on~$M_d(\Q_p)$, $\abs{\Phi\circ u(\Q_p)}$ can be written~$\abs{P}$ for some~$P$ in $\Q_p[T_1,\ldots,T_d]$ of degree at most~$M$. Then we can construct~$\Theta$ such as in the proposition by composing by~$u(\Q_p)$ any linear open immersion~${\Z_p}^d\to U$. If~$R$ is the positive radius of a closed ball  contained in~$U$, then $x\mapsto p^{-k}x$ whenever~$k$ is an integer that is greater than~$\log_p(R)$.
\end{proof}

\begin{prop}
 Let~$\Phi$ be a rational open map ${\Z_p}^d \to H(\Q_s)$. Then the direct image of the Haar measure of~${\Z_p}^d$ is absolutely continuous with respect to the Haar measure of~$H(\Q_S)$.
\end{prop}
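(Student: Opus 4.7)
The plan is to reduce to the $p$-adic change-of-variables formula, using the fact that a rational open map has generically invertible differential. Throughout I write $\Q_p$ for the target scalar field (taking the statement as written about $H(\Q_p)$, consistent with Proposition~\ref{pparam}), and I use that the map was obtained from a dominant birational parametrisation, so that $d=\dim H$.

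First I would examine the Jacobian. Since $\Phi$ is rational, in local affine coordinates on the source $\Z_p^d$ and in coordinates on $H$ coming from the embedding $H\subseteq\mathbf{GL}(n)$, the differential $d\Phi$ is represented by a matrix of rational functions. Choose an invariant top differential form $\omega_H$ on $H$ defined over $\Q_p$ (which exists and is unique up to scalar since $H$ is a linear algebraic group, hence its top exterior power of the cotangent bundle is trivial); then $\Phi^*\omega_H = J(\Phi)\cdot dT_1\wedge\cdots\wedge dT_d$ for some rational function $J(\Phi)\in\Q_p(T_1,\ldots,T_d)$. The crucial observation is that $J(\Phi)$ is not identically zero: otherwise $\Phi$ would factor, on a Zariski dense open of $\Z_p^d$, through a proper subvariety of $H$, and its image would be contained in a Zariski closed proper subset of $H$, contradicting the hypothesis that $\Phi$ is an open map to $H(\Q_p)$ for the $p$-adic topology (an open subset of $H(\Q_p)$ is Zariski dense in $H$).

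Next I would exploit that the zero locus $Z\subseteq\Z_p^d$ of $J(\Phi)$ together with the pole locus of $\Phi$ is contained in a proper algebraic subset of $\Z_p^d$, hence has Haar measure zero: any non-zero polynomial in $\Q_p[T_1,\ldots,T_d]$ vanishes on a set of Haar measure zero, by Fubini and induction on $d$ from the one-dimensional case (a non-zero $p$-adic polynomial has only finitely many zeros). On the complement $U=\Z_p^d\smallsetminus Z$, the differential $d\Phi$ is an isomorphism at every point, so the $p$-adic inverse function theorem (see, e.g., Serre, \emph{Lie Algebras and Lie Groups}, Part~II, Ch.~III) provides, around each point of $U$, an open neighbourhood on which $\Phi$ is an analytic diffeomorphism onto an open subset of $H(\Q_p)$. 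By second countability of $\Z_p^d$ I can cover $U$ by countably many such opens $(U_i)_{i\geq 0}$.

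Then I would apply the $p$-adic change-of-variables formula on each $U_i$: if $\lambda$ denotes Haar measure on $\Z_p^d$ and $\mu_H$ the Haar measure on $H(\Q_p)$ associated with $\omega_H$, one has
\[
  \Phi_*(\lambda|_{U_i}) = \bigl(|J(\Phi)|\circ\Phi|_{U_i}^{-1}\bigr)\cdot\mu_H|_{\Phi(U_i)}.
\]
This shows $\Phi_*(\lambda|_{U_i})$ is absolutely continuous with respect to $\mu_H$, with a locally bounded density. Summing over $i$ and using that $\lambda(Z)=0$ to discard the contribution of $\Phi_*(\lambda|_Z)$, which by inner regularity and continuity of $\Phi$ gives a measure supported on a $\mu_H$-negligible set (the image of a negligible set need not be negligible in general, but this contribution is absorbed since we only assert absolute continuity of $\Phi_*\lambda$, and the bad part has total mass zero), I conclude that $\Phi_*\lambda$ is absolutely continuous with respect to $\mu_H$.

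The main subtle point, which I would treat with care, is that $\Phi_*(\lambda|_Z)$ is in fact the zero measure, not merely supported on a $\mu_H$-negligible set: since $\lambda(Z)=0$, the measure $\lambda|_Z$ vanishes and so does its pushforward. Thus $\Phi_*\lambda = \sum_i \Phi_*(\lambda|_{U_i\smallsetminus\bigcup_{j<i}U_j})$, a countable sum of measures each absolutely continuous with respect to $\mu_H$, which is itself absolutely continuous with respect to $\mu_H$. The only technical obstacle is the careful verification of the $p$-adic change-of-variables formula with the correct normalisations and the reconciliation of multiple sheets over a single point of $H(\Q_p)$, but the absolute continuity itself follows once it is checked locally on each sheet.
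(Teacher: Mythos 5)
Your proof is essentially sound and follows the same overall structure as the paper's: discard the singular (critical) locus, which has Haar measure zero, and on the regular locus use local $p$-adic analytic structure to compare the pushforward of Haar measure with the Haar measure on $H(\Q_p)$. The paper does this by noting that $\Phi$ is locally analytically conjugate to a \emph{linear submersion} $\Z_p^d \to \Z_p^{\dim H}$ and that both Haar measures come from volume forms, then verifies the claim directly for the linear map; you do it by pulling back a top invariant form $\omega_H$ and invoking the $p$-adic change-of-variables formula with Jacobian. These are two dressings of the same idea.

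The one genuine divergence worth flagging is your restriction $d=\dim H$. Your Jacobian argument (pulling back $\omega_H$ to a top form on the source, observing $J(\Phi)\not\equiv 0$, then applying the inverse function theorem to get local analytic \emph{diffeomorphisms}) is tied to the equidimensional case and does not directly cover a submersive $\Phi$ with $d>\dim H$, which the proposition as stated allows and which the paper's reduction to a linear map $\Z_p^d\to\Z_p^{\dim H}$ handles uniformly. You justify the restriction by pointing at the birational parametrisation of Proposition~\ref{pparam}, which is reasonable in context, but it narrows the generality of what you actually prove relative to what is claimed. One further small remark: the aside in your penultimate paragraph about $\Phi_*(\lambda|_Z)$ being ``supported on a $\mu_H$-negligible set'' and ``absorbed'' is a red herring — if that measure were nonzero and supported on a $\mu_H$-null set it would \emph{violate} absolute continuity — and your own final paragraph correctly supersedes it with the observation that $\lambda(Z)=0$ forces $\lambda|_Z=0$ outright. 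That correction should simply replace the earlier remark.
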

\begin{proof}
 One needs to show that the inverse image, say~$I$, of a null set is a null set. As~$\Phi$ is open, it is generically submersive, for the Zariski topology. As the singular locus of~$\Phi$ is a strict subvariety, it is contained in the zero set of a nonzero polynomial, hence is a null set. Consequently it will be enough to show that the intersection of~$I$ with the regular locus of~$\Phi$ has measure zero. It will be enough to check this locally. But locally,~$\Phi$ is \emph{analytically conjugated} to a linear map~${\Z_p}^d\to{\Z_p}^{\dim(H)}$ (we mean formally conjugated \latin{via} locally invertible convergent power series on the considered neighbourhood.) We recall that the Haar measure on~${\Z_p}^d$ and~$H(\Q_p)$ comes from a differential volume form. Consequently their image measure in~${\Z_p}^d$, (resp.~$\to{\Z_p}^{\dim(H)}$) will be associated to differential volume forms, hence will have locally bounded above and below density with respect to the Haar measures. Thus we can consider the case of a linear map with respect to the Haar measures~${\Z_p}^d\to{\Z_p}^{\dim(H)}$, in which case the proposition can be check directly.
\end{proof}
Applying Radon-Nykodym theorem, we deduce the following.
\begin{corollary}\label{densite}
 Let~$\Phi$ be a rational open map ${\Z_p}^d \to H(\Q_s)$. Then the direct image of the Haar measure of~${\Z_p}^d$ has an essentially nonzero integrable density with respect to the Haar measure of~$H(\Q_S)$, and it dominates a measure with essentially nonzero bounded density.
\end{corollary}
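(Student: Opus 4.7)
The plan is to combine the preceding Proposition with the Radon-Nikodym theorem and then to truncate the resulting density. First, I would observe that the compactness of~${\Z_p}^d$ implies that its Haar measure is finite, and hence its direct image~$\mu$ under~$\Phi$ is a finite positive Borel measure on~$H(\Q_p)$. By the preceding Proposition, $\mu$ is absolutely continuous with respect to the Haar measure~$\nu$ on~$H(\Q_p)$. Since both~$\mu$ and~$\nu$ are $\sigma$-finite, the Radon-Nikodym theorem provides a nonnegative measurable function~$f$ on~$H(\Q_p)$ such that~$\mu = f \cdot \nu$.

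Next I would check that~$f$ is integrable and essentially nonzero. Integrability is immediate: $\int f \, d\nu = \mu(H(\Q_p)) < \infty$ because~${\Z_p}^d$ has finite Haar mass. For essential nonvanishingness, note that~$\Phi$ is open, so its image is a nonempty open subset of~$H(\Q_p)$; hence~$\mu(\Phi({\Z_p}^d)) > 0$, so~$f$ cannot be $\nu$-almost everywhere zero.

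Finally, for the domination statement, I would truncate~$f$. The key observation is that since~$\int f \, d\nu < \infty$ and~$f$ is essentially nonzero, there exists a constant~$C > 0$ such that the truncation~$f_C := \min(f, C)$ is still essentially nonzero: indeed, writing~$A_C = \{ x \in H(\Q_p) : f(x) > C \}$, monotone convergence gives~$\nu(A_C) \to 0$ as~$C \to \infty$, so~$\int_{A_C^c} f \, d\nu \to \int f \, d\nu > 0$, and in particular~$\nu(\{ f_C > 0 \}) > 0$ for~$C$ large enough. The measure~$f_C \cdot \nu$ then has bounded essentially nonzero density and is dominated by~$\mu$ pointwise on the Borel $\sigma$-algebra since~$f_C \leq f$.

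I do not anticipate any substantial obstacle: the statement is essentially a direct application of the Radon-Nikodym theorem to the absolute continuity already established, combined with an elementary truncation argument. The only mild subtlety is confirming that the truncated density remains essentially nonzero, which is handled by the integrability of~$f$.
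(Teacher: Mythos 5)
Your proof is correct and takes essentially the same route as the paper, which gives only the one-line indication ``applying Radon--Nikodym theorem, we deduce the following''; you simply supply the details (finiteness of the pushforward, Radon--Nikodym density, truncation). One small simplification: the truncation $f_C=\min(f,C)$ has $\{f_C>0\}=\{f>0\}$ for any $C>0$, so its essential nonvanishing is immediate from that of $f$ and the limiting argument over $C$ is unnecessary.
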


\section{Spaces of lattices, Mahler's criterion}\label{lattices}
In order to use~\cite{KT}, we fix here some notations.
We refer to~\cite{KT}, section~8 for the details.
Recall that~$S$ is assume to contain the archimedean place. Consequently,~$\Z_S$ defines a discrete cocompact ring in~$\Q_S$. A \emph{lattice} in~$\Q_S^m$ will be discrete $\Z_S$-submodule $\Lambda$ of rank~$m$. Equivalently~$\Lambda$ is a $\Z_S$-module generated by a basis of ${\Q_S}^m$ over~$\Q_S$. 

We identify the space of lattices in~${\Q_S}^m$ with~$GL(m,\Z_S)\sous GL(m,\Q_S)$ \latin{via} the map~
$$GL(m,\Z_S)g\mapsto {\Z_S}^mg.$$

Let~$\lambda_v$ denote the standard Lebesgue measure on~$\R$ if~$v$ is archimedean, or the additive Haar measure on~$\Q_p$ such that $\lambda_v(\Z_p)=1$ if~$v$ is finite and associated with the prime~$p$. We denote by~$\lambda_S$ the product Haar measure on~$\Q_S$. Then $\Z_S$ has covolume~$1$ in~$\Q_S$ with respect to~$\lambda_S$. We endow ${\Q_S}^m$ with the product Haar measure. Then lattices have finite covolume, and for any lattice written as~${\Z_S}^mg$, this covolume is~$\abs{\det(g)}_S$.

 As $G(\Z_S)$ is a lattice in $G(\Q_S)$, we know, from Borel and Harish-Chandra criterion that~$\det(G)=\{1\}$. Consequently~$G$ is contained in~$SL(m)$, and the map~$GL(\Z_S)g\mapsto {\Z_S}^mg$ identify~$G(\Z_S)\sous G(\Q_S)$ with a space of lattices in~${\Q_S}^m$ of covolume one.

 When~$L$ is a discrete $\Z_S$-submodule of~$\Q_S$, write the subspace~$\Q_SL$ generated by~$L$ as a sum $\Q_SL =\oplus_{v\in S} (\Q_SL)\tens\Q_v$, identify each $(\Q_SL)\tens\Q_v$ with the corresponding subspace of~${\Q_v}^m$, and endow $\Q_SL$ with the Haar measure which is the product measure of, for each factor~$(\Q_SL)\tens\Q_v$, the Haar measure induced
\begin{itemize}                                                                                                                                                                                                                                                                                                                                                                \item by the induced euclidean structure from $\R^m$ on $(\Q_SL)\tens\R$ if $v$ is archimedean;                                                                                                                                                                                                                                                                                                                                               
\item the Haar measure on~$(\Q_SL)\tens\Q_v$ giving volume one to~$((\Q_SL)\tens\Q_v)\cap \Z_p^m$ if $v$ is finite and associated with a prime~$p$.
\end{itemize}
We then define the covolume~$\cov(L)$ of~$L$ as the covolume of~$L$ inside~$\Q_S L$ with respect to the above Haar measure.

As a particular case, if~$L$ has rank one and generator~$x$, then~$\cov(L)=\Nm{x}_S$ where we define~$\Nm{(x_v)_{v\in S}}_S=\prod_{v\in S}\Nm{x_v}_v$ where~$\Nm{-}_v$ denote
\begin{itemize}                                                                                                                                                                                                                                                                                                                                                                \item the standard euclidean norm if $v$ is archimedean;                                                                                                                                                                                                                                                                                                                                               
\item the homogeneous norm unit sphere~${\Z_p}^m\smallsetminus p{\Z_p}^m$ if $v$ is finite and associated with a prime~$p$.
\end{itemize}
We define the systole function~$\sys:G(\Z_S)\sous G(\Q_S)\to\R$ as~$g\mapsto \inf\left\{\Nm{x}_S|x\in {\Z_S}^mg\smallsetminus\{0\}\right\}.$ Note that this infimum is actually a minimum, because any lattice in ${\Q_S}^m$, being discrete intersect any closed ball in~${\Q_S}^m$, which is compact, along a finite set.

We recall that Mahler's compactness criterion asserts that a subset~$E$ in~$G(\Z_S)\sous G(\Q_S)$ is bounded if and only if the systole function is bounded away from zero on this subset. Consequently, a family of measures~$(\mu_i)_{i\in I}$ on~$G(\Z_S)\sous G(\Q_S)$ is narrow if and only if
\begin{equation}\label{Mahler}
 \forall \epsilon>0, \exists \epsilon', \forall i \in I, \mu_i( \{x \in G(\Z_S)\sous G(\Q_S)~|~\sys(x)\leq \epsilon' \}) < \epsilon.
\end{equation}

\section{A ``good'' parametrisation}\label{good}

Recall that \cite{KT}, section 3, define, on a subset~$U$ metric space~$(X,d)$ with nowhere zero Borel measure~$\mu$, a real nonzero Borel function to be \emph{$(C,\alpha)$-good} on~$U$, for some positive invertible constants~$C$ and~$\alpha$, if for any open ball~$B$ in~$U$ one has
\begin{equation}\label{eqCA}
 \forall \epsilon>0, \mu\left(\left\{x\in B\left| \abs{f(x)}<\epsilon\cdot\sup_{b\in B}\abs{f(b)} \right.\right\}\right)\leq C\epsilon^\alpha \mu(B).
\end{equation}
In other words ``we can effectively contol, in any ball, the relative time a good function is relatively small''.

Consider the class of functions~$E(n,\Lambda)$ from~\cite{EMSGAFA}, Definition~1, namely the class of functions~$\R\to\C$ made of a linear combination of~$t\mapsto t^l\exp(\lambda t)$, with a natural integer $l$ such that $l\leq n$ and a complex parameter~$\lambda$ such that~$\abs{\lambda}\leq \Lambda$. Let us prove that for any bounded subset~$U$ of~$\R$, there exists is some positive invertible constants~$C$ and~$\alpha$ such that for any (nonzero) function $f$ in~$E(n,\Lambda)$ ,~$\abs{f}$ is $(C,\alpha)$-good on~$U$ (with respect to the Lebesgue measure).
\begin{proof}
It will be enough to check property (\ref{eqCA}) for any open ball of bounded radius in~$\R$, that is an open interval of bounded length.

Desired result is the content of Corollary 2.10 from~\cite{EMSGAFA}, provided one can effectively bound involved constant~$M$ by a polynomial in the~$\epsilon$ of the statement. From the use of Corollary~2.9  in the proof of Corollary~2.10, the subset
$$\left\{x\in B\left| \abs{f(x)}<1/M\cdot\sup_{b\in B}\abs{f(b)} \right.\right\}$$
is a union of at most~$n^2-1$ subintervals. Assume that~$M>1$ so that any of these intervals can be~$B$ itself. So each of these subintervals~$J$ has a boundary point in~$B$, and consequently~$\sup_{b\in J}\abs{f(b)}=\epsilon\sup_{b\in B}\abs{f(b)}$. Applying Corollary~2.10 on each of these subintervals, we conclude that~$M$ convene for~$\epsilon$ and~$M>1$, then~$M^2$ convene for~$\epsilon^2$.

This yields the desire bound and finishes the proof.
\end{proof}
Using Lemma 3.2 from \cite{KT}, we deduce that for the any class~$E(m,n,\Lambda)$ of multivariate functions $\R^m\to\C$, and for any bounded subset~$U$ in~$\R^m$, there exists some positive invertible constants~$C$ and~$\alpha$ such that for any (nonzero) function $f$ in~$E(m,n,\Lambda)$ ,~$\abs{f}$ is $(C,\alpha)$-good on~$U$ for the Lebesgue measure. (Note that any bounded subset of~$\R^m$ is contained in a product of bounded subsets of~$\R$)

In situation of Lemma~\ref{ratio}, given~$\Phi$, and~$U$ as in the Lemma,  the Lemmas 3.2 and 3.4 of \cite{KT} implies there exists constants~$C$ and~$\alpha$ such that for any nonzero function~$\phi$ in~$\Phi$,~$\abs{\phi}$ is $(C,\alpha)$-good on~$U$.

\begin{prop}\label{param}
Let~$\rho:G\to Sl(n)$ be a representation, for some natural integer~$n$. Let~$\Nm{-}_S$ be some norm on~${\Q_S}^n$. Consider, for every place~$v$ in~$S$, let~$\Theta_v$ denote
\begin{itemize}
\item a generically submersive function (\cf \cite{Lemma}, A.3) of some class~$E_G(m,n,\Lambda)$ if~$v$ is archimedean; 
\item a function~${\Z_p}^{d_v}$ as in Proposition~\ref{pparam} if~$v$ is a finite associated with some prime~$p$.
\end{itemize}

 Let~$\Theta$ denote the product map~$\Theta_S((\lambda_v)_{v\in S}\mapsto \prod_{v\in S}\Theta_v(\lambda_v)$.

 Then for any bounded subset~$U$ in the domain of~$\Theta$, there exists positive and invertible constants~$C$ and~$\alpha$ such that for any~$x$ in~${\Q_S}^n$, and any~$g$ in~$G(\Q_S)$
% \begin{itemize}
% \item for any any~$\Q_S$-linear form~$\phi$ on~$V\tens\Q_S$,
% \item for any~$g$ in~$GL(m,\Q_S)$,
% \end{itemize}
 the function~$\Nm{\rho(\Theta g)\cdot x}_S$ is~$(C,\alpha)$-good on~$U$.
\end{prop}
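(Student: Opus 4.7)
The plan is to reduce the claim, place by place, to the two already recorded results on $(C,\alpha)$-good functions: on the archimedean side, the class $E(m,n,\Lambda)$ and its multivariate version from \cite{EMSGAFA} together with Lemma~3.2 of \cite{KT}; on the ultrametric side, the space $\Phi$ of Lemma~\ref{ratio} normalised by the polynomial $P$ together with Lemmas~3.2 and~3.4 of \cite{KT}. The combination across $v\in S$ is then carried out by a further application of Lemma~3.2 of \cite{KT}, exploiting that $\|\cdot\|_S$ is built from the place by place norms $\|\cdot\|_v$ and that $U$ sits in a product of bounded subsets, one per place.

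First, I would fix $g=(g_v)_{v\in S}\in G(\Q_S)$ and $x=(x_v)_{v\in S}\in \Q_S^n$ and analyse, for each $v$, the function $\lambda_v\mapsto \|\rho(\Theta_v(\lambda_v)\cdot g_v)\cdot x_v\|_v$ on $U_v$. Writing the coordinates of $\rho(\Theta_v(\lambda_v)\cdot g_v)\cdot x_v$ in a basis of $\Q_v^n$, each coordinate is a linear functional applied to $\rho(\Theta_v(\lambda_v))(\rho(g_v)\cdot x_v)$. The crucial point is that the vector space spanned by all such coordinate functions, as $g_v$ and $x_v$ vary, lies in a \emph{fixed} finite dimensional space of functions on $U_v$: at an archimedean place it is $E_G(m,n,\Lambda')$ for some $\Lambda'$ depending only on $\rho$ and the generically submersive parametrisation, because right translation and pairing against a linear form preserves the class; at an ultrametric place $v$ associated to $p$, Proposition~\ref{pparam} supplies a common denominator $P$ and a bound $M$ on the degrees of the numerators $P\cdot (\phi\circ \rho\circ \Theta_v)$ for arbitrary linear $\phi$, and after normalising by $|P|\equiv 1$ on $U_v$ (Lemma~\ref{ratio}) we obtain functions whose absolute values are absolute values of polynomials of bounded degree $M$.

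The second step is to convert these ``bounded complexity'' statements into the $(C,\alpha)$-good property, uniformly in $g$ and $x$. At an archimedean place we invoke the fact, proved in section~\ref{good}, that any nonzero function in $E(m,n,\Lambda')$ has $(C_v,\alpha_v)$-good absolute value on the bounded set $U_v$, with constants depending only on $m,n,\Lambda'$ and $U_v$. At an ultrametric place we use the consequence of Lemmas~3.2 and~3.4 of \cite{KT} already recorded in section~\ref{good}, which gives $(C_v,\alpha_v)$-goodness of $|\phi|$ for any nonzero $\phi$ in the finite dimensional space considered, again with constants independent of $\phi$. In each case one then applies Lemma~3.2 of \cite{KT} to pass from the coordinate functions to the norm $\|\cdot\|_v$ (maximum of $|\phi_i|$ up to a fixed comparability constant between norms on $\Q_v^n$); this yields constants $C_v,\alpha_v$ independent of $g_v$ and $x_v$.

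Finally, since $\|y\|_S=\prod_{v\in S}\|y_v\|_v$, the function $\|\rho(\Theta(\lambda)\cdot g)\cdot x\|_S$ on $U\subseteq \prod_{v\in S} U_v$ is a product of functions, each $(C_v,\alpha_v)$-good in one block of variables. A last application of Lemma~3.2 of \cite{KT} (products of good functions in independent variables are good) provides $C$ and $\alpha$, depending only on $\rho$, on the $\Theta_v$ and on $U$, such that this product is $(C,\alpha)$-good on $U$ uniformly in $g$ and $x$, which is the desired conclusion. The main obstacle I anticipate is the uniformity in $g\in G(\Q_S)$: one must check that right translation really does preserve the fixed finite dimensional space of coordinate functions (with uniform parameters $\Lambda'$ at archimedean places, and uniform degree bound $M$ and common denominator $P$ at ultrametric places); once this stability of the class under right translation is established, everything else is an assembly of the good-function lemmas already at hand.
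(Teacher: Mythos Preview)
Your approach is essentially the same as the paper's: reduce place by place, invoke the $E(m,n,\Lambda)$ goodness at the archimedean place and the bounded-degree polynomial goodness (via Proposition~\ref{pparam} and Lemma~\ref{ratio}) at finite places, then assemble using the good-function lemmas of \cite{KT}. One notable simplification you miss: the paper disposes of the element $g$ in one line by observing that $\rho(\Theta(\lambda)g)\cdot x=\rho(\Theta(\lambda))\cdot(\rho(g)x)$, so replacing $x$ by $\rho(g)x$ reduces immediately to $g=e$; this avoids your entire discussion of ``stability of the class under right translation'' and the uniformity worry you flag at the end. Also, your repeated citations of ``Lemma~3.2 of \cite{KT}'' conflate several distinct lemmas: the paper uses Lemma~3.1(d) to change norms, Lemma~3.3 (not~3.2) to reduce to a single place, and Lemma~3.1(c) to pass from the max-norm to a single coordinate; you should cite these more precisely.
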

Note that the submersivity assumption may always be satisfied. It follows, for exemple from~\cite{Lemma},~A.3 and A.4.
\begin{proof}
We first note that, possibly translating~$x$, we may ignore the element~$g$.

Using lemma 3.1 (d) in \cite{KT}, we may replace~$\Nm{-}$ by any comparable norm. Consequently, we may assume that~$\Nm{-}_S$ may be written~$\prod_{v\in S}\Nm{-}_v$, with~$\Nm{-}_v$ be the maximum of~$\abs{-}_v$ applied to the coordinates.

Using Lemma 3.3 of~\cite{KT}, we may assume that~$S$ contains only one element, say~$v$. 

Using Lemma 3.1 (c) in \cite{KT}, we may replace the function~$\Nm{\Theta\cdot x}_S$ by the absolute value of some coordinate. Note that coordinates of~$\Nm{\Theta\cdot x}_S$ are matrix coefficients of~$\Theta$. 

Recall that the class~$E_G(m,n,\Lambda)$ is made of functions into~$G$ whose matrix coefficients in the adjoint representation of~$G$ belongs to~$E(m,n,\Lambda)$. As~$G$ is semisimple, its adjoint representation is a closed immersion. It follows that any algebraic regular function on~$G$, and in particular the matrix coefficients of~$\rho$, belong to some class~$E(m',n',\Lambda)'$. Then for archimedean~$v$, the proposition follows from the preceding discussion.

If~$v$ is a finite place, then from Proposition~\ref{pparam} and definition~(\ref{eqCA}) we may assume~$\Theta$ is a polynomial map. It then follows from the use of Lemmas 3.1, 3.3 and 3.4 of \cite{KT} as in the proof of Theorem 9.4 of \cite{KT}.
\end{proof}

% \section{Applying \cite{KT}}
% \begin{theo}
% Let~$h$ be a map~$X\to G(\Q_S)$ from a Besicovich metric space~$X$ with uniform Federer measure~$\mu$ such that 
% for any discrete $\Z_S$-submodule $\Delta$ of~${\Z_S}^m$, the function~$x\mapsto \cov({\Z_S}^mh(x))$ are~$(C,\alpha)$ good, for some uniform positive invertible constants $C$ and $\alpha$.
% \end{theo}
% 
% 
% In this section we state a slight variant of the application 9.4 of Theorem 9.3 in~\cite{KT}. The previous section we allow us to prove this variant by a slight modification of the original proof by D.~Kleinbock and G.~Tomanov. 
% 
% \begin{theo}\label{adaptation}
%  Then there exists~$\alpha$, such that for every compact set~$L$ in ?, one can find positive $C_0$ and $\tau$ such that $L\subseteq Q_\tau$ and the following propertiy holds: for any positive invertible $\epsilon$ and any ball $B$ in~$X$ such ... one has
% $$\lambda(\{x\in B | h(x){\Z_S}^m\not\in Q_\epsilon\})leq C_0\epsilon^\alpha \lambda(B)$$
% whenever 
% \end{theo}
% 
% Let~$p$ be a prime number, let~$m$ be a natural integer. For maps $and let~$$V$ be a finite dimensionl~$\Q_p$-vector space. Fo
% 
% =algebraic closed subgroup of~$GL(n)$ over $\Q_p$. 
% 
% Let~$V$
% Let~$\Theta$ be a map from ${\Z_p}^d\to$
% 
% \begin{prop}\label{adaptation}
% 
% \end{prop}

\section{Proof}
Let~$n=2^m$ and consider the exterior algebra representation $\rho:GL(m)\to GL(n)$ (written in some basis). As~$G$ is contained in~$SL(m)$, its image under $\rho$, and \latin{a fortiori} the image of~$H$ is contained in~$SL(n)$. Indeed,~$\det\circ \rho$ defines a character, hence is trivial on~$SL(m)$, the latter being simple.

As in proof of Theorem 9.4 in~\cite{KT}, there is suitable norm on~${\Q_S}^n$ such that for any discrete $\Z_S$-submodule~$\Delta$ of~${\Q_S}^n$, there is some~$x$ in~${\Q_S}^n$, such that for any~$g$ in~$GL(m,\Q_S)$, 
$$\cov(\Delta g)=\Nm{x\rho(g)}_S.$$

We consider a map~$\Theta$ as in Proposition~\ref{param}. Then there are some positive and invertible constants~$C$ and~$\alpha$ such that for any discrete $\Z_S$-submodule~$\Delta$ of~${\Q_S}^n$, and any~$g$ in~$G(\Q_S)$, the function~$\cov(\Delta \Theta g)$ is $(C,\alpha)$-good. In particular condition (i) of Theorem 9.3 of \cite{KT} is satisfied for the map~$\Theta g$. Let~$Y_S$ be as in Theorem~\ref{SBombay}, and let~$c$ be constant as in Theorem~\ref{SBombay} relative to~$\rho$ and the image of the~$\Theta_v$ (at a finite place~$v$, $\Theta_v$, being open, has Zariski dense image). Then for any~$y$ in~$Y_S$, the translated map~$\Theta\cdot y$ satisfies the condition (i) and (ii) of Theorem 9.3. Of course we may assume~$c<1$.

The other conditions of Theorem 9.3 are satisfied for the same reason as in the proof of Theorem  9.4. We conclude that there is some (effectively computable, \cf \cite{KT}~9.3) constant~$C'$ such that for any~$y$ in~$Y_S$ for any ball~$B$ in~$\prod_{v\in S} {\Q_v}^{d_v}$ of some radius~$R$ at a centre~$x$ such that the ball of radius $3^mR$ at the same centre is contained in the domain of~$\Theta$, we have
$$\forall \epsilon<c \mu(\{b\in B|\sys({\Z_S}^m\Theta(b)y)<\epsilon c)\leq C'\epsilon^\alpha,$$
where~$\mu$ is a Haar measure on the domain of~$\Theta$. By virtue of formula~(\ref{Mahler}), this implies that if~$\mu_B$ denote the direct image of~$\mu$ in~$G(\Z_S)\sous G(\Q_S)$, then the family of translates~$(\mu_B\cdot y)_{y\in Y_S}$ is narrow.

According Corollary~\ref{densite} and~\cite{Lemma}~A.4, the measures~$\mu_B$ is absolutely continuous, Hence dominate a measure with positive bounded non essentially zero density. Using property~(\ref{dom}) concludes the proof of Corollary~\ref{C3}.

\backmatter

\bibliography{./Bibliography/ThisVolumeandPhD,./Bibliography/0,./Bibliography/1,./Bibliography/2,./2-Richard-Zamojski/bib,./Bibliography/3,./3-Resultat-ultrametrique-these-V/ultrametrique}

\newcommand{\etalchar}[1]{$^{#1}$}
\def\cprime{$'$}
\providecommand{\bysame}{\leavevmode ---\ }
\providecommand{\og}{``}
\providecommand{\fg}{''}
\providecommand{\smfandname}{\&}
\providecommand{\smfedsname}{\'eds.}
\providecommand{\smfedname}{\'ed.}
\providecommand{\smfmastersthesisname}{M\'emoire}
\providecommand{\smfphdthesisname}{Th\`ese}
\begin{thebibliography}{FWG{\etalchar{+}}92}

\bibitem[AM07]{BasicLieTheory}
{\scshape H.~Abbaspour {\normalfont \smfandname} M.~Moskowitz} -- \emph{Basic
  {L}ie theory}, World Scientific Publishing Co. Pte. Ltd., Hackensack, NJ,
  2007.

\bibitem[And89]{1-A}
{\scshape Y.~Andr\'e} -- \emph{{$G$}-functions and geometry}, Aspects of
  Mathematics, E13, Friedr. Vieweg \& Sohn, Braunschweig, 1989.

\bibitem[BB66]{1-BailyBorel}
{\scshape W.~L. Baily, Jr. {\normalfont \smfandname} A.~Borel} -- {\og
  Compactification of arithmetic quotients of bounded symmetric domains\fg},
  \emph{Ann. of Math. (2)} \textbf{84} (1966), p.~442--528.

\bibitem[Ber90]{Ber90}
{\scshape V.~G. Berkovich} -- \emph{Spectral theory and analytic geometry over
  non-{A}rchimedean fields}, Mathematical Surveys and Monographs, vol.~33,
  American Mathematical Society, Providence, RI, 1990.

\bibitem[BHC62]{BH-C}
{\scshape A.~Borel {\normalfont \smfandname} Harish-Chandra} -- {\og Arithmetic
  subgroups of algebraic groups\fg}, \emph{Ann. of Math. (2)} \textbf{75}
  (1962), p.~485--535.

\bibitem[Bog80]{1-Bogo}
{\scshape F.~A. Bogomolov} -- {\og Sur l'alg\'ebricit\'e des repr\'esentations
  {$l$}-adiques\fg}, \emph{C. R. Acad. Sci. Paris S\'er. A-B} \textbf{290}
  (1980), no.~15, p.~A701--A703.

\bibitem[Bor63]{Borelihes}
{\scshape A.~Borel} -- {\og Some finiteness properties of adele groups over
  number fields\fg}, \emph{Inst. Hautes \'Etudes Sci. Publ. Math.} (1963),
  no.~16, p.~5--30.

\bibitem[Bor66]{BorelDensity}
{\scshape A.~Borel} -- {\og Density and maximality of arithmetic
  subgroups.\fg}, \emph{Journal für die reine und angewandte Mathematik}
  \textbf{224} (1966), p.~78--89.

\bibitem[Bor69]{BorelIntro}
{\scshape A.~Borel} -- \emph{Introduction aux groupes arithm\'etiques},
  Publications de l'Institut de Math\'ematique de l'Universit\'e de Strasbourg,
  XV. Actualit\'es Scientifiques et Industrielles, No. 1341, Hermann, Paris,
  1969.

\bibitem[Bor72]{1-BorelExtension}
\bysame , {\og Some metric properties of arithmetic quotients of symmetric
  spaces and an extension theorem\fg}, \emph{J. Differential Geometry}
  \textbf{6} (1972), p.~543--560, Collection of articles dedicated to S. S.
  Chern and D. C. Spencer on their sixtieth birthdays.

\bibitem[Bor91a]{BorelLAG}
\bysame , \emph{Linear algebraic groups}, second \smfedname, Graduate Texts in
  Mathematics, vol. 126, Springer-Verlag, New York, 1991.

\bibitem[Bor91b]{Bor91}
\bysame , \emph{Linear algebraic groups}, Graduate Texts in Mathematics, vol.
  146, Springer-Verlag, New-York, 1991.

\bibitem[Bos94]{Bost}
{\scshape J.-B. Bost} -- {\og Semi-stability and heights of cycles\fg},
  \emph{Invent. Math.} \textbf{118} (1994), no.~2, p.~223--253.

\bibitem[Bos01]{1-Bost}
{\scshape J.-B.~t. Bost} -- {\og Algebraic leaves of algebraic foliations over
  number fields\fg}, \emph{Publ. Math. Inst. Hautes \'Etudes Sci.} (2001),
  no.~93, p.~161--221.

\bibitem[Bou58]{TOP}
{\scshape N.~Bourbaki} -- \emph{\'el\'ements de math\'ematique. {I}: {L}es
  structures fondamentales de l'analyse. {F}ascicule {VIII}. {L}ivre {III}:
  {T}opologie g\'en\'erale. {C}hapitre 9: {U}tilisation des nombres r\'eels en
  topologie g\'en\'erale}, Deuxi\`eme \'edition revue et augment\'ee.
  Actualit\'es Scientifiques et Industrielles, No. 1045, Hermann, Paris, 1958.

\bibitem[Bou60a]{Lie1}
\bysame , \emph{\'el\'ements de math\'ematique. {XXVI}. {G}roupes et alg\`ebres
  de {L}ie. {C}hapitre 1: {A}lg\`ebres de {L}ie}, Actualit\'es Sci. Ind. No.
  1285. Hermann, Paris, 1960.

\bibitem[Bou60b]{Bou60}
\bysame , \emph{\'el\'ements de math\'ematique. {XXVI}. {G}roupes et alg\`ebres
  de {L}ie. {C}hapitre 1: {A}lg\`ebres de {L}ie}, Actualit\'es Sci. Ind. No.
  1285. Hermann, Paris, 1960.

\bibitem[Bou63a]{BBK-I-VII}
\bysame , \emph{\'el\'ements de math\'ematique. {F}ascicule {XXIX}. {L}ivre
  {VI}: {I}nt\'egration. {C}hapitre 7: {M}esure de {H}aar. {C}hapitre 8:
  {C}onvolution et repr\'esentations}, Actualit\'es Scientifiques et
  Industrielles, No. 1306, Hermann, Paris, 1963.

\bibitem[Bou63b]{Integration}
{\scshape N.~Bourbaki} -- \emph{\'{E}l\'ements de math\'ematique. {F}ascicule
  {XXIX}. {L}ivre {VI}: {I}nt\'egration. {C}hapitre 7: {M}esure de {H}aar.},
  Actualit\'es Scientifiques et Industrielles, No. 1306, Hermann, Paris, 1963.

\bibitem[Bou69]{BBKINT}
{\scshape N.~Bourbaki} -- \emph{\'el\'ements de math\'ematique. {F}asc. {XXXV}.
  {L}ivre {VI}: {I}nt\'egration. {C}hapitre {IX}: {I}nt\'egration sur les
  espaces topologiques s\'epar\'es}, Actualit\'es Scientifiques et
  Industrielles, No. 1343, Hermann, Paris, 1969.

\bibitem[Bou72]{BBKLie3}
\bysame , \emph{\'el\'ements de math\'ematique. {F}asc. {XXXVII}. {G}roupes et
  alg\`ebres de {L}ie. {C}hapitre {II}: {A}lg\`ebres de {L}ie libres.
  {C}hapitre {III}: {G}roupes de {L}ie}, Hermann, Paris, 1972, Actualit\'es
  Scientifiques et Industrielles, No. 1349.

\bibitem[Bou04]{BouINT}
{\scshape N.~Bourbaki} -- \emph{Integration. {II}. {C}hapters 7--9}, Elements
  of Mathematics (Berlin), Springer-Verlag, Berlin, 2004, Translated from the
  1963 and 1969 French originals by Sterling K. Berberian.

\bibitem[BP89]{Borel-Prasad}
{\scshape A.~Borel {\normalfont \smfandname} G.~Prasad} -- {\og Finiteness
  theorems for discrete subgroups of bounded covolume in semi-simple
  groups\fg}, \emph{Inst. Hautes \'Etudes Sci. Publ. Math.} (1989), no.~69,
  p.~119--1571.

\bibitem[BT72]{BT72}
{\scshape F.~Bruhat {\normalfont \smfandname} J.~Tits} -- {\og Groupes
  r\'eductifs sur un corps local\fg}, \emph{Inst. Hautes \'Etudes Sci. Publ.
  Math.} (1972), no.~41, p.~5--251.

\bibitem[BT73]{BorelTits}
{\scshape A.~Borel {\normalfont \smfandname} J.~Tits} -- {\og Homomorphismes"
  abstraits" de groupes algebriques simples\fg}, \emph{Annals of Mathematics}
  (1973), p.~499--571.

\bibitem[BT84]{BT84}
{\scshape F.~Bruhat {\normalfont \smfandname} J.~Tits} -- {\og Groupes
  r\'eductifs sur un corps local. {II}. {S}ch\'emas en groupes. {E}xistence
  d'une donn\'ee radicielle valu\'ee\fg}, \emph{Inst. Hautes \'Etudes Sci.
  Publ. Math.} (1984), no.~60, p.~197--376.

\bibitem[Bur92]{Burnol}
{\scshape J.-F. Burnol} -- {\og Remarques sur la stabilit\'e en
  arithm\'etique\fg}, \emph{Internat. Math. Res. Notices} (1992), no.~6,
  p.~117--127.

\bibitem[CLT01]{ACLT}
{\scshape A.~Chambert-Loir {\normalfont \smfandname} Y.~Tschinkel} -- {\og
  Torseurs arithm\'etiques et espaces fibr\'es\fg}, in \emph{Rational points on
  algebraic varieties}, Progr. Math., vol. 199, Birkh\"auser, Basel, 2001,
  p.~37--70.

\bibitem[Cor02]{0-Co}
{\scshape C.~Cornut} -- {\og Mazur's conjecture on higher {H}eegner points\fg},
  \emph{Invent. Math.} \textbf{148} (2002), no.~3, p.~495--523.

\bibitem[CU05]{Ullmospeciale1}
{\scshape L.~Clozel {\normalfont \smfandname} E.~Ullmo} -- {\og
  \'{E}quidistribution de sous-vari\'et\'es sp\'eciales\fg}, \emph{Ann. of
  Math. (2)} \textbf{161} (2005), no.~3, p.~1571--1588.

\bibitem[Del79a]{0-Deligne}
{\scshape P.~Deligne} -- {\og Vari\'et\'es de {S}himura: interpr\'etation
  modulaire, et techniques de construction de mod\`eles canoniques\fg}, in
  \emph{Automorphic forms, representations and {$L$}-functions ({P}roc.
  {S}ympos. {P}ure {M}ath., {O}regon {S}tate {U}niv., {C}orvallis, {O}re.,
  1977), {P}art 2}, Proc. Sympos. Pure Math., XXXIII, Amer. Math. Soc.,
  Providence, R.I., 1979, p.~247--289.

\bibitem[Del79b]{1-Deligne}
\bysame , {\og Vari\'et\'es de {S}himura: interpr\'etation modulaire, et
  techniques de construction de mod\`eles canoniques\fg}, in \emph{Automorphic
  forms, representations and {$L$}-functions ({P}roc. {S}ympos. {P}ure {M}ath.,
  {O}regon {S}tate {U}niv., {C}orvallis, {O}re., 1977), {P}art 2}, Proc.
  Sympos. Pure Math., XXXIII, Amer. Math. Soc., Providence, R.I., 1979,
  p.~247--289.

\bibitem[Dem76]{Dem76}
{\scshape M.~Demazure} -- {\og D\'emonstration de la conjecture de {M}umford
  (d'apr\`es {W}. {H}aboush)\fg}, \emph{S\'eminaire {B}ourbaki (1974/1975:
  {E}xpos\'es {N}os. 453--470), {E}xp. {N}o. 462} (1976), p.~138--144. Lecture
  Notes in Math., Vol. 514.

\bibitem[DM93]{DM}
{\scshape S.~G. Dani {\normalfont \smfandname} G.~A. Margulis} -- {\og Limit
  distributions of orbits of unipotent flows and values of quadratic forms\fg},
  in \emph{I. {M}. {G}el\cprime fand {S}eminar}, Adv. Soviet Math., vol.~16,
  Amer. Math. Soc., Providence, RI, 1993, p.~91--137.

\bibitem[DR73]{Deligne}
{\scshape P.~Deligne {\normalfont \smfandname} M.~Rapoport} -- {\og Les
  sch\'emas de modules de courbes elliptiques\fg}, in \emph{Modular functions
  of one variable, II (Proc. Internat. Summer School, Univ. Antwerp, Antwerp,
  1972)}, Springer, Berlin, 1973, p.~143--316. Lecture Notes in Math., Vol.
  349.

\bibitem[DRS93]{DukRudSar93}
{\scshape W.~Duke, Z.~Rudnick {\normalfont \smfandname} P.~Sarnak} -- {\og
  Density of integer points on affine homogeneous varieties\fg}, \emph{Duke
  Math. J.} \textbf{71} (1993), no.~1, p.~143--179.

\bibitem[Ein09]{WhatisMeasureRig}
{\scshape M.~Einsiedler} -- {\og What is {$\dots$} measure rigidity?\fg},
  \emph{Notices Amer. Math. Soc.} \textbf{56} (2009), no.~5, p.~600--601.

\bibitem[EM93]{EskMcm93}
{\scshape A.~Eskin {\normalfont \smfandname} C.~McMullen} -- {\og Mixing,
  counting, and equidistribution in {L}ie groups\fg}, \emph{Duke Math. J.}
  \textbf{71} (1993), no.~1, p.~181--209.

\bibitem[EMS]{EMSCorr}
{\scshape A.~Eskin, S.~Mozes {\normalfont \smfandname} N.~Shah} -- {\og
  Correction to `unipotent flows and counting lattice points on homogeneous
  varieties'{}\fg}.

\bibitem[EMS96]{EMSAnn}
\bysame , {\og Unipotent flows and counting lattice points on homogeneous
  varieties\fg}, \emph{Ann. of Math. (2)} \textbf{143} (1996), no.~2,
  p.~253--299.

\bibitem[EMS97]{EMSGAFA}
{\scshape A.~Eskin, S.~Mozes {\normalfont \smfandname} N.~Shah} -- {\og
  Non-divergence of translates of certain algebraic measures\fg}, \emph{Geom.
  Funct. Anal.} \textbf{7} (1997), no.~1, p.~48--80.

\bibitem[EW11]{EinsiedlerGTM}
{\scshape M.~Einsiedler {\normalfont \smfandname} T.~Ward} -- \emph{Ergodic
  theory with a view towards number theory}, Graduate Texts in Mathematics,
  vol. 259, Springer-Verlag London, Ltd., London, 2011.

\bibitem[FWG{\etalchar{+}}92]{1-FW}
{\scshape G.~Faltings, G.~W\"ustholz, F.~Grunewald, N.~Schappacher {\normalfont
  \smfandname} U.~Stuhler} -- \emph{Rational points}, third \smfedname, Aspects
  of Mathematics, E6, Friedr. Vieweg \& Sohn, Braunschweig, 1992, Papers from
  the seminar held at the Max-Planck-Institut f\"ur Mathematik, Bonn/Wuppertal,
  1983/1984, With an appendix by W\"ustholz.

\bibitem[Gil09]{GilleKneserTits}
{\scshape P.~Gille} -- {\og Le probl\`eme de {K}neser-{T}its\fg},
  \emph{Ast\'erisque} (2009), no.~326, p.~Exp. No. 983, vii, 39--81 (2010),
  S{\'e}minaire Bourbaki. Vol. 2007/2008.

\bibitem[GMO08]{GorMauOh08}
{\scshape A.~Gorodnik, F.~Maucourant {\normalfont \smfandname} H.~Oh} -- {\og
  Manin's and {P}eyre's conjectures on rational points and adelic mixing\fg},
  \emph{Ann. Sci. \'Ec. Norm. Sup\'er. (4)} \textbf{41} (2008), no.~3,
  p.~383--435.

\bibitem[GO09]{GorOh09}
{\scshape A.~Gorodnik {\normalfont \smfandname} H.~Oh} -- {\og Rational points
  on homogeneous varieties and equidistribution of adelic periods\fg}, In
  preparation, 2009.

\bibitem[God95]{1-GW}
{\scshape R.~Godement} -- {\og Domaines fondamentaux des groupes
  arithm\'etiques [ {MR}0191899 (33 \#126)]\fg}, in \emph{S\'eminaire
  {B}ourbaki, {V}ol. 8}, Soc. Math. France, Paris, 1995, p.~Exp. No. 257,
  201--225.

\bibitem[Gra84]{Grayson1}
{\scshape D.~R. Grayson} -- {\og Reduction theory using semistability\fg},
  \emph{Comment. Math. Helv.} \textbf{59} (1984), no.~4, p.~600--634.

\bibitem[Gra86]{Grayson2}
\bysame , {\og Reduction theory using semistability. {II}\fg}, \emph{Comment.
  Math. Helv.} \textbf{61} (1986), no.~4, p.~661--676.

\bibitem[Hel01]{1-Helgason}
{\scshape S.~Helgason} -- \emph{Differential geometry, {L}ie groups, and
  symmetric spaces}, Graduate Studies in Mathematics, vol.~34, American
  Mathematical Society, Providence, RI, 2001, Corrected reprint of the 1978
  original.

\bibitem[HRS{\etalchar{+}}17]{0-PS}
{\scshape P.~Habegger, G.~R\'emond, T.~Scanlon, E.~Ullmo {\normalfont
  \smfandname} A.~Yafaev} -- \emph{Around the {Z}ilber-{P}ink
  conjecture/{A}utour de la conjecture de {Z}ilber-{P}ink}, Panoramas et
  Synth\`eses [Panoramas and Syntheses], vol.~52, Soci\'et\'e Math\'ematique de
  France, Paris, 2017, Papers corresponding to courses from the conference
  ``Etats de la Recherche'' held at CIRM, Marseille, May 2011.

\bibitem[KN79]{KempfNess}
{\scshape G.~Kempf {\normalfont \smfandname} L.~Ness} -- {\og The length of
  vectors in representation spaces\fg}, in \emph{Algebraic geometry ({P}roc.
  {S}ummer {M}eeting, {U}niv. {C}openhagen, {C}openhagen, 1978)}, Lecture Notes
  in Math., vol. 732, Springer, Berlin, 1979, p.~233--243.

\bibitem[Kna02]{Knapp}
{\scshape A.~W. Knapp} -- \emph{Lie groups beyond an introduction}, second
  \smfedname, Progress in Mathematics, vol. 140, Birkh{$\ddot{\mathrm{a}}$}user
  Boston Inc., Boston, MA, 2002.

\bibitem[KT07]{KT}
{\scshape D.~Kleinbock {\normalfont \smfandname} G.~Tomanov} -- {\og Flows on
  {$S$}-arithmetic homogeneous spaces and applications to metric {D}iophantine
  approximation\fg}, \emph{Comment. Math. Helv.} \textbf{82} (2007), no.~3,
  p.~519--581.

\bibitem[KUY16a]{0-KUY}
{\scshape B.~Klingler, E.~Ullmo {\normalfont \smfandname} A.~Yafaev} -- {\og
  The hyperbolic {A}x-{L}indemann-{W}eierstrass conjecture\fg}, \emph{Publ.
  Math. Inst. Hautes \'Etudes Sci.} \textbf{123} (2016), p.~333--360.

\bibitem[KUY16b]{1-KUY}
\bysame , {\og The hyperbolic {A}x-{L}indemann-{W}eierstrass conjecture\fg},
  \emph{Publ. Math. Inst. Hautes \'Etudes Sci.} \textbf{123} (2016),
  p.~333--360.

\bibitem[KY14]{1-KY}
{\scshape B.~Klingler {\normalfont \smfandname} A.~Yafaev} -- {\og The
  {A}ndr\'e-{O}ort conjecture\fg}, \emph{Ann. of Math. (2)} \textbf{180}
  (2014), no.~3, p.~867--925.

\bibitem[Mar91]{Margulis}
{\scshape G.~A. Margulis} -- \emph{Discrete subgroups of semisimple {L}ie
  groups}, second \smfedname, Ergebnisse der Mathematik und ihrer Grenzgebiete
  (3) [Results in Mathematics and Related Areas (3)], vol.~17, Springer-Verlag,
  Berlin, 1991.

\bibitem[Mar97]{MargulisOppenheim}
\bysame , {\og Oppenheim conjecture\fg}, in \emph{Fields {M}edallists'
  lectures}, World Sci. Ser. 20th Century Math., vol.~5, World Sci. Publ.,
  River Edge, NJ, 1997, p.~272--327.

\bibitem[MAs13]{1-AM}
{\scshape H.~Menken {\normalfont \smfandname} A.~A\c~san} -- {\og On some
  properties of {L}iouville numbers in the non-{A}rchimedean case\fg},
  \emph{Eur. J. Pure Appl. Math.} \textbf{6} (2013), no.~2, p.~239--246.

\bibitem[Mat60]{1-Matsushima}
{\scshape Y.~Matsushima} -- {\og Espaces homog\`enes de {S}tein des groupes de
  {L}ie complexes\fg}, \emph{Nagoya Math. J} \textbf{16} (1960), p.~205--218.

\bibitem[MFK94]{GIT}
{\scshape D.~Mumford, J.~Fogarty {\normalfont \smfandname} F.~Kirwan} --
  \emph{Geometric invariant theory}, third \smfedname, Ergebnisse der
  Mathematik und ihrer Grenzgebiete (2) [Results in Mathematics and Related
  Areas (2)], vol.~34, Springer-Verlag, Berlin, 1994.

\bibitem[Mil17]{1-Milne}
{\scshape J.~S. Milne} -- {\og Introduction to shimura varieties\fg}, available
  on author website, \url{https://www.jmilne.org/math/xnotes/svi.pdf}, October
  23, 2004 (revised September 16, 2017).

\bibitem[Moo98]{1-Moonen}
{\scshape B.~Moonen} -- {\og Linearity properties of {S}himura varieties.
  {I}\fg}, \emph{J. Algebraic Geom.} \textbf{7} (1998), no.~3, p.~539--567.

\bibitem[Mos55]{Mos55}
{\scshape G.~D. Mostow} -- {\og Some new decomposition theorems for semi-simple
  groups\fg}, \emph{Mem. Amer. Math. Soc.} \textbf{No. 14} (1955), p.~31--54.

\bibitem[Mos56]{Mostow56}
\bysame , {\og Fully reducible subgroups of algebraic groups\fg}, \emph{Amer.
  J. Math.} \textbf{78} (1956), p.~200--221.

\bibitem[MT94]{MargulisTomanov}
{\scshape G.~A. Margulis {\normalfont \smfandname} G.~M. Tomanov} -- {\og
  Invariant measures for actions of unipotent groups over local fields on
  homogeneous spaces\fg}, \emph{Invent. Math.} \textbf{116} (1994), no.~1-3,
  p.~347--392.

\bibitem[MT96]{MT3}
\bysame , {\og Measure rigidity for almost linear groups and its
  applications\fg}, \emph{J. Anal. Math.} \textbf{69} (1996), p.~25--54.

\bibitem[Orr13]{1-OrrThesis}
{\scshape M.~Orr} -- {\og La conjecture de {A}ndr\'e-{P}ink: orbites de {H}ecke
  et sous-vari\'et\'es faiblement sp\'eciales\fg}, \smfphdthesisname,
  University Paris-Sud (Orsay), 2013, available at
  \url{https://tel.archives-ouvertes.fr/tel-00879010}.

\bibitem[Orr15a]{0-Orr}
\bysame , {\og Families of abelian varieties with many isogenous fibres\fg},
  \emph{J. Reine Angew. Math.} \textbf{705} (2015), p.~211--231.

\bibitem[Orr15b]{1-Orr}
\bysame , {\og Families of abelian varieties with many isogenous fibres\fg},
  \emph{J. Reine Angew. Math.} \textbf{705} (2015), p.~211--231.

\bibitem[Pin05]{1-Pinky}
{\scshape R.~Pink} -- {\og A combination of the conjectures of {M}ordell-{L}ang
  and {A}ndr\'e-{O}ort\fg}, in \emph{Geometric methods in algebra and number
  theory}, Progr. Math., vol. 235, Birkh\"auser Boston, Boston, MA, 2005,
  p.~251--282.

\bibitem[PR94a]{PR}
{\scshape V.~Platonov {\normalfont \smfandname} A.~Rapinchuk} --
  \emph{Algebraic groups and number theory}, Pure and Applied Mathematics, vol.
  139, Academic Press Inc., Boston, MA, 1994, Translated from the 1991 Russian
  original by Rachel Rowen.

\bibitem[PR94b]{PR94}
\bysame , \emph{Algebraic groups and number theory}, Pure and Applied
  Mathematics, vol. 139, Academic Press, Inc., Boston, MA, 1994, Translated
  from the 1991 Russian original by Rachel Rowen.

\bibitem[Rat91]{Rat}
{\scshape M.~Ratner} -- {\og On {R}aghunathan's measure conjecture\fg},
  \emph{Ann. of Math. (2)} \textbf{134} (1991), no.~3, p.~545--607.

\bibitem[Rat95a]{0-Ra}
\bysame , {\og Interactions between ergodic theory, {L}ie groups, and number
  theory\fg}, in \emph{Proceedings of the {I}nternational {C}ongress of
  {M}athematicians, {V}ol. 1, 2 ({Z}\"urich, 1994)}, Birkh\"auser, Basel, 1995,
  p.~157--182.

\bibitem[Rat95b]{RatnerICM}
\bysame , {\og Interactions between ergodic theory, {L}ie groups, and number
  theory\fg}, in \emph{Proceedings of the {I}nternational {C}ongress of
  {M}athematicians, {V}ol.\ 1, 2 ({Z}\"urich, 1994)} (Basel), Birkh\"auser,
  1995, p.~157--182.

\bibitem[Rat95c]{Ratnerp}
\bysame , {\og Raghunathan's conjectures for {C}artesian products of real and
  {$p$}-adic {L}ie groups\fg}, \emph{Duke Math. J.} \textbf{77} (1995), no.~2,
  p.~275--382.

\bibitem[Ric]{1-RU}
{\scshape R.~Richard} -- {\og {R}\'esultat g\'eom\'etrique sur les
  repr\'esentations de groupes r\'eductif sur un corps ultram\'etrique\fg}, in
  \emph{This volume}.

\bibitem[Ric67]{RichardsonConjugacy}
{\scshape R.~W. Richardson, Jr.} -- {\og Conjugacy classes in {L}ie algebras
  and algebraic groups\fg}, \emph{Ann. of Math. (2)} \textbf{86} (1967),
  p.~1--15.

\bibitem[Ric77]{1-RichardsonMatsushima}
{\scshape R.~W. Richardson} -- {\og Affine coset spaces of reductive algebraic
  groups\fg}, \emph{Bull. London Math. Soc.} \textbf{9} (1977), no.~1,
  p.~38--41.

\bibitem[Ric88a]{Richardsontuple}
\bysame , {\og Conjugacy classes of {$n$}-tuples in {L}ie algebras and
  algebraic groups\fg}, \emph{Duke Math. J.} \textbf{57} (1988), no.~1,
  p.~1--35.

\bibitem[Ric88b]{Richardson}
\bysame , {\og Conjugacy classes of $n$-tuples in lie algebras and algebraic
  groups\fg}, 1988, p.~1--35.

\bibitem[Ric09a]{These3}
{\scshape R.~Richard} -- {\og \'{E}quidistribution et {V}ari\'{e}t\'{e}s de
  {S}himura\fg}, \emph{Expos\'{e} III de \og{}~Sur quelques questions
  d'\'{e}quidistribution en g\'{e}om\'{e}trie arithm\'{e}tique~\fg{}} (2009).

\bibitem[Ric09b]{LemmaA}
\bysame , {\og On narrowness for ${S}$-arithmetic translates\fg},
  \emph{Expos\'{e} VI de \og{}~Sur quelques questions d'\'{e}quidistribution en
  g\'{e}om\'{e}trie arithm\'{e}tique~\fg{}} (2009).

\bibitem[Ric09c]{Ric09}
\bysame , {\og On narrowness for translated algebraic probabilities in
  $s$-arithmetic homogeneous spaces\fg},  (2009), p.~119--130.

\bibitem[Ric09d]{Lemmap}
\bysame , {\og R\'{e}sultat g\'{e}om\'{e}trique sur les repr\'{e}sentations de
  groupes alg\'{e}briques r\'{e}ductifs sur un corps ultram\'{e}trique\fg},
  \emph{Expos\'{e} V de \og{}~Sur quelques questions d'\'{e}quidistribution en
  g\'{e}om\'{e}trie arithm\'{e}tique~\fg{}} (2009).

\bibitem[Ric09e]{TheseV}
\bysame , {\og R\'{e}sultat g\'{e}om\'{e}trique sur les repr\'{e}sentations de
  groupes r\'{e}ductifs sur un corps ultram\'{e}trique\fg},  (2009),
  p.~87--117.

\bibitem[Ric09f]{0-R}
\bysame , {\og Sur quelques questions d'\'equidistribution en g\'eom\'etrie
  arithm\'etique\fg}, \smfphdthesisname, Univ. Rennes 1, Nov. 2009,
  \url{https://tel.archives-ouvertes.fr/tel-00438515/}.

\bibitem[Ric09g]{1-R}
\bysame , {\og {Sur quelques questions d'\'equidistribution en g\'eom\'etrie
  arithm\'etique}\fg}, \smfphdthesisname, Univ. Rennes 1, Nov. 2009,
  \url{https://tel.archives-ouvertes.fr/tel-00438515/}.

\bibitem[Rob00]{Robert}
{\scshape A.~M. Robert} -- \emph{A course in {$p$}-adic analysis}, Graduate
  Texts in Mathematics, vol. 198, Springer-Verlag, New York, 2000.

\bibitem[RS09a]{RS09}
{\scshape R.~Richard {\normalfont \smfandname} N.~Shah} -- {\og Geometric
  result on representations of reductive lie groups\fg},  (2009), p.~67--85.

\bibitem[RS09b]{Lemma}
\bysame , {\og R\'{e}sultat g\'{e}om\'{e}trique sur les repr\'{e}sentations de
  groupes de lie r\'{e}ductifs\fg}, \emph{Expos\'{e} IV de \og{}~Sur quelques
  questions d'\'{e}quidistribution en g\'{e}om\'{e}trie arithm\'{e}tique~\fg{}}
  (2009).

\bibitem[RS11]{Lemmanew}
\bysame , {\og R\'{e}sultat g\'{e}om\'{e}trique sur les repr\'{e}sentations de
  groupes de lie r\'{e}ductifs\fg}, \emph{Pr\'{e}publication} (2011).

\bibitem[RS17]{RichardShah}
\bysame , {\og Geometric results on linear actions of reductive lie groups for
  applications to homogeneous dynamics\fg}, \emph{Ergodic Theory and Dynamical
  Systems} (2017).

\bibitem[RS18]{1-RS}
{\scshape R.~RICHARD {\normalfont \smfandname} N.~A. SHAH} -- {\og Geometric
  results on linear actions of reductive lie groups for applications to
  homogeneous dynamics\fg}, \emph{Ergodic Theory and Dynamical Systems}
  \textbf{38} (2018), no.~7, p.~2780–2800.

\bibitem[RTW10]{RTW09}
{\scshape B.~R\'emy, A.~Thuillier {\normalfont \smfandname} A.~Werner} -- {\og
  Bruhat-{T}its theory from {B}erkovich's point of view. {I}. {R}ealizations
  and compactifications of buildings\fg}, \emph{Ann. Sci. \'Ec. Norm. Sup\'er.
  (4)} \textbf{43} (2010), no.~3, p.~461--554.

\bibitem[RY]{0-RY}
{\scshape R.~Richard {\normalfont \smfandname} A.~Yafaev} -- {\og {I}nner
  {G}alois equidistribution in $s$-{H}ecke orbits\fg}, in \emph{This volume}.

\bibitem[RY17]{RichardYaffaev}
{\scshape R.~Richard {\normalfont \smfandname} A.~Yaffaev} -- {\og Inner galois
  equidistribution in~$s$-hecke orbits (titre provisoire)\fg},  (2017).

\bibitem[RZa]{RichardZamojski}
{\scshape R.~Richard {\normalfont \smfandname} T.~Zamojski} -- {\og Limit
  distribution of translated pieces of possibly irrational leaves in
  $s$-arithmetic homogeneous spaces\fg}.

\bibitem[RZb]{0-RZ}
{\scshape R.~Richard {\normalfont \smfandname} T.~Zamojski} -- {\og {L}imit
  distributions of translated pieces of leaves in $s$-arithmetic homogeneous
  spaces\fg}, in \emph{This volume}.

\bibitem[RZc]{1-RZ}
\bysame , {\og {L}imit distributions of translated pieces of leaves in
  $s$-arithmetic homogeneous spaces\fg}, in \emph{This volume}.

\bibitem[Sat80]{1-Satake}
{\scshape I.~Satake} -- \emph{Algebraic structures of symmetric domains},
  Princeton University Press, 1980.

\bibitem[Ser94]{SerreLNM5}
{\scshape J.-P. Serre} -- \emph{Cohomologie galoisienne}, fifth \smfedname,
  Lecture Notes in Mathematics, vol.~5, Springer-Verlag, Berlin, 1994.

\bibitem[Ser00]{SerreOEuvres4}
\bysame , \emph{\oe uvres. {C}ollected papers. {IV}}, Springer-Verlag, Berlin,
  2000, 1985--1998.

\bibitem[Ser06]{1-Serre}
\bysame , \emph{Lie algebras and {L}ie groups}, Lecture Notes in Mathematics,
  vol. 1500, Springer-Verlag, Berlin, 2006, 1964 lectures given at Harvard
  University, Corrected fifth printing of the second (1992) edition.

\bibitem[Ses77]{Seshadri}
{\scshape C.~S. Seshadri} -- {\og Geometric reductivity over arbitrary
  base\fg}, \emph{Advances in Math.} \textbf{26} (1977), no.~3, p.~225--274.

\bibitem[Sha91]{Shah91}
{\scshape N.~A. Shah} -- {\og Uniformly distributed orbits of certain flows on
  homogeneous spaces\fg}, \emph{Math. Ann.} \textbf{289} (1991), no.~2,
  p.~315--334.

\bibitem[Spr09a]{1-Springer}
{\scshape T.~A. Springer} -- \emph{Linear algebraic groups}, second \smfedname,
  Modern Birkh\"auser Classics, Birkh\"auser Boston, Inc., Boston, MA, 2009.

\bibitem[Spr09b]{SpringerLAG}
\bysame , \emph{Linear algebraic groups}, second \smfedname, Modern
  Birkh\"auser Classics, Birkh\"auser Boston, Inc., Boston, MA, 2009.

\bibitem[Stu76]{Stuhler}
{\scshape U.~Stuhler} -- {\og Eine {B}emerkung zur {R}eduktionstheorie
  quadratischer {F}ormen\fg}, \emph{Arch. Math. (Basel)} \textbf{27} (1976),
  no.~6, p.~604--610.

\bibitem[Tit77]{TitsMargulis}
{\scshape J.~Tits} -- {\og Travaux de {M}argulis sur les sous-groupes discrets
  de groupes de {L}ie\fg}, in \emph{S\'eminaire {B}ourbaki, 28\`eme ann\'ee
  (1975/76), {E}xp. {N}o. 482}, Springer, Berlin, 1977, p.~174--190. Lecture
  Notes in Math., Vol. 567.

\bibitem[Tit78]{TitsBBK}
\bysame , {\og Groupes de {W}hitehead de groupes alg\'ebriques simples sur un
  corps (d'apr\`es {V}. {P}. {P}latonov et al.)\fg}, in \emph{S\'eminaire
  {B}ourbaki, 29e ann\'ee (1976/77)}, Lecture Notes in Math., vol. 677,
  Springer, Berlin, 1978, p.~Exp. No. 505, pp. 218--236.

\bibitem[Tit79]{Tit79}
{\scshape J.~Tits} -- {\og Reductive groups over local fields\fg}, in
  \emph{Automorphic forms, representations and {$L$}-functions ({P}roc.
  {S}ympos. {P}ure {M}ath., {O}regon {S}tate {U}niv., {C}orvallis, {O}re.,
  1977), {P}art 1}, Proc. Sympos. Pure Math., XXXIII, Amer. Math. Soc.,
  Providence, R.I., 1979, p.~29--69.

\bibitem[Tom00]{TomanovOrbits}
{\scshape G.~Tomanov} -- {\og Orbits on homogeneous spaces of arithmetic origin
  and approximations\fg}, in \emph{Analysis on homogeneous spaces and
  representation theory of {L}ie groups, {O}kayama--{K}yoto (1997)}, Adv. Stud.
  Pure Math., vol.~26, Math. Soc. Japan, Tokyo, 2000, p.~265--297.

\bibitem[Ull07a]{1-U}
{\scshape E.~Ullmo} -- {\og Equidistribution de sous-vari\'et\'es sp\'eciales.
  {II}\fg}, \emph{J. Reine Angew. Math.} \textbf{606} (2007), p.~193--216.

\bibitem[Ull07b]{Ullmospeciale2}
\bysame , {\og Equidistribution de sous-vari\'et\'es sp\'eciales. {II}\fg},
  \emph{J. Reine Angew. Math.} \textbf{606} (2007), p.~193--216.

\bibitem[Ull14]{1-U1}
\bysame , {\og Applications du th\'eor\`eme d'{A}x-{L}indemann
  hyperbolique\fg}, \emph{Compos. Math.} \textbf{150} (2014), no.~2,
  p.~175--190.

\bibitem[UY11]{1-UY3}
{\scshape E.~Ullmo {\normalfont \smfandname} A.~Yafaev} -- {\og A
  characterization of special subvarieties\fg}, \emph{Mathematika} \textbf{57}
  (2011), no.~2, p.~263--273.

\bibitem[UY13]{1-UY1}
\bysame , {\og Mumford-{T}ate and generalised {S}hafarevich conjectures\fg},
  \emph{Ann. Math. Qu\'e.} \textbf{37} (2013), no.~2, p.~255--284.

\bibitem[UY14]{1-UY}
\bysame , {\og Hyperbolic {A}x-{L}indemann theorem in the cocompact case\fg},
  \emph{Duke Math. J.} \textbf{163} (2014), no.~2, p.~433--463.

\bibitem[UY18]{1-UY2}
\bysame , {\og Algebraic flows on shimura varieties\fg}, \emph{manuscripta
  mathematica} \textbf{155} (2018), no.~3, p.~355--367.

\bibitem[Vat02]{0-Va}
{\scshape V.~Vatsal} -- {\og Uniform distribution of {H}eegner points\fg},
  \emph{Invent. Math.} \textbf{148} (2002), no.~1, p.~1--46.

\bibitem[Vig96]{vigneras1996representations}
{\scshape M.~Vigneras} -- \emph{Representations modulaires des groupes
  reductifs p-adiques. representations cuspidales de gl(n)}, Progress in
  Mathematics, Birkh{\"a}user Boston, 1996.

\bibitem[Wan71]{Wang}
{\scshape S.~P. Wang} -- {\og On density properties of {$S$}-subgroups of
  locally compact groups\fg}, \emph{Ann. of Math. (2)} \textbf{94} (1971),
  p.~325--329.

\bibitem[Wei74]{Wei74}
{\scshape A.~Weil} -- \emph{Basic number theory}, third \smfedname,
  Springer-Verlag, New York-Berlin, 1974, Die Grundlehren der Mathematischen
  Wissenschaften, Band 144.

\bibitem[Yam50]{Yamabe}
{\scshape H.~Yamabe} -- {\og On an arcwise connected subgroup of a {L}ie
  group\fg}, \emph{Osaka Math. J.} \textbf{2} (1950), p.~13--14.

\bibitem[Zha94]{Zhang}
{\scshape S.~Zhang} -- {\og Geometric reductivity at {A}rchimedean places\fg},
  \emph{Internat. Math. Res. Notices} (1994), no.~10, p.~425 ff., approx.\ 9
  pp.\ (electronic).

\end{thebibliography}
\bibliographystyle{SMF-Files/smfalpha}

\end{document}